\newcommand{\fdm}[1]{\textcolor{purple}{#1}}
\newcommand{\textfrc}[1]{{\frcseries#1}}
\newcommand{\mathfrc}[1]{\text{\textfrc{#1}}}
\DeclarePairedDelimiterX{\inp}[2]{\langle}{\rangle}{#1, #2}
\newcommand{\intrd}[1][]{ \int_{\bT^d}  }
\newcommand{\hyp}[1]{{\rm{(}\bf{#1}\rm{)}}}
\newcommand{\curss}{\textrm{\rm \mathfrc{s}}}
\newcommand{\cursu}{\textrm{\rm \mathfrc{u}}}
\newcommand{\cursr}{\textrm{\rm \mathfrc{r}}}
\newcommand{\essup}{\textrm{\rm essup}_{\omega^0 \in \Omega^0}}
\newcommand{\bT}{\mathbb{T}}
\def\ud{\mathrm{d}}
\def\ud{\mathrm{d}}
\DeclareFontFamily{OMX}{MnSymbolE}{}
\DeclareSymbolFont{MnLargeSymbols}{OMX}{MnSymbolE}{m}{n}
\DeclareFontShape{OMX}{MnSymbolE}{m}{n}{
    <-6>  MnSymbolE5
   <6-7>  MnSymbolE6
   <7-8>  MnSymbolE7
   <8-9>  MnSymbolE8
   <9-10> MnSymbolE9
  <10-12> MnSymbolE10
  <12->   MnSymbolE12
}{}
\DeclareFontShape{OMX}{MnSymbolE}{b}{n}{
    <-6>  MnSymbolE-Bold5
   <6-7>  MnSymbolE-Bold6
   <7-8>  MnSymbolE-Bold7
   <8-9>  MnSymbolE-Bold8
   <9-10> MnSymbolE-Bold9
  <10-12> MnSymbolE-Bold10
  <12->   MnSymbolE-Bold12
}{}
\let\llangle\@undefined
\let\rrangle\@undefined
\DeclareMathDelimiter{\llangle}{\mathopen}%
                     {MnLargeSymbols}{'164}{MnLargeSymbols}{'164}
\DeclareMathDelimiter{\rrangle}{\mathclose}%
                     {MnLargeSymbols}{'171}{MnLargeSymbols}{'171}
\newcounter{daggerfootnote}
\newcounter{starfootnote}
\theoremstyle{plain}
\newtheorem{theorem}{Theorem}[section]
\newtheorem{lemma}[theorem]{Lemma}
\newtheorem{proposition}[theorem]{Proposition}
\newtheorem{corollary}[theorem]{Corollary}
\theoremstyle{definition}
\newtheorem{definition}[theorem]{Definition}
\theoremstyle{definition}
\newtheorem{remark}[theorem]{Remark}
\newtheorem*{theorem*}{Theorem (Main result)}
\numberwithin{equation}{section}
\numberwithin{figure}{section}
\begin{document}
%\selectlanguage{english}
\begin{frontmatter}\title{Major-minor mean field games: common noise helps}
\runtitle{Major Minor MFGs}

\begin{aug}
%%%%%%%%%%%%%%%%%%%%%%%%%%%%%%%%%%%%%%%%%%%%%%
%%Only one address is permitted per author. %%
%%Only division, organization and e-mail is %%
%%included in the address.                  %%
%%Additional information can be included in %%
%%the Acknowledgments section if necessary. %%
%%%%%%%%%%%%%%%%%%%%%%%%%%%%%%%%%%%%%%%%%%%%%%
\author[A]{Fran\c{c}ois Delarue}
\and
\author[B]{Chenchen Mou}
%%%%%%%%%%%%%%%%%%%%%%%%%%%%%%%%%%%%%%%%%%%%%%
%% Addresses                                %%
%%%%%%%%%%%%%%%%%%%%%%%%%%%%%%%%%%%%%%%%%%%%%%
\address[A]{Universit\'e C\^ote d'Azur, CNRS, Laboratoire J.A.Dieudonn\'{e},
Parc Valrose,
France-06108 NICE Cedex 2,
 francois.delarue@univ-cotedazur.fr}

\address[B]{City University of Hong Kong, Department of Mathematics,
China-Hong Kong SAR,
chencmou@cityu.edu.hk}
\end{aug}

%\date{ \currenttime, \ddmmyyyydate\today}

%%%%%%%%%%%%%%%%%%%%%%%%%%%%%%%%%%%%%%%%%%%%%%%%%%%%%%%%%%%%%%%%%%%%%%%%%%%%%%%%%%%%%%%%%%%%%%%%%%%%%%%%%%%%%%%%%%%%%%%%
%%%%%%%%%%%%%%%%%%%%%%%%%%%%%%%%%%%%%%%%%%%%%%%%%%%%%%%%%%%%%%%%%%%%%%%%%%%%%%%%%%%%%%%%%%%%%%%%%%%%%%%%%%%%%%%%%%%%%%%%

\begin{abstract}
The objective of this work is to study the existence, uniqueness, and stability of equilibria in mean field games involving a major player and a continuum of minor players over finite intervals of arbitrary length. Following earlier articles addressing similar questions in the context of classical mean field games, the cost functions for the minor players are assumed to satisfy the Lasry-Lions monotonicity condition.
In this contribution, we demonstrate that if, in addition to the monotonicity condition, the intensity of the (Brownian) noise driving the major player is sufficiently high, then—under further mild regularity assumptions on the coefficients—existence, uniqueness, and stability of equilibria are guaranteed.
A key challenge is to show that the threshold (beyond which the noise intensity must be taken) can be chosen independently of the length of the time interval over which the game is defined. Building on the stability properties thus established, we further show that the associated system of master equations admits a unique classical solution. To the best of our knowledge, this is the first result of its kind for major-minor mean field games defined over intervals of arbitrary length.
 \end{abstract}

\begin{keyword}[class=MSC]
\kwd[Primary ]{49N80}
\kwd{91A16}
\kwd[; secondary ]{35R60}
\kwd{60H10}
\kwd{60H15}
\end{keyword}

\begin{keyword}
\kwd{Major-minor mean field games}
\kwd{Master equation}
\kwd{BMO martingale}
\end{keyword}

\end{frontmatter}

\begin{comment}
\color{red}
TO DO
\begin{enumerate} 
\item Assumptions: regularity in $x_0$ ${\mathcal C}^3$? regularity in $(x,p)$: ${\mathcal C}^{\curss}$? State in terms of Lagrangian or Hamiltonian (seems that we use both, unfortunately)?
Just need Lasry Lions in $\mu$, no requirement of strict monotonicity.
\item Section 2: Lack of separability of ${\mathcal C}^s$: requires special care when dealing 
with random variables with values in non-separable Banach spaces
\item Section 4: Decide the value of $\cursr$: 1 is enough for Section 5?
Complete the proof of the main Theorems: invoke Section 5.
\end{enumerate}
\end{comment}

\section{Introduction} 

Mean field games were introduced simultaneously by Lasry and Lions \cite{Lasry2006,LasryLions2,LasryLions}, and by Huang, Caines, and Malhamé  \cite{HuangCainesMalhame1,Huang2006}, with the aim of providing an asymptotic description of differential games for large populations of weakly interacting agents. While the theory for the most popular form of mean field games has reached a certain level of maturity (as evidenced by Lions' lectures at the Collège de France, \cite{Lionscollege}, 
as well as --among other references-- the books by Cardaliaguet et al.  \cite{CardaliaguetDelarueLasryLions}, Carmona and Delarue  
\cite{CarmonaDelarue_book_I,CarmonaDelarue_book_II} and Gomes et al. \cite{MR3559742}, along with the lecture notes by Cardaliaguet and Porretta \cite{MR4214773} and Delarue \cite{MR4368182}, and the surveys by Caines et al.  \cite{MR4139292} and Cardaliaguet and Delarue \cite{MR4680378}), many questions remain open. These arise either in regimes where the assumptions are too weak to be covered by the existing literature or in situations where the very structure of the game is significantly more complex and thus escapes known results.

In this regard, one example currently generating considerable interest is the case of mean field games subject to common noise (see, for some of the earlier works on the subject, the books
 \cite{CardaliaguetDelarueLasryLions,CarmonaDelarue_book_II}
 and
 the papers
by Bertucci, Lasry and Lions
\cite{MR3941633},  
Carmona, Delarue and Lacker 
\cite{CarmonaDelarueLacker}, 
Carmona, Fouque and Sun
\cite{MR3325083}
and 
Lacker and Zariphopoulou
\cite{MR4014625}). By common noise, we mean a noise that impacts all the agents in the population in a common manner, thereby leading to a randomization of the population state. This contrasts with the traditional version of mean field games, where the population state—obtained by taking the limit over a population of asymptotically identically distributed and independent players—is deterministic. In the presence of common noise, the mathematical challenges are numerous: they may involve adapting existing concepts (see for instance the works by Bertucci \cite{MR4687277}, 
Bertucci, Lasry and Lions 
\cite{MR4749404}, 
Bertucci and Maynard 
\cite{bertucci2024noiseadditionalvariablemean}, Cardaliaguet and Souganidis
\cite{MR4451309}, 
Gangbo et al. \cite{MR4499277}, and 
Mou and Zhang \cite{MR4813035} for works in connection with the master equation, now a cornerstone of the field; see also 
Cardaliaguet and Souganidis
\cite{MR4430014} for the analysis of the mean field game system itself) or exploring additional effects induced solely by the noise (see Bayraktar et al. \cite{BCCD2020} and 
Foguen Tchuendom \cite{fog2018}).

In our work, we consider a more "advanced" version of mean field games with common noise: games involving a major player and minor players. In the regime we study, the major player is subject to a Brownian diffusion, thereby forcing the entire population of minor players to become random. However, mean field games with a major player introduce additional difficulties not present in games involving only minor players (even with common noise). In games with a major player, the population of minor agents competes with the major player. In addition to the standard mean field game played among the minor agents once the major player has determined her strategy, there is also an equilibrium condition stemming from the ability of the major player to react to the strategies of the minor players. This %dual 
entanglement %—the equilibrium of the minor players in response to the major player's actions, and in turn, the major player's adjustment in response to the population state—
makes the formalization of major-minor games quite subtle. For this reason, different concepts of solution have been proposed in the literature, each reflecting a particular form of equilibrium among the various actors.

The first article introducing the concept of a major player is due to Huang \cite{HuangMajor}. Initially formulated in infinite horizon and for a linear-quadratic structure, the paper was later adapted by Nourian and Huang \cite{NguyenHuang1} to the finite horizon case, and by Nourian and Caines \cite{NourianCaines} to the nonlinear case. In these works, the complexity inherent to the presence of two types of players is resolved by imposing a sequential structure on the equilibria, where the major player's response is computed for a fixed state of the population. An alternative sequential approach is proposed in the paper of Bensoussan, Chau, and Yam \cite{BensoussanChauYam}, where the major player anticipates the rational behaviour of the minor players. For a review of other types of equilibria considered in the literature, we refer to the works of Cardaliaguet, Cirant, and Porretta \cite{Cardaliaguet:Cirant:Porretta:PAMS}
 and Bergault et al. \cite{MR4757544}. 

In the current paper, we focus on full-fledged Nash equilibria, which 
implicitly require updating the state of the minor player population when changing the major player’s strategy. In this regard, two regimes are possible, depending on whether the major player employs open-loop or closed-loop strategies and leading to possibly distinct solutions. The open-loop case has been studied by Carmona and Zhu \cite{CarmonaZhu}, as well as Carmona and Wang \cite{CarmonaWang} and 
Huang and Tang 
\cite{Huang12112024}, primarily using the stochastic Pontryagin principle.
The closed-loop case has led to several articles by Lasry and Lions \cite{Lasry:Lions:Major:Minor}, and Cardaliaguet, Cirant, and Porretta \cite{Cardaliaguet:Cirant:Porretta:PAMS,Cardaliaguet:Cirant:Porretta:JEMS}. This is precisely the framework we consider in our contribution. In this context, the primary work to which we compare our results is that of Cardaliaguet, Cirant, and Porretta \cite{Cardaliaguet:Cirant:Porretta:JEMS}, where the authors address the
existence of classical solutions to the Nash system over short time horizons. This system is expressed as two partial differential equations (called master equations) posed on the space of probability measures.

In comparison, the objective of our work is to study stability of the Nash equilibria and then existence of classical solutions to the Nash system, for games set over finite but arbitrarily long time horizons. The main idea to prevent the emergence, over intervals of arbitrary length, of multiple Nash equilibria or, equivalently, of singular solutions to the Nash system, 
 is twofold: (i) we assume that the costs to the minor players satisfy the Lasry-Lions monotonicity condition; this provides stability for the system governing the minor players when the major player is fixed—this is well understood in the case of common noise models (see, for example \cite{CardaliaguetDelarueLasryLions})--; (ii) 
we assume that the major player's dynamics is subjected to a non-degenerate Brownian noise, which is referred to as a common noise in the rest of the text; 
the diffusion structure induces a regularizing effect that is expected to contribute to the uniqueness and stability of equilibria. In fact, we show that this regularizing effect indeed exists if the intensity of the common noise exceeds a certain threshold. Furthermore, we provide conditions under which this threshold is independent of the duration $T$ of the game. A concise version of this result, requiring detailed mild regularity assumptions outlined in Section \ref{se:2}, is presented in Theorem \ref{thm:1}. As a corollary, Theorem \ref{thm:2} establishes the existence and uniqueness of classical solutions to the corresponding Nash system. All  the results are stated for minor players evolving in the torus (of dimension $d$). This simplification is already present in \cite{CardaliaguetDelarueLasryLions}. Here (and similarly to what occurs in the long-term study of classical mean field games, see 
Cardaliaguet et al. 
\cite{CLLP}
and
Cardaliaguet and Porretta \cite{cardaliaguet:porretta:longtime:master}), the periodic structure allows for easier control of certain key quantities, independently of the length $T$. 

The proof structure relies on a natural representation of the major and minor players'  values
in the form of two coupled forward-backward stochastic equations. The forward-backward system
representing the minor players 
 can be interpreted as a mean field game system embedded in the random environment induced by the major player. 
 It is made of a forward Kolmogorov equation with random coefficients and a backward stochastic Hamilton-Jacobi-Bellman equation, accounting respectively for the evolution and the value of the minor player population. 
 We thus call this system a Forward-Backward Stochastic Partial Differential Equation (FBSPDE). 
 As 
  for the major player, the forward-backward equation is a standard finite-dimensional 
  Forward-Backward Stochastic Differential Equation (FBSDE) with random coefficients depending on the state of the minor player population.
Stochastic control theory
says that
the major player's strategy is represented by the martingale component of the corresponding forward-backward equation. 
Our approach then  combines stability methods for each of the two systems. 
 On the one hand, the Lasry-Lions condition is crucial to
 apply to the FBSPDE the techniques 
  introduced in \cite{CardaliaguetDelarueLasryLions} for solving mean field game
 systems with common noise.
 On the other hand, we use tools from stochastic analysis and the theory of backward stochastic differential equations (BSDEs)
 to address the FBSDE.  
% . Regarding the mean field game component, the Lasry-Lions condition is crucial. Concerning BSDEs, it is worth noting that they provide a natural representation of the major player's value function. In Section \ref{se:2}, we explain that the game is characterized by a system of forward-backward equations: for the minor players, this forward-backward system can be interpreted as a mean field game embedded in the random environment induced by the major player; for the major player, the forward-backward equation is finite-dimensional and depends on the state of the minor player population. 
% 
 
 In fact, the key challenge is to study the stability properties of the FBSDE, which has here the peculiarity to be strongly coupled with the FBSPDE. As is often the case in stochastic analysis, Girsanov’s transformation simplifies the forward equation of the FBSDE, reducing it to a standard Brownian motion and thus decoupling it from the FBSPDE system for the minor players. Quantitatively, a major step is to estimate the impact of this Girsanov transformation on the overall stability properties of the system comprising both the FBSDE and the FBSPDE. For this, we make systematic use of Bounded Mean Oscillation (BMO) martingale theory introduced in \cite{Kazamaki}. In short, these BMO bounds decrease as the intensity of the common noise increases, which explains the condition on the intensity of the noise. 

This quantitative analysis ultimately yields \textit{a priori} bounds on the spatial gradient of the game's value functions. Once these bounds are established, existence and uniqueness of equilibria are obtained by iterating contraction principles applied in small time intervals. 
Remarkably, the resulting solutions are strong, meaning that they 
are adapted to the filtration generated by the common noise. 
The resolution of the master equations is also achieved iteratively by demonstrating that, over short time intervals, the system of forward-backward equations mentioned earlier serves as the characteristic system for the master equations. The derivation of classical solutions from this characteristic system follows a linearization procedure similar to that developed by  \cite{CardaliaguetDelarueLasryLions}. In this regard, our approach differs (even in short time) from the splitting method proposed in the work \cite{Cardaliaguet:Cirant:Porretta:JEMS}.

The paper is organized as follows. In Section \ref{se:2}, we introduce the problem and the equilibrium representation in the form of two coupled forward-backward equations, the aforementioned FBSDE and the FBSPDE. We also state the main results. In Section \ref{se:3}, we derive a priori estimates for solutions to the FBSDE and FBSPDE, including the essential BMO bounds necessary for measure changes. Section \ref{se:4} is devoted to obtaining a priori bounds on the gradients of the value functions by linearizing the forward-backward equations. Section \ref{se:5} contains several short-time results required to conclude, though their novelty is of lesser significance.

\begin{comment}
Because of the closed loop  {feedback} form of the control ${\boldsymbol \alpha}^0$ played by the major player, 
the stochastic Pontryagin principle is not very appropriate for the analysis of 
the equilibria. 
Instead, earlier works on the topic have focused on the master equation, 
which here comprises two equations, one for the value $V_0$ of the major player 
and one for the value $V$ of the minor player. 

Briefly, 
the strategy is to prove first existence and uniqueness 
on short time intervals, which is done in Section \ref{se:5}, and then to show by induction
that the system is solvable on larger and larger time intervals. As well-known with forward-backward systems, 
the key point to make the induction work is to control, globally, the Lipschitz constant of the so-called 
\textit{decoupling field}, with the latter being identified in the end with the solution of the so-called 
master equation of the Major/Minor MFG. The main purpose of this section is to establish 
such an 
\textit{a priori} estimate for the decoupling field, assuming first that existence and uniqueness of a flow 
of solutions
to \eqref{eq:major:FBG:1}--\eqref{eq:minor:FBG:2}
 is given (with the flow being parametrized by the initial condition of the game). 
In our approach, we use the fact that this flow is `linearizable',  a property whose proof is also based on other results proved in  
Section \ref{se:5}.
\end{comment}

\section{Set-up and main results}
\label{se:2}
\subsection{A brief presentation of the model} 
\label{subse:1.1}
We introduce the Major/Minor MFG through a probabilistic formulation 
that comprises 
 two filtered probability spaces 
$(\Omega^0,{\mathcal F}^0,{\mathbb F}^0,{\mathbb P}^0)$
and
$(\Omega,{\mathcal F},{\mathbb F},{{\mathbb P}})$ equipped with two 
Brownian motions $(B_t^0)_{0 \leq t \leq T}$ 
and $(B_t)_{0 \leq t \leq T}$ with values in ${\mathbb R}^d$.
 In brief, 
the space labelled with a `$0$' carries the `common noise' underpinning 
the major player and the other spaces carries the `idiosyncratic noise' underpinning one 
tagged minor player.

 We
then consider random variables constructed on 
$\Omega^0$, $\Omega$ or $\Omega^0 \times 
\Omega$. When dealing with a
random variable $X$ defined on  
the product space $\Omega^0 \times \Omega$, 
the
expectation of $X$ (if well-defined, and possibly with values in 
${\mathbb R}^n$ for some $n \geq 1$) is denoted 
${{\mathbb E}^0 {\mathbb E}}(X)$ and the conditional law of $X$ given  
${\mathcal F}^0$ is denoted ${\mathcal L}^0(X)$ (i.e., 
for $\omega^0 \in \Omega^0$, 
${\mathcal L}^0(X)(\omega^0)$ is the law of the random variable 
$\omega \in  \Omega \mapsto X(\omega^0,\omega)$). 

Given an ${\mathbb F}^0$-adapted continuous random flow of measures ${\boldsymbol \mu} = (\mu_t)_{0 \leq t \leq T}$ with values in 
${\mathcal P}({\mathbb T}^d)$, 
the game is equipped with two kinds of players: 
\begin{description}
\item[\hspace{5pt} \textrm{\rm 1.} Major player\textrm{\rm :}]
For
a (fixed) ${\mathcal F}^0_0$-measurable 
initial condition $X_0^0 : \Omega^0 \rightarrow  \mathbb R^d$
and for  a
 (square-integrable) 
 ${\mathbb F}^0$-progressively measurable control process 
${\boldsymbol \alpha}^0=(\alpha_t^0)_{0 \le t \le T}$
with values in ${\mathbb R}^d$, the major player has dynamics
\begin{equation*} 
\ud X_t^0 = \alpha_t^0   \ud t + \sigma_0 \ud B_t^0, \quad t \in [0,T],
\end{equation*} 
where 
$\alpha^0$ is a 
velocity field from ${\mathbb R}^d$ into itself 
and $\sigma_0$ is 
a (strictly) positive diffusion coefficient,
and has cost functional 
\begin{equation*} 
J^0({\boldsymbol \alpha}^0;{\boldsymbol \mu}) 
= {\mathbb E}^0 \biggl[ g^0(X_T^0,\mu_T) + \int_0^T   \Bigl( f^0_t(X_t^0,\mu_t) + L^0(X_t^0, \alpha_t^0) \Bigr) \ud t \biggr].
\end{equation*} 
Here, $f^0$ (resp. $g^0$) is a cost coefficient from $[0,T]\times {\mathbb R}^d \times {\mathcal P}({\mathbb T}^d)$ 
(resp. ${\mathbb R}^d \times {\mathcal P}({\mathbb T}^d)$) 
to ${\mathbb R}$
and $L^0$ is a Lagrangian from ${\mathbb R}^d \times {\mathbb R}^d$ to ${\mathbb R}$.
\item[\hspace{5pt} \textrm{\rm 2.} Minor player\textrm{\rm :}]
For 
a  (fixed)
${\mathcal F}_0$-measurable 
initial condition 
 $X_0  :  \Omega \rightarrow {\mathbb T}^d$, 
 and for 
a  (square-integrable) ${\mathbb F}^0 \otimes {\mathbb F}$-progressively measurable control process 
${\boldsymbol \alpha}=(\alpha_t)_{0 \le t \le T}$
with values in ${\mathbb R}^d$, the minor player has dynamics
\begin{equation*} 
\ud X_t = \alpha_t \ud t + \ud B_t,\quad t \in [0,T], 
\end{equation*} 
and has cost functional 
\begin{equation*} 
J({\boldsymbol \alpha}; {\boldsymbol \alpha}^0, {\boldsymbol \mu}) 
= {\mathbb E} \biggl[ g(X_T^0,X_T,\mu_T) +  \int_0^T 
\Bigl( f_t(X_t^0,X_t,\mu_t) + L(X_t,\alpha_t) 
\Bigr) 
\ud t \biggr].
\end{equation*} 
 Here, $f$ (resp. $g$) is a cost coefficients from $[0,T] \times {\mathbb R}^d \times {\mathbb T}^d \times {\mathcal P}({\mathbb T}^d)$
(resp. $ {\mathbb R}^d \times {\mathbb T}^d \times {\mathcal P}({\mathbb T}^d)$)
 to ${\mathbb R}$
and $L$ is a Lagrangian from ${\mathbb T}^d \times {\mathbb R}^d$ to ${\mathbb R}$.
\end{description}

As far as the major player is concerned, we look for strategies in Markov feedback form $\alpha^0 : [0,T] \times \mathbb R^d \times 
 {{\mathcal P}({\mathbb T}^d)} \rightarrow {\mathbb R}^d$,  namely ${\boldsymbol \alpha}^0$  reads in the form 
$\alpha_t^0 = \alpha^0(t,X_t^0,\mu_t)$, $t \in [0,T]$.
This implicitly puts some constraints on the state equation for $(X_t^0)_{0 \le t \le T}$, which are addressed in Subsection  
\ref{subse:weak:formulation:MFG}.
As for the minor player, we also look for strategies in Markov feedback form $\alpha : [0,T] \times {\mathbb R}^d \times {\mathbb T}^d \times {\mathcal P}({\mathbb T}^d) \rightarrow {\mathbb R}^d$, namely $
\alpha_t = \alpha(t,X_t^0,X_t,\mu_t)$, $t \in [0,T]$,
which also puts some constraints on the state equation for $(X_t)_{0\le t \le T}$. 
The goal is to find a Nash equilibrium in the sense of \cite{Cardaliaguet:Cirant:Porretta:PAMS,Cardaliaguet:Cirant:Porretta:JEMS,Lasry:Lions:Major:Minor}. A clean definition (though in a weaker setting) is given 
in Subsection \ref{subse:weak:formulation:MFG} below. In a nutshell, an equilibrium has the following three features: 
(i) the process ${\boldsymbol \mu}$ solves the Fokker-Planck equation associated with the random velocity field 
$x \mapsto \alpha(t,X_t^0,x,\mu_t)$; (ii) with ${\boldsymbol \mu}$ as in (i), 
the strategy $\alpha$  minimizes
the cost $J$ when ${\boldsymbol \alpha}^0$ and ${\boldsymbol \mu}$ are fixed; (iii) the strategy $\alpha^0$ minimizes the cost $J^0$ with the peculiarity that 
${\boldsymbol \mu}$ therein depends on $\alpha^0$ itself through the state ${\boldsymbol X}^0$ of the major player. 
The latter point makes Major/Minor MFGs more difficult to solve than MFGs with common noise. 

%It is worth noting that the velocity field $b^0$ in the dynamics of the major player has a real role in the analysis
%of the game for large values in $T$. Its presence is not just motivated by aesthetic reasons. In comparison, 
%the dynamics of the minor players are not drifted because there would be no mathematical interest. 

\subsection{General notations} 

We first state with several notations that are necessary in our analysis. 
\vskip 5pt

\textit{Functional and distributional spaces on ${\mathbb T}^d$.}  
For an index $s \in {\mathbb N}$, we call ${\mathcal C}^s({\mathbb T}^d)$ the space of functions 
from ${\mathbb T}^d$ into ${\mathbb R}$ that have $\lfloor s \rfloor$ continuous derivatives. We equip 
the space ${\mathcal C}^s({\mathbb T}^d)$ with the norm 
\begin{equation*} 
\| f \|_s = \sup_{k=0,\cdots,\lfloor s \rfloor}
\sup_{x \in {\mathbb T}^d} | \nabla^k f(x) |.
\end{equation*} 
In particular, $\| f \|_0$ coincides with the $L^\infty$ norm, sometimes denoted $\| f \|_{L^\infty}$,
of $f$. When $s>0$ and $s \not \in {\mathbb N}$, 
we call ${\mathcal C}^s({\mathbb T}^d)$ the space of functions 
from ${\mathbb T}^d$ into ${\mathbb R}$ that have $\lfloor s \rfloor$ derivatives and such that the derivative
of order $\lfloor s \rfloor$ is $s - \lfloor s \rfloor$ H\"older continuous. 
We equip the space ${\mathcal C}^s({\mathbb T}^d)$ with the norm 
\begin{equation*} 
\| f \|_s = \sup_{k=0,\cdots,\lfloor s \rfloor}
\sup_{x \in {\mathbb T}^d} | \nabla^k f(x) |
 + \sup_{x,x' \in {\mathbb T}^d : x \not = x'}
 \frac{\vert \nabla^{\lfloor s \rfloor} f(x) - \nabla^{\lfloor s \rfloor} f(x') \vert}{\vert x- x' \vert^{s- \lfloor s \rfloor}}.  
\end{equation*} 
 We also make use of the (topological) dual space of ${\mathcal C}^{s}({\mathbb T}^d)$, which we denote
 ${\mathcal C}^{-s}({\mathbb T}^d)$ and which we equip with the dual norm:
 \begin{equation*} 
 \| q \|_{-s} = \sup_{\| f \|_s \leq 1} (f,q)_{s,-s}, 
 \end{equation*} 
 where $(\cdot,\cdot)_{s,-s}$ is here used to denote the duality product between elements of 
 ${\mathcal C}^{-s}({\mathbb T}^d)$ and ${\mathcal C}^s({\mathbb T}^d)$. 
Quite often the precise index $s$ in the duality $(f,q)_{s,-s}$ between a function $f \in {\mathcal C}^s({\mathbb T}^d)$ and 
a distribution $q \in {\mathcal C}^{-s}({\mathbb T}^d)$ is implicitly understood and thus omitted, and the duality product between $f$ and 
$q$ 
is merely written $(f,q)$.

We next introduce the Sobolev space ${\mathcal H}^{s}(\bT^d)$. 
We denote $(e_k)_{k \in {\mathbb Z}^d}$ the standard (complex valued) Fourier basis
of ${\mathbb T}^d$
and $( \cdot , \cdot)_{0,2} $ the standard inner product in $L^2({\mathbb T}^d)$
%For a function $f \in L^2(\bT^d)$, we denote
%its Fourier coefficients by 
%\begin{equation*}
%\hat{f}^{\boldsymbol n} :=  \int_{\bT^d} f(x) e^{- \i 2 \pi {\boldsymbol n} \cdot x} \ud x, 
%\quad {\boldsymbol n} \in {\mathbb Z}^d.
%\end{equation*}
For $s>0$, we call ${\mathcal H}^{s}(\bT^d)$ the space of functions $f \in L^2(\bT^d)$
such that $\| f \|_{s,2}^2:=\sum_{k \in {\mathbb Z}^d} (1+\vert k \vert^2)^{s} \vert (f,e_k)_{0,2}  \vert^2 < \infty$. The ${\mathcal H}^{s}(\bT^d)$-norm is $\| \cdot \|_{s,2}$.  See
for instance \cite{dinezza}.
\vskip 5pt

\textit{Distances on the space  ${\mathcal P}({\mathbb T}^d)$.}  
For two probability measures $\mu,\nu \in {\mathcal P}({\mathbb T}^d)$, 
$\| \mu - \nu \|_{-1}$ coincides with 
${\mathbb W}_1(\mu,\nu)$, where we recall that, for  any $p \geq 1$, 
\begin{equation*}
{\mathbb W}_p(\mu,\nu): = \inf_{\pi} \biggl[ \int_{{\mathbb T}^d \times {\mathbb T}^d} \vert x-y \vert^p \ud \pi(x,y) \biggr]^{1/p}, 
\end{equation*}
with the infimum being taken over `couplings' $\pi$, i.e. 
over elements $\pi \in {\mathcal P}({\mathbb T}^d \times {\mathbb T}^d)$, 
 whose images by the first and second projection mappings from ${\mathbb T}^d \times {\mathbb T}^d$ into  ${\mathbb T}^d$ are respectively $\mu$ and $\nu$.
 
 Similarly, 
$\| \mu - \nu \|_{-0}$ coincides with 
$d_{\textrm{\rm TV}}(\mu,\nu)$, where we recall that 
\begin{equation*}
d_{\textrm{\rm TV}}(\mu,\nu) := \sup_{ \| \ell \|_\infty \leq 1}
\biggl\vert \int_{{\mathbb T}^d} \ell (x) \ud \bigl( \mu - \nu \bigr)(x) \biggr\vert,
\end{equation*} 
with the supremum in the right-hand side being taken over functions 
$\ell : {\mathbb T}^d \rightarrow {\mathbb R}$ that are bounded by 1. 
\vskip 5pt

\textit{Derivatives on  ${\mathcal P}({\mathbb T}^d)$.} 
Throughout, we use two standard notions of derivatives on ${\mathcal P}({\mathbb T}^d)$. We refer the reader to 
\cite[Chapter 5]{CarmonaDelarue_book_I} and \cite{CardaliaguetDelarueLasryLions}. Briefly, we say that a function 
$\ell : {\mathcal P}({\mathbb T}^d) \rightarrow {\mathbb R}$ is continuously differentiable in the flat sense if there exists a jointly continuous 
function  
$ \delta_\mu   \ell : {\mathcal P}({\mathbb T}^d) \times {\mathbb R}^d  \rightarrow
{\mathbb R} $ 
such that, for any two $\mu,\nu \in {\mathcal P}({\mathbb T}^d)$,
\begin{equation*} 
\ell(\nu) - \ell(\mu) = \int_0^1 \delta_\mu \ell \bigl( r \nu + (1-r) \mu , y \bigr) \ud  \bigl( \nu - \mu \bigr)(y).
\end{equation*} 
Because the flat derivative is just defined up to an additive constant, we require (by convention) that 
\begin{equation}
\label{eq:convention:deltam}
\forall \mu \in {\mathcal P}({\mathbb T}^d), 
\quad 
\int_{{\mathbb T}^d} 
 \delta_\mu \ell (\mu,y) \ud \mu(y) =0.  
\end{equation} 
When the function $(\mu,y) \mapsto \delta_\mu \ell(\mu,y)$ is differentiable with respect to $y$, we let 
\begin{equation*}
\partial_\mu \ell(\mu,y) = \nabla_y \bigl(  \delta_\mu \ell \bigr) (\mu,y), \quad (\mu,y) \in {\mathcal P}({\mathbb T}^d) \times {\mathbb T}^d,
\end{equation*} 
which we sometimes refer to as the `Wasserstein' derivative of $\ell$. It is standard to observe that, if 
$\partial_\mu \ell$ is bounded (with respect to $\mu$ and $y$), then 
the function $\ell$ is Lipschitz continuous with respect to the $1$-Wasserstein distance ${\mathbb W}_1$. 
 \vskip 5pt
 
\textit{Stochastic processes.} 
On a filtered probability space $(\Omega,{\mathcal F},{\mathbb F},{\mathbb P})$, we introduce the following spaces. 
For a Euclidean space $(E,  | \cdot |)$ and an exponent $p \in [1,\infty]$, we call ${\mathscr S}^p(\Omega,{\mathcal F},{\mathbb F},{\mathbb P};E)$ (or just 
${\mathscr S}^p(E)$) the collection of ${\mathbb F}$-adapted processes with continuous $E$-valued trajectories $(S_t)_{0 \le t \le T}$
such that 
\begin{equation*} 
  \sup_{0 \le t \le T}  | S_t |  \in L^p(\Omega,{\mathcal F},{\mathbb P};{\mathbb R}),
\end{equation*} 
and, for $p \in [1,\infty)$, we call ${\mathscr H}^p(\Omega,{\mathcal F},{\mathbb F},{\mathbb P};E)$ (or just 
${\mathscr H}^p(E)$) the collection of ${\mathbb F}$-progressively-measurable $E$-valued processes $(H_t)_{0 \le t \le T}$
such that 
\begin{equation*} 
{\mathbb E} \biggl[ \biggl( \int_0^T  |  H_t |^2 \ud t \biggr)^{p/2}\biggr] < \infty. 
\end{equation*} 
For a  ${\mathbb F}$-continuous local martingale ${\boldsymbol M} = (M_t)_{0 \le t \le T}$ with values in 
${\mathbb R}$, 
we call $\langle {\boldsymbol M} \rangle_\cdot 
= (\langle {\boldsymbol M} \rangle_t)_{0 \le t \le T}$ the standard bracket of 
${\boldsymbol M}$
and $({\mathscr E}_t({\boldsymbol M})_{0 \le t \le T}$ the Doléans-Dade exponential 
\begin{equation*} 
{\mathscr E}_t( {\boldsymbol M} ) 
:= 
\exp \Bigl( M_t - \frac12 \langle M \rangle_t 
\Bigr),  
\quad t \in [0,T]. 
\end{equation*} 
If in addition, ${\boldsymbol M}$ is uniformly integrable, we let 
\begin{equation*} 
\| {\boldsymbol M} \|_{\textrm{\rm BMO}}^2 := \sup_{\tau}  \bigl\|
{\mathbb E} \bigl[ \vert M_T - M_{\tau} \vert^2 \vert {\mathcal F}_\tau \bigr] \bigr\|_{L^\infty(\Omega)},
\end{equation*} 
with the supremum being taken over the collection of ${\mathbb F}$-stopping times 
$\tau$ and with $\| \cdot \|_{L^\infty(\Omega)}$ here denoting the $L^\infty$ norm on 
$(\Omega,{\mathcal F},{\mathbb P})$. When ${\boldsymbol M} \in {\mathscr S}^2({\mathbb R})$, the above is the same as 
\begin{equation*} 
\| {\boldsymbol M} \|_{\textrm{\rm BMO}}^2 = 
\sup_{\tau}  \bigl\|
{\mathbb E} \bigl[  \langle M \rangle _T - \langle M \rangle_{\tau} \vert {\mathcal F}_\tau \bigr]
\bigr\|_{L^\infty(\Omega)}.
\end{equation*} 
We refer to
\cite{Kazamaki} for more details on BMO martingales. 
\subsection{Assumptions on the coefficients}
 In order to state the assumptions, 
 we introduce a set of four generic conditions, which will be also very useful in the rest of the paper. 
For any three (strictly) positive reals 
 $\Lambda$, $\cursr$ and $\curss$, we let:
 \vskip 5pt
 
  \noindent \textbf{Condition ${\mathscr C}^0(\Lambda,\cursr)$.}
  We say that a function $h^0 : (x_0,\mu) \in {\mathbb R}^d \times {\mathcal P}({\mathbb T}^d) \mapsto
  h^0(x_0,\mu) \in {\mathbb R}$ satisfies 
 ${\mathscr C}^0(\Lambda,\cursr)$
 if
 \begin{enumerate}[(i)]
\item  $h^0$ is continuously differentiable with respect to $x_0$ and $\mu$;
\item for any 
 $(x_0,\mu) \in {\mathbb R}^d \times {\mathcal P}({\mathbb T}^d)$, the function $\delta_\mu h^0(x_0,\mu,\cdot) : y \in {\mathbb T}^d \mapsto 
 \delta_\mu h^0(x_0,\mu,y) \in 
 {\mathbb R}$ belongs to ${\mathcal C}^{\cursr}({\mathbb T}^d)$;
 \item the functions 
 $\nabla_{x_0} h^0$ and $\delta_\mu h^0$ satisfy 
 \begin{equation*} 
\forall x_0 \in {\mathbb R}^d, \ \mu \in {\mathcal P}({\mathbb T}^d), \quad
\bigl\vert 
  h^0(x_0,\mu)\bigr\vert +
\bigl\vert 
\nabla_{x_0} h^0(x_0,\mu)\bigr\vert \leq \Lambda, 
\quad 
\| \delta_\mu h^0(x_0,\mu,\cdot) 
 \|_{\cursr} \leq \Lambda. 
 \end{equation*} 
 \end{enumerate}
 \vskip 4pt
 
 \noindent \textbf{Condition ${\mathscr C}(\Lambda,\cursr,\curss)$.}
 We say that a function $h : (x_0,x,\mu) \in {\mathbb R}^d \times {\mathbb T}^d \times {\mathcal P}({\mathbb T}^d) 
 \mapsto h(x_0,x,\mu) \in {\mathbb R}$ satisfies ${\mathscr C}(\Lambda,\cursr,\curss)$ if 
 \begin{enumerate}[(i)]
 \item  $h$ is continuously differentiable with respect to $x_0$, $x$ and $\mu$;
 \item for any $(x_0,\mu) \in {\mathbb R}^d \times {\mathcal P}({\mathbb T}^d)$, 
 the function $\nabla_{x_0} h(x_0,\cdot,\mu) : x \mapsto \nabla_{x_0} h(x_0,x,\mu) \in {\mathbb R}^d$ belongs to 
 ${\mathcal C}^{\curss}({\mathbb T}^d)$; 
 \item the function 
 $\delta_\mu h(x_0,\cdot,\mu,\cdot) :  (x,y) \in  {\mathbb R}^d  \times {\mathbb T}^d \mapsto 
 \delta_\mu h(x_0,\cdot,\mu,\cdot)$
 has continuous joint derivatives up to the order $\lfloor \cursr \rfloor$ in $y$
 and $\lfloor \curss \rfloor$ in $x$; 
 \item the functions 
 $\nabla_{x_0} h$ and $\delta_\mu h$ satisfy 
  \begin{equation*} 
  \begin{split}
\forall x_0 \in {\mathbb R}^d, \ \mu \in {\mathcal P}({\mathbb T}^d), \quad
&\bigl\|
h(x_0,\cdot,\mu)\bigr\|_{\curss} + \bigl\|
\nabla_{x_0} h(x_0,\cdot,\mu)\bigr\|_{\curss} \leq \Lambda, 
\quad 
\hspace{-2pt} 
\sup_{l=0,\cdots,\lfloor \cursr \rfloor}
\sup_{y \in {\mathbb T}^d} 
\| \nabla_y^l \delta_\mu h(x_0,\cdot,\mu,y) 
 \|_{\curss} \leq \Lambda;
 \\
 \quad \forall y,y' \in {\mathbb T}^d, \quad 
& \Bigl\| 
 \nabla_y^{\lfloor \cursr \rfloor} 
 \delta_\mu h(x_0,\cdot,\mu,y') 
 - 
  \nabla_y^{\lfloor \cursr \rfloor} 
 \delta_\mu h(x_0,\cdot,\mu,y)
 \Bigr\|_{\curss} \leq \Lambda \vert y'-y \vert^{\cursr- \lfloor \cursr \rfloor}. 
 \end{split}
 \end{equation*} 
\end{enumerate}
Of course, the very last line above can be removed if $\cursr$ is an integer. 

Notice that condition (iv) right above is equivalent to 
 \begin{equation*} 
  \begin{split}
\forall x_0 \in {\mathbb R}^d, \ \mu \in {\mathcal P}({\mathbb T}^d), \quad
&\bigl\|
h(x_0,\cdot,\mu)\bigr\|_{\curss} + \bigl\|
\nabla_{x_0} h(x_0,\cdot,\mu)\bigr\|_{\curss} \leq \Lambda, 
\quad 
\sup_{k=0,\cdots,\lfloor \curss \rfloor}
\sup_{x \in {\mathbb T}^d} 
\| \nabla_x^k \delta_\mu h(x_0,x,\mu,\cdot) 
 \|_{\cursr} \leq \Lambda;
 \\
 \quad \forall x,x' \in {\mathbb T}^d, \quad 
& \Bigl\| 
 \nabla_x^{\lfloor \curss \rfloor} 
 \delta_\mu h(x_0,x',\mu,\cdot )
 - 
  \nabla_x^{\lfloor \curss \rfloor} 
 \delta_\mu h(x_0,x,\mu,\cdot)
 \Bigr\|_{\cursr} \leq \Lambda \vert x'-x \vert^{\curss- \lfloor \curss \rfloor}.
 \end{split}
 \end{equation*}

  \noindent \textbf{Condition ${\mathscr D}^0(\Lambda,\cursr)$.}
  We say that a function $h^0 : (x_0,\mu) \in {\mathbb R}^d \times {\mathcal P}({\mathbb T}^d) \mapsto
  h^0(x_0,\mu) \in {\mathbb R}$ satisfies 
 ${\mathscr D}^0(\Lambda,\cursr)$
 if
 it satisfies the first two conditions in ${\mathscr C}^0(\Lambda,\cursr)$
 and, for all  
$l \in \{0,\cdots,\lfloor \cursr \rfloor\}$, $x_0,x_0' \in {\mathbb R}^d$, $\mu,\mu' \in {\mathcal P}({\mathbb T}^d)$ and $y',y\in\mathbb T^d$,
\begin{align*}
&\big|\nabla_{x_0} h^0(x_0',\mu')-\nabla_{x_0}  h^0(x_0,  \mu)\big| \leq \kappa \Big(| x_0'-x_0|+\mathbb W_1( \mu',\mu) \Big),\\
&\big|\nabla_y^l\delta_\mu h^0(x_0',\mu',y')-\nabla_y^l\delta_\mu h^0( x_0,\mu, y)\big| \leq \kappa \Big(| x_0'-x_0|+\mathbb W_1( \mu',\mu) +|y'-y|^{\cursr -\lfloor \cursr \rfloor}\Big).
\nonumber
\end{align*}

  \noindent \textbf{Condition ${\mathscr D}(\Lambda,\cursr,\curss)$.}
  We say that a function $h : (x_0,x,\mu) \in {\mathbb R}^d \times  {\mathbb T}^d \times {\mathcal P}({\mathbb T}^d) \mapsto
  h(x_0,x,\mu) \in {\mathbb R}$ satisfies 
 ${\mathscr D}(\Lambda,\cursr,\curss)$
 if
 it satisfies the first two conditions in ${\mathscr C}(\Lambda,\cursr,\curss)$
 and, for all  
$l \in \{0,\cdots,\lfloor \cursr \rfloor\}$, $x_0,x_0' \in {\mathbb R}^d$, $\mu,\mu' \in {\mathcal P}({\mathbb T}^d)$ and $y, y'\in\mathbb T^d$,
\begin{align*}
&\big\| 
\nabla_{x_0} h(x_0',\cdot,\mu')-\nabla_{x_0}  h(x_0,\cdot,  \mu)\big\|_{\curss}
\leq \Lambda \Big(|x_0'-x_0|+\mathbb W_1( \mu',\mu) \Big),\\
&\big\|\nabla_y^l\delta_\mu h(x_0',\cdot,\mu',y')-\nabla_y^l\delta_\mu h( x_0,\cdot, \mu, y)\big\|_{\curss}
\leq \Lambda \Big(|x_0'-x_0|+\mathbb W_1( \mu',\mu)+|y'-y|^{\cursr -\lfloor \cursr \rfloor} \Big).
\nonumber
\end{align*}

 We now introduce the two sets of assumptions.
 \vskip 5pt
 
 \noindent \textbf{Assumption A.} 
 There exist three reals $\lambda>0$, $\kappa>0$ and $\curss > d/2+5$, $\curss \not \in {\mathbb N}$, such that 
\vskip 4pt

\noindent {\bf (A1)}
The functions $(f^0_t)_{0 \leq t \leq T}$ and $g^0$ satisfy 
%${\mathscr C}^0(\kappa,\lfloor \curss\rfloor-(d/2+1))$
%and
${\mathscr D}^0(\kappa,\lfloor \curss\rfloor-(d/2+1))$;
the functions $(f_t)_{0 \le t \le T}$ and 
$g$ satisfy 
%${\mathscr C}(\kappa,\lfloor \curss\rfloor-(d/2+1),\curss)$ and 
${\mathscr D}(\kappa,\lfloor \curss\rfloor-(d/2+1),\curss)$. 
%\textcolor{red}{${\mathscr D} \Rightarrow {\mathscr C}$: so why to repeat it?}
\vskip 4pt

\noindent {\bf (A2)} 
The Lagrangians $L^0$ and $L$ are $\lambda$-strictly convex in the variables 
$\alpha_0$ and $\alpha$ respectively, uniformly with respect to the other variables. 
 Moreover, $L^0$ has continuous joint derivatives up to the order 2 in $x_0$ and $\lfloor \curss \rfloor +1$ in 
 $\alpha_0$, and 
 $L$ has continuous joint derivatives up to the order $\lfloor \curss \rfloor +1$ in 
 $(x,\alpha)$.
 The quantity $L^0(x_0,\alpha_0)$ is bounded by 
 $\kappa(1+\vert \alpha_0\vert^2)$ and 
 the quantity $L(x,\alpha)$ is bounded by 
 $\kappa(1+\vert \alpha \vert^2)$.  
  The gradient $\nabla_{\alpha_0} L^0(x_0,\alpha_0)$ 
  is bounded by $\kappa( 1+ \vert \alpha_0\vert)$
  and the gradient $\nabla_\alpha L(x_0,x,\alpha)$
  is bounded by $\kappa( 1 + \vert \alpha \vert)$.   
 All the other existing derivatives are bounded by $\kappa$. 
 \vskip 4pt
 
 \noindent{\bf (A3)}
 The functions $f$ and $g$ satisfy the following Lasry-Lions monotonicity conditions
 \begin{equation*} 
 \int_{{\mathbb R}^d} 
 \bigl( 
 f_t(x_0,x,\mu') -  f_t(x_0,x,\mu) \bigr) 
 \ud \bigl( \mu' - \mu \bigr)(x) \geq 0, 
 \quad 
 \int_{{\mathbb R}^d} 
 \bigl( 
 g(x_0,x,\mu') -  g(x_0,x,\mu) \bigr) 
 \ud \bigl( \mu' - \mu \bigr)(x) \geq 0, 
 \end{equation*} 
 for all $t \in [0,T]$, $x_0 \in {\mathbb R}^d$,
 and $\mu,\mu' \in {\mathcal P}({\mathbb T}^d)$. 
  \vskip 4pt
 
 \noindent{\bf (A4)}
 The coefficient $\sigma_0$ is greater than 1. 

\begin{comment}
\color{red}
{!!to be cleaned up!!} \fdm{Introduce a parametrised set of assumptions, say $A(s)$}
\begin{enumerate}
\item The coefficients are sufficiently regular in the space variable, especially $x_0$, $x$ and $p$. We need 
$s=d/2+2$ derivatives (or $s=d/2+3$), but easier to $s$ non integer. 
\item The functions $f$ and $g$ are differentiable in $x_0$ and $\mu$ and the derivatives satisfy 
\item The Hamiltonians have bounded second order derivatives and are uniformly strictly convex. 
\fdm{Require $H$ to be $s$ in $x$ and $p$}
\item 
\item {\color{blue}$f$}, $F$ and $g$ satisfy a form of Lasry Lions. \fdm{If $f$ is Lasry-Lions, then 
$F$ is necessarily Lasry Lions} 
\item We require $\sigma_0 \geq 1$. No loss of generality because 
$\sigma_0$ is chosen large in the end. 
\end{enumerate}
\end{comment}
\vskip 10pt

 \noindent \textbf{Assumption B.} 
On top of Assumption \hyp{A} and for the same parameters as therein,
there exists two functions $F^0 : {\mathbb R}^d \rightarrow {\mathbb R}$ and $F
: {\mathbb T}^d \times {\mathcal P}({\mathbb T}^d) \rightarrow 
{\mathbb R}$ such that 
\vskip 4pt

 \noindent{\bf (B1)}
 The function 
 $F^0$ is continuously differentiable and bounded by $\kappa$, and its gradient is also bounded by 
 $\kappa$ and is $\kappa$-Lipschitz continuous; 
 the function $F$, seen as a function of $(x_0,x,\mu)$ that would be constant 
 in the variable $x_0$,  
satisfies 
${\mathscr D}(\kappa,\lfloor \curss\rfloor-(d/2+1),\curss)$.
 \vskip 4pt

 \noindent{\bf (B2)}
The following two bounds hold true:
\begin{equation*} 
\int_0^T \sup_{x_0 \in {\mathbb T}^d} \sup_{\mu \in {\mathcal P}({\mathbb T}^d)}
\vert f_t^0(x_0,\mu) - F^0(x_0) \vert 
\ud t \leq \kappa,
\quad
\int_0^T \sup_{x_0 \in {\mathbb T}^d} \sup_{\mu \in {\mathcal P}({\mathbb T}^d)}
\| f_t(x_0,x,\mu) - F(x,\mu) \|_{\curss}
\ud t \leq \kappa.
\end{equation*} 

 \noindent{\bf (B3)}
The functions $(f_t^0)_{0 \le t \le T}$
satisfy %\textcolor{red}{same for $f$????}
\begin{equation*} 
\int_0^{T} \sup_{x_0 \in {\mathbb T}^d} \sup_{\mu \in {\mathcal P}({\mathbb T}^d)}
\| \delta_\mu f_t^0(x_0,\mu,\cdot)  \|_1 
\ud t \leq \kappa.
\end{equation*} 

\begin{remark}
The following remarks are in order: 
\begin{enumerate}
\item 
Assumption \hyp{B} has a simple interpretation. In long time, the 
running costs associated with the major and minor 
players become independent of the state of the other player. 
As made clear below, this assumption is very important to obtain 
a 
lower bound independent of $T$ 
for the intensity $\sigma_0$ 
beyond which the Major/Minor MFG has the desired solvability properties. 
\item Construction of examples satisfying 
\hyp{B} is quite simple. It suffices to start from given coefficients $F^0$ and $F$ and to add
 perturbations $f^0_t-F^0$ and $f_t-F$ that decay sufficiently fast 
 as $t$ tends to $\infty$. 
 \item 
 The form of 
 Assumption \hyp{B}
 explains (up to some extent) our choice to restrict the analysis 
 to Lagrangians $L^0$ and $L$ 
 that depend on the state of one player only (and not on the state of the other player). 
 If one of the two Lagrangians were depending on both states, we would need a convenient form of 
condition \hyp{B2}. 
\end{enumerate}

\end{remark}

\textit{Hamiltonians.} 
With the two Lagrangians $L^0$ and $L$, we associate the
following two Hamiltonians:
\begin{equation}
\label{eq:Hamiltonians}
\begin{split}
&H^0(x_0,p) := \sup_{\alpha \in {\mathbb R}^d} 
\bigl[ - p \cdot \alpha - L^0(x,\alpha) \bigr],
\\
&H(x,p) = \sup_{\alpha \in {\mathbb R}^d} 
\bigl[ - p \cdot \alpha - L(x,\alpha) \bigr].
\end{split}
\end{equation} 
Under the standing standing assumptions on the Lagrangian $L$, we have the following representation formula for $H$: 
\begin{equation}
\label{eq:Hamiltonians:Fenchel}
\nabla_p H(x,p) = (\nabla_\alpha L)^{\circ -1}(x,p), \quad 
H(x,p)=p \cdot  \nabla_p H(x,p) -L\bigl(x,-\nabla_p H(x,p)\bigr) , 
\end{equation} 
and similarly for $H^0$. We easily deduce that 
there exist two (strictly) positive constants $\lambda'$ and $\kappa'$
such that 
 (notice that \hyp{A5} below is not an assumption but a consequence of 
Assumption \hyp{A}):
\vskip 4pt

\noindent {\bf (A5)} 
The Hamiltonians $H^0$ and $H$ are $\lambda'$-strictly convex in the variables 
$p_0$ and $p$ respectively, uniformly with respect to the other variables. 
 Moreover, $H^0$ has continuous joint derivatives up to the order 2 in $x_0$ and $\lfloor \curss \rfloor +1$ in 
 $p$, and 
 $H$ has continuous joint derivatives up to the order $\lfloor \curss \rfloor +1$ in 
 $(x,p)$.
 The quantity $H^0(x_0,p)$ is bounded by 
 $\kappa'(1+\vert p\vert^2)$ and 
 the quantity $H(x,p)$ is bounded by 
 $\kappa'(1+\vert p \vert^2)$.  
  The gradient $\nabla_{p} H^0(x_0,p)$ 
  is bounded by $\kappa'( 1+ \vert p \vert)$
  and the gradient $\nabla_p H(x,p)$
  is bounded by $\kappa'( 1 + \vert p \vert)$.   
 All the other existing derivatives are bounded by $\kappa'$.

\subsection{Weak formulation of the Major/Minor MFG} 
\label{subse:weak:formulation:MFG}

Below, we address a weak formulation of the game in which the spaces
$\Omega^0$ and $\Omega$ are taken as the canonical spaces $\Omega^0_{\textrm{\rm canon}}:={\mathcal C}([0,T],{\mathbb R}^d) \times {\mathcal C}([0,T],{\mathcal P}({\mathbb T}^d)) \times {\mathcal C}([0,T],{\mathbb R}^d)$
and 
$\Omega_{\textrm{\rm canon}} := {\mathcal C}([0,T],{\mathbb R}^d)  \times {\mathcal C}([0,T],{\mathbb R}^d)$,
 equipped with their respective Borel $\sigma$-fields ${\mathcal B}(\Omega^0_{\textrm{\rm canon}})$ and ${\mathcal B}(\Omega_{\textrm{\rm canon}})$.
For simplicity, the canonical processes on $\Omega^0_{\textrm{\rm canon}}$ and $\Omega_{\textrm{\rm canon}}$ are still   denoted $({\boldsymbol X}^0,{\boldsymbol \mu},{\boldsymbol B}^0)=(X_t^0,\mu_t,B_t^0)_{0 \le t \le T}$ and 
$({\boldsymbol X},{\boldsymbol B})=(X_t,B_t)_{0 \le t \le T}$, and the canonical filtrations 
are still  denoted ${\mathbb F}^0=({\mathcal F}_t^0)_{0 \le t \le T}$ and 
${\mathbb F}=({\mathcal F}_t)_{0 \le t \le T}$.

\begin{definition} 
\label{def:admissibility:pair}
For fixed initial conditions $(x_0,\mu) \in {\mathbb R}^d \times {\mathcal P}({\mathbb T}^d)$, 
a pair $(\alpha^0,\alpha)$, with $\alpha^0 :  [0,T] \times \mathbb R^d \times 
{\mathcal P}({\mathbb T}^d) \rightarrow {\mathbb R}^d$
and 
$\alpha :  [0,T] \times {\mathbb R}^d \times {\mathbb T}^d \times {\mathcal P}({\mathbb T}^d) \rightarrow {\mathbb R}^d$
is said to be admissible if 
$\alpha^0$ and 
$\alpha$ are measurable, 
$\alpha$ is bounded and 
there exists a unique probability measure ${\mathbb P}^0$ on $\Omega_{\textrm{\rm canon}}^0$ such that: 
(i) under ${\mathbb P}^0$, the 
process ${\boldsymbol B}^0$ is an-${\mathbb F}^0$ Brownian motion starting from 
$0$; (ii) the 
pair
$({\boldsymbol X}^0,{\boldsymbol \mu})$
satisfies ${\mathbb P}^0(\{ (X^0_0,\mu_0) = (x_0,\mu)\})=1$; (iii) the two equations
\begin{equation}
\label{eq:Markov:state:equation}
\begin{split}
&\ud X_t^0 = \alpha^0(t,X_t^0,\mu_t) \ud t + \ud B_t^0, 
\\
&\partial_t \mu_t = \tfrac12 \Delta_{x} \mu_t - \textrm{\rm div}_{x} \bigl( \alpha(t,X_t^0,\cdot,\mu_t) \mu_t \bigr), \quad t \in [0,T], 
\end{split}
\end{equation} 
are satisfied under ${\mathbb P}^0$; (iv) the  BMO condition 
\begin{equation}
\label{eq:Markov:state:equation:BMO} 
\biggl\| 
\int_0^\cdot \alpha^0(t,X_t^0,\mu_t)
\cdot 
\ud B_t^0
\biggr\|_{\textrm{\rm BMO}} < \infty
%{\mathbb E}^0 \int_0^T \vert \alpha^0(t,X_t^0,\mu_t) \vert^2 \ud t < \infty
\end{equation} 
holds true.
When needed to clarify the set-up, we 
put an additional index in ${\mathbb P}^0$ and write
 ${\mathbb P}^0_{({ \alpha}^0,{ \alpha})}$.
\end{definition} 

\begin{remark}
\label{rem:weak:form:FP}
In \eqref{eq:Markov:state:equation}, the Kolmogorov equation is understood in a weak sense, namely, 
for ${\mathbb P}^0$-almost every $\omega^0 \in \Omega_{\textrm{\rm canon}}^0$, for any 
function $\phi : [0,T] \times {\mathbb T}^d \rightarrow {\mathbb R}^d$ in the space ${\mathcal W}_{1,2}^{d+1}$
of functions with first and second order $x$-derivatives in space and first order $t$-derivative in 
$L^{d+1}({\mathbb T}^d)$, 
\begin{equation} 
\label{eq:FP:weak:form:W12d+1}
\bigl( \phi_t , \mu_t \bigr) =
\bigl( \phi_0 , \mu_0\bigr)
+ \int_0^t \Bigl( \partial_t \phi_s(\cdot)  + \tfrac12 \Delta_{x} \phi_s(\cdot) + 
\alpha(s,X_s^0,\cdot,\mu_s) \cdot \nabla_x \phi_s(\cdot), \mu_s \Bigr) \ud s, \quad t \in [0,T]. 
\end{equation} 
By an obvious separability argument, the above is equivalent to having the same expansion for any $\phi \in {\mathcal W}_{1,2}^{d+1}$
and for ${\mathbb P}^0$-almost every $\omega^0 \in \Omega^0_{\textrm{\rm canon}}$.
\end{remark} 

The definition of the costs in the weak formulation relies on the following lemma (we recall that 
${\boldsymbol X}^0$ and $\boldsymbol{\mu}$ are part of the canonical process on $\Omega^0_{\textrm{\rm canon}}$):
\begin{lemma}
\label{lem:alpha:weak:solution}
Let 
$\mu \in {\mathcal P}({\mathbb T}^d)$ and 
$\alpha:  [0,T] \times {\mathbb R}^d \times {\mathbb T}^d \times {\mathcal P}({\mathbb T}^d) \rightarrow {\mathbb R}^d$
be a bounded and mesurable function as in Definition \ref{def:admissibility:pair}. Then, there exists a measurable mapping 
\begin{equation*} 
\Omega^0_{\textrm{\rm canon}}  \ni \omega^0 \mapsto {\mathbb P}_{\omega^0} \in {\mathcal P}(\Omega_{\textrm{\rm canon}} ),
\end{equation*}
such that, for any $\omega^0 \in \Omega^0_{\textrm{\rm canon}}$, ${\mathbb P}_{\omega^0}$ is the unique probability measure on 
$\Omega_{\textrm{\rm canon}} $ satisfying the following three items: 
(i) under ${\mathbb P}_{\omega^0}$, 
the 
process ${\boldsymbol B}$ is an-${\mathbb F}$ Brownian motion starting from 
$0$; (ii) the 
pair
$({\boldsymbol X},{\boldsymbol B})$
satisfies ${\mathbb P}_{\omega^0} \circ X_0^{-1} = \mu$; (iii) the equation
\begin{equation}
\label{eq:lem:alpha:weak:solution:SDE}
\ud X_t  = \alpha(t,X_t^0,X_t,\mu_t) \ud t + \ud B_t, \quad t \in [0,T],
\end{equation}
is satisfied under ${\mathbb P}_{\omega^0}$. 
When needed to clarify the set-up, we 
put an additional index in ${\mathbb P}_{\omega^0}$ and write
 ${\mathbb P}_{\omega^0,\alpha}$.
\end{lemma}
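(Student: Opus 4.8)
The plan is to fix $\omega^0 \in \Omega^0_{\textrm{\rm canon}}$ and to freeze the drift along the continuous (hence bounded on $[0,T]$) paths $(X_t^0(\omega^0))_{0\le t\le T}$ and $(\mu_t(\omega^0))_{0\le t\le T}$, i.e. to set $b^{\omega^0}(t,x) := \alpha(t,X_t^0(\omega^0),x,\mu_t(\omega^0))$; this is a Borel function of $(t,x)$, bounded by $\|\alpha\|_\infty$ uniformly in $\omega^0$, and for each $\omega^0$ the process $t\mapsto b^{\omega^0}(t,X_t)$ is ${\mathbb F}$-progressively measurable. Then \eqref{eq:lem:alpha:weak:solution:SDE} is the SDE $\ud X_t = b^{\omega^0}(t,X_t)\ud t + \ud B_t$ with bounded measurable drift and unit diffusion, for which existence and uniqueness of a weak solution with initial law $\mu$ is classical and is obtained via Girsanov's transform. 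Concretely, let $\mathbb{P}^{\mathrm W}_\mu$ denote the law on $\mathcal{C}([0,T],{\mathbb R}^d)$ of a Brownian motion with initial distribution $\mu$, and let $\iota_{\omega^0}$ be the measurable map $w \mapsto \bigl(w,\, w_\cdot - w_0 - \int_0^\cdot b^{\omega^0}(s,w_s)\,\ud s\bigr)$ from $\mathcal{C}([0,T],{\mathbb R}^d)$ into $\Omega_{\textrm{\rm canon}}$; set $\mathbb{Q}_{\omega^0} := (\iota_{\omega^0})_*\mathbb{P}^{\mathrm W}_\mu$, under which the coordinate $X$ is a Brownian motion with initial law $\mu$, $W_\cdot := X_\cdot - X_0$ is an ${\mathbb F}$-Brownian motion, and ${\boldsymbol B} = W_\cdot - \int_0^\cdot b^{\omega^0}(s,X_s)\,\ud s$. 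Finally define $\mathbb{P}_{\omega^0}$ by $\ud\mathbb{P}_{\omega^0}/\ud\mathbb{Q}_{\omega^0}\big|_{\mathcal{F}_T} = \mathscr{E}_T\bigl(\int_0^\cdot b^{\omega^0}(s,X_s)\cdot \ud W_s\bigr)$; since $\|b^{\omega^0}\|_\infty\le\|\alpha\|_\infty<\infty$, Novikov's criterion makes this Doléans--Dade exponential a true martingale, so $\mathbb{P}_{\omega^0}$ is a probability measure, and Girsanov's theorem yields exactly (i)--(iii): ${\boldsymbol B}$ is an ${\mathbb F}$-Brownian motion from $0$, the law of $X_0$ is unchanged and equal to $\mu$, and $\ud X_t = b^{\omega^0}(t,X_t)\ud t + \ud B_t$.

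For uniqueness, suppose $\mathbb{P}'$ satisfies (i)--(iii). By (iii) one has, $\mathbb{P}'$-a.s., ${\boldsymbol B} = X_\cdot - X_0 - \int_0^\cdot b^{\omega^0}(s,X_s)\,\ud s$, i.e. $({\boldsymbol X},{\boldsymbol B})=\iota_{\omega^0}({\boldsymbol X})$, so $\mathbb{P}' = (\iota_{\omega^0})_*(\mathbb{P}')_X$ where $(\mathbb{P}')_X$ is the law of ${\boldsymbol X}$ under $\mathbb{P}'$; the same identity holds for $\mathbb{P}_{\omega^0}$. By (i)--(ii), $(\mathbb{P}')_X$ is the law of a weak solution with initial distribution $\mu$ of the above SDE, and a reverse Girsanov computation (again licit because $b^{\omega^0}$ is bounded) shows that $(\mathbb{P}')_X$ is equivalent to $\mathbb{P}^{\mathrm W}_\mu$ with a density that is an explicit measurable functional of the path of ${\boldsymbol X}$ and of $\omega^0$; hence $(\mathbb{P}')_X$ is uniquely determined and coincides with $(\mathbb{P}_{\omega^0})_X$, so $\mathbb{P}' = \mathbb{P}_{\omega^0}$.

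It remains to exhibit a version of $\omega^0\mapsto\mathbb{P}_{\omega^0}$ that is measurable from $(\Omega^0_{\textrm{\rm canon}},\mathcal{B}(\Omega^0_{\textrm{\rm canon}}))$ to $\mathcal{P}(\Omega_{\textrm{\rm canon}})$. For any bounded Borel $\Phi$ on $\Omega_{\textrm{\rm canon}}$ I would use the identity
\begin{equation*}
\int_{\Omega_{\textrm{\rm canon}}}\Phi\,\ud\mathbb{P}_{\omega^0}
= \mathbb{E}^{\mathbb{P}^{\mathrm W}_\mu}\Bigl[\Phi\bigl(X,\,X_\cdot-X_0-\textstyle\int_0^\cdot b^{\omega^0}(s,X_s)\,\ud s\bigr)\,
\mathscr{E}_T\bigl(\textstyle\int_0^\cdot b^{\omega^0}(s,X_s)\cdot\ud X_s\bigr)\Bigr],
\end{equation*}
in which the reference measure $\mathbb{P}^{\mathrm W}_\mu$ does not depend on $\omega^0$. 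The map $(\omega^0,s,x)\mapsto \alpha(s,X^0_s(\omega^0),x,\mu_s(\omega^0))$ is jointly Borel and bounded, and progressively measurable in $(s,x)$ for each fixed $\omega^0$; hence $(\omega^0,x)\mapsto\int_0^\cdot b^{\omega^0}(s,x_s)\,\ud s$ is jointly measurable, and the stochastic integral $\int_0^T b^{\omega^0}(s,X_s)\cdot\ud X_s$ admits a version that is jointly measurable in $(\omega^0,\cdot)$, obtained for instance as an $L^2(\mathbb{P}^{\mathrm W}_\mu)$-limit of Riemann sums that depend measurably on $\omega^0$ (alternatively one may invoke the measurable dependence on parameters of the solutions of well-posed martingale problems). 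Granting this, the integrand above is jointly measurable and Fubini's theorem gives the measurability of $\omega^0\mapsto\int\Phi\,\ud\mathbb{P}_{\omega^0}$, hence of $\omega^0\mapsto\mathbb{P}_{\omega^0}$. I expect this last point --- producing a jointly measurable version of the Girsanov density while retaining, for each fixed $\omega^0$, the progressive measurability required to apply Girsanov --- to be the only genuinely delicate step; everything else is the standard bounded-drift theory. I would also record that $\mathbb{P}_{\omega^0}$ depends on $\omega^0$ only through the components $({\boldsymbol X}^0,{\boldsymbol\mu})$, since $\alpha$ does not involve ${\boldsymbol B}^0$, and that the indexed notation $\mathbb{P}_{\omega^0,\alpha}$ is mere bookkeeping.
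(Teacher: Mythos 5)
Your construction is correct and follows essentially the same route as the paper: for frozen $\omega^0$ you build ${\mathbb P}_{\omega^0}$ by a Girsanov tilt of a fixed Wiener-type reference measure with bounded drift $b^{\omega^0}$, and you obtain measurability of $\omega^0\mapsto{\mathbb P}_{\omega^0}$ by writing $\int\Phi\,\ud{\mathbb P}_{\omega^0}$ as an integral against the $\omega^0$-independent reference measure and applying Fubini, exactly as in the paper's proof. The only (minor) divergence is the uniqueness step, where you argue by a reverse Girsanov computation giving weak uniqueness directly, while the paper simply invokes strong (hence weak) well-posedness of the bounded-drift SDE; your version is a bit more self-contained, and your extra care about a jointly measurable version of the Girsanov density fills in what the paper dismisses as easy.
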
 

\begin{remark}
\label{rem:!:Fokker-Planck}
Using It\^o-Krylov formula, it is easy to check, for $\omega^0 \in \Omega^0$
and for any 
function $\phi : [0,T] \times {\mathbb T}^d \rightarrow {\mathbb R}^d$ in the space ${\mathcal W}_{1,2}^{d+1}$,
the flow of probability measures ${\boldsymbol \nu} := (\nu_t= {\mathbb P}_{\omega^0} \circ X_t^{-1})_{0 \le t \le T}$
satisfies  
 the Fokker-Planck equation 
\begin{equation} 
\label{eq:fokker-planck:nu}
\partial_t \nu_t = \tfrac12 \Delta_{x} \nu_t - \textrm{\rm div}_{x} \bigl( \alpha(t,X_t^0,\cdot,\mu_t) \nu_t \bigr), \quad t \in [0,T]; \quad \nu_0=\mu,
\end{equation} 
 in the same weak sense as 
in Remark 
\ref{rem:weak:form:FP}. 
As such, ${\boldsymbol \nu}$ is ${\mathbb F}^0$-adapted, because for any bounded test 
function $\phi : {\mathbb T}^d \rightarrow {\mathbb R}$, 
the process 
$(\int_{{\mathbb T}^d} \phi(x) \ud \nu_t(x) 
= {\mathbb E}_{\omega^0}[ \phi (X_t)])_{0 \le t \le T}$
is ${\mathbb F}^0$-progressively measurable.   

In fact, this is the unique weak solution to \eqref{eq:fokker-planck:nu}. Indeed, any weak solution, say
$\tilde{\boldsymbol \nu} = (\tilde \nu_t)_{0 \leq t \leq T}$, to 
the Fokker-Planck equation 
(understood in the same weak sense as 
\eqref{eq:FP:weak:form:W12d+1}
in Remark 
\ref{rem:weak:form:FP}) is necessarily equal to the flow of marginal laws 
of the solution to the SDE 
\eqref{eq:lem:FP:SDE:X:tildeB}. This follows from a standard duality argument that consists in solving 
the parabolic equation $\partial_t \phi_t + \tfrac12 \Delta_x \phi_t + \alpha(t,X_t(\omega^0),\cdot,\mu_t) \cdot 
\nabla_x \phi_t = 0$, with $\phi_T = \ell$ for 
a prescribed smooth function $\ell : 
{\mathbb T}^d \rightarrow {\mathbb R}$, and then in expanding $((\phi_t,\tilde \nu_t))_{0 \le t \le T}$. In this way, we get 
$(\ell,\tilde \nu_T) = (\phi_T,\tilde \nu_T) = (\phi_0,\mu)$
for any smooth $\ell$, 
which suffices to identify $\tilde \nu_T$. Replacing $T$ by $t$, we can proceed in a similar manner and identify 
$\tilde \nu_t$ for any $t \in [0,T]$. 
\end{remark}

The proof of Lemma \ref{lem:alpha:weak:solution} is postponed to the end of the section, see Subsection \ref{subse:proof:FP}. For the time being, the statement makes it possible to let: 
\begin{definition}
\label{weak:costs}
For a fixed initial condition 
$(x_0,\mu) \in {\mathbb R}^d \times {\mathcal P}({\mathbb T}^d)$
and 
with the same notation as in Definitions 
\ref{def:admissibility:pair}
and 
Lemma \ref{lem:alpha:weak:solution}, we let:
\begin{description}
\item[\hspace{5pt} \textrm{\rm 1.} Cost to the major\textrm{\rm :}] 
Let $(\alpha^0,\alpha)$ be an admissible pair
and ${\mathbb P}^0_{({\alpha}^0,{\alpha})}$ be the probability 
associated with it by Definition 
\ref{def:admissibility:pair}. Then, the cost to the major player is defined by 
\begin{equation*}
J^0_{\rm w}\bigl({\alpha}^0,{  \alpha} \bigr) = 
{\mathbb E}^0_{({ \alpha}^0,{\alpha})} \biggl[ g^0(X_T^0,\mu_T) + \int_0^T   \Bigl( f^0_t(X_t^0,\mu_t) + L^0(X_t^0, \alpha_t^0) \Bigr) \ud t \biggr];
\end{equation*} 
\item[\hspace{5pt} \textrm{\rm 2.} Cost to the minor\textrm{\rm :}]
Let $\alpha:  [0,T] \times {\mathbb R}^d \times {\mathbb T}^d \times {\mathcal P}({\mathbb T}^d) \rightarrow {\mathbb R}^d$
be a bounded and mesurable function and 
${\mathbb P}^0$  be a probability measure on $\Omega^0_{\textrm{\rm canon}} $. Then, the cost to the minor player in the environment 
${\mathbb P}^0$ is defined by 
\begin{equation*} 
J_{\rm w}\bigl({ \alpha} ; {\mathbb P}^0 \bigr)
=
\int_{\Omega^0} 
{\mathbb E}_{\omega^0,\alpha} \biggl[ g(X_T^0,X_T,\mu_T) +  \int_0^T 
\Bigl( f_t(X_t^0,X_t,\mu_t) + L(X_t,\alpha_t) 
\Bigr) 
\ud t \biggr] \ud {\mathbb P}^0(\omega^0). 
\end{equation*} 
\end{description}
\end{definition} 

The following definition is inspired from \cite{Cardaliaguet:Cirant:Porretta:PAMS,Cardaliaguet:Cirant:Porretta:JEMS,Lasry:Lions:Major:Minor}:

\begin{definition}
\label{def:Nash}
Let 
$(x_0,\mu) \in {\mathbb R}^d \times {\mathcal P}({\mathbb T}^d)$
be a fixed initial condition and 
$(\alpha^0,\alpha)$ be an admissible 
pair in the sense of Definition \ref{def:admissibility:pair}. 
Recalling the notation  
$({\boldsymbol X}^0,{\boldsymbol \mu},{\boldsymbol B}^0)$
for the canonical process on $\Omega^0_{\textrm{\rm canon}} $ and the notation 
$({\boldsymbol X},{\boldsymbol B})$ for the canonical process on 
$\Omega_{\textrm{\rm canon}} $,   
%we define the process $({\boldsymbol \alpha}^0,{\boldsymbol \alpha})$ by following 
%\eqref{eq:alpha0:Markov:feedback}--\eqref{eq:alpha:Markov:feedback}, i.e. by letting: 
%\begin{equation*}
%\alpha_t^0 := \alpha^0(t,X_t^0,\mu_t), \quad 
%\alpha_t := \alpha(t,X_t^0,X_t,\mu_t), \quad t \in [0,T],
%\end{equation*} 
%\fdm{Note clear to me whether we should require the solvability of the equation for $X$ or just invoke some superposition principle 
%to deduce the existence (but not uniqueness) of weakly solution. After all, the space $(\Omega,{\mathbb P})$ does not 
%really matter here and existence of weak solutions to the SDE for the minor players would suffice.} 
the pair $(\alpha^0,\alpha)$
is said to be a mean field equilibrium if 
\begin{description}
\item[\hspace{5pt} \textrm{\rm 1.} Deviation of the minor\textrm{\rm :}] For any 
bounded and measurable function
$\beta:  [0,T] \times {\mathbb R}^d \times {\mathbb T}^d \times {\mathcal P}({\mathbb T}^d) \rightarrow {\mathbb R}^d$,
\begin{equation*} 
J_{\rm w}\bigl({\alpha}; {\mathbb P}^0_{(\alpha^0,\alpha)}\bigr) 
\leq 
J_{\rm w}\bigl({\beta}; {\mathbb P}^0_{(\alpha^0,\alpha)}\bigr);
\end{equation*} 
\item[\hspace{5pt}  \textrm{\rm 2.} Deviation of the major\textrm{\rm :}] For any feedback function $\beta^0 : [0,T] \times {\mathbb R}^d \times {\mathcal P}({\mathbb T}^d) \rightarrow {\mathbb R}$ such that the pair 
$(\beta^0,\alpha)$ is admissible, 
\begin{equation*} 
J_{\rm w}^0({\alpha}^0, \alpha) 
\leq 
J_{\rm w}^0({ \beta}^0,   {\alpha}).
\end{equation*} 
\end{description}
\end{definition} 
\color{black}

\begin{remark}
It is worth emphasizing that
the nature of the equilibrium would be different if 
${\boldsymbol \alpha}^0$ and ${\boldsymbol \alpha}$ were required to be in open loop form. 
The analysis of equilibria over controls in {Markov} feedback form is in fact more difficult. 

As far as the control ${\beta}$ is concerned in item 1 right above, the nature 
of it (open versus closed) does not make any difference. Intuitively, this comes from 
the fact that any deviation of the minor player has no influence on the state of the 
population (encoded through ${\boldsymbol \mu}$) nor on 
the state of the major player (encoded through 
${\boldsymbol X}^0$). 
This fact is standard in MFG theory. 
\end{remark}

We now give conditions under
which a pair $(\alpha^0,\alpha)$ is admissible in the sense of Definition  
\ref{def:admissibility:pair}:

\begin{lemma}
\label{lem:weak:uniqueness:forward:equation}
Let $\alpha^0 : [0,T] \times {\mathbb R}^d \times {\mathcal P}({\mathbb T}^d ) \rightarrow {\mathbb R}^d$
and
$\alpha  : [0,T] \times {\mathbb R}^d \times {\mathbb T}^d \times {\mathcal P}({\mathbb T}^d ) \rightarrow {\mathbb R}^d$
 be two Borel measurable functions such that
 $\alpha$ is bounded and satisfies 
 \begin{equation}
\label{prop:lem:weak:uniqueness:forward:equation}
\sup_{t \in [0,T]} 
\sup_{x_0 \in {\mathbb R}^d}
\sup_{\mu,\mu' \in {\mathcal P}({\mathbb T}^d) : \mu \not = \mu'} 
\sup_{x \in {\mathbb T}^d} \frac{| \alpha(t,x_0,x,\mu') - \alpha(t,x_0,x,\mu) |}{ {\mathbb W}_1(\mu,\mu')}  < \infty.
\end{equation} 
Then:
\vskip 4pt

(a) 
For any
$(x_0,\mu) \in {\mathbb R}^d \times {\mathcal P}({\mathbb T}^d)$,
there exists at most one 
probability measure 
${\mathbb P}^0$ on $\Omega_{\textrm{\rm canon}}^0$
such that items (i), (ii), (iii) and (iv) 
in Definition 
\ref{def:admissibility:pair} are satisfied. 
\vskip 4pt

(b) 
For any
$(x_0,\mu) \in {\mathbb R}^d \times {\mathcal P}({\mathbb T}^d)$, 
 there exists a unique probability measure $\bar{\mathbb P}^0$ on $\Omega^0_{\textrm{\rm canon}}$
such that items (i), (ii) and (iii) 
in Definition 
\ref{def:admissibility:pair} are satisfied when $\alpha^0 \equiv 0$. 
\vskip 4pt

(c) If, for a given $(x_0,\mu) \in {\mathbb R}^d \times {\mathcal P}({\mathbb T}^d)$, the  BMO condition 
\begin{equation}
\label{eq:Markov:state:equation:BMO:0} 
\biggl\| 
\int_0^\cdot \alpha^0(t,X_t^0,\mu_t)
\cdot 
\ud B_t^0
\biggr\|_{\textrm{\rm BMO}} < \infty
%{\mathbb E}^0 \int_0^T \vert \alpha^0(t,X_t^0,\mu_t) \vert^2 \ud t < \infty
\end{equation} 
is satisfied under $\bar{\mathbb P}^0$, then there exists a (hence unique) 
probability measure 
${\mathbb P}^0$ on $\Omega_{\textrm{\rm canon}}^0$
such that items (i), (ii), (iii) and (iv) are satisfied 
under Definition 
\ref{def:admissibility:pair}. 
Conversely, if 
there exists a  
probability measure 
${\mathbb P}^0$ on $\Omega_{\textrm{\rm canon}}^0$
such that items (i), (ii), (iii) and (iv) are satisfied, then the BMO condition 
\eqref{eq:Markov:state:equation:BMO:0} 
is satisfied under $\bar{\mathbb P}^0$.
\end{lemma}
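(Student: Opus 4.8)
The plan is to run all three parts through a single Girsanov change of measure built from $\alpha^0$, with the BMO bound playing the role of integrability certificate (Kazamaki \cite{Kazamaki}). I will need two preliminary facts. (P1) For every $\chi \in {\mathcal C}([0,T],{\mathbb R}^d)$, the nonlinear Fokker--Planck equation $\partial_t m_t = \tfrac12 \Delta_x m_t - \textrm{\rm div}_x(\alpha(t,\chi_t,\cdot,m_t)\,m_t)$, $m_0=\mu$, has a unique ${\mathcal C}([0,T],{\mathcal P}({\mathbb T}^d))$-valued weak solution $\Phi(\chi)$, the map $\Phi$ being Borel and causal; I would get existence from a Picard scheme and uniqueness by viewing any weak solution as the solution of the \emph{linear} equation with frozen coefficient $\alpha(t,\chi_t,\cdot,m_t)$ (unique by Remark \ref{rem:!:Fokker-Planck}) and then closing a Girsanov-based Gronwall estimate exploiting \eqref{prop:lem:weak:uniqueness:forward:equation} — all classical in the common-noise MFG theory, cf.\ \cite{CardaliaguetDelarueLasryLions}. (P2) Under any ${\mathbb P}$ on $\Omega^0_{\textrm{\rm canon}}$ satisfying item (iii) of Definition \ref{def:admissibility:pair}, the state equation forces, ${\mathbb P}$-a.s., $B^0_\cdot = X^0_\cdot - x_0 - \int_0^\cdot \alpha^0(s,X^0_s,\mu_s)\,\ud s$ and ${\boldsymbol \mu} = \Phi({\boldsymbol X}^0)$; hence ${\mathbb F}^0$ coincides ${\mathbb P}$-a.s.\ with the filtration of $({\boldsymbol X}^0,{\boldsymbol \mu})$, and ${\mathbb P}$ is the image of its $({\boldsymbol X}^0,{\boldsymbol \mu})$-marginal under a fixed Borel map.

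For part (b) I would take a $d$-dimensional Brownian motion ${\boldsymbol B}^0$ and set $\bar{\mathbb P}^0 := {\mathcal L}(x_0+{\boldsymbol B}^0,\Phi(x_0+{\boldsymbol B}^0),{\boldsymbol B}^0)$; causality of $\Phi$ yields items (i)--(ii), and item (iii) with $\alpha^0\equiv 0$ holds by definition of $\Phi$. If ${\mathbb P}'$ satisfies (i)--(iii) with $\alpha^0\equiv 0$, then ${\mathbb P}'$-a.s.\ $X^0=x_0+B^0$ with ${\boldsymbol B}^0$ Brownian and ${\boldsymbol \mu}$ a weak solution of the Fokker--Planck equation driven by ${\boldsymbol X}^0$, so ${\boldsymbol \mu}=\Phi({\boldsymbol X}^0)$ by (P1) and ${\mathbb P}'=\bar{\mathbb P}^0$.

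For parts (a) and (c) I would use that (Kazamaki \cite{Kazamaki}) a BMO ${\mathbb P}$-martingale ${\boldsymbol M}$ has ${\mathscr E}_\cdot({\boldsymbol M})$ uniformly integrable, and that with ${\mathbb Q}:={\mathscr E}_T({\boldsymbol M})\cdot {\mathbb P}$ the process ${\boldsymbol N}-\langle {\boldsymbol N},{\boldsymbol M}\rangle$ is BMO under ${\mathbb Q}$ for any BMO ${\mathbb P}$-martingale ${\boldsymbol N}$. \emph{Necessity in (c) and part (a):} given ${\mathbb P}^0$ satisfying (i)--(iv), ${\boldsymbol M}:=\int_0^\cdot\alpha^0(s,X^0_s,\mu_s)\cdot\ud B^0_s$ is ${\mathbb P}^0$-BMO, and I set ${\mathbb Q}:={\mathscr E}_T(-{\boldsymbol M})\cdot {\mathbb P}^0$. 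By Girsanov $X^0_\cdot-x_0$ is a ${\mathbb Q}$-Brownian motion; with (P2) this identifies ${\mathbb Q}$ as the image of Wiener measure under a Borel map independent of ${\mathbb P}^0$, so ${\mathbb Q}$ is unique and ${\mathbb Q}\circ({\boldsymbol X}^0,{\boldsymbol \mu})^{-1}=\bar{\mathbb P}^0\circ({\boldsymbol X}^0,{\boldsymbol \mu})^{-1}$. Since $\ud {\mathbb P}^0/\ud {\mathbb Q}={\mathscr E}_T(-{\boldsymbol M})^{-1}$ is, ${\mathbb Q}$-a.s., an intrinsic functional of the canonical process — through ${\boldsymbol M}=\int_0^\cdot\alpha^0(s,X^0_s,\mu_s)\cdot\ud X^0_s-\int_0^\cdot|\alpha^0(s,X^0_s,\mu_s)|^2\ud s$, a stochastic integral against the ${\mathbb Q}$-semimartingale ${\boldsymbol X}^0$ — it is the same for any two measures satisfying (i)--(iv), which proves (a). For the BMO assertion, ${\mathcal A}_\cdot:=\langle {\boldsymbol M}\rangle_\cdot=\int_0^\cdot|\alpha^0(s,X^0_s,\mu_s)|^2\ud s$ is a measure-independent functional of $({\boldsymbol X}^0,{\boldsymbol \mu})$; by the stability statement ${\boldsymbol M}+{\mathcal A}$ is BMO under ${\mathbb Q}$, so $\sup_\tau\|{\mathbb E}^{\mathbb Q}[{\mathcal A}_T-{\mathcal A}_\tau\mid {\mathcal F}^0_\tau]\|_{L^\infty}=\|{\boldsymbol M}+{\mathcal A}\|_{\textrm{\rm BMO}}^2<\infty$; this only depends on ${\mathbb Q}\circ({\boldsymbol X}^0,{\boldsymbol \mu})^{-1}=\bar{\mathbb P}^0\circ({\boldsymbol X}^0,{\boldsymbol \mu})^{-1}$ (and ${\mathbb F}^0$ is the $({\boldsymbol X}^0,{\boldsymbol \mu})$-filtration under both), so it stays finite under $\bar{\mathbb P}^0$, where ${\boldsymbol B}^0$ is Brownian and ${\mathcal A}$ is the bracket of $\int_0^\cdot\alpha^0(s,X^0_s,\mu_s)\cdot\ud B^0_s$ — that is \eqref{eq:Markov:state:equation:BMO:0} under $\bar{\mathbb P}^0$. \emph{Sufficiency in (c):} if $\hat{\boldsymbol M}:=\int_0^\cdot\alpha^0(s,X^0_s,\mu_s)\cdot\ud B^0_s$ is $\bar{\mathbb P}^0$-BMO, I set $\tilde{\mathbb P}:={\mathscr E}_T(\hat{\boldsymbol M})\cdot\bar{\mathbb P}^0$; by Girsanov $\tilde B^0_\cdot:=B^0_\cdot-\int_0^\cdot\alpha^0(s,X^0_s,\mu_s)\ud s$ is a $\tilde{\mathbb P}$-Brownian motion and, since $X^0=x_0+B^0$ under $\bar{\mathbb P}^0$, one has $\ud X^0_t=\alpha^0(t,X^0_t,\mu_t)\ud t+\ud\tilde B^0_t$ and ${\boldsymbol \mu}=\Phi({\boldsymbol X}^0)$; taking ${\mathbb P}^0$ to be the image of $\tilde{\mathbb P}$ under the relabeling $({\boldsymbol X}^0,{\boldsymbol \mu},{\boldsymbol B}^0)\mapsto({\boldsymbol X}^0,{\boldsymbol \mu},\tilde{\boldsymbol B}^0)$ gives items (i)--(iii), while (iv) follows because $\int_0^\cdot\alpha^0(s,X^0_s,\mu_s)\cdot\ud\tilde B^0_s=\hat{\boldsymbol M}-\langle\hat{\boldsymbol M}\rangle$ is $\tilde{\mathbb P}$-BMO by the stability statement; uniqueness of ${\mathbb P}^0$ is part (a).

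The main obstacle I expect is not analytic but organizational: Girsanov's transform changes which process is the driving Brownian motion, whereas Definition \ref{def:admissibility:pair} pins the \emph{coordinate} ${\boldsymbol B}^0$ to be Brownian, so the tilted measure must be re-presented as a pushforward under an explicit relabeling of the noise coordinate; and the uniqueness argument, together with the transfer of the BMO bound among ${\mathbb P}^0$, ${\mathbb Q}$ and $\bar{\mathbb P}^0$, hinge on verifying that the Radon--Nikodym densities, brackets and BMO norms in play are intrinsic functionals of the canonical path (equivalently, depend only on the law of $({\boldsymbol X}^0,{\boldsymbol \mu})$), hence insensitive to the particular equivalent measure. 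The only genuinely analytic ingredient is (P1), the well-posedness of the nonlinear Fokker--Planck equation with a frozen driving path.
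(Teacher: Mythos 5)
Your proposal is correct and follows essentially the same route as the paper: well-posedness of the frozen-path (nonlinear) Fokker--Planck equation yielding a measurable, causal solution map $\Phi$ (which settles (b)), then a forward Girsanov tilt of $\bar{\mathbb P}^0$ under the BMO hypothesis to build ${\mathbb P}^0$ for (c), and a backward tilt of any admissible ${\mathbb P}^0$ (licit by item (iv)) to identify the driftless law with $\bar{\mathbb P}^0$, giving uniqueness in (a) and the converse BMO transfer via Kazamaki's stability theorems. Your packaging of uniqueness through the intrinsic Radon--Nikodym density, and of the BMO transfer through the $({\boldsymbol X}^0,{\boldsymbol \mu})$-marginal and filtration identification, is only a cosmetic variant of the paper's pushforward reconstruction.
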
 
Similar to Lemma \ref{lem:alpha:weak:solution}, 
Lemma \ref{lem:weak:uniqueness:forward:equation} is proven in Subsection \ref{subse:proof:FP}.

\subsection{Forward-backward characterization}

Our analysis below relies on a stochastic forward-backward system, which plays the same role 
as the MFG system in the standard setting. This system reads in the form of 
two coupled  stochastic forward-backward equations, which are understood in a strong sense
and thus posed on any arbitrary 
filtered probability space 
$(\Omega^0,{\mathcal F}^0,{\mathbb F}^0,{\mathbb P}^0)$
satisfying the usual conditions and 
equipped with a 
Brownian motion $(B_t^0)_{0 \leq t \leq T}$ with values in ${\mathbb R}^d$. 
In particular, 
{\bf we NO longer regard
$({\boldsymbol X}^0,{\boldsymbol \mu},{\boldsymbol B}^0)=(X_t^0,\mu_t,B_t^0)_{0 \le t \le T}$ and 
$({\boldsymbol X},{\boldsymbol B})=(X_t,B_t)_{0 \le t \le T}$
as canonical processes in this subsection}.

The first equation  provides a Lagrangian description of the state of the major player (at equilibrium):
\begin{equation} 
\label{eq:major:FB:1}
\begin{split} 
&\ud X_t^0 = - \nabla_p H^0 \bigl( X_t^0 , Z_t^0\bigr)  \ud t + \sigma_0 \ud B_t^0, \quad t\in [0,T],
\\
&\ud Y_t^0 =   - \Bigl( f_t^0(X_t^0,\mu_t) + L^0\bigl(X_t^0,- \nabla_p H^0\bigl( X_t^0, Z_t^0\bigr) \bigr)
\Bigr) \ud t + 
\sigma_0 Z_t^0 \cdot \ud B_t^0, \quad t \in [0,T], 
\\
&X_0^0=x_0, \quad Y_T^0 = g^0(X_T^0,\mu_T).
\end{split}
\end{equation} 
Above, the 
measure argument  $(\mu_t)_{0 \le t \le T}$
 in the dynamics of 
$(Y_t^0)_{0 \le t \le T}$ corresponds to the statistical law of the minor player, 
whose evolution (at equilibrium) is described by the
 stochastic system  {(which is the second of the two aforementioned forward-backward equation)}: 
\begin{equation}
\label{eq:minor:FB:2}
\begin{split} 
&\partial_t \mu_t - \tfrac12 \Delta_x \mu_t - {\rm div}_x \Bigl( \nabla_p H \bigl(\cdot, \nabla_x u_t\bigr) \mu_t \Bigr) =0, 
\quad  \ (t,x)\in [0,T] \times {\mathbb T}^d, 
\\
&\ud_t u_t(x) =\Bigl(  -  \tfrac12 \Delta_x u_t(x) + H\bigl(x,\nabla_x u_t(x) \bigr)  - f_t(X_t^0,x,\mu_t)  \Bigr) \ud t
+ \sigma_0 v_t^0(x) \cdot \ud B_t^0, \quad (t,x) \in [0,T] \times {\mathbb T}^d, 
\\
&\mu_0=\mu,\quad u_T(x) = g(X_T^0,x,\mu_T), \quad x \in {\mathbb T}^d.  
\end{split}
\end{equation}

\begin{definition} 
\label{def:forward-backward=MFG:solution}
Solutions to the above two systems are understood in the following sense: 
\begin{enumerate}
\item 
For a given initial condition $x_0  \in {\mathbb R}^d$
and  an ${\mathbb F}^0$-adapted continuous process 
${\boldsymbol \mu} = (\mu_t)_{0 \le t \le T}$ with values in 
${\mathcal P}({\mathbb T}^d)$, we call  solution 
to 
\eqref{eq:major:FB:1}
any ${\mathbb F}^0$-progressively measurable process 
$({\boldsymbol X}^0,{\boldsymbol Y}^0,{\boldsymbol Z}^0)$ with values in ${\mathbb R}^d \times {\mathbb R}
\times {\mathbb R}^d$, such that: (i)
${\boldsymbol X}^0$ and ${\boldsymbol Y}^0$ 
have continuous trajectories; (ii) 
$\sup_{0 \le t \le T} \vert X_t^0 \vert \in L^2(\Omega^0,{\mathbb P}^0)$
and
$\sup_{0 \le t \le T} \vert Y_t^0 \vert \in L^\infty(\Omega^0,{\mathbb P}^0)$;
(iii) $\sup_{\tau} \| {\mathbb E}^0[ \int_{[\tau,T]} \vert Z_t^0 \vert^2 \ud t \vert {\mathcal F}_\tau^0] \|_{L^\infty(\Omega^0,{\mathbb P}^0)} < \infty$, the supremum being 
taken over the collection of stopping times $\tau$ with respect to the usual augmentation of the filtration 
generated by $({\boldsymbol X}^0,{\boldsymbol Y}^0,{\boldsymbol Z}^0,{\boldsymbol B}^0)$; 
(iv) the system 
\eqref{eq:major:FB:1} is satisfied ${\mathbb P}^0$- almost surely.
\item For a given initial condition $\mu \in {\mathcal P}({\mathbb T}^d)$
and  an ${\mathbb F}^0$-adapted continuous process 
${\boldsymbol X}^0 = (X^0_t)_{0 \le t \le T}$ with values in 
${\mathbb R}^d$ such that 
$\sup_{0 \le t \le T} \vert X_t^0 \vert \in L^2(\Omega^0,{\mathbb P}^0)$, we call  solution 
to 
\eqref{eq:minor:FB:2}
any ${\mathbb F}^0$-progressively measurable process 
$({\boldsymbol \mu},{\boldsymbol u},{\boldsymbol v}^0)$ with values in ${\mathcal P}({\mathbb T}^d) \times {\mathcal C}^\mathfrc{s}({\mathbb T}^d) \times 
{\mathcal C}^{\lfloor \mathfrc{s} \rfloor -d/2-1}({\mathbb T}^d)$, such that: (i)
${\boldsymbol \mu}$ and ${\boldsymbol u}$ 
have continuous trajectories in 
${\mathcal P}({\mathbb T}^d) \times   {\mathcal C}^{\cursr}({\mathbb T}^d)$
for any $\mathfrc{r} < \mathfrc{s}$
; (ii) $\sup_{0 \le t \le T} \| u_t \|_{\mathfrc{s}} \in L^\infty(\Omega^0,{\mathbb P}^0)$;
(iii) 
${\mathbb E}^0 \int_{[0,T]} \| v_t^0 \|_{\lfloor \mathfrc{s} \rfloor -d/2-1}^2 \ud s < \infty$;
(iv) the forward equation
\eqref{eq:minor:FB:2} is satisfied ${\mathbb P}^0$ almost surely in the same weak sense as in Remark
\ref{rem:weak:form:FP}; 
(v) the backward equation is satisfied ${\mathbb P}^0$-almost surely (in the classical sense).
\item For a given initial condition $(x_0,\mu) \in {\mathbb R}^d \times {\mathcal P}({\mathbb T}^d)$,
we call a solution to the coupled systems 
\eqref{eq:major:FB:1}--\eqref{eq:minor:FB:2} a pair $(({\boldsymbol X}^0,{\boldsymbol Y}^0,{\boldsymbol Z}^0),({\boldsymbol \mu},{\boldsymbol u},{\boldsymbol v}^0))$
satisfying items 1 and 2 right above. 
\end{enumerate}
\end{definition}

\begin{remark} 
\label{rem:def:2:9}
The following remarks are in order: 
\begin{enumerate}
\item 
The process $(Y_t^0)_{0 \le t \le T}$ 
is expected to describe the evolution of the equilibrium cost to the major player. Intuitively, 
$(-\nabla_p H^0(X_t^0,Z_t^0))_{0 \le t \le T}$ is the corresponding equilibrium feedback.
 
 \item 
 In the stochastic forward-backward system \eqref{eq:minor:FB:2}, the (random) function $(t,x) \mapsto u_t(x)$ 
 is the equilibrium value of the minor player. The optimal feedback is 
 $(t,x) \mapsto - \nabla_p H(x,\nabla_x u_t(x))$. 
 Obviously, $(u_t)_{0 \le t \le T}$
 is random under the presence of the noise ${\boldsymbol B}^0$ 
 acting on the major player. 
 The term $(v_t^0)_{0 \le t \le T}$ is here to ensure that 
 $(u_t)_{0 \le t \le T}$ is indeed ${\mathbb F}^0$-adapted. 
 
\hspace{5pt}  
It is worth mentioning that, in comparison with the two forward and backward equations (4.2) and (4.3) 
introduced in \cite[Chapter 4]{CardaliaguetDelarueLasryLions}
in the analysis of MFGs with a common noise, our own system is simpler because the dynamics of the minor player are NOT forced by the common noise 
$B^0$. In particular, 
the forward equation is not a stochastic Fokker-Planck equation
(like
\cite[(4.2)]{CardaliaguetDelarueLasryLions})
 but  a Fokker-Planck equation with random coefficients.
For the same reason, the backward equation does not contain any It\^o-Wentzell correction comparable to the one appearing 
in 
\cite[(4.3)]{CardaliaguetDelarueLasryLions}.
\item Below, the analysis of the backward SPDE in 
\eqref{eq:minor:FB:2}
is inspired by the study carried out in 
the monograph 
\cite{CardaliaguetDelarueLasryLions}, in which a similar equation is treated 
within the framework of mean field games with common noise (see Chapter 4 therein). 
However, we have slightly changed the spaces in which solutions are taken:
we feel clearer to see them as random processes with values in non-integer H\"older 
spaces, whilst they are regarded as random 
process with values  
in integer H\"older spaces in \cite{CardaliaguetDelarueLasryLions}. This requires some care because, for $\curss \not \in {\mathbb N}$, 
the space ${\mathcal C}^{\curss}({\mathbb T}^d)$ is not separable. 
Working with (non-separable) Banach-valued random
variables
is indeed an issue, see for instance 
\cite{LedouxTalagrand}. One standard way to overcome the 
lack of separability is to strengthen the notion of measurability 
and to work with \textit{Bochner} measurable random variables. 
As explained in 
 \cite{LiQueffelec} (see also 
 \cite{JansonKaijser} for an overview), 
 a random variable
with values in a Banach space $E$ (measurability being understood 
with respect to the standard Borel $\sigma$-field on $E$) is \textit{Bochner} measurable 
if it takes values in a separable subspace of $E$. 
 Any such \textit{Bochner} measurable random variable has a tight distribution and can be approximated by simple 
 random variables, which makes its manipulation easier. 
 
 In the specific framework of  \eqref{eq:minor:FB:2}, 
 one can typically choose ${\mathcal C}^{\curss'}({\mathbb T}^d)$, for $\curss' > \curss$, as separable subspace of 
 ${\mathcal C}^{\curss}({\mathbb T}^d)$. 
Indeed,   
 Schauder estimates make it possible to gain some extra regularity on $u_t$, for $t < T$, and then to regard
 the latter as an element of  ${\mathcal C}^{\curss'}({\mathbb T}^d)$, for $\curss' > \curss$. 
 At time $t=T$, \textit{Bochner} measurability can be checked directly thanks to the 
 properties of $g$.
 Combined with the continuity properties stated in Definition 
  \ref{def:forward-backward=MFG:solution} and  an  interpolation inequality in H\"older spaces, this
  argument says even more:   
  ${\boldsymbol u}$ has continuous trajectories 
  from $[0,T-\varepsilon]$ to ${\mathcal C}^{\curss'}$, for any 
  $\varepsilon >0$ and some $\curss' > \curss$. As such, ${\boldsymbol u}$ can be written as the almost everywhere 
  limit in $[0,T] \times \Omega$ of simple processes of the form  $\sum_{i=0}^{n-1} X_{t_i} {\mathbf 1}_{(t_i,t_{i+1}]}$, 
  with $n \geq 1$, $t_0=0<t_1<\cdots<t_n=T$ and $X_{t_i}$ an ${\mathcal F}_{t_i}$ 
  \textit{Bochner} measurable ${\mathcal C}^\curss({\mathbb T}^d)$-valued random variable for each 
  $i =0,\cdots,n-1$.  This extends the notion of Bochner measurability to processes.  
   
 We will come back to these measurability questions when needed, but the message is 
 clear: measurability
properties stated 
 in Definition
\ref{def:forward-backward=MFG:solution}
are 
 in fact
understood in the \textit{Bochner} sense.  
  Of course, there is no similar difficulty 
with the forward component of the system 
\eqref{eq:minor:FB:2} because 
${\mathcal P}({\mathbb T}^d)$
is compact when equipped with any standard distance metricizing 
the weak topology.
\item In the same vein as above, notice that, 
in 
the second item 
of Definition 
\ref{def:forward-backward=MFG:solution}, 
$\sup_{0 \le t \le T} \| u_t \|_{\mathfrc{s}}$ is necessarily measurable if 
${\boldsymbol u}$ 
has continuous trajectories in 
${\mathcal C}^{\cursr}({\mathbb T}^d)$
for any $\mathfrc{r} < \mathfrc{s}$. 
Indeed, for 
$\lfloor \curss \rfloor < \mathfrc{r} < \mathfrc{s}$, the norm 
$\| \cdot \|_{\curss}$ is lower semi-continuous on 
${\mathcal C}^{\cursr}({\mathbb T}^d)$, which shows that 
$\sup_{0 \le t \le T} \| u_t \|_{\mathfrc{s}}$
is then equal to 
$\sup_{t \in [0,T] \cap {\mathbb Q}} \| u_t \|_{\mathfrc{s}}$. 
  \end{enumerate}
\end{remark}

The 
following statement clarifies the 
connection with Definition 
\ref{def:forward-backward=MFG:solution}:
\begin{proposition}
\label{prop:characterization:1}
Assume that,
on any filtered probability space 
$(\Omega^0,{\mathcal F}^0,{\mathbb F}^0,{\mathbb P}^0)$ 
equipped with 
an ${\mathbb F}^0$-Brownian motion ${\boldsymbol B}^0=(B_t^0)_{0 \leq t \leq T}$ 
with 
values in ${\mathbb R}^d$, for any initial condition $(t,x_0,\mu) \in [0,T] \times {\mathbb R}^d \times {\mathcal P}({\mathbb T}^d)$, the system 
\eqref{eq:major:FB:1}--\eqref{eq:minor:FB:2} has a unique solution (in the sense of Definition 
\ref{def:forward-backward=MFG:solution}), denoted 
$(X_s^{0,t,x_0,\mu},Y_s^{0,t,x_0,\mu},Z_s^{0,t,x_0,\mu})_{t \leq s \leq T}$
and
$(\mu_s^{t,x_0,\mu},u_s^{t,x_0,\mu},v_s^{0,t,x_0,\mu})_{t \leq s \leq T}$, and satisfying 
\begin{equation}
\label{eq:prop:characterization:1}
\sup_{t \in [0,T]} 
\sup_{x_0 \in {\mathbb R}^d}
\sup_{\mu,\mu' \in {\mathcal P}({\mathbb T}^d) : \mu \not = \mu'} 
\frac{\| u_t^{t,x_0,\mu}(\cdot) - u_t^{t,x_0,\mu'}(\cdot) \|_1}{ {\mathbb W}_1(\mu,\mu')}  < \infty. 
\end{equation} 
Then,
there exists a pair $(\alpha^0,\alpha)$ satisfying the Definition \ref{def:Nash} of a mean field equilibrium such that, for any 
initial condition 
$(x_0,\mu) \in {\mathbb R}^d \times {\mathcal P}({\mathbb T}^d)$ to the Major/Minor MFG 
 at time $0$, 
the law of the forward path
$(X_s^{0,0,x_0,\mu},\mu_s^{0,x_0,\mu},{B}_s^0)_{0 \leq s \leq T}$ coincides with 
the measure ${\mathbb P}_{(\alpha^0,\alpha)}$ defined in Definition \ref{def:Nash}. 

Moreover,
assume that
for a fixed $(x_0,\mu)  \in {\mathbb R}^d \times {\mathcal P}({\mathbb T}^d)$, 
there exists another 
mean field equilibrium $(\tilde \alpha^0,\tilde \alpha)$ to the Major/Minor initialized at 
$(x_0,\mu)$ at time $0$
such that: (i) $\tilde \alpha$ satisfies
\begin{equation}
\label{eq:prop:characterization:2}
\sup_{t \in [0,T]} 
\sup_{x_0 \in {\mathbb R}^d}
\sup_{\mu,\mu' \in {\mathcal P}({\mathbb T}^d) : \mu \not = \mu'} 
\sup_{x \in {\mathbb T}^d}
\frac{|\tilde \alpha(t,x_0,x,\mu') -\tilde \alpha(t,x_0,x,\mu) |}{ {\mathbb W}_1(\mu,\mu')}  < \infty; 
\end{equation} 
and (ii) 
the state equation
\eqref{eq:Markov:state:equation}
driven by $(\tilde \alpha^0,\tilde \alpha)$ and 
defined on the canonical space $\Omega^0_{\textrm{\rm canon}}$
%is uniquely strongly solvable for any starting point in $(t,x_0,\mu) \in [0,T] \times {\mathbb R}^d \times {\mathcal P}({\mathbb T}^d)$ (or equivalently 
has, 
for any starting point $( t,\tilde x_0,\tilde \mu) \in [0,T] \times {\mathbb R}^d \times {\mathcal P}({\mathbb T}^d)$,
 a solution
that is adapted with respect to the (augmentation of the) filtration generated by 
$(B_s^0 - B_t^0)_{t \leq s \leq T}$ (which is here the third component of the canonical process). Then,  
${\mathbb  P}_{(\tilde \alpha^0,\tilde \alpha)} =  {\mathbb  P}_{(\alpha^0,\alpha)}$.
\end{proposition}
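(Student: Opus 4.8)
\emph{Plan.} The strategy is to build the equilibrium from the decoupling fields of \eqref{eq:major:FB:1}--\eqref{eq:minor:FB:2}, check admissibility, verify the two optimality conditions of Definition \ref{def:Nash}, and finally run the converse construction for the uniqueness part. Using the hypothesised unique solvability at every starting time and state, I would first introduce the master fields
\[
\cV^0(t,x_0,\mu):=Y^{0,t,x_0,\mu}_t,\qquad \cZ^0(t,x_0,\mu):=Z^{0,t,x_0,\mu}_t,\qquad \cU(t,x_0,x,\mu):=u^{t,x_0,\mu}_t(x)
\]
(measurability being understood in the Bochner sense of Remark \ref{rem:def:2:9}), and the candidate feedbacks $\alpha^0(t,x_0,\mu):=-\nabla_pH^0(x_0,\cZ^0(t,x_0,\mu))$ and $\alpha(t,x_0,x,\mu):=-\nabla_pH(x,\nabla_x\cU(t,x_0,x,\mu))$. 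The flow property forced by uniqueness (restart the system at time $s$ from $(X^{0,0,x_0,\mu}_s,\mu^{0,x_0,\mu}_s)$) gives, along the solution issued from $(0,x_0,\mu)$, that $Z^{0,0,x_0,\mu}_s=\cZ^0(s,X^{0,0,x_0,\mu}_s,\mu^{0,x_0,\mu}_s)$ for a.e.\ $s$ and $u^{0,x_0,\mu}_s=\cU(s,X^{0,0,x_0,\mu}_s,\cdot,\mu^{0,x_0,\mu}_s)$; substituting these into \eqref{eq:major:FB:1}--\eqref{eq:minor:FB:2} exhibits $(X^{0,0,x_0,\mu}_s,\mu^{0,x_0,\mu}_s,B^0_s)_{0\le s\le T}$ as a solution of the state system \eqref{eq:Markov:state:equation} driven by $(\alpha^0,\alpha)$. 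To apply Lemma \ref{lem:weak:uniqueness:forward:equation}, I would check that $\alpha$ is measurable, bounded (by the a priori gradient bounds of Sections \ref{se:3}--\ref{se:4} and the regularity of $H$) and Lipschitz in $\mu$ in the sense of \eqref{prop:lem:weak:uniqueness:forward:equation} (from \eqref{eq:prop:characterization:1} and the local Lipschitz continuity of $\nabla_pH$), and that $|\alpha^0(s,X^0_s,\mu_s)|=|\nabla_pH^0(X^0_s,Z^0_s)|\le\kappa'(1+|Z^0_s|)$, which together with item (iii) of Definition \ref{def:forward-backward=MFG:solution} yields the BMO bound \eqref{eq:Markov:state:equation:BMO}. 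Lemma \ref{lem:weak:uniqueness:forward:equation}(a) then forces the law of $(X^{0,0,x_0,\mu},\mu^{0,x_0,\mu},B^0)$ to equal $\mathbb{P}^0_{(\alpha^0,\alpha)}$, which is the asserted matching of laws.

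\emph{Optimality of the minor.} I would fix the environment $\mathbb{P}^0_{(\alpha^0,\alpha)}$ and a bounded measurable deviation $\beta$, and work on $\Omega^0_{\mathrm{canon}}\times\Omega_{\mathrm{canon}}$ under $\Theta_\beta(d\omega^0,d\omega):=\mathbb{P}^0_{(\alpha^0,\alpha)}(d\omega^0)\,\mathbb{P}_{\omega^0,\beta}(d\omega)$ furnished by Lemma \ref{lem:alpha:weak:solution}: there $B^0,B$ are independent Brownian motions, $(X^0,\mu)$ is $\cF^0$-adapted, and $X=X^\beta$ solves $dX_s=\beta(s,X^0_s,X_s,\mu_s)\,ds+dB_s$. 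Writing $u=u^{0,x_0,\mu}$ (so $u_0=\cU(0,x_0,\cdot,\mu)$), I would apply the It\^o-Wentzell formula to the $\cF^0$-semimartingale random field $(s,x)\mapsto u_s(x)$ — whose trajectories are $\cC^\curss(\bT^d)$-valued, hence $\cC^5$ in $x$ since $\curss>d/2+5$ — composed with the $\cF$-semimartingale $X$ (no cross-variation, $B\perp B^0$); the two Laplacian terms cancel, and after taking $\mathbb E_{\Theta_\beta}$ and using $u_T=g(X^0_T,\cdot,\mu_T)$ and $X_0\sim\mu$,
\[
\int_{\bT^d}\cU(0,x_0,x,\mu)\,d\mu(x)=\mathbb E_{\Theta_\beta}\!\left[g(X^0_T,X_T,\mu_T)+\int_0^T\!\bigl(-H(X_s,\nabla_x u_s(X_s))-\beta_s\cdot\nabla_x u_s(X_s)+f_s(X^0_s,X_s,\mu_s)\bigr)\,ds\right],
\]
the stochastic integral being a genuine martingale by the integrability of $\nabla_x u$ and $v^0$ in Definition \ref{def:forward-backward=MFG:solution}. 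The Fenchel inequality $H(x,p)+\beta\cdot p\ge-L(x,\beta)$ (see \eqref{eq:Hamiltonians}--\eqref{eq:Hamiltonians:Fenchel}), which is an equality precisely at $\beta=-\nabla_pH(x,p)$, then turns this into $J_{\rm w}(\beta;\mathbb{P}^0_{(\alpha^0,\alpha)})\ge\int_{\bT^d}\cU(0,x_0,x,\mu)\,d\mu(x)$, with equality when $\beta=\alpha$ (because $\alpha(s,X^0_s,x,\mu_s)=-\nabla_pH(x,\nabla_x u_s(x))$ makes the Fenchel gap vanish pointwise). This is item 1 of Definition \ref{def:Nash}.

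\emph{Optimality of the major.} This is the step I expect to be the main obstacle. Given a feedback $\beta^0$ with $(\beta^0,\alpha)$ admissible, under $\mathbb{P}^0_{(\beta^0,\alpha)}$ the canonical pair $(\tilde X^0,\tilde\mu)$ solves \eqref{eq:Markov:state:equation} driven by $(\beta^0,\alpha)$, and $\tilde\mu$ carries no martingale part, being a Fokker--Planck flow slaved to the path of $\tilde X^0$. I would apply It\^o's formula to $s\mapsto\cV^0(s,\tilde X^0_s,\tilde\mu_s)$, and combine it with the master equation satisfied by $\cV^0$ and the Fenchel bound $\beta^0_s\cdot\nabla_{x_0}\cV^0+L^0(\tilde X^0_s,\beta^0_s)\ge-H^0(\tilde X^0_s,\nabla_{x_0}\cV^0)$, to see that $s\mapsto\cV^0(s,\tilde X^0_s,\tilde\mu_s)+\int_0^s\bigl(f^0_r(\tilde X^0_r,\tilde\mu_r)+L^0(\tilde X^0_r,\beta^0_r)\bigr)\,dr$ is a submartingale; since $\cV^0(T,\cdot,\cdot)=g^0$ and $\cV^0(0,x_0,\mu)=Y^{0,0,x_0,\mu}_0=J^0_{\rm w}(\alpha^0,\alpha)$ (take expectations in the $Y^0$-equation and use the law-matching of the first part), taking expectations gives $J^0_{\rm w}(\alpha^0,\alpha)\le J^0_{\rm w}(\beta^0,\alpha)$, i.e.\ item 2 of Definition \ref{def:Nash}. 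The difficulty is that this requires the It\^o expansion of $\cV^0$ as a function of $(x_0,\mu)\in\mathbb R^d\times\cP(\bT^d)$, hence the differentiability of the major master field (continuity in time, two $x_0$-derivatives, one flat $\mu$-derivative with two further space derivatives of $\delta_\mu\cV^0$) together with the equation it satisfies; because the major's control problem is not convex, this cannot be circumvented by a Pontryagin argument, and the regularity has to be imported from the short-time analysis of Section \ref{se:5} and propagated over $[0,T]$ through the linearisation of the characteristic system carried out in Section \ref{se:4} (alternatively, one can paste short-time verification inequalities using the flow property at the partition points, at the cost of a heavier induction).

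\emph{Uniqueness of the equilibrium.} I would run the converse of the above. For $(\tilde\alpha^0,\tilde\alpha)$ as in the statement, set $\mathbb{P}^0:=\mathbb{P}^0_{(\tilde\alpha^0,\tilde\alpha)}$, well defined since $\tilde\alpha$ is bounded and, by \eqref{eq:prop:characterization:2}, Lipschitz in $\mu$, so Lemma \ref{lem:weak:uniqueness:forward:equation} applies. By assumption (ii) the environment $(\tilde X^0_s,\tilde\mu_s)_{s\ge t}$ is a measurable functional of $(\tilde X^0_t,\tilde\mu_t)$ and of $(B^0_s-B^0_t)_{s\ge t}$, so its $\cF^0_t$-conditional law depends on $(\tilde X^0_t,\tilde\mu_t)$ alone; hence the minor's optimal cost-to-go $\tilde u_t(\cdot)$ is $\cF^0_t$-measurable, and a standard stochastic-control/BSDE analysis in this random environment, with the interior Schauder estimates permitted by Assumption \hyp{A}, identifies $(\tilde\mu,\tilde u,\tilde v^0)$ — with $\sigma_0\tilde v^0_t$ the $B^0$-martingale integrand of $\tilde u_t$ — as a solution of \eqref{eq:minor:FB:2} with $X^0=\tilde X^0$, strict convexity of $L$ forcing the minor's unique optimal feedback to be $-\nabla_pH(\cdot,\nabla_x\tilde u_t)$; matching it with $\tilde\alpha(t,\tilde X^0_t,\cdot,\tilde\mu_t)$ along the flow (where $\tilde\mu_t$ has a positive density) and then by continuity everywhere shows that this is exactly the drift in the Fokker--Planck equation for $\tilde\mu$. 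Likewise, since $\tilde\mu$ carries no martingale part, the major's optimal cost-to-go $\tilde Y^0_t$ — with martingale integrand $\sigma_0\tilde Z^0_t$ — together with $\tilde X^0$ solves \eqref{eq:major:FB:1}, strict convexity of $L^0$ forcing $\tilde\alpha^0(t,\tilde X^0_t,\tilde\mu_t)=-\nabla_pH^0(\tilde X^0_t,\tilde Z^0_t)$. Thus $((\tilde X^0,\tilde Y^0,\tilde Z^0),(\tilde\mu,\tilde u,\tilde v^0))$ solves \eqref{eq:major:FB:1}--\eqref{eq:minor:FB:2} from $(x_0,\mu)$ at time $0$, so by the uniqueness hypothesis $\tilde X^0=X^{0,0,x_0,\mu}$ and $\tilde\mu=\mu^{0,x_0,\mu}$ as processes under $\mathbb{P}^0$; therefore $\mathbb{P}^0_{(\tilde\alpha^0,\tilde\alpha)}$, being the law of $(\tilde X^0,\tilde\mu,B^0)$, coincides with $\mathbb{P}^0_{(\alpha^0,\alpha)}$ by the matching of laws of the first part. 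The delicate point is the passage from the equilibrium conditions of Definition \ref{def:Nash} to the $\cF^0$-adaptedness of the value processes and to their solving \eqref{eq:major:FB:1}--\eqref{eq:minor:FB:2} with the regularity of Definition \ref{def:forward-backward=MFG:solution}, which is exactly where assumption (ii) and the smoothing effect of the non-degenerate diffusions enter.
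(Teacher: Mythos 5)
Your treatment of the minor player's optimality matches the paper's second step (It\^o--Wentzell along $u_s(X_s)$, Fenchel inequality); that part is correct. The gap is in the major player's optimality and, in a similar vein, the uniqueness step, where you rightly flag the ``main obstacle'' but then propose the wrong fix.

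You propose to apply It\^o's formula to $s\mapsto\mathcal V^0(s,\tilde X^0_s,\tilde\mu_s)$ and ``combine it with the master equation satisfied by $\mathcal V^0$''. That requires $\mathcal V^0$ to have one time derivative, two $x_0$-derivatives, and a $\mu$-derivative $\partial_\mu\mathcal V^0$ that is once more differentiable in the auxiliary space variable -- but none of this is available under the hypotheses of the proposition. The only regularity granted is unique solvability of \eqref{eq:major:FB:1}--\eqref{eq:minor:FB:2} and the Lipschitz bound \eqref{eq:prop:characterization:1} on $u^{t,x_0,\mu}$. Indeed, the differentiability of $\mathcal V^0$ in $\mu$ is established only in Sections \ref{se:4} and \ref{se:5}, and those developments build on this very proposition (and on Theorem \ref{thm:1}), so importing them here would make the logic circular. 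Your suggestion to ``paste short-time verification inequalities using the flow property'' has the same issue: the short-time regularity of the master field in $\mu$ is a later result, not an ingredient available at this stage.

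The paper avoids the obstacle entirely by not expanding $\mathcal V^0$ at all. Instead, given a deviation $\beta^0$, it performs a Girsanov change of measure \eqref{eq:proof:!:Nash:auxiliary:bsde:1:2}--\eqref{eq:proof:!:Nash:auxiliary:bsde:1:3} that rewrites the $X^0$-equation with drift $\beta^0$ under a tilted measure $\tilde{\mathbb P}^0_{(\alpha^0,\alpha)}$, at the cost of an extra linear-in-$Z^0$ drift in the BSDE for $Y^0$. The convexity of $L^0$ then bounds the new driver from below by $-(f^0_t+L^0(\cdot,\beta^0_t))$, so $Y^0_0\le J^0_{\rm w}(\beta^0,\alpha)$ after taking expectation under $\tilde{\mathbb P}^0_{(\alpha^0,\alpha)}$, and the law of $(\boldsymbol X^0,\boldsymbol\mu)$ under the tilted measure matches $\mathbb P^0_{(\beta^0,\alpha)}$ by the admissibility of $(\beta^0,\alpha)$. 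The BMO estimates furnished by item (iii) of Definition \ref{def:forward-backward=MFG:solution} (and propagated via \cite[Theorems 2.3, 3.3, 3.6]{Kazamaki}) are what make the Girsanov step licit; this is the content of the paper's fourth step. The uniqueness part in the paper's third step has the same architecture (solve the BSDE along the given environment, Girsanov to the candidate optimal drift, use convexity and the equilibrium property to force equality), again with no appeal to the master equation. You should replace the master-field It\^o expansion with this BMO-Girsanov comparison argument to close the gap.
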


\begin{remark}
Our statement may seem rather complicated at first sight. In fact, the main idea is to limit the analysis to the existence and uniqueness of Nash equilibria that are 
adapted to the common noise. By analogy with the terminology used in the theory of SDEs, those equilibria should be called ``strong''. 
As for the existence of such strong equilibria, 
the key point here is that the system 
\eqref{eq:major:FB:1}--\eqref{eq:minor:FB:2} 
is assumed to be uniquely strongly solvable. This forces the solutions (to the system) to be adapted to the common noise and also 
implies the existence of a feedback function, see the first step of the proof below.
As for uniqueness, the main assumption consists of the two items (i) and (ii) in the second part of the statement. As shown in the third step 
of the proof, the combination of both forces the equation \eqref{eq:Markov:state:equation} 
(when driven by driven by $(\tilde \alpha^0,\tilde \alpha)$) to be uniquely strongly solvable. 
\end{remark}

Here is now the main statement of our article:
\begin{theorem}
\label{thm:1}
Under Assumption \hyp{A}, for any $T>0$, there exists a threshold $\sigma_0^*(T) \in (0,+\infty)$ such that, for 
$\sigma_0 \geq \sigma_0^*(T)$, 
for any initial condition $(x_0,\mu) \in {\mathbb R}^d \times {\mathcal P}({\mathbb T}^d)$, 
the system 
\eqref{eq:major:FB:1}--\eqref{eq:minor:FB:2} has a unique solution in the sense of Definition 
\ref{def:forward-backward=MFG:solution} (on any filtered probability space 
$(\Omega^0,{\mathcal F}^0,{\mathbb F}^0,{\mathbb P}^0)$ 
equipped with 
an ${\mathbb F}^0$-Brownian motion ${\boldsymbol B}^0=(B_t^0)_{0 \leq t \leq T}$ 
with 
values in ${\mathbb R}^d$). Moreover, \eqref{eq:prop:characterization:1} holds true. 

If in addition, Assumption \hyp{B} is also in force, then we can
choose $\sigma_0^*(T)$ independently of $T$. Namely, 
we can find a threshold $\sigma_0^* \in (0,+\infty)$ such that existence and uniqueness 
hold true on any interval $[0,T]$, $T>0$, and for any $\sigma_0 \geq \sigma_0^*$.
\end{theorem}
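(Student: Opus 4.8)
The plan is to derive Theorem~\ref{thm:1} by combining the short-time well-posedness results of Section~\ref{se:5} with the \textit{a priori} estimates of Sections~\ref{se:3}--\ref{se:4}, and then propagating solvability from small intervals to the whole of $[0,T]$ by an induction on the length of the horizon. The engine of the induction is an \textit{a priori} bound on the Lipschitz constant of the decoupling field of \eqref{eq:major:FB:1}--\eqref{eq:minor:FB:2} --- equivalently, on the quantity in \eqref{eq:prop:characterization:1}, on $\sup_{0\le t\le T}\|\nabla_x u_t\|_\infty$, and on the BMO norm of the equilibrium-drift martingale $\int_0^\cdot \nabla_p H^0(X_t^0,Z_t^0)\cdot \ud B_t^0$. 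Once these are controlled and cannot blow up, the usual forward--backward gluing argument applies: one solves on $[T-\delta,T]$ for a step $\delta$ depending only on the \textit{a priori} Lipschitz constant, carries the time-$(T-\delta)$ decoupling field as a new terminal condition, and iterates. Since each step is solved in the strong sense, the glued solution is adapted to the common noise, as demanded by Definition~\ref{def:forward-backward=MFG:solution}; propagating the Lipschitz bound at every step then also yields uniqueness in the strong class, through a Gronwall/contraction estimate run on intervals whose length depends only on that bound and then concatenated. The estimate \eqref{eq:prop:characterization:1} together with the BMO condition \eqref{eq:Markov:state:equation:BMO} are precisely what place the constructed flow inside the admissible class of Definition~\ref{def:admissibility:pair}, so that Proposition~\ref{prop:characterization:1} applies.

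First I would establish the \textit{a priori} estimates for a hypothetical solution. The key device is Girsanov's transformation: the drift $-\nabla_p H^0(X_t^0,Z_t^0)$ in the forward equation of \eqref{eq:major:FB:1} is removed by the change of measure with density ${\mathscr E}_T\bigl(-\tfrac1{\sigma_0}\int_0^\cdot \nabla_p H^0(X_s^0,Z_s^0)\cdot \ud B_s^0\bigr)$, under which ${\boldsymbol B}^0$ is a Brownian motion and ${\boldsymbol X}^0$ a shifted Brownian motion, decoupled from the minor system. Since $\nabla_p H^0$ grows at most linearly in $Z^0$ by \hyp{A5} and, along a solution, $Z_t^0$ coincides with the spatial gradient of the major player's value, the BMO norm of this exponent on an interval of length $\delta$ is bounded by $C\delta^{1/2}(1+\Gamma)/\sigma_0$, with $\Gamma$ the \textit{a priori} bound on the value's gradient; hence over intervals of fixed length the change of measure is a perturbation that is small in the BMO sense as soon as $\sigma_0$ is large. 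Under the new measure, \eqref{eq:minor:FB:2} is a mean field game system with common noise ${\boldsymbol B}^0$ (the coefficients still depending on $X^0$), so the Lasry--Lions monotonicity \hyp{A3} and the techniques of \cite{CardaliaguetDelarueLasryLions} yield stability of the minor component --- a bound on the modulus in \eqref{eq:prop:characterization:1} and on $\sup_t\|\nabla_x u_t\|_\infty$ --- while the BMO-smallness of the Girsanov density, quantified through the estimates of Section~\ref{se:3}, keeps these bounds finite. The gradient bound itself is produced in Section~\ref{se:4} by linearizing \eqref{eq:major:FB:1}--\eqref{eq:minor:FB:2} around the solution, so that the spatial derivative of the value functions solves the linearized forward--backward system, which inherits the monotonicity and is controlled by the same scheme.

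Granting these estimates, the first assertion follows at once: under Assumption~\hyp{A} alone the constants may depend on $T$ but are finite for each fixed $T$, which suffices for the gluing and uniqueness arguments above, and \eqref{eq:prop:characterization:1} is then nothing but the stability bound just produced.

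The hard part will be the second assertion, that $\sigma_0^*$ can be chosen independently of $T$. For this, the \textit{a priori} bounds on $\sup_t\|\nabla_x u_t\|_\infty$ and on the Lipschitz-in-$\mu$ modulus, together with the per-interval contraction gained from Lasry--Lions monotonicity, must stay controlled --- with constants, and with a contraction ratio strictly below $1$ --- uniformly in $T$, so that iterating over the roughly $T/\delta$ sub-intervals produces a geometric, hence $T$-uniform, estimate rather than one degrading with $T$. This is the exponential stabilization (turnpike) phenomenon for monotone, non-degenerate mean field game systems (compare \cite{CLLP,cardaliaguet:porretta:longtime:master}), which here has to be run inside the random environment generated by the major player. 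Assumption~\hyp{B} is what makes this possible: since $f_t-F$ and $f_t^0-F^0$ are integrable in $t$ with a $T$-independent bound by \hyp{B2}--\hyp{B3}, the running costs are asymptotically autonomous and decoupled from the other player's state, so the ``ergodic'' comparison system enjoys $T$-uniform contraction and the perturbations do not accumulate. Once the gradient bounds are $T$-uniform, the Girsanov density still has BMO norm at most $C\delta^{1/2}/\sigma_0$ on each fixed-length sub-interval with $C$ independent of $T$, so choosing $\sigma_0$ large once keeps every per-interval error below half the contraction margin, uniformly in $T$; concatenating then yields a threshold $\sigma_0^*$ independent of the horizon. The genuine difficulty, as opposed to the individually fairly standard ingredients (short-time solvability, BMO bounds, monotone stability), is to control this interplay between the interval-wise small but horizon-wise non-negligible change of measure and the monotone stability \emph{uniformly} in $T$.
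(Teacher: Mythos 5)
Your overall blueprint --- short-time solvability, Girsanov to decouple ${\boldsymbol X}^0$, Lasry--Lions monotonicity under the tilted measure, a priori Lipschitz bounds on the decoupling field, and gluing by induction on the horizon --- matches the paper's strategy (Sections~\ref{se:3}--\ref{se:5}, culminating in Theorem~\ref{thm:4.10}). Two points, however, are treated differently and one of them is a genuine gap.

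First, the paper actually proves the $T$-uniform statement under Assumption~\hyp{B} \emph{first}, and then deduces the first assertion by absorbing $T$-dependence into $\kappa$ so that \hyp{B2}--\hyp{B3} hold trivially; you go the other way. That is a cosmetic choice.

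Second, and this is the substantive issue: your quantitative control of the Girsanov change of measure is too local to produce the $T$-independent threshold. You bound the BMO norm of the exponent on an interval of length $\delta$ by $C\delta^{1/2}(1+\Gamma)/\sigma_0$ and then assert that concatenation with a per-interval contraction ratio strictly below $1$ gives a $T$-uniform estimate. But BMO norms do not concatenate that way: a martingale whose increments on each of $T/\delta$ disjoint sub-intervals have BMO norm $\varepsilon$ may have global BMO norm of order $\sqrt{T/\delta}\,\varepsilon$ (already from the deterministic stopping time $\tau=0$), so this accumulation is not geometric, and the reverse-H\"older/Muckenhoupt constants attached to the density blow up with $T$. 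The paper does not take this route. It obtains a single global BMO bound, on the whole of $[\tau,T]$ and $T$-uniform in its constant, directly from the comparison with the ergodic quantities $w^0$ and $\tilde u$ under \hyp{B2}--\hyp{B3}: Lemma~\ref{lem:BMO:major} shows $\sup_\tau\bigl\|{\mathbb E}^0\bigl[\int_\tau^T\sigma_0^2|Z_r^0-\nabla_{x_0}w^0(r,X_r^0)|^2\,\ud r\,|\,{\mathcal F}_\tau^0\bigr]\bigr\|_\infty\le C$ with $C$ independent of $\sigma_0$ and $T$, and Propositions~\ref{prop:minor:regularity:higher}--\ref{prop:bmo:minor} do the analogue for the minor system. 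These global bounds are what license a single Girsanov change on $[0,T]$ with $T$-uniform reverse-H\"older estimates (\eqref{eq:Novikov}), and they are the input to the energy functional of Proposition~\ref{prop:4:7}, whose structure (the auxiliary BSDE solution $e_t$ kept between $c$ and $1$ uniformly, the $\exp(\Theta_t)$ weight tied to \hyp{B3}) yields the global Lipschitz estimate \eqref{eq:globalLip} in one shot, rather than as a limit of per-interval contractions. The threshold on $\sigma_0$ then comes from the explicit condition $\tfrac{c\varepsilon_2\sigma_0^2}{2A}>\tfrac{C\kappa_{H^0}}{\varepsilon_1\sigma_0^2}$ in that proposition. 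Your ergodic/turnpike intuition is the right one, but you need to upgrade the local BMO bound to this global one (or supply an alternative $T$-uniform energy estimate) before the Girsanov argument and the contraction can legitimately be composed over $T/\delta$ steps.
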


Notice that 
\eqref{eq:prop:characterization:1}
implicitly requires to solve 
\eqref{eq:major:FB:1}--\eqref{eq:minor:FB:2} when the initial condition is fixed at any time 
$t \in [0,T]$. 
Observe in particular that, combined with Proposition 
\ref{prop:characterization:1}, Theorem \ref{thm:1}
says the 
the Major/Minor MFG has a unique ``strong'' equilibrium when 
$\sigma_0
\geq \sigma_0^*(T)$ (and \hyp{A} holds true) 
or 
$\sigma_0
\geq \sigma_0^*$ (and \hyp{B} holds true).
\vskip 4pt

The proof of Theorem \ref{thm:1} is deferred
to Sections \ref{se:3} and \ref{se:4}, see in particular 
Theorem \ref{thm:4.10} for a refined version of it.
The proof of the latter makes explicit use of the condition 
$\curss > d/2+5$.  
Notice in fact that it suffices to prove the second part of Theorem \ref{thm:1}, namely the claim under 
Assumption \hyp{B}. Indeed, once the conclusion has been proved to hold true under 
Assumption \hyp{B}, one can easily the derive the
first part of the statement by modifying the constant 
$\kappa$ in \hyp{A} in such a way that 
\hyp{B2} and \hyp{B3} hold true. This is possible to do so by replacing $\kappa$ by a new constant 
that is allowed to depend on $T$.

We now turn to the proof of 
Proposition 
\ref{prop:characterization:1} (the reader may skip it on a first reading):

\begin{proof}[Proof of Proposition \ref{prop:characterization:1}.]
The proof is divided into three steps. 
\vspace{5pt}
\\
\textit{First Step.} 
The first step is to prove that,
for a fixed initial condition $(x_0,\mu) \in {\mathbb R}^d \times {\mathcal P}({\mathbb T}^d)$ 
at time $0$ and
 under the standing unique 
solvability property of the system 
\eqref{eq:major:FB:1}--\eqref{eq:minor:FB:2}, the latter induces 
a Nash equilibrium to the Major/Minor MFG. 
To do so, we 
consider a probability space $(\Omega^0,{\mathcal F}^0,{\mathbb F}^0,{\mathbb P}^0)$ 
equipped with 
a Brownian motion ${\boldsymbol B}^0=(B_t^0)_{0 \leq t \leq T}$.
We assume that ${\mathbb F}^0$ is the ${\mathbb P}^0$-completion of 
the filtration.generated by ${\boldsymbol B}^0$, from which we deduce that 
solutions to
\eqref{eq:major:FB:1}--\eqref{eq:minor:FB:2}, when constructed on this space, 
are necessarily adapted to the (completion) of the filtration generated by ${\boldsymbol B}^0$.

Following \cite[Proposition 1.31]{CarmonaDelarue_book_II}, 
strong uniqueness of the solution to 
\eqref{eq:major:FB:1}--\eqref{eq:minor:FB:2} implies that the mapping 
that sends the initial condition $(t,x_0,\mu)$ onto the law of 
$(X_{s \vee t}^{t,x_0,\mu},\mu_{s \vee t}^{t,x_0,\mu})_{0 \leq s \leq T}$ 
on ${\mathcal C}([0,T],{\mathbb R}^d) \times {\mathcal C}([0,T],{\mathcal P}({\mathbb T}^d))$
is measurable, from which we deduce that the 
solution to 
\eqref{eq:major:FB:1}--\eqref{eq:minor:FB:2}
forms a strong Markov process. Then, 
following  
\cite[Proposition 3.2 \& Theorem 3.4]{ImkellerReveillacRichter}
(which relies on Theorem 6.27 in 
\cite{CinlarJacodProtterSharpe}),  we can find a Borel function 
$\psi^0 : [0,T] \times {\mathbb R}^d \times {\mathcal P}({\mathbb T}^d) \rightarrow {\mathbb R}^d \otimes {\mathbb R}^d$  such that, for
any initial time $t \in [0,T]$
and  any time $s \in [t,T]$,  
\begin{equation*} 
\begin{split}
&{\mathbb P}^0 \Bigl( \bigl\{ Z_s^{0,t,x_0,\mu}  = \psi^0(s,X_s^{0,t,x_0,\mu},\mu_s^{0,t,x_0,\mu}) \bigr\} \Bigr) = 1, 
\end{split}
\end{equation*} 
which shows the existence of a Markov feedback function for the process ${\boldsymbol X}^{0,t,x_0,\mu}$ (understood below as the state of the major player). 

Also,
by observing from the unique strong solvability 
of \eqref{eq:major:FB:1}--\eqref{eq:minor:FB:2}
that
the process ${\boldsymbol u}^{t,x_0,\mu}$ 
is, for any $(t,x_0,\mu) \in [0,T] \times {\mathbb R}^d \times {\mathcal P}({\mathbb T}^d)$, 
adapted to the (completion of the) filtration generated by 
$(B^0_s-B^0_t)_{t \leq s \leq T}$, 
we can define the value to the minor player as 
\begin{equation*}
V(t,x_0,x,\mu) = u_t^{t,x_0,\mu}(x), \quad x \in {\mathbb T}^d,
\quad (t,x_0,\mu) \in [0,T] \times {\mathbb R}^d \times {\mathcal P}({\mathbb T}^d). 
\end{equation*} 
Since the process ${\boldsymbol u}^{t,x_0,\mu}$ takes values in 
${\mathcal C}^{\mathfrc{s}}({\mathbb T}^d)$, we deduce that  $V$ is differentiable in $x$. The gradient 
$\nabla_x V$ induces a measurable mapping from $[0,T] \times {\mathbb R}^d \times {\mathbb T}^d \times {\mathcal P}({\mathbb T}^d)$ to 
${\mathbb R}^d$. 
By unique strong solvability 
of \eqref{eq:major:FB:1}--\eqref{eq:minor:FB:2}, 
we know that, for any 
$(t,x_0,\mu) \in [0,T] \times {\mathbb R}^d \times 
{\mathcal P}({\mathbb T}^d)$ and 
any
$s \in [t,T]$,  
\begin{equation*} 
{\mathbb P}^0 \Bigl( \bigl\{ \forall x \in {\mathbb T}^d, \ u_s^{0,t,x_0,\mu}(x)  = V(s,X_s^{0,t,x_0,\mu},x,\mu_s^{0,t,x_0,\mu}) \bigr\} \Bigr) = 1,
\end{equation*} 
and then,
\begin{equation*} 
{\mathbb P}^0 \Bigl( \bigl\{ \forall x \in {\mathbb T}^d, \  \nabla_x u_s^{0,t,x_0,\mu}(x)  = \nabla_x V(s,X_s^{0,t,x_0,\mu},x,\mu_s^{0,t,x_0,\mu}) \bigr\} \Bigr) = 1,
\end{equation*}
which shows the existence of a Markov feedback function for the minor player. 
Moreover, under the assumption \eqref{eq:prop:characterization:1}, the partial derivative $\nabla_x V$ appearing in the above event is Lipschitz continuous in the measure argument, uniformly with respect to the other parameters.  
This supplies us with the following two feedback functions:
\begin{equation} 
\label{eq:alpha0:alpha:equilibria}
\begin{split}
&\alpha^0 : [0,T] \times {\mathbb R}^d \times {\mathcal P}({\mathbb T}^d) \ni (x_0,\mu) 
\mapsto 
- \nabla_p H^0\bigl(x_0,\psi^0(t,x_0,\mu) \bigr), 
\\
&\alpha : [0,T] \times 
{\mathbb R}^d \times {\mathbb T}^d  \times {\mathcal P}({\mathbb T}^d) \ni (x_0,x,\mu) 
\mapsto - \nabla_p H \bigl( x, \nabla_x V(t,x_0,x,\mu) \bigr).
\end{split} 
\end{equation} 
The next point is to show that, for any fixed  initial condition $(x_0,\mu) \in {\mathbb R}^d \times {\mathcal P}({\mathbb T}^d)$ (choose, to simplify, 
$0$ as initial time),  the pair $(\alpha^0,\alpha)$ 
is admissible in the sense of Definition 
\ref{def:admissibility:pair}. 
By 
transferring 
 the law of $({\boldsymbol X}^{0,0,x_0,\mu},{\boldsymbol \mu}^{0,x_0,\mu},{\boldsymbol B}^{0})$
 onto the canonical space $\Omega^0_{\textrm{\rm canon}}$, we get the existence of a probability 
 measure ${\mathbb P}^0_{(\alpha^0,\alpha)}$ under 
 which the system 
  \eqref{eq:Markov:state:equation} (with $(\alpha^0,\alpha)$ given by 
  \eqref{eq:alpha0:alpha:equilibria})
  is satisfied. 
Denoting here by $({\boldsymbol X}^0,{\boldsymbol \mu},{\boldsymbol B}^0)$ 
   the canonical process 
   on  $\Omega^0_{\textrm{\rm canon}}$, 
   the  
   pair $({\boldsymbol X}^0,{\boldsymbol \mu})$
is adapted to the completion under ${\mathbb P}^0_{(\alpha^0,\alpha)}$ of the filtration 
generated by ${\boldsymbol B}^0$.   Hence, under the completion of ${\mathbb P}^0_{(\alpha^0,\alpha)}$ (still denoted ${\mathbb P}^0_{(\alpha^0,\alpha)}$), one can solve the backward equation in 
   \eqref{eq:major:FB:1}
 and then denote the solution by $({\boldsymbol Y}^0,{\boldsymbol Z}^0)=(Y_t^0,Z_t^0)_{0 \le t \le T}$, 
 see 
\cite{Kobylanski} for standard solvability results for quadratic BSDEs. Then, 
 observing that 
 the 
law of the solution to  
 the backward equation in 
   \eqref{eq:major:FB:1}
is uniquely determined by the law of the 
input $({\boldsymbol X}^0,{\boldsymbol \mu},{\boldsymbol B}^0)$, 
 one necessarily 
 has, for $\textrm{\rm Leb} \times {\mathbb P}_{(\alpha^0,\alpha)}$-almost every $(t,\omega^0) \in [0,T] \times  \Omega^0_{\textrm{\rm canon}}$,
 $Z_t^0 = \psi^0(t,X_t^0,\mu_t)$, which proves that $({\boldsymbol X}^0,{\boldsymbol Y}^0,{\boldsymbol Z}^0)$
 solves the forward-backward system 
    \eqref{eq:major:FB:1}
    under ${\mathbb P}^0_{(\alpha^0,\alpha)}$.
  The BMO condition   \eqref{eq:Markov:state:equation:BMO}
is then established by means of standard results for backward SDEs, see e.g. 
\cite{Tevzadze}.

Uniqueness of this probability measure is a consequence of Lemma
\ref{lem:weak:uniqueness:forward:equation},
which says that the law of the solution to 
  \eqref{eq:Markov:state:equation} (with $(\alpha^0,\alpha)$ given by 
  \eqref{eq:alpha0:alpha:equilibria}) is uniquely determined. 
This shows that the pair $(\alpha^0,\alpha)$ fits the requirements of  Definition 
\ref{def:admissibility:pair}. 
\vspace{5pt}
\\
\textit{Second Step.} 
We now prove that the pair 
$(\alpha^0,\alpha)$ in 
\eqref{eq:alpha0:alpha:equilibria}
defines an equilibrium in the sense of Definition 
\ref{def:Nash}.
As in the first step, we do so for a fixed initial condition 
$(x_0,\mu) \in {\mathbb R}^d \times {\mathcal P}({\mathbb T}^d)$
at time $0$. 

We first check item 1 in Definition 
\ref{def:Nash}. For ${\beta}$ as in item 1
and for
$({\mathbb P}_{\omega^0,\beta})_{\omega^0 \in \Omega^0_{\textrm{\rm canon}}}$
as in Lemma \ref{lem:alpha:weak:solution}, 
and with 
$({\boldsymbol X}^0,{\boldsymbol \mu},{\boldsymbol B}^0)=(X_t^0,\mu_t,B_t^0)_{0 \le t \le T}$ and 
$({\boldsymbol X},{\boldsymbol B})=(X_t,B_t)_{0 \le t \le T}$
denoting the canonical processes on 
$\Omega^0_{\textrm{\rm canon}}$ and $\Omega_{\textrm{\rm canon}}$, we know that, for any $\omega^0 \in \Omega^0$, 
the following equation holds true under 
${\mathbb P}^0_{\omega^0,\beta}$: 
\begin{equation*} 
\ud X_t = \beta(t,X_t^0,X_t,\mu_t) \ud t + \ud B_t, \quad t \in [0,T],
\end{equation*}
with ${\mathbb P}^0_{\omega^0,\beta} \circ X_0^{-1} =\mu$. 
We then introduce, on $\Omega^0_{\textrm{\rm canon}} \times \Omega_{\textrm{\rm canon}}$, 
the probability measure
${\mathbb P}_{(\alpha^0,\alpha)}^0 \otimes 
  {\mathbb P}_{\cdot,\beta}$ defined by 
\begin{equation*} 
{\mathbb P}_{(\alpha^0,\alpha)}^0 \otimes 
  {\mathbb P}_{\cdot,\beta}
  \bigl( A^0 \times A \bigr) 
  = 
  \int_{\Omega^0_{\textrm{\rm canon}}}
  {\mathbf 1}_{A^0}(\omega^0)
  {\mathbb P}_{\omega^0,\beta}(A) 
  \ud 
  {\mathbb P}^0_{(\alpha^0,\alpha)}(\omega^0), 
  \quad A^0 \in {\mathcal B}(\Omega^0_{\textrm{\rm canon}}), \ A \in {\mathcal B}(\Omega_{\textrm{\rm canon}}). 
\end{equation*}

We then expand $(u_t(X_t))_{0 \le t \le T}$ by using It\^o-Wentzell formula (which does not raise any difficulty in this setting because 
$(u_t)_{0 \le t \le T}$ is independent of $(B_t)_{0 \leq t \le T}$, see\footnote{\label{foo:1}The proof is as follows. 
For a time $h>0$, we write $u_{t+h}(X_{t+h}) - u_t(X_t) = u_{t+h}(X_{t+h}) - u_{t+h}(X_t) + u_{t+h}(X_{t}) - u_t(X_t)$. And then, 
by standard It\^o's formula, we get on the one hand, for any fixed $\omega^0 \in \Omega^0$, under 
${\mathbb P}_{\omega^0,\beta}$, 
\begin{equation*} 
\label{eq:expansion:ito:wentzell:1}
\begin{split} 
u_{t+h}(X_{t+h}) - u_{t+h}(X_{t})
&= 
\tfrac12 \int_{t}^{t+h} 
\Delta_{x} u_{t+h}(X_s) \ud s + 
\int_t^{t+h} 
\nabla_xu_{t+h}(X_s)\cdot \Bigl( \beta(s,X_s^0,X_s,\mu_s) \ud s 
+ 
  \ud B_s\Bigr), \quad t \in [0,T-h].  
\end{split}
\end{equation*}
%{\color{blue}Chenchen: Is the 2nd last term above $\int_t^{t+h}\nabla_xu_{t+h}(X_s)\cdot\beta(s,X_s^0,X_s,\mu_s)ds$?}
On the other hand, using the fact that $X_t$ is independent of $(B^0_s - B^0_t)_{t \leq s \leq t+h}$, we 
can formally replace $x$ by $X_t$ in the backward SPDE of 
$(u_s)_{t \leq s \leq t+h}$ and then get, with probability 1 under 
${\mathbb P}_{(\alpha^0,\alpha)}^0 \otimes 
  {\mathbb P}_{\cdot,\beta}$, 
  \begin{equation*} 
\label{eq:expansion:ito:wentzell:2}
  \begin{split}
 u_{t+h}(X_t) - u_t(X_t) &= 
 - \tfrac12  \int_t^{t+h}  \Delta_x u_s(X_t) \ud s +
 \int_t^{t+h} 
  H\bigl(X_t,\nabla_x u_s(X_t) \bigr)  \ud s
   -
 \int_t^{t+h}    f_s(X_s^0,X_t,\mu_s)   \ud s
+ \sigma_0 \int_t^{t+h} v_s^0(X_t) \cdot \ud B_s^0. 
 \end{split} 
 \end{equation*}  
We then sum the last two displays
over a mesh of stepsize $h$. 
We handle the Lebesgue integrals by using the fact that 
$\sup_{0 \le t \le T} \| u_t \|_{\mathfrc{s}} \in L^\infty(\Omega^0,{\mathbb P}^0)$. In fact, 
the most difficult term to handle is the stochastic integral. 
We have, for any deterministic exponent 
$\eta \in (0,1)$, with $\eta < \lfloor \curss \rfloor - d/2-1$,
$${\mathbb E}^0 \int_t^{t+h} \vert v^0_s(X_t) - v^0_s(X_s) 
\vert^2 \ud 
s \leq 
{\mathbb E}^0 \biggl[ 
\Bigl( 
1 \wedge 
\sup_{\vert s-r \vert \le h} 
\vert X_s - X_r \vert^{2 \eta}
\Bigr) 
\int_t^{t+h} \| v^0_s 
\|^2_{\eta} \ud s\biggr].$$
Using the fact that 
$\beta$ satisfies 
\eqref{eq:Markov:state:equation:BMO} 
together with the bound
${\mathbb E}^0 \int_{[0,T]} \| v_s^0 \|_{\lfloor \mathfrc{s} \rfloor-d/2-1}^2 \ud s < \infty$, we can easily 
sum the above right-hand side 
over a mesh of stepsize $h$
and 
let 
$h$ tend to $0$. 
%and
%${\mathbb E}^0 \int_{[0,T]} \| v_s^0 \|_{\lfloor \mathfrc{s} \rfloor-d/2-1}^2 \ud s < \infty$,
%{\color{blue}Chenchen:Is it $\| v_t^0 \|_{\mathfrc{s}-d/2-1}$?}, 
We obtain 
\eqref{eq:verif:argument:ito:wentzell}.}). We obtain, under 
${\mathbb P}_{(\alpha^0,\alpha)}^0 \otimes 
  {\mathbb P}_{\cdot,\beta}$, for all $t \in [0,T]$, 
%\fdm{I think the assumptions allow us to do this: requires $\nabla_x v$ to be continuous in $x$, but not in time}
\begin{align}
\label{eq:verif:argument:ito:wentzell} 
&u_T(X_T) - u_t(X_t) + \int_t^T \bigl[  f_s(X_s^0,X_s,\mu_s) + L(X_s,\beta_s) \bigr] \ud s  
\\
&\geq \int_t^T \Bigl[ H\bigl(X_s,\nabla_x u_s(X_s)\bigr) +   \beta_s   \cdot \nabla_x u_s(X_s) + L(X_s,\beta_s) \Bigr]
\ud s
  + \int_t^T \nabla_x u_s(X_s) \cdot   \ud B_s
+ \int_t^T v_s^0(X_s) \cdot   \ud B_s^0,
%\\
%&\text{\color{blue}Chenchen: Is the last term on the above line $\sigma_0\int_t^Tv_s^0(X_s)\cdot dB_s^0$?}
\nonumber
\end{align} 
with the short-hand notation $(\beta_s  := \beta(s,X_s^0,X_s,\mu_s))_{0 \leq s \leq T}$.

By construction, see
\eqref{eq:Hamiltonians}, the integrand in the $\ud s$ integral is non-negative. Taking expectation 
(which is licit thanks to 
the properties of 
${\boldsymbol u}$ stated in 
Definition 
\ref{def:forward-backward=MFG:solution}), we deduce that 
$J_{\rm w}({ \beta} ; {\mathbb P}^0_{(\alpha^0,\alpha)} ) \geq 
{\mathbb E}_{(\alpha^0,\alpha)}^0 \otimes {\mathbb E}_{\cdot,\beta} [u_0(X_0)]=
{\mathbb E}_{(\alpha^0,\alpha)}^0 [(u_0,\mu_0)]$, 
with the inequality becoming an equality 
when $\beta$ is equal to 
$\alpha$ (see 
\eqref{eq:alpha0:alpha:equilibria} for the definition of the latter), i.e., 
$J_{\rm w}({ \beta} ; {\mathbb P}^0_{(\alpha^0,\alpha)} ) \geq 
J_{\rm w}({ \alpha} ; {\mathbb P}^0_{(\alpha^0,\alpha)} ) $. 

Next, we prove item 2 in Definition 
\ref{def:Nash}. For this, we consider 
a new Markov feedback function $\beta^0$ such that the pair 
$(\beta^0,\alpha)$ is admissible. Then, we rewrite
the forward-backward system 
   \eqref{eq:major:FB:1} solved by 
   $({\boldsymbol X}^0,{\boldsymbol \mu},{\boldsymbol B}^0,{\boldsymbol Y}^0,{\boldsymbol Z}^0)$   
   on $(\Omega^0_{\textrm{\rm canon}},{\mathbb P}^0_{(\alpha^0,\alpha)})$ (see the first step) in the form
%\fdm{(assumptions to do this? Should we assume boundednes or BMO?}
\begin{equation}
\label{eq:proof:!:Nash:auxiliary:bsde:1}
\begin{split}
&\ud X_t^0 = \beta^0(t,X_t^0,\mu_t) dt + \sigma_0 \ud \tilde B_t^0
\\
&\ud Y_t^0 =  - \Bigl( f_t^0(X_t^0,\mu_t) + L^0\bigl(X_t^0,- \nabla_p H^0\bigl( X_t^0, Z_t^0\bigr) \bigr)
\Bigr) \ud t 
 +
  \Bigl( \nabla_p H^0 (X_t^0,Z_t^0)  + \beta^0(t,X_t^0,\mu_t) \Bigr) \cdot Z_t^0 \ud t
  \\
&\hspace{20pt} +
\sigma_0 Z_t^0 \cdot \ud \tilde{B}_t^0,
\end{split}
\end{equation} 
where 
\begin{equation}
\label{eq:proof:!:Nash:auxiliary:bsde:1:2} 
\tilde B_t^0 := B_t^0 - \sigma_0^{-1} \Bigl( \nabla_p H^0 (X_t^0,Z_t^0)  + \beta^0(t,X_t^0,\mu_t) \Bigr) \ud t, \quad t \in [0,T].
\end{equation}
We justify in the fourth step below that
we can apply Girsanov theorem 
to prove that 
$\tilde{\boldsymbol B}^0$ 
 is a Brownian motion under the probability measure 
\begin{equation}
\label{eq:proof:!:Nash:auxiliary:bsde:1:3} 
\frac{\ud \tilde{{\mathbb P}}^0_{(\alpha^0,\alpha)}}{\ud {\mathbb P}_{(\alpha^0,\alpha)}^0} 
= 
{\mathscr E}_T \biggl(  \sigma_0^{-1} \int_0^\cdot \Bigl( \nabla_p H^0 (X_t^0,Z_t^0)  + \beta^0(t,X_t^0,\mu_t) \Bigr) \cdot \ud B_t^0 
 \biggr). 
 \end{equation}  
And then, we 
observe that the $\ud t$ term in the 
backward equation appearing in
\eqref{eq:proof:!:Nash:auxiliary:bsde:1}
 is greater than 
$- [ f_t^0(X_t^0,\mu_t) + L^0(X_t^0,\beta^0(t,X_t^0,\mu_t))]$, which 
follows from
the convexity of $L^0$: 
\begin{equation*}
\begin{split} 
& L^0\bigl(X_t^0,\beta^0(t,X_t^0,\mu_t)\bigr)
\\
&\geq L^0\bigl(X_t^0,- \nabla_p H^0\bigl( X_t^0, Z_t^0\bigr) \bigr)
 +
  \Bigl( \nabla_p H^0 (X_t^0,Z_t^0)  + \beta^0(t,X_t^0,\mu_t) \Bigr) \cdot \nabla_\alpha L^0\bigl(X_t^0, - \nabla_p H^0( X_t^0, Z_t^0)\bigr)
  \\
  &= 
  L^0\bigl(X_t^0,- \nabla_p H^0\bigl( X_t^0, Z_t^0\bigr) \bigr)
 -
  \Bigl( \nabla_p H^0 (X_t^0,Z_t^0)  + \beta^0(t,X_t^0,\mu_t) \Bigr) \cdot  
  Z_t^0.% {\color{blue}\text{Chenchen: Is it $L^0-(\nabla_pH^0+\beta^0)$?}}
\end{split}
\end{equation*} 
Observe now that
the stochastic integral 
in 
\eqref{eq:proof:!:Nash:auxiliary:bsde:1}
has zero expectation 
under $\tilde{\mathbb P}^0_{(\alpha^0,\alpha)}$
because 
${\mathbb E}^0[ 
\vert \int_{[0,T]} \vert Z_t^0 \vert^2 \ud t \vert^p ] < \infty$
for any $p \geq 1$ and the change of measure 
in \eqref{eq:proof:!:Nash:auxiliary:bsde:1:3} has a finite exponential moment, see \cite[Theorem 2.2]{Kazamaki} and the fourth step below for the proof of the related BMO property). This suffices to say that 
\begin{equation*}
Y_0^0 \leq \tilde{\mathbb E}^0_{(\alpha^0,\alpha)}  
\biggl[ g^0(X_T^0,\mu_T) + \int_0^T\Bigl(  f^0(X_t^0,\mu_t) + L^0\bigl(X_t^0,\beta^0(t,X_t^0,\mu_t)\bigr) \Bigr) \ud t \biggr].  
\end{equation*} 
Observing that the law of $({\boldsymbol X}^0,{\boldsymbol \mu})$ under $\tilde{\mathbb P}^0_{(\alpha^0,\alpha)}$
is the same as the law of $({\boldsymbol X}^0,{\boldsymbol \mu})$ under ${\mathbb P}^0_{(\beta^0,\alpha)}$ (because 
$(\beta^0,\alpha)$ is admissible, see Definition 
\ref{def:admissibility:pair}), 
%with the latter being associated 
%to $(\beta^0,\alpha)$ by means of Definition \ref{def:admissibility:pair}, 
%namely
%\begin{equation*} 
%\begin{split}
%&\ud \tilde X_t^0 = \beta^0(t, \tilde X_t^0,\tilde \mu_t) dt + \sigma_0 \ud   B_t^0
%\\
%&\partial_t \tilde\mu_t = \tfrac12 \Delta_{x_0} \tilde \mu_t - \textrm{\rm div}_{x_0} \bigl( \alpha(t,\tilde X_t^0,\cdot,\tilde \mu_t) \tilde\mu_t \bigr), \quad t \in [0,T], 
%\end{split} 
%\end{equation*}
%with $(x_0,\mu_0)$ as initial condition, 
this shows that 
$J_{\rm w}^0({\alpha}^0, \alpha) = Y_0^0  
\leq 
J_{\rm w}^0({ \beta}^0,   {\alpha})$, as required. 
\vspace{5pt} 
\\
\textit{Third Step.} 
The last step is to prove uniqueness. 
To do so, we take an admissible pair $(\alpha^0,\alpha)$ 
(different from the one constructed right above, but denoted in the same manner) 
satisfying the requirements of Definition 
\ref{def:Nash}. By item (ii) in the statement
and with   $({\boldsymbol X}^0,{\boldsymbol \mu},{\boldsymbol B}^0)$  denoting again the canonical process   on $\Omega^0_{\textrm{\rm canon}}$, 
we know that, under the completion of 
${\mathbb P}_{(\alpha^0,\alpha)}^0$ (still denoted ${\mathbb P}_{(\alpha^0,\alpha)}^0$), the process 
 $({\boldsymbol X}^0,{\boldsymbol \mu})$
 is adapted to the augmentation of the filtration 
 generated by ${\boldsymbol B}^0$. This makes it possible to solve the backward stochastic HJB equation
within the same class as in Definition 
\ref{def:forward-backward=MFG:solution}
(solvability of this equation is explained in 
Subsection 
\ref{subse:HJB:system:analysis:apriori}):
\begin{equation}
\label{eq:verif:backward:HJB} 
\begin{split} 
&\ud_t u_t(x) =\Bigl(  -  \tfrac12 \Delta_x u_t(x) + H\bigl(x,\nabla_x u_t(x) \bigr)  - f_t(X_t^0,x,\mu_t)  \Bigr) \ud t
+ \sigma_0 v_t^0(x) \cdot \ud B_t^0, \quad (t,x) \in [0,T] \times {\mathbb T}^d, 
\\
&u_T(x) = g(X_T^0,x,\mu_T).
\end{split} 
\end{equation} 
Then, by expanding the duality product $((u_t,\mu_t))_{0 \le t \le T}$ (or by expanding 
$(u_t(X_t))_{0 \le t \le T}$ 
under ${\mathbb P}_{(\alpha^0,\alpha)}^0 \otimes 
  {\mathbb P}_{\cdot,\alpha}$), we can reproduce the verification argument used in the second step to show that necessarily, 
for
$\textrm{\rm Leb} \times 
{\mathbb P}^0_{(\alpha^0,\alpha)}$-almost every $(t,\omega^0) \in [0,T] \times \Omega^0_{\textrm{\rm canon}}$, 
\begin{equation*} 
\alpha(t,X_t^0,x,\mu_t) = - \nabla_p H \bigl( X_t^0,  \nabla_x u_t(x) \bigr), \quad x \in {\mathbb T}^d.
\end{equation*} 
Combined with the second equation in 
\eqref{eq:Markov:state:equation}, this suffices to show that the pair $({\boldsymbol X}^0,{\boldsymbol \mu})$
together with $({\boldsymbol u},{\boldsymbol v}^0)$ introduced in 
\eqref{eq:verif:backward:HJB} solves 
\eqref{eq:minor:FB:2} under ${\mathbb P}_{(\alpha^0,\alpha)}^0$. 

We now handle the major player. As above, we can solve,
under 
${\mathbb P}_{(\alpha^0,\alpha)}^0$,
the BSDE
\begin{equation}
\label{eq:proof:!:Nash:auxiliary:bsde:2} 
\begin{split}
&\ud Y_t^0 =  - \Bigl( f_t^0(X_t^0,\mu_t) + L^0\bigl(X_t^0,\alpha^0(t, X_t^0, \mu_t ) \bigr)
\Bigr) \ud t + 
\sigma_0 Z_t^0 \cdot \ud B_t^0, \quad t \in [0,T], 
\\
&Y_T^0 = g^0(X_T^0,\mu_T).
\end{split}
\end{equation} 
Under 
the standing assumption 
(see in particular 
\eqref{eq:prop:characterization:2} in item (i) of the statement)
and by Lemma 
\ref{lem:weak:uniqueness:forward:equation}, the state equation
\eqref{eq:Markov:state:equation}
has at most one weak solution, for any starting point in $(t,x_0,\mu) \in [0,T] \times {\mathbb R}^d \times {\mathcal P}({\mathbb T}^d)$. 
By item (ii) in the statement, we know that 
equation \eqref{eq:Markov:state:equation}
is already assumed to have at least one strong solution, and this for any 
starting point in $(t,x_0,\mu) \in [0,T] \times {\mathbb R}^d \times {\mathcal P}({\mathbb T}^d)$. 
By a straightforward modification of the Yamada-Watanabe theorem, we deduce that 
the state equation
\eqref{eq:Markov:state:equation}
is uniquely strongly solvable for any starting point in $(t,x_0,\mu) \in [0,T] \times {\mathbb R}^d \times {\mathcal P}({\mathbb T}^d)$. And then, 
by 
adapting 
the proof of 
\cite[Proposition 1.31]{CarmonaDelarue_book_II} (which is itself 
inspired from the remark below \cite[Theorem 6.2.2]{StroockVaradhan}), 
strong uniqueness implies that the 
solution to 
\eqref{eq:Markov:state:equation}
forms a strong Markov process. Then, 
following once again 
\cite[Proposition 3.2 \& Theorem 3.4]{ImkellerReveillacRichter},  we can find a new 
Borel function 
$\varphi^0 : [0,T] \times {\mathbb R}^d \times {\mathcal P}({\mathbb T}^d) \rightarrow {\mathbb R}^d \otimes {\mathbb R}^d$  such that, for any $s \in [0,T]$,  
\begin{equation}
 \label{eq:proof:!:Nash:auxiliary:bsde:2:4} 
\begin{split}
&{\mathbb P}^0 \Bigl( \bigl\{ Z_s^{0}  = \varphi^0(s,X_s^{0},\mu_s) \bigr\} \Bigr) = 1.
\end{split}
\end{equation} 
%which shows the existence of a Markov feedback function for the process ${\boldsymbol X}^0$. 

Recalling the assumption 
\eqref{eq:Markov:state:equation:BMO}, it is quite easy to prove that 
$\sup_{0 \leq t \leq T} \vert Y_t^0 \vert  \in L^\infty(\Omega^0,{\mathbb P}^0)$. And then,
expanding $(\vert Y_t^0\vert^2)_{0 \le t \le T}$ by means of It\^o's formula, 
we obtain $\| \int_0^{\cdot} Z_t^0 \ud B_t^0 \|_{\textrm{\rm BMO}} < \infty$, and 
then 
$\| \int_0^{\cdot} \nabla_p H^0(X_t^0,Z_t^0) \cdot  \ud B_t^0 \|_{\textrm{\rm BMO}} < \infty$.

We then define the tilted noise 
\begin{equation}
\label{eq:proof:!:Nash:auxiliary:bsde:2:1}  
\tilde{B}_t^0 := B_t^0 + \sigma_0^{-1} \Bigl( \nabla_p H^0 (X_t^0,Z_t^0)  + \alpha^0(t,X_t^0,\mu_t) \Bigr) \ud t,
\quad t \in [0,T]. 
\end{equation} 
We justify in the fourth step below
that
we can apply Girsanov theorem 
to prove that 
$\tilde{\boldsymbol B}^0$ 
 is a Brownian motion under the probability measure 
\begin{equation}
\label{eq:proof:!:Nash:auxiliary:bsde:2:2}  
\frac{\ud \tilde{{\mathbb P}}_{(\alpha^0,\alpha)}^0}{\ud {\mathbb P}_{(\alpha^0,\alpha)}^0} 
= 
{\mathscr E}_T \biggl( - \sigma_0^{-1} \int_0^\cdot \Bigl( \nabla_p H^0 (X_t^0,Z_t^0)  + \alpha^0(t,X_t^0,\mu_t) \Bigr) \cdot \ud B_t^0 
 \biggr). 
 \end{equation}  
And we rewrite the BSDE in \eqref{eq:proof:!:Nash:auxiliary:bsde:2} as 
\begin{equation}
\label{eq:proof:!:Nash:auxiliary:bsde:2:2:11}  
\begin{split}
&\ud Y_t^0 =  - \Bigl( f_t^0(X_t^0,\mu_t) + L^0\bigl(X_t^0,\alpha^0(t, X_t^0, \mu_t ) \bigr) 
\Bigr) \ud t 
-
\Bigl( \nabla_p H^0 (X_t^0,Z_t^0)  + \alpha^0(t,X_t^0,\mu_t) \Bigr)
\cdot 
Z_t^0
 \ud t
\\
&\hspace{30pt} + 
\sigma_0 Z_t^0 \cdot \ud \tilde{B}_t^0, \quad t \in [0,T]. 
\end{split}
\end{equation} 
As before, we observe from the  (strict) convexity property of $L^0$ that 
\begin{equation*} 
\begin{split}
&L^0\bigl(X_t^0,\alpha^0(t, X_t^0, \mu_t ) \bigr) 
+
\Bigl( \nabla_p H^0 (X_t^0,Z_t^0)  + \alpha^0(t,X_t^0,\mu_t) \Bigr)
\cdot 
Z_t^0
 \geq 
L^0\bigl(X_t^0, - \nabla_p H^0 (X_t^0,Z_t^0) \bigr),
\end{split} 
\end{equation*}
with equality if and only if $\nabla_p H^0 (X_t^0,Z_t^0)  + \alpha^0(t,X_t^0,\mu_t) = 0$. 
We then have (the BMO condition, which can be transferred from the original probability measure 
${\mathbb P}_{(\alpha^0,\alpha)}^0$
to the new probability measure 
$\tilde{\mathbb P}_{(\alpha^0,\alpha)}^0$
--see Theorem \cite[Theorems 2.3 and 3.6]{Kazamaki}--,
 makes it possible to take expectation in 
\eqref{eq:proof:!:Nash:auxiliary:bsde:2:2:11}) 
\begin{equation}
\label{eq:J0:uniqueness:cost:3rd:part} 
J^0_{\textrm{\rm w}}({\alpha}^0,\alpha)
= Y_0^0 
\geq 
\tilde{\mathbb E}^0_{(\alpha^0,\alpha)}
\biggl[ g^0(X_T^0,\mu_T) + \int_0^T \Bigl( f_t(X_t^0,\mu_t) + L^0\bigl(X_t^0,-\nabla_p H^0(X_t^0,Z_t^0) \bigr) \Bigr)
\ud t \biggr].
\end{equation}  
Under $\tilde{\mathbb P}_{(\alpha^0,\alpha)}^0$,
 $({\boldsymbol X}^0,{\boldsymbol \mu},\tilde{\boldsymbol B}^0)=(X_t^0,\mu_t,\tilde{B}^0_t)_{0 \le t \le T}$ solves the system 
\begin{equation*}
\begin{split}
&\ud X_t^0 = -\nabla_p H^0\bigl( X_t^0, 
 \varphi^0(s,X_s^{0},\mu_s) 
 \bigr) \ud t + \ud \tilde{B}_t^0, 
\\
&\partial_t \mu_t = \tfrac12 \Delta_{x_0} \mu_t - \textrm{\rm div}_{x_0} \bigl( \alpha(t,X_t^0,\cdot,\mu_t) \mu_t \bigr), \quad t \in [0,T].
\end{split}
\end{equation*}
Here, we observe that 
$
\| \int_0^{\cdot} \nabla_p H^0(X_t^0,-\nabla_p H^0\bigl( X_t^0, 
 \varphi^0(s,X_s^{0},\mu_s) ) \cdot  \ud \tilde B_t^0 \|_{\textrm{\rm BMO}} 
=
\| \int_0^{\cdot} \nabla_p H^0(X_t^0,Z_t^0) \cdot  \ud \tilde B_t^0 \|_{\textrm{\rm BMO}} < \infty$
(again, this follows from 
Theorem \cite[Theorem 3.6]{Kazamaki}). 
This provides one weak solution to the state equation
\eqref{eq:Markov:state:equation}, driven by 
$\beta^0(t,x_0,\mu) : = 
-  \nabla_p H^0(x_0,\varphi^0(t,x_0,\mu))$, 
that satisfies the BMO condition 
\eqref{eq:Markov:state:equation:BMO}.
By Lemma
\ref{lem:weak:uniqueness:forward:equation}, this 
weak solution is necessarily unique and, therefore, the pair 
$(\beta^0,\alpha)$ is
admissible in the sense of Definition 
\ref{def:admissibility:pair}. Then, the right-hand side  in 
\eqref{eq:J0:uniqueness:cost:3rd:part}
coincides 
with $J^0_{\textrm{\rm w}}(\beta^0,\alpha)$, 

 By item 2 in 
 Definition \ref{def:Nash}, 
the inequality in 
\eqref{eq:J0:uniqueness:cost:3rd:part}
must become an equality and then, for almost every $(t,\omega^0) \in [0,T] \times \Omega^0$ under the measure 
$\textrm{\rm Leb} \times {\mathbb P}^0$, it holds 
$\nabla_p H^0 (X_t^0,Z_t^0)  + \alpha^0(t,X_t^0,\mu_t) = 0$, which proves that 
$({\boldsymbol X}^0,{\boldsymbol \mu})$ coincides with the solution
of 
\eqref{eq:major:FB:1}--\eqref{eq:minor:FB:2}.
\vspace{5pt}
\\
\noindent \textit{Fourth Step.} 
We now justify the application of Girsanov theorem in 
\eqref{eq:proof:!:Nash:auxiliary:bsde:1},
\eqref{eq:proof:!:Nash:auxiliary:bsde:1:2}
and 
\eqref{eq:proof:!:Nash:auxiliary:bsde:1:3}. 
We start from 
\eqref{eq:Markov:state:equation}
for an admissible pair $(\alpha^0,\alpha)$ as in the statement of Definition 
\ref{def:admissibility:pair},  with 
$\alpha$ satisfying 
\eqref{prop:lem:weak:uniqueness:forward:equation}
(which is the case in 
\eqref{eq:alpha0:alpha:equilibria}
because of 
\eqref{eq:prop:characterization:1}). 
Then,  Lemma %\ref{lem:alpha:weak:solution}{\color{blue}Chenchen: Is it Lemma 
\ref{lem:weak:uniqueness:forward:equation} says that the BMO condition 
\eqref{eq:Markov:state:equation:BMO:0} is satisfied under 
$\bar{\mathbb P}^0$. This observation applies here 
to both 
$\alpha^0$ 
as in \eqref{eq:alpha0:alpha:equilibria} and $\alpha^0\equiv \beta^0$ (with 
$\beta^0$ as in \eqref{eq:proof:!:Nash:auxiliary:bsde:1}). 
In particular, choosing now 
$\alpha^0$ as in 
 \eqref{eq:alpha0:alpha:equilibria}, 
the process $(\int_{[0,t]} [ \beta^0(s,X_s^0,\mu_s) - \alpha^0(s,X_s^0,\mu_s)] \cdot \ud B_s^0)_{0 \le t \le T}$ is 
BMO under $\bar{\mathbb P}^0$. Let now 
\begin{equation*} 
\check{\mathbb P}^0 : = 
{\mathscr E}_T\biggl( \int_0^\cdot 
\alpha^0(s,X_s^0,\mu_s) \cdot \ud B_s^0
\biggr) \cdot \overline{\mathbb P}^0. 
\end{equation*} 
By \cite[Theorem 3.6]{Kazamaki}, we know that 
$(\int_{[0,t]} [\beta^0(s,X_s^0,\mu_s) - \alpha^0(s,X_s^0,\mu_s)] \cdot \ud \check{B}_s^0)_{0 \le t \le T}$ is BMO under 
$\check{\mathbb P}^0$, where 
$\check{\boldsymbol B}^0=(\check{ B}^0_t := B_t^0 - \int_0^t \alpha^0(s,X_s^0,\mu_s)  \ud s)_{0 \le t \le T}$
is a Brownian motion under $\check{\mathbb P}^0$. 
Since $\check{\mathbb P}^0 \circ ({\boldsymbol X}^0,{\boldsymbol \mu},\check{\boldsymbol B}^0)^{-1} 
=  {\mathbb P}^0_{(\alpha^0,\alpha)}$, 
this shows that $(\int_{[0,t]} [\beta^0(s,X_s^0,\mu_s) - \alpha^0(s,X_s^0,\mu_s)] \cdot \ud B_s^0)_{0 \le t \le T}$ is BMO under 
${\mathbb P}_{(\alpha^0,\alpha)}^0$, which suffices to apply Girsanov theorem 
in 
\eqref{eq:proof:!:Nash:auxiliary:bsde:1},
\eqref{eq:proof:!:Nash:auxiliary:bsde:1:2}
and 
\eqref{eq:proof:!:Nash:auxiliary:bsde:1:3}. 

We now proceed in a similar manner to 
justify the Girsanov transformation in 
\eqref{eq:proof:!:Nash:auxiliary:bsde:2:1}  and 
\eqref{eq:proof:!:Nash:auxiliary:bsde:2:2}. 
In fact, it suffices to apply the same argument as above but with $\alpha^0$ 
as in the third step, see \eqref{eq:proof:!:Nash:auxiliary:bsde:2},
and with $\beta^0(t,x_0,\mu) = 
-  \nabla_p H^0(x_0,\varphi^0(t,x_0,\mu))$, see \eqref{eq:proof:!:Nash:auxiliary:bsde:2:4}. 
%By Lemma \ref{lem:weak:uniqueness:forward:equation}, the law of the system right above is independent of the choice of the driving Brownian motion. In particular, 
%the law of 
% $({\boldsymbol X}^0,{\boldsymbol \mu})$ under $\check{\mathbb P}^0$
% is the same as the law of 
% 
% 
% under ${\mathbb P}^0$. 
\end{proof}

\subsection{System of master equations}

Following the analysis performed in 
\cite{Cardaliaguet:Cirant:Porretta:JEMS}, we
associate with the Major/Minor MFG 
a system of master equations. Formally, it reads as a pair of two equations for the value $V^0$ to the major player 
and the value $V$ to the minor player. 

The equation for $V^0$ reads
\begin{equation} 
\label{eq:major:1}
\begin{split} 
&\partial_t V^0(t,x_0,\mu) + 
\tfrac12 \sigma_0^2 \Delta_{x_0} 
V^0(t,x_0,\mu) - H^0 \bigl( x_0, 
  \nabla_{x_0} V^0(t,x_0,\mu) \bigr)  + f_t^0(x_0,\mu)
\\
&\hspace{15pt}+ 
\int_{{\mathbb T}^d}
\Bigl\{ \tfrac12
{\rm div}_y(\partial_{\mu}V^0(t,x_0,\mu,y)) -\partial_\mu V^0(t,x_0,\mu,y) \cdot \nabla_p H \bigl(y, \nabla_x V(t,x_0,y,\mu) \bigr) \Bigr\} \ud \mu(y) =0, 
\\
&V^0(T,x_0,\mu) = g^0(x_0,\mu),
\end{split}
\end{equation}
for $(t,x_0,\mu) \in [0,T] \times {\mathbb R}^d \times {\mathcal P}({\mathbb T}^d)$. 

The equation for $V$ is
\begin{equation} 
\label{eq:minor:1}
\begin{split} 
&\partial_t V(t,x_0,x,\mu) + 
\tfrac{1}2 
\Delta_x 
V(t,x_0,x,\mu) +\tfrac12 {\sigma_0^2} \Delta_{x_0}V(t,x_0,x,\mu)- H \bigl( x_0,x, \nabla_x V(t,x_0,x,\mu) \bigr)+  f_t(x_0,x,\mu)
\\
&\hspace{15pt} - \nabla_p H^0\big( x_0,  \nabla_{x_0} V^0(t,x_0,\mu)\big) \cdot \nabla_{x_0} V(t,x_0,x,\mu) 
\\
&\hspace{15pt} + \int_{{\mathbb T}^d}
\Bigl\{ \tfrac12 {\rm div}_y(\partial_\mu V(t,x_0,x,\mu,y)) -\partial_\mu V(t,x_0,x,\mu,y) \cdot 
\nabla_p H \bigl( y, \nabla_x V(t,x_0,y,\mu)\bigr) \Bigr\} \ud \mu(y) =0, 
\\
&V(T,x_0,x,\mu) = g(x_0,x,\mu),
\end{split}
\end{equation} 
for $(t,x_0,x,\mu) \in [0,T] \times {\mathbb R}^d \times {\mathbb T}^d \times {\mathcal P}({\mathbb T}^d)$.

The following statement clarifies the connection between 
\eqref{eq:major:1}--\eqref{eq:minor:1}
and 
\eqref{eq:major:FB:1}--\eqref{eq:minor:FB:2}.

\begin{proposition}
\label{prop:verif}
Assume that the master equation
\eqref{eq:major:1}--\eqref{eq:minor:1}
has a classical solution $(V^0,V)$ in the sense that 
\begin{enumerate}
\item $(t,x_0,\mu) \mapsto (\partial_t V^0(t,x_0,\mu),  \nabla_{x_0} V^0(t,x_0,\mu), \nabla^2_{x_0} V^0(t,x_0,\mu))$ is continuous on $[0,T] \times {\mathbb R}^d \times {\mathcal P}({\mathbb T}^d)$ (with the latter 
factor being equipped with any distance metricizing weak convergence on ${\mathcal P}({\mathbb T}^d)$, for instance ${\mathbb W}_1$); 
$(t,x_0,\mu,y) \mapsto (\partial_\mu V^0(t,x_0,\mu,y), \nabla_y \partial_\mu V^0(t,x_0,\mu,y))$ is continuous on $[0,T] \times {\mathbb R}^d \times {\mathcal P}({\mathbb T}^d) \times {\mathbb T}^d$; 
\item $(t,x_0,x,\mu) \mapsto (\partial_t V(t,x_0,x,\mu),  \nabla_{x_0} V(t,x_0,x,\mu), \nabla_{x} V(t,x_0,x,\mu),  \nabla^2_{x_0} V(t,x_0,x,\mu),\nabla_{x}^2 V(t,x_0,x,\mu) )$ is continuous on $[0,T] \times {\mathbb R}^d \times {\mathbb T}^d \times {\mathcal P}({\mathbb T}^d)$; 
$(t,x_0,x,\mu,y) \mapsto (\partial_\mu V(t,x_0,x,\mu,y), \nabla_y \partial_\mu V(t,x_0,x,\mu,y))$ is continuous on $[0,T] \times {\mathbb R}^d \times {\mathbb T}^d \times {\mathcal P}({\mathbb T}^d) \times {\mathbb T}^d$. 
\end{enumerate}
Assume also that 
$(t,x_0,\mu) \mapsto \nabla_{x_0} V^0(t,x_0,\mu)$ and $(t,x_0,x,\mu) \mapsto (\nabla_{x_0} V(t,x_0,x,\mu), \nabla_x V(t,x_0,x,\mu))$ are Lipschitz 
continuous with respect to $(x_0,\mu)$ and $(x_0,x,\mu)$ respectively 
(using the distance ${\mathbb W}_1$ to handle the argument $\mu$) 
and that the initial condition 
$X^0_0$ (in Subsection 
\ref{subse:1.1})
 is square-integrable.
Then, 
the triplets $(X_t^0,Y_t^0,Z_t^0)_{0 \le t \le T}$ and 
$(\mu_t,u_t,v_t^0)_{0 \le t \le T}$
obtained by solving, 
on a product structure comprising 
two filtered probability spaces 
$(\Omega^0,{\mathcal F}^0,{\mathbb F}^0,{\mathbb P}^0)$
and
$(\Omega,{\mathcal F},{\mathbb F},{{\mathbb P}})$ equipped with two 
Brownian motions $(B_t^0)_{0 \leq t \leq T}$ 
and $(B_t)_{0 \leq t \leq T}$ with values in ${\mathbb R}^d$, the (coupled forward) equations
\begin{equation}
\label{eq:prop:verif:forward}
\begin{split}
&\ud X_t^0 = - \nabla_p H^0 \bigl( X_t^0, \nabla_{x_0} V^0(t,X_t^0,\mu_t) \bigr)  \ud t + \sigma_0 \ud B_t^0, \quad t \in [0,T], 
\\
&\partial_t \mu_t - \tfrac12 \Delta_x \mu_t - \textrm{\rm div}_x \bigl( \nabla_p H (x, \nabla_x V(t,X_t^0,x,\mu_t)) \mu_t  \bigr) =0, \quad (t,x)\in [0,T] \times {\mathbb T}^d,  
\end{split} 
\end{equation} 
and then by letting 
\begin{equation}
\label{eq:prop:verif:backward}
\begin{split} 
&Y_t^0 := V^0(t,X_t^0,\mu_t), \quad Z_t^0 := \nabla_{x_0} V^0(t,X_t^0,\mu_t), \quad t \in [0,T], 
\\
&u_t(x) := V(t,X_t^0,x,\mu_t), \quad v_t^0(x) :=\nabla_{x_0} V(t,X_t^0,x,\mu_t), \quad (t,x) \in [0,T] \times {\mathbb T}^d, 
\end{split}
\end{equation}
are solutions of 
\eqref{eq:major:FB:1}--\eqref{eq:minor:FB:2}. 
\end{proposition}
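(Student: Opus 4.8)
The plan is a verification argument. Starting from the classical solution $(V^0,V)$, I would form the candidate processes exactly as in \eqref{eq:prop:verif:forward}--\eqref{eq:prop:verif:backward}, differentiate $Y_t^0=V^0(t,X_t^0,\mu_t)$ and $u_t(x)=V(t,X_t^0,x,\mu_t)$ by an It\^o formula that combines the finite-dimensional diffusion ${\boldsymbol X}^0$ with the measure flow ${\boldsymbol\mu}$, and then substitute the two master equations \eqref{eq:major:1}--\eqref{eq:minor:1} to recognise, term by term, the backward equations of \eqref{eq:major:FB:1}--\eqref{eq:minor:FB:2}. The pleasant feature is that ${\boldsymbol\mu}$ carries no martingale part, so the chain rule only needs a first-order ($t$-regular) expansion in the measure variable, for which the continuity of $\partial_\mu V^0$, $\nabla_y\partial_\mu V^0$ (and their analogues for $V$) assumed in hypotheses 1--2 is exactly what is required; the algebraic cancellations are then immediate once the Fenchel identity \eqref{eq:Hamiltonians:Fenchel} is invoked.

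\textbf{Step 1: the forward system is well posed.} First I would check that \eqref{eq:prop:verif:forward} has a unique ${\mathbb F}^0$-adapted solution $({\boldsymbol X}^0,{\boldsymbol\mu})$ with ${\boldsymbol X}^0\in{\mathscr S}^2$ and ${\boldsymbol\mu}$ a continuous ${\mathcal P}({\mathbb T}^d)$-valued flow, so that the candidate processes are well defined. The drift $x_0\mapsto-\nabla_p H^0(x_0,\nabla_{x_0}V^0(t,x_0,\mu))$ is Lipschitz in $(x_0,\mu)$ (because $\nabla_{x_0}V^0$ is assumed Lipschitz and $\nabla_pH^0$ has bounded derivatives by \hyp{A5}), and likewise $x\mapsto\nabla_p H(x,\nabla_x V(t,x_0,x,\mu))$ is Lipschitz in $(x_0,x,\mu)$; a Picard iteration on the pair $({\boldsymbol X}^0,{\boldsymbol\mu})\in{\mathcal C}([0,T];{\mathbb R}^d)\times{\mathcal C}([0,T];{\mathcal P}({\mathbb T}^d))$ then contracts on short intervals and patches to a global solution (equivalently, $\mu_t={\mathcal L}^0$ of an auxiliary McKean--Vlasov diffusion carrying ${\boldsymbol X}^0$ as a parameter process). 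Since the Fokker--Planck operator is uniformly parabolic, $\mu_t$ has a smooth density for $t>0$, which — together with the continuity statements in hypotheses 1--2 — makes all the $y$-integrals appearing below well defined and continuous in $t$, and legitimates testing the weak form of the Fokker--Planck equation against $\delta_\mu V^0(t,x_0,\cdot)$ and $\delta_\mu V(t,x_0,x,\cdot)$.

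\textbf{Step 2: the It\^o computations.} I would then apply the chain rule for functions of a diffusion and a measure flow (in the spirit of \cite[Chapter 5]{CarmonaDelarue_book_I} and \cite{CardaliaguetDelarueLasryLions}) to $Y_t^0=V^0(t,X_t^0,\mu_t)$. Using that $\mu_t$ solves $\partial_t\mu_t=\tfrac12\Delta_x\mu_t+\mathrm{div}_x(\nabla_p H(\cdot,\nabla_x V(t,X_t^0,\cdot,\mu_t))\mu_t)$, the $\mu$-contribution to the drift is exactly the last integral of \eqref{eq:major:1} (with $x_0=X_t^0$); substituting \eqref{eq:major:1} collapses $\partial_tV^0+\tfrac12\sigma_0^2\Delta_{x_0}V^0+(\text{that integral})$ into $H^0(X_t^0,\nabla_{x_0}V^0)-f_t^0(X_t^0,\mu_t)$, and adding the transport term $-\nabla_{x_0}V^0\cdot\nabla_pH^0(X_t^0,\nabla_{x_0}V^0)$ produced by $\ud X_t^0$ and using $H^0(x_0,p)-p\cdot\nabla_pH^0(x_0,p)=-L^0(x_0,-\nabla_pH^0(x_0,p))$ from \eqref{eq:Hamiltonians:Fenchel}, the whole drift becomes $-\big(f_t^0(X_t^0,\mu_t)+L^0(X_t^0,-\nabla_pH^0(X_t^0,Z_t^0))\big)$ with $Z_t^0=\nabla_{x_0}V^0(t,X_t^0,\mu_t)$, and the martingale part is $\sigma_0 Z_t^0\cdot\ud B_t^0$; together with $Y_T^0=g^0(X_T^0,\mu_T)$ and $X_0^0=x_0$ this is precisely \eqref{eq:major:FB:1}. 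The identical computation applied, for each frozen $x\in{\mathbb T}^d$, to $u_t(x)=V(t,X_t^0,x,\mu_t)$ and using \eqref{eq:minor:1}: the $\mu$-integral cancels with its counterpart in \eqref{eq:minor:1}, the $\nabla_pH^0(X_t^0,\nabla_{x_0}V^0)\cdot\nabla_{x_0}V$ coming from $\ud X_t^0$ cancels against the transport term of \eqref{eq:minor:1}, and the drift reduces to $-\tfrac12\Delta_x u_t(x)+H(x,\nabla_x u_t(x))-f_t(X_t^0,x,\mu_t)$ with martingale part $\sigma_0 v_t^0(x)\cdot\ud B_t^0$, $v_t^0(x)=\nabla_{x_0}V(t,X_t^0,x,\mu_t)$; the terminal condition matches and the forward equation of \eqref{eq:minor:FB:2} is literally the second line of \eqref{eq:prop:verif:forward} since $\nabla_x u_t(x)=\nabla_xV(t,X_t^0,x,\mu_t)$.

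\textbf{Step 3 and the main obstacle.} It remains to verify the integrability and measurability demanded by Definition \ref{def:forward-backward=MFG:solution}: ${\boldsymbol X}^0\in{\mathscr S}^2$ from Step 1; $\sup_t|Y_t^0|\in L^\infty$ and boundedness of ${\boldsymbol Z}^0$ (hence $\int_0^\cdot Z_t^0\cdot\ud B_t^0$ BMO) from the a priori boundedness of $V^0$ and $\nabla_{x_0}V^0$ attached to a classical solution; $\sup_t\|u_t\|_{\curss}\in L^\infty$ and ${\mathbb E}^0\int_0^T\|v_t^0\|^2\,\ud t<\infty$ from boundedness of $V$ and $\nabla_{x_0}V$ in the relevant H\"older norms; joint measurability and $x$-regularity of $u$ inherited from hypothesis 2; and the \emph{Bochner} measurability of the ${\mathcal C}^{\curss}({\mathbb T}^d)$-valued process $(u_t)_{0\le t\le T}$ in the non-separable space ${\mathcal C}^{\curss}({\mathbb T}^d)$, handled as in Remark \ref{rem:def:2:9} via the continuity of $(t,x_0,\mu)\mapsto V(t,x_0,\cdot,\mu)$ in a slightly stronger H\"older norm (gained from parabolic smoothing) and approximation by simple processes. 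The routine part is the algebra against the master equations; I expect the real work to be (i) the well-posedness of the coupled forward system \eqref{eq:prop:verif:forward}, and above all (ii) making the chain rule for $V^0(t,X_t^0,\mu_t)$ and $V(t,X_t^0,x,\mu_t)$ fully rigorous under precisely the regularity of hypotheses 1--2 — i.e. differentiating along the Fokker--Planck flow $\mu_t$, justifying the integration by parts in $y$, and keeping the It\^o expansion valid jointly (and measurably) in the frozen parameter $x$ — together with the function-space bookkeeping of Step 3 in the non-separable H\"older setting.
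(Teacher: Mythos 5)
Your Steps 1 and 2 — well-posedness of the forward system via a (conditional) McKean--Vlasov equation, and then the It\^o/chain-rule expansion of $V^0(t,X_t^0,\mu_t)$ and $V(t,X_t^0,x,\mu_t)$ against the two master equations, with the Fenchel identity closing the algebra — match the paper's proof, which likewise solves the auxiliary conditional McKean--Vlasov system, establishes $\mathbb{E}^0[\sup_t|X_t^0|^2]<\infty$, and invokes a chain rule for maps of $(t,X_t^0,\mathcal{L}^0(X_t))$ (Proposition~\ref{prop:ito:formula}) combined with \eqref{eq:major:1}--\eqref{eq:minor:1}.

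The problem is Step 3. You claim $\sup_t|Y_t^0|\in L^\infty$, boundedness of $\boldsymbol{Z}^0$ (hence BMO), and $\sup_t\|u_t\|_{\curss}\in L^\infty$ ``from the a priori boundedness of $V^0$ and $\nabla_{x_0}V^0$ attached to a classical solution''. That boundedness is \emph{not} among the hypotheses of Proposition~\ref{prop:verif}. Items 1--2 only assume continuity of the derivatives, and the final hypothesis only assumes Lipschitz continuity of $\nabla_{x_0}V^0$ (and of $(\nabla_{x_0}V,\nabla_x V)$) in $(x_0,\mu)$; since $x_0$ ranges over the unbounded $\mathbb{R}^d$, a Lipschitz $\nabla_{x_0}V^0$ may grow linearly in $|x_0|$ and $V^0$ quadratically, so no sup-$\omega$ bound is available. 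What the hypotheses and Step 1 do give is an $L^2$ bound: $\mathbb{E}^0[\sup_t(|Z_t^0|^2+\sup_x|v_t^0(x)|^2)]<\infty$, which is exactly \eqref{eq:verif:integrability:Y0:v0} in the paper and is what makes the stochastic integrals true (square-integrable) martingales. The paper then explicitly says, in the remark following the statement, that it does \emph{not} verify all the conditions of Definition~\ref{def:forward-backward=MFG:solution} for these triplets and that doing so ``would require further assumptions on $V^0$ and $V$.'' So the conclusion of the proposition should be read as: the processes satisfy the system \eqref{eq:major:FB:1}--\eqref{eq:minor:FB:2} with genuine martingale parts, not that they satisfy the full Definition~\ref{def:forward-backward=MFG:solution}. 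Your Step 3 should therefore be replaced by the $L^2$ argument just described, and the BMO/$L^\infty$/Bochner-measurability claims dropped.
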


 As the proof shows (see \eqref{eq:verif:integrability:Y0:v0}), the solutions to \eqref{eq:major:FB:1}--\eqref{eq:minor:FB:2} that we obtain in this manner just supply us with `true' martingales in 
the two equations.
Notice that, to simplify, we do not 
prove that the 
the triplets $(X_t^0,Y_t^0,Z_t^0)_{0 \le t \le T}$ and 
$(\mu_t,u_t,v_t^0)_{0 \le t \le T}$
satisfy all the conditions in Definition 
\ref{def:forward-backward=MFG:solution}, as this would be useless at this stage of the paper. Obviously, this would require further assumptions on $V^0$ and $V$.

Our main statement regarding the solvability of the master equation is 
\begin{theorem}
\label{thm:2}
In addition to Assumption \hyp{A}, assume that 
\begin{enumerate}[i.]
\item  the coefficient $(t,x_0,\mu) \mapsto 
f_t^0(x_0,\mu)$ is H\"older continuous in time, uniformly in 
$(x_0,\mu)$; the coefficient $(t,x_0,x,\mu) \mapsto 
f_t(x_0,x,\mu)$ is H\"older continuous in time, 
uniformly in $(x_0,x,\mu)$;
\item $x_0 \mapsto g^0(x_0,\mu)$ has 
H\"older continuous second-order derivatives,
uniformly in $\mu$; $x_0 \mapsto g(x_0,x,\mu)$
has H\"older continuous second-order derivatives,  
uniformly in $(x,\mu)$. 
 \end{enumerate}
Then, for any $T>0$ and for $\sigma_0 \geq \sigma_0^*(T) \in (0,+\infty)$ 
(with the latter being defined as in the statement of Theorem \ref{thm:1}), 
the system 
\eqref{eq:major:1}--\eqref{eq:minor:1}
admits a unique solution in the class of functions 
$(V^0,V)$ 
that satisfy items (1) and (2) in the statement of Proposition  
\ref{prop:verif}
and such that: (3) 
$(t,x_0,\mu) \mapsto \nabla_{x_0} V^0(t,x_0,\mu)$ and $(t,x_0,x,\mu) \mapsto (\nabla_{x_0} V(t,x_0,x,\mu), \nabla_x V(t,x_0,x,\mu))$ are Lipschitz 
continuous with respect to $(x_0,\mu)$ and $(x_0,x,\mu)$ respectively 
(using the distance ${\mathbb W}_1$ to handle the argument $\mu$); 
(4) $(t,x_0,\mu) \mapsto (V^0(t,x_0,\mu),\nabla_{x_0} V^0(t,x_0,\mu))$ is globally bounded, 
$(t,x_0,\mu) \mapsto( \| V(t,x_0,\cdot,\mu)\|_{\curss}, \| \nabla_{x_0} V(t,x_0,\cdot,\mu)\|_{\cursr})$ is globally
bounded for any $\cursr \in [1,\lfloor \curss \rfloor -(d/2+1)] \setminus {\mathbb N}$.

If  Assumption \hyp{B} is also in force, then existence and uniqueness 
hold true on any interval $[0,T]$, $T>0$, and for any $\sigma_0 \geq \sigma_0^*$
(with the latter being defined as in the statement of Theorem \ref{thm:1}).
\end{theorem}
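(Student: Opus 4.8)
The plan is to prove the statement first under Assumption \hyp{B}; the case of Assumption \hyp{A} alone will then follow, exactly as for Theorem \ref{thm:1}, by letting the constant $\kappa$ depend on $T$ so that \hyp{B2}--\hyp{B3} become vacuous. There will be four main steps: first, build a candidate $(V^0,V)$ as the decoupling field of the globally well-posed characteristic system of Theorem \ref{thm:1}; second, upgrade its Lipschitz regularity to the full regularity of items (1)--(4) by linearizing the forward-backward system; third, verify that $(V^0,V)$ solves \eqref{eq:major:1}--\eqref{eq:minor:1} in the classical sense; fourth, deduce uniqueness from Proposition \ref{prop:verif} together with the uniqueness part of Theorem \ref{thm:1}.

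For the first step, fix $\sigma_0 \ge \sigma_0^*$. By Theorem \ref{thm:1}, for every initial datum $(t,x_0,\mu) \in [0,T] \times {\mathbb R}^d \times {\mathcal P}({\mathbb T}^d)$ the system \eqref{eq:major:FB:1}--\eqref{eq:minor:FB:2} has a unique solution in the sense of Definition \ref{def:forward-backward=MFG:solution}, and I set $V^0(t,x_0,\mu) := Y_t^{0,t,x_0,\mu}$, $V(t,x_0,x,\mu) := u_t^{t,x_0,\mu}(x)$; both are deterministic, being ${\mathcal F}_t^0$-measurable for a filtration generated by increments of ${\boldsymbol B}^0$ on $[t,T]$. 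Time-consistency of the unique solution gives $Y_s^{0,t,x_0,\mu} = V^0(s,X_s^{0,t,x_0,\mu},\mu_s^{t,x_0,\mu})$ and $u_s^{t,x_0,\mu}(x) = V(s,X_s^{0,t,x_0,\mu},x,\mu_s^{t,x_0,\mu})$ along characteristics. Estimate \eqref{eq:prop:characterization:1} yields the Lipschitz continuity of $V$ in $\mu$ for ${\mathbb W}_1$, while the a priori gradient bounds established in the course of proving Theorem \ref{thm:1} (uniform in $T$ under \hyp{B}) provide the analogous bound for $V^0$ together with the Lipschitz property (3) and the global bounds (4).

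The second step is the crucial one. To obtain $\nabla_{x_0}^2 V^0$, $\nabla_{x_0}^2 V$, $\nabla_x^2 V$ and $\partial_\mu V^0$, $\partial_\mu V$, $\nabla_y \partial_\mu V^0$, $\nabla_y \partial_\mu V$ --- all continuous in their arguments --- I would differentiate the characteristic system formally with respect to $x_0$ and, in the flat sense, with respect to $\mu$. This produces linear forward-backward systems of the same structure as \eqref{eq:major:FB:1}--\eqref{eq:minor:FB:2}; they are solvable with bounds independent of $T$ by the same BMO / Lasry--Lions arguments used for the nonlinear system, iterated over a mesh whose step is controlled by the a priori gradient bounds, and a standard stability argument identifies their solutions with the derivatives of the flow $(t,x_0,\mu) \mapsto (X^{0,t,x_0,\mu},Y^{0,t,x_0,\mu},\dots,\mu^{t,x_0,\mu},u^{t,x_0,\mu},\dots)$. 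The H\"older-in-time hypotheses (i)--(ii) and Schauder estimates for the linear backward SPDE then give the continuity of $\partial_t V^0$, $\partial_t V$ and of the second-order spatial derivatives, so that $(V^0,V)$ lies in the class described by items (1)--(4). For the third step, fixing $(t,x_0,\mu)$, running \eqref{eq:prop:verif:forward} from this datum and setting $(Y^0,Z^0,u,v^0)$ by \eqref{eq:prop:verif:backward}, I would expand $V^0(s,X_s^0,\mu_s)$ and $V(s,X_s^0,x,\mu_s)$ by the It\^o formula along a flow of measures (chain rule on ${\mathcal P}({\mathbb T}^d)$ plus the It\^o--Wentzell correction from ${\boldsymbol B}^0$); comparing with the dynamics of the decoupling field established in the first two steps and matching the finite-variation parts yields \eqref{eq:major:1}--\eqref{eq:minor:1} pointwise, hence everywhere since $(X_s^0,\mu_s)$ sweeps out all of $(x_0,\mu)$ as the initial datum varies. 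Alternatively, one may first establish the master equations on short intervals via the results of Section \ref{se:5} and then patch, the $T$-independent a priori bounds guaranteeing that the patching does not degenerate; this is the iterative scheme mentioned in the introduction. This proves existence.

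For the fourth step, let $(V^0,V)$ and $(\tilde V^0,\tilde V)$ be two solutions in the stated class. By Proposition \ref{prop:verif}, each generates, for every initial datum $(t,x_0,\mu)$, a solution of \eqref{eq:major:FB:1}--\eqref{eq:minor:FB:2}; the Lipschitz bounds (3) ensure that \eqref{eq:prop:characterization:1} holds and the global bounds (4) supply the remaining integrability and BMO requirements of Definition \ref{def:forward-backward=MFG:solution}, so these are genuine solutions in that sense. The uniqueness part of Theorem \ref{thm:1} then forces the two flows to coincide, whence $V^0(t,x_0,\mu) = Y_t^{0,t,x_0,\mu} = \tilde V^0(t,x_0,\mu)$ and $V(t,x_0,x,\mu) = u_t^{t,x_0,\mu}(x) = \tilde V(t,x_0,x,\mu)$ for every argument; the statement under \hyp{A} alone follows as indicated at the outset. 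I expect the main obstacle to be the second step: propagating the smoothness of the decoupling field --- in particular the existence and uniform continuity of $\partial_\mu V$, $\nabla_y \partial_\mu V$ and $\nabla_{x_0}^2 V$ --- over an arbitrarily long horizon. This rests on the well-posedness, with $T$-independent estimates, of the linearized forward-backward systems, which is precisely where the largeness of $\sigma_0$ (through the BMO control of the Girsanov densities) and the Lasry--Lions monotonicity of $f$ and $g$ are decisive, in the same spirit as the a priori estimates of Section \ref{se:4}.
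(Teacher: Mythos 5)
Your fourth step (uniqueness) is essentially identical to the paper's argument: apply Proposition \ref{prop:verif} to turn any classical solution $(V^0,V)$ in the stated class into a solution of the characteristic system \eqref{eq:major:FB:1}--\eqref{eq:minor:FB:2}, check (via the bounds in items (3)--(4) and the linear theory of Lemmas \ref{lem:minor:regularity} and \ref{lem:reg:v0}) that this is a genuine solution in the sense of Definition \ref{def:forward-backward=MFG:solution}, and conclude from the uniqueness in Theorem \ref{thm:1} together with the representation formula \eqref{eq:representation:weak:solution}. That part is fine as is, modulo the small bootstrapping remark (Proposition \ref{prop:verif} does not directly deliver item 2(ii)--(iii); one must use the linear backward SPDE estimates to upgrade, which the paper flags explicitly).

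For existence, however, your second and third steps are vaguer than the paper's and the key technical mechanism is missing. You propose to obtain the second-order derivatives ($\nabla_{x_0}^2 V^0$, $\nabla_{x_0}^2 V$, $\partial_t V^0$, $\partial_t V$) by ``linearizing the forward--backward system'' and then invoking Schauder estimates. But a single linearization in $(x_0,\mu)$ only produces the \emph{first-order} derivatives ($\nabla_{x_0}V$, $\delta_\mu V$ and their $y$-derivatives), which is what Section \ref{se:4} delivers; getting $\nabla_{x_0}^2 V^0$ by differentiating again would require a well-posed \emph{second} linearization, which neither you nor the paper carries out. What the paper does instead is derive a Kolmogorov/Feynman--Kac--type representation formula for ${\mathcal U}^0$ and ${\mathcal U}$ (Propositions \ref{prop:representation:U0:PDE} and \ref{prop:representation:U:PDE}): with $\mu$ \emph{frozen}, the master field is shown to solve a uniformly parabolic PDE in $(t,x_0)$ with H\"older-continuous coefficients and source terms (built from the already-constructed first-order derivatives), and classical Schauder theory for that finite-dimensional PDE supplies $\partial_t$ and $\nabla_{x_0}^2$ and their joint continuity in all arguments. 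This is exactly the content of Corollary \ref{corollary:U0:PDE} and item iv of Proposition \ref{prop:H:approx}, and it is what is iterated backwards over $[T-(\delta+k\delta'),T]$. Your ``It\^o formula along a flow of measures'' verification (third step) is circular as a means to \emph{construct} the second derivatives --- it presupposes them --- and only makes sense as a verification \emph{after} they are known to exist. Your parenthetical fallback (``establish the master equations on short intervals via the results of Section~\ref{se:5} and then patch'') is the route actually taken, and is the one you should promote to the main line, filling in the Kolmogorov-representation step as the bridge between the probabilistic flow and Schauder theory. You correctly identify that the $T$-independence of the iteration step comes from the a priori Lipschitz bounds of Section \ref{se:4}, so the gap is local (the mechanism for second-order regularity) rather than structural.
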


The proof of Theorem \ref{thm:2} is deferred to Section \ref{se:4}, see Theorem 
\ref{thm:4.10:b} for a refined version.

We now turn to the proof of Proposition \ref{prop:verif}:

\begin{proof}[Proof of Proposition \ref{prop:verif}.]
To establish the solvability of \eqref{eq:prop:verif:forward}, we consider the conditional McKean-Vlasov equation 
\begin{equation*} 
\begin{split}
&\ud X_t^0 = - \nabla_p H^0 \Bigl( X_t^0, \nabla_{x_0} V^0\bigl(t,X_t^0,
{\mathcal L}^0(X_t)\bigr) 
  \Bigr)  \ud t + \sigma_0 \ud B_t^0, 
\\
&\ud X_t = - \nabla_p H 
\Bigl( X_t^0, X_t , \nabla_{x} V\bigl(t,X_t^0,X_t,
{\mathcal L}^0(X_t)\bigr) 
  \Bigr)  \ud t + \ud B_t, \quad t \in [0,T], 
\end{split}
\end{equation*} 
with the same initial conditions $X_0^0$ and $X_0$ as in Subsection \ref{subse:1.1}. 

Because $\nabla_{x_0} V^0$ and $\nabla_x V$ are 
jointly continuous in all the arguments 
and 
Lipschitz continuous 
with respect to $(x_0,\mu)$ and $(x_0,x,\mu)$ respectively 
(Lipschitz continuity in the argument $\mu$ holding true with respect to ${\mathbb W}_1$), the above system has 
a unique solution, see \cite[Proposition 2.8]{CarmonaDelarue_book_II}. 
It satisfies 
\begin{equation}
\label{eq:verif:integrability:X0:X} 
{\mathbb E}^0 \Bigl[ \sup_{0 \le t \le T} \vert X_t^0 \vert^2 \Bigr] < \infty. 
%+{\mathbb E}^0 {\mathbb E} \Bigl[ \sup_{0 \le t \le T} \vert X_t \vert^2 \Bigr] < \infty.
\end{equation} 
(A similar bound holds true for $\sup_{0 \le t \le T} \vert X_t \vert^2$ under 
${\mathbb E}^0 {\mathbb E}$ when ${\boldsymbol X}$ is implicitly regarded as being 
${\mathbb R}^d$-valued, but this bound has little interest since 
${\boldsymbol X}$ is regarded as being 
${\mathbb T}^d$-valued.)
Letting $(\mu_t:={\mathcal L}^0(X_t))_{0 \le t \le T}$, 
this makes it possible to define $(Y_t^0,Z_t^0)_{0 \le t \le T}$ and 
$(u_t,v_t^0)_{0 \le t \le T}$ as in
\eqref{eq:prop:verif:backward}. Combining the integrability condition 
\eqref{eq:verif:integrability:X0:X}
with 
the regularity properties of $\nabla_{x_0} V^0$, $\nabla_{x_0} V$ and $\nabla_x V$, we get 
\begin{equation}
\label{eq:verif:integrability:Y0:v0} 
{\mathbb E}^0 \Bigl[ \sup_{0 \leq t \leq T} \bigl( \vert Z_t^0 \vert^2 +\sup_{x \in {\mathbb T}^d} \vert   \nabla_x V(t,X_t^0,x,\mu_t) \vert^2
+  \sup_{x \in {\mathbb T}^d} \vert v_t^0(x) \vert^2
\bigr) \Bigr] < \infty, 
\end{equation}
The derivation of the forward equation in 
\eqref{eq:minor:FB:2} is straightforward. 
As for the two backward equations
in 
\eqref{eq:major:FB:1}
and
\eqref{eq:minor:FB:2}, they are obtained by combining the chain rule proved in Appendix, see Proposition \ref{prop:ito:formula}, with
the two PDEs 
\eqref{eq:major:1} and \eqref{eq:minor:1}. 
The bound 
\eqref{eq:verif:integrability:Y0:v0} 
 shows that the martingale terms are `true' martingales. 
\end{proof}

\subsection{About Fokker-Planck equations with random coefficients}
\label{subse:proof:FP}
The purpose of this subsection is to prove 
Lemmas 
\ref{lem:alpha:weak:solution}
and
\ref{lem:weak:uniqueness:forward:equation}.

\begin{proof}[Proof of Lemma \ref{lem:alpha:weak:solution}]
We call $\tilde{\mathbb P}$ the (completion of the) probability measure on $\Omega_{\textrm{\rm canon}}$ under which the canonical 
process $({\boldsymbol X},{\boldsymbol B})$ satisfies:
(i) $\tilde{\mathbb P} \circ X_0^{-1} = \mu$; 
(ii) ${\boldsymbol B}$ is an ${\mathbb F}$-Brownian motion starting from $0$; 
(iii) $X_t-X_0=B_t$ for all $t \in [0,T]$. 

For $\omega \in \Omega^0$, 
we then let  
\begin{equation*} 
\tilde{\mathbb P}_{\omega^0} 
:= {\mathscr E}_T \biggl( \int_0^\cdot \alpha\bigl(t,X_t^0(\omega^0),X_t,\mu_t(\omega^0)\bigr) \cdot \ud B_t \biggr) \cdot \tilde {\mathbb P}. 
\end{equation*} 
Since $\alpha$
is bounded, 
$\tilde{\mathbb P}_{\omega^0}$ is a probability measure. 
It is standard to check that, under 
$\tilde{\mathbb P}_{\omega^0}$, the process 
$(\tilde B_t := B_t - \int_0^t 
\alpha(s,X_s^0(\omega^0),X_s,\mu_s(\omega^0)) \ud s)_{0 \le t \le T}$ is an ${\mathbb F}$-Brownian motion starting from $0$, $X_0$
is distributed according to $\mu$ and, for any $t \in [0,T]$, 
\begin{equation}
\label{eq:lem:FP:SDE:X:tildeB} 
\ud X_t = 
\alpha\bigl(t,X_t^0(\omega^0),X_t,\mu_t(\omega^0)\bigr)
\ud t + \ud \tilde B_t, \quad t \in [0,T]. 
\end{equation} 
We then define
\begin{equation*} 
{\mathbb P}_{\omega^0} := \tilde{\mathbb P}_{\omega^0} 
\circ \bigl( {\boldsymbol X},\tilde{\boldsymbol B} \bigr)^{-1}.
\end{equation*}
Using Fubini's theorem, 
it is easy to prove that, for any event $C$ of ${\mathcal B}(\Omega_{\textrm{\rm canon}})$, 
the mapping $\omega^0 \mapsto {\mathbb P}_{\omega^0}(C)$ is measurable, which proves the measurability 
of the mapping 
$\Omega_{\textrm{\rm canon}}^0 \ni \omega^0 \mapsto {\mathbb P}_{\omega^0}$.

Using the fact that the SDE 
\eqref{eq:lem:alpha:weak:solution:SDE}
(with a prescribed initial condition) 
is uniquely strongly (and thus weakly) solvable for any $\omega^0 \in \Omega^0_{\textrm{\rm canon}}$ (because $\alpha$ is bounded), we get that 
${\mathbb P}_{\omega^0}$ is unique. 
\end{proof}

\begin{proof}[Proof of Lemma \ref{lem:weak:uniqueness:forward:equation}]

\textit{First Step.} 
With ${\boldsymbol x}^0 : = (x_t^0)_{0 \leq t \leq T}$ an element of ${\mathcal C}([0,T],{\mathbb R}^d)$, we
associate the Fokker-Planck equation 
\begin{equation*} 
\begin{split}
&\partial_t \tilde \mu_t^{{\boldsymbol x}^0} = \tfrac12 \Delta_{x} \tilde \mu_t^{{\boldsymbol x}^0} - \textrm{\rm div}_{x} \Bigl( \alpha\bigl(t,x_t^0,\cdot,\tilde \mu_t^{{\boldsymbol x}^0} \bigr) \tilde \mu_t^{{\boldsymbol x}^0} \Bigr), \quad t \in [0,T],
\end{split}
\end{equation*}
with $\mu_0$ as initial condition (at time $0$).

By 
\cite{Lacker_ECP} and 
Remark 
\ref{rem:!:Fokker-Planck},
we know that, for any  ${\boldsymbol x}^0\in {\mathcal C}([0,T],{\mathbb R}^d)$, the Fokker-Planck equation right above admits a unique 
solution in the 
same weak sense as in 
Remark
\ref{rem:weak:form:FP}, which can be obtained by iterating the mapping, denoted $\Phi({\boldsymbol x}^0,\cdot)$, that sends an element 
${\boldsymbol \nu} = (\nu_t)_{0 \le t \le T} \in {\mathcal C}([0,T],{\mathcal P}({\mathbb T}^d))$ onto the solution 
${\boldsymbol \mu}=(\mu_t)_{0 \le t \le T}$ 
of the 
equation 
\begin{equation*} 
\begin{split}
&\partial_t  \mu_t = \tfrac12 \Delta_{x}   \mu_t - \textrm{\rm div}_{x} \bigl( \alpha\bigl(t,x_t^0,\cdot,\nu_t \bigr)  \mu_t \bigr), \quad t \in [0,T],
\end{split}
\end{equation*} 
with $\mu_0$ as initial condition. 
By 
Lemma 
\ref{lem:alpha:weak:solution}, $(\mu_t)_{0 \le t \le T}$ is the flow of marginal laws of the 
process ${\boldsymbol X}$ under 
the tilted measure 
\begin{equation*} 
{\mathbb P}_{{\boldsymbol x}^0}
:= 
 {\mathscr E}_T \biggl( \int_0^\cdot \alpha(t,x_t^0,X_0+B_t,\nu_t) \cdot \ud B_t 
\biggr) \cdot \tilde{\mathbb P}, 
\end{equation*} 
where 
$\tilde{\mathbb P}$ is the same probability measure on $\Omega_{\textrm{\rm canon}}$
as in the proof of Lemma \ref{lem:alpha:weak:solution}. Proceeding as in the latter proof, 
we easily deduce that 
${\boldsymbol \mu}$, seen as an element of ${\mathcal C}([0,T],{\mathcal P}({\mathbb T}^d))$, is 
the image by a 
 measurable function $\Phi$ of 
the pair $({\boldsymbol x}^0,{\boldsymbol \nu})$, seen as an element of
${\mathcal C}([0,T],{\mathbb R}^d) \times {\mathcal C}([0,T],{\mathcal P}({\mathbb T}^d))$, i.e., 
${\boldsymbol \mu} = \Phi({\boldsymbol x}^0,{\boldsymbol \nu})$. 
And then, writing
\begin{equation*}
{\boldsymbol \mu}^{{\boldsymbol x}^0} 
= \lim_{n \rightarrow \infty} \bigl[ \Phi({\boldsymbol x}^0,\cdot) \bigr]^{\circ n}({\boldsymbol \nu}^0), 
\end{equation*} 
for an arbitrarily fixed element 
${\boldsymbol \nu}^0 \in {\mathcal C}([0,T],{\mathcal P}({\mathbb T}^d))$,  
we deduce that the mapping 
${\mathcal C}([0,T],{\mathbb R}^d) \ni 
{\boldsymbol x}^0 
\mapsto 
{\boldsymbol \mu}^{{\boldsymbol x}^0} 
\in {\mathcal C}([0,T],{\mathcal P}({\mathbb T}^d))$ is measurable. 

Considering on an arbitrary probability space a $d$-dimensional Brownian 
motion $(\bar B_t^0)_{0 \le t \le T}$ and replacing ${\boldsymbol x}^0$ by 
$(x_0+\bar B_t^0)_{0 \le t \le T}$, we get the existence of a (measurable) solution to 
\begin{equation*}
\begin{split} 
&\ud \bar X_t^0 = \ud \bar B_t^0, 
\\
&\partial_t  \bar \mu_t = \tfrac12 \Delta_{x}   \bar \mu_t  - \textrm{\rm div}_{x} \bigl( \alpha(t,\bar X_t^0,\cdot,\bar \mu_t)  \bar\mu_t \bigr), \quad t \in [0,T],
\end{split} 
\end{equation*} 
with $(x_0,\mu)$ as initial condition (at time $0$). In fact, the solution is (replacing the interval $[0,T]$ by the interval $[0,S]$, for $S$ running between $0$ and 
$T$ in the above measurability argument) progressively-measurable with respect to the filtration  generated by 
$\bar{\boldsymbol B}^0$. Also, it is pathwise unique. 
Below, we call 
$\bar{\mathbb P}^0$ the law of $(\bar{\boldsymbol X}^0,\bar{\boldsymbol \mu},\bar{\boldsymbol B}^0)$
on $\Omega^0_{\textrm{\rm canon}}$. 
This proves (b) in the statement. 
\vspace{5pt}
\\
\textit{Second Step.} 
Assume now that the BMO condition 
\eqref{eq:Markov:state:equation:BMO:0} 
is satisfied 
under 
$\bar{\mathbb P}^0$.
Under 
the latter probability, 
the canonical process $({\boldsymbol X}^0,{\boldsymbol \mu},{\boldsymbol B}^0)$
on $\Omega^0_{\textrm{\rm canon}}$ satisfies 
\begin{equation*}
\begin{split} 
&\ud   X_t^0 = \ud   B_t^0, 
\\
&\partial_t    \mu_t = \tfrac12 \Delta_{x}    \mu_t  - \textrm{\rm div}_{x} \bigl( \alpha(t, X_t^0,\cdot, \mu_t)  \mu_t \bigr), \quad t \in [0,T],
\end{split} 
\end{equation*} 
with $(x_0,\mu)$ as initial condition (at time $0$). We introduce the tilted measure 
\begin{equation*} 
\tilde{\mathbb P}^0 := 
{\mathscr E}_T \biggl(  \int_0^\cdot \alpha^0(t,X_t^0,\mu_t) \cdot \ud B_t^0 \biggr) \cdot \bar{\mathbb P}^0,
\end{equation*} 
which is a probability measure thanks to the BMO condition \eqref{eq:Markov:state:equation:BMO}.
Letting $(\tilde B^0_t := B_t^0 -  \int_0^t \alpha^0(s,X_s^0,\mu_s)\ud s)_{0 \le t \le T}$, we have 
\begin{equation*}
\begin{split}
&\ud X_t^0 = 
\alpha^0(t,X_t^0,\mu_t)\ud t
+
 \ud \tilde B_t^0, 
\\
&\partial_t \mu_t = \tfrac12 \Delta_{x_0} \mu_t - \textrm{\rm div}_{x_0} \bigl( \alpha(t,X_t^0,\cdot,\mu_t) \mu_t \bigr), \quad t \in [0,T], 
\end{split}
\end{equation*} 
with $(x_0,\mu)$ as initial condition (at time $0$), 
and $\tilde{\boldsymbol B}^0$ is a Brownian motion under $\tilde{\mathbb P}^0$. 
It then remains to let
${\mathbb P}^0:= 
\tilde{\mathbb P}^0 \circ ({\boldsymbol X}^0,{\boldsymbol \mu},\tilde{\boldsymbol B}^0)^{-1}$. 
It is a probability measure on
$\Omega^0_{\textrm{\rm canon}}$
and it
 satisfies the requirements of 
Lemma \ref{lem:weak:uniqueness:forward:equation}.
Notice in particular that item (iv) in Definition 
\ref{def:admissibility:pair}
follows from 
\cite[Theorems 2.3 and 3.3]{Kazamaki}.
This proves the existence part in item (c) in the statement. 
\vskip 5pt

\noindent \textit{Third Step.} 
Uniqueness is proven in a similar manner. Assuming that we are given a probability measure 
(still denoted) ${\mathbb P}^0$ 
 satisfying items (i), (ii), (iii) and (iv) in 
 Definition  
\ref{def:admissibility:pair} and considering without any loss of generality its completion, we introduce the tilted measure 
\begin{equation*} 
\tilde{\mathbb P}^0 := 
{\mathscr E}_T \biggl( - \int_0^\cdot \alpha^0(t,X_t^0,\mu_t) \cdot \ud B_t^0 \biggr) \cdot {\mathbb P}^0,
\end{equation*} 
which a probability measure thanks to the BMO condition
\eqref{eq:Markov:state:equation:BMO}
%{\color{blue}Chenchen: Is it \eqref{eq:Markov:state:equation:BMO:0}?} 
under ${\mathbb P}^0$.

Letting $(\tilde B^0_t := X_t^0 - x_0)_{0 \le t \le T}$, we have 
\begin{equation*}
\begin{split}
&\ud X_t^0 =  \ud \tilde B_t^0, 
\\
&\partial_t \mu_t = \tfrac12 \Delta_{x_0} \mu_t - \textrm{\rm div}_{x_0} \bigl( \alpha(t,X_t^0,\cdot,\mu_t) \mu_t \bigr), \quad t \in [0,T], 
\end{split}
\end{equation*} 
and $\tilde{\boldsymbol B}^0$ is a Brownian motion under $\tilde{\mathbb P}^0$. 
And by the uniqueness result established in the first step, we see that the law of 
$({\boldsymbol X}^0,{\boldsymbol \mu},\tilde{\boldsymbol B}^0)$ under $\tilde{\mathbb P}^0$ is equal to the probability 
$\bar {\mathbb P}^0$ constructed in the second step. 
By \cite[Theorems 2.3 and 3.3]{Kazamaki} again, 
the BMO condition \eqref{eq:Markov:state:equation:BMO:0} is 
satisfied under $\bar{\mathbb P}^0$. 
Also, we have
\begin{equation*} 
{\mathbb P}^0 
 = 
  {\mathscr E}_T \biggl( \int_0^\cdot \alpha^0(t,  X_t^0 , \mu_t) \cdot \ud \tilde B_t^0 \biggr) 
  \cdot \tilde{\mathbb P}^0.
\end{equation*} 
In the end,
${\mathbb P}^0$, which is tautologically equal to the law of 
$({\boldsymbol X}^0,{\boldsymbol \mu},{\boldsymbol B}^0)$
under ${\mathbb P}^0$, 
can be regarded as the law of 
$({\boldsymbol X}^0,{\boldsymbol \mu},
(\tilde B_t^0 -  \int_0^\cdot \alpha^0(s,X_s^0,\mu_s)\ud s)_{0 \leq t \leq T}
)$
under ${\mathscr E}_T  (\int_0^\cdot \alpha^0(t,  X_t^0 , \mu_t) \cdot \ud \tilde B_t^0 ) 
  \cdot \tilde{\mathbb P}^0$. 
Since 
$\tilde{\mathbb P}^0 \circ ({\boldsymbol X}^0,{\boldsymbol \mu},\tilde{\boldsymbol B}^0)^{-1}
=
\bar{\mathbb P}^0 \circ  ({\boldsymbol X}^0,{\boldsymbol \mu},{\boldsymbol B}^0)^{-1}$, 
we deduce that 
${\mathbb P}^0$ coincides with the law of 
$({\boldsymbol X}^0,{\boldsymbol \mu},
( B_t^0 -  \int_0^\cdot \alpha^0(s,X_s^0,\mu_s)\ud s)_{0 \leq t \leq T}
)$
under $  {\mathscr E}_T  (\int_0^\cdot \alpha^0(t,  X_t^0 , \mu_t) \cdot \ud  B_t^0 ) 
  \cdot \bar{\mathbb P}^0$. 
This is exactly the construction achieved in the previous step.  
\end{proof}

\section{A priori estimates for the forward-backward system}
\label{se:3}

The objective of this section is to obtain a series of \textit{a priori} estimates for the solution(s)
to the forward-backward system
\eqref{eq:major:FB:1}--\eqref{eq:minor:FB:2}, when 
posed on an arbitrary 
filtered probability space 
$(\Omega^0,{\mathcal F}^0,{\mathbb F}^0,{\mathbb P}^0)$
satisfying the usual conditions and 
equipped with a 
Brownian motion $(B_t^0)_{0 \leq t \leq T}$ with values in ${\mathbb R}^d$, 
with ${\mathbb F}^0$ being 
generated by ${\mathcal F}_0^0$ 
and ${\boldsymbol B}^0$. 

%\textcolor{red}{At this point, there is no need to consider the weak formulation. Strong formulation is easier.} 

\subsection{HJB equation for the minor player}
\label{subse:HJB:system:analysis:apriori}
The main result of this subsection concerns the regularity of the solution to the 
stochastic HJB equation in 
\eqref{eq:minor:FB:2}. We proceed very much as in the monograph \cite{CardaliaguetDelarueLasryLions}. 
 We also refer to 
 \cite{Du:Chen} for a related analysis but  in Sobolev (instead of H\"older) spaces.

Throughout this subsection, we fix an 
 ${\mathbb F}^0$-adapted 
 continuous path ${\boldsymbol X}^0=(X_t^0)_{0 \le t \le T}$ with values in 
 ${\mathbb R}^d$ (not necessarily solving the forward equation in 
 \eqref{eq:major:FB:1}) and an ${\mathbb F}^0$-adapted continuous path 
 ${\boldsymbol \mu} = (\mu_t)_{0 \le t \leq T}$ with values in ${\mathcal P}({\mathbb T}^d)$
 (not necessarily solving the forward equation in 
 \eqref{eq:minor:FB:2}). With the two of them, 
 we associate the (backward) stochastic Hamilton-Jacobi-Bellman equation  
\begin{equation}
\label{eq:minor:with:X0:frozen}
\begin{split} 
&\ud_t u_t(x) = \Bigl( -  \tfrac12 \Delta_x u_t(x) + 
H\bigl(x,\nabla_x u_t(x)\bigr) 
- f_t(X_t^0,x,\mu_t) 
\Bigr) \ud t
+ \ud m_t(x), \quad (t,x) \in [0,T] \times {\mathbb T}^d, 
\\
&u_T(x) = g(X_T^0,x,\mu_T), \quad x \in {\mathbb T}^d,
\end{split}
\end{equation}
where $(m_t(x))_{0\leq t\leq T}$ is an $\mathbb F^0$-martingale for any $x\in\mathbb T^d$.

The class within which the equation \eqref{eq:minor:with:X0:frozen} is solved is clarified in the following statement, which is taken 
from   \cite[Proposition 4.3.8]{CardaliaguetDelarueLasryLions}:
 \begin{lemma}\label{lem:minor:regularity}
 Under Assumption {\rm (}{\bf A}{\rm )} and within the framework described above, 
 the equation \eqref{eq:minor:with:X0:frozen}
 has a unique solution $({\boldsymbol u},{\boldsymbol m})=(u_t,m_t)_{0 \le t \le T}$, such that 
\begin{enumerate}
\item $(u_t)_{0 \le t \le T}$ is an 
${\mathbb F}^0$-adapted process with 
values in ${\mathcal C}^{\mathfrc{s}}({\mathbb T}^d)$, with continuous 
paths in
${\mathcal C}^{\mathfrc{r}}({\mathbb T}^d)$ for any 
$\mathfrc{r} < \mathfrc{s}$, 
satisfying 
%\begin{equation*} 
$\sup_{t \in [0,T]} \| u_t \|_{\mathfrc{s}} \in L^\infty(\Omega^0,{\mathcal F}^0,{\mathbb P}^0)$.
%\end{equation*} 
\item $(m_t)_{0 \le t \le T}$ is an 
${\mathbb F}^0$-adapted process with 
values in $ {\mathcal C}^{\mathfrc{s}-2}({\mathbb T}^d)$, 
with continuous paths in 
${\mathcal C}^{\mathfrc{r}}({\mathbb T}^d)$ for any 
$ \mathfrc{r} < \mathfrc{s}-2$,
satisfying 
$
\sup_{t \in [0,T]} \| m_t \|_{\mathfrc{s}-2} \in L^\infty(\Omega^0,{\mathcal F}^0,{\mathbb P}^0)$, with $m_0 \equiv 0$, and
with $(m_t(x))_{0 \le t \le T}$ being an ${\mathbb F}^0$-martingale 
for any $x \in {\mathbb T}^d$.  
\end{enumerate}
In fact, uniqueness holds in a wider class of solutions $({\boldsymbol u},{\boldsymbol m})$ for which the above holds true 
with respect to some $\curss' >2$ in lieu of $\curss$. 
 \end{lemma}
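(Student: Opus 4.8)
The plan is to follow closely the proof of \cite[Proposition 4.3.8]{CardaliaguetDelarueLasryLions}, making only two adjustments: the driving noise is here an arbitrary $\mathbb{F}^0$-martingale $\boldsymbol m$ (in the application of \eqref{eq:minor:FB:2}, $\sigma_0 \boldsymbol v^0 \cdot \ud \boldsymbol B^0$; since $\mathbb{F}^0$ is generated by $\mathcal{F}_0^0$ and $\boldsymbol B^0$, one may always represent $m_t(x)=\int_0^t n_s(x)\cdot \ud B_s^0$), and—more importantly—solutions are sought in the non-integer Hölder scale $\mathcal{C}^{\curss}(\mathbb{T}^d)$, which is not separable, so measurability must be handled with the Bochner-measurability conventions recorded in Remark \ref{rem:def:2:9}. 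The frozen path $\boldsymbol X^0$ enters only through the coefficients $f_t(X_t^0,\cdot,\mu_t)$ and $g(X_T^0,\cdot,\mu_T)$, which under Assumption \hyp{A} are $\mathbb{F}^0$-adapted and bounded in $\mathcal{C}^{\curss}(\mathbb{T}^d)$ uniformly in $(t,\omega^0)$; this causes no structural difficulty. Note also that, unlike \cite[(4.3)]{CardaliaguetDelarueLasryLions}, equation \eqref{eq:minor:with:X0:frozen} carries no It\^o--Wentzell correction, which only simplifies the argument.

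First I would treat the linear backward stochastic parabolic equation obtained by freezing the first-order nonlinearity, namely $\ud_t u_t = (-\tfrac12\Delta_x u_t + b_t\cdot\nabla_x u_t + c_t)\,\ud t + \ud m_t$ with $u_T=\xi$, for $\mathbb{F}^0$-adapted data $b,c,\xi$ bounded in the relevant Hölder norms. Existence and uniqueness come from the usual duality argument against the forward Kolmogorov equation (as in Remark \ref{rem:!:Fokker-Planck}), while stochastic Schauder estimates give a bound on $\sup_t\|u_t\|_{\curss}$ by $\|\xi\|_{\curss}+\sup_t\|c_t\|_{\curss-2}$ (plus the coefficient norms), uniform in $\omega^0$, together with the companion bound $\sup_t\|m_t\|_{\curss-2}<\infty$; the interior parabolic regularity also yields, for $t<T$, extra smoothness of $u_t$ (an element of $\mathcal{C}^{\curss'}$ for some $\curss'>\curss$), which is exactly what underpins the Bochner-measurability discussion of Remark \ref{rem:def:2:9}. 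The non-integer exponent is harmless: interior Schauder estimates hold for any non-integer Hölder exponent, and measurability of $\omega^0\mapsto\sup_t\|u_t\|_{\curss}$ follows from lower semicontinuity of $\|\cdot\|_{\curss}$ on $\mathcal{C}^{\cursr}(\mathbb{T}^d)$, $\cursr\in(\lfloor\curss\rfloor,\curss)$, as in Remark \ref{rem:def:2:9}.

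Next I would remove the freezing. The key a priori ingredient is an $L^\infty$ bound, uniform in $(t,\omega^0)$, on both $u_t$ and $\nabla_x u_t$ for any solution: the bound on $u_t$ follows from a maximum/comparison principle for the backward SPDE using the growth bounds on $H(x,0)$ and $f$ from \hyp{A}, and the gradient bound is obtained as in the monograph (a Bernstein-type estimate, or the BSDE representation along stochastic characteristics differentiated in the space variable), invoking the $\lambda'$-convexity of $H$ in \hyp{A5}. Once $\|\nabla_x u_t\|_\infty\le M$ is known, I replace $H$ by a modification $H^M$ agreeing with $H$ on $\{|p|\le M+1\}$ with globally bounded derivatives, write $H^M(x,\nabla_x u_t(x)) = H(x,0) + \bigl(\int_0^1\nabla_p H^M(x,\theta\nabla_x u_t(x))\,\ud\theta\bigr)\cdot\nabla_x u_t(x)$, and run a fixed-point iteration on the resulting linear equations, contracting in $\sup_t\|\cdot\|_{\cursr}$ over a short interval and patching—the scheme of \cite[Proposition 4.3.8]{CardaliaguetDelarueLasryLions}. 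Uniqueness in the stated wider class (any $\curss'>2$) follows by subtracting two solutions: their difference solves a linear backward SPDE with first-order coefficient $\bar b_t(x)=\int_0^1\nabla_p H\bigl(x,\nabla_x u_t^2(x)+\theta\nabla_x(u_t^1-u_t^2)(x)\bigr)\,\ud\theta$, which is bounded because $\mathcal{C}^{\curss'}\hookrightarrow\mathcal{C}^1$, so the linear uniqueness of the first step closes the argument.

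The main obstacle is the stochastic Schauder theory itself: obtaining the sharp regularity $u_t\in\mathcal{C}^{\curss}$, $m_t\in\mathcal{C}^{\curss-2}$ uniformly in $\omega^0$, and in particular the interior gain of regularity for $t<T$. This is precisely the content imported from \cite[Chapter 4]{CardaliaguetDelarueLasryLions}; the only genuinely new issue is verifying that all of it survives the passage from integer to non-integer Hölder exponents, which is routine at the level of the estimates but requires the measurability bookkeeping of Remark \ref{rem:def:2:9}.
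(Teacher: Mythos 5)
Your proposal is correct and follows essentially the same route as the paper: reduce to \cite[Proposition 4.3.8]{CardaliaguetDelarueLasryLions}, handle the quadratic nonlinearity by truncating the Hamiltonian and proving a gradient bound, and deal with the non-separability of ${\mathcal C}^{\curss}(\mathbb{T}^d)$ via the Bochner-measurability conventions of Remark~\ref{rem:def:2:9} together with interior parabolic regularity to gain $\curss'>\curss$ for $t<T$. Two small points of comparison. First, the paper reverses your order of operations: it truncates $H$ into $H^R$ \emph{before} establishing any bound, applies the cited proposition directly to the truncated (linear-growth) equation to get $(\boldsymbol u^R,\boldsymbol m^R)$, and only then proves that $\|\nabla_x u^R_t\|_{L^\infty}$ is bounded by a constant independent of $R$; this avoids the need to argue an a~priori bound ``for any solution'' before one is known to exist, which is a slightly cleaner logical arrangement than your ``establish bound, then truncate'' plan. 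Second, for this lemma the paper's Bernstein argument does \emph{not} invoke the $\lambda'$-convexity of $H$ in \hyp{A5}: differentiating the truncated equation and applying the maximum principle to the resulting backward transport-diffusion equation gives $\vert\partial_{x_i}u_t^R(x)\vert \le \|\nabla_x g\|_{L^\infty} + T\|\nabla_x f\|_{L^\infty} + T(\|\nabla_x H\|_{L^\infty}+1)$, a $T$-dependent constant but one independent of $R$, which is all Lemma~\ref{lem:minor:regularity} needs. The convexity-based semi-concavity argument you gesture at is reserved for Proposition~\ref{prop:minor:regularity}, whose point is precisely to make the gradient bound independent of $T$. Your mention of ``the BSDE representation along stochastic characteristics differentiated in the space variable'' is in fact the mechanism behind the maximum principle the paper uses, so the route you have in mind is consistent; just be aware that no convexity is required at this stage.
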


 \begin{proof} 
The
result mainly follows from 
 \cite[Proposition 4.3.8]{CardaliaguetDelarueLasryLions}, 
 with two main differences: (i) one must here handle a quadratic HJB equation, whilst the nonlinearity 
 is of linear growth in \cite{CardaliaguetDelarueLasryLions};  (ii) one 
 here claims that continuity of ${\boldsymbol u}$ holds in 
 ${\mathcal C}^{\mathfrc{r}}({\mathbb T}^d)$ for any 
$\mathfrc{r} < \mathfrc{s}$, whilst continuity 
 in 
\cite{CardaliaguetDelarueLasryLions}
is obtained up to the order $\lfloor \curss\rfloor$; (iii) 
as explained in 
the third item of 
Remark 
\ref{rem:def:2:9}, the pair $({\boldsymbol u},{\boldsymbol m})$
is directly seen as 
a random variable with values in 
non-integer H\"older spaces, which are not separable. Generally speaking, the argument to treat (ii) 
is as follows: using the fact that solutions are proven to satisfy 
$\sup_{t \in [0,T]} \| u_t \|_{\mathfrc{s}} \in L^\infty(\Omega^0,{\mathcal F}^0,{\mathbb P}^0)$, 
continuity with values in any 
  ${\mathcal C}^{\mathfrc{r}}({\mathbb T}^d)$, with 
  $\mathfrc{r} < \mathfrc{s}$, 
  follows from the fact that the embedding from 
  ${\mathcal C}^{\mathfrc{r}}({\mathbb T}^d)$
 to 
  ${\mathcal C}^{\mathfrc{s}}({\mathbb T}^d)$
  is compact. 
  As for (iii), we follow the outline 
  given in 
  Remark 
\ref{rem:def:2:9}. Once 
 ${\boldsymbol u}$ is known to be an ${\mathbb F}^0$-adapted 
(continuous) process with values in ${\mathcal C}^{\lfloor \curss \rfloor -1}({\mathbb T}^d)$, 
the same argument based on Schauder's estimates as the one 
used in \cite{CardaliaguetDelarueLasryLions} permits to show that, for any $t \in [0,T)$, 
$u_t$ takes values in ${\mathcal C}^{\curss'}({\mathbb T}^d)$ and 
$m_t$ in ${\mathcal C}^{\curss' -2}({\mathbb T}^d)$, for a certain ${\curss}' > {\curss}$. 
Then, using the same argument as in 
the proof of \cite[Proposition II.2, (1b)]{LiQueffelec}, we deduce that, for any 
$t \in [0,T)$, $u_t$ and $m_t$ are ${\mathcal F}_t^0$-\textit{Bochner} measurable 
with values in ${\mathcal C}^{\curss}({\mathbb T}^d)$ and 
${\mathcal C}^{\curss-2}({\mathbb T}^d)$ respectively. 
At time $t=T$, the field $x \mapsto g(X_T^0,x,\mu_T)$ can be proven to 
be \textit{Bochner} measurable: $X_T^0$ and $\mu_T$ are the almost sure limits of 
simple random variables with values in ${\mathbb R}^d$ and 
${\mathcal P}({\mathbb T}^d)$ respectively. Using the fact that the function 
$(x_0,\mu) \in {\mathbb R}^d \times {\mathcal P}({\mathbb T}^d) 
\mapsto g(x_0,\cdot,\mu) \in {\mathcal C}^{\curss}({\mathbb T}^d)$ 
is continuous, we deduce that 
$u_T$ is the almost sure limit of 
simple random variables with values in ${\mathcal C}^{\curss}({\mathbb T}^d)$, which 
shows \textit{Bochner} measurability. 
It remains to prove that 
$m_T$ is also 
 \textit{Bochner} measurable with values in 
 ${\mathcal C}^{\curss-2}({\mathbb T}^d)$. In fact, it suffices to prove that 
 the integral from $0$ to $T$ of the driver in the backward equation of 
 \eqref{eq:minor:with:X0:frozen}
 is  \textit{Bochner} measurable with values in 
 ${\mathcal C}^{\curss-2}({\mathbb T}^d)$.
 By  
\cite[Proposition II.2, (2)]{LiQueffelec}, it suffices to prove 
the same property but for the integral from $0$ to $T-\varepsilon$, and this for any 
$\varepsilon \in (0,T)$. The latter is a mere consequence of the fact that
the integral from $0$ to $T-\varepsilon$ takes in fact values in 
${\mathcal C}^{\curss'-2}({\mathbb T}^d)$ for a certain 
$\curss'>\curss$.

In order to tackle (i), we proceed by considering first a truncated version of the Hamiltonian, namely we consider a function $H^R$ on ${\mathbb R}^d$ such that 
 \begin{equation*} 
\begin{split}
& H^R(x,p) = H(x,p), \quad \textrm{\rm if} \  \vert p \vert \leq  R \ ;  \quad \sup_{x \in {\mathbb T}^d} \sup_{p \in {\mathbb R}^d} 
 \vert \nabla_p H^R(x,p) \vert < \infty,  
 \\
 &\sup_{x \in {\mathbb T}^d} \sup_{p \in {\mathbb R}^d} 
 \vert \nabla_x H^R(x,p) \vert \leq 
   \sup_{x \in {\mathbb T}^d} \sup_{p \in {\mathbb R}^d} 
 \vert \nabla_x H(x,p) \vert + 1. 
 \end{split} 
 \end{equation*} 
 Then, \cite[Proposition 4.3.8]{CardaliaguetDelarueLasryLions} applies to the following equation: 
 \begin{equation}
 \label{eq:minor:with:X0:frozen:HR}
\begin{split} 
&\ud_t u_t^R(x) = \Bigl( -  \tfrac12 \Delta_x u_t^R(x) +
H^R \bigl( x, \nabla_x u_t^R(x) \bigr) 
- f_t(X_t^0,x,\mu_t) 
\Bigr) \ud t
+ \ud m_t^R(x), \quad (t,x) \in [0,T] \times {\mathbb T}^d, 
\\
&u_T^R(x) = g(X_T^0,x,\mu_T), \quad x \in {\mathbb T}^d,
\end{split}
\end{equation}
and supplies us with the existence of a unique solution $({\boldsymbol u}^R,{\boldsymbol m}^R)$ satisfying the prescriptions of 
items 1 and 2 in the statement of Lemma \ref{lem:minor:regularity}.

In order to pass from the equation \eqref{eq:minor:with:X0:frozen:HR} to the original equation \eqref{eq:minor:with:X0:frozen}, it suffices to show that 
$| \nabla_x u_t^R |$ can be bounded by a deterministic constant independent of $R$, which can be done by the classical Bernstein 
argument. Differentiating with respect to $x$ (which is licit from the results in \cite[Proposition 4.3.8]{CardaliaguetDelarueLasryLions}), we have 
 \begin{equation}
 \label{eq:HJB:derivee}
\begin{split} 
&\ud_t \partial_{x_i} u_t^R(x) = \Bigl( -  \tfrac12 \Delta_x \partial_{x_i} u_t^R(x) + \nabla_p H^R \bigl(x,  \nabla_x u_t^R(x) \bigr)\cdot \partial_{x_i} \nabla_x u_t^R(x) 
+ \partial_{x_i} H^R\bigl(x, \nabla_x u_t^R(x) \bigr) 
\\
&\hspace{15pt} - 
\partial_{x_i} f_t(X_t^0,x,\mu_t) 
\Bigr) \ud t
+ \ud \partial_{x_i} m_t^R(x), \quad (t,x) \in [0,T] \times {\mathbb T}^d, 
\\
&\partial_{x_i} u_T^R(x) = \partial_{x_i} g(X_T^0,x,\mu_T), \quad x \in {\mathbb T}^d,
\end{split}
\end{equation}
for any coordinate $i \in \{1,\cdots,d\}$. Then, we can interpret the equation as a backward stochastic transport diffusion equation, for which we have a maximum 
principle. In turn, we get that 
\begin{equation*} 
\vert \partial_{x_i} u_t^R(x) \vert \leq   \| \nabla_x g \|_{L^\infty}
+ T 
\| \nabla_x f \|_{L^\infty}
+ 
T 
\bigl( \| \nabla_x H  \|_{L^\infty} + 1
\bigr).
\end{equation*} 
We hence get a bound $C$ for the left-hand side. Obviously, this bound $C$ is deterministic and independent of $R$, which allows us to retrieve the original equation by choosing $R$ larger than $C$. Uniqueness of the hence constructed solution 
is obvious because we restricted our analysis to solutions that are bounded in ${\mathcal C}^{\mathfrc{s}}({\mathbb T}^d)$ by a deterministic constant. 
%
%Estimates for the high-order derivatives directly follow from  \cite[Theorem 4.3.1]{CardaliaguetDelarueLasryLions}. 
%
%The very last step of the proof is to provide estimates for 
%$(m_t)_{0 \le t \le T}$. This follows from the identity 
%\begin{equation*} 
%m_t(x) =  u_t(x) {\color{blue}-u_0(x)}+ \int_0^t \bigl( \frac12 \Delta u_r(x) - H(x,\nabla_x u_r(x)) + f_t(X_t^0,x,\mu_r) \bigr) \ud r. 
%\end{equation*} 
As mentioned in the statement, we can even state uniqueness in a wider class. This follows from the fact that the parameter 
$n$ in 
\cite[Proposition 4.3.8]{CardaliaguetDelarueLasryLions}
can be taken equal to 2. 
This completes the proof. 
%In particular, we can assume that $T-t \geq 1$. By following the proof of \cite[Lemma 4.3.1]{CardaliaguetDelarueLasryLions}, we claim that, for any 
%integer $k \in \{1,\cdots,\lfloor s \rfloor\}$ and any $\delta \in (0,1)$, 
%\begin{equation} 
%\label{eq:heat:kernel:1}
%\| u_{t} \|_{s-k} \leq C_\delta {\mathbb E}^0 \bigl[ 
%\| u_{t+\delta} \|_{s-(k+1)} 
%\, \vert \, {\mathcal F}_t \bigr]
%+
%C
% {\mathbb E}^0 \biggl[ 
%\int_t^{t+\delta}  
%\frac1{\sqrt{r-t}} 
%\| \vert
%\nabla_x u_r \vert^2 \|_{s-(k+1)}
%\, \vert \, {\mathcal F}_t \biggr],
%\end{equation}
%where $C_\delta$ depends on $\delta$. 
%Assuming that we have uniform (in time and in $\omega \in \Omega^0$) bounds for
%$\| u_t \|_{s-(k+1)}$, we can bound 
%$\| \vert
%\nabla_x u_r \vert^2 \|_{s-(k+1)}$ by $C_k(1+\| u_r \|_{s-k})$. By choosing 
%$\delta$ small enough, we then get 
%a uniform in time bound for $\| u_t \|_{s-k}$. 
%
%Once we have 
%\begin{equation*}
%\| u_t \|_s \leq C, \quad t \in [0,T],
%\end{equation*} 
%we can apply 
%Lemma 
%\ref{le:appendix:3} to get the exponential decay when $t$ is away from $T$. 
%The very last step of the proof is to provide estimates for 
%$(m_t)_{0 \le t \le T}$. This follows from the identity 
%\begin{equation*} 
%m_t(x) =  u_t(x) + \int_0^t \bigl( \frac12 \Delta u_r(x) - \frac12 \vert \nabla_x u_r(x) \vert^2 \bigr) \ud r. 
%\end{equation*} 
%This completes the proof. 
 \end{proof} 

Since ${\mathbb F}^0$ is 
generated by ${\mathcal F}_0^0$ 
and ${\boldsymbol B}^0$, we can represent, for any 
$x \in {\mathbb T}^d$, the martingale $(m_t(x))_{0 \le t \le T}$ in the form
\begin{equation}
\label{eq:representation:formula:m:v0}
m_t(x) = \sigma_0 \int_0^t v_s^0(x) \cdot \ud B_s^0, \quad 0 \le t \le T, 
\end{equation}
where $(v_t^0(x))_{0 \le t \le T}$
is an ${\mathbb R}^d$-valued ${\mathbb F}^0$-progressively measurable process. 
%By Burkholder-Davis-Gundy inequalities, we have
%(because 
%$
%\sup_{t \in [0,T]} \| m_t \|_{\mathfrc{s}-2} \in L^\infty(\Omega^0,{\mathcal F}^0,{\mathbb P}^0)$
%and $\textit{\it \mathfrc{s}} > 3$ \fdm{Decide between $\textit{\it \mathfrc{s}}$ and $\textrm{\rm \mathfrc{s}}$}) 
%\begin{equation*} 
%\sigma_0^p{\mathbb E}^0 \biggl[ 
%\biggl(  \int_0^T \bigl\vert v_s^0(x) 
%-  v_s^0(x') \bigr\vert^2 \ud s 
%\biggr)^{p/2} \biggr] \leq C_p 
%{\mathbb E}^0 \bigl[ \sup_{0 \le t \le T} 
%\vert m_t(x) - m_t(x') \vert^p \bigr] \leq C_p \vert x-x' \vert^p,
%\end{equation*}
%which shows from Kolmogorov's continuity theorem that we can assume the field
%$(\omega^0,t,x) \mapsto v_t^0(\omega^0,x)$ to be continuous in $x$. 
The regularity of $v^0$ is given by the following statement:

\begin{lemma} 
\label{lem:reg:v0}
The representation term in \eqref{eq:representation:formula:m:v0}
induces a process 
$(v_t^0)_{0 \le t \le T}$
with values in ${\mathcal H}^{\lfloor \textrm{\rm \mathfrc{s}} \rfloor}({\mathbb T}^d)$
(which is embedded in 
${\mathcal C}^{\textrm{\rm  \mathfrc{s}} - d/2-1}({\mathbb T}^d)$). Moreover, 
\begin{equation}
\label{lem:eq:v0:s-d/2-2} 
{\mathbb E}^0 \int_{0}^T \| v_t^0 \|_{\textrm{\rm \mathfrc{s}}-d/2-1}^2 \ud s < \infty,
 \end{equation}
and, for any $k \in \{0,\cdots,\lfloor \textrm{\rm \mathfrc{s}}  -d/2 \rfloor -1\}$,
${\mathbb P}^0$-almost surely, for all $x \in {\mathbb T}^d$,  
\begin{equation}
\label{lem:eq:v0:s-d/2-2:exchange:derivative}  
\nabla^k_x m_t(x) = \sigma_0 \int_0^t \nabla^k_x v_s^0(x) \cdot \ud B_s^0, 
\quad t \in [0,T]. 
\end{equation}
\end{lemma}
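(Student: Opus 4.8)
The plan is to run the classical Hilbert-space energy estimate for the backward stochastic HJB equation \eqref{eq:minor:with:X0:frozen}, but carried out mode by mode in the Fourier basis $(e_k)_{k\in\bZ^d}$, so as to bypass the usual circularity (one would otherwise need to know \emph{a priori} that ${\boldsymbol v}^0$ is ${\mathcal H}^{\lfloor\curss\rfloor}$-valued before being allowed to test the equation in that space). The only inputs are the bound $\sup_{0\le t\le T}\|u_t\|_{\curss}\in L^\infty(\Omega^0,{\mathbb P}^0)$ and the representation \eqref{eq:representation:formula:m:v0} from Lemma \ref{lem:minor:regularity}, the deterministic $L^\infty$-bound on $\nabla_x u_t$ produced by the Bernstein argument in the proof of that lemma, and Assumption \hyp{A}. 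As a preliminary, since ${\boldsymbol m}$ is an $L^2(\bT^d)$-valued square-integrable martingale, the It\^o isometry already gives ${\boldsymbol v}^0\in L^2(\Omega^0\times[0,T];L^2(\bT^d))$, so that for each $k$ the scalar local martingale $\int_0^\cdot(u_s,e_k)_{0,2}\,\ud(m_s,e_k)_{0,2}$ is a true martingale (using also $|(u_s,e_k)_{0,2}|\le\|u_s\|_{L^\infty}$, a deterministic bound).

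First I would test \eqref{eq:minor:with:X0:frozen} against $e_k$: writing $\hat u_t(k)=(u_t,e_k)_{0,2}$, $\hat v_t^0(k)=(v_t^0,e_k)_{0,2}$, $\hat m_t(k)=\sigma_0\int_0^t\hat v_s^0(k)\cdot\ud B_s^0$, $\hat G_t(k)=(H(\cdot,\nabla_x u_t)-f_t(X_t^0,\cdot,\mu_t),e_k)_{0,2}$, $\hat g(k)=(g(X_T^0,\cdot,\mu_T),e_k)_{0,2}$, and letting $\lambda_k\ge0$ be the eigenvalue of $-\Delta$ attached to $e_k$ ($\lambda_0=0$, $\inf_{k\ne0}\lambda_k>0$), one has $\ud\hat u_t(k)=(\tfrac12\lambda_k\hat u_t(k)+\hat G_t(k))\,\ud t+\ud\hat m_t(k)$ with terminal value $\hat g(k)$. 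Applying It\^o's formula to $|\hat u_t(k)|^2$ and taking ${\mathbb E}^0[\,\cdot\mid{\mathcal F}_t^0]$ produces, for every $t$ and $k$,
\begin{equation*}
|\hat u_t(k)|^2+\lambda_k{\mathbb E}^0\Bigl[\textstyle\int_t^T|\hat u_s(k)|^2\ud s\,\Big|\,{\mathcal F}_t^0\Bigr]+\sigma_0^2{\mathbb E}^0\Bigl[\textstyle\int_t^T|\hat v_s^0(k)|^2\ud s\,\Big|\,{\mathcal F}_t^0\Bigr]={\mathbb E}^0\bigl[|\hat g(k)|^2\,\big|\,{\mathcal F}_t^0\bigr]-2{\mathbb E}^0\Bigl[\textstyle\int_t^T\mathrm{Re}\bigl(\overline{\hat u_s(k)}\hat G_s(k)\bigr)\ud s\,\Big|\,{\mathcal F}_t^0\Bigr].
\end{equation*}

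Next I would treat the cross term: for $k=0$ it is controlled directly by $|\hat u_s(0)|\le\|u_s\|_{L^\infty}\le C$, while for $k\ne0$ Young's inequality, $2|\hat u_s(k)||\hat G_s(k)|\le\varepsilon(1+\lambda_k)|\hat u_s(k)|^2+\varepsilon^{-1}(1+\lambda_k)^{-1}|\hat G_s(k)|^2$ with $\varepsilon$ small enough that $\varepsilon(1+\lambda_k)\le\tfrac12\lambda_k$ for all $k\ne0$, lets one absorb the first part into the coercive term, leaving $\sigma_0^2{\mathbb E}^0\int_t^T|\hat v_s^0(k)|^2\ud s\le{\mathbb E}^0|\hat g(k)|^2+C(1+|k|^2)^{-1}{\mathbb E}^0\int_t^T|\hat G_s(k)|^2\ud s$. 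Multiplying by $(1+|k|^2)^{\lfloor\curss\rfloor}$, summing over $|k|\le N$ and letting $N\to\infty$ by monotone convergence gives
\begin{equation*}
\sigma_0^2{\mathbb E}^0\int_t^T\|v_s^0\|_{\lfloor\curss\rfloor,2}^2\ud s\le{\mathbb E}^0\|g(X_T^0,\cdot,\mu_T)\|_{\lfloor\curss\rfloor,2}^2+C{\mathbb E}^0\int_t^T\|H(\cdot,\nabla_x u_s)-f_s(X_s^0,\cdot,\mu_s)\|_{\lfloor\curss\rfloor-1,2}^2\ud s.
\end{equation*}
The terminal term is bounded deterministically via ${\mathcal C}^{\curss}(\bT^d)\hookrightarrow{\mathcal H}^{\lfloor\curss\rfloor}(\bT^d)$ and \hyp{A}; in the running term $f_s(X_s^0,\cdot,\mu_s)\in{\mathcal C}^{\curss}\hookrightarrow{\mathcal H}^{\lfloor\curss\rfloor-1}$ with deterministic norm, while $\|H(\cdot,\nabla_x u_s)\|_{\lfloor\curss\rfloor-1,2}\le C(\|\nabla_x u_s\|_{L^\infty})(1+\|\nabla_x u_s\|_{\lfloor\curss\rfloor-1,2})$ by a Moser-type composition estimate, legitimate since $\lfloor\curss\rfloor-1>\curss-2>d/2$ makes ${\mathcal H}^{\lfloor\curss\rfloor-1}(\bT^d)$ a Banach algebra and $H$ is ${\mathcal C}^{\lfloor\curss\rfloor+1}$; as $\|\nabla_x u_s\|_{L^\infty}$ is deterministically bounded and $\|\nabla_x u_s\|_{\lfloor\curss\rfloor-1,2}\le C\|u_s\|_{\curss}\le C\sup_t\|u_t\|_{\curss}\in L^2(\Omega^0)$, the right-hand side is finite. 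Taking $t=0$ yields ${\mathbb E}^0\int_0^T\|v_t^0\|_{\lfloor\curss\rfloor,2}^2\ud t<\infty$, whence $v_t^0\in{\mathcal H}^{\lfloor\curss\rfloor}(\bT^d)$ for $\mathrm{Leb}\otimes{\mathbb P}^0$-a.e.\ $(t,\omega^0)$ (progressive measurability as an ${\mathcal H}^{\lfloor\curss\rfloor}$-valued process following from separability of ${\mathcal H}^{\lfloor\curss\rfloor}$), and the Sobolev embedding ${\mathcal H}^{\lfloor\curss\rfloor}(\bT^d)\hookrightarrow{\mathcal C}^{\curss-d/2-1}(\bT^d)$ (valid as $\lfloor\curss\rfloor-d/2>\curss-d/2-1$) gives \eqref{lem:eq:v0:s-d/2-2}.

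Finally I would promote the pointwise-in-$x$ representation to a Hilbert-space-valued one: with the integrability just obtained, ${\boldsymbol M}:=\sigma_0\int_0^\cdot v_s^0\,\ud B_s^0$ is a continuous ${\mathcal H}^{\lfloor\curss\rfloor}(\bT^d)$-valued martingale, and since each evaluation map $\delta_x$ is a bounded linear functional on ${\mathcal H}^{\lfloor\curss\rfloor}(\bT^d)$ (as $\lfloor\curss\rfloor>d/2$) it commutes with the stochastic integral, so $M_t(x)=m_t(x)$ by \eqref{eq:representation:formula:m:v0}, i.e.\ $m_t=\sigma_0\int_0^t v_s^0\,\ud B_s^0$ in ${\mathcal H}^{\lfloor\curss\rfloor}(\bT^d)$. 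Applying the bounded operator $\nabla_x^k:{\mathcal H}^{\lfloor\curss\rfloor}(\bT^d)\to{\mathcal H}^{\lfloor\curss\rfloor-k}(\bT^d)$, which again commutes with the integral, gives $\nabla_x^k m_t=\sigma_0\int_0^t\nabla_x^k v_s^0\,\ud B_s^0$ in ${\mathcal H}^{\lfloor\curss\rfloor-k}(\bT^d)$; for $k\le\lfloor\curss-d/2\rfloor-1$ one checks $\lfloor\curss\rfloor-k>d/2$ (using $\lfloor\curss\rfloor>\curss-1$), so ${\mathcal H}^{\lfloor\curss\rfloor-k}(\bT^d)\hookrightarrow{\mathcal C}^0(\bT^d)$ and the identity descends to the pointwise statement \eqref{lem:eq:v0:s-d/2-2:exchange:derivative}. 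The genuinely delicate point is the one flagged at the outset — making the energy estimate rigorous without presupposing regularity of ${\boldsymbol v}^0$, which the mode-by-mode computation together with monotone summation takes care of; the Sobolev product/composition bound for $H(\cdot,\nabla_x u_s)$ is routine given $\curss>d/2+5$.
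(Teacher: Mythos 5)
Your proof is correct and takes a genuinely different route from the paper's in both halves. For the key Hilbert–Sobolev regularity ${\mathbb E}^0\int_0^T\|v_t^0\|_{\lfloor\curss\rfloor,2}^2\,\ud t<\infty$, the paper views \eqref{eq:minor:with:X0:frozen} as a linear backward SPDE whose source lies in ${\mathcal C}^{\curss-1}({\mathbb T}^d)$ and directly cites \cite[Theorem 2.3]{Du:Meng}; your Fourier mode-by-mode energy identity (It\^o on $|\hat u_t(k)|^2$, absorbing the cross term into the coercive $\lambda_k$ contribution, then summing by monotone convergence) is self-contained and, in fact, very close in spirit to the argument the paper itself deploys later in the proof of Proposition~\ref{prop:bmo:minor}; the one extra ingredient you need that is not invoked there is the Moser composition estimate for $\|H(\cdot,\nabla_x u_s)\|_{\lfloor\curss\rfloor-1,2}$, which is legitimate under $\hyp{A}$ since $\lfloor\curss\rfloor-1>d/2$. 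For the exchange-of-derivative identity \eqref{lem:eq:v0:s-d/2-2:exchange:derivative}, the paper proceeds by induction on $k$, passing to the limit in the difference quotients $\tfrac1\varepsilon\bigl(\nabla_x^{k-1}m_t(x+\varepsilon\xi)-\nabla_x^{k-1}m_t(x)\bigr)$ and then invoking Kolmogorov's continuity theorem to upgrade from \emph{(for each $x$, a.s.)} to \emph{(a.s., for all $x$)}; your route—treating $\sigma_0\int_0^\cdot v_s^0\,\ud B_s^0$ as an ${\mathcal H}^{\lfloor\curss\rfloor}({\mathbb T}^d)$-valued stochastic integral and letting the bounded operators $\delta_x$ and $\nabla_x^k$ commute through it—is cleaner and sidesteps the quantifier exchange entirely, at the price of invoking the Hilbert-space-valued stochastic integration machinery. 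The only place where you are slightly terse is the reconciliation between the $L^2({\mathbb T}^d)$-valued representation of ${\boldsymbol m}$ and the pointwise one from \eqref{eq:representation:formula:m:v0}: to go from $\delta_x M_t=m_t(x)$ a.s.\ \emph{for each} $x$ to $M_t=m_t$ in ${\mathcal H}^{\lfloor\curss\rfloor}({\mathbb T}^d)$ one should pass to a countable dense set of $x$'s and use the (already known) spatial continuity of both sides; this is routine, and both approaches need essentially the same identification, so there is no gap. Overall the paper's proof is more economical in its prerequisites (it delegates the hard estimate to Du–Meng), while yours is more self-contained and abstractly cleaner in the final step.
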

Notice that 
${\mathcal H}^{\lfloor \textrm{\rm \mathfrc{s}} \rfloor}({\mathbb T}^d)$
(equipped with $\| \cdot \|_{\textrm{\rm  \mathfrc{s}} - d/2-1}$) is separable:
for any $t \in [0,T]$, 
$v_t^0$, when regarded as a random variable with values in 
${\mathcal C}^{\textrm{\rm  \mathfrc{s}} - d/2-1}({\mathbb T}^d)$ is obviously 
\textit{Bochner} measurable. 
%\fdm{This means that we should change the regularity of $v^0$ in Section 2. We just said ${\mathcal C}^1$: should we require
%$\mathfrc{s}-2$?}

\begin{proof}
Because 
$\sup_{t \in [0,T]} \| u_t \|_{\mathfrc{s}} \in L^\infty(\Omega^0,{\mathcal F}^0,{\mathbb P}^0)$, 
we can see the equation  
\eqref{eq:minor:with:X0:frozen}
as a linear equation with a source
term in 
${\mathcal C}^{\mathfrc{s}-1}({\mathbb T}^d)$. 
This makes it possible 
to apply \cite[Theorem 2.3]{Du:Meng} (with a modicum of care because the latter result is stated on the Euclidean space,
but the adaptation to the periodic setting is straightforward).  
We deduce that
the representation term
 $(v_t^0)_{0 \le t \le T}$ in
 \eqref{eq:representation:formula:m:v0}
 takes values in 
 ${\mathcal H}^{\lfloor \mathfrc{s} \rfloor}({\mathbb T}^d)$. Moreover, it satisfies 
 \begin{equation*} 
 {\mathbb E}^0  \int_{0}^T \| v_t^0 \|_{\lfloor \mathfrc{s} \rfloor,2}^2 \ud s < \infty.
 \end{equation*}
 By Sobolev embedding, we obtain 
\eqref{lem:eq:v0:s-d/2-2} (because 
$\lfloor \curss \rfloor - d/2 > \curss - d/2-1$). 
Then, 
\eqref{lem:eq:v0:s-d/2-2:exchange:derivative}
can be easily established by induction on $k \in \{1,\cdots,\lfloor \textrm{\rm \mathfrc{s}}  -d/2 \rfloor -1\}$, by passing to the limit
(as $\varepsilon$ tends to $0$) 
 in the 
representation 
formula
\begin{equation*} 
\tfrac1{\varepsilon} 
\Bigl( \nabla_x^{k-1} m_t(x+\varepsilon \xi) -\nabla_x^{k-1} m_t(x)
\Bigr) 
= \sigma_0 \int_0^t \Bigl[ 
\tfrac1{\varepsilon} 
\Bigl( \nabla_x^{k-1} v^0_s(x+\varepsilon \xi) -\nabla_x^{k-1} v^0_s(x)
\Bigr)\Bigr] 
\cdot \ud B_s, 
\end{equation*} 
for any $x \in {\mathbb T}^d$ and unitary $\xi \in {\mathbb R}^d$. This proves 
that, for all $x \in {\mathbb T}^d$,  
${\mathbb P}^0$-almost surely,
for all $t \in [0,T]$, 
\eqref{lem:eq:v0:s-d/2-2:exchange:derivative} holds true.  
Using the regularity properties 
of the left-hand side 
in 
\eqref{lem:eq:v0:s-d/2-2:exchange:derivative}, we 
easily deduce that, for any integer 
$p \geq 1$, there exists 
a constant $C_p$ such that, for any 
$k \in \{0,\cdots,\lfloor \textrm{\rm \mathfrc{s}}  -d/2 \rfloor -1\}$
and any $x,x' \in {\mathbb T}^d$, 
\begin{equation*}
{\mathbb E} \biggl[  \sup_{0 \le t \le T} 
\biggl\vert 
\int_0^t \Bigl( \nabla^k_x v_s^0(x') -  \nabla^k_x v_s^0(x)\Bigr) \cdot \ud B_s^0
\biggr\vert^p \biggr] \
= {\mathbb E} \Bigl[ \sup_{0 \le t \le T} 
\vert \nabla^k m_t(x') - \nabla^k m_t(x) \vert^p \Bigr] \leq 
C_p \vert x'-x \vert^p,
\end{equation*} 
which proves, by Kolmogorov continuity theorem, that we can find, for each $x \in {\mathbb T}^d$, a version of the stochastic process 
$(\int_0^t   \nabla_x^{k} v^0_s(x)  
\cdot \ud B_s)_{0 \leq t \leq T}$ 
that is continuous in $x$. This permits to exchange the quantifiers
in  
\eqref{lem:eq:v0:s-d/2-2:exchange:derivative} 
and then get the formula ${\mathbb P}^0$-almost surely,
for all $(t,x) \in [0,T] \times {\mathbb T}^d$.
\end{proof} 
\subsection{Uniform in time estimates for the HJB equation}
\label{subse:uniform:HJB}
Within the same framework as  in Subsection 
\ref{subse:HJB:system:analysis:apriori}, we now provide a uniform in time estimate of the gradient of 
the solution to 
\eqref{eq:minor:with:X0:frozen}. This bound is key in our analysis. 

\begin{proposition}\label{prop:minor:regularity}
 Under Assumption \hyp{A}, there exists a constant $R_0$,  only depending on the parameters in \hyp{A} except $(\sigma_0,T)$ (in particular, $R_0$ is independent of ${\boldsymbol \mu}$),  such that 
\begin{equation}
\label{minor:linfty:bound:Du}
{\mathbb P}^0 \Bigl( \sup_{0 \le t \le T} \| \nabla_x u_t \|_{L^\infty} \leq R_0 \Bigr) = 1. 
\end{equation}
%has a unique solution $(u_t,m_t)_{0 \le t \le T}$, such that 
%\begin{enumerate}
%\item $(u_t)_{0 \le t \le T}$ is an 
%${\mathbb F}^0$-adapted process with paths in the space 
%${\mathcal C}([0,T],{\color{blue}{\mathcal C}^{s}({\mathbb T}^d)})$ satisfying 
%\begin{equation*} 
%{\color{blue}\sup_{t \in [0,T]} \| u_t \|_{s}} \in L^\infty(\Omega^0,{\mathcal F}^0,{\mathbb P}^0).
%\end{equation*} 
%\item $(m_t)_{0 \le t \le T}$ is an 
%${\mathbb F}^0$-adapted process with paths in the space 
%${\mathcal C}([0,T],{\color{blue}{\mathcal C}^{s-2}({\mathbb T}^d)})$ satisfying 
%\begin{equation*} 
%{\color{blue}\sup_{t \in [0,T]} \| m_t \|_{s-2}} \in L^\infty(\Omega^0,{\mathcal F}^0,{\mathbb P}^0),
%\end{equation*}
%with $m_0 \equiv 0$, and
%with $(m_t(x))_{0 \le t \le T}$ being an ${\mathbb F}^0$-martingale 
%for any $x \in {\mathbb T}^d$.  
%\end{enumerate}
%Moreover, 
% the bounds for $(u_t)_{0 \le t \le T}$ and 
%$(m_t)_{0 \le t \le T}$ in items 1 and 2 only depend on the parameters in
%the assumption,  except $(\sigma_0,T)$.
%In fact, the bound for $(u_t)_{0 \le t \le T}$ can be reinforced into:
%\begin{equation*}
%\Bigl\| u_t  - \int_{{\mathbb T}^d} u_t(x) \ud x \Bigr\|_s \leq C \exp \bigl( - \gamma (T-t) \bigr), \quad t \in [0,T],
%\end{equation*} 
%for two constants $C$ and $\gamma$ independent of $T$ and $\sigma_0$. 
 \end{proposition}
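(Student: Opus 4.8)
The plan is to establish \eqref{minor:linfty:bound:Du} by a Bernstein-type estimate for the squared gradient $W_t(x):=|\nabla_x u_t(x)|^2$, combined with a quantitative use of the uniform convexity of $H$ to render the bound independent of $T$. By Lemma \ref{lem:minor:regularity} the solution is classical, with $\sup_{0\le t\le T}\|u_t\|_{\curss}$ bounded by a deterministic (a priori $T$-dependent) constant and $\curss>d/2+5$, which justifies all the differentiations below; so the matter reduces to improving the $T$-dependence of the crude gradient bound obtained in the proof of Lemma \ref{lem:minor:regularity}. Differentiating the backward SPDE in \eqref{eq:minor:with:X0:frozen} once in $x$ and applying It\^o's formula, $\boldsymbol{W}=(W_t)_{0\le t\le T}$ solves a backward SPDE
\begin{equation*}
\ud_t W_t = \Bigl( -\tfrac12\Delta_x W_t + \nabla_p H\bigl(x,\nabla_x u_t\bigr)\cdot\nabla_x W_t + |\nabla_x^2 u_t|^2 + \sigma_0^2|\nabla_x v_t^0|^2 + 2\,\nabla_x u_t\cdot\bigl(\nabla_x H(x,\nabla_x u_t) - \nabla_x f_t(X_t^0,x,\mu_t)\bigr) \Bigr)\,\ud t + \ud n_t ,
\end{equation*}
with $\boldsymbol n$ a local martingale and $W_T(x)=|\nabla_x g(X_T^0,x,\mu_T)|^2\le\kappa^2$ by \hyp{A1}. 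The structural point is that the two nonnegative terms $|\nabla_x^2 u_t|^2$ and $\sigma_0^2|\nabla_x v_t^0|^2$ sit on the \emph{dissipative} side of this backward equation: when $W_t$ is expressed as a conditional expectation of its terminal value minus the time-integral of the drift, they can only decrease it. Hence, after conditioning, the martingale $\boldsymbol n$ and the entire $\sigma_0$-dependent contribution drop out (or help), which is exactly what makes the final constant independent of $\sigma_0$; and since $\mu_t$ enters only through $\nabla_x f_t$ and $\nabla_x g$, which are bounded by $\kappa$ uniformly in $\mu$ by \hyp{A1}, the constant is also insensitive to $\boldsymbol\mu$.

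Discarding the two dissipative terms and bounding the first-order term by $4\kappa\sqrt{W_t}$ (using $\|\nabla_x H\|_{L^\infty}\vee\|\nabla_x f\|_{L^\infty}\le\kappa$, which follow from \hyp{A1}--\hyp{A2}), a maximum principle for backward SPDEs gives a differential inequality $\tfrac{\ud}{\ud t}\Phi_t\ge-4\kappa\sqrt{\Phi_t}$, where $\Phi_t$ is the $\omega^0$-essential supremum of $\sup_x W_t(x)$; but this only reproduces the $T$-dependent bound $\sqrt{\Phi_t}\le\kappa+2\kappa(T-t)$. The genuine input for a $T$-\emph{independent} bound is the uniform convexity and superlinearity of $H$ (Assumption \hyp{A5}, a consequence of \hyp{A2}): where $|\nabla_x u_t|$ is large, the running Hamiltonian satisfies $H(x,\nabla_x u_t)\ge\tfrac{\lambda'}{4}|\nabla_x u_t|^2-C$; fed back into the equation for $u_t$, and combined with $|\nabla_x^2 u_t|^2\ge\tfrac1d(\Delta_x u_t)^2$ and with the lower bound $\nabla^2_{pp}H\ge\lambda' I$, this forces a coercive negative contribution in the evolution of $W_t$.

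I would make this quantitative following the long-time MFG estimates of \cite{CLLP,cardaliaguet:porretta:longtime:master}, adapted to the present stochastic setting: testing the $\boldsymbol W$-equation against a power $W_t^{p-1}$, integrating over $\mathbb{T}^d$, and combining with the $u_t$-equation tested suitably, one upgrades the previous inequality into one with an absorbing term, schematically $\ud_t\bigl(\textstyle\int_{\mathbb{T}^d}W_t^p\bigr)\ge\bigl(c\int_{\mathbb{T}^d}W_t^p-C\bigr)\ud t+\ud(\text{martingale})$, with $c,C>0$ depending only on $\lambda,\lambda',\kappa,d$. Gr\"onwall backward in time, using $\int_{\mathbb{T}^d}W_T^p\le\kappa^{2p}$, then yields a $T$-independent $L^p$ bound on $\nabla_x u_t$, and a Moser/Sobolev-embedding iteration (again uniform in $T$) upgrades it to the $L^\infty$ bound \eqref{minor:linfty:bound:Du}, with $R_0$ depending only on $\lambda,\lambda',\kappa,d$ and $\curss$, as required.

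The step I expect to be the main obstacle is carrying out this coercivity-based computation rigorously at the level of the \emph{backward SPDE}: one has to control the It\^o corrections produced by testing against $W_t^{p-1}$ (quadratic-variation terms coming from $\boldsymbol n$), verify that the additional term $\sigma_0^2|\nabla_x v_t^0|^2$ always appears on the favourable side of every inequality so that the noise intensity never deteriorates the estimate, and check that the many sub-coercive terms generated by the integrations by parts can indeed be absorbed into $c\int_{\mathbb{T}^d}W_t^p-C$ rather than spoiling it. As a consistency check — and as an independent route to the oscillation of $u_t$, from which a gradient bound would also follow by interior parabolic regularity on a time window of unit length — one can instead use the representation of $u_t$ as the value function of the minor player's control problem conditional on $\mathbb{F}^0$, together with a reflection coupling of the minor's controlled trajectories on the compact torus $\mathbb{T}^d$, whose meeting time has moments bounded independently of $T$.
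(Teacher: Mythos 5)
Your first computation (the backward SPDE for $W_t=|\nabla_x u_t|^2$, the favourable sign of $|\nabla^2_x u_t|^2$ and $\sigma_0^2|\nabla_x v^0_t|^2$, and the resulting bound $\sqrt{\Phi_t}\le \kappa+C(T-t)$) is sound, but it only recovers the $T$-dependent estimate, as you say yourself. The actual content of the proposition is the $T$-independent bound, and for that your argument has a genuine gap: the pivotal inequality $\ud_t\bigl(\int_{\mathbb T^d}W_t^p\bigr)\ge (c\int_{\mathbb T^d}W_t^p-C)\,\ud t+\ud(\text{mart})$ is only asserted ``schematically'' and I do not see how it can be produced. The uniform convexity of $H$ enters the $W$-equation nowhere as $H$ itself: only $\nabla_pH$ (transport) and $\nabla_xH$ (bounded source) appear, so the lower bound $H(x,p)\ge\tfrac{\lambda'}{4}|p|^2-C$ has no equation to be ``fed back into''. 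If you test against $W_t^{p-1}$ and integrate by parts, the transport term produces $\mathrm{div}_x\bigl(\nabla_pH(x,\nabla_x u_t)\bigr)$, i.e.\ terms involving $\nabla^2_{pp}H:\nabla^2_x u_t$ with no sign, which at best can be absorbed by the dissipative $|\nabla^2_xu_t|^2W_t^{p-1}$ term — there is no visible mechanism generating the absorbing $c\int W_t^p$ that your backward Gr\"onwall needs. This is precisely why the long-time references you invoke (\cite{CLLP}, \cite{cardaliaguet:porretta:longtime:master}) do not argue on $|\nabla_x u|^2$ at all; a coupling/oscillation argument, which you only mention as a ``consistency check'', is likewise not carried out.

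The paper's proof takes the route of those references: it is a \emph{semi-concavity} estimate, not a Bernstein estimate on the gradient. One differentiates \eqref{eq:minor:with:X0:frozen} twice in a fixed direction $\xi$, evaluates $U_t:=D^2_{\xi\xi}u_t(X_t)$ along the diffusion with drift $-\nabla_pH(\cdot,\nabla_x u_t(\cdot))$, and uses the second-order structural inequality \eqref{eq:lower:bound:key:functional:HR} (this is where convexity of $H$ genuinely enters, at the level of the quadratic form in $(\xi,\nabla_xD_\xi u_t)$) together with $|\nabla_xD_\xi u_t|^2\ge|D^2_{\xi\xi}u_t|^2$ to obtain the Riccati inequality $\ud U_t\ge C^{-1}(U_t-C)(U_t+C)\,\ud t+\ud n_t$. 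Conditioning and using $\|D^2_{\xi\xi}g\|_\infty\le C$ yields the one-sided bound $\sup_x\sup_{|\xi|=1}\nabla^2_xu_{t_0}(x)\xi\cdot\xi\le C$ uniformly in $t_0$, $T$, $\sigma_0$ and ${\boldsymbol\mu}$; the gradient bound then follows from the purely periodic fact (inequality (15) in \cite{cardaliaguet:porretta:longtime:master}) that on the torus the Lipschitz constant of a smooth function is controlled by its semi-concavity constant, giving $R_0=C\sqrt d$. If you want to salvage your approach, you would have to either prove your coercive $L^p$ inequality rigorously (which I doubt is possible in this form) or switch to this second-order/semi-concavity mechanism, where the Riccati structure supplies the uniform-in-$T$ absorption that your scheme lacks.
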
 
 
% The need to introduce a cut-off will be clarified in the sequel. Briefly, one does so in order to handle the quadratic growth of the Hamiltonian: existence of a solution 
% to the major-minor problem is easier to establish when the Hamiltonian is just of linear growth. The goal here is thus to establish \textit{a priori} estimates 
%that are independent of the cut-off parameter $R$ (provided it is large enough). This will make it possible to regard any solution to the model with cut-off 
%as a solution to the model without cut-off. 

\begin{proof}
%Again, we need to come
%
%$R \in [R_0+1,+\infty]$ and any Hamiltonian $H_R : {\mathbb T}^d \rightarrow {\mathbb R}^d$ with bounded 
%second order derivatives such that
%\begin{equation*} 
%H^R(x,p) = H(x,p) \quad \textrm{\rm if} \ \vert p \vert \leq R, 
%\end{equation*} 
%the 
%solution $u^R$ 
%to the equation  
% \eqref{eq:minor:with:X0:frozen}, but with 
% $H$ replaced by $H^R$, satisfies 
%
%
%
%We notice that the existence of a (smooth) solution is guaranteed by Lemma 
%\ref{lem:minor:regularity}. 
The proof of the bound \eqref{minor:linfty:bound:Du} is  inspired from the proof of Lemma 1.5 in \cite{cardaliaguet:porretta:longtime:master}, but adapted to the SPDE setting. 

%Throughout, $R$ is fixed. For the time being, it is arbitrarily chosen in $(0,+\infty]$. We explain next how to define the threshold $R_0$ and then require 
%$R$ to be greater than or equal to $R_0$.

The very first step of the proof is to consider, for  a given $\xi \in {\mathbb R}^d$ with $\vert \xi \vert=1$, 
the equation satisfied by $(D^2_{\xi \xi} u_t(x) := (\nabla^2_{xx} u_t(x) \xi) \cdot \xi)_{0 \leq t \leq T, x \in {\mathbb T}^d}.$
By a straightforward adaptation of the computations as in \cite{cardaliaguet:porretta:longtime:master}, we get that 
$D^2_{\xi \xi} u$ solves the following equation: 
\begin{equation*} 
\begin{split}
\ud_t \Bigl( D^2_{\xi \xi}  u_t(x) \Bigr)  &= \Bigl[ - \tfrac12 \Delta_{x} D^2_{\xi \xi} u_t(x) + 
\Bigl( \nabla^2_{xx} H
\bigl(x, \nabla_x u_t(x) \bigr) \xi \Bigr) \cdot \xi + 2 \Bigl( \nabla_{px}^2 H \bigl(x,\nabla_x u_t(x) \bigr) \xi \Bigr)\cdot \nabla_x 
D_\xi 
u_t(x) 
\\
&\hspace{15pt} 
+\bigl(  \nabla^2_{pp} H\bigl(x, \nabla_x u_t(x)\bigr)
\nabla_x D_\xi u_t(x) \bigr) \cdot \nabla_x D_\xi u_t(x) 
\\
&\hspace{15pt} + 
\nabla_p H\bigl(x, \nabla_x u_t(x) \bigr) \cdot \nabla_x D^2_{\xi \xi} u_t(x) 
- D^2_{\xi \xi} f_t(X_t^0,x,\mu_t) \Bigr] \ud t 
\\
&\hspace{15pt} + \ud \bigl(D^2_{\xi \xi} m_t\bigr) (x), 
\end{split}
\end{equation*} 
where the notations $D_{\xi}$ and $D^2_{\xi \xi}$ are extended in an obvious manner to the first and second order derivatives 
in the direction $\xi$ of (possibly random) functions different from $u_t(\cdot)$. 
Above, $(D^2_{\xi \xi } m_t(x))_{0 \leq t \leq T}$ is a  martingale. 

The key point in the proof is to observe that
there exists a constant $C>0$, only depending on the parameters in \hyp{A5}, such that, for $x \in {\mathbb T}^d$, $p,q \in {\mathbb R}^d$, 
\begin{equation} 
\label{eq:lower:bound:key:functional:HR}
\begin{split} 
&\Bigl( \nabla^2_{xx} H (x,p) \xi \Bigr) \cdot \xi + 2 \Bigl( \nabla_{px}^2 H(x,p) \xi \Bigr)\cdot q 
+\Bigl(  \nabla^2_{pp} H(x,p) 
q \Bigr) \cdot q
  - D^2_{\xi \xi} f_t(X_t^0,x,\mu_t) 
\\
&\geq C^{-1} \vert q \vert^2 - C.  
\end{split} 
\end{equation}
%Importantly, the above display is no longer true when 
%$\vert p \vert >R$ because of the lack of strict convexity of $H^R$ outside ${\mathbb T}^d \times B_R$. 
%Also, the constant $C$ only depends on the properties of the original Hamiltonian $H$ (as
%$H$ and $H^R$ coincide on ${\mathbb T}^d \times B_R$. 
%In fact, when $\vert p \vert \geq R$, 
%the best we can say is 
%\begin{equation} 
%\label{eq:lower:bound:key:functional:HR:2}
%\Bigl( \nabla^2_{xx} H^R (x,p) \xi \Bigr) \cdot \xi + 2 \Bigl( \nabla_{px}^2 H^R(x,p) \xi \Bigr)\cdot q 
%+\Bigl(  \nabla^2_{pp} H^R(x,p) 
%q \Bigr) \cdot q
%  - D^2_{\xi \xi} f(t,X_t^0,x,\mu_t) 
%\geq - C^R \vert q \vert^2 - C^R, 
%\end{equation} 
%for a constant $C^R$. 

We next consider the same Brownian motion $(B_t)_{0 \leq t \leq T}$ as in the dynamics of the minor player. In particular, 
$(B_t)_{0 \leq t \leq T}$ is independent of the common noise $(B_t^0)_{0 \le t \le T}$. We then consider the SDE
\begin{equation*} 
\ud X_t = - \nabla_p H \bigl( X_t, \nabla_x u_t(X_t) \bigr) \ud t + \ud B_t, \quad t \in [t_0,T],
\end{equation*}  
for a given $t_0 \in [0,T]$ and for an initial condition $X_{t_0} = x$. 
Notice that the drift is random, but the solution is uniquely defined thanks to the regularity properties
granted by Lemma 
\ref{lem:minor:regularity}. 
We also let
\begin{equation*} 
U_t := D^2_{\xi \xi} u_t(X_t). 
\end{equation*} 
By applying It\^o-Wentzell formula 
to $(U_t)_{0 \le t \le T}$ 
and by invoking 
Lemma 
\ref{lem:reg:v0} to guarantee that 
${\mathbb E}^0 \int_{[0,T]} \| D^2_{\xi \xi} v^0_t\|^2_1 \ud t < \infty$
(thanks to the condition $\mathfrc{s}>d/2+5$), we get (see footnote 
\ref{foo:1} for the proof)
\begin{equation*} 
\begin{split} 
\ud U_t &= \Bigl[ \Bigl( \nabla^2_{xx} H\bigl(X_t, \nabla_x u_t(X_t) \bigr) \xi \Bigr)
\cdot \xi + 2 \Bigl( \nabla_{px}^2 H \bigl(X_t,\nabla_x u_t(X_t) \bigr) \xi \Bigr) \cdot \nabla_x D_\xi u_t  (X_t) 
\\
&\hspace{15pt}+
\bigl(  \nabla^2_{pp} H\bigl(X_t, \nabla_x u_t(X_t)\bigr)
\nabla_x D_\xi u_t(X_t) \bigr) \cdot \nabla_x  D_\xi u_t(X_t) 
 - D^2_{\xi \xi} f_t(X_t^0,X_t,\mu_t) \Bigr] \ud t + \ud n_t, 
\end{split}
\end{equation*} 
where $(n_t)_{0 \le t \le T}$ is a new generic martingale term, but now independent of the entry $x$. 
By 
\eqref{eq:lower:bound:key:functional:HR} and thanks to the lower bound 
$\vert \nabla_x D_\xi u_t  (X_t) \vert^2 \geq  \vert \nabla_x D_\xi u_t  (X_t) \cdot \xi\vert^2 
= \vert  D^2_{\xi \xi} u_t  (X_t)  \vert^2=\vert U_t \vert^2$, we get 
\begin{equation} 
\label{eq:UtR:2}
\begin{split}
\ud U_t &\geq  \bigl( C^{-1} \vert U_t \vert^2 - C \bigr)    \ud t + \ud n_t.
\\
&= C^{-1} \bigl( U_t - C) \bigl( U_t + C \bigr) \ud t + \ud n_t,
\end{split} 
\end{equation} 
which gives
\begin{equation*} 
\ud \Bigl[ \exp \biggl( - C^{-1} \int_{t_0}^t (U_s + C) \ud s \biggr) \bigl( U_t - C\bigr) \Bigr] \geq \ud n_t.
\end{equation*} 
%By Lemma \ref{lem:minor:regularity}, we deduce that there exists 
%a constant $C_{R,T}'$
%such that 
%\begin{equation*} 
%U_t^R \leq \| D^2_{\xi \xi} g \|_\infty + C_{R,T}' (T-t), \quad t \in [0,T]. 
%\end{equation*} 
In particular, 
assuming without any loss of generality that $\| D^2_{\xi \xi} g \|_\infty \leq C$ (i.e. $C \geq \kappa$, 
with the notations used in Assumption \hyp{A}), 
%and letting $R_0 := \sqrt{d} C + 1$, 
%we obtain that, for $C_{R,T}'(T-t_0) \leq 1/(4 \sqrt{d})$, 
%\begin{equation*} 
%U_t^R \leq  C + \frac1{4 \sqrt{d}}, \quad t \in [t_0,T]. 
%\end{equation*}
%By initializing $X$ from any $x \in {\mathbb T}^d$ at time $t_0$, 
%we obtain 
%\begin{equation*} 
%\sup_{\vert \xi \vert =1} \| (D^2_{\xi \xi} u_t^R)_+ \|_\infty  
%\leq C + \frac1{4 \sqrt{d}},
%\end{equation*} 
%for any $t \in [0,T]$ such that $C_{R,T}'(T-t) \leq 1/(4 \sqrt{d})$.
%By (15) in \cite{cardaliaguet:porretta:longtime:master}, which says that, on the torus, the Lipschitz constant of a smooth function can be controlled by its
%semi-concavity constant,
%we deduce that $\| D u_t^R \|_\infty \leq C \sqrt{d} + 1/4 < R_0$. 
%In particular, if $R \geq R_0$, we can return back to 
%\eqref{eq:UtR:2} and obtain, 
%for any $t \in [0,T]$ such that $C_{R,T}'(T-t) \leq 1/(4 \sqrt{d})$, 
%\begin{equation*} 
%\begin{split}
%\ud U_t^R &= C^{-1} \bigl( U_t^R - C) \bigl( U_t^R + C \bigr) \ud t + \ud n_t,
%\end{split} 
%\end{equation*} 
%which gives
%\begin{equation*} 
%\ud \Bigl[ \exp \biggl( - C^{-1} \int_{t_0}^t (U_s^R + C) \ud s \biggr) \bigl( U_t^R - C\bigr) \Bigr] \geq \ud n_t.
%\end{equation*} 
%Since 
the boundary condition at time $T$ in the above left-hand side has a non-positive value, from which we deduce (by conditioning on 
${\mathcal F}_{t_0}^0 \otimes {\mathcal F}_{t_0}$) that $U_{t_0}\leq C$. 
By initializing $X$ from any $x \in {\mathbb T}^d$ at time $t_0$,
% and by conditioning on 
%${\mathcal F}_{t_0}^0 \otimes {\mathcal F}_{t_0}$, 
this shows that, ${\mathbb P}^0$-almost surely, 
\begin{equation*} 
\sup_{x \in {\mathbb T}^d} \sup_{\xi \in {\mathbb R}^d : \vert \xi \vert =1} \nabla^2_x u_{t_0}(x) \xi \cdot \xi \leq C.
\end{equation*}  
By (15) in \cite{cardaliaguet:porretta:longtime:master}, which says that, on the torus, the Lipschitz constant of a smooth function can be controlled by its
semi-concavity constant,
we deduce that ${\mathbb P}^0(\{\| \nabla_x u_{t_0} \|_{L^\infty} \leq R_0\})=1$, with $R_0:=C \sqrt{d}$. 
By continuity in time of the process $(\| u_t \|_2)_{0 \le t \le T}$, we complete the proof. 
\end{proof}

We eventually extend the result of 
Proposition 
\ref{prop:minor:regularity}
to higher-derivatives: 
\begin{proposition}
\label{prop:minor:higher}
 Under Assumption \hyp{A}
 and within the context described in the beginning of Subsection 
\ref{subse:HJB:system:analysis:apriori}, there exists a constant $R_{\textrm{\rm \mathfrc{s}}-1}$, only depending on the parameters in \hyp{A} except $(\sigma_0,T)$ (in particular, $R$ is independent of ${\boldsymbol \mu}$), such that   
\begin{equation}
\label{minor:linfty:bound:Dsu}
{\mathbb P}^0 \Bigl( \sup_{0 \le t \le T} \| \nabla_x u_t \|_{\textrm{\rm \mathfrc{s}}-1} \leq R_{\textrm{\rm \mathfrc{s}}-1}\Bigr) = 1. 
\end{equation}
\end{proposition}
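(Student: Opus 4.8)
The plan is to bootstrap, starting from the $L^\infty$-bound on $\nabla_x u$ furnished by Proposition \ref{prop:minor:regularity} (which is already independent of $(\sigma_0,T)$ and of $\boldsymbol\mu$), and to upgrade it to a bound on $\sup_{0\le t\le T}\|\nabla_x u_t\|_{\curss-1}$ with the same independence. Note that Lemma \ref{lem:minor:regularity} already supplies an $L^\infty(\Omega^0)$-bound on $\sup_t\|u_t\|_{\curss}$, but only one that degrades with $T$ --- the maximum-principle estimate in its proof carries a factor $T$ --- so the content here is precisely to render such a bound $T$-uniform by leaning on the $T$-uniform Lipschitz bound of Proposition \ref{prop:minor:regularity}. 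First I would differentiate \eqref{eq:minor:with:X0:frozen} once in $x$: writing $w_t:=\nabla_x u_t$, each component solves the backward stochastic transport-diffusion equation
\begin{equation*}
\ud_t w_t^{(i)}(x) = \Bigl( -\tfrac12 \Delta_x w_t^{(i)}(x) + \nabla_p H\bigl(x,w_t(x)\bigr)\cdot\nabla_x w_t^{(i)}(x) + \partial_{x_i}H\bigl(x,w_t(x)\bigr) - \partial_{x_i}f_t(X_t^0,x,\mu_t) \Bigr)\ud t + \ud \partial_{x_i} m_t(x),
\end{equation*}
with terminal condition $w_T^{(i)}(x)=\partial_{x_i}g(X_T^0,x,\mu_T)$; these differentiations are licit by Lemma \ref{lem:minor:regularity}, the martingale representation term of $\partial_{x_i}m_t$ is $\sigma_0\int_0^\cdot\partial_{x_i}v_s^0\cdot\ud B_s^0$ by Lemma \ref{lem:reg:v0}, and $w_t$ is mean-zero on $\bT^d$ for every $t$ (being a gradient). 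By Proposition \ref{prop:minor:regularity}, $\|w_t\|_{L^\infty}\le R_0$ $\bP^0$-almost surely, uniformly in $t$; hence the transport coefficient $x\mapsto\nabla_pH(x,w_t(x))$ and the source $x\mapsto\partial_{x_i}H(x,w_t(x))$ are bounded in $L^\infty$ by a constant depending only on the parameters in \hyp{A5}, the term $\partial_{x_i}f_t$ is bounded in $\cC^{\curss-1}(\bT^d)$ by $\kappa$ via \hyp{A1}, and the terminal datum $\nabla_x g(X_T^0,\cdot,\mu_T)$ is bounded in $\cC^{\curss-1}(\bT^d)$ by $\kappa$ via \hyp{A1} --- all these bounds being deterministic, i.e. free of $\omega^0$, of $\sigma_0$ and of $T$.

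Next I would run an induction on the spatial regularity index, with fixed Hölder exponent $\theta:=\curss-\lfloor\curss\rfloor\in(0,1)$. Assuming a $T$-uniform a.s. bound on $\sup_t\|w_t\|_{\cC^{k+\theta}(\bT^d)}$ (with $\cC^{\theta}$ read as $L^\infty$ when $k=0$, the case of Proposition \ref{prop:minor:regularity}), the transport coefficient $\nabla_pH(\cdot,w_t(\cdot))$ and the source $\partial_{x_i}H(\cdot,w_t(\cdot))-\partial_{x_i}f_t(X_t^0,\cdot,\mu_t)$ belong, uniformly in $t$ and with deterministic norm controlled by the level-$k$ bound and by the parameters in \hyp{A}, to $\cC^{(k+\theta)\wedge(\curss-1)}(\bT^d)$ --- here I would use that $H\in\cC^{\lfloor\curss\rfloor+1}$ jointly in $(x,p)$ by \hyp{A5}, that $f$ is bounded in $\cC^{\curss}$ by \hyp{A1}, and that the Hölder exponent of a composition is the minimum of the exponents involved (note $\curss-1<\lfloor\curss\rfloor$ since $\curss\notin\bN$). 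An interior Schauder estimate for the backward stochastic parabolic equation --- in the spirit of \cite[Proposition 4.3.8]{CardaliaguetDelarueLasryLions} and of \cite{Du:Meng,Du:Chen}, the first step, when the data are only $L^\infty$, being run in the Sobolev scale of \cite{Du:Meng} and then embedded, and the periodic adaptation being routine --- then upgrades $w$ to $\cC^{(k+1+\theta)\wedge(\curss-1)}(\bT^d)$, uniformly in $t$; since these estimates are available in a form producing $L^\infty(\Omega^0)$-bounds on the Hölder norm of the solution out of $L^\infty(\Omega^0)$-bounds on the data, and since all the data above are deterministically bounded, the resulting bound is a deterministic (a.s.) one. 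Iterating finitely many times (the number of steps being controlled by $\lfloor\curss\rfloor$) reaches $\cC^{\curss-1}(\bT^d)$, i.e. \eqref{minor:linfty:bound:Dsu}. Finally, the passage from a countable dense set of times to all of $[0,T]$ uses the time-continuity of $(\|u_t\|_{\cursr})$ in $\cC^{\cursr}(\bT^d)$ for $\cursr<\curss$ from Lemma \ref{lem:minor:regularity} together with the lower semicontinuity of $\|\cdot\|_{\curss-1}$ (cf. the last item of Remark \ref{rem:def:2:9}), while the Bochner-measurability subtleties attached to the non-integer Hölder spaces are handled exactly as in Lemma \ref{lem:minor:regularity}.

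The hard part will be making the Schauder estimate in the induction step uniform in $T$: stated on a generic time interval it carries a constant that grows with the interval length, and a naive maximum-principle argument --- which is exactly what makes the analogous estimate in Lemma \ref{lem:minor:regularity} $T$-dependent --- would reintroduce a factor $T$, since for derivatives of order $\ge 2$ the differentiated equation is linear in the top-order derivative with no coercive quadratic structure, so the Riccati-type argument used in the proof of Proposition \ref{prop:minor:regularity} is no longer available. To get around this I would localize in time, following the long-time analysis of \cite{cardaliaguet:porretta:longtime:master,CardaliaguetDelarueLasryLions}: write the mild (Duhamel) form of the $w$-equation over a forward window of fixed unit length, exploit simultaneously the smoothing of the heat semigroup $e^{\frac12 s\Delta}$ on $\bT^d$ for small $s$ and, thanks to the mean-zero property of $w_t$, its exponential contraction for $s\ge 1$ (the spectral gap of $-\Delta$ on the torus), estimate the stochastic-convolution term via Lemma \ref{lem:reg:v0}, and feed in only the a priori bound $\|w_t\|_{L^\infty}\le R_0$ rather than any $T$-dependent quantity; near the terminal time one instead uses the $T$-uniform $\cC^{\curss-1}$-bound on $\nabla_x g$ with an up-to-the-boundary estimate. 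Covering $[0,T]$ by finitely many overlapping unit windows then yields a bound on $\sup_t\|w_t\|_{\cdot}$ depending only on the data over windows of bounded length, hence independent of $T$.
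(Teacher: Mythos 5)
Your overall architecture is the same as the paper's: differentiate \eqref{eq:minor:with:X0:frozen} once, bootstrap the regularity of $\nabla_x u$ one unit at a time with the $T$-uniform Lipschitz bound of Proposition \ref{prop:minor:regularity} as base, handle $T-t\le 1$ by the short-time result \cite[Proposition 4.3.8]{CardaliaguetDelarueLasryLions}, and, away from $T$, run Duhamel/heat-kernel estimates on time windows of bounded length so that no $T$-dependent constant enters. The gap sits exactly in the step you flag as the hard part: closing the windowed estimate. After conditioning on ${\mathcal F}_t^0$ (which kills the martingale term), the Duhamel integral over a window $[t,S]$ contains the transport term $\nabla_p H(\cdot,\nabla_x u_r)\cdot\partial_{x_i}\nabla_x u_r$, whose ${\mathcal C}^{\curss-(k+1)}$-norm is comparable to $\|\nabla_x u_r\|_{\curss-k}$ --- precisely the quantity being estimated --- integrated against the singular kernel $(r-t)^{-1/2}$. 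So inside the window you cannot ``feed in only the a priori bound $\|\nabla_x u\|_{L^\infty}\le R_0$'': what you obtain is the self-referential inequality $\|\nabla_x u_t\|_{\curss-k}\le C_k (S-t)^{-1/2}+C_k\,{\mathbb E}^0\bigl[\int_t^S (r-t)^{-1/2}\,\|\nabla_x u_r\|_{\curss-k}\,\ud r\,\vert\,{\mathcal F}_t^0\bigr]$ (the first term carrying the \emph{lower}-order norm of the datum at the right end of the window, controlled by the induction hypothesis), and the whole issue is to absorb the integral term. The two mechanisms you invoke do not do this: the exponential contraction of the heat semigroup on mean-zero functions is irrelevant here (the integrands in the Duhamel term are not mean-zero, and the paper uses no semigroup decay for this proposition), and covering $[0,T]$ by overlapping unit windows yields nothing until the per-window inequality has actually been closed.

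The paper closes it by an absorption argument with a \emph{small} window: integrate $\textrm{\rm essup}_{\omega\in\Omega^0}\|\nabla_x u_t\|_{\curss-k}$ against the weight $(t-(S-\delta))^{-1/2}$ over $[S-\delta,S]$, use Fubini to see that the double singular integral reproduces the same weighted quantity with prefactor $C_k\sqrt{\delta}\,B(\tfrac12,\tfrac12)$, choose $\delta$ (depending only on $C_k$, hence not on $T$ or $\sigma_0$) so that this prefactor is at most $\tfrac12$, and then take $S=t+\delta$ to turn the integrated bound into a pointwise one for every $t\le T-1$. Any equivalent device (a Gronwall lemma for singular kernels, or shrinking the window length until the top-order term can be absorbed) would do, but some such step is indispensable, and your fixed choice of window length $1$ combined with the spectral gap of $-\Delta$ on the torus does not supply it. With that absorption step added --- and with the terminal datum of each window estimated through the induction-hypothesis norm rather than through $R_0$ --- your plan coincides with the paper's proof.
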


\begin{proof}
When $t$ is close to $T$ (say $T-t \leq 1$), the result follows   from  \cite[Proposition 4.3.8]{CardaliaguetDelarueLasryLions}
(which applies since the gradient in the Hamiltonian is known to be bounded by means of 
Proposition \ref{prop:minor:regularity}).

In particular, we can assume that $T-t \geq 1$. The only difficulty is to get a bound that is independent of $T$. By 
taking one derivative in the equation 
\eqref{eq:minor:with:X0:frozen}
as done in \eqref{eq:HJB:derivee}
and then by using standard heat kernel estimates, we  claim that, for any 
integer $k \in \{1,\cdots,\lfloor \textrm{\rm \mathfrc{s}} \rfloor-1\}$, any $t,S \in [0,T]$, with $S - 1 \leq t < S$,
\begin{equation*} 
\begin{split}
\| \nabla_x u_{t} \|_{\textrm{\rm \mathfrc{s}}-k} &\leq \frac{C}{\sqrt{S-t}} {\mathbb E}^0 \bigl[ 
\| \nabla_x u_{S} \|_{\textrm{\rm \mathfrc{s}}-(k+1)} 
\, \vert \, {\mathcal F}_t^0 \bigr]
\\
&\hspace{5pt} +
C
 {\mathbb E}^0 \biggl[ 
\int_t^{S}  
\frac1{\sqrt{r-t}} 
\sup_{i=1,\cdots,d}
\bigl\|   \nabla_p H(\cdot, \nabla_x u_r(\cdot) \bigr)\cdot \partial_{x_i} \nabla_x u_r    \bigr\|_{\textrm{\rm \mathfrc{s}}-(k+1)}
\ud r 
\, \vert \, {\mathcal F}_t^0 \biggr]
\\
&\hspace{5pt} +
C
 {\mathbb E}^0 \biggl[ 
\int_t^{S}  
\frac1{\sqrt{r-t}} 
\Bigl( \bigl\| \nabla_x H(\cdot, \nabla_x u_r(\cdot) \bigr)  \bigr\|_{s-(k+1)}
+
\bigl\| \nabla_x f_r(X_r^0,\cdot,\mu_r) 
\bigr\|_{s-(k+1)}
\Bigr) \ud r 
\, \vert \, {\mathcal F}_t^0 \biggr].
\end{split} 
\end{equation*}
Assuming that we have uniform (in time and in $\omega \in \Omega^0$) bounds for
$\| \nabla_x u_t \|_{\textrm{\rm \mathfrc{s}}-(k+1)}$, we deduce that there exists a constant $C_k$ such that 
\begin{equation}
\label{:eq:heat:kernel:1}
\begin{split}
\| \nabla_x u_{t} \|_{\textrm{\rm \mathfrc{s}}-k} &\leq \frac{C_k}{\sqrt{S-t}}  +
C_k
 {\mathbb E}^0 \biggl[ 
\int_t^{S}  
\frac1{\sqrt{r-t}} 
\| \nabla_x u_{r} \|_{\textrm{\rm \mathfrc{s}}-k}
\ud r 
\, \vert \, {\mathcal F}_t^0 \biggr].
\end{split} 
\end{equation}
And then, for any $\delta \in (0,S)$, 
with $\delta \leq 1$, 
(the mapping $t \mapsto \textrm{\rm essup}_{\omega \in \Omega^0} \| \nabla_x u_{t} \|_{\textrm{\rm \mathfrc{s}}-k}$
is measurable as limit of 
$t \mapsto {\mathbb E}^0[ \| \nabla_x u_{t} \|_{\textrm{\rm \mathfrc{s}}-k}^p]^{1/p}$
as $p$ tends to $\infty$)
\begin{equation*} 
\begin{split}
&\int_{S-\delta}^S 
\frac1{\sqrt{t-(S-\delta)}}
\textrm{\rm essup}_{\omega \in \Omega^0} \| \nabla_x u_{t} \|_{\textrm{\rm \mathfrc{s}}-k}
\ud t 
\\
&\leq C_k B(\tfrac12,\tfrac12) +
C_k
\int_{S-\delta}^S 
\frac1{\sqrt{t-(S-\delta)}}
\biggl(
\int_t^{S}  
\frac1{\sqrt{r-t}} 
\textrm{\rm essup}_{\omega \in \Omega^0}
\| \nabla_x u_{r} \|_{\textrm{\rm \mathfrc{s}}-k}
\ud r \biggr) 
\ud t
\\
&= C_k B(\tfrac12,\tfrac12) +
C_k
\int_{S-\delta}^S 
\textrm{\rm essup}_{\omega \in \Omega^0}
\| \nabla_x u_{r} \|_{\textrm{\rm \mathfrc{s}}-k}
\biggl( \int_{S-\delta}^r \frac1{\sqrt{r-t}}  \frac1{\sqrt{t-(S-\delta)}} \ud t
\biggr)
\ud r 
\\
&= C_k B(\tfrac12,\tfrac12) +
C_k  B(\tfrac12,\tfrac12)
\int_{S-\delta}^S 
\textrm{\rm essup}_{\omega \in \Omega^0}
\| \nabla_x u_{r} \|_{\textrm{\rm \mathfrc{s}}-k}
\ud r
\\
&\leq C_k B(\tfrac12,\tfrac12) +
C_k  \sqrt{\delta} B(\tfrac12,\tfrac12)
\int_{S-\delta}^S 
\frac1{\sqrt{t-(S-\delta)}}
\textrm{\rm essup}_{\omega \in \Omega^0}
\| \nabla_x u_{r} \|_{\textrm{\rm \mathfrc{s}}-k}
\ud r,
\end{split} 
\end{equation*} 
where $B(\cdot,\cdot)$ is the Euler B\^eta function. 
And then,
we get a bound in the left hand-side for $\sqrt{\delta}  B(\nicefrac12,\nicefrac12) =\min(1/2,1/{2C_k})$. 
Choosing $S=t+\delta$ in 
\eqref{:eq:heat:kernel:1}, we get, for $t \leq T-1$, 
a bound for $\textrm{\rm essup}_{\omega \in \Omega^0} \| \nabla_x u_{t} \|_{\textrm{\rm \mathfrc{s}}-k}$. 
\end{proof}

\subsection{BMO estimates for the minor MFG system}
\label{subse:MFG:system:analysis:apriori}
We now fix an 
 ${\mathbb F}^0$-adapted 
 continuous path ${\boldsymbol X}^0 = (X_t^0)_{0 \le t \le T}$ with values in 
 ${\mathbb R}^d$ (not necessarily solving the forward equation in 
 \eqref{eq:major:FB:1}). With it, 
 we associate the 
 stochastic MFG system
 \eqref{eq:minor:FB:2}. Since $f$ and $g$ are assumed to be monotone in Lasry Lions sense, and since we have 
 an \textit{a priori} estimate for the gradient of the backward component, we can easily do as if the Hamiltonian 
 were at most of linear growth and hence follow the proof of 
  \cite[Theorem 4.3.1]{CardaliaguetDelarueLasryLions} to get a unique solution to the MFG system. Importantly, 
  the backward component satisfies the conclusion of 
  Lemma 
  \ref{lem:minor:regularity}
and Proposition 
\ref{prop:minor:regularity}. 
Our first step is to strengthen the estimate provided by 
Proposition 
\ref{prop:minor:higher}. 
 
Throughout the subsection, we 
consider a fixed ${\mathbb F}^0$-stopping time $\tau$ with values in $[0,T]$ and we consider the auxiliary MFG system 
\begin{equation} 
\label{eq:auxiliary:MFG} 
\begin{split}
&\partial_t \tilde \mu_t - \tfrac12 \Delta_x \tilde \mu_t - {\rm div}_x \Bigl( \nabla_p H \bigl(\cdot, \nabla_x \tilde u_t\bigr)\tilde \mu_t \Bigr) =0, 
\quad  \ (t,x)\in [\tau,T] \times {\mathbb T}^d, 
\\
&\partial_t \tilde u_t(x) =   -  \tfrac12 \Delta_x \tilde u_t(x) + H\bigl(x,\nabla_x \tilde u_t(x) \bigr) - F(x,\tilde \mu_t), \quad (t,x) \in [\tau,T] \times {\mathbb T}^d, 
\\
&\tilde u_T(x) = 0, \quad x \in {\mathbb T}^d, \quad \tilde \mu_\tau=\mu_\tau,  
\end{split}
\end{equation}
where $F$ is as in Assumption \hyp{B}. 
Except for the fact that the initial time is random, this MFG system is deterministic. 
Since $F$ satisfies the Lasry-Lions monotonicity condition, 
the system \eqref{eq:auxiliary:MFG} 
has a unique solution. 

\begin{proposition}\label{prop:minor:regularity:higher}
 Under Assumption \hyp{B}
 and with the  the same 
 notation as above, 
 there exists a constant $C$, 
 only depending on the parameters in  \hyp{B} except $(\sigma_0,T)$
 (in particular, $C$ is also independent of $\tau$), such that
 \begin{equation*}
{\mathbb P}^0 
\biggl( \biggl\{ 
{\mathbb E}^0 \biggl[ 
\int_\tau^T
\biggl( 
 \mathbb W_1( \mu_t , \tilde \mu_t)^2  + \Bigl\| \bigl( u_{t} - \tilde{u}_{t} \bigr)
- \int_{{\mathbb T}^d} \bigl( u_{t} - \tilde{u}_{t} \bigr)(x) \ \ud x \Bigr\|_{\textrm{\rm \mathfrc{s}}}^2
\biggr) \, \ud  t 
\, \vert \, {\mathcal F}^0_{\tau} \biggr] \leq C \biggr\} \biggr) = 1.
\end{equation*} 
 \end{proposition}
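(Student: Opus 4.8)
I would run the Lasry--Lions energy argument on the difference of the stochastic system \eqref{eq:minor:FB:2} (for the given ${\boldsymbol X}^0$) and the autonomous system \eqref{eq:auxiliary:MFG}, extract from it an integrated gradient bound, and then upgrade this bound to the claimed $\mathbb W_1$- and $\curss$-estimates using the uniform-in-$T$ parabolic/ergodic regularity of the two Fokker--Planck equations and the spectral gap of $-\Delta$. The whole computation is carried out pathwise in $\omega^0$ (for fixed $\omega^0$, $(\tilde{\boldsymbol\mu},\tilde{\boldsymbol u})$ is a deterministic function of $\mu_{\tau}(\omega^0)$) and only averaged against $\mathcal F^0_{\tau}$ at the end.

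\textbf{Step 1: the energy estimate.} Set $w_t:=u_t-\tilde u_t$, $\mathring w_t:=w_t-\int_{\mathbb T^d}w_t\,\ud x$, $\rho_t:=\mu_t-\tilde\mu_t$, so that $\rho_{\tau}=0$. Applying the product rule to $t\mapsto(w_t,\rho_t)$ on $[\tau,T]$ produces no It\^o cross-term ($\tilde u,\tilde\mu$ are $\mathcal F^0_{\tau}$-measurable and $\rho$ is absolutely continuous in time, since the forward equation carries no martingale part) and the $\tfrac12\Delta w$-terms cancel. The $\lambda'$-strict convexity of $H$ (\hyp{A5}) bounds the Hamiltonian contribution by $-\tfrac{\lambda'}{2}\int|\nabla w_t|^2\,\ud(\mu_t+\tilde\mu_t)$; the Lasry--Lions monotonicity of $f_t$ (\hyp{A3}) makes $(f_t(X^0_t,\cdot,\mu_t)-f_t(X^0_t,\cdot,\tilde\mu_t),\rho_t)\ge0$; the residual coupling term is bounded, using $\|\rho_t\|_{-\curss}\le\mathbb W_1(\mu_t,\tilde\mu_t)\le\mathrm{diam}(\mathbb T^d)$, by $\mathrm{diam}(\mathbb T^d)\|f_t(X^0_t,\cdot,\tilde\mu_t)-F(\cdot,\tilde\mu_t)\|_{\curss}$, whose time integral is $\le\mathrm{diam}(\mathbb T^d)\,\kappa$ by \hyp{B2}; and $(w_T,\rho_T)=(g(X^0_T,\cdot,\mu_T),\mu_T-\tilde\mu_T)\ge-\kappa\,\mathrm{diam}(\mathbb T^d)$, since $\tilde u_T\equiv0$, by the monotonicity (\hyp{A3}) and boundedness (\hyp{A1}) of $g$. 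The stochastic integral $\sigma_0\int(v^0_t,\rho_t)\cdot\ud B^0_t$ is a genuine martingale because $\|v^0_t\|_{L^\infty}\lesssim\|v^0_t\|_{\curss-d/2-1}$ with $\mathbb E^0\int_0^T\|v^0_t\|_{\curss-d/2-1}^2\,\ud t<\infty$ (Lemma~\ref{lem:reg:v0}). Conditioning on $\mathcal F^0_{\tau}$ then gives, $\mathbb P^0$-a.s.,
\begin{equation*}
\mathbb E^0\!\left[\int_{\tau}^T\!\Bigl(\|\nabla w_t\|_{L^2(\mu_t+\tilde\mu_t)}^2+\bigl(f_t(X^0_t,\cdot,\mu_t)-f_t(X^0_t,\cdot,\tilde\mu_t),\rho_t\bigr)\Bigr)\ud t\;\Big|\;\mathcal F^0_{\tau}\right]\le C_1,
\end{equation*}
where $C_1$ depends only on $\lambda',\kappa,\mathrm{diam}(\mathbb T^d)$, hence not on $(\sigma_0,T,\tau)$.

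\textbf{Step 2: upgrading the estimate.} By Propositions~\ref{prop:minor:regularity}--\ref{prop:minor:higher} the feedbacks $-\nabla_pH(\cdot,\nabla u_t)$, $-\nabla_pH(\cdot,\nabla\tilde u_t)$ are bounded in $\mathcal C^{\curss-1}$ uniformly in $(t,T,\omega^0)$; parabolic regularity for \eqref{eq:minor:FB:2} and \eqref{eq:auxiliary:MFG} then forces $\mu_t,\tilde\mu_t$ to have densities bounded above and below by \hyp{B}-constants for $t\ge\tau+1$, so that $\|\nabla w_t\|_{L^2(\ud x)}^2\le c_\ast^{-1}\|\nabla w_t\|_{L^2(\mu_t)}^2$ and, by Poincar\'e, $\mathbb E^0[\int_{\tau+1}^T\|\mathring w_t\|_{\mathcal H^1}^2\,\ud t\mid\mathcal F^0_{\tau}]\le C$. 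One then obtains the two missing bounds in turn: first, by coupling the energy functional $(w_t,\rho_t)$ with an $\mathcal H^{-1}$-estimate for the forward equation and using the spectral gap of $-\Delta$ and the monotone structure, a $T$-independent bound for $\mathbb E^0[\int_{\tau+1}^T\mathbb W_1(\mu_t,\tilde\mu_t)^2\,\ud t\mid\mathcal F^0_{\tau}]$ (recall $\mathbb W_1=\|\cdot\|_{-1}\le C\|\cdot\|_{\mathcal H^{-1}}$); second, feeding this into the source $-(f_t-F)(X^0_t,\cdot,\mu_t)-(F(\cdot,\mu_t)-F(\cdot,\tilde\mu_t))$ of the linear transport--diffusion SPDE solved by $w$ — the first term small in $L^2(\ud t;\mathcal C^{\curss})$ by \hyp{B2}, the second $\lesssim\mathbb W_1(\mu_t,\tilde\mu_t)$ by the regularity of $F$, the first-order term $H(\cdot,\nabla u_t)-H(\cdot,\nabla\tilde u_t)$ absorbed into the operator — interior parabolic estimates over unit time-windows (as in Lemma~\ref{lem:minor:regularity}, Proposition~\ref{prop:minor:higher}, via \cite[Theorem 2.3]{Du:Meng}) lift the $\mathcal H^1$-control of $\mathring w_t$ to the required $\curss$-control. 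On the boundary layers $[\tau,\tau+1]$ and $[T-1,T]$, $\mathbb W_1(\mu_t,\tilde\mu_t)^2\le\mathrm{diam}(\mathbb T^d)^2$ and $\|\mathring w_t\|_{\curss}^2\le CR_{\curss-1}^2$ are already controlled by Propositions~\ref{prop:minor:regularity}--\ref{prop:minor:higher} and the bound on $g$, contributing an $O(1)$ amount; summing the three contributions yields the claim.

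\textbf{Main obstacle.} The hard part is the independence of $C$ from $T$. Every naive route — Gr\"onwall for $\mathbb W_2(\mu_t,\tilde\mu_t)$, a bare $\mathcal H^{-1}$-inequality for $\rho_t$, interpolation between $\mathcal H^1$ and $\mathcal C^{\curss+1}$ — yields constants blowing up like $e^{c(T-\tau)}$ or $(T-\tau)^{\theta}$, because the transport part of the Fokker--Planck equation can amplify oscillations, while the Laplacian alone contracts them only at the fixed rate $\lambda_1$. The remedy is the exponential-stability (``turnpike'') mechanism for Lasry--Lions monotone MFG systems with non-degenerate diffusion, as in \cite{cardaliaguet:porretta:longtime:master,CLLP}: the forward amplification must be cancelled against the strictly dissipative backward HJB equation \emph{through the monotonicity coupling}, which forces one to work with the coupled linearised system — equivalently, with the pair $(w_t,\rho_t)$ — rather than with $\mu$ and $u$ in isolation. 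The attendant technical cost is the stochastic bookkeeping: all estimates are conditional on $\mathcal F^0_{\tau}$, the representation term $v^0$ must remain square-integrable after differentiating the SPDE twice in $x$ (where $\curss>d/2+5$ enters, through Lemma~\ref{lem:reg:v0}), and $(\tilde{\boldsymbol\mu},\tilde{\boldsymbol u})$ must be seen to obey the same uniform a priori bounds, which it does since it is covered by Propositions~\ref{prop:minor:regularity}--\ref{prop:minor:higher} with $F$ in place of $f$.
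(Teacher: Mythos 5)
Your Step 1 (the Lasry--Lions energy estimate conditioned on $\mathcal F^0_\tau$) is essentially the paper's First Step, with the same ingredients and the same $T$-independent constant. You also correctly diagnose, in the ``Main obstacle'' paragraph, that the real difficulty is cancelling the transport-term amplification against the dissipation, and you point to the right literature \cite{CLLP,cardaliaguet:porretta:longtime:master}.

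The gap is in Step 2, at the point where you claim to obtain a $T$-independent bound on $\mathbb E^0[\int_{\tau+1}^T\mathbb W_1(\mu_t,\tilde\mu_t)^2\,\ud t\mid\mathcal F^0_\tau]$ by ``coupling the energy functional with an $\mathcal H^{-1}$-estimate for the forward equation and using the spectral gap of $-\Delta$ and the monotone structure.'' This does not deliver $T$-independence, for exactly the reason your own obstacle paragraph raises: the $\mathcal H^{-1}$-energy estimate for $\rho_t=\mu_t-\tilde\mu_t$ gives a dissipative contribution from $\tfrac12\Delta\rho_t$ but \emph{also} a cross term from $\operatorname{div}\bigl(\nabla_pH(\cdot,\nabla\tilde u_t)\,\rho_t\bigr)$, which is of the same order. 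The spectral gap of $-\Delta$ alone does not beat this; Gr\"onwall then produces an $e^{cT}$. The monotonicity has already been consumed in Step 1 and has no sign to offer in the forward $\mathcal H^{-1}$-balance. What is actually needed — and what the paper uses — is the exponential mixing of the \emph{full} transport--diffusion operator with bounded drift on the torus (the paper's Lemma \ref{le:appendix:1}, from \cite{CLLP}): one solves the backward adjoint $\partial_t\varphi+\tfrac12\Delta\varphi-\nabla_pH(\cdot,\nabla\tilde u_t)\cdot\nabla\varphi=0$, invokes the drift-dependent exponential decay $\|\nabla\varphi_t\|_{L^\infty}\le Ce^{-\gamma(S-t)}\|\phi\|_1$, pairs it against $\rho_t$, applies Cauchy--Schwarz in the measure $\mu_t$, squares and Fubinis. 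The weight $e^{-\gamma(S-t)}$, not the spectral gap of $-\Delta$, is what makes the double time integral $T$-uniform. Your outline names the turnpike mechanism but does not deploy it at the step where it is needed. Likewise the $\curss$-upgrade in the paper is a second duality argument against the conservative transport--diffusion equation (the paper's Lemma \ref{le:appendix:2}) with mollifier test functions, not windowed interior Schauder estimates; the latter alone would again leave a $T$-dependent accumulation unless the same exponential decay is slotted in. Finally, the detour through lower density bounds for $\mu_t,\tilde\mu_t$ and Poincar\'e is unnecessary (the paper works directly with $L^2(\mu_t+\tilde\mu_t)$-norms and never needs the Harnack-type lower bound), creates the boundary-layer bookkeeping you then have to dispose of, and would have to be proven to hold with a uniform-in-$T$ constant.
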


\begin{proof}
For simplicity, we assume in the first three steps of the proof that 
$\tau=0$. We then explain in the very last step how the proof has to be changed when 
$\tau$ is general. 
\vskip 4pt

\noindent \textit{First Step.}
By monotonicity of the coefficient $F$ and from the standard duality method of mean field games
(see \cite[Lemma 3.1.2]{CardaliaguetDelarueLasryLions}), we have, 
for any $t \in [0,T]$,
\begin{equation*} 
\begin{split}
&- \langle u_t- \tilde u_t, \mu_t - \tilde \mu_t) + c  {\mathbb E}^0 \biggl[ \int_t^T \bigl( \mu_r+\tilde \mu_r, \vert \nabla_x 
(u_r - \tilde u_r) \vert^2 \bigr) \ud r \, \vert \, {\mathcal F}_t^0 \biggr]
\\
&\leq - {\mathbb E}^0 \biggl[  \int_t^T \bigl( f_r(X_r^0,\cdot,\mu_r) - F(\cdot,\mu_r), \mu_r - \tilde \mu_r \bigr) 
\ud r + ( u_T- \tilde u_T, \mu_T - \tilde \mu_T) \, \vert \, {\mathcal F}_t^0 \biggr],
\end{split}
\end{equation*} 
for a constant $c>0$ only depending on the parameters in \hyp{B} except $(\sigma_0,T)$. 

By boundedness of $u_T$  and $\tilde u_T$ (but the latter is null), we obtain 
\begin{equation}
\label{eq:ergodicity:H} 
\begin{split}
&- ( u_t- \tilde u_t, \mu_t - \tilde \mu_t) + c {\mathbb E}^0 \biggl[ \int_t^T \bigl( \mu_r+\tilde \mu_r, \vert \nabla_x 
(u_r - \tilde u_r) \vert^2 \bigr) \ud r \, \vert \, {\mathcal F}_t^0 \biggr]
\\
&\leq C 
{\mathbb E}^0 \biggl[   1 + \int_t^T
 \sup_{x_0 \in {\mathbb R}^d} \sup_{\mu \in {\mathcal P}({\mathbb T}^d)}
\| f_r(x_0,\cdot,\mu) - F(\cdot,\mu) \|_{L^\infty} 
\ud r \, \vert \, {\mathcal F}_t^0 \biggr].
\end{split}
\end{equation} 
We give below two applications of \eqref{eq:ergodicity:H}. 
\vskip 4pt

\noindent \textit{Second Step.}
First, we study the equation for $( \mu_t - \tilde \mu_t)_{0 \le t \le T}$. We write
\begin{equation*} 
\begin{split}
\partial_t (\mu_t - \tilde \mu_t) &= \tfrac12 \Delta_x  ( \mu_t - \tilde \mu_t) + \textrm{\rm div}_x
\Bigl( \nabla_p H\bigl(\cdot,\nabla_x \tilde u_t(\cdot)\bigr) 
( \mu_t - \tilde \mu_t)\Bigr) 
\\
&\hspace{15pt} + 
\textrm{\rm div}_x
\Bigl( \bigl[ 
\nabla_p H\bigl(\cdot,\nabla_x u_t(\cdot)\bigr) 
-
\nabla_p H\bigl(\cdot,\nabla_x \tilde u_t(\cdot)\bigr)
\bigr]   \mu_t \Bigr). 
\end{split} 
\end{equation*} 
Following Lemma
\ref{le:appendix:1} and noticing that $\tilde {\boldsymbol u}$ satisfies the same bounds 
as ${\boldsymbol u}$ in Proposition \ref{prop:minor:higher}, consider now the backward PDE
\begin{equation*} 
\begin{split}
&\partial_t \varphi_t + \tfrac12 \Delta_x \varphi_t - \nabla_p H\bigl(\cdot,\nabla_x \tilde u_t(\cdot)\bigr)  \cdot \nabla_x \varphi_t = 0,
\quad t \in [0,S]; \quad \varphi_S=\phi,
\end{split}
\end{equation*}
for a smooth terminal function $\phi$ and for some $S \in [0,T]$
(since $\tilde {\boldsymbol u}$ is deterministic, so is the above PDE and there is no need 
to add an additional martingale term as it would be the case if we were
considering the same equation but driven by ${\boldsymbol u}$). 
By duality, we obtain
\begin{equation*} 
\begin{split} 
\frac{\ud}{\ud t} \bigl( \varphi_t, \mu_t - \tilde \mu_t\bigr) 
&= - \Bigl( \bigl[ 
\nabla_p H\bigl(\cdot,\nabla_x u_t(\cdot)\bigr) 
-
\nabla_p H\bigl(\cdot,\nabla_x \tilde u_t(\cdot)\bigr)
\bigr] \mu_t,\nabla_x \varphi_t \Bigr).
\end{split}
\end{equation*} 
Therefore, 
using the fact that $\mu_0-\tilde \mu_0=0$, we obtain
\begin{equation*} 
\begin{split} 
 \bigl( \phi, \mu_S - \tilde \mu_S \bigr) &\leq \int_0^S \| \nabla_x \varphi_t \|_{L^\infty} 
 \bigl(  \mu_t, \vert  \nabla_p H\bigl(\cdot,\nabla_x u_t(\cdot)\bigr) 
-
\nabla_p H\bigl(\cdot,\nabla_x \tilde u_t(\cdot)\bigr) \vert^2 \bigr)^{1/2} 
\ud t
\\
&\leq   \|\phi\|_1  \int_0^S\exp \bigl( - \gamma (S-t) \bigr) 
 \bigl( \mu_t, \vert  \nabla_p H\bigl(\cdot,\nabla_x u_t(\cdot)\bigr) 
-
\nabla_p H\bigl(\cdot,\nabla_x \tilde u_t(\cdot)\bigr) \vert^2 \bigr)^{1/2} 
\ud t, 
\end{split} 
\end{equation*} 
with the exponential decay following from 
Lemma
\ref{le:appendix:1}, and for $\gamma$ only depending on the parameters 
in \hyp{B} but not on $(\sigma_0,T)$. 
And then, 
\begin{equation*} 
\begin{split}
\| \mu_S - \tilde \mu_S \|_{-1} &\leq \int_0^S\exp \bigl( - \gamma (S-t) \bigr) 
 \bigl( \mu_t, \vert  \nabla_p H\bigl(\cdot,\nabla_x u_t(\cdot)\bigr) 
-
\nabla_p H\bigl(\cdot,\nabla_x \tilde u_t(\cdot)\bigr) \vert^2 \bigr)^{1/2} 
\ud t
\\
&\leq 
\frac1{\gamma^{1/2}}
\biggl( \int_0^S\exp \bigl( - \gamma (S-t) \bigr) 
 \bigl(  \mu_t, \vert  \nabla_p H\bigl(\cdot,\nabla_x u_t(\cdot)\bigr) 
-
\nabla_p H\bigl(\cdot,\nabla_x \tilde u_t(\cdot)\bigr) \vert^2\bigr) \ud t\biggr)^{1/2}, 
\end{split}
\end{equation*} 
which yields (replacing $S$ by $t$ in the left-hand side and $t$ by 
$r$ in the right-hand side) 
\begin{equation*} 
\begin{split}
{\mathbb E}^0 \biggl[ \int_0^T
 \| \mu_t - \tilde \mu_t \|_{-1}^2 \ud t \biggr]
 &\leq \frac1{\gamma} {\mathbb E}^0 \int_0^T \biggl( \int_0^t\exp \bigl( - \gamma (t-r) \bigr) 
 \bigl(  \mu_r, \vert  \nabla_p H\bigl(\cdot,\nabla_x u_r(\cdot)\bigr) 
-
\nabla_p H\bigl(\cdot,\nabla_x \tilde u_r(\cdot)\bigr) \vert^2 \bigr) 
\ud r \biggr) \ud t
\\
&=
\frac1{\gamma} {\mathbb E}^0 \int_0^T 
 \bigl(  \mu_r, \vert  \nabla_p H\bigl(\cdot,\nabla_x u_r(\cdot)\bigr) 
-
\nabla_p H\bigl(\cdot,\nabla_x \tilde u_r(\cdot)\bigr) \vert^2 \bigr) 
\biggl( \int_r^T\exp \bigl( - \gamma (t-r) \bigr) \ud t \biggr)
\ud r
\\
&\leq \frac1{\gamma^2}  {\mathbb E}^0 \int_0^T 
 \bigl( \mu_r, \vert  \nabla_p H\bigl(\cdot,\nabla_x u_r(\cdot)\bigr) 
-
\nabla_p H\bigl(\cdot,\nabla_x \tilde u_r(\cdot)\bigr) \vert^2 \bigr) 
\ud r.
\end{split}
\end{equation*}
We now insert \eqref{eq:ergodicity:H} (with $t=0$, recalling that $\mu_0=\tilde \mu_0$)
into the above bound. We obtain (recalling that $\| \mu - \tilde \mu\|_{-1}$ is the same 
as $\mathbb W_1(\mu,\tilde \mu)$)
\begin{equation} 
\label{eq:minor:MFG:ergodicity:forward}
\begin{split}
{\mathbb E}^0 \biggl[ \int_0^T
 \mathbb W_1( \mu_t , \tilde \mu_t)^2 \ud t \biggr]
 &\leq C \biggl( 1+ {\mathbb E}^0 
\int_0^T 
 \sup_{x_0 \in {\mathbb R}^d} \sup_{\mu \in {\mathcal P}({\mathbb T}^d)}
\| f_r(x_0,\cdot,\mu) - F(\cdot,\mu) \|_{\infty} 
\ud r \biggr).
\end{split}
\end{equation}

\noindent \textit{Third Step.} 
We provide a similar estimate for the difference of the two backward components $u-\tilde u$. We have
\begin{equation*} 
\begin{split} 
\ud_t \bigl( u_t - \tilde u_t\bigr)(x) &=
\biggl[
- \tfrac12 \Delta_x \bigl( u_t - \tilde u_t \bigr)(x) + H\bigl(x,\nabla_x u_t(x)\bigr) - H\bigl(x,\nabla_x \tilde u_t(x)\bigr) 
- \bigl( f_t(X_t^0,x,\mu_t) - F(x,\tilde \mu_t) \bigr) 
\biggr] \ud t 
\\
&\hspace{15pt} + \sigma_0 v_t^0(x) \cdot \ud B_t^0. 
\end{split}
\end{equation*} 
Rewriting the difference of the two Hamiltonians as 
\begin{equation*} 
\begin{split}
&H\bigl(x,\nabla_x u_t(x)\bigr) - H\bigl(x,\nabla_x \tilde u_t(x)\bigr) 
\\
&= \biggl( \int_0^1 \nabla_p H \bigl( x, \theta \nabla_x u_t(x) + (1-\theta) \nabla_x \tilde u_t(x) \bigr) \ud \theta \biggr)
\cdot \bigl( \nabla_x u_t(x) - \nabla_x \tilde u_t(x) \bigr), 
\end{split} 
\end{equation*} 
we apply Lemma 
\ref{le:appendix:2} with 
\begin{equation}
\label{eq:b:Hu-Htildeu}
b_t(x) = - \biggl( \int_0^1 \nabla_p H \bigl( x, \theta \nabla_x u_t(x) + (1-\theta) \nabla_x \tilde u_t(x) \bigr) \ud \theta \biggr). 
\end{equation} 
For a smooth initial condition $q$ satisfying $\int_{{\mathbb T}^d} q(x) \ud x=0$
and for an initial time $t_0 \in [0,T]$, we solve the Fokker-Planck equation
\begin{equation*} 
\partial_t q_t - \tfrac12 \Delta_x q_t + \textrm{\rm div}_x \bigl( b_t(\cdot) q_t \bigr) =0, \quad t \in [t_0,T] \ ; \quad q_{t_0}=q. 
\end{equation*}
We observe that this equation is a random conservative equation, due to the random nature of 
$(b_t)_{0 \leq t\leq T}$.  
By decomposing $q$ in positive and negative parts, it suffices to 
construct solutions for $q$ a probability density. 
Following the proof of Lemma \ref{lem:alpha:weak:solution}
and  
the first step of the proof of Lemma
\ref{lem:weak:uniqueness:forward:equation},
we deduce that the solution is measurable with respect to $\omega^0$: 
$(q_t)_{0 \le t \le T}$ can be regarded as an ${\mathbb F}^0$-progressively measurable process with values in the space of finite signed measures on ${\mathbb T}^d$. 
Then, by duality, we expand
(see \cite[Lemma 4.3.11]{CardaliaguetDelarueLasryLions}) 
\begin{equation}
\label{eq:duality:1st:time}
\begin{split}
\ud_t \bigl( u_t - \tilde u_t,q_t \bigr) &= 
- \bigl( f_t(X_t^0,\cdot,\mu_t) - F(\cdot,\tilde \mu_t), q_t \bigr) \ud t + \ud n_t, \quad t \in [t_0,T], 
\end{split}
\end{equation} 
where $(n_t)_{t_0 \le t \le T}$ is a generic martingale term. 
We obtain, for a constant $C$ depending on the parameters in \hyp{B} except $(\sigma_0,T)$,
% \fdm{pay attention that it requires Lipschitz of $F$ in $\mu$ but in $C^\curss$ norm in $x$}
\begin{equation*} 
\begin{split}
\bigl( u_{t_0} - \tilde{u}_{t_0}, q \bigr) &\leq 
{\mathbb E}^0 \biggl[ \| q_T \|_{-\curss}\sup_{x_0 \in {\mathbb R}^d} \sup_{\mu \in {\mathcal P}({\mathbb T}^d)}
  \| g(x_0,\cdot,\mu) \|_{\curss} 
  + C \int_{t_0}^T \| q_t \|_{-\curss} \mathbb W_1(\mu_t,\tilde \mu_t) \ud t
  \\
  &\hspace{15pt} 
+ \int_{t_0}^T \| q_t \|_{-\curss} \sup_{x_0 \in {\mathbb R}^d} \sup_{\mu \in {\mathcal P}({\mathbb T}^d)}
\| f_t(x_0,\cdot,\mu) - F(\cdot,\mu) \|_{\curss} \ud t \, \vert \, {\mathcal F}_{t_0}^0 \biggr]. 
\end{split}
\end{equation*} 
By Lemma 
\ref{le:appendix:2} (which is stated for deterministic conservative equations but which applies here because 
$b$ is bounded by a deterministic constant), we can find new values of $C$ and $\gamma$ such that $\| q_t \|_{-\curss} \leq C \exp(-\gamma (t-t_0)) \| q \|_{-\curss}$. Therefore, 
choosing $q$ as 
$q(\cdot) = (-1)^l  \partial^l \rho(\cdot-x)$
and 
then
as
$q(\cdot) = (-1)^l [ \partial^l \rho(\cdot-x)-   \partial^l \rho(\cdot-x')]$, for $x,x' \in {\mathbb T}^d$ and $\rho$ a smooth density
on ${\mathbb R}^d$, 
where $\partial^l \rho$ denotes the derivative of $\rho$ along $l$ (arbitrary) directions of ${\mathbb R}^d$ (with possible repetitions), 
and then letting $\rho$ tend to the Delta mass in $0$, we get 
\begin{equation*} 
\begin{split}
\biggl\|u_{t_0} - \tilde{u}_{t_0} 
- \int_{{\mathbb T}^d} \bigl( u_{t_0} - \tilde{u}_{t_0}  \bigr)(x) \ud x 
\biggr\|_{\curss}
 &\leq C \exp(-\gamma(T-t_0)) + C {\mathbb E}^0 
 \biggl[ \int_{t_0}^T \exp (-\gamma(t-t_0)) \mathbb W_1(\mu_t,\tilde \mu_t) \ud t \, \vert \, {\mathcal F}_{t_0}^0 \biggr]
 \\
&\hspace{15pt} + C \int_{t_0}^T \exp (-\gamma(t-t_0)) \sup_{x_0 \in {\mathbb R}^d} \sup_{\mu \in {\mathcal P}({\mathbb T}^d)}
\| f_t(x_0,\cdot,\mu) - F(\cdot,\mu) \|_{\curss} \ud t. 
\end{split}
\end{equation*} 
Squaring as done in the second step, we obtain (allowing the constant $C$
to vary from line to line as long as it only depends on the various parameters in   
\hyp{B}  except $(\sigma_0,T)$)
\begin{equation*} 
\begin{split}
\Bigl\|u_{t_0} - \tilde{u}_{t_0} 
- \int_{{\mathbb T}^d} \bigl( u_{t_0} - \tilde{u}_{t_0}  \bigr)(x) \ud x 
\Bigr\|_\curss^2
 &\leq C \exp(-\gamma(T-t_0)) + C {\mathbb E}^0 
 \biggl[ \int_{t_0}^T \exp (-\gamma(t-t_0)) \mathbb W_1(\mu_t,\tilde \mu_t)^2 \ud t \, \vert \, {\mathcal F}_{t_0}^0 \biggr] 
 \\
&\hspace{15pt} + C \int_{t_0}^T \exp (-\gamma(t-t_0)) \sup_{x_0 \in {\mathbb R}^d} \sup_{\mu \in {\mathcal P}({\mathbb T}^d)}
\| f_t(x_0,\cdot,\mu) - F(\cdot,\mu) \|_{\curss} \ud t,
\end{split}
\end{equation*} 
where we used the fact that $f_t$ and $F$ are uniformly bounded. Above, 
$\gamma$ is implicitly hidden in the multiplicative constant $C$. Hence, 
by integrating in $t_0$ and using the same Fubini argument as before, we deduce that 
\begin{equation*} 
\begin{split}
&{\mathbb E}^0 \biggl[ \int_0^T \Bigl\| \bigl( u_{t} - \tilde{u}_{t} \bigr)
- \int_{{\mathbb T}^d} \bigl( u_{t} - \tilde{u}_{t} \bigr)(x) \ \ud x \Bigr\|_{\curss}^2 \ud  t 
\biggr]
 \\
 &\hspace{15pt} \leq C   + C {\mathbb E}^0 \Big[\int_{0}^T   \mathbb W_1(\mu_t,\tilde \mu_t)^2 \ud t\Big]
 + C \int_{0}^T \sup_{x_0 \in {\mathbb T}^d} \sup_{\mu \in {\mathcal P}({\mathbb T}^d)}
\| f_t(x_0,\cdot,\mu) - F(\cdot,\mu) \|_{\curss} \ud t.
\end{split}
\end{equation*} 
Substituting 
the last term on the first line by the upper bound in 
\eqref{eq:minor:MFG:ergodicity:forward}, we obtain 
\begin{equation*} 
\begin{split}
{\mathbb E}^0 \biggl[ \int_0^T  \Bigl\| \bigl( u_{t} - \tilde{u}_{t} \bigr)
- \int_{{\mathbb T}^d} \bigl( u_{t} - \tilde{u}_{t} \bigr)(x) \ \ud x \Bigr\|_{\curss}^2 \ud  t 
\biggr]
 &\leq C  + C \int_{0}^T \sup_{x_0 \in {\mathbb T}^d} \sup_{\mu \in {\mathcal P}({\mathbb T}^d)}
\| f_t(x_0,\cdot,\mu) - F(\cdot,\mu) \|_{\curss} \ud t.
\end{split}
\end{equation*} 
The above right-hand side is bounded thanks to \hyp{B2}. 

\noindent \textit{Fourth Step.} 
Replacing the initial time $0$ by a more general stopping time $\tau$ with values in 
$[0,T]$ (as given in the assumption of the lemma), we get in a very similar manner (using conditional expectation given ${\mathcal F}_{\tau}^0$
instead of expectation) that there exists a constant $C$, independent of 
$\sigma_0$, $\tau$ and $T$, such that 
\begin{equation*} 
\begin{split}
{\mathbb P}^0 
\biggl( \biggl\{ 
{\mathbb E}^0 \biggl[ \int_\tau^T \Bigl\| \bigl( u_{t} - \tilde{u}_{t} \bigr)
- \int_{{\mathbb T}^d} \bigl( u_{t} - \tilde{u}_{t} \bigr)(x) \ \ud x
 \Bigr\|_{\curss}^2 \ud  t 
\, \vert \,  {\mathcal F}^0_{\tau} \biggr] \leq C \biggr\} \biggr) = 1. 
\end{split}
\end{equation*}
Reformulating 
\eqref{eq:minor:MFG:ergodicity:forward}
in a similar manner, 
we complete the proof. 
\end{proof} 

%As for the constant Fourier mode (or, equivalently, the $x$-integral of $u_t-\tilde{u}_t$), it satisfies 
%\begin{lemma}
%\label{lem:minor:regularity:higher}
%Within the framework of Proposition \ref{prop:minor:regularity:higher}, 
% there exists a constant $C$, independent of $\sigma_0$, $\tau$ and $T$, such that
% \begin{equation*}
%{\mathbb P}^0 
%\Bigl( \bigl\{ \forall t \in [\tau,T], \quad  \| u_{t} - \tilde{u}_{t} \|_{\infty}
% \leq C \bigr\} \Bigr) = 1.
%\end{equation*} 
%\end{lemma}
%
%\begin{proof}
%The proof relies on the equation satisfied by the difference $(u_t-\tilde u_t)_{\tau \le t \le T}$. 
%As in the third step of the proof of Proposition 
%\ref{prop:minor:regularity:higher},
%one has
%\begin{equation*} 
%\begin{split} 
%\ud_t \bigl( u_t - \tilde u_t\bigr)(x) &=
%\biggl[
%- \frac12 \Delta_x \bigl( u_t - \tilde u_t \bigr)(x) - b(t,x) 
%\cdot \bigl( \nabla_x u_t(x) - \nabla_x \tilde u_t(x) \bigr)
%- \bigl( f_t(X_t^0,x,\mu_t) - F(x,\tilde \mu_t) \bigr) 
%\biggr] \ud t 
%\\
%&\hspace{15pt} + \sigma_0 v_t^0(x) \cdot \ud B_t^0. 
%\end{split}
%\end{equation*} 
%with 
%\begin{equation*}
%b_t(x) = - \biggl( \int_0^1 \nabla_p H \bigl( x, \theta \nabla_x u_t(x) + (1-\theta) \nabla_x \tilde u_t(x) \bigr) \ud \theta \biggr). 
%\end{equation*} 
%This can be seen as a backward stochastic transport-diffusion PDE, for which we have 
%a form of maximum principle. It says that 
%\begin{equation*} 
%\sup_{\tau \le t \le T} \| u_t - \tilde u_t \|_\infty \leq 
%\| g \|_\infty + \int_0^T \| f_t - F \|_\infty \ud t 
%+ C \int_\tau
%\end{equation*} 
%
%
%\end{proof}  

An important application is (recall 
\eqref{eq:representation:formula:m:v0} for the definition of the term $v^0$ below) 
\begin{proposition}
\label{prop:bmo:minor}
Under Assumption  \hyp{B}, 
% there exists a constant $C$, independent of $\sigma_0$, $\tau$ and $T$, such thatAssume that $s>d+1$, 
there exist an exponent $\gamma>0$ and a constant $C$, only depending on the parameters in 
\hyp{B} except $(\sigma_0,T)$, such that, for any ${\mathbb F}^0$-stopping time $\tau$ with values in $[0,T]$, 
\begin{equation*} 
{\mathbb P}^0
\biggl( \biggl\{ {\mathbb E}^0 \biggl[ \exp \biggl( \gamma \sigma_0^2 \int_\tau^T \Bigl\| v^0_r(\cdot)
- \int_{{\mathbb T}^d} 
v^0_r(x) \, \ud x 
\Bigr\|^2_{\lfloor \curss \rfloor -(d/2+1)} \ \ud r
\biggr) \, \vert \, {\mathcal F}_\tau^0 \biggr] \leq C \biggr\} \biggr) = 1. 
\end{equation*} 
\end{proposition}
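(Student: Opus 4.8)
The plan is to reduce the statement, via a John--Nirenberg (energy) inequality, to a pointwise bound on the conditional increments of the increasing process
\[
A_t \;:=\; \sigma_0^2 \int_0^t \Bigl\| v^0_r(\cdot) - \int_{{\mathbb T}^d} v^0_r(y)\,\ud y \Bigr\|_{\lfloor \curss \rfloor -(d/2+1)}^2\,\ud r, \qquad 0\le t\le T,
\]
with $v^0$ as in \eqref{eq:representation:formula:m:v0}. Concretely, I would first show that there is a constant $K$, depending only on the parameters in \hyp{B} other than $(\sigma_0,T)$, such that ${\mathbb E}^0[A_T-A_\tau\,\vert\,{\mathcal F}^0_\tau]\le K$ ${\mathbb P}^0$-a.s.\ for \emph{every} ${\mathbb F}^0$-stopping time $\tau$ with values in $[0,T]$. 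This is exactly the $\mathrm{BMO}$-type condition for the increasing process $A$, and the classical energy inequality (the induction ${\mathbb E}^0[(A_T-A_\tau)^n\,\vert\,{\mathcal F}^0_\tau]\le n!\,K^n$, run exactly as in the proof of \cite[Theorem~2.2]{Kazamaki} but directly for $A$) then gives ${\mathbb E}^0[\exp(\gamma(A_T-A_\tau))\,\vert\,{\mathcal F}^0_\tau]\le 2$ for $\gamma:=(2K)^{-1}$, which is the claim with $C=2$.

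To obtain this $\mathrm{BMO}$ bound I would compare, on $[\tau,T]$, the backward stochastic HJB equation in \eqref{eq:minor:FB:2} with the deterministic auxiliary MFG system \eqref{eq:auxiliary:MFG} (initialized at $\tau$ with $\tilde\mu_\tau=\mu_\tau$), whose solution $(\tilde{\boldsymbol\mu},\tilde{\boldsymbol u})$ is, conditionally on ${\mathcal F}^0_\tau$, deterministic and hence carries no ${\boldsymbol B}^0$-martingale part on $[\tau,T]$. Setting $w_t(x):=(u_t-\tilde u_t)(x)-\int_{{\mathbb T}^d}(u_t-\tilde u_t)(y)\,\ud y$ and $\bar v^0_t:=v^0_t-\int_{{\mathbb T}^d}v^0_t(y)\,\ud y$, the function $w_t$ has zero spatial mean, $\nabla_x w_t=\nabla_x(u_t-\tilde u_t)$, $w_T=g(X^0_T,\cdot,\mu_T)-\int_{{\mathbb T}^d}g(X^0_T,y,\mu_T)\,\ud y$, and (subtracting spatial means throughout, which annihilates the $\Delta_x$ contributions) $w$ solves
\[
\ud_t w_t(x) = \Bigl(-\tfrac12\Delta_x w_t(x)+\bar g^H_t(x)+\bar g^F_t(x)\Bigr)\ud t + \sigma_0\,\bar v^0_t(x)\cdot\ud B^0_t,
\]
where $g^H_t:=H(\cdot,\nabla_x u_t)-H(\cdot,\nabla_x\tilde u_t)$, $g^F_t:=F(\cdot,\tilde\mu_t)-f_t(X^0_t,\cdot,\mu_t)$ and $\bar h:=h-\int_{{\mathbb T}^d}h(y)\,\ud y$.

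Next I would fix an auxiliary Sobolev index $s_0$ with $\lfloor\curss\rfloor-1<s_0<\curss-1$, $s_0\notin{\mathbb N}$ and $s_0-d/2\notin{\mathbb N}$ — such $s_0$ exists precisely because $\curss\notin{\mathbb N}$ and $\curss>d/2+5$ — so that ${\mathcal H}^{s_0}({\mathbb T}^d)\hookrightarrow{\mathcal C}^{s_0-d/2}({\mathbb T}^d)\hookrightarrow{\mathcal C}^{\lfloor\curss\rfloor-(d/2+1)}({\mathbb T}^d)$ and ${\mathcal C}^{\curss}({\mathbb T}^d)\hookrightarrow{\mathcal H}^{s_0+1}({\mathbb T}^d)$, and apply It\^o's formula to $t\mapsto\|w_t\|_{s_0,2}^2$ on $[\tau,T]$ (termwise in the Fourier basis $(e_k)$, then summing; this is legitimate thanks to the regularity supplied by Lemma~\ref{lem:minor:regularity}, Proposition~\ref{prop:minor:higher} and Lemma~\ref{lem:reg:v0}, which in particular yield the \emph{deterministic} bound $\sup_t\|w_t\|_{\curss}\le C$ — using that $\tilde{\boldsymbol u}$ satisfies the estimates of Proposition~\ref{prop:minor:higher} and that $w_t$ is mean-zero — and ${\mathbb E}^0\int_0^T\|\bar v^0_t\|_{\lfloor\curss\rfloor,2}^2\,\ud t<\infty$). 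Since $(w_t,-\Delta_x w_t)_{s_0,2}\ge0$, $\|w_\tau\|_{s_0,2}^2\ge0$, and the stochastic term is a true martingale on $[\tau,T]$, taking ${\mathbb E}^0[\,\cdot\,\vert\,{\mathcal F}^0_\tau]$ and using the Cauchy--Schwarz and Young inequalities leaves
\[
\sigma_0^2\,{\mathbb E}^0\Bigl[\int_\tau^T\|\bar v^0_t\|_{s_0,2}^2\,\ud t\,\Big\vert\,{\mathcal F}^0_\tau\Bigr]\le C\,{\mathbb E}^0\Bigl[\|w_T\|_{s_0,2}^2+\int_\tau^T\bigl(\|w_t\|_{s_0,2}^2+\|g^H_t\|_{s_0,2}^2+\|g^F_t\|_{s_0,2}^2\bigr)\ud t\,\Big\vert\,{\mathcal F}^0_\tau\Bigr].
\]

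It then remains to bound the right-hand side by a constant depending only on the parameters in \hyp{B} except $(\sigma_0,T)$. Here $\|w_T\|_{s_0,2}\le C\|g(X^0_T,\cdot,\mu_T)\|_{\curss}\le C\kappa$ by \hyp{A1}; since $s_0<\curss-1<\lfloor\curss\rfloor$, every $\|\cdot\|_{s_0,2}$ is controlled by $\|\cdot\|_{\curss}$; using the mean-value representation \eqref{eq:b:Hu-Htildeu}, the bound $\|\nabla_x u_t\|_{\curss-1},\|\nabla_x\tilde u_t\|_{\curss-1}\le R_{\curss-1}$ from Proposition~\ref{prop:minor:higher}, \hyp{A5}, and the fact that ${\mathcal C}^{\curss-1}({\mathbb T}^d)$ is a multiplication algebra ($\curss-1>d/2$), one gets $\|g^H_t\|_{s_0,2}\le C\|g^H_t\|_{\curss-1}\le C\|\nabla_x(u_t-\tilde u_t)\|_{\curss-1}\le C\|w_t\|_{\curss}$; and by \hyp{B1} ($F$ is Lipschitz in $\mu$ for ${\mathbb W}_1$ with values in ${\mathcal C}^{\curss}$), $\|g^F_t\|_{s_0,2}\le C\|f_t(X^0_t,\cdot,\mu_t)-F(\cdot,\mu_t)\|_{\curss}+C\,{\mathbb W}_1(\mu_t,\tilde\mu_t)$. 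Finally $\int_\tau^T\|w_t\|_{\curss}^2\,\ud t$ and $\int_\tau^T{\mathbb W}_1(\mu_t,\tilde\mu_t)^2\,\ud t$ have conditional expectation $\le C$ by Proposition~\ref{prop:minor:regularity:higher}, while, using the uniform bound $\|f_t(x_0,\cdot,\mu)-F(\cdot,\mu)\|_{\curss}\le2\kappa$ (from \hyp{A1} and \hyp{B1}), $\int_\tau^T\|f_t(X^0_t,\cdot,\mu_t)-F(\cdot,\mu_t)\|_{\curss}^2\,\ud t\le2\kappa\int_0^T\|f_t(X^0_t,\cdot,\mu_t)-F(\cdot,\mu_t)\|_{\curss}\,\ud t\le2\kappa^2$ by \hyp{B2}. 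This yields ${\mathbb E}^0[\,\sigma_0^2\int_\tau^T\|\bar v^0_t\|_{s_0,2}^2\,\ud t\,\vert\,{\mathcal F}^0_\tau]\le K$, and the choice of $s_0$ gives $\|\bar v^0_t\|_{\lfloor\curss\rfloor-(d/2+1)}\le C\|\bar v^0_t\|_{s_0,2}$, which is the bound demanded in the first paragraph. The main obstacle I anticipate is the Sobolev bookkeeping around $s_0$: the It\^o expansion of $\|w_t\|_{s_0,2}^2$ requires $w_t\in{\mathcal H}^{s_0+1}({\mathbb T}^d)$, while the target norm requires ${\mathcal H}^{s_0}\hookrightarrow{\mathcal C}^{\lfloor\curss\rfloor-(d/2+1)}$, and reconciling the two rests precisely on $\curss\notin{\mathbb N}$ and $\curss>d/2+5$; beyond that, the only genuinely structural input is Proposition~\ref{prop:minor:regularity:higher}, whose $T$-independent conditional $L^2$-in-time bounds on $w$ and on ${\mathbb W}_1(\mu,\tilde\mu)$ are exactly what keep $K$ (hence $\gamma$ and $C$) independent of $T$.
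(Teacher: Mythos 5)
Your proof is correct and follows essentially the same route as the paper: compare $u$ with the auxiliary deterministic system \eqref{eq:auxiliary:MFG}, run a Sobolev/Fourier energy estimate on the mean-free difference to get a $T$-independent conditional bound on $\sigma_0^2\int_\tau^T\|\bar v^0_r\|^2\,\ud r$ via Proposition \ref{prop:minor:regularity:higher} and \hyp{B2}, then pass to the exponential bound by the BMO energy inequality. The only (harmless) differences are presentational: the paper works coefficient-wise with the weight $(1+|k|^2)^{\curss-1}$ rather than your auxiliary index $s_0$, and it simply cites \cite[Theorem 2.2]{Kazamaki} where you spell out the John--Nirenberg induction for the increasing process.
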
 
In the sequel, we let 
\begin{equation}
\label{eq:barv0} 
\bar v_t^0(x) := v_t^0(x) - \int_{{\mathbb T}^d} v_t^0(y) \ud y, \quad 
(t,x) \in [0,T] \times {\mathbb T}^d. 
\end{equation}

\begin{proof}
The proof relies on BMO estimates similar to those used in Section 
\ref{se:2}, but there is a subtlety due to the fact that the argument in 
the exponential involves a functional norm. 

We proceed by Sobolev embeddings. We recall that 
${\mathcal C}^\curss({\mathbb T}^d)$ is included in the Sobolev space 
${\mathcal H}^\curss({\mathbb T}^d)$. 
Also, because
%the embedding from 
%$H^s({\mathbb T}^d)$
%into $H^{s-d/2}({\mathbb T}^d)$ is 
%Hilbert-Schmidt, which means that if, $(e_n)_{n \geq 0}$ is an orthonormal basis of 
%$H^s({\mathbb T}^d)$ then
%\begin{equation*} 
%\sum_{n \geq 0} 
%\| e_n \|^2_{H^{s-d/2}({\mathbb T}^d)} < \infty. 
%\end{equation*} 
%And then, recalling that 
$H^{\curss-1}({\mathbb T}^d)$ embeds continuously in 
${\mathcal C}^{\lfloor \curss \rfloor -(d/2+1)}({\mathbb T}^d)$, we obtain
\begin{equation*}
\begin{split}  
\bigl\| \bar v^0_r
 \bigr\|^2_{\lfloor \curss \rfloor-(d/2+1)}
&\leq 
C \bigl\| \bar v^0_r
\bigr\|^2_{H^{\curss-1}({\mathbb T}^d)}
=
C  \bigl\| \bar v^0_r
\bigr\|^2_{\curss-1,2}
 = C  \sum_{k \in {\mathbb Z}^d \setminus \{0\}} 
\bigl( 1 + \vert k \vert^2 \bigr)^{\curss-1}
\vert (  v^0_r,e_k ) \vert^2,
\end{split} 
\end{equation*} 
where we recall $(e_k)_{k \in {\mathbb Z}^d}$ is the standard (complex valued) Fourier basis
of ${\mathbb T}^d$
and $( \cdot , \cdot)$ denotes here the standard inner product in $L^2({\mathbb T}^d)$ (understood coordinate-wise since 
$v^0$ takes values in ${\mathbb R}^d$). 
Therefore, 
for $\tau$ as in the statement, 
\begin{equation}
\label{eq:fourier:bmo}
\begin{split} 
  {\mathbb E}^0 \biggl[ \int_\tau^T 
\big\| \bar v^0_r  \big\|^2_{\lfloor \curss \rfloor -(d/2+1)} \ud r \, \vert \, {\mathcal F}_\tau^0\biggr]
&\leq C  \sum_{k \neq 0} 
\bigl( 1 + \vert k \vert^2 \bigr)^{\curss-1} 
{\mathbb E}^0 \biggl[ \int_\tau^T 
\vert ( v^0_r,e_k ) \vert^2 \ud r \, \vert \, {\mathcal F}_\tau^0 \biggr].
%\\
%&\leq  C   \sum_{n \geq 0} 
%\bigl( 1 + n^2 \bigr)^s
%{\mathbb E}^0 \bigl[ 
%\vert (m_T,e_n ) \vert^2  \, \vert \, {\mathcal F}_t^0 \biggr]
%\\
%&= C {\mathbb E}^0 \Bigl[ \| g(X_T^0,\cdot,\mu_T)\|_{H^{s}({\mathbb T}^d)}^2 
%\, \vert \, {\mathcal F}_t ^0\Bigr],
\end{split}
\end{equation} 
%which is less than $C$, for a new value of $C$. 
Next, we compute the dynamics of the Fourier coefficients. For $k \in {\mathbb Z}^d$, 
\begin{equation*} 
\begin{split}
\ud_t \bigl(u_t-\tilde u_t,e_k\bigr) &= 2 \pi^2 \vert k \vert^2 \bigl(u_t-\tilde u_t,e_k\bigr) \ud t 
+ \bigl( e_k, H(\cdot,\nabla_x u_t(\cdot)) - H(\cdot,\nabla_x \tilde u_t(\cdot)) 
\bigr) \ud t 
\\
&\hspace{15pt} - \bigl( e_k, f_t(X^0_t,\cdot,\mu_t) - F(\cdot,\tilde \mu_t) \bigr) \ud t
 + \sigma_0 \bigl( v_t^0,e_k\bigr) \ud B_t^0, \quad t \in [0,T]. 
\end{split}
\end{equation*} 
Taking squared norm, we obtain
\begin{equation*} 
\begin{split}
\ud_t \vert \bigl(u_t-\tilde u_t,e_k\bigr) \vert^2&\geq 
 2 \bigl(u_t-\tilde u_t,e_k\bigr)  \bigl( e_k, H(\cdot,\nabla_x u_t(\cdot)) -  H(\cdot,\nabla_x \tilde u_t(\cdot)) 
\bigr) \ud t 
\\
&\hspace{15pt} 
-  2 \bigl(u_t-\tilde u_t,e_k\bigr) \bigl( e_k, f_t(X_t^0,\cdot,\mu_t) - F(\cdot,\tilde \mu_t) \bigr) \ud t
+ \sigma_0^2 \bigl\vert  \bigl( v_t^0,e_k\bigr) \bigr\vert^2 \ud t + \ud n_t, 
\end{split}
\end{equation*} 
where $(n_t)_{0 \le t \le T}$ is a generic martingale term. 
Therefore, 
\begin{equation*} 
\begin{split}
&\sigma^2_0
{\mathbb E}^0 \biggl[ \int_{\tau}^T 
\sum_{k \neq 0} \bigl( 1 + \vert k \vert^2 \bigr)^{\curss-1} \bigl\vert  \bigl( v_t^0,e_k\bigr) \bigr\vert^2 \ud t \, \vert \, {\mathcal F}^0_{\tau} 
\biggr]
\\
&\leq {\mathbb E}^0 \biggl[ 
\sum_{k \neq 0} \bigl( 1 + \vert k \vert^2 \bigr)^{\curss-1} \bigl\vert  \bigl( u_T,e_k\bigr) \bigr\vert^2  \, \vert \, {\mathcal F}^0_{\tau} 
\biggr]
\\
&\hspace{15pt}  -
2 {\mathbb E}^0 \biggl[ 
\int_{\tau}^T 
\sum_{k \neq 0} \bigl( 1 + \vert k \vert^2 \bigr)^{\curss-1} \bigl(u_t-\tilde u_t,e_k\bigr) \bigl( e_k, f_t(X_t^0,\cdot,\mu_t) 
- 
F(\cdot,\tilde \mu_t)
\bigr) \ud t \, \vert \, {\mathcal F}^0_{\tau} 
\biggr]
\\
&\hspace{15pt} +
2 {\mathbb E}^0 \biggl[ 
\int_{\tau}^T 
\sum_{k \neq 0} \bigl( 1 + \vert k \vert^2 \bigr)^{\curss-1}\bigl(u_t-\tilde u_t,e_k\bigr)    \bigl( e_k, H(\cdot,\nabla u_t(\cdot)) -  H(\cdot,\nabla \tilde u_t(\cdot)) 
\bigr) \ud t \, \vert \, {\mathcal F}^0_{\tau} 
\biggr].
\end{split}
\end{equation*} 
And then, 
by 
\eqref{eq:fourier:bmo} and for a constant $C$ as in the statement,
\begin{equation*} 
\begin{split}
&\sigma_0^2 {\mathbb E}^0 \biggl[ \int_\tau^T 
\big\| \bar v^0_r \big\|^2_{\lfloor \curss \rfloor -(d/2+1)} \ud r \, \vert \, {\mathcal F}_\tau^0\biggr]
\\
&\leq C
{\mathbb E}^0 \bigl[ \| u_T \|^2_{\curss-1,2}  \, \vert \, {\mathcal F}^0_{\tau} 
\bigr]
\\
&\hspace{15pt} 
+
C {\mathbb E}^0 \biggl[ 
\int_{\tau}^T  
\Bigl\| \bigl(u_t-\tilde u_t\bigr) - \int_{{\mathbb T}^d} 
\bigl(u_t-\tilde u_t\bigr)(x) \, \ud x
\Bigr\|_{\curss-1,2}
\| f_t(X_t^0,\cdot,\mu_t) - F(\cdot,\tilde \mu_t) 
\|_{\curss-1,2}
 \ud t \, \vert \, {\mathcal F}^0_{\tau} 
\biggr]
\\
&\hspace{15pt} + 
C {\mathbb E}^0 \biggl[ 
\int_{\tau}^T 
\Bigl\| \bigl(u_t-\tilde u_t\bigr) - \int_{{\mathbb T}^d} 
\bigl(u_t-\tilde u_t\bigr)(x) \, \ud x
\Bigr\|_{\curss-1,2}
\Bigl\| 
H(\cdot,\nabla_x u_t(\cdot)) -  H(\cdot,\nabla_x \tilde u_t(\cdot)) 
\Bigr\|_{\curss-1,2}  \ud t \, \vert \, {\mathcal F}^0_{\tau} 
\biggr]
\\
&\leq 
 C
{\mathbb E}^0 \bigl[ \| u_T \|^2_{\curss-1}  \, \vert \, {\mathcal F}^0_{\tau} 
\bigr]
+
C  {\mathbb E}^0 \biggl[ 
\int_{\tau}^T  
\|
 f_t(X_t^0,\cdot,\mu_t) - F(\cdot,\tilde \mu_t) 
\|_{\curss-1}^2
 \ud t \, \vert \, {\mathcal F}^0_{\tau} 
\biggr]
\\
&\hspace{15pt} 
+
C {\mathbb E}^0 \biggl[ 
\int_{\tau}^T  
\Bigl\| \bigl(u_t-\tilde u_t\bigr) - \int_{{\mathbb T}^d} 
\bigl(u_t-\tilde u_t\bigr)(x) \, \ud x
\Bigr\|_{\curss-1}^2
\ud t \, \vert \, {\mathcal F}^0_{\tau} 
\biggr]
\\
&\hspace{15pt} + 
C {\mathbb E}^0 \biggl[ 
\int_{\tau}^T 
\Bigl\| 
H(\cdot,\nabla_x u_t(\cdot)) -  H(\cdot,\nabla_x \tilde u_t(\cdot)) 
\Bigr\|_{\curss-1}^2  \ud t \, \vert \, {\mathcal F}^0_{\tau} 
\biggr].
\end{split}
\end{equation*} 
The key point to treat the very last term is to observe that 
\begin{equation*} 
\begin{split}
H(\cdot,\nabla_x u_t(\cdot)) -  H(\cdot,\nabla_x \tilde u_t(\cdot))  = - b_t(\cdot) \cdot \bigl( \nabla_x u_t - \nabla_x \tilde u_t \bigr)(\cdot),
\end{split}
\end{equation*}
with $b_t(\cdot)$  as in 
\eqref{eq:b:Hu-Htildeu}. 
By Proposition 
\ref{prop:minor:higher} (with a similar result holding true for the derivatives of $\tilde u$), 
we have a bound for $\| b_t \|_{\curss-1}$, independently of $(\sigma_0,T)$. Thanks to this, 
\begin{equation*} 
\Bigl\| 
H(\cdot,\nabla_x u_t(\cdot)) -  H(\cdot,\nabla_x \tilde u_t(\cdot)) 
\Bigr\|_{\curss-1}
\leq C \bigl\| 
\nabla_x (u_t -    \tilde u_t )
\bigr\|_{\curss-1}
\leq C \Bigl\| \bigl(u_t-\tilde u_t\bigr) - \int_{{\mathbb T}^d} 
\bigl(u_t-\tilde u_t\bigr)(x) \, \ud x
\Bigr\|_{\curss}. 
\end{equation*}
And then, using the Lipschitz property of $F$ w.r.t. $\mu$ in ${\mathcal C}^{\curss}$, we obtain 
\begin{equation*} 
\begin{split}
&\sigma_0^2  {\mathbb E}^0 \biggl[ \int_\tau^T 
\big\| \bar v^0(r,\cdot) \big\|^2_{\lfloor \curss \rfloor -(d/2+1)} \ud r \, \vert \, {\mathcal F}_\tau^0\biggr]
\\
&\leq 
 C
\sup_{x_0 \in {\mathbb R}^d} \sup_{\mu \in {\mathcal P}({\mathbb T}^d)}
\| g(x_0,\cdot,\mu) \|_{\curss}
+
C  
\int_{\tau}^T  
\sup_{x_0 \in {\mathbb R}^d} \sup_{\mu \in {\mathcal P}({\mathbb T}^d)}
\| f_t(x_0,\cdot,\mu) - F(\cdot,\mu) \|_{\curss}
 \ud t 
\\
&\hspace{15pt} 
+
C  {\mathbb E}^0 \biggl[ 
\int_{\tau}^T  
\mathbb W_1(\mu_t,\tilde \mu_t)^2 
 \ud t \, \vert \, {\mathcal F}^0_{\tau} 
\biggr]
+
C {\mathbb E}^0 \biggl[ 
\int_{\tau}^T  
\Bigl\| \bigl(u_t-\tilde u_t\bigr) - \int_{{\mathbb T}^d} 
\bigl(u_t-\tilde u_t\bigr)(x) \, \ud x
\Bigr\|_{\curss}^2
\ud t \, \vert \, {\mathcal F}^0_{\tau} 
\biggr],
\end{split}
\end{equation*} 
where we removed the square in the $f_t-F$ term by using boundedness of the latter. 
Thanks to 
 \hyp{B2}. 
and Proposition \ref{prop:minor:regularity:higher}, we can bound the 
left-hand side by $C$ (for a possibly new value of $C$). 
The conclusion follows from the theory of BMO martingale, see 
\cite[Theorem 2.2]{Kazamaki}. 
%The rest of the proof is then standard results from BMO theory for martingales, see
%Chapter 2 in the aforementioned reference \cite{Kazamaki}. 
\end{proof}

\begin{remark}
\label{rem:BMO:v0}
The same proof shows that
there exist an exponent $\gamma_T>0$ and a constant $C_T$, possibly depending on $T$, such that, for any ${\mathbb F}^0$-stopping time $\tau$ with values in $[0,T]$, 
\begin{equation*} 
{\mathbb P}^0
\biggl( \biggl\{ {\mathbb E}^0 \biggl[ \exp \biggl( \gamma_T \sigma_0^2 \int_\tau^T \bigl\| v^0_r 
 \bigr\|^2_{\lfloor \curss \rfloor -(d/2+1)} \ \ud r
\biggr) \, \vert \, {\mathcal F}_\tau^0 \biggr] \leq C_T \biggr\} \biggr) = 1. 
\end{equation*} 
The only difference is that, without the additional centring, we loose the exponential 
decay in the second and third steps of the proof of 
Proposition 
\ref{prop:minor:regularity:higher}. 
\end{remark}

\subsection{Forward-backward SDE for the major player} 

We here provide similar results for the system 
\eqref{eq:major:FB:1}
satisfied by the major player, using now the following auxiliary deterministic HJB equation:
\begin{equation} 
\label{eq:auxiliary:HJB}
\begin{split}
&\partial_t w^0(t,x_0) +  \tfrac{1}2 \sigma_0^2 \Delta_{x_0} w^0(t,x_0) + F^0(x_0) - H^0 \bigl( x_0,\nabla_{x_0} w^0(t,x_0) \bigr) =0, 
\quad (t,x_0) \in [0,T] \times {\mathbb R}^d,
\\ 
&w^0(T,x_0) = 0. 
\end{split}
\end{equation}  

Existence and uniqueness of a (classical) solution is standard. 
The key point is that, by 
\cite[Theorem 1.20]{TranHJB}, we have a bound for $\sup_{(t,x_0) \in  [0,T] \times {\mathbb R}^d} \vert \nabla_{x_0} w^0
(t,x_0)\vert$ that only depends on the parameters in  \hyp{B},
except $T$. 
Using the fact that $F^0$ has bounded first-order derivatives, we deduce from 
standard regularization properties of the 
heat kernel that $\sup_{(t,x_0) \in  [0,T] \times {\mathbb R}^d} \vert \nabla^2_{x_0} w^0
(t,x_0)\vert$ is bounded with a bound that only depends on the parameters in  \hyp{B},
except $T$. Notice that when $F^0 \equiv 0$ and $H^0(\cdot,0) \equiv 0$, then $w^0 \equiv 0$.

\begin{lemma} 
\label{lem:BMO:major}
Under Assumption  \hyp{B}, for any ${\mathbb F}^0$-adapted continuous path 
 $(\mu_t)_{0 \le t \leq T}$ with values in ${\mathcal P}({\mathbb T}^d)$
 and  any ${\mathbb F}^0$-stopping time $\tau$ with values in $[0,T]$, any solution to  
 the 
forward-backward SDE
\eqref{eq:major:FB:1} (in the sense of item 1 in Definition 
\ref{def:forward-backward=MFG:solution}) satisfies 
 \begin{equation} 
 \label{eq:BMO:major}
 {\mathbb P}^0
\biggl( \biggl\{
{\mathbb E}^0 \biggl[ \int_\tau^T \sigma_0^2\vert Z_r^0 - \nabla_{x_0} w^0(X_t^0) \vert^2 \ud r \, \vert \, {\mathcal F}_\tau^0 \biggr] \leq C \biggr\} 
\biggr) = 1, 
 \end{equation} 
 for a constant $C$ only depending on the parameters  
 in 
\hyp{B}
 except the parameters $(\sigma_0,T)$. 
 % and $\lambda_0$.  \textcolor{red}{$\lambda_0$ should be the strict convexity constant of $H^0$, but it is not clear whether we need convexity or not in the rest of the proof}{\color{blue}Chenchen: I think we do not use the strict convexity in the rest of the proof.}. 
 \end{lemma}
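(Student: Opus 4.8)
The plan is to carry over to the finite-dimensional forward-backward system \eqref{eq:major:FB:1} the comparison strategy used for the minor MFG system in Propositions \ref{prop:minor:regularity:higher} and \ref{prop:bmo:minor}: subtract from the backward component the solution of the deterministic auxiliary problem \eqref{eq:auxiliary:HJB} and read off a coercive identity whose quadratic part is exactly the left-hand side of \eqref{eq:BMO:major}. Concretely, I would set $\zeta_t := Z_t^0 - \nabla_{x_0} w^0(t,X_t^0)$ and $P_t := Y_t^0 - w^0(t,X_t^0)$, apply It\^o's formula to $(w^0(t,X_t^0))_{0 \le t \le T}$ — recalling that $X^0$ has quadratic variation $\sigma_0^2\,\mathrm{Id}\,\ud t$ — substitute the equation \eqref{eq:auxiliary:HJB}, subtract from the backward equation in \eqref{eq:major:FB:1}, and use the Fenchel identity $L^0(x_0,-\nabla_p H^0(x_0,p)) = p \cdot \nabla_p H^0(x_0,p) - H^0(x_0,p)$ (the $H^0$-analogue of \eqref{eq:Hamiltonians:Fenchel}). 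The $\ud t$-terms then rearrange into
\[
\ud P_t = \bigl[ F^0(X_t^0) - f_t^0(X_t^0,\mu_t) + \Xi_t \bigr] \ud t + \sigma_0 \zeta_t \cdot \ud B_t^0, \qquad P_T = g^0(X_T^0,\mu_T),
\]
where $\Xi_t = H^0(X_t^0,Z_t^0) - H^0\bigl(X_t^0,\nabla_{x_0} w^0(t,X_t^0)\bigr) - \zeta_t \cdot \nabla_p H^0(X_t^0,Z_t^0)$; a second-order Taylor expansion together with the $\lambda'$-strict convexity and the $\kappa'$-bound on $\nabla^2_{pp} H^0$ from \hyp{A5} give $-\tfrac{\kappa'}{2} |\zeta_t|^2 \le \Xi_t \le -\tfrac{\lambda'}{2} |\zeta_t|^2$.

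Next I would establish a bound $|P_t| \le 2\kappa$ holding $\bP^0$-a.s.\ and uniformly in $(\sigma_0,T)$. By item 1(ii) of Definition \ref{def:forward-backward=MFG:solution}, $\sup_t |Y_t^0| \in L^\infty(\Omega^0,\bP^0)$; expanding $|Y_t^0|^2$ and using the quadratic growth of $L^0$ in the standard way (cf.\ \cite{Tevzadze}) shows $\| \int_0^\cdot Z_t^0 \cdot \ud B_t^0 \|_{\textrm{\rm BMO}} < \infty$, hence $\zeta$ is BMO and all stochastic integrals below are true martingales. The lower bound $P_t \ge -2\kappa$ is immediate from $\ud P_t \le [F^0 - f_t^0]\ud t + \sigma_0 \zeta_t \cdot \ud B_t^0$ (as $\Xi_t \le 0$), integrating, conditioning on $\cF_t^0$, and using $|g^0| \le \kappa$ together with \hyp{B2}. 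For the upper bound I would run a verification argument against the auxiliary control problem: rewrite the backward equation in \eqref{eq:major:FB:1} with the competing feedback $\beta^0(t,x_0) := -\nabla_p H^0(x_0,\nabla_{x_0} w^0(t,x_0))$ — bounded, since $\nabla_{x_0} w^0$ is bounded uniformly in $(\sigma_0,T)$ by \cite[Theorem 1.20]{TranHJB} as recalled below \eqref{eq:auxiliary:HJB} — perform the (trivially licit, bounded-kernel) Girsanov change that turns the forward drift into $\beta^0$, and use the convexity of $L^0$ exactly as in the "deviation of the major" step of the proof of Proposition \ref{prop:characterization:1}; this yields $Y_\tau^0 \le \tilde{\bE}^0[g^0(\hat X_T,\mu_T) + \int_\tau^T (f_t^0(\hat X_t,\mu_t) + L^0(\hat X_t,\beta^0(t,\hat X_t)))\ud t \mid \cF_\tau^0]$, where under the tilted measure $\hat X$ obeys the $w^0$-optimal dynamics. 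Subtracting the \emph{exact} verification identity $w^0(\tau,X_\tau^0) = \tilde{\bE}^0[\int_\tau^T (F^0(\hat X_t) + L^0(\hat X_t,\beta^0(t,\hat X_t)))\ud t \mid \cF_\tau^0]$ — obtained by It\^o on $(w^0(t,\hat X_t))$, exact because $\beta^0$ is the optimal feedback for \eqref{eq:auxiliary:HJB} — and invoking \hyp{B2} once more gives $P_\tau \le \kappa + \int_0^T \sup_{x_0,\mu} |f_t^0(x_0,\mu) - F^0(x_0)|\,\ud t \le 2\kappa$.

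The estimate \eqref{eq:BMO:major} then follows in two stages. First, from $\ud P_t \le [F^0 - f_t^0 - \tfrac{\lambda'}{2}|\zeta_t|^2]\ud t + \sigma_0 \zeta_t \cdot \ud B_t^0$ (using $\Xi_t \le -\tfrac{\lambda'}{2}|\zeta_t|^2$), integrating from $\tau$ to $T$, conditioning, and using $|g^0|\le\kappa$, $|P_\tau|\le 2\kappa$ and \hyp{B2}, one gets the coercive bound $\bE^0[\int_\tau^T |\zeta_t|^2\,\ud t \mid \cF_\tau^0] \le 8\kappa/\lambda' =: C_0$. Second, applying It\^o to $|P_t|^2$, whose quadratic variation is $\sigma_0^2 |\zeta_t|^2\,\ud t$,
\[
\ud |P_t|^2 = \bigl[ 2P_t (F^0(X_t^0) - f_t^0(X_t^0,\mu_t)) + 2P_t \Xi_t + \sigma_0^2 |\zeta_t|^2 \bigr]\ud t + 2\sigma_0 P_t \zeta_t \cdot \ud B_t^0 ,
\]
integrating from $\tau$ to $T$, conditioning on $\cF_\tau^0$, and bounding $|g^0|\le\kappa$, $|P_t|\le 2\kappa$, $|\Xi_t| \le \tfrac{\kappa'}{2}|\zeta_t|^2$, the $f^0_t-F^0$ term by \hyp{B2}, and — crucially — the cross term $|2P_t\Xi_t| \le 2\kappa\kappa'|\zeta_t|^2$ by the coercive bound $C_0$, one obtains $\sigma_0^2\,\bE^0[\int_\tau^T |\zeta_t|^2\,\ud t \mid \cF_\tau^0] \le 5\kappa^2 + 2\kappa\kappa' C_0 =: C$, which depends only on the parameters in \hyp{B} and not on $(\sigma_0,T)$.

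The main obstacle is precisely keeping the constant free of both $\sigma_0$ and $T$. On the one hand, the $\sigma_0^2$-weight in \eqref{eq:BMO:major} cannot come from the coercivity of $H^0$ alone — that yields only $\bE^0[\int_\tau^T |\zeta_t|^2\,\ud t \mid \cF_\tau^0] \le C$, i.e.\ a BMO norm growing like $\sigma_0$ — so one is forced to differentiate $|P_t|^2$ and extract the weight from the quadratic variation. On the other hand, unlike in the minor analysis, where the analogue of the cross term is controlled by the exponential-decay (ergodicity) estimate of Proposition \ref{prop:minor:regularity:higher} independently of the martingale integrand, here $2P_t\Xi_t$ is genuinely quadratic in $\zeta_t$ and of a priori uncertain sign; it must be absorbed using the coercive bound $C_0$, so that the two stages together (coercivity first, then the $\sigma_0^2$-weighted identity) are what deliver the $(\sigma_0,T)$-free constant. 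Obtaining the $(\sigma_0,T)$-free two-sided bound on $P_\tau$ in the second paragraph — especially the upper bound via the verification/Girsanov comparison with the auxiliary problem, where \hyp{B2} does the essential work — is the other delicate ingredient.
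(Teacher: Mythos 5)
Your decomposition $P_t := Y_t^0 - w^0(t,X_t^0)$, the Fenchel identity for $H^0$, and the Taylor bound $-\tfrac{\kappa'}{2}|\zeta_t|^2 \le \Xi_t \le -\tfrac{\lambda'}{2}|\zeta_t|^2$ reproduce the paper's reduction to the BSDE with coercive quadratic driver exactly. Where you diverge is in passing from boundedness of $P$ to the $\sigma_0^2$-weighted BMO estimate. The paper applies It\^o to the exponential $E_t^0 := e^{\nu P_t^0}$ with $\nu \ge 2(\|\nabla^2_{pp}H^0\|_\infty+1)$: combined with $\sigma_0 \ge 1$ from \hyp{A4}, the quadratic-variation term $\tfrac12\nu^2\sigma_0^2 E_t^0|\zeta_t|^2$ dominates the driver contribution $-\nu E_t^0(A_t^0\zeta_t)\cdot\zeta_t$ outright, and the estimate follows in a single conditional expectation. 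You instead work with $|P_t|^2$ in two passes: first a pure coercivity bound $\bE^0[\int_\tau^T|\zeta_t|^2\,\ud t \mid \cF_\tau^0] \le 8\kappa/\lambda'$ from $\Xi_t \le -\tfrac{\lambda'}{2}|\zeta_t|^2$, then It\^o on $|P_t|^2$ with the cross term $2P_t\Xi_t$ absorbed by that first bound. Both versions are correct and both rest on the $(\sigma_0,T)$-free $L^\infty$ bound for $P$; yours does not use $\sigma_0 \ge 1$, at the cost of an extra stage. For the $L^\infty$ bound itself the paper simply linearizes the quadratic driver $(A_t^0\zeta_t)\cdot\zeta_t$ by a BMO--Girsanov change (citing the standard quadratic-BSDE machinery), whereas your upper bound runs a full verification against the auxiliary problem \eqref{eq:auxiliary:HJB}. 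This works, but is heavier than necessary, and one small imprecision should be noted: the Girsanov kernel in your comparison step is $\sigma_0^{-1}\bigl(\nabla_p H^0(X_t^0,Z_t^0)-\nabla_p H^0(X_t^0,\nabla_{x_0}w^0(t,X_t^0))\bigr)$, which depends on $Z_t^0$ and is therefore a BMO kernel rather than a bounded one; Girsanov is still licit once you have established $\| \int_0^\cdot Z_t^0 \cdot \ud B_t^0 \|_{\textrm{\rm BMO}} < \infty$, but the phrase ``trivially licit, bounded-kernel'' overstates the case.
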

%This is a standard result in the theory of quadratic backward SDEs following for instance 
% from 
%\cite{BRIAND20132921} (and the references therein). 
\begin{proof}
Using smoothness of $w^0$, we can expand $(Y_t^0 - w^0(t,X_t^0))_{0 \leq t \leq T}$ by means of It\^o's formula. We get
\begin{equation*} 
\begin{split} 
\ud \bigl[ Y_t^0 - w^0(t,X_t^0) \bigr]
&= - \bigl( f_t^0(X_t^0,\mu_t) - F^0(X_t^0) \bigr) 
\ud t 
\\
&\hspace{-25pt} - \Bigl( L^0\bigl(X_t^0, - \nabla_p H^0(X_t^0,Z_t^0)\bigr) + H^0\bigl( X_t^0,\nabla_{x_0} w^0(t,X_t^0)\bigr) 
- \nabla_p H^0(X_t^0,Z_t^0) \cdot \nabla_{x_0} w^0(t,X_t^0) \Bigr) 
\ud t 
\\
&\hspace{-25pt} + \sigma_0 \bigl( Z_t^0 - \nabla_{x_0} w^0(t,X_t^0) \bigr) \cdot \ud B_t^0, \quad t \in [0,T]. 
\end{split}
\end{equation*} 
We recall the standard formula
(see \eqref{eq:Hamiltonians:Fenchel}) 
\begin{equation}\label{eq:H0}
H^0(x_0,p_0) = p_0 \cdot \nabla_p H^0(x_0,p_0) - L^0\bigl(x_0,- \nabla_p H^0(x_0,p_0)\bigr),
\end{equation}
from which we obtain (using $p_0=Z_t^0$)
\begin{equation*} 
\begin{split} 
&\ud \bigl[ Y_t^0 - w^0(t,X_t^0)   \bigr]
=  - \bigl( f_t^0(X_t^0,\mu_t) - F^0(X_t^0) \bigr) 
\ud t 
\\
&\hspace{15pt} - \Bigl( - H^0(X_t^0,Z_t^0) + H^0\bigl( X_t^0,\nabla_{x_0} w^0(t,X_t^0)\bigr) 
- \nabla_p H^0(X_t^0,Z_t^0) \cdot \bigl[ \nabla_{x_0} w^0(t,X_t^0) - Z_t^0 \bigr] \Bigr) 
\ud t 
\\
&\hspace{15pt} + \sigma_0 \bigl( Z_t^0 - \nabla_{x_0} w^0(t,X_t^0) \bigr) \cdot \ud B_t^0, \quad t \in [0,T]. 
\end{split}
\end{equation*} 
Here, we write 
\begin{equation*} 
\begin{split}   
&H^0\bigl( X_t^0,\nabla_{x_0} w^0(t,X_t^0)\bigr) 
- H^0(X_t^0,Z_t^0) 
- \nabla_p H^0(X_t^0,Z_t^0) \cdot \bigl[ \nabla_{x_0} w^0(t,X_t^0) - Z_t^0 \bigr]  
\\
&= \int_0^1 \bigl[ \nabla_p H^0\bigl(X_t^0,\theta \nabla_{x_0} w^0(t,X_t^0) + (1-\theta) Z_t^0\bigr) 
- \nabla_p H^0(X_t^0, Z_t^0) 
\bigr] \cdot \bigl( \nabla_{x_0} w^0(t,X_t^0) - Z_t^0 \bigr) \ud \theta
\\
&= \bigl[ A_t^0
\bigl( \nabla_{x_0} w^0(t,X_t^0) - Z_t^0 \bigr) 
\bigr] \cdot \bigl( \nabla_{x_0} w^0(t,X_t^0) - Z_t^0 \bigr), 
\end{split}
\end{equation*} 
where we have let 
\begin{equation*}
A_t^0 := \int_0^1 \int_0^1   \nabla^2_{pp} H^0\bigl(X_t^0,\theta \varphi \nabla_{x_0} w^0(t,X_t^0) + (1-\varphi \theta) Z_t^0\bigr)   \ud \theta \ud \varphi.
\end{equation*} 
So, with the notations
\begin{equation*} 
P_t^0 := Y_t^0 - w^0(t,X_t^0), \quad Q_t^0 := Z_t^0 - \nabla_{x_0} w^0(t,X_t^0), 
\end{equation*} 
we obtain the following BSDE: 
\begin{equation*} 
\ud P_t^0 = \Bigl[   - \bigl(f_t^0(X_t^0,\mu_t) - F^0(X_t^0)\bigr) - (A_t^0 Q_t^0) \cdot Q_t^0 \Bigr] \ud t + 
\sigma_0 Q_t^0 \cdot \ud B_t^0, \quad t \in [0,T]. 
\end{equation*} 
Using Girsanov transformation, 
it is standard  (see 
 \cite{BRIAND20132921} and the references therein about quadratic BSDEs)
to deduce that 
\begin{equation} 
\label{eq:bound:P0}
\begin{split} 
\vert P_t^0 \vert &\leq 
\sup_{x_0 \in {\mathbb R}^d} \sup_{\mu \in {\mathcal P}({\mathbb T}^d)}
\vert g^0(x_0,\mu) \vert
+
\int_{t}^T  
\sup_{x_0 \in {\mathbb R}^d} \sup_{\mu \in {\mathcal P}({\mathbb T}^d)}
| f_r^0(x_0,\mu) - F^0(\mu) |
 \ud r 
\leq  C,
\end{split} 
\end{equation} 
with $C$ independent of $(\sigma_0,T)$, see \hyp{B2}. 

We 
now 
let ${\boldsymbol E}^0:= (E_t^0 := \exp(\nu P_t^0))_{0 \le t \le T}$, for some  parameter $\nu>0$. 
Notice  from \eqref{eq:bound:P0} that we have a global bound for both ${\boldsymbol P}^0$ and ${\boldsymbol E}^0$. This bound is independent
of $\sigma_0$ and $T$ (and of $\tau$). 
We then expand 
\begin{equation*}
\begin{split} 
\ud E_t^0 &= \nu E_t^0 \ud P_t^0 + \tfrac{1}2 \nu^2 E_t^0 \ud \langle P^0 \rangle_t  
\\
&= \nu E_t^0 \Bigl[ 
 - \bigl(f_t^0(X_t^0,\mu_t) - F^0(X_t^0)\bigr) - (A_t^0 Q_t^0) \cdot Q_t^0 \Bigr] \ud t
 + \tfrac1{2} \nu^2 \sigma_0^2 E_t^0 \vert Q_t^0 \vert^2 \ud t 
+ \nu \sigma_0 E_t^0 Q_t^0 \cdot \ud B_t^0. 
\end{split}
\end{equation*} 
Recall that $\sigma_0 \geq 1$, choose $\nu \geq 2 (\| \nabla^2_{pp} H^0 \|_\infty+1)$ and deduce that, 
for any ${\mathbb F}^0$-stopping time $\tau$ with values in $[0,T]$, 
\begin{equation*} 
\begin{split} 
 \sigma_0^2 {\mathbb E}^0 \biggl[ \int_{\tau}^T E_t^0 \vert Q_t^0 \vert^2 \ud t 
 \, \vert \, {\mathcal F}_\tau^0 \biggr] \leq 
  {\mathbb E}^0 \biggl[
   E_T^0 + \int_\tau^T  E_t^0 \bigl\vert f_t^0(X_t^0,\mu_t) - F^0(X_t^0)\bigr\vert 
  \ud t  \, \vert \, {\mathcal F}_\tau^0 \biggr]. 
    \end{split}
\end{equation*} 
%The fact that we can rid of the martingale is standard in the theory of quadratic BSDEs, see again the 
%aforementioned references. 
Thanks to the bounds for ${\boldsymbol P}^0$ and ${\boldsymbol E}^0$ and to \hyp{B2} again,
we obtain 
\begin{equation*} 
\begin{split} 
 \sigma_0^2 {\mathbb E}^0 \biggl[ \int_{\tau}^T E_t^0 \vert Q_t^0 \vert^2 \ud t 
 \, \vert \, {\mathcal F}_\tau^0 \biggr] &\leq 
  C,
      \end{split}
\end{equation*} 
%Inserting 
% \eqref{eq:bound:P0} and invoking again Fubini theorem for exchanging the various time integrals, we obtain 
% \begin{equation*} 
%\begin{split} 
% \sigma_0^2 {\mathbb E}^0 \biggl[ \int_{\tau}^T E_t^0 \vert Q_t^0 \vert^2 \ud t 
% \, \vert \, {\mathcal F}_\tau^0 \biggr] &\leq 
%  C_\nu \biggl( 1
%+ \int_0^T    \| f_t^0  - F^0\|_{\infty}  \ud t \biggr),
%    \end{split}
%\end{equation*} 
with the constant $C$ being independent of $\sigma_0$ and $T$. 
\end{proof} 

Bound
 \eqref{eq:BMO:major}
is a BMO bound. It ensures that the Doléans-Dade exponential 
\begin{equation*}
\begin{split}
{\mathcal E}_t^0 &:= 
{\mathscr E}_t \biggl( \int_0^\cdot \sigma_0^{-1} \bigl( \nabla_p H^0(X_r^0, Z_r^0)
- \nabla_p H^0(X_r^0, \nabla_{x_0} w^0(r,X_r^0)) \bigr) \cdot \ud B_r^0 \biggr), \quad t \in [0,T], 
\end{split} 
\end{equation*} 
is a martingale with respect to ${\mathbb F}^0$, see \cite[Theorem 2.3]{Kazamaki}. In particular, the measure 
\begin{equation}
\label{eq:widetilde:P0}
\widetilde{\mathbb P}^0 := {\mathcal E}_T^0 \cdot {\mathbb P}^0
\end{equation} 
is a probability measure. In fact, the very benefit of 
Lemma \ref{lem:BMO:major}
is to provide an $L^{1+}$ bound for 
${\boldsymbol {\mathcal E}}^0$, 
independently of $\sigma_0$ and $T$. Indeed, from  
\cite[Theorem 3.1]{Kazamaki} (together with the fact that $\sigma_0 \geq 1$), we claim
\begin{lemma}
\label{lem:Girsanov:major}
Under Assumption \hyp{B}, 
there exist two constants $\gamma_0 >0$ and $C_0 \geq 0$, only depending on the parameters in \hyp{B} except $(\sigma_0,T)$ such that, for any ${\mathbb F}^0$-stopping time $\tau$ with values in $[0,T]$,  
\begin{equation*}
{\mathbb P}^0 \biggl( \biggl\{ {\mathbb E}^0 \Bigl[ \bigl( {\mathcal E}_T^0 ({\mathcal E}_\tau^0)^{-1} \bigr)^{1+\gamma_0} \, \vert \, {\mathcal F}_{\tau}^0 \Bigr] \leq C_0 \biggr\} \biggr) = 1,
%\text{\color{blue}Chenchen: Should we let this $\gamma_0$ replaced by $\frac{\gamma_0\sigma_0^4}{\|\nabla_{pp}H^0\|_{L^\infty}}$ or let $\sigma_0$ be sufficiently large?}
\end{equation*} 
and 
\begin{equation*}
{\mathbb P}^0 \biggl( \biggl\{ 
{\mathbb E}^0 \biggl[  
\exp \biggl( \gamma_0 \sigma_0^2 \int_\tau^T \vert Z_r^0 - \nabla_{x_0} w^0(r,X^0_r) \vert^2 \ud r \biggr) 
\, \vert \, {\mathcal F}_{\tau}^0 \biggr] \leq C_0 \biggr\} \biggr)=1.
\end{equation*} 
\end{lemma}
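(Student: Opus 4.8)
The plan is to derive both estimates from the uniform (in $\sigma_0$ and $T$) bracket bound of Lemma \ref{lem:BMO:major}, combined with the classical reverse H\"older and energy (John--Nirenberg) inequalities for BMO martingales recorded in \cite{Kazamaki}. First I would introduce the two continuous $\mathbb F^0$-martingales
\begin{equation*}
N^0_t := \sigma_0 \int_0^t \bigl( Z_r^0 - \nabla_{x_0} w^0(r,X_r^0) \bigr) \cdot \ud B_r^0,
\qquad
M^0_t := \int_0^t \sigma_0^{-1}\bigl( \nabla_p H^0(X_r^0,Z_r^0) - \nabla_p H^0(X_r^0, \nabla_{x_0} w^0(r,X_r^0)) \bigr)\cdot \ud B_r^0,
\end{equation*}
so that ${\boldsymbol{\mathcal E}}^0 = {\mathscr E}({\boldsymbol M}^0)$ and $\langle N^0\rangle_T - \langle N^0\rangle_\tau = \sigma_0^2 \int_\tau^T |Z_r^0 - \nabla_{x_0} w^0(r,X_r^0)|^2\,\ud r$. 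Lemma \ref{lem:BMO:major} states exactly that $\| {\boldsymbol N}^0 \|_{\textrm{\rm BMO}}^2 \leq C$, with $C$ depending only on the parameters in \hyp{B} except $(\sigma_0,T)$. Since $\nabla_p H^0$ is $\kappa'$-Lipschitz in $p$ by \hyp{A5}, one has $\langle M^0 \rangle_T - \langle M^0 \rangle_\tau \leq (\kappa')^2 \sigma_0^{-4}\bigl( \langle N^0\rangle_T - \langle N^0\rangle_\tau \bigr)$, so that, using $\sigma_0 \geq 1$ from \hyp{A4}, $\| {\boldsymbol M}^0 \|_{\textrm{\rm BMO}}^2 \leq (\kappa')^2 C$; this bound is again uniform in $(\sigma_0,T)$ (in fact non-increasing in $\sigma_0$).

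For the first inequality, I would invoke the reverse H\"older inequality for Dol\'eans-Dade exponentials of BMO martingales, \cite[Theorem 3.1]{Kazamaki}: there is a decreasing function $p \mapsto \Phi(p)$ on $(1,\infty)$ with $\Phi(p) \to \infty$ as $p \to 1^+$ such that $\| {\boldsymbol M}^0 \|_{\textrm{\rm BMO}} < \Phi(p)$ forces ${\mathscr E}({\boldsymbol M}^0)$ to lie in the reverse H\"older class ${\mathcal L}^p$, i.e. ${\mathbb E}^0[({\mathcal E}_T^0 ({\mathcal E}_\tau^0)^{-1})^p \,\vert\, {\mathcal F}_\tau^0] \leq C_p$ for every $\mathbb F^0$-stopping time $\tau$, with $C_p$ depending only on $p$ and on an upper bound for $\| {\boldsymbol M}^0 \|_{\textrm{\rm BMO}}$. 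Since $\| {\boldsymbol M}^0 \|_{\textrm{\rm BMO}} \leq \kappa'\sqrt{C}$, I can fix $\gamma_0>0$, depending only on the parameters in \hyp{B} except $(\sigma_0,T)$, with $\Phi(1+\gamma_0) > \kappa'\sqrt{C}$; this gives the first bound with $C_0 := C_{1+\gamma_0}$.

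For the second inequality, I would apply the energy (John--Nirenberg) inequality for BMO martingales, again from \cite{Kazamaki}, to ${\boldsymbol N}^0$: whenever $\gamma_0 \| {\boldsymbol N}^0 \|_{\textrm{\rm BMO}}^2 < 1$ one has $\bigl\| {\mathbb E}^0[ \exp(\gamma_0(\langle N^0\rangle_T - \langle N^0\rangle_\tau)) \,\vert\, {\mathcal F}_\tau^0] \bigr\|_{L^\infty(\Omega^0)} \leq (1 - \gamma_0 \| {\boldsymbol N}^0 \|_{\textrm{\rm BMO}}^2)^{-1}$. Taking $\gamma_0 \leq 1/(2C)$ then gives the announced bound with $C_0 = 2$, since $\langle N^0\rangle_T - \langle N^0\rangle_\tau = \sigma_0^2 \int_\tau^T |Z_r^0 - \nabla_{x_0} w^0(r,X_r^0)|^2\,\ud r$. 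Replacing $(\gamma_0,C_0)$ by the smaller threshold and the larger constant among those obtained in the two steps concludes.

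The only genuinely delicate point is to ensure that the threshold $\gamma_0$ and the constant $C_0$ do not degenerate as $\sigma_0 \to \infty$ or $T \to \infty$; this is precisely what the uniformity claim in Lemma \ref{lem:BMO:major} provides, together with the normalization $\sigma_0 \geq 1$ and the observation that the $\sigma_0^{-1}$ prefactor in ${\boldsymbol M}^0$ (combined with the $\sigma_0^2$ already built into the BMO bound for its bracket) makes $\| {\boldsymbol M}^0 \|_{\textrm{\rm BMO}}$ non-increasing in $\sigma_0$. The remainder is bookkeeping with the localized (stopping-time) forms of the two inequalities, which is standard BMO theory.
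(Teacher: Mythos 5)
Your proof is correct and follows essentially the same route as the paper, which deduces the lemma directly from the BMO bound of Lemma \ref{lem:BMO:major} together with \cite[Theorem 3.1]{Kazamaki} and the normalization $\sigma_0 \geq 1$. Your write-up simply makes explicit the steps the paper leaves implicit: the Lipschitz transfer of the BMO bound from ${\boldsymbol N}^0$ to the exponent martingale ${\boldsymbol M}^0$, the reverse H\"older inequality for the first estimate, and the John--Nirenberg (energy) inequality for the second.
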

%\textcolor{red}{In fact, we could improve the first bound by using the fact that there is $\sigma_0^{-2}$ in the Girsanov change of measure. 
%And then, $\gamma_0$ should be replaced by $\gamma_0 \sigma_0^4$.}

\section{Weak formulation of the major-minor MFG} 
\label{se:4}

\subsection{Girsanov transformation of the coupled major-minor forward-backward system} 
The preliminary estimates we obtained in the previous section allow us to make 
a change of measure that leads to a new formulation of the major-minor stochastic MFG coupled system, 
which we call weak formulation (precisely because it is formulated on a new probability space depending on 
the solution itself). On an arbitrary 
filtered probability space 
$(\Omega^0,{\mathcal F}^0,{\mathbb F}^0,{\mathbb P}^0)$
satisfying the usual conditions and 
equipped with a 
Brownian motion $(B_t^0)_{0 \leq t \leq T}$ with values in ${\mathbb R}^d$, we introduce the tilted Brownian motion
\begin{equation*}
\widetilde{B}_t^0 := B_t^0 - \sigma_0^{-1} \int_0^t \bigl[ \nabla_p H^0(X_r^0,Z_r^0)
- 
\nabla_p H^0(X_r^0,\nabla_{x_0} w(r,X_r^0)) 
\bigr]
 \ud r, \quad t \in [0,T].  
\end{equation*} 
Thanks to Lemma \ref{lem:BMO:major}, we know that $\widetilde{\boldsymbol B}^0 = (\widetilde B_t^0)_{0 \le t \le T}$ is a Brownian motion under 
$\widetilde{\mathbb P}^0$ defined in 
\eqref{eq:widetilde:P0}, and then 
\eqref{eq:major:FB:1}--\eqref{eq:minor:FB:2} become
\begin{equation} 
\label{eq:major:FBG:1:t}
\begin{split} 
\ud X_t^0 &= - \nabla_p H^0(X_t^0,\nabla_{x_0} w^0(X_t^0)) \ud t +  \sigma_0 \ud \widetilde B_t^0, 
\\
\ud Y_t^0 &=  \Bigl[
  - f^0_t(X_t^0,\mu_t) - L^0(X_t^0,- \nabla_p H^0(X_t^0,Z_t^0)) 
\\
&\hspace{15pt} + 
Z_t^0 \cdot \bigl( \nabla_p H^0(X_t^0,Z_t^0) - \nabla_p H^0(X_t^0,\nabla_{x_0} w^0(X_t^0)) \bigr)
\Bigr] \ud t
 + \sigma_0 Z_t^0 \cdot \ud \widetilde B_t^0, \quad t \in [0,T], 
\\
Y_T^0 &= g^0(X_T^0,\mu_T), 
\end{split}
\end{equation} 
and
\begin{equation}
\label{eq:minor:FBG:2:t}
\begin{split} 
&\partial_t \mu_t - \tfrac12 \Delta_x \mu_t - {\rm div}_x \Bigl( \nabla_p H(\cdot,\nabla_x u_t(\cdot))  \mu_t \Bigr) =0, 
\quad {\rm on} \ (0,T) \times {\mathbb T}^d, 
\\
&\ud_t u_t(x) = \Bigl( -  \tfrac12 \Delta_x u_t(x) + H(x, \nabla_x u_t(x))- f_t(X_t^0,x,\mu_t) 
\Bigr) \ud t
\\
&\hspace{20pt}
+ \Bigl( \nabla_p H^0(X_t^0,Z_t^0) - \nabla_p H^0(X_t^0,\nabla_{x_0} w^0(X_t^0))  \Bigr)\cdot v_t^0(x) 
\ud t 
+ \sigma_0 v_t^0(x) \cdot \ud \widetilde B_t^0, \quad (t,x) \in [0,T] \times {\mathbb T}^d, 
\\
&u_T(x) = g(X_T^0,x,\mu_T), \quad x \in {\mathbb T}^d.  
\end{split}
\end{equation}
This is the system that we use below to get estimates on the values 
of the Major/Minor MFG.
% Intuitively, 
%any strong uniqueness argument for 
%\eqref{eq:major:FBG:1:t}--\eqref{eq:minor:FBG:2:t}
%implies weak uniqueness for 
%\eqref{eq:major:FB:1}--\eqref{eq:minor:FB:2}, since any solution to 
%\eqref{eq:major:FB:1}--\eqref{eq:minor:FB:2} induces
%a (weak) solution to 
%\eqref{eq:major:FBG:1:t}--\eqref{eq:minor:FBG:2:t}. Here, we say that the latter solution is weak because the Brownian motion
%$\widetilde{\boldsymbol B}^0$ is not given \textit{a priori} but is part of the solution. The reader should notice that the Girsanov transformation that we just used here is made possible because $\sigma_0>0$ or equivalently thanks to the presence of the common noise. 
The very interest is that the forward equation of the major player is now independent of the remaining three equations.

It is important to observe that the Girsanov transformation \eqref{eq:widetilde:P0} does not impact 
the conclusion of Proposition \ref{prop:bmo:minor}: provided we change $\gamma$ and $C$ in the statement, the bound therein remains true under the tilted measure. 
This follows from the fact that, 
with $\gamma_0$ as in Lemma
\ref{lem:Girsanov:major}, 
for any ${\mathbb F}^0$-stopping time $\tau$ with values in $[0,T]$,
\begin{equation*} 
\begin{split}
&\widetilde{\mathbb E}^0 \biggl[ \exp \biggl( \gamma \frac{\gamma_0}{1+\gamma_0} \sigma_0^2 \int_\tau^T 
\bigl\| \bar v_r^0  \bigr\|^2_{\lfloor \curss \rfloor -(d/2+1)} \ud r
\biggr) \, \vert \, {\mathcal F}_\tau^0 \biggr]
\\
&=  {\mathbb E}^0 \biggl[ {\mathcal E}_T {\mathcal E}_\tau^{-1} \exp \biggl( \gamma \frac{\gamma_0}{1+\gamma_0} \sigma_0^2 \int_\tau^T 
\bigl\| \bar v_r^0  \bigr\|^2_{\lfloor \curss \rfloor-(d/2+1)}  \ud r
\biggr) \, \vert \, {\mathcal F}_\tau^0 \biggr]
\\
&\leq 
{\mathbb E}^0 \Bigl[
\Bigl( 
 {\mathcal E}_T {\mathcal E}_\tau^{-1} \Bigr)^{1+\gamma_0} \, \vert \, {\mathcal F}_\tau^0 \Bigr]^{1/(1+\gamma_0)}
{\mathbb E}^0 \biggl[  \exp \biggl( \gamma   \sigma_0^2 \int_\tau^T \bigl\| \bar v_r^0  \bigr\|^2_{\lfloor \curss \rfloor-(d/2+1)}  \ud r
\biggr) \, \vert \, {\mathcal F}_\tau^0 \biggr]^{\gamma_0/(1+\gamma_0)}
\\
&\leq C_0^{1/(1+\gamma_0)} C^{\gamma_0/(1+\gamma_0)},
\end{split}
\end{equation*} 
with $C_0$ as in 
Lemma 
\ref{lem:Girsanov:major}. 

Letting 
\begin{equation*} 
\tilde \gamma_0 :=  \gamma_0 \frac{\gamma_0}{1+\gamma_0}, \quad 
\tilde \gamma :=  \gamma\frac{\gamma_0}{1+\gamma_0}, 
\quad  \tilde C:= C_0^{1/(1+\gamma_0)} C^{\gamma_0/(1+\gamma_0)}, 
\end{equation*} 
we obtain (proceeding similarly for the second one below), for any ${\mathbb F}^0$-stopping time $\tau$ with values in $[0,T]$,  
\begin{equation}
\label{eq:Novikov} 
\begin{split}  
&\widetilde{\mathbb E}^0 \biggl[ \exp \biggl( \tilde \gamma  \sigma_0^2 \int_\tau^T \bigl\|
\bar v^0_r \bigr\|^2_{\lfloor \curss \rfloor -(d/2+1)} \ud r
\biggr) \, \vert \, {\mathcal F}_\tau^0 \biggr] \leq \tilde C, 
\\
&\widetilde{\mathbb E}^0 \biggl[ \exp \biggl( \tilde \gamma_0  \sigma_0^2 \int_\tau^T \vert Z_r^0 - \nabla_{x_0} w^0(X_r^0) \vert^2 \ud r
\biggr) \, \vert \, {\mathcal F}_\tau^0 \biggr] \leq \tilde{C}_0.
\end{split} 
\end{equation}

\subsection{Strong formulation of the tilted system and related linearized system}
We now come to the heart of the paper. 
We study  the tilted system when posed in the strong form, meaning on the original probability space, with 
the given ${\boldsymbol B}^0$ as driving Brownian motion. 
In clear, we directly address
\begin{equation} 
\label{eq:major:FBG:1}
\begin{split} 
&\ud X_t^0 = - \nabla_p H^0(X_t^0,\nabla_{x_0} w^0(X_t^0)) \ud t +  \sigma_0 \ud  B_t^0, 
\\
&\ud Y_t^0 =  \Bigl[
  - f^0_t(X_t^0,\mu_t) - L^0(X_t^0,- \nabla_p H^0(X_t^0,Z_t^0)) 
\\
&\hspace{15pt} + 
Z_t^0 \cdot \bigl( \nabla_p H^0(X_t^0,Z_t^0) - \nabla_p H^0(X_t^0,\nabla_{x_0} w^0(X_t^0)) \bigr)
\Bigr] \ud t
 + \sigma_0 Z_t^0 \cdot \ud  B_t^0, \quad t \in [0,T], 
\\
&Y_T^0 = g^0(X_T^0,\mu_T), 
\end{split}
\end{equation} 
and
\begin{equation}
\label{eq:minor:FBG:2}
\begin{split} 
&\partial_t \mu_t - \tfrac12 \Delta_x \mu_t - {\rm div}_x \Bigl( \nabla_p H(\cdot,\nabla_x u_t(\cdot))  \mu_t \Bigr) =0, 
\quad {\rm on} \ (0,T) \times {\mathbb T}^d, 
\\
&\ud_t u_t(x) = \Bigl( -  \tfrac12 \Delta_x u_t(x) + H(x, \nabla_x u_t(x))- f_t(X_t^0,x,\mu_t) 
\Bigr) \ud t
\\
&\hspace{20pt}
+ \Bigl( \nabla_p H^0(X_t^0,Z_t^0) - \nabla_p H^0(X_t^0,\nabla_{x_0} w^0(X_t^0))  \Bigr)\cdot v_t^0(x) 
\ud t 
+ \sigma_0 v_t^0(x) \cdot \ud B_t^0, \quad (t,x) \in [0,T] \times {\mathbb T}^d, 
\\
&u_T(x) = g(X_T^0,x,\mu_T), \quad x \in {\mathbb T}^d,
\end{split}
\end{equation}
on the space $(\Omega^0,{\mathcal F}^0,{\mathbb F}^0,{\mathbb P}^0)$. 
\vskip 4pt

The
following statement clarifies the passage from 
strong existence and uniqueness for 
\eqref{eq:major:FB:1}--\eqref{eq:minor:FB:2}
to 
strong existence and uniqueness for 
\eqref{eq:major:FBG:1}--\eqref{eq:minor:FBG:2}:

\begin{proposition}
\label{prop:strong:existence:uniqueness:passage}
Under Assumption \hyp{B}, 
assume further that, on any arbitrary 
probabilistic set-up $(\Omega^0,{\mathcal F}^0,{\mathbb F}^0,{\mathbb P}^0)$ and for a 
fixed initial condition $(x_0,\mu) \in {\mathbb R}^d \times {\mathcal P}({\mathbb T}^d)$, the system 
\eqref{eq:major:FB:1}--\eqref{eq:minor:FB:2}
has a unique solution in the sense of Definition 
\ref{def:forward-backward=MFG:solution}. Assume also that there exist 
two bounded and measurable mappings 
${\mathcal U}^0 : [0,T] \times {\mathbb R}^d \times {\mathcal P}({\mathbb T}^d)
\rightarrow {\mathbb R}$ and 
${\mathcal U} : [0,T] \times {\mathbb R}^d \times {\mathbb T}^d 
\times {\mathcal P}({\mathbb T}^d) \rightarrow
{\mathbb R}$, with ${\mathcal U}^0$ and 
${\mathcal U}$ being  both  
differentiable in the ${\mathbb R}^d$-variable, with 
${\mathcal U}$ being  
differentiable in the ${\mathbb T}^d$-variable 
and with 
${\mathcal U}^0$ and $\nabla_x {\mathcal U}$ being Lipschitz continuous in the ${\mathbb R}^d$ and 
${\mathcal P}({\mathbb T}^d)$-variables uniformly in the other variables, 
 such that, ${\mathbb P}^0$ almost surely,  
\begin{equation} 
\label{eq:representation:weak:solution}
\begin{split} 
&Y_t^0 = {\mathcal U}^0(t,X_t^0,\mu_t),
\
Z_t^0 = \nabla_{x_0} {\mathcal U}^0(t,X_t^0,\mu_t), \quad t \in [0,T], 
\\
&u_t(x) = {\mathcal U}(t,X_t^0,x,\mu_t),
\
v_t^0(x) = \nabla_{x_0} {\mathcal U}(t,X_t^0,x,\mu_t), 
 \quad (t,x) \in [0,T] \times 
{\mathbb T}^d. 
\end{split} 
\end{equation} 
Then, 
on the same (and thus on any) 
probabilistic set-up $(\Omega^0,{\mathcal F}^0,{\mathbb F}^0,{\mathbb P}^0)$ and for 
the same 
initial condition $(x_0,\mu) \in {\mathbb R}^d \times {\mathcal P}({\mathbb T}^d)$, 
the system 
\eqref{eq:major:FBG:1}--\eqref{eq:minor:FBG:2}
has a unique (hence strong) solution
satisfying the requirements of 
Definition 
\ref{def:forward-backward=MFG:solution}
together with 
\begin{equation} 
\label{eq:new:iii}
\sup_{\tau} \biggl\| {\mathbb E}^0 \biggl[ \int_{\tau}^T 
\| v^0_r \|^2_{\lfloor \curss \rfloor - (d/2+1)}
\ud r 
\, \vert \, {\mathcal F}_\tau \biggr]
 \biggr\|_{L^\infty(\Omega^0,{\mathbb P}^0)}
< \infty.
\end{equation} 

\end{proposition}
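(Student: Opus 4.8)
The plan is to shuttle back and forth between the original system \eqref{eq:major:FB:1}--\eqref{eq:minor:FB:2} and the tilted one \eqref{eq:major:FBG:1}--\eqref{eq:minor:FBG:2} through the Girsanov transformation of Subsection 4.1, and then to promote the resulting weak solution into a strong one by a Yamada--Watanabe argument, the decisive structural feature being that the forward $X^0$-equation of the tilted system is autonomous. First I would record that the change of measure is licit in both directions. If $((\boldsymbol X^0,\boldsymbol Y^0,\boldsymbol Z^0),(\boldsymbol\mu,\boldsymbol u,\boldsymbol v^0))$ solves \eqref{eq:major:FB:1}--\eqref{eq:minor:FB:2} on $(\Omega^0,\mathcal F^0,\mathbb F^0,\mathbb P^0)$ driven by $\boldsymbol B^0$, then by Lemma \ref{lem:BMO:major} and standard BMO theory the exponential local martingale $\boldsymbol{\mathcal E}^0$ appearing in \eqref{eq:widetilde:P0} is a genuine uniformly integrable martingale, $\widetilde{\mathbb P}^0=\mathcal E^0_T\cdot\mathbb P^0$ is a probability, $\widetilde{\boldsymbol B}^0$ is a $\widetilde{\mathbb P}^0$-Brownian motion, and the same processes solve \eqref{eq:major:FBG:1:t}--\eqref{eq:minor:FBG:2:t}. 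Conversely, if $((\boldsymbol X^0,\boldsymbol Y^0,\boldsymbol Z^0),(\boldsymbol\mu,\boldsymbol u,\boldsymbol v^0))$ solves \eqref{eq:major:FBG:1}--\eqref{eq:minor:FBG:2} on $(\Omega^0,\mathcal F^0,\mathbb F^0,\mathbb P^0)$ driven by $\boldsymbol B^0$ and meets the prescriptions of Definition \ref{def:forward-backward=MFG:solution} (in particular the conditional $L^2$-bound on $\boldsymbol Z^0$), then, since $|\nabla_pH^0(X^0_t,Z^0_t)-\nabla_pH^0(X^0_t,\nabla_{x_0}w^0(X^0_t))|\le C(1+|Z^0_t|)$ and $\nabla_{x_0}w^0$ is bounded, the reverse drift correction is BMO, its stochastic exponential is a true martingale, and the induced measure change turns the tilted system back into the original one.

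The second ingredient is a deterministic functional representation of the solution. Under the tilted dynamics the forward equation $\ud X^0_t=-\nabla_pH^0(X^0_t,\nabla_{x_0}w^0(X^0_t))\,\ud t+\sigma_0\,\ud B^0_t$ is an autonomous SDE with bounded Lipschitz drift (by the bounds on $\nabla_{x_0}w^0$ and $\nabla^2_{x_0}w^0$ recalled after \eqref{eq:auxiliary:HJB}), hence $\boldsymbol X^0=\Psi_1(x_0,\boldsymbol B^0)$ for a measurable $\Psi_1$. Since the representation \eqref{eq:representation:weak:solution} holds almost surely and all measures entering the argument are mutually equivalent, along the solution one has $\boldsymbol u=\mathcal U(\cdot,\boldsymbol X^0,\cdot,\boldsymbol\mu)$, so the forward equation for $\boldsymbol\mu$ is the random, $\boldsymbol X^0$-dependent Fokker--Planck equation with velocity field $-\nabla_pH(\cdot,\nabla_x\mathcal U(t,X^0_t,\cdot,\mu_t))$; this McKean--Vlasov type equation is uniquely and measurably solvable in $\boldsymbol X^0$ by \cite{Lacker_ECP} and the iteration performed in the first step of the proof of Lemma \ref{lem:weak:uniqueness:forward:equation}, the velocity being bounded and Lipschitz in the measure argument (these two properties being inherited from the regularity of the solution of the original system encoded by $\mathcal U$), so that $\boldsymbol\mu=\Psi_2(\boldsymbol X^0)$. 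Composing with $\mathcal U^0,\nabla_{x_0}\mathcal U^0,\mathcal U,\nabla_{x_0}\mathcal U$, the whole solution of the tilted system is a measurable functional of $(x_0,\boldsymbol B^0)$, in particular adapted to the augmentation of the filtration generated by $\boldsymbol B^0$; the argument is insensitive to which equivalent measure and associated Brownian motion are used.

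With these at hand the conclusion follows. For existence, start from the solution of \eqref{eq:major:FB:1}--\eqref{eq:minor:FB:2} on $(\Omega^0,\mathcal F^0,\mathbb F^0,\mathbb P^0)$ supplied by assumption; the Girsanov transformation produces a solution of the tilted system on $(\Omega^0,\mathcal F^0,\mathbb F^0,\widetilde{\mathbb P}^0)$ driven by $\widetilde{\boldsymbol B}^0$, which by the previous paragraph (applied on that set-up) is a deterministic functional of $(x_0,\widetilde{\boldsymbol B}^0)$; the Yamada--Watanabe principle then yields a strong solution on the original set-up driven by $\boldsymbol B^0$, obtained by feeding $\boldsymbol B^0$ into $\Psi_1,\Psi_2$ and the feedback functions. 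For uniqueness, two strong solutions of the tilted system share the same $\boldsymbol X^0$ by pathwise uniqueness for the autonomous forward SDE; the reverse change of measure (licit by the first paragraph) turns each into a solution of the original system, so by the assumed uniqueness and \eqref{eq:representation:weak:solution} their $\boldsymbol\mu$-components solve the same random Fokker--Planck equation with velocity $-\nabla_pH(\cdot,\nabla_x\mathcal U(t,X^0_t,\cdot,\mu_t))$ and the same initial datum, hence coincide, and then so do all remaining components. Finally, the regularity and integrability of Definition \ref{def:forward-backward=MFG:solution} are inherited from the original solution along the functional representation, while \eqref{eq:new:iii} follows from the conditional exponential estimate of Remark \ref{rem:BMO:v0} applied with the now-frozen path $\boldsymbol X^0$ and transported under the change of measure exactly as in the derivation of \eqref{eq:Novikov}, such an exponential bound on $\int_\tau^T\|v^0_r\|^2_{\lfloor\curss\rfloor-(d/2+1)}\,\ud r$ forcing the corresponding conditional expectations to be uniformly bounded.

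The main obstacle is the bookkeeping of the Girsanov densities, which depend on the a priori unknown solution through $\boldsymbol Z^0$: one must check at each occurrence that the relevant stochastic exponential is a genuine martingale --- using boundedness of $\boldsymbol Z^0$, inherited from the Lipschitz property of $\mathcal U^0$, on the existence side, and the a priori BMO bound built into Definition \ref{def:forward-backward=MFG:solution} on the uniqueness side --- and, at the same time, one must reconcile the measure component $\boldsymbol\mu$ produced by the (uniquely solvable) conditional Fokker--Planck equation with the one implicitly carried by the backward SPDE, which is precisely where the representation \eqref{eq:representation:weak:solution} and the assumed unique solvability of \eqref{eq:major:FB:1}--\eqref{eq:minor:FB:2} are indispensable.
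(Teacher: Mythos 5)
Your proposal is correct and follows essentially the same strategy as the paper: Girsanov to pass between the two systems in both directions (with BMO controls justifying the martingale property of the exponentials), the autonomy of the tilted $X^0$-equation and the representation \eqref{eq:representation:weak:solution} to obtain pathwise uniqueness and adaptedness of $(\boldsymbol X^0,\boldsymbol\mu)$, and Remark \ref{rem:BMO:v0} together with stability of BMO under BMO changes of measure for the integrability conditions and \eqref{eq:new:iii}. The explicit Yamada--Watanabe framing you use is just a slightly different packaging of what the paper phrases directly as ``strong uniqueness via \eqref{eq:representation:weak:solution}''.
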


\begin{remark}
The fact that existence and uniqueness hold true on any 
$(\Omega^0,{\mathcal F}^0,{\mathbb F}^0,{\mathbb P}^0)$
(equipped with a Brownian motion ${\boldsymbol B}^0$) 
implies that
any solution constructed on a sub-filtration of ${\mathbb F}^0$ (still carrying ${\boldsymbol B}^0$)  must coincide with the solution 
constructed on ${\mathbb F}^0$. 
In particular, without any loss of generality, 
${\mathbb F}^0$ can always be taken as the
filtration generated by ${\mathcal F}_0^0$ and 
${\boldsymbol B}^0$. In this way, we can recover the setting of Section \ref{se:3}. 
\end{remark}

\begin{proof} 
Existence of a (weak) solution
to 
\eqref{eq:major:FBG:1}--\eqref{eq:minor:FBG:2}
 follows from 
 the aforementioned Girsanov argument (as before, the word `weak' means that the solution is 
 driven by another Brownian motion, different from the original 
 ${\boldsymbol B}^0$). 
This weak solution satisfies 
the integrability conditions in 
Definition 
\ref{def:forward-backward=MFG:solution}
 because 
 (ii) and (iii) in item 1 in Definition 
\ref{def:forward-backward=MFG:solution}
are preserved by BMO changes of measure 
(see 
\cite[Theorem 3.3]{Kazamaki})
and (ii)  in item 2 is  also preserved.
The bound 
\eqref{eq:new:iii}
is a consequence of Remark \ref{rem:BMO:v0}. 
In fact, the Lipschitz property of ${\mathcal U}^0$ gives here an $L^\infty$ bound 
on ${\boldsymbol Z}^0$ that is even stronger than the BMO condition stated in 
(iii) in item 1 in Definition \ref{def:forward-backward=MFG:solution}. 

 Thanks to (iii) in item 1 of Definition \ref{def:forward-backward=MFG:solution}
 (but for 
\eqref{eq:major:FBG:1}--\eqref{eq:minor:FBG:2}), 
the Girsanov transformation can be reverted, hence proving
weak uniqueness. 

Strong uniqueness is 
proved by means of 
\eqref{eq:representation:weak:solution}. 
%By a standard adaptation of the Yamada-Watanabe argument, it suffices to 
%prove that weak solutions are in fact adapted to the driving noise. 
By weak uniqueness, any weak solution to 
\eqref{eq:major:FBG:1}--\eqref{eq:minor:FBG:2}
is in fact obtained by changing the Brownian motion 
in the original 
\eqref{eq:major:FB:1}--\eqref{eq:minor:FB:2}. As such, it must
satisfy 
\eqref{eq:representation:weak:solution}. 
Inserting the representation 
\eqref{eq:representation:weak:solution}
into 
the forward equation of 
\eqref{eq:minor:FBG:2}
(which is the same as the forward equation of 
\eqref{eq:minor:FB:2}) and using the fact that 
$\nabla_x {\mathcal U}$ is Lipschitz in the 
${\mathcal P}({\mathbb T}^d)$-argument, one gets (following the proof of Lemma \ref{lem:weak:uniqueness:forward:equation}) that the pair 
$({\boldsymbol X}^0,{\boldsymbol \mu})$ is necessarily progressively-adapted to the 
(usual augmentation of the)
filtration generated by ${\boldsymbol B}^0$ and pathwise unique.
By 
\eqref{eq:representation:weak:solution}, 
we deduce that solutions to 
\eqref{eq:major:FBG:1}--\eqref{eq:minor:FBG:2}
are 
progressively-adapted to the  
filtration generated by ${\boldsymbol B}^0$ and pathwise unique.
%the representation of 
%${\boldsymbol Z}^0$ says that the
%backward equation in 
%\eqref{eq:major:FBG:1} 
%is a mere backward equation with 
%Lipschitz random coefficients: the solution is also 
%adapted to the driving noise. 
%Thanks to \cite{Du:Meng}
%(dedicated to backward SPDEs), the same argument applies to the backward 
%equation in 
%\eqref{eq:minor:FBG:2}. 
 \end{proof}

Existence and uniqueness of a solution to the system 
\eqref{eq:major:FB:1}--\eqref{eq:minor:FB:2}
(which is the main purpose of this section)
 will be eventually established in Subsection \ref{subse:4.6} (using auxiliary results proven in 
Section \ref{se:5}). % \fdm{Check that this is consistent with the new section 
%\ref{se:5}.} %\fdm{This paragraph should be modified in the end: I am not sure that we should keep it.}
At this stage, we are given a flow of solutions
\begin{center}
$\bigl(({\boldsymbol X}^{0,x_0,\mu},{\boldsymbol Y}^{0,x_0,\mu},{\boldsymbol Z}^{0,x_0,\mu}),({\boldsymbol \mu}^{x_0,\mu},{\boldsymbol u}^{x_0,\mu},{\boldsymbol v}^{0,x_0,\mu})\bigr)_{x_0,\mu}$
\end{center}
to the systems 
\eqref{eq:major:FBG:1}
and
\eqref{eq:minor:FBG:2}, parametrized by $(x_0,\mu) \in {\mathbb T}^d \times {\mathcal P}({\mathbb T}^d)$
($x_0$ should be understood as $X_0^{0,x_0,\mu}$ and $\mu$ as $\mu_0^{x_0,\mu}$). 
 Implicitly, 
these solutions are required to satisfy
the same constraints as in Definition 
\ref{def:forward-backward=MFG:solution}.

Throughout, we 
fix $(x_0,\mu) \in {\mathbb R}^d \times {\mathcal P}({\mathbb T}^d)$
and $(x_0',\mu') \in  {\mathbb R}^d \times {\mathcal P}({\mathbb T}^d)$,
%, with $\vert x_0' - x_0 \vert \leq 1$, 
and we consider the system satisfied by the following derivatives, for $t \in [0,T]$:
\begin{equation*} 
\begin{split}
&\bigl( \delta X_t^0,\delta Y_t^0,\delta Z^0 \bigr) = \frac{\ud}{\ud \varepsilon}_{\vert \varepsilon =0} 
\Bigl(
X_t^{0,x_0+\varepsilon (x_0'-x_0),\mu + \varepsilon (\mu'- \mu)},
Y_t^{0,x_0+\varepsilon (x_0'-x_0),\mu + \varepsilon (\mu'- \mu)},
Z_t^{0,x_0+\varepsilon (x_0'-x_0),\mu + \varepsilon (\mu'- \mu)}\Bigr) 
\\
&\bigl( \delta \mu_t ,\delta u_t, \delta v_t^0 \bigr) = \frac{\ud}{\ud \varepsilon}_{\vert \varepsilon =0} 
\Bigl( \mu_t^{x_0+\varepsilon (x_0'-x_0),\mu + \varepsilon (\mu'- \mu)}, 
u_t^{x_0+\varepsilon (x_0'-x_0),\mu + \varepsilon (\mu'- \mu)},
v_t^{0,x_0+\varepsilon (x_0'-x_0),\mu + \varepsilon (\mu'- \mu)} \Bigr) .
\end{split}
\end{equation*}
The sense given to the above derivatives will be clarified below (see in particular the proof of 
Proposition 
\ref{prop:strong:existence:uniqueness:passage2} in Subsection \ref{subse:4.7}). 
%\fdm{We should reformulate this paragraph once Section \ref{se:5} is 
%complete. The slight difficulty is that Section \ref{se:5} provides the existence of the linearized system but 
%for \eqref{eq:major:FB:1}-\eqref{eq:minor:FB:2} (which is necessary to get the master equation in small time). Of course, we do not want to redo all the machinery for 
%the tilted system. But, if we assume that the representation 
%\eqref{eq:representation:weak:solution}
%holds true, we can see the new term in 
%the backward equation 
%of 
%\eqref{eq:minor:FBG:2} 
%as a mere term depending on $(x_0,\mu)$ and then apply the results of 
%Section \ref{se:5} directly. I would say that this is a reasonable strategy except 
%for one thing: it seems that we are loosing one derivative because 
%$v^0$ is $\nabla_{x_0} {\mathcal U}(t,X_t^0,\cdot,\mu_t)$. This may require to 
%adapt the results of 
%Section \ref{se:5} but 
%with $\curss-1$ as new index of regularity. This would change the corresponding spaces, used 
%to study the various derivatives.}
%\textcolor{red}{In fact, I think we should initialize the linearized system with distributions and not with probability measures. It looks necessary to acces the higher derivatives of 
%$\delta {\mathcal U}/\delta m(y)$ in the variable $y$, which we need to write down the master equation.}
%We insist on the fact that,  
%at this stage, the notion of differentiability that is used 
%does not 
%care. 
What matters now is the form of the system that is satisfied by 
the two triples $(\delta {\boldsymbol X}^0,\delta {\boldsymbol Y}^0,\delta {\boldsymbol Z}^0)$ 
and $(\delta {\boldsymbol \mu},\delta {\boldsymbol u},\delta {\boldsymbol v}^0)$, which we call 
`linearized' processes. 
Just say that the first triple takes values in 
${\mathbb R}^d \times {\mathbb R} \times {\mathbb R}^d$
whilst the second one takes values in a functional 
space, namely ${\boldsymbol \nu}$ is Schwartz-distributional valued and 
${\boldsymbol u}$ and ${\boldsymbol v}^0$ are functional valued.
Throughout, we use the following notation. For a generic (possibly random and say real-valued) function $\Phi$ of (some of) the inputs $(X_t^{0},Z_t^{0},\mu_t)$, 
we denote by 
$\delta[\Phi(X_t^0 ,Z_t^0,\mu_t)]$ the term 
\begin{equation*} 
\delta[\Phi(X_t^{0},Z_t^{0},\mu_t)]
:= \nabla_x \Phi(X_t^{0},Z_t^{0},\mu_t) \cdot \delta X_t^0
+ \nabla_z 
\Phi(X_t^{0},Z_t^{0},\mu_t) \cdot \delta Z_t^0
+ ( \delta_\mu \Phi (X_t^{0},Z_t^{0},\mu_t)(\cdot), \delta \mu_t).
\end{equation*} 
Then, 
using the relationship (see \eqref{eq:Hamiltonians:Fenchel}) 
\begin{equation*} 
\begin{split}
& - L^0(X_t^0,- \nabla_p H^0(X_t^0,Z_t^0)) 
 + 
Z_t^0 \cdot \bigl( \nabla_p H^0(X_t^0,Z_t^0) - \nabla_p H^0(X_t^0,\nabla_{x_0} w^0(t,X_t^0)) \bigr)
\\
&= H^0(X_t^0,Z_t^0)  - \bigl( Z_t^0 - \nabla_{x_0} w^0(t,X_t^0)
\bigr) \cdot \nabla_p H^0(X_t^0,\nabla_{x_0} w^0(t,X_t^0))
\\
&\hspace{15pt} - \nabla_{x_0} w^0(t,X_t^0) \cdot \nabla_p H^0(X_t^0,\nabla_{x_0} w^0(t,X_t^0))
\\
&=  \bigl( H^0(X_t^0,Z_t^0)
- H^0(X_t^0, \nabla_{x_0} w^0(t,X_t^0))
\bigr)  - \bigl( Z_t^0 - \nabla_{x_0} w^0(t,X_t^0)
\bigr) \cdot \nabla_p H^0(X_t^0,\nabla_{x_0} w^0(t,X_t^0))
\\
&\hspace{15pt} - L^0(X_t^0,- \nabla_p H^0(X_t^0,\nabla_{x_0} w^0(t,X_t^0))),
\end{split}
\end{equation*} 
and noticing that 
\begin{equation*} 
\begin{split}
&\delta \bigl[ - H^0(X_t^0, \nabla_{x_0} w^0(t,X_t^0)) 
+
 \nabla_{x_0} w^0(t,X_t^0)
 \cdot \nabla_p H^0(X_t^0,\nabla_{x_0} w^0(t,X_t^0))
 \bigr]
 \\
 &=- \nabla_{x_0}
 H^0(X_t^0, \nabla_{x_0} w^0(t,X_t^0))
 \cdot \delta X_t^0 +   \nabla_{x_0} w^0(t,X_t^0)
 \cdot 
 \delta \bigl[ 
 \nabla_p H^0(X_t^0,\nabla_{x_0} w^0(t,X_t^0)) \bigr],
 \end{split}
\end{equation*}
we obtained, as linearized system for the major player, 
\begin{align} 
&\ud \delta X_t^0 = - \delta \bigl[ \nabla_p H^0(X_t^0,\nabla_{x_0} w^0(t,X_t^0)) \bigr] \ud t,
\nonumber 
\\
&\ud \delta Y_t^0 = 
 - \delta \bigl[f_t^0(X_t^0,\mu_t)\bigr]  \ud t
\nonumber
\\
&\hspace{5pt}
+
\Bigl[ -\bigl( Z_t^0 - \nabla_{x_0} w^0(t,X_t^0) \bigr) \cdot \delta \bigl[ \nabla_p H^0(X_t^0,\nabla_{x_0} w^0(t,X_t^0))\bigr]
-
\delta \bigl[L^0(X_t^0,-\nabla_p H^0(X_t^0,\nabla_{x_0}w^0(t,X_t^0)))\bigr]
\Bigr] \ud t
\nonumber
\\
&\hspace{5pt} +  \bigl( \nabla_{x_0} H^0(X_t^0,Z_t^0) - \nabla_{x_0} H^0(X_t^0,\nabla_{x_0} w^0(t,X_t^0))\bigr) \cdot \delta X_t^0
  \ud t
  \label{eq:major:diff:1}
\\
&\hspace{5pt} + \bigl( \nabla_p H^0(X_t^0,Z_t^0) - \nabla_p H^0(X_t^0,\nabla_{x_0} w^0(t,X_t^0))
\bigr) \cdot \delta Z_t^0   \ud t
\nonumber
\\
&\hspace{5pt}+ 
\sigma_0 \delta Z_t^0 \cdot \ud   B_t^0, \quad t \in [0,T], 
\nonumber
\\
 &\delta Y_T^0 = 
 \delta \bigl[ g^0(X_T^0,\mu_T) \bigr] \nonumber,
\end{align} 
with the initial condition $\delta X_0=x_0'-x_0$  and 
where $(X^0_t,Y^0_t,Z_t^0)$ is a shorter notation for $(X_t^{0,x_0,\mu},Y_t^{0,x_0,\mu},Z_t^{0,x_0,\mu})$. 

Similarly, 
the linearized system of the minor player is 
\begin{align}
&\partial_t \delta \mu_t - \tfrac12 \Delta_x \delta \mu_t - {\rm div}_x \Bigl(  
\nabla_p H(\cdot ,\nabla_x u_t(\cdot))
 \delta \mu_t \Bigr) 
-
{\rm div}_x \Bigl(  
\delta \bigl[
\nabla_p H(\cdot ,\nabla_x u_t(\cdot))
\bigr] \mu_t
\Bigr) 
=0, \nonumber
\\
&\ud_t \delta u_t(x) = \Bigl( -  \tfrac12 \Delta_x \delta u_t(x) +   \nabla_p H(x,\nabla_x u_t(x)) \cdot \nabla_x \delta u_t(x) 
- \delta \bigl[ f_t(X_t^0,x,\mu_t) \bigr]
\Bigr) \ud t \nonumber
\\
&
\hspace{15pt}   
+ \Bigl( \nabla_p H^0(X_t^0,Z_t^0) - \nabla_p H^0(X_t^0,\nabla_{x_0} w^0(t,X_t^0))  \Bigr)\cdot \delta v_t^0(x) 
\ud t \label{eq:minor:diff:2}
\\
&
\hspace{15pt}   
+ \delta \Bigl[ \Bigl( \nabla_p H^0(X_t^0,Z_t^0) - \nabla_p H^0(X_t^0,\nabla_{x_0} w^0(t,X_t^0))  \Bigr)
\Bigr]
\cdot  v_t^0(x) 
\ud t 
+ \sigma_0 \delta v_t^0(x) \cdot \ud   B_t^0, \quad (t,x)  \in [0,T] \times 
{\mathbb T}^d,  \nonumber
\\
&\delta u_T(x) = \delta \bigl[ g(X_T^0,x,\mu_T) \bigr], \quad x \in {\mathbb T}^d,  \nonumber
\end{align}
with the initial condition $\delta \mu_0=\mu'-\mu$ 
and
 with the same convention as before that, for 
a (say smooth real-valued) function $\Phi(\nabla_x u_t(x))$ of $\nabla_x u_t(x)$,
\begin{equation*} 
\delta \bigl[ \Phi(\nabla_x u_t(x)) \bigr] = \nabla \Phi(\nabla_x u_t(x)) \cdot \nabla_x \delta u_t(x).
\end{equation*} 

\label{def:linear forward-backward=MFG:solution}
Solutions to the two systems \eqref{eq:major:diff:1}--\eqref{eq:minor:diff:2} are understood in the following sense: 

\begin{definition} 
\label{def:4.8:4.9}
Let $(({\boldsymbol X}^0,{\boldsymbol Y}^0,{\boldsymbol Z}^0), ({\boldsymbol\mu},{\boldsymbol u},{\boldsymbol v^0}))$ be a solution to \eqref{eq:major:FBG:1}--\eqref{eq:minor:FBG:2} in the sense of Definition \ref{def:forward-backward=MFG:solution}, for an initial condition $(x_0,\mu) \in\mathbb R^d\times\mathcal{P}(\mathbb T^d)$.
%, and  
%$\cursr \in (1,\lfloor \curss \rfloor-(d/2+1))$. 
Then, 
for another 
$(x_0',\mu')\in\mathbb R^d\times\mathcal{P}(\mathbb T^d)$, 
%$1\leq \lfloor  \cursr'' \rfloor<\cursr' < \cursr''\leq\lfloor \curss \rfloor-(d/2+1)$ and $\cursr \in (\cursr', \cursr'')$,
\begin{enumerate}
\item 
Given the initial condition $x_0'-x_0$
and  an ${\mathbb F}^0$-adapted process 
${\delta\boldsymbol  \mu} = (\delta\mu_t)_{0 \le t \le T}$ with 
%values in 
%${\mathcal C}^{-\mathfrc{r}'}({\mathbb T}^d)$ and 
continuous trajectories in ${\mathcal C}^{-\lfloor \curss \rfloor+(d/2+1)}({\mathbb T}^d)$, we call solution 
to 
\eqref{eq:major:diff:1}
any ${\mathbb F}^0$-progressively measurable process 
$({\delta\boldsymbol X}^0,{\delta\boldsymbol Y}^0,{\delta\boldsymbol Z}^0)$ with values in ${\mathbb R}^d \times {\mathbb R}
\times {\mathbb R}^d$, such that: (i)
$\delta{\boldsymbol X}^0$ and $\delta{\boldsymbol Y}^0$ 
have continuous trajectories; (ii) $\sup_{0\leq t\leq T}|\delta X_t^0|\in L^\infty(\Omega^0,\mathbb P^0)$, $\sup_{0\leq t\leq T}|\delta Y_t^0|\in L^\infty(\Omega^0,\mathbb P^0)$; (iii) $\sup_{\tau} \| {\mathbb E}^0[ \int_{[\tau,T]} \vert \delta Z_t^0 \vert^2 \ud t \vert {\mathcal F}_\tau^0] \|_{L^\infty(\Omega^0,{\mathbb P}^0)} < \infty$, the supremum being 
taken over all stopping times $\tau$;
(iv) the system 
\eqref{eq:major:diff:1} is satisfied ${\mathbb P}^0$-almost surely.
\item Given the initial condition $\mu'-\mu$,  an ${\mathbb F}^0$-adapted process 
${\delta \boldsymbol X}^0 = (\delta X^0_t)_{0 \le t \le T}$ with $\sup_{0\leq t\leq T}|\delta X_t^0|\in L^\infty(\Omega^0,\mathbb P^0)$ and continuous trajectories in $\mathbb R^d$, and an ${\mathbb F}^0$-progressively measurable process ${\delta \boldsymbol Z}^0=  (\delta Z^0_t)_{0 \le t \le T}$ with $\sup_{\tau} \| {\mathbb E}^0[ \int_{[\tau,T]} \vert \delta Z_t^0 \vert^2 \ud t \vert {\mathcal F}_\tau^0] \|_{L^\infty(\Omega^0,{\mathbb P}^0)} < \infty$, we call solution 
to 
\eqref{eq:minor:diff:2}
any \textit{Bochner} ${\mathbb F}^0$-progressively measurable process 
$({\delta\boldsymbol \mu},{\delta\boldsymbol u},{\delta\boldsymbol v}^0)$ with values in 
${\mathcal C}^{-1}({\mathbb T}^d) \times {\mathcal C}^{3+\epsilon}({\mathbb T}^d) \times {\mathcal C}^{3+\epsilon}({\mathbb T}^d)$ for some 
$\epsilon >0$, such that: (i)
%${\mathcal C}^{-\mathfrc{r}}({\mathbb T}^d) \times {\mathcal C}^{}({\mathbb T}^d) \times {\mathcal C}^{\mathfrc{s}-d/2-1}({\mathbb T}^d)$, such that: (i)
$({\delta\boldsymbol \mu},{\delta\boldsymbol u})$ has continuous trajectories in 
${\mathcal C}^{-1}({\mathbb T}^d) \times {\mathcal C}^{3+\epsilon}({\mathbb T}^d)$; 
%${\mathcal C}^{-\mathfrc{r}}({\mathbb T}^d) \times {\mathcal C}^{\mathfrc{r}+1}({\mathbb T}^d)$; 
%(ii) $\sup_{0\leq t\leq T}\|\delta u_t\|_{\cursr''+1}\in L^\infty(\Omega^0,{\mathbb P}^0)$, $\sup_{0\leq t\leq T}\|\delta \mu_t\|_{-\cursr'}\in L^\infty(\Omega^0,{\mathbb P}^0)$; 
(ii) $\sup_{0\leq t\leq T}\|\delta u_t\|_{3+\epsilon}\in L^\infty(\Omega^0,{\mathbb P}^0)$, $\sup_{0\leq t\leq T}\|\delta \mu_t\|_{-1}\in L^\infty(\Omega^0,{\mathbb P}^0)$; 
(iii) $\mathbb E^0[\int_0^T\|\delta v_t^0\|^2_{3+\epsilon}\ud t ]<\infty$; (iv) the forward equation in \eqref{eq:minor:diff:2} is satisfied ${\mathbb P}^0$-almost surely in the weak sense; (v) the backward equation in \eqref{eq:minor:diff:2} is satisfied ${\mathbb P}^0$-almost surely in the classical sense.

\item For a given initial condition $( x_0'-x_0, \mu'-\mu)$,
we call a solution to the coupled systems 
\eqref{eq:major:diff:1}--\eqref{eq:minor:diff:2} a pair $(({\delta\boldsymbol X}^0,{\delta\boldsymbol Y}^0,{\delta\boldsymbol Z}^0),({\delta\boldsymbol \mu},{\delta\boldsymbol u},{\delta\boldsymbol v}^0))$
satisfying items 1 and 2 right above. 
\end{enumerate}
\end{definition} 
Regarding (iv) in item 2, $\delta {\boldsymbol \mu}$ is said to satisfy \eqref{eq:minor:diff:2} ${\mathbb P}^0$-almost surely in the weak sense if, 
 ${\mathbb P}^0$, for any test function function $\varphi : [0,T] \times {\mathbb T}^d \rightarrow {\mathbb R}$ in the (separable) space of functions that are once differentiable in 
$t$ and three times differentiable in space, with jointly continuous derivatives, it holds, for any 
$t \in [0,T]$, 
\begin{equation}
\label{eq:weak:sense:deltamu} 
\begin{split}
&\bigl( \varphi(t,\cdot) ,  \delta \mu_t
\bigr) 
-\bigl( \varphi(0,\cdot) ,  \delta \mu_0
\bigr) 
= \int_0^t \bigl( \partial_t \varphi(s,\cdot) ,  \delta \mu_s
\bigr) 
\ud s
+ \tfrac12 \int_0^t \bigl( \Delta_x \varphi(s,\cdot), \delta \mu_s \bigr)  \ud s 
\\
&\hspace{15pt} 
+
\int_0^t \Bigl( \nabla_x \varphi(s,\cdot) 
\cdot  
\nabla_p H(\cdot ,\nabla_x u_s(\cdot)),
 \delta \mu_s \Bigr) \ud s 
+
\int_0^t 
\Bigl(   \nabla_x \varphi(s,\cdot)   \cdot 
\delta \bigl[
\nabla_p H(\cdot ,\nabla_x u_s(\cdot))
\bigr] , \mu_s
\Bigr) \ud s.
\end{split}
\end{equation}

%\fdm{I have slightly changed the values of the regularity indices. I think it enough for what we want and it seems easier to manipulate. If needed, older values are in comments.}

Regarding the solvability of the system 
\eqref{eq:major:diff:1}--\eqref{eq:minor:diff:2}, the following statement is proven in Subsection 
\ref{subse:4.7}:

\begin{proposition}
\label{prop:strong:existence:uniqueness:passage2}
Let Assumption \hyp{B} and the assumptions of Proposition \ref{prop:strong:existence:uniqueness:passage} hold true (for any initial condition of 
the system 
\eqref{eq:major:FB:1}--\eqref{eq:minor:FB:2}). Assume also that
there exist two positive reals $\alpha$ and $L$
such that, for all $t \in [0,T]$, 
${\mathcal U}^0(t,\cdot,\cdot)\in {\mathscr D}^0(L,1)$
and  ${\mathcal U}(t,\cdot,\cdot,\cdot)\in {\mathscr D}(L,1,3+\alpha)$, and 
all 
$(t,x_0,\mu) \in [0,T] \times \mathbb R^d\times\mathcal{P}(\mathbb T^d)$,
\begin{equation}\label{eq:U:priori:contintime}
\begin{split}
&\lim_{h\to 0}\Bigl(\big\|\nabla_{x_0} {\mathcal U}(t+h,x_0,\cdot,\mu)-\nabla_{x_0}{\mathcal U}(t,x_0,\cdot,\mu)\big\|_{3+\alpha}
\\
&+\sup_{l\in\{0,1 \}} \sup_{y \in {\mathbb T}^d} \big\|\nabla_y^l\delta_\mu {\mathcal U}(t+h,x_0,\cdot,\mu,\cdot)-\nabla_y^l\delta_\mu {\mathcal U}(t,x_0,\cdot,\mu,y)\big\|_{3+\alpha} \Bigr)=0.
\end{split}
\end{equation}

Let $(x_0,\mu),(x_0',\mu')\in\mathbb R^d\times\mathcal{P}(\mathbb T^d)$.
Then, on any arbitrary 
probabilistic set-up $(\Omega^0,{\mathcal F}^0,{\mathbb F}^0,{\mathbb P}^0)$,  and for the given initial condition $(x_0'-x_0, \mu'-\mu)$, 
the system 
\eqref{eq:major:diff:1}--\eqref{eq:minor:diff:2} 
has a (hence strong) solution $(({\delta\boldsymbol X}^0,{\delta\boldsymbol Y}^0,{\delta\boldsymbol Z}^0),({\delta\boldsymbol \mu},{\delta\boldsymbol u},{\delta\boldsymbol v}^0))$
satisfying the requirements of 
Definition 
\ref{def:linear forward-backward=MFG:solution} and, ${\mathbb P}^0$ almost surely,  
\begin{equation} 
\label{eq:representation:weak:linear:solution}
\begin{split} 
&\delta Y_t^0 =\nabla_{x_0} {\mathcal U}^0(t,X_t^0,\mu_t)\cdot\delta X_t^0+\left(\delta_{\mu}\mathcal{U}^0(t,X_t^0,\mu_t),\delta\mu_t\right), \quad t \in [0,T], 
\\
&\delta u_t(x) = \nabla_{x_0}{\mathcal U}(t,X_t^0,x,\mu_t)\cdot \delta X_t^0+\left(\delta_{\mu}\mathcal{U}(t,X_t^0,x,\mu_t),\delta\mu_t\right), \quad (t,x) \in [0,T] \times 
{\mathbb T}^d. 
\end{split} 
\end{equation} 
\end{proposition}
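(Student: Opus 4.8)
The plan is to produce the linearized solution, together with the meaning of the derivatives $(\delta X^0,\delta Y^0,\delta Z^0)$ and $(\delta\mu,\delta u,\delta v^0)$ introduced just above the statement, by differentiating at $\varepsilon=0$ the flow of solutions to the tilted system \eqref{eq:major:FBG:1}--\eqref{eq:minor:FBG:2} along the segment of initial conditions $(x_0^\varepsilon,\mu^\varepsilon):=\bigl(x_0+\varepsilon(x_0'-x_0),\,\mu+\varepsilon(\mu'-\mu)\bigr)$, $\varepsilon\in[0,1]$ (note $\mu^\varepsilon\in{\mathcal P}({\mathbb T}^d)$ as a convex combination). Writing a superscript $\varepsilon$ for the corresponding solution --- which exists, is unique, and obeys the representation \eqref{eq:representation:weak:solution} by Proposition \ref{prop:strong:existence:uniqueness:passage} --- and $\Delta^\varepsilon(\cdot):=\varepsilon^{-1}\bigl((\cdot)^\varepsilon-(\cdot)^0\bigr)$ for the difference quotients, I would carry out four steps: (i) uniform-in-$\varepsilon$ bounds on the $\Delta^\varepsilon$'s in the norms of Definition \ref{def:4.8:4.9}; (ii) passage to the limit $\varepsilon\to0$ in the equations, strongly for the $X^0,Y^0,\mu,u$-components and weakly in $L^2(\ud t\otimes\ud{\mathbb P}^0)$ for the martingale-integrand components $Z^0,v^0$, obtaining a solution of \eqref{eq:major:diff:1}--\eqref{eq:minor:diff:2}; (iii) derivation of the representation \eqref{eq:representation:weak:linear:solution} and of the regularity claimed in Definition \ref{def:4.8:4.9}; (iv) uniqueness of the linear system, which upgrades the convergence in (ii) to convergence of the whole family and makes the solution strong.

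For step (i) I would exploit the decoupled structure granted by \eqref{eq:representation:weak:solution}: the forward equation for $X^0$ in \eqref{eq:major:FBG:1} is closed with a $\kappa$-Lipschitz drift (bounded $\nabla^2_{pp}H^0$ and $\nabla^2_{x_0}w^0$), so $\sup_t|\Delta^\varepsilon X^0_t|\in L^\infty(\Omega^0,{\mathbb P}^0)$ uniformly in $\varepsilon$ by Gronwall; then $\mu^\varepsilon$ solves a Fokker--Planck equation whose drift $x\mapsto\nabla_p H\bigl(x,\nabla_x {\mathcal U}(t,X^{0,\varepsilon}_t,x,\mu^\varepsilon_t)\bigr)$ is Lipschitz in $(X^0,\mu)$ because $\nabla_x {\mathcal U}$ is, so representing $\mu^\varepsilon$ as a conditional law of a diffusion (as in Lemma \ref{lem:alpha:weak:solution}) and running Gronwall in ${\mathbb W}_1$ yields $\sup_t\|\Delta^\varepsilon\mu_t\|_{-1}\in L^\infty$ uniformly in $\varepsilon$; the remaining bounds on $\Delta^\varepsilon Z^0,\Delta^\varepsilon u,\Delta^\varepsilon v^0$ follow from \eqref{eq:representation:weak:solution} and the Lipschitz-ness of $\nabla_{x_0}{\mathcal U}^0,\nabla_{x_0}{\mathcal U},\nabla_x {\mathcal U}$, the controls on the martingale integrands being in fact $L^\infty$-bounds (stronger than BMO), exactly as in the proof of Proposition \ref{prop:strong:existence:uniqueness:passage}. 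For step (ii) I would subtract the two copies of \eqref{eq:major:FBG:1}--\eqref{eq:minor:FBG:2}, divide by $\varepsilon$, write each coefficient increment as the integral of its gradient along the joining segment, and use the $C^1$ regularity of $H^0,H,L^0,f^0_t,f_t,g^0,g,w^0$ and of ${\mathcal U}^0,{\mathcal U}$ in their relevant arguments to pass to the limit by dominated convergence (dominations coming from step (i)); the $X^0$- and $\mu$-equations pass in the strong, resp.\ weak, sense \eqref{eq:weak:sense:deltamu}, while for the $Y^0$- and $u$-equations the stochastic integrals $\sigma_0\int\Delta^\varepsilon Z^0\cdot\ud B^0$ and $\sigma_0\int\Delta^\varepsilon v^0\cdot\ud B^0$ converge because their integrands are uniformly $L^2$-bounded, their weak limits being the $\delta Z^0,\delta v^0$ entering \eqref{eq:major:diff:1}--\eqref{eq:minor:diff:2}.

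For step (iii) I would pass to the limit in the pointwise-in-$\varepsilon$ identities $Y^{0,\varepsilon}_t={\mathcal U}^0(t,X^{0,\varepsilon}_t,\mu^\varepsilon_t)$ and $u^{0,\varepsilon}_t(x)={\mathcal U}(t,X^{0,\varepsilon}_t,x,\mu^\varepsilon_t)$ from \eqref{eq:representation:weak:solution}: by $C^1$-ness of ${\mathcal U}^0,{\mathcal U}$ in $(x_0,\mu)$ and the convergences $\Delta^\varepsilon X^0\to\delta X^0$, $\Delta^\varepsilon\mu\to\delta\mu$ obtained in (i)--(ii), the difference quotients $\Delta^\varepsilon Y^0$ and $\Delta^\varepsilon u$ converge to the right-hand sides of \eqref{eq:representation:weak:linear:solution}, which equal $\delta Y^0$ and $\delta u$ by step (ii); the pairings $(\delta_\mu{\mathcal U}^0(t,X^0_t,\mu_t,\cdot),\delta\mu_t)$ and $(\delta_\mu{\mathcal U}(t,X^0_t,x,\mu_t,\cdot),\delta\mu_t)$ are licit since ${\mathcal U}^0(t,\cdot,\cdot)\in{\mathscr D}^0(L,1)$ and ${\mathcal U}(t,\cdot,\cdot,\cdot)\in{\mathscr D}(L,1,3+\alpha)$ supply the relevant flat derivatives in ${\mathcal C}^1$ in the $y$-variable while $\delta\mu_t\in{\mathcal C}^{-1}({\mathbb T}^d)$. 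From \eqref{eq:representation:weak:linear:solution} and the bounds on $\delta X^0,\delta\mu$ I would then read off $\sup_t|\delta Y^0_t|,\ \sup_t\|\delta u_t\|_{3+\epsilon}\in L^\infty$, the time-continuity with values in ${\mathcal C}^{3+\epsilon}$ using joint continuity of the data and the hypothesis \eqref{eq:U:priori:contintime}, and the bounds on $\delta Z^0$ and $\delta v^0$ required in Definition \ref{def:4.8:4.9} (a bound of the type \eqref{eq:new:iii}) exactly as in step (i). For step (iv) I would use linearity: the homogeneous version of \eqref{eq:major:diff:1}--\eqref{eq:minor:diff:2} with zero data forces $\delta X^0\equiv0$ (closed linear ODE) and leaves a coupled linear system in $(\delta\mu,\delta u,\delta v^0)$ and $(\delta Y^0,\delta Z^0)$ whose only non-standard terms, $\bigl(\nabla_p H^0(X^0_t,Z^0_t)-\nabla_p H^0(X^0_t,\nabla_{x_0}w^0(t,X^0_t))\bigr)$ paired against $\delta Z^0_t$ and $\delta v^0_t$, have a coefficient generating a BMO martingale by Lemma \ref{lem:BMO:major}; removing it by a Girsanov change of measure (as in the derivation of \eqref{eq:major:FBG:1:t}, using the $L^{1+}$-controls of Lemma \ref{lem:Girsanov:major} and \eqref{eq:Novikov}) leaves a coupled linear system with bounded coefficients, solvable by a short-time contraction on the a priori bounds of Section \ref{se:3} and patched over $[0,T]$, hence identically zero; combined with step (ii) and a Yamada--Watanabe-type argument (the constructed solution being adapted to the augmentation of the filtration of ${\mathcal F}^0_0$ and ${\boldsymbol B}^0$), this yields the unique strong solution.

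I expect the main obstacle to be the passage to the limit in the martingale components $\Delta^\varepsilon Z^0$ and $\Delta^\varepsilon v^0$: only $C^1$ (not $C^2$) regularity of ${\mathcal U}^0,{\mathcal U}$ is available, so $\delta Z^0$ and $\delta v^0$ are not representable as derivatives of the decoupling fields and only weak $L^2$-convergence can be expected, whence the need to identify the weak limits through the linearized equations themselves and to invoke uniqueness of the linear system to upgrade to convergence of the whole family. Secondary difficulties are carrying the Girsanov/BMO machinery of Section \ref{se:3} through the linearized estimates, because of the drift term linear in $\delta Z^0$ (resp.\ $\delta v^0$) with a merely BMO coefficient, and the bookkeeping of the distribution-valued ${\delta\boldsymbol\mu}$ together with the Hölder scales for ${\delta\boldsymbol u},{\delta\boldsymbol v}^0$, including the \emph{Bochner}-measurability subtleties flagged in Remark \ref{rem:def:2:9}.
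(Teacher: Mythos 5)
Your overall strategy is the same as the paper's: differentiate the flow of solutions to the tilted system \eqref{eq:major:FBG:1}--\eqref{eq:minor:FBG:2} along the segment of initial conditions, use the representation \eqref{eq:representation:weak:solution} and the ${\mathscr D}^0(L,1)$, ${\mathscr D}(L,1,3+\alpha)$ regularity of ${\mathcal U}^0,{\mathcal U}$ to get uniform bounds on the difference quotients, and read off \eqref{eq:representation:weak:linear:solution} in the limit. Where you diverge is in how the limits of the backward components are identified. The paper does not take weak $L^2$ limits of $\Delta^\varepsilon Z^0,\Delta^\varepsilon v^0$ at all: it first \emph{solves} the linearized backward equations directly (the finite-dimensional linear BSDE \eqref{eq:backward:backward:diff:2:Y0} by standard theory, since $Z^0=\nabla_{x_0}{\mathcal U}^0$ is bounded, and the linear BSPDE \eqref{eq:backward:backward:diff:2:ut} by the Du--Meng $W^{1,2}$ theory), and then proves \emph{strong} $L^2$ convergence of the difference quotients to these solutions via stability estimates (Pardoux--Peng-type for the BSDE, Du--Meng stability for the BSPDE), after which the representation, the ${\mathcal C}^{3+\epsilon}$ regularity of $\delta u,\delta v^0$ and the time-continuity via \eqref{eq:U:priori:contintime} follow as in your step (iii). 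This buys two things your route has to pay for separately: there is no weak-limit identification problem, and no need for your step (iv) at all --- the statement only claims existence of a solution with the representation, and the Girsanov-plus-short-time-contraction uniqueness argument for the linearized system on the whole of $[0,T]$ that you invoke to upgrade subsequential convergence is nontrivial extra machinery (the paper only proves such well-posedness in short time, in Section \ref{se:5}). For the forward component, note also that the paper does more than a Gronwall bound in ${\mathbb W}_1$: it proves that $(\Delta^\varepsilon\mu)_\varepsilon$ is Cauchy in ${\mathcal C}([0,T],{\mathcal C}^{-1}({\mathbb T}^d))$, using convergence of $\mu^\varepsilon$ to $\mu$ in total variation (via Lacker's Girsanov/Pinsker argument and a coupling of the initial conditions); this strong convergence is what lets one pass to the limit in the products with $\mu_t^\varepsilon$ in the linearized Fokker--Planck equation and obtain $\delta{\boldsymbol\mu}$ with continuous trajectories in ${\mathcal C}^{-1}$ as Definition \ref{def:4.8:4.9} requires --- if you stay with weak-$*$ compactness only, you should expect to need a comparable Cauchy or strong-stability argument there as well. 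With these caveats your plan is workable, but the paper's direct-solve-plus-strong-stability route is the cleaner implementation of the common idea.
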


{\bf In the following Subsections 
\ref{subse:4:3},
\ref{subse:4:4}
and
\ref{subse:4:5},  the assumptions of Propositions 
\ref{prop:strong:existence:uniqueness:passage}
and
\ref{prop:strong:existence:uniqueness:passage2}
are assumed to hold true (in addition to Assumption \hyp{B})}. Namely,
there exist 
two positive reals $\kappa,\alpha>0$ and 
two mappings 
${\mathcal U}^0 : [0,T] \times {\mathbb R}^d \times {\mathcal P}({\mathbb T}^d)
\rightarrow {\mathbb R}$ and 
${\mathcal U} : [0,T] \times {\mathbb R}^d \times {\mathbb T}^d 
\times {\mathcal P}({\mathbb T}^d) \rightarrow
{\mathbb R}$
in ${\mathscr D}^0(L,3+\alpha)$ and ${\mathscr D}(L,1,3+\alpha)$ respectively, satisfying the continuity 
property 
\eqref{eq:U:priori:contintime}, 
 such that 
the (hence unique) solution to 
\eqref{eq:major:FB:1}--\eqref{eq:minor:FB:2}
satisfies 
the representation formula
\eqref{eq:representation:weak:solution}.

\subsection{Tilting the linearized system}
\label{subse:4:3}

The objective is to provide (\textit{a priori}) estimates, for the linearized systems \eqref{eq:major:diff:1}--\eqref{eq:minor:diff:2}, that are independent of $T$. This is the key point in the 
study of the solvability of the (double) forward-backward system 
\eqref{eq:major:FBG:1}--\eqref{eq:minor:FBG:2}. Our strategy relies on a new change of measure which we explain now.  Indeed, we know (under the standing assumption) that the solution satisfies the analogue of 
\eqref{eq:Novikov} but under ${\mathbb P}^0$. Therefore, we can apply
%, for $\sigma_0^2 \min(\tilde \gamma,\tilde \gamma_0) \geq 1$
%(which guarantees in particular Novikov condition) 
a new Girsanov transformation (see \cite[Theorem 2.3]{Kazamaki}). Letting 
\begin{equation}\label{eq:Girsanov:bar:1}
\begin{split}
\overline{\mathcal E}_t &:= {\mathscr E}_t \biggl( - \sigma_0^{-1} \int_0^\cdot 
 \Bigl( \nabla_p H^0(X_r^0,Z_r^0) - \nabla_p H^0(X_r^0,\nabla_{x_0} w^0(r,X_r^0))  \Bigr)\cdot 
\ud B_r^0 
\biggr),
\quad t \in [0,T],  
 \end{split} 
\end{equation} 
we have that $(\overline{\mathcal E}_t)_{0 \le t \le T}$ is an ${\mathbb F}^0$-martingale (under 
${\mathbb P}^0$) and we can define the probability measure 
\begin{equation}\label{eq:Girsanov:bar:2} 
\overline{\mathbb P}^0 := \overline{\mathcal E}_T \cdot {\mathbb P}^0. 
\end{equation} 
Then, following the proof of Lemma 
\ref{lem:Girsanov:major},
%writing in a sketchy manner 
%$(\overline{\mathcal E}_t=: \exp(m_t - \tfrac12 \langle m \rangle_t))_{0 \le t \le T}$, is standard to observe that, for any 
%${\mathbb F}^0$-stopping time $\tau$ with values in $[0,T]$,  
%\begin{equation*}
%\begin{split} 
%&\mathbb E^0 \Bigl[ \Bigl( \overline {\mathcal E}_T
%\overline {\mathcal E}_\tau^{-1} \Bigr)^2 \, \vert \, {\mathcal F}_\tau^0 
%\Bigr] 
%= {\mathbb E}^0 \Bigl[  
%\exp \Bigl( 2 (m_T- m_\tau) - (\langle m\rangle_T - \langle m \rangle_\tau) \Bigr) 
% \, \vert \, {\mathcal F}_{\tau}^0  
%\Bigr]
%\\
%&= {\mathbb E}^0 \Bigl[  
%\exp \Bigl( 2 (m_T- m_{\tau}) - 4 (\langle m \rangle_T - \langle m \rangle_\tau) \Bigr) 
%\exp \Bigl( 3 ( \langle m \rangle_T - \langle m \rangle_\tau) \Bigr) 
% \, \vert \, {\mathcal F}_\tau^0 \Bigr]
%\\
%&\leq {\mathbb E}^0 \Bigl[  
%\exp \Bigl( 4   (m_T- m_{\tau}) - 8 ( \langle m \rangle_T -  \langle m \rangle_\tau) \Bigr) 
% \, \vert \, {\mathcal F}_\tau \Bigr]^{1/2}
%\times 
%{\mathbb E}^0 \Bigl[  
% \exp \Bigl( 6 ( \langle m \rangle_T - \langle m \rangle_\tau) \Bigr) 
%  \, \vert \, {\mathcal F}_{\tau}^0 
%\Bigr]^{1/2}
%\\
%&\leq {\mathbb E}^0 \Bigl[  
% \exp \Bigl( 6 ( \langle m \rangle_T - \langle m \rangle_\tau) \Bigr) 
%  \, \vert \, {\mathcal F}_{\tau}^0 
%\Bigr]^{1/2} 
%\\
%&= {\mathbb E}^0 
%\biggl[ \exp \biggl( 6 \sigma_0^{-2} \int_{\tau}^T
%\bigl\vert \nabla_p H^0(X_r^0,Z_r^0) - \nabla_p H^0(X_r^0,\nabla_{x_0} w^0(r,X_r^0))  \bigr\vert^2  \ud r\,\Big|\,\mathcal{F}_{\tau}^0
% \biggr) \biggr]^{1/2}, 
%\end{split}
%\end{equation*} 
and then proceeding as in the derivation of \eqref{eq:Novikov}, we deduce 
\begin{lemma}
%Assume that 
%$\sigma_0^2 \min(\tilde \gamma,\tilde \gamma_0) \geq  6\|\nabla_{pp}H^0\|_{\infty}$. 
%Then,
Under Assumption \hyp{B}, 
there exist constants $\overline \gamma,\overline \gamma_0>0$ and $\overline C, \overline C_0 \geq 0$, only depending on 
the parameters in \hyp{B} expect $(\sigma_0,T)$, such that, for any ${\mathbb F}^0$-stopping time $\tau$ with values in $[0,T]$,   
\begin{equation}
\label{eq:Novikov:2} 
\begin{split}  
&\overline{\mathbb E}^0 \biggl[ \exp \biggl( \overline \gamma  \sigma_0^2 \int_\tau^T 
\bigl\|  \bar v_r^0  \bigr\|^2_{\lfloor \curss \rfloor -(d/2+1)}
 \ud r
\biggr) \, \vert \, {\mathcal F}_\tau^0 \biggr] \leq \overline C,\\
&\overline{\mathbb E}^0 \biggl[ \exp \biggl( \overline \gamma_0  \sigma_0^2 \int_\tau^T \vert Z_r^0 - \nabla_{x_0} w^0(r,X_r^0) \vert^2 \ud r
\biggr) \, \vert \, {\mathcal F}_\tau^0 \biggr] \leq \overline C_0.
\end{split} 
\end{equation}
\end{lemma}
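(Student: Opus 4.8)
The plan is to mimic verbatim the Girsanov/Hölder argument already carried out in the text just after \eqref{eq:widetilde:P0}, replacing the density $\mathcal E^0$ there by $\overline{\mathcal E}$ and using as base estimates the $\mathbb P^0$-bounds of Proposition \ref{prop:bmo:minor} and Lemma \ref{lem:Girsanov:major}. Write $\overline{\mathcal E}=\mathscr E(\overline M)$ with $\overline M_t:=-\sigma_0^{-1}\int_0^t\bigl(\nabla_p H^0(X_r^0,Z_r^0)-\nabla_p H^0(X_r^0,\nabla_{x_0}w^0(r,X_r^0))\bigr)\cdot\ud B_r^0$. The first step is to verify that $\overline M$ is a BMO martingale under $\mathbb P^0$ whose norm is bounded independently of $(\sigma_0,T)$: since $\|\nabla^2_{pp}H^0\|_{L^\infty}\le\kappa'$ by \hyp{A5}, for every $\mathbb F^0$-stopping time $\tau$ one has
\begin{equation*}
\mathbb E^0\bigl[\langle\overline M\rangle_T-\langle\overline M\rangle_\tau\,\vert\,{\mathcal F}_\tau^0\bigr]\le(\kappa')^2\sigma_0^{-4}\,\mathbb E^0\Bigl[\int_\tau^T\sigma_0^2\,\vert Z_r^0-\nabla_{x_0}w^0(r,X_r^0)\vert^2\,\ud r\,\vert\,{\mathcal F}_\tau^0\Bigr]\le(\kappa')^2 C,
\end{equation*}
the last inequality being Lemma \ref{lem:BMO:major} together with $\sigma_0\ge1$ (which also makes the factor $\sigma_0^{-4}$ harmless). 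By \cite[Theorem 2.3]{Kazamaki} this already shows that $\overline{\mathcal E}$ is a uniformly integrable $\mathbb F^0$-martingale, so $\overline{\mathbb P}^0$ in \eqref{eq:Girsanov:bar:2} is a probability measure.

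Next I would invoke the reverse Hölder inequality \cite[Theorem 3.1]{Kazamaki}, applied to $\overline M$ and to its shifts $\overline M-\overline M_{\cdot\wedge\tau}$; since the constants there depend only on $\|\overline M\|_{\textrm{\rm BMO}}$, the previous step produces $\overline\gamma_0>0$ and $\overline C_0\ge0$, depending only on the parameters in \hyp{B} except $(\sigma_0,T)$, such that $\mathbb P^0$-almost surely $\mathbb E^0[(\overline{\mathcal E}_T\overline{\mathcal E}_\tau^{-1})^{1+\overline\gamma_0}\,\vert\,{\mathcal F}_\tau^0]\le\overline C_0$ for every such $\tau$. Then, fixing $\tau$ and using the Bayes rule $\overline{\mathbb E}^0[\,\cdot\,\vert\,{\mathcal F}_\tau^0]=\mathbb E^0[\overline{\mathcal E}_T\overline{\mathcal E}_\tau^{-1}\,\cdot\,\vert\,{\mathcal F}_\tau^0]$ together with Hölder's inequality with conjugate exponents $1+\overline\gamma_0$ and $(1+\overline\gamma_0)/\overline\gamma_0$, and setting $\overline\gamma:=\tfrac{\overline\gamma_0}{1+\overline\gamma_0}\gamma$ with $\gamma$ as in Proposition \ref{prop:bmo:minor},
\begin{equation*}
\overline{\mathbb E}^0\biggl[\exp\biggl(\overline\gamma\sigma_0^2\int_\tau^T\bigl\|\bar v_r^0\bigr\|^2_{\lfloor\curss\rfloor-(d/2+1)}\ud r\biggr)\,\vert\,{\mathcal F}_\tau^0\biggr]\le\overline C_0^{1/(1+\overline\gamma_0)}\;\mathbb E^0\biggl[\exp\biggl(\gamma\sigma_0^2\int_\tau^T\bigl\|\bar v_r^0\bigr\|^2_{\lfloor\curss\rfloor-(d/2+1)}\ud r\biggr)\,\vert\,{\mathcal F}_\tau^0\biggr]^{\overline\gamma_0/(1+\overline\gamma_0)},
\end{equation*}
and the last factor is $\le C^{\overline\gamma_0/(1+\overline\gamma_0)}$ by Proposition \ref{prop:bmo:minor}, which gives the first line of \eqref{eq:Novikov:2} with $\overline C:=\overline C_0^{1/(1+\overline\gamma_0)}C^{\overline\gamma_0/(1+\overline\gamma_0)}$. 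The second line of \eqref{eq:Novikov:2} follows in exactly the same way, now starting from the second exponential estimate of Lemma \ref{lem:Girsanov:major} (valid under $\mathbb P^0$), after possibly shrinking $\overline\gamma_0$.

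The only genuinely delicate point — the one on which the $(\sigma_0,T)$-uniformity of the whole construction hinges — is the BMO estimate of the first step: the bound for $\langle\overline M\rangle$ carries a $\sigma_0^{-2}$ gain, and Lemma \ref{lem:BMO:major} supplies a conditional bound for $\sigma_0^2\int_\tau^T\vert Z_r^0-\nabla_{x_0}w^0\vert^2$ that is already uniform in $(\sigma_0,T)$; combined with $\sigma_0\ge1$ and with the fact that Kazamaki's constants in Theorems 2.3 and 3.1 are monotone functions of the BMO norm alone, this propagates uniformity through every subsequent estimate. Everything else is the routine Girsanov/Hölder bookkeeping already displayed in the excerpt for the analogous tilted measure $\widetilde{\mathbb P}^0$.
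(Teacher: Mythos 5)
Your proof is correct and follows exactly the route the paper gestures at: establish a BMO bound for the martingale inside $\overline{\mathcal E}$ from Lemma \ref{lem:BMO:major} and the Lipschitz bound on $\nabla_p H^0$, invoke Kazamaki's reverse Hölder inequality, and transfer the conditional exponential bounds of Proposition \ref{prop:bmo:minor} and Lemma \ref{lem:Girsanov:major} through Bayes' rule and Hölder. The paper states this only as "following the proof of Lemma \ref{lem:Girsanov:major} and proceeding as in the derivation of \eqref{eq:Novikov}"; your write-up is a faithful expansion of that pointer, with the $\sigma_0^{-4}\le1$ observation being precisely what secures $(\sigma_0,T)$-uniformity.
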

Obviously, under the new measure 
$\overline{\mathbb P}^0$, 
the process 
\begin{equation*} 
\overline B_t^0 := B_t^0 + \sigma_0^{-1} \int_0^t 
 \Bigl( \nabla_p H^0(X_r^0,Z_r^0) - \nabla_p H^0(X_r^0,\nabla_{x_0} w^0(r,X_r^0))  \Bigr)
 \ud r, \quad t \in [0,T], 
\end{equation*} 
is an ${\mathbb F}^0$-Brownian motion and 
the forward-backward system \eqref{eq:major:diff:1}--\eqref{eq:minor:diff:2} writes
\begin{align} 
&\ud \delta X_t^0 = - \delta \bigl[ \nabla_p H^0(X_t^0,\nabla_{x_0} w^0(t,X_t^0)) \bigr] \ud t, \nonumber
\\
&\ud \delta Y_t^0 = 
 - \delta \bigl[f_t^0(X_t^0,\mu_t)\bigr]   \ud t \nonumber
\\
&\hspace{5pt}
+
\Bigl[ -\bigl( Z_t^0 - \nabla_{x_0} w^0(t,X_t^0) \bigr) \cdot \delta \bigl[ \nabla_p H^0(X_t^0,\nabla_{x_0} w^0(t,X_t^0))\bigr]
- 
\delta \bigl[L^0(X_t^0,-\nabla_p H^0(X_t^0,\nabla_{x_0}w^0(t,X_t^0)))\bigr]
\Bigr] \ud t \nonumber
\\
&\hspace{5pt} +  \bigl( \nabla_{x_0} H^0(X_t^0,Z_t^0) - \nabla_{x_0} H^0(X_t^0,\nabla_{x_0} w^0(t,X_t^0))\bigr) \cdot \delta X_t^0
  \ud t + 
\sigma_0 \delta Z_t^0 \cdot \ud   \overline B_t^0, \quad t \in [0,T], \label{eq:major:diff:g:1} 
\\
&\delta Y_T^0=\nabla_{x_0}g^0(X_T^0,\mu_T)\delta X_T^0+(\delta_\mu g^0(X_T^0,\mu_T),\delta \mu_T),
\nonumber
\end{align} 
and
\begin{align}
&\partial_t \delta \mu_t - \tfrac12 \Delta_x \delta \mu_t - {\rm div}_x \Bigl(  
\nabla_p H(\cdot ,\nabla_x u_t(\cdot))
 \delta \mu_t \Bigr) 
-
{\rm div}_x \Bigl(  
\delta \bigl[
\nabla_p H( \cdot ,\nabla_x u_t(\cdot))
\bigr] \mu_t
\Bigr) 
=0, \nonumber
\\
&\ud_t \delta u_t(x) = \Bigl( -  \tfrac12 \Delta_x \delta u_t(x) +   \nabla_p H(x,\nabla_x u_t(x)) \cdot \nabla_x \delta u_t(x) 
- \delta \bigl[ f_t(X_t^0,x,\mu_t) \bigr]
\Bigr) \ud t
\label{eq:minor:diff:g:2}
\\
&
\hspace{15pt}   
+ \delta \Bigl[ \Bigl( \nabla_p H^0(X_t^0,Z_t^0) - \nabla_p H^0(X_t^0,\nabla_{x_0} w^0(t,X_t^0))  \Bigr)
\Bigr]
\cdot  v_t^0(x) 
\ud t 
+ \sigma_0 \delta v_t^0(x) \cdot \ud \overline B_t^0,\quad (t,x)\in [0,T] \times {\mathbb T}^d,
\nonumber
\\
&\delta u_T(x)=\nabla_{x_0}g(X_T^0,x,\mu_T)\delta X_T^0+(\delta_\mu g(X_T^0,x,\mu_T),\delta \mu_T), \nonumber
\end{align}
where $\left(({\boldsymbol X}^0,{\boldsymbol Y}^0,{\boldsymbol Z}^0), ({\boldsymbol\mu},{\boldsymbol u},{\boldsymbol v^0})\right)$ solves
(under the mesure $\overline{\mathbb P}^0$)
\begin{equation} 
\label{eq:major:FB:bar:1}
\begin{split} 
&\ud X_t^0 = - \nabla_p H^0 \bigl( X_t^0 , Z_t^0\bigr)  \ud t + \sigma_0 \ud  \overline B_t^0, \quad t\in [0,T],
\\
&\ud Y_t^0 =   - \Bigl( f_t^0(X_t^0,\mu_t) + L^0\bigl(X_t^0,- \nabla_p H^0\bigl( X_t^0, Z_t^0\bigr) \bigr)
\Bigr) \ud t + 
\sigma_0 Z_t^0 \cdot \ud  \overline B_t^0, \quad t \in [0,T], 
\\
&X_0^0=x_0, \quad Y_T^0 = g^0(X_T^0,\mu_T).
\end{split}
\end{equation} 
and
\begin{equation}
\label{eq:minor:FB:bar:2}
\begin{split} 
&\partial_t \mu_t - \tfrac12 \Delta_x \mu_t - {\rm div}_x \Bigl( \nabla_p H \bigl(\cdot, \nabla_x u_t\bigr) \mu_t \Bigr) =0, 
\quad  \ (t,x)\in [0,T] \times {\mathbb T}^d, 
\\
&\ud_t u_t(x) =\Bigl(  -  \tfrac12 \Delta_x u_t(x) + H\bigl(x,\nabla_x u_t(x) \bigr)  - f_t(X_t^0,x,\mu_t)  \Bigr) \ud t
+ \sigma_0 v_t^0(x) \cdot \ud  \overline B_t^0, \quad (t,x) \in [0,T] \times {\mathbb T}^d, 
\\
&\mu_0=\mu,\quad u_T(x) = g(X_T^0,x,\mu_T), \quad x \in {\mathbb T}^d.  
\end{split}
\end{equation}

\subsection{A priori estimates for the linearized systems}
\label{subse:4:4}
We now state several standard \textit{a priori}  estimates for the linearized systems
\eqref{eq:major:diff:g:1}
and
\eqref{eq:minor:diff:g:2}. All of them are borrowed from the 
book \cite{CardaliaguetDelarueLasryLions}. 
As a preliminary observation, we notice
from the regularity of $w^0$
 that 
\begin{lemma}
\label{lem:forward:major:estimate}
Under Assumption \hyp{B}, 
there exists a constant $C_T$, only depending on the parameters in 
\hyp{B}, such that 
\begin{equation*} 
\sup_{0 \le t \le T} 
\vert \delta X_t^0 \vert \leq C_T \vert x_0-x_0' \vert.
\end{equation*}
\end{lemma}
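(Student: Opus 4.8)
The plan is to exploit the fact that, after the Girsanov tilt leading to \eqref{eq:major:diff:g:1}, the forward equation for $\delta{\boldsymbol X}^0$ has become \emph{autonomous} — completely decoupled from the remaining five equations — and is in fact a linear random ODE with a bounded coefficient. The only input is the regularity of $w^0$ already recorded after \eqref{eq:auxiliary:HJB}.

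First, I would observe that $(t,x_0)\mapsto \nabla_p H^0\bigl(x_0,\nabla_{x_0} w^0(t,x_0)\bigr)$ is a deterministic function of $(t,x_0)$ alone, because $w^0$ is the classical solution of the deterministic HJB equation \eqref{eq:auxiliary:HJB} (in particular it carries no dependence on $\mu$ or $Z$). Hence, with the notation convention fixed before Definition \ref{def:4.8:4.9}, the $\delta[\cdot]$ operator acts on this quantity only through its $x_0$-derivative, so that the forward equation in \eqref{eq:major:diff:g:1} reads
\begin{equation*}
\ud \delta X_t^0 = - A_t^0\, \delta X_t^0\, \ud t, \qquad \delta X_0^0 = x_0'-x_0,
\end{equation*}
where $A_t^0 := \nabla_{x_0}\bigl(\nabla_p H^0(\cdot,\nabla_{x_0} w^0(t,\cdot))\bigr)(X_t^0)$, and, by the chain rule,
\begin{equation*}
A_t^0 = \nabla_{px}^2 H^0\bigl(X_t^0,\nabla_{x_0} w^0(t,X_t^0)\bigr) + \nabla^2_{pp} H^0\bigl(X_t^0,\nabla_{x_0} w^0(t,X_t^0)\bigr)\,\nabla^2_{x_0} w^0(t,X_t^0).
\end{equation*}

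Next, I would bound $A_t^0$ uniformly: by \hyp{A5} (which is a consequence of \hyp{A}), $\nabla_{px}^2 H^0$ and $\nabla^2_{pp} H^0$ are bounded by $\kappa'$, while $\sup_{(t,x_0)\in[0,T]\times{\mathbb R}^d}|\nabla^2_{x_0} w^0(t,x_0)|$ is bounded by a constant depending only on the parameters in \hyp{B}, as recalled right after \eqref{eq:auxiliary:HJB}. Therefore $\sup_{t\in[0,T]}|A_t^0|\le C$ for a deterministic constant $C$ depending only on the parameters in \hyp{B}. Finally, since $\delta{\boldsymbol X}^0$ has continuous trajectories and solves the above ODE ${\mathbb P}^0$-almost surely, a pathwise application of Gr\"onwall's lemma gives $|\delta X_t^0|\le e^{Ct}\,|x_0-x_0'|\le e^{CT}\,|x_0-x_0'|$ for all $t\in[0,T]$, whence the claim with $C_T:=e^{CT}$. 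There is no genuine obstacle here: the whole point is that the Girsanov change of measure \eqref{eq:Girsanov:bar:1}--\eqref{eq:Girsanov:bar:2} makes the major player's forward dynamics independent of the rest of the system, so that the trivial linear-ODE estimate suffices — which is why this statement is recorded as a preliminary observation.
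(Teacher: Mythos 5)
Your proposal is correct and takes precisely the route the paper intends when it writes "this is a straightforward consequence of Gronwall's lemma": after the Girsanov tilt leading to \eqref{eq:major:FBG:1}, the forward equation for ${\boldsymbol X}^0$ depends only on $(t,X_t^0)$ through $\nabla_{x_0}w^0$, so its linearization is a linear ODE in $\delta X_t^0$ with coefficient bounded via \hyp{A5} and the bounds on $\nabla^2_{x_0}w^0$ recorded after \eqref{eq:auxiliary:HJB}. The pathwise Gronwall estimate then gives the claim, exactly as you wrote.
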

\begin{proof}
This is a straightforward consequence of Gronwall's lemma. 
\end{proof} 

We continue with the analysis of the backward equation.
\begin{lemma} 
\label{lem:backward:estimate}
Under Assumption  \hyp{B}, 
there exist two constants $C_T$ and $C$, 
only depending on the parameters in 
 \hyp{B}
but
with $C$ being independent of $\sigma_0$ and $T$, such that, with probability 1, for any 
$t \in [0,T]$, and for $\cursr \in (0,\lfloor \curss \rfloor -(d/2+1)] \setminus {\mathbb N}$, 
\begin{equation*} 
\| \delta u_{t} \|_{\cursr} \leq  
C_T \vert x_0-x_0' \vert + 
 C \overline{\mathbb E}^0 \biggl[ \| \delta \mu_T \|_{-\cursr} +
\int_{t}^T 
\Bigl( 
  \| \delta \mu_r \|_{-\cursr} 
  +
\vert  \delta Z_r^0 \vert \,   \|  \bar v_r^0 \|_{\cursr} \Bigr) \,  \ud r \, \vert \, {\mathcal F}_{t}^0 \biggr],
\end{equation*}
 where
 we recall the notation  
 $\bar v_t^0 = v_t^0 - \int_{{\mathbb T}^d} v_t^0(x) \ud x$, see 
 \eqref{eq:barv0}. 
\end{lemma}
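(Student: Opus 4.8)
The plan is to treat the backward equation for $\delta{\boldsymbol u}$ in \eqref{eq:minor:diff:g:2} as a linear backward parabolic SPDE of the form
\begin{equation*}
\ud_t \delta u_t(x) = \Bigl(-\tfrac12\Delta_x\delta u_t(x) + b_t(x)\cdot\nabla_x\delta u_t(x) + S_t(x)\Bigr)\,\ud t + \sigma_0\,\delta v_t^0(x)\cdot\ud\overline B_t^0,
\end{equation*}
whose transport coefficient $b_t(x):=\nabla_p H(x,\nabla_x u_t(x))$ is, by Proposition~\ref{prop:minor:higher} together with \hyp{A5}, bounded in ${\mathcal C}^{\curss-1}({\mathbb T}^d)$ by a deterministic constant depending only on the parameters in \hyp{B} except $(\sigma_0,T)$, and whose source is $S_t(x) = -\,\delta[f_t(X_t^0,x,\mu_t)] + \delta[\nabla_p H^0(X_t^0,Z_t^0)-\nabla_p H^0(X_t^0,\nabla_{x_0}w^0(t,X_t^0))]\cdot v_t^0(x)$. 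Expanding the $\delta[\,\cdot\,]$ brackets and using \hyp{A5} together with the bounds on $\nabla_{x_0}w^0$ and $\nabla_{x_0}^2w^0$ recalled before Lemma~\ref{lem:BMO:major}, one gets $|\delta[\nabla_p H^0(X_t^0,Z_t^0)-\nabla_p H^0(X_t^0,\nabla_{x_0}w^0(t,X_t^0))]|\le C(|\delta X_t^0|+|\delta Z_t^0|)$; combining this with the regularity of $f_t$ and $g$ in \hyp{A1} (so that $\|\nabla_{x_0}h(x_0,\cdot,\mu)\|_{\curss}\le\kappa$ and $\|\delta_\mu h(x_0,x,\mu,\cdot)\|_{\cursr}\le\kappa$ for $h\in\{f_t,g\}$ and $\cursr\le\lfloor\curss\rfloor-(d/2+1)$), the elementary duality $|(\delta_\mu h(x_0,x,\mu,\cdot),\delta\mu)|\le\|\delta_\mu h(x_0,x,\mu,\cdot)\|_{\cursr}\|\delta\mu\|_{-\cursr}$, and Lemma~\ref{lem:forward:major:estimate}, I would obtain
\begin{equation*}
\|\delta u_T\|_{\cursr}\le C_T|x_0-x_0'| + C\|\delta\mu_T\|_{-\cursr}, \qquad
\|S_r\|_{\cursr}\le C_T|x_0-x_0'| + C\|\delta\mu_r\|_{-\cursr} + C|\delta Z_r^0|\,\|v_r^0\|_{\cursr},
\end{equation*}
with $C$ depending only on the parameters in \hyp{B} except $(\sigma_0,T)$ and $|\delta X_r^0|\le C_T|x_0-x_0'|$ absorbed into the first summand.

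Next, I would run the duality argument of \cite[Lemma~4.3.11]{CardaliaguetDelarueLasryLions}, exactly as in the third step of the proof of Proposition~\ref{prop:minor:regularity:higher}: for a test distribution $q$ (a smooth density, or a derivative or difference of Dirac masses) solve on $[t,T]$ the conservative equation $\partial_r q_r-\tfrac12\Delta_x q_r-{\rm div}_x(b_r q_r)=0$, $q_t=q$, regard $(q_r)$ as an ${\mathbb F}^0$-progressively measurable measure-valued process (measurability as in the proof of Lemma~\ref{lem:weak:uniqueness:forward:equation}), and expand $(\delta u_r,q_r)$; the transport and Laplacian terms cancel by duality, the stochastic integral against $\overline{\boldsymbol B}^0$ has zero conditional mean by the integrability in Definition~\ref{def:4.8:4.9}, leaving $(\delta u_t,q)=\overline{\mathbb E}^0[(\delta u_T,q_T)+\int_t^T(S_r,q_r)\,\ud r\,|\,{\mathcal F}_t^0]$. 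Two points make the bound uniform in $T$. First, when $q$ (hence $q_r$, since the total mass is conserved) has zero spatial average, the spatially constant part $\delta[\,\cdot\,]_r\cdot\int_{{\mathbb T}^d}v_r^0(y)\,\ud y$ of the source pairs to $0$ against $q_r$, so $(S_r,q_r)$ only involves $\bar v_r^0$; second, Lemma~\ref{le:appendix:2}, applicable here pathwise in $\omega^0$ since $b$ is bounded by a deterministic constant, gives $\|q_r\|_{-\cursr}\le C e^{-\gamma(r-t)}\|q\|_{-\cursr}$. Choosing $q$ among $(-1)^l\partial^l\rho(\cdot-x)$ and $(-1)^l[\partial^l\rho(\cdot-x)-\partial^l\rho(\cdot-x')]$ for $1\le l\le\lfloor\cursr\rfloor$, together with $q=\delta_x-\delta_{x'}$ when $\cursr<1$, and letting $\rho$ concentrate at $0$, I recover all the derivative and top-order Hölder seminorms of $\delta u_t$ modulo additive constants; the $|\delta X_r^0|$-terms and the non-decaying cross-term $|\delta X_r^0|\,\|\bar v_r^0\|_{\cursr}$ are absorbed into $C_T|x_0-x_0'|$ using $|\delta X_r^0|\le C_T|x_0-x_0'|$ and the $L^2$-version of the BMO bound for $\bar v^0$ under $\overline{\mathbb P}^0$ in \eqref{eq:Novikov:2} (a $T$-dependent constant being harmless there). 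This yields
\begin{equation*}
\Bigl\|\delta u_t - \int_{{\mathbb T}^d}\delta u_t(x)\,\ud x\Bigr\|_{\cursr} \le C_T|x_0-x_0'| + C\,\overline{\mathbb E}^0\Bigl[\|\delta\mu_T\|_{-\cursr} + \int_t^T\bigl(\|\delta\mu_r\|_{-\cursr} + |\delta Z_r^0|\,\|\bar v_r^0\|_{\cursr}\bigr)\,\ud r\,\Big|\,{\mathcal F}_t^0\Bigr].
\end{equation*}

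It remains to control the spatial average $\int_{{\mathbb T}^d}\delta u_t(x)\,\ud x$, which I would do directly from the representation \eqref{eq:representation:weak:linear:solution} rather than through the scalar backward equation it satisfies: integrating \eqref{eq:representation:weak:linear:solution} in $x$ gives $\int_{{\mathbb T}^d}\delta u_t(x)\,\ud x = \nabla_{x_0}{\mathcal V}^0(t,X_t^0,\mu_t)\cdot\delta X_t^0 + (\delta_\mu{\mathcal V}^0(t,X_t^0,\mu_t,\cdot),\delta\mu_t)$ with ${\mathcal V}^0(t,x_0,\mu):=\int_{{\mathbb T}^d}{\mathcal U}(t,x_0,y,\mu)\,\ud y$, which inherits from ${\mathcal U}\in{\mathscr D}(L,\cdot,\cdot)$ the bounds $|\nabla_{x_0}{\mathcal V}^0|\le L$ and $\|\delta_\mu{\mathcal V}^0(t,x_0,\mu,\cdot)\|_{\cursr}\le L$ (the same bound that, via \eqref{eq:representation:weak:solution}, makes $|\int_{{\mathbb T}^d}v_r^0(y)\,\ud y|=|\nabla_{x_0}{\mathcal V}^0(r,X_r^0,\mu_r)|\le L$ deterministic); hence, by Lemma~\ref{lem:forward:major:estimate}, $|\int_{{\mathbb T}^d}\delta u_t(x)\,\ud x|\le C_T|x_0-x_0'| + C\|\delta\mu_t\|_{-\cursr}$, and adding the two displays (and enlarging $C,C_T$, the $\|\delta\mu_t\|_{-\cursr}$-term being dominated by the right-hand side once this estimate is coupled with the forward estimate on $\delta{\boldsymbol\mu}$) finishes the argument. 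The delicate part is purely bookkeeping: making sure that every contribution required to carry a constant independent of $\sigma_0$ and $T$ — those multiplying $\|\delta\mu_\cdot\|_{-\cursr}$ and $|\delta Z_\cdot^0|\,\|\bar v_\cdot^0\|_{\cursr}$ — actually does so, which is where the ergodic/exponential-decay estimate of Lemma~\ref{le:appendix:2} for the adjoint conservative equation, the centred BMO bound \eqref{eq:Novikov:2}, and the deterministic bound on $\int v^0$ are essential, while the unavoidable $T$-dependence is confined to the prefactor of $|x_0-x_0'|$.
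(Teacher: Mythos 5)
Your proof follows the same strategy as the paper's: a duality argument against the conservation equation $\partial_r q_r - \tfrac12\Delta q_r - {\rm div}_x(\nabla_p H(\cdot,\nabla_x u_r)\,q_r) = 0$, the appeal to Lemma~\ref{le:appendix:2} for the uniform-in-$T$ control of $\|q_r\|_{-\cursr}$, and the recovery of the H\"older norm from mollifier test distributions. The one place where you genuinely deviate is the treatment of the spatial average of $\delta u_t$. You correctly observe that the passage from $(q_r,v_r^0)$ to $(q_r,\bar v_r^0)$ requires $\int_{{\mathbb T}^d} q_r\,\ud x = 0$, which excludes the $l=0$ test functions $q = \rho(\cdot-x)$, and you therefore first bound $\|\delta u_t - \int_{{\mathbb T}^d}\delta u_t\|_{\cursr}$ and treat $\int_{{\mathbb T}^d}\delta u_t(x)\,\ud x$ separately via the representation formula \eqref{eq:representation:weak:linear:solution}. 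The paper's own proof takes $q=(-1)^l\partial^l\rho(\cdot-x)$ for $l\in\{0,\dots,\lfloor\cursr\rfloor\}$ without commenting on the non-zero mass of the $l=0$ case, so your version is more explicit about this point.

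Two details in your average estimate deserve scrutiny. First, the bound $\|\delta_\mu{\mathcal V}^0(t,x_0,\mu,\cdot)\|_{\cursr}\leq L$ with ${\mathcal V}^0 = \int_{{\mathbb T}^d}{\mathcal U}\,\ud x$ only follows from the standing assumption ${\mathcal U}(t,\cdot,\cdot,\cdot)\in{\mathscr D}(L,1,3+\alpha)$ when $\cursr\leq 1$: the index $1$ in ${\mathscr D}(L,1,\cdot)$ controls one $y$-derivative of $\delta_\mu{\mathcal U}$, so for $\cursr$ up to $\lfloor\curss\rfloor-(d/2+1)>3$ (which the lemma covers, and which Proposition~\ref{prop:4:7} needs) you would only get $\|\delta_\mu{\mathcal V}^0\|_1\leq L$, hence a bound on $\bigl|(\delta_\mu{\mathcal V}^0,\delta\mu_t)\bigr|$ in terms of $\|\delta\mu_t\|_{-1}$ rather than $\|\delta\mu_t\|_{-\cursr}$, which is genuinely larger. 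Second, the quantity $\|\delta\mu_t\|_{-\cursr}$ at the \emph{current} time $t$ that your estimate produces does not appear in the lemma's stated right-hand side (only $\|\delta\mu_T\|_{-\cursr}$ and $\int_t^T\|\delta\mu_r\|_{-\cursr}\,\ud r$ do); your closing remark that it is ``dominated by the right-hand side once this estimate is coupled with the forward estimate on $\delta{\boldsymbol\mu}$'' masks a potential circularity, since the forward bound of Lemma~\ref{lem:forward:estimate} reintroduces $\nabla_x\delta u_r$ on the right. Either of these would require a more careful chaining than what is sketched, so while the overall strategy matches, the average step as written does not quite close.
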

\begin{proof} 
The strategy is inspired from a  duality argument developed in the proof of Lemma 4.3.2
in \cite{CardaliaguetDelarueLasryLions}. It relies on Lemma 
\ref{le:appendix:2} in Appendix (which we already used in the previous section, 
see 
\eqref{eq:b:Hu-Htildeu}
and
\eqref{eq:duality:1st:time}).
For $t_0 \in [0,T]$ and for $q$ a smooth (deterministic) function  
with $\| q \|_{-\cursr} \leq 1$, we consider the conservation equation 
%\fdm{To do, this computation, one does not need $\delta u$ to be $\cC^r$. We just need a classical solution, say
%$\cC^2$}
\begin{equation*} 
\begin{split}
&\partial_t q_t - \tfrac12 \Delta_x  q_t - \textrm{\rm div}_x \bigl( \nabla_p H(\cdot,\nabla_x u_t(\cdot))  q_t \bigr) = 0,
\quad t \in [t_0,t]; \quad q_{t_0}=q,
\end{split}
\end{equation*}
we compute 
\begin{equation*} 
\begin{split} 
\ud (\delta u_t,q_t) &= 
-\Bigl( \delta \bigl[ f_t(X_t^0,x,\mu_t) \bigr], q_t \Bigr) \ud t
+  \delta \Bigl[ \Bigl( \nabla_p H^0(X_t^0,Z_t^0) - \nabla_p H^0(X_t^0,\nabla_{x_0} w^0(t,X_t^0))  \Bigr)
\Bigr]
 \cdot  (q_t, v_t^0 ) \ud t 
\\
&\hspace{15pt} + \sigma_0 \bigl( q_t, \delta v_t^0 \bigr) \cdot \ud \overline{B}_t^0, \quad t\in [t_0,T]. 
\end{split}
\end{equation*}
And then,
using the regularity properties of $g$ and $f$
together with Lemmas 
\ref{lem:forward:major:estimate}
and 
\ref{le:appendix:2}, 
we obtain
% \textcolor{red}{be careful that the regularity of $f_t$ in the variable $x$ matters below}
\begin{equation*} 
\begin{split} 
(\delta u_{t_0},q) &=\overline{\mathbb E}^0 \biggl[ (\delta u_T,q_T) 
+\int_{t_0} ^T 
\Bigl( \delta \bigl[ f_t(X_t^0,x,\mu_t) \bigr], q_t \Bigr) \ud t
\\
&\hspace{15pt} - \int_{t_0}^T 
 \delta \Bigl[ \Bigl( \nabla_p H^0(X_t^0,Z_t^0) - \nabla_p H^0(X_t^0,\nabla_{x_0} w^0(t,X_t^0))  \Bigr)
\Bigr] \cdot  (q_t, v_t^0 ) \, \ud t 
 \, \vert \, {\mathcal F}_{t_0}^0 \biggr]
 \\
 &\leq C_T \vert x_0-x_0' \vert + 
 C  \overline{\mathbb E}^0 \biggl[ \| \delta \mu_T \|_{-\cursr} +
\int_{t_0}^T 
\Bigl(  \| \delta \mu_t \|_{-\cursr} + 
\bigl( 
\vert  \delta X_t^0 \vert
+
\vert  \delta Z_t^0 \vert \bigr) \,   \| \bar v_t^0 \|_{\cursr} \Bigr) \,  \ud t \, \vert \, {\mathcal F}_{t_0}^0 \biggr]. 
\end{split}
\end{equation*}
By \eqref{eq:Novikov:2}
and Lemma \ref{lem:forward:major:estimate}
again, 
the above bound can be rewritten 
(for a new value of the constant $C_T$)
\begin{equation*} 
\begin{split} 
(\delta u_{t_0},q) &\leq C_T \vert x_0-x_0' \vert + 
 C  \overline{\mathbb E}^0 \biggl[ \| \delta \mu_T \|_{-\cursr} +
\int_{t_0}^T 
\Bigl(  \| \delta \mu_t \|_{-\cursr} + 
\vert  \delta Z_t^0 \vert \,   \| \bar v_t^0 \|_{\cursr} \Bigr) \,  \ud t \, \vert \, {\mathcal F}_{t_0}^0 \biggr]. 
\end{split}
\end{equation*}
For $l \in \{0,\cdots,\lfloor \cursr \rfloor\}$, $x$ and $h$ two elements of ${\mathbb R}^d$ and $\rho$ a smooth density on 
${\mathbb R}^d$, we 
observe that, for
$q(\cdot) = (-1)^l  \partial^l \rho(\cdot-x)$
(with $\partial^l \rho$ denoting the derivative of $\rho$ along $l$ arbitrary directions of ${\mathbb R}^d$ with possible repetitions)
and
 any $\varphi \in {\mathcal C}^\cursr({\mathbb T}^d)$ satisfying 
$\| \varphi \|_{\cursr} \leq 1$,
\begin{equation*} 
\bigl\vert (\varphi,q) \bigr\vert \leq \Bigl\vert \nabla^l \varphi * \rho(x)  
\Bigr\vert \leq 1.
\end{equation*}
Similarly, for 
$q(\cdot) = (-1)^{\lfloor \cursr  \rfloor} \vert h \vert^{-\cursr + \lfloor \cursr \rfloor}  [\partial^{\lfloor \cursr
\rfloor} \rho(\cdot-x) -  \partial^{\lfloor \cursr \rfloor} \rho(\cdot-(x+h)]$,
\begin{equation*} 
\bigl\vert (\varphi,q) \bigr\vert \leq  \vert h \vert^{-\cursr + \lfloor \cursr \rfloor}   \Bigl\vert \nabla^{\lfloor \cursr
\rfloor} \varphi * \rho(x)  
- \nabla^{\lfloor \cursr \rfloor} \varphi * \rho(x+h)  
\Bigr\vert \leq 1.
\end{equation*}
And then, denoting by $(\rho_n)_{n \geq 1}$ a sequence of mollifiers on 
${\mathbb R}^d$, we deduce that, $\overline{\mathbb P}^0$-almost surely, for 
any $l \in \{0,\cdots,\lfloor \cursr \rfloor\}$, any $n \geq 1$ and 
any $x,h \in {\mathbb Q}^d$,
\begin{equation*} 
\Bigl\vert \nabla^l \Bigl( \delta u_{t_0} * \rho_n \Bigr) (x) \Bigr\vert \leq C_T \vert x_0-x_0' \vert + 
 C  \overline{\mathbb E}^0 \biggl[ \| \delta \mu_T \|_{-\cursr} +
\int_{t_0}^T 
\Bigl(  \| \delta \mu_t \|_{-\cursr} + 
\vert  \delta Z_t^0 \vert \,   \| \bar v_t^0 \|_{\cursr} \Bigr) \,  \ud t \, \vert \, {\mathcal F}_{t_0}^0 \biggr],
\end{equation*} 
and
\begin{equation*} 
\begin{split}
 &\vert h \vert^{-\cursr + \lfloor \cursr \rfloor}
\Bigl\vert \nabla^{ \lfloor \cursr \rfloor} \Bigl( \delta u_{t_0} * \rho_n \Bigr) (x) 
-
\nabla^{  \lfloor \cursr \rfloor} \Bigl( \delta u_{t_0} * \rho_n \Bigr) (x+h) 
\Bigr\vert 
\\
&\hspace{15pt} \leq C_T  \vert x_0-x_0' \vert + 
 C  \overline{\mathbb E}^0 \biggl[ \| \delta \mu_T \|_{-\cursr} +
\int_{t_0}^T 
\Bigl(  \| \delta \mu_t \|_{-\cursr} + 
\vert  \delta Z_t^0 \vert \,   \| \bar v_t^0 \|_{\cursr} \Bigr) \,  \ud t \, \vert \, {\mathcal F}_{t_0}^0 \biggr].
\end{split}
\end{equation*} 
Obviously, 
this holds true
almost surely, for 
any $l \in \{0,\cdots,\lfloor \cursr \rfloor\}$, any $n \geq 1$ and 
any $x,h \in {\mathbb R}^d$. And since 
$\delta u_{t_0}$ is already known to be in 
${\mathcal C}^{3+\epsilon}({\mathbb T}^d)$ (see 
Proposition 
\ref{prop:strong:existence:uniqueness:passage2}), we deduce that it belongs to 
${\mathcal C}^{\cursr}({\mathbb T}^d)$
with 
\begin{equation*} 
\| \delta u_{t_0} \|_{\cursr} \leq  
C_T \vert x_0-x_0' \vert + 
 C \overline{\mathbb E}^0 \biggl[ \| \delta \mu_T \|_{-\cursr} +
\int_{t_0}^T 
\Bigl( 
  \| \delta \mu_t \|_{-\cursr} 
  +
\vert  \delta Z_t^0 \vert  \,   \|  v_t^0 \|_{\cursr} \Bigr) \,  \ud t \, \vert \, {\mathcal F}_{t_0}^0 \biggr].
\end{equation*}
Here, we also recall that $\delta {\boldsymbol u}$ has continuous values in 
${\mathcal C}^{3+\epsilon}({\mathbb T}^d)$. We deduce that the above is true, almost surely, for any $t_0\in [0,T]$. 
\end{proof} 

\begin{lemma} 
\label{lem:forward:estimate}
Under Assumption 
\hyp{B}, there exist an exponent $\gamma >0$ and a constant $C$, 
only depending on the parameters in 
 \hyp{B}
except $\sigma_0$ and $T$, such that, with probability 1, for any 
$t \in [0,T]$, for any 
 $\cursr \in [1,\lfloor \curss \rfloor -(d/2+1)] \setminus {\mathbb N}$,  
\begin{equation*} 
\| \delta \mu_t \|_{-\cursr} \leq  
 C \biggl[ \| \delta \mu_0  \|_{-\cursr} + \int_0^t \exp(- \gamma (t-r))  \Bigl\| \nabla_x \delta u_r \, \mu_r \Bigr\|_{-\cursr+1} \ud r \biggr].
\end{equation*}
In particular, 
\begin{equation*} 
\| \delta \mu_t \|_{-\cursr}^2 \leq  
 C \biggl[ \| \delta \mu_0 \|_{-\cursr}^2 + \int_0^t 
 \exp(- \gamma (t-r))
  \bigl
  ( \vert \nabla_x \delta u_r \vert^2, \mu_r
 \bigr)  \ud r \biggr],
\end{equation*}
for a possibly new value of $C$. 
\end{lemma}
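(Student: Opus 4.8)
The statement to prove is Lemma~\ref{lem:forward:estimate}, a weighted-in-time estimate for $\|\delta\mu_t\|_{-\cursr}$ in terms of the initial data and of $\nabla_x\delta u$. Here is my plan.

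\textbf{Setting up the duality.} The forward equation for $\delta\mu$ in \eqref{eq:minor:diff:g:2} reads, in divergence form,
\begin{equation*}
\partial_t \delta\mu_t - \tfrac12\Delta_x\delta\mu_t - \mathrm{div}_x\bigl(\nabla_p H(\cdot,\nabla_x u_t(\cdot))\,\delta\mu_t\bigr)
- \mathrm{div}_x\bigl(\delta[\nabla_p H(\cdot,\nabla_x u_t(\cdot))]\,\mu_t\bigr)=0.
\end{equation*}
The first three terms form a linear conservative (Fokker--Planck-type) operator with the \emph{bounded} (and, by Proposition~\ref{prop:minor:higher}, spatially regular) drift $b_t(\cdot) := -\nabla_p H(\cdot,\nabla_x u_t(\cdot))$; the last term is a source. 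To estimate $\|\delta\mu_t\|_{-\cursr}$ I would fix $t_0\in[0,T]$, a test function $\varphi\in\cna$ with $\|\varphi\|_{\cursr}\le 1$, and solve \emph{backward} on $[0,t_0]$ the dual transport–diffusion equation
\begin{equation*}
\partial_t \varphi_t + \tfrac12\Delta_x\varphi_t + b_t(\cdot)\cdot\nabla_x\varphi_t = 0,\quad t\in[0,t_0];\qquad \varphi_{t_0}=\varphi.
\end{equation*}
This is exactly the kind of equation controlled by Lemma~\ref{le:appendix:2}: since $\|b_t\|_{\cursr-1}$ (indeed $\|b_t\|_{\lfloor\curss\rfloor-1}$) is bounded independently of $(\sigma_0,T)$ via Proposition~\ref{prop:minor:higher}, that lemma gives the exponential decay $\|\varphi_t\|_{\cursr}\le C\exp(-\gamma(t_0-t))\|\varphi\|_{\cursr}$, with $\gamma,C$ depending only on the parameters in \hyp{B} but not on $(\sigma_0,T)$ — this is where the required uniformity enters. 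Note $\delta\mu$ is deterministic-looking in its evolution (no martingale term in its equation), so no It\^o–Wentzell correction appears and the duality is a straightforward chain rule.

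\textbf{The duality identity and the bound.} Expanding $\tfrac{d}{dt}(\varphi_t,\delta\mu_t)$ using the two equations, the conservative part cancels and one is left, for $t_0$ and initial time $0$, with
\begin{equation*}
(\varphi,\delta\mu_{t_0}) = (\varphi_0,\delta\mu_0) + \int_0^{t_0}\Bigl(\nabla_x\varphi_t\cdot\delta[\nabla_p H(\cdot,\nabla_x u_t(\cdot))],\ \mu_t\Bigr)\,dt.
\end{equation*}
Since $\delta[\nabla_p H(\cdot,\nabla_x u_t(\cdot))] = \nabla^2_{pp}H(\cdot,\nabla_x u_t)\nabla_x\delta u_t$ and $\|\nabla^2_{pp}H\|_\infty\le\kappa'$ by \hyp{A5}, the integrand is pointwise bounded by $\kappa'|\nabla_x\varphi_t||\nabla_x\delta u_t|$; pairing with $\mu_t$ and using $\|\nabla_x\varphi_t\|_{\cursr-1}\le\|\varphi_t\|_{\cursr}$ one obtains $|(\nabla_x\varphi_t\cdot\delta[\cdots],\mu_t)|\le C\|\varphi_t\|_{\cursr}\,\|\nabla_x\delta u_t\,\mu_t\|_{-\cursr+1}$. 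Plugging the decay estimate for $\|\varphi_t\|_{\cursr}$ and $\|\varphi_0\|_{\cursr}\le C e^{-\gamma t_0}$, then taking the supremum over $\varphi$ with $\|\varphi\|_{\cursr}\le1$, gives
\begin{equation*}
\|\delta\mu_{t_0}\|_{-\cursr}\le C\Bigl[\exp(-\gamma t_0)\|\delta\mu_0\|_{-\cursr} + \int_0^{t_0}\exp(-\gamma(t_0-r))\,\|\nabla_x\delta u_r\,\mu_r\|_{-\cursr+1}\,dr\Bigr],
\end{equation*}
which is the first claimed inequality (bounding $e^{-\gamma t_0}\le 1$ on the first term if one wants the form stated). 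For the ``in particular'' part, I would apply Jensen's inequality to the normalized kernel $\gamma e^{-\gamma(t_0-r)}$: writing the convolution as an average, $\bigl(\int_0^{t_0}e^{-\gamma(t_0-r)}\|\nabla_x\delta u_r\,\mu_r\|_{-\cursr+1}dr\bigr)^2\le \gamma^{-1}\int_0^{t_0}e^{-\gamma(t_0-r)}\|\nabla_x\delta u_r\,\mu_r\|_{-\cursr+1}^2 dr$, and then bound the $\mathcal C^{-\cursr+1}$-norm of the product by the pairing against test functions: for $\|\psi\|_{\cursr-1}\le1$, $(\psi,\nabla_x\delta u_r\,\mu_r)=\int \psi\,\nabla_x\delta u_r\,d\mu_r\le(\int|\nabla_x\delta u_r|^2 d\mu_r)^{1/2}$ since $|\psi|\le\|\psi\|_0\le 1$, giving $\|\nabla_x\delta u_r\,\mu_r\|_{-\cursr+1}^2\le(|\nabla_x\delta u_r|^2,\mu_r)$. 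This yields the second inequality with a new constant $C$.

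\textbf{Technical points and the main obstacle.} Two routine checks are needed: (i) measurability/adaptedness of $(q_t)$, i.e. that the dual equation with the random coefficient $b_t$ produces an $\mathbb F^0$-progressively measurable process — this follows exactly as in the proof of Lemma~\ref{lem:weak:uniqueness:forward:equation} and as already done around \eqref{eq:b:Hu-Htildeu}--\eqref{eq:duality:1st:time}, and in fact here the pathwise bound on $b$ means all constants are deterministic; (ii) justifying the duality expansion rigorously in the distributional class of Definition~\ref{def:4.8:4.9}, where $\delta\mu_t\in\mathcal C^{-1}$ and $\delta u_t\in\mathcal C^{3+\epsilon}$ — since the dual solution $\varphi_t$ is smooth (Schauder for the backward transport–diffusion equation with H\"older drift), the pairing $(\varphi_t,\delta\mu_t)$ and its differential are well defined, and the weak formulation \eqref{eq:weak:sense:deltamu} is precisely what is used. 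The main obstacle — though it is one already resolved upstream — is making sure the decay rate $\gamma$ in Lemma~\ref{le:appendix:2} genuinely depends only on an upper bound for $\|b_t\|$ in the relevant H\"older norm and not on $(\sigma_0,T)$; this is guaranteed because Proposition~\ref{prop:minor:higher} supplies such a $(\sigma_0,T)$-independent bound for $\|\nabla_x u_t\|_{\curss-1}$ (using $\curss>d/2+5$), so $b_t=-\nabla_p H(\cdot,\nabla_x u_t)$ is uniformly bounded in $\mathcal C^{\cursr-1}$ for every admissible $\cursr\le\lfloor\curss\rfloor-(d/2+1)$, including the range $[1,\lfloor\curss\rfloor-(d/2+1)]\setminus\mathbb N$ in the statement.
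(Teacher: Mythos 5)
Your argument is substantively the paper's: fix $t_0$, solve the dual backward transport--diffusion equation with random drift $-\nabla_p H(\cdot,\nabla_x u_t(\cdot))$, invoke an appendix decay lemma whose constants are independent of $(\sigma_0,T)$ via Proposition~\ref{prop:minor:higher}, then close with the duality identity, the factorization $\delta[\nabla_p H(\cdot,\nabla_x u_t)] = \nabla^2_{pp}H(\cdot,\nabla_x u_t)\nabla_x\delta u_t$, and Jensen/Cauchy--Schwarz for the squared version. The reasoning about why the rate is $T$-independent and the derivation of $\|\nabla_x\delta u_r\mu_r\|_{-\cursr+1}^2\le(|\nabla_x\delta u_r|^2,\mu_r)$ are both correct.

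Two imprecisions worth fixing, though neither breaks the argument. First, the relevant appendix lemma is Lemma~\ref{le:appendix:1}, not Lemma~\ref{le:appendix:2}: your dual equation is a transport--diffusion equation (test-function side of the duality), whereas Lemma~\ref{le:appendix:2} concerns conservative (Fokker--Planck) equations; the paper cites Lemma~\ref{le:appendix:1} at exactly this step. Second, the exponential decay you quote, $\|\varphi_t\|_{\cursr}\le Ce^{-\gamma(t_0-t)}\|\phi\|_{\cursr}$, is not what that lemma gives and is in fact false (take $\phi\equiv 1$, so $\varphi_t\equiv 1$). Lemma~\ref{le:appendix:1} gives decay only for the \emph{centered} quantity $\|\varphi_t-\bar\varphi_t\|_{\cursr}$. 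The argument still closes because the term that needs the decay is $\|\nabla_x\varphi_t\|_{\cursr-1}$, and $\nabla_x\varphi_t=\nabla_x(\varphi_t-\bar\varphi_t)$, so the constant mode is invisible there; meanwhile the boundary term $(\delta\mu_0,\varphi_0)$ needs no decay (the paper simply bounds $\|\varphi_0\|_{\cursr}\le C\|\phi\|_{\cursr}$ by the maximum principle), and your stronger claim $\|\varphi_0\|_{\cursr}\le Ce^{-\gamma t_0}$ is not needed nor literally true --- although a decaying bound for $(\delta\mu_0,\varphi_0)$ can indeed be recovered by using that $\delta\mu_0$ has zero total mass, which kills $\bar\varphi_0$. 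You should rephrase these two points so that the estimate you invoke matches what the lemma actually asserts.
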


\begin{proof}
We fix $t_0 \in [0,T]$. 
We use again a duality argument by computing $\ud_t (\delta \mu_t, \varphi_t)$, where 
$(\varphi_t)_{0 \leq t \leq t_0}$ solves the 
(random) backward equation
%(in the same sense as in item 2 of Definition 
%\ref{def:forward-backward=MFG:solution}) 
\begin{equation*} 
\begin{split}
&\partial_t \varphi_t = -  \tfrac12 \Delta \varphi_t + \nabla_p H(\cdot,\nabla_x u_t(\cdot)) \cdot \nabla \varphi_t,
\quad t \in [0,t_0]; \quad \varphi_{t_0} =\phi,
\end{split}
\end{equation*}
where $\phi$ is a deterministic function in ${\mathcal C}^{\cursr}({\mathbb T}^d)$.
Observe that $(\varphi_t)_{0 \leq t \leq t_0}$ may be anticipative. 
%and $(n_t)_{0 \leq t \leq t_0}$ is a martingale representation term.
By  Lemma 
\ref{le:appendix:1} together with the fact that 
$\nabla_x {\boldsymbol u}$ takes values in 
${\mathcal C}^{3+\epsilon}({\mathbb T}^d)$
for some $\epsilon >0$, we deduce that 
$(\varphi_t)_{0 \le t \le t_0}$ takes values in 
${\mathcal C}^{3+\epsilon}({\mathbb T}^d)$. 
In particular, 
$(\partial_t \varphi_t)_{0 \le t \le t_0}$ takes values in 
${\mathcal C}^{1+\epsilon}({\mathbb T}^d)$. Because 
$\delta {\boldsymbol \mu}$ takes values in 
${\mathcal C}^{-1}({\mathbb T}^d)$, this makes it possible to expand the duality product 
$(\delta \mu_t,\varphi_t)_{0 \le t \le t_0}$. We obtain
\begin{equation*} 
\begin{split} 
\ud_t( \delta \mu_t, \varphi_t)
&= - \Bigl( \nabla_x \varphi_t,
\delta \bigl[
\nabla_p H(\cdot ,\nabla_x u_t(\cdot))
\bigr] \mu_t \Bigr) \ud t.
\end{split}
\end{equation*} 
Then, 
\begin{equation*}
\begin{split} 
( \delta \mu_{t_0}, \phi)
&\leq 
( \delta \mu_{0}, \varphi_0)
+
C \int_0^{t_0}  
\bigl\| \nabla_x \varphi_t\|_{\cursr-1} \bigl\| \delta \bigl[
\nabla_p H(\cdot ,\nabla_x u_t(\cdot))
\bigr] \mu_t  \bigr\|_{-(\cursr-1)} \ud t
\\
&\leq 
C \| \phi \|_\cursr
\biggl[ \| \delta\mu_0 \|_{-\cursr} 
+
\int_0^{t_0} 
\exp(- \gamma (t_0-t))
\bigl\| \nabla_x \delta u_t \mu_t \bigr\|_{-(\cursr-1)} \ud t \biggr],
\end{split}
\end{equation*} 
where, to get the last line, we used the identity 
$\delta \bigl[
\nabla_p H(\cdot ,\nabla_x u_t(\cdot))
\bigr] =\nabla^2_{pp}H(\cdot ,\nabla_x u_t(\cdot)) \nabla_x \delta u_t$ together with the 
 bound $\|\nabla^2_{pp}H(\cdot ,\nabla_x u_t(\cdot)) \|_{\cursr-1} \leq C$, 
which follows from 
Proposition 
\ref{prop:minor:higher}.  
Taking the supremum over $\phi$ in the unit ball of ${\mathcal C}^{\cursr}({\mathbb T}^d)$, we complete
the proof. 
\end{proof}

\subsection{A key functional} 
\label{subse:4:5} 

We now introduce a key object. For a parameter $A>0$, we call $(e_t)_{0 \le t \le T}$ the solution to the backward SDE
\begin{equation} 
\label{eq:e:BSDE}
\begin{split}
&\ud_t e_t = \frac{\sigma_0^2}A e_t 
  \|  \bar v_t^0  \|_{\lfloor \curss \rfloor -(d/2+1)}^2 \ud t + \ell_t \ud  \overline{B}_t^0, 
 \quad t \in [0,T]; \quad e_T = 1, 
 \end{split} 
 \end{equation} 
 where
 we recall the notation  
 $\bar v_t^0 = v_t^0 - \int_{{\mathbb T}^d} v_t^0(x) \ud x$, see 
 \eqref{eq:barv0}. 

 The solution is given by 
 the following lemma, which is a straightforward consequence of   
 \eqref{eq:Novikov:2}:
 \begin{lemma}\label{lem:et}
 Under Assumption 
 \hyp{B}, 
the BSDE 
 \eqref{eq:e:BSDE} has a unique solution, which is 
 given by 
 \begin{equation*} 
 e_t = 
 \overline{\mathbb E}^0 \biggl[ \exp \biggl(- \frac{\sigma_0^2}A \int_t^T 
 \| \bar v_r^0  \|_{\lfloor \curss \rfloor -(d/2+1)}^2 \ud r  \biggr) 
 \, \vert \, {\mathcal F}_t^0 \biggr], \quad t \in [0,T]. 
 \end{equation*} 
In particular,  there exists a constant $c \in (0,1)$, only depending on the parameters in 
\hyp{B}
except   $\sigma_0$ and $T$, such that, 
for $A$ large enough (independently of $\sigma_0$ and $T$),
with probability 1, for all $t \in [0,T]$, 
\begin{equation}
\label{eq:et:c-1}
c < e_t \leq 1.
\end{equation}
 \end{lemma}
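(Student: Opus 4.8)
The plan is to read the solution of \eqref{eq:e:BSDE} directly off the conditional-expectation formula and to let the exponential integrability from \eqref{eq:Novikov:2} do all the work; as the statement already signals, the lemma is essentially a corollary of \eqref{eq:Novikov:2}. Fix $A$, and recall that without loss of generality $\overline{\mathbb F}^0$ may be taken to be generated by ${\mathcal F}_0^0$ and the driving Brownian motion (cf.\ the Remark following Proposition \ref{prop:strong:existence:uniqueness:passage}), so that martingale representation with respect to $\overline{\boldsymbol B}^0$ is available under $\overline{\mathbb P}^0$. First I would introduce the bounded $\overline{\mathbb F}^0$-martingale $M_t := \overline{\mathbb E}^0\bigl[\exp\bigl(-\tfrac{\sigma_0^2}{A}\int_0^T\|\bar v_r^0\|_{\lfloor\curss\rfloor-(d/2+1)}^2\,\ud r\bigr)\,\big|\,{\mathcal F}_t^0\bigr]$, which takes values in $(0,1]$ since its terminal variable does (the integrand being nonnegative), write $\ud M_t=\tilde\ell_t\cdot\ud\overline B_t^0$ by representation, set $A_t:=\exp\bigl(\tfrac{\sigma_0^2}{A}\int_0^t\|\bar v_r^0\|_{\lfloor\curss\rfloor-(d/2+1)}^2\,\ud r\bigr)$, and then define $e_t:=M_tA_t$ and $\ell_t:=A_t\tilde\ell_t$. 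Itô's product rule gives exactly $\ud e_t=\tfrac{\sigma_0^2}{A}\|\bar v_t^0\|_{\lfloor\curss\rfloor-(d/2+1)}^2 e_t\,\ud t+\ell_t\cdot\ud\overline B_t^0$ with $e_T=1$, so $({\boldsymbol e},{\boldsymbol\ell})$ solves \eqref{eq:e:BSDE}; and since $A_t$ is ${\mathcal F}_t^0$-measurable, $e_t=\overline{\mathbb E}^0\bigl[\exp\bigl(-\tfrac{\sigma_0^2}{A}\int_t^T\|\bar v_r^0\|_{\lfloor\curss\rfloor-(d/2+1)}^2\,\ud r\bigr)\,\big|\,{\mathcal F}_t^0\bigr]$, the announced formula. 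The role of ``$A$ large enough'' is precisely to guarantee, via \eqref{eq:Novikov:2} (which needs $A\ge(1+\delta)/\overline\gamma$ for some $\delta>0$), that $A_T$ — hence ${\boldsymbol e}$ and $\int_0^\cdot\ell_r\cdot\ud\overline B_r^0$ — lies in $L^{1+}$; this is the only place where the size of $A$ enters.

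Next I would establish \eqref{eq:et:c-1}. The upper bound $e_t\le 1$ is immediate from the representation, being the conditional expectation of a variable bounded by $1$. For the lower bound, Jensen's inequality for the convex map $x\mapsto e^{-x}$ gives $e_t\ge\exp\bigl(-\tfrac1A\overline{\mathbb E}^0[\sigma_0^2\int_t^T\|\bar v_r^0\|_{\lfloor\curss\rfloor-(d/2+1)}^2\,\ud r\,|\,{\mathcal F}_t^0]\bigr)$, while Jensen for $\exp$ applied to the first bound in \eqref{eq:Novikov:2} yields $\overline{\mathbb E}^0[\sigma_0^2\int_t^T\|\bar v_r^0\|_{\lfloor\curss\rfloor-(d/2+1)}^2\,\ud r\,|\,{\mathcal F}_t^0]\le\overline\gamma^{-1}\log\overline C$ (note $\overline C\ge1$, the integrand being nonnegative). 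Combining, $e_t\ge\exp\bigl(-\log\overline C/(A\overline\gamma)\bigr)=:c\in(0,1)$, a constant depending only on the parameters in Assumption \hyp{B} other than $(\sigma_0,T)$, with $c\uparrow1$ as $A\to\infty$; this proves the two-sided bound.

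Finally, for uniqueness I would take a second solution $({\boldsymbol e}',{\boldsymbol\ell}')$ within the class in which $\int_0^\cdot\ell_r'\cdot\ud\overline B_r^0$ is a genuine martingale, observe that $\delta{\boldsymbol e}:={\boldsymbol e}-{\boldsymbol e}'$ and $\delta{\boldsymbol\ell}:={\boldsymbol\ell}-{\boldsymbol\ell}'$ solve the same linear equation with $\delta e_T=0$, and check by Itô's product rule that $A_t^{-1}\delta e_t$ is a local martingale vanishing at $t=T$; the $L^{1+}$ control from \eqref{eq:Novikov:2} (again for $A$ large) makes it uniformly integrable, hence $A_t^{-1}\delta e_t\equiv0$, so $\delta{\boldsymbol e}\equiv0$ and then $\delta{\boldsymbol\ell}\equiv0$. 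I do not anticipate any real obstacle: the one thing to watch is the exponential-moment bookkeeping that pins down the threshold on $A$, and that is supplied verbatim by \eqref{eq:Novikov:2}.
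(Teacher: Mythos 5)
Your proof is correct and follows essentially the same route as the paper: the solution is identified with the conditional-expectation formula by discounting with $\exp\bigl(-\tfrac{\sigma_0^2}{A}\int_0^t\|\bar v_r^0\|^2_{\lfloor\curss\rfloor-(d/2+1)}\,\ud r\bigr)$, uniqueness follows from the resulting (uniformly integrable) martingale, and \eqref{eq:et:c-1} is deduced from \eqref{eq:Novikov:2}. The only divergence is the lower bound: the paper applies $\overline{\mathbb E}^0[\xi\,\vert\,{\mathcal F}_t^0]\ge\overline{\mathbb E}^0[\xi^{-1}\,\vert\,{\mathcal F}_t^0]^{-1}$ and gets $e_t\ge 1/\overline C$ directly, independently of $A$, whereas your double application of Jensen yields $\overline C^{-1/(A\overline\gamma)}$, which still depends on $A$; since this quantity is increasing in $A$, simply fix $c:=1/\overline C$ (its value at the threshold $A\overline\gamma=1$) so that, as the statement and its later use in Proposition \ref{prop:4:7} require, $c$ is a single constant valid for every sufficiently large $A$.
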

 
 \begin{proof}
For any solution $(e_t)_{0 \le t \le T}$ as in 
\eqref{eq:e:BSDE}, 
\begin{equation*}
\begin{split} 
&\ud_t \biggl[ e_t  \exp \biggl( - \frac{\sigma_0^2}A \int_0^t 
 \| \bar v_r^0 \|_{\lfloor \curss \rfloor -(d/2+1)}^2 \ud r  \biggr) 
\biggr]
 =e_t  \exp \biggl( - \frac{\sigma_0^2}A \int_0^t 
  \| \bar v_r^0 \|_{\lfloor \curss \rfloor -(d/2+1)}^2 \ud r  \biggr) \ell_t \ud \overline{B}_t^0.
 \end{split}
 \end{equation*} 
 Under the standard conditions that 
 \begin{equation*} 
 \overline{\mathbb E}^0 
 \biggl[ 
 \sup_{0 \le t \le T} \vert 
 e_t
 \vert^2 
 + \int_0^T \vert \ell_t \vert^2 
 \ud t \biggr]
 < \infty,
 \end{equation*} 
 we see that the right-hand side in the latter expansion yields a martingale. We deduce that 
 \begin{equation*} 
 e_t =  \overline{\mathbb E}^0 \biggl[ \exp \biggl(- \frac{\sigma_0^2}A \int_t^T 
  \|  \bar v_r^0 \|_{\lfloor \curss \rfloor -(d/2+1)}^2 \ud r  \biggr) 
 \, \vert \, {\mathcal F}_t^0 \biggr], \quad t \in [0,T], 
 \end{equation*} 
 and there is no difficulty for proving that the above right-hand side induces a solution. 
 In order to prove the estimate \eqref{eq:et:c-1} (which is the key point in the statement), we recall that 
 for any positive random variable $\xi$ with positive values
 \begin{equation*} 
\overline{\mathbb E}^0 \bigl[ \xi \, \vert \, {\mathcal F}_t^0 \bigr] \geq 
\overline{\mathbb E}^0 \bigl[ \xi^{-1} \, \vert \, {\mathcal F}_t^0 \bigr]^{-1}.
 \end{equation*}
 Choosing 
 \begin{equation*} 
 \xi = 
\exp \biggl(- \frac{\sigma_0^2}A \int_t^T 
 \| \bar{v}_r^0 \|_{\lfloor \curss \rfloor -(d/2+1)}^2 \ud r  \biggr),
 \end{equation*} 
 we complete the proof of the lower bound by using  
\eqref{eq:Novikov:2}. Importantly, we notice that the lower bound $c$ does not depend on $A$
  (as soon as the latter one satisfies $1/A \leq  \bar\gamma$). The upper bound is obvious. 
 \end{proof} 
 
Here is the way we use the process $(e_t)_{0 \le t \le T}$. 
To clarify the dependence on the various parameters in   
\hyp{B}, we let 
$\kappa_{H^0}:=\|\nabla^2_{pp}H^0\|_{\infty}/2$ and 
we call $\lambda_H$ the largest real such that $\nabla^2_{pp}H\geq \lambda_H I_{d}$.
We also introduce 
\begin{equation*}
\theta_t := \sup_{x_0 \in {\mathbb R}^d}
\sup_{\mu \in {\mathcal P}({\mathbb T}^d)}
\| \delta_\mu f_t^0(x_0,\mu)(\cdot) \|_{\cursr} + \min \bigl( 1, \frac1{T} \bigr), 
\quad
\Theta_t = \int_0^t \theta_s \ud s,  \quad t \in [0,T].
\end{equation*}
By assumption (see \hyp{B3}), $\Theta_T = \int_0^T \theta_t \ud t$ is bounded by a constant independent of $T$ (and of course of 
$\sigma_0$). And by construction, $\theta_t^{-1} \leq \max(1,T)$. 
Then, 
for three (positive) parameters $\varepsilon_1$, $\varepsilon_2$ and $\varrho$, we next compute
 \begin{equation*} 
 \ud_t \biggl[ \varepsilon_1 
 \exp ( \Theta_t ) 
 \vert \delta Y_t^0 \vert^2 - ( \delta \mu_t,\delta u_t) + 
\varepsilon_2 e_t \biggl( \| \delta \mu_0  \|_{-\cursr}^2 + 
 \int_0^t \Bigl( \vert \nabla_x \delta u_r  \vert^2, \mu_r
 \Bigr) \ud r
 \biggr)\biggr],
 \end{equation*} 
 for a real $\cursr \in [1,\lfloor \curss \rfloor - (d/2+1)] \setminus {\mathbb N}$, 
 where we recall $\delta\mu_0=\mu-\mu'$.

We start with the expansion of 
$\ud_t  [ \varepsilon_1  \exp ( \Theta_t )  \vert \delta Y_t^0 \vert^2]$. 
Back to 
\eqref{eq:major:diff:g:1}, we write
\begin{equation*} 
\begin{split}
\ud_t \bigl[ \varepsilon_1  \exp ( \Theta_t )  \vert \delta Y_t^0 \vert^2 \bigr] &= 
\varepsilon_1  \exp (\Theta_t )  \biggl[ 
  \sigma_0^2 \vert \delta Z_t^0 \vert^2 
  + \theta_t  \vert \delta Y_t^0 \vert^2
\\
&\hspace{-30pt} +   \delta Y_t^0   \Bigl[ \Bigl(\mathcal{O}_{{\mathbb R}^d }(1)+\mathcal{O}_{{\mathbb R}^d}(\vert Z_t^0 - \nabla_{x_0} w^0(t,X_t^0) \vert )
\Bigr) \cdot \delta X_t^0 \Bigr] +  \delta Y_t^0 \Big(\mathcal{O}_{{\mathcal C}^{\cursr}({\mathbb T}^d)}(\theta_t),\delta\mu_t\Big)\biggr]\ud t + \ud n_t,
\end{split}
\end{equation*}
where $(n_t)_{t \geq 0}$ is a generic martingale term, whose precise value may vary from line to line. 
For a normed space $(E,\| \cdot \|)$ and for a possibly random real $\Delta\geq 0$, 
${\mathcal O}_{E}(\Delta)$ stands for an $E$-valued term satisfying 
$\| {\mathcal O}_E( \Delta ) \| \leq C \vert \Delta \vert$, for a constant $C$ that only depends on the 
various  parameters in   \hyp{B} except 
$\sigma_0$ and $T$. 
%For instance, the first term ${\mathcal O}(1)$ in the right-hand side must be understood as a $d \times d$
%matrix bounded by $C$, the second term $\mathcal{O}(\vert Z_t^0 - \nabla_{x_0} w^0(X_t^0) \vert )$
%is also a $d \times d$ matrix but is bounded by 
%$C \vert  Z_t^0 - \nabla_{x_0} w^0(X_t^0) \vert$. Lastly, 
%the last ${\mathcal O}(1)(\cdot)$ is understood as a function (acting on a distribution) 

Next, we compute $\ud_t ( \delta \mu_t,\delta u_t)$. 
By duality, we obtain 
from 
\eqref{eq:minor:diff:g:2}:
\begin{equation*} 
\begin{split} 
&\ud_t  ( \delta \mu_t,\delta u_t)
\\
&= - 
\Bigl(  
\delta [ \nabla_x u_t] , 
\delta \bigl[
\nabla_p H( \cdot ,\nabla_x u_t(\cdot))
\bigr] \mu_t
\Bigr) \ud t
- 
\Bigl( \delta \mu_t, \delta \bigl[ f_t(X_t^0,x,\mu_t) \bigr] \Bigr) 
\ud t 
\\
&\hspace{15pt}   
+ \delta \Bigl[ \Bigl( \nabla_p H^0(X_t^0,Z_t^0) - \nabla_p H^0(X_t^0,\nabla_{x_0} w^0(t,X_t^0))  \Bigr)
\Bigr]
\cdot \bigl(   v_t^0,\delta \mu_t \bigr)  
\ud t 
+ \ud n_t\\
&= \bigg[-\Big((\nabla_x \delta u_t)^{\top}\nabla_{pp}^2H(\cdot,\nabla_xu_t(\cdot))\nabla_x\delta u_t,\mu_t\Big)-\Big((\delta_\mu f_t(X_t^0,\cdot,\mu_t,\cdot),\delta \mu_t(\cdot)),\delta \mu_t(\cdot)\Big)\\
& \hspace{15pt}   +
{\mathcal O}_{\mathbb R} \bigl( \vert \delta X_t^0 \vert \| \delta \mu_t \|_{-\cursr} 
\bigr) 
+
\mathcal{O}_{{\mathbb R}^d} \big( \vert \delta X_t^0 \vert  \bigr)\cdot 
(v_t^0,\delta \mu_t)
+
\bigl( \nabla^2_{pp} H^0(X_t^0,Z_t^0)  
\delta Z_t^0
\bigr) 
\cdot
(v_t^0,\delta \mu_t)
\bigg]\ud t+\ud n_t.
\end{split} 
\end{equation*} 
Above, we notice that 
$(v_t^0,\delta \mu_t)=
(\bar v_t^0,\delta \mu_t)$ because
the duality bracket 
$(\delta \mu_t,{\boldsymbol 1})$
(with ${\boldsymbol 1}$ standing for the constant function, equal to 1) 
is equal to $0$.  

Lastly, we handle 
$
\ud_t [ \varepsilon_2 e_t  ( \| \delta\mu_0  \|_{-\cursr}^2 + 
 \int_0^t ( \nabla_x \delta u_r \vert^2,  \mu_r  
 ) \ud r
)]$. We have, for all $t \in [0,T]$,  
\begin{equation*} 
\begin{split} 
&\ud_t \biggl[ \varepsilon_2 e_t  \biggl( \| \delta\mu_0  \|_{-\cursr}^2 + 
 \int_0^t \bigl(\bigl\vert \nabla_x \delta u_r \bigr\vert^2,  \mu_r  
 \bigr) \ud r
\biggr)\biggr]
\\
&= \bigg[\varepsilon_2 e_t \bigl(\bigl\vert \nabla_x \delta u_t \bigr\vert^2,  \mu_t 
 \bigr) 
 +
 \frac{\varepsilon_2  \sigma_0^2}A 
 e_t 
   \|  \bar v_t^0 \|_{\lfloor \curss \rfloor-(d/2+1)}^2 
 \biggl( \|\delta \mu_0 \|_{-\cursr}^2 + 
 \int_0^t \bigl(\bigl\vert \nabla_x \delta u_r \bigr\vert^2,  \mu_r  
 \bigr)dr
\biggr)\bigg]  \ud t + \ud n_t. 
\end{split}
\end{equation*} 
%
%
%\begin{equation*}
%\begin{split} 
%&\partial_t \delta \mu_t - \frac12 \Delta_x \delta \mu_t - {\rm div}_x \Bigl(  
%\nabla_p H(\cdot ,\nabla_x u_t(\cdot))
% \delta \mu_t \Bigr) 
%-
%{\rm div}_x \Bigl(  
%\delta \bigl[
%\nabla_p H( \cdot ,\nabla_x u_t(\cdot))
%\bigr] \mu_t
%\Bigr) 
%=0, 
%\\
%&\ud_t \delta u_t(x) = \Bigl( -  \frac12 \Delta_x \delta u_t(x) +   \nabla_p H(x,\nabla_x u_t(x)) \cdot \nabla_x \delta u_t(x) 
%- \delta \bigl[ f_t(X_t^0,x,\mu_t) \bigr]
%\Bigr) \ud t
%\\
%&
%\hspace{15pt}   
%+ \delta \Bigl[ \Bigl( \nabla_p H^0(X_t^0,Z_t^0) - \nabla_p H^0(X_t^0,\nabla w^0(X_t^0))  \Bigr)
%\Bigr]
%\cdot  v_t^0(x) 
%\ud t 
%+ \sigma_0 \delta v_t^0(x) \cdot \ud \overline B_t^0,
%\end{split}
%\end{equation*}
% The key term is the duality product in the middle which is handled as in the analysis of standard MFGs. 

By combining the last three displays, we get 
 \begin{equation*}
 \begin{split}
 &\ud_t \biggl[ \varepsilon_1 
 \exp( \Theta_t ) 
 \vert \delta Y_t^0 \vert^2 - ( \delta \mu_t,\delta u_t) + 
\varepsilon_2 e_t \biggl( \| \delta\mu_0 \|_{-\cursr}^2 + 
 \int_0^t \bigl(\bigl\vert \nabla_x \delta u_r \bigr\vert^2,  \mu_r  
 \bigr) \ud r
 \biggr)\biggr]
  \\
  &= \varepsilon_1   \exp( \Theta_t )  \biggl[   \sigma_0^2 \vert \delta Z_t^0 \vert^2   +
   \theta_t
 \vert \delta Y_t^0 \vert^2\biggr]\ud t 
  \\
  &\hspace{15pt} 
    + 
     \varepsilon_1   \exp( \Theta_t )
     \biggl[ 
     \delta Y_t^0   \Bigl[ \Bigl(\mathcal{O}_{{\mathbb R}^d }(1)+\mathcal{O}_{{\mathbb R}^d}(\vert Z_t^0 - \nabla_{x_0} w^0(t,X_t^0) \vert )
\Bigr) \cdot \delta X_t^0 \Bigr]  +  \delta Y_t^0\Big(\mathcal{O}_{{\mathcal C}^{\cursr}({\mathbb T}^d)}(\theta_t),\delta\mu_t\Big)\biggr]\ud t 
\\
&\hspace{15pt}   +\biggl[\Big((\nabla_x \delta u_t)^{\top}\nabla_{pp}^2H(\cdot,\nabla_xu_t(\cdot))\nabla_x\delta u_t,\mu_t\Big)+\Big((\delta_\mu f_t(X_t^0,\cdot,\mu_t,\cdot),\delta \mu_t(\cdot)),\delta \mu_t(\cdot)\Big)
\biggr] \ud t
\\
& \hspace{15pt}   +
\biggl[ {\mathcal O}_{\mathbb R}\bigl( \vert \delta X_t^0 \vert \| \delta \mu_t \|_{-\cursr} 
\bigr) 
+
\mathcal{O}_{{\mathbb R}^d} \big( \vert \delta X_t^0 \vert  \bigr) \cdot (\bar v_t^0,\delta \mu_t)   +
\bigl( \nabla^2_{pp} H^0(X_t^0,Z_t^0)  
\delta Z_t^0
\bigr) 
\cdot
(\bar v_t^0,\delta \mu_t)
\biggr]dt
 \\
 &\hspace{15pt}
 +\bigg[
 \varepsilon_2 e_t \bigl(\bigl\vert \nabla_x \delta u_t \bigr\vert^2,  \mu_t 
 \bigr) 
  +
 \frac{\varepsilon_2  \sigma_0^2}A 
 e_t 
   \|  \bar v_t^0 \|_{\lfloor \curss \rfloor -(d/2+1)}^2 
 \biggl( \| \delta\mu_0 \|_{-\cursr}^2 + 
 \int_0^t \bigl(\bigl\vert \nabla_x \delta u_r \bigr\vert^2,  \mu_r  
 \bigr)
\biggr) \bigg] \ud t + \ud n_t,
 \end{split} 
 \end{equation*} 
 where $(n_t)_{0 \le t \le T}$ stands for a generic martingale term.
 All the terms on the second line of the right-hand side are handled by means of Young's inequality, 
 in such a way that 
 the corresponding term 
 $\delta Y_t^0$ is `absorbed' by 
 $\theta_t \vert \delta Y_t^0 \vert^2$ (which is greater than $\min(1,T^{-1}) \vert \delta Y_t^0 \vert^2$) on the first line of the right-hand side. 
 The two terms on the third line of the right-hand side are non-negative: for the first one, this follows from 
 the convexity of $H$ in the variable $p$; for the second one, this follows from the monotonicity of 
 $f$ in the variables $(x,\mu)$. 
 The first two terms on the fourth line of the right-hand side are also handled by means of 
 Young's inequality: among the resulting two terms, one of them is $\vert \delta X_t^0 \vert^2$ multiplied by a possibly large constant. 
 The last term on the fourth line of the right-hand side is handled by a new application of 
 Young's inequality, with the term $\delta 
Z_t^0$ being now `absorbed' by  $\varepsilon_1 \sigma_0^2 \vert \delta Z_t^0 \vert^2$
on the first line. 

Then, using 
Lemma \ref{lem:et}
together with 
the fact that $\theta_t^{-1} \leq \max(1,T)$, $\theta_t \leq \kappa +1$ and $\Theta_T \leq C$, we observe that the above expansion is greater than 
\begin{equation*}
\begin{split} 
 &\ud_t \biggl[ \varepsilon_1 \exp( \Theta_t)  \vert \delta Y_t^0 \vert^2 - ( \delta \mu_t,\delta u_t) + 
\varepsilon_2 e_t \biggl( \| \delta\mu_0\|_{-\cursr}^2 + 
 \int_0^t \bigl(\bigl\vert \nabla_x \delta u_r \bigr\vert^2, \mu_r 
 \bigr) \ud r
 \biggr)\biggr]
\\
&\geq \biggl[  
\frac{\varepsilon_1 \sigma_0^2}{2} \vert \delta Z_t^0 \vert^2 
+ 
  \frac{c \varepsilon_2 \sigma_0^2}{2A} 
 \bigl\| \bar v_t^0 \bigr\|_{\lfloor \curss\rfloor-(d/2+1)}^2
   \biggl( \| \delta \mu_0 \|_{-\cursr}^2 + 
 \int_0^t \bigl(\bigl\vert \nabla_x \delta u_r \bigr\vert^2, \mu_r  
 \bigr) \ud r
 \biggr)
\\
&\hspace{15pt}
-  \frac{\kappa_{H^0}}{\varepsilon_1 \sigma_0^2}\| \delta \mu_t \|^2_{-\cursr}  \bigl\| \bar v_t^0 \bigr\|_{\lfloor \curss \rfloor -(d/2+1)}^2
+
\lambda_{H} \bigl( \vert \nabla_x \delta u_t \vert^2, \mu_t \bigr)
%+\Big((\delta_\mu f_t(X_t^0,\cdot,\mu_t,\cdot),\delta \mu_t(\cdot)),\delta \mu_t(\cdot)\Big)
%-\frac{\varepsilon_1}{\varrho}\big|(\delta_\mu f_t^0(X_t^0,\mu_t),\delta\mu_t)\big|^2
\\
&\hspace{15pt}
- C_{\varepsilon_1,\sigma_0,T} 
 |\delta X_t^0|^2
-
C_{\varepsilon_1,\sigma_0,T} 
|Z_t^0-\nabla_{x_0}w^0(X_t^0)|^2
 |\delta X_t^0|^2
- C \varepsilon_1 \| \delta \mu_t \|^2_{-\mathfrc{r}}
 \biggr]
  \ud t   
  + \ud n_t,
\end{split}
\end{equation*} 
for a constant $C$ only depending on the parameters in   \hyp{B} except $\sigma_0$ and $T$
and for a constant $C_{\varepsilon_1,\sigma_0,T}$ only depending on $\varepsilon_1$ and the parameters in   \hyp{B} (including $\sigma_0$ and $T$). 
We now recall the upper bound for $\| \delta \mu_t \|^2_{-\cursr} $ (see Lemma \ref{lem:forward:estimate}). 
Allowing the value of $C$ to vary from line to line, we obtain 
\begin{equation}
\label{eq:commonhelp}
\begin{split} 
 &\ud_t \biggl[ \varepsilon_1 \exp( \theta_t)  \vert \delta Y_t^0 \vert^2 - ( \delta \mu_t,\delta u_t) + 
\varepsilon_2 e_t \biggl( \| \delta\mu_0\|_{-\cursr}^2 + 
 \int_0^t \bigl(\bigl\vert \nabla_x \delta u_r \bigr\vert^2, \mu_r 
 \bigr) \ud r
 \biggr)\biggr]
\\
&\geq \biggl[  
\frac{\varepsilon_1 \sigma_0^2}{2} \vert \delta Z_t^0 \vert^2 
+
\lambda_H \bigl( \vert \nabla_x \delta u_t \vert^2, \mu_t \bigr)
%+\Big((\delta_\mu f_t(X_t^0,\cdot,\mu_t,\cdot),\delta \mu_t(\cdot)),\delta \mu_t(\cdot)\Big)
\\
&\hspace{10pt}
+ 
\biggl\{ 
\Bigl(   \frac{c \varepsilon_2 \sigma_0^2}{2A} 
- 
 \frac{ C \kappa_{H^0}}{\varepsilon_1 \sigma_0^2}
\Bigr) 
 \bigl\| \bar v_t^0 \bigr\|_{\lfloor \curss\rfloor-(d/2+1)}^2
 - 
 C \varepsilon_1 
 \biggr\}
   \biggl( \| \delta \mu_0 \|_{-\cursr}^2 + 
 \int_0^t 
\exp(- \gamma (t-r))
 \bigl(\bigl\vert \nabla_x \delta u_r \bigr\vert^2, \mu_r  
 \bigr) \ud r
 \biggr)
%-\frac{\varepsilon_1}{\varrho}\big|(\delta_\mu f_t^0(X_t^0,\mu_t),\delta\mu_t)\big|^2
\\
&\hspace{10pt}
- C_{\varepsilon_1,\sigma_0,T} 
 |\delta X_t^0|^2
-
C_{\varepsilon_1,\sigma_0,T} 
|Z_t^0-\nabla_{x_0}w^0(X_t^0)|^2
 |\delta X_t^0|^2
 \biggr]
  \ud t + \ud n_t,
\end{split}
\end{equation}

%\begin{equation}\label{eq:commonhelp}
%\begin{split} 
% &\ud_t \biggl[ \varepsilon_1  \exp( \varrho t)  \vert \delta Y_t^0 \vert^2
% - ( \delta \mu_t,\delta u_t) + 
%\varepsilon_2 e_t \biggl( \| \delta\mu_0\|_{-\cursr}^2 + 
% \int_0^t \bigl(\bigl\vert \nabla_x \delta u_r \bigr\vert^2, \mu_r 
% \bigr) \ud r
% \biggr) 
% \biggr]
%\\
%&\geq \biggl[\frac{ \varepsilon_1 \sigma_0^2 }{2}\vert \delta Z_t^0 \vert^2 +c_0 \bigl( \vert \nabla_x \delta u_t \vert^2, \mu_t \bigr) +\Big((\delta_\mu f_t(X_t^0,\cdot,\mu_t,\cdot),\delta \mu_t(\cdot)),\delta \mu_t(\cdot)\Big)%-\frac{\varepsilon_1}{\varrho}\big|(\delta_\mu f_t^0(X_t^0,\mu_t),\delta\mu_t)\big|^2
%\\
%&\hspace{15pt} + 
%\Bigl(
%  \frac{c \varepsilon_2\sigma_0^2}{2A} 
%  -
% \frac{C   \kappa}{\varepsilon_1 \sigma_0^2}
%  \Bigr)
% \Bigl\| \bar v_t^0 \Bigr\|_{s-(d/2+1)}^2
%   \biggl( \| \delta \mu_0 \|_{-s'}^2 + 
% \int_0^t \Bigl(\Bigl\vert \nabla_x \delta u_r \Bigr\vert^2, \mu_r  
% \Bigr) \ud r
% \biggr)
% \biggr]
%  \ud t \\
%  &\hspace{15pt} -\Big(\mathcal{O}(1)+\mathcal{O}\big(|Z_t^0-\nabla_{x_0}w^0(X_t^0)|^2\big)|\delta X_t|^2+(\nabla_{x_0}f_t(X_t^0,\cdot,\mu_t))\delta X_t^0,\delta\mu_t\Big)
% \biggr]
%  \ud t + \ud n_t.
%\end{split}
%\end{equation} 
We deduce the following main inequality:
\begin{proposition}
\label{prop:4:7}
Under Assumption \hyp{B},
there exist a constant $C \geq 1$ only depending
on the parameters in \hyp{B} except $(\sigma_0,T)$ 
and a constant $C_{\varepsilon_1,\sigma_0,T}$ only depending on 
$\varepsilon_1$ and 
the parameters in   \hyp{B} (including $(\sigma_0,T)$) 
and non-decreasing with $T$, 
such that, 
with the two notations
$\kappa_{H^0}:=\|\nabla^2_{pp}H^0\|_{\infty}/2$ and 
$\lambda_H := \sup \{ \theta : \nabla^2_{pp}H\geq \theta I_{d}\}$, for 
$A$ and 
 $c$ as in Lemma 
\ref{lem:et}, 
and
under the following two conditions:
\begin{enumerate}[i.]
\item  $\displaystyle \varepsilon_1 + \varepsilon_2 <\frac{\lambda_H}{C}$, 
\item 
$\displaystyle  \frac{c \varepsilon_2\sigma_0^2}{2A} >  \frac{C\kappa_{H^0}}{\varepsilon_1 \sigma_0^2}$, 
%\item 
%$\displaystyle  
%\sup_{t\in [0,T]}\sup_{x_0 \in {\mathbb R}^d} 
%\sup_{\mu,\mu' \in {\mathcal P}({\mathbb T}^d)}
%\Bigl[ \frac{ \varepsilon_1}{\varrho} \frac{\vert f_t^0(x_0,\mu) - f_t^0(x_0,\mu') \vert^2}{\| \mu - \mu' \|_s^2}
%- \frac{(\mu - \mu', f_t(x_0,\cdot,\mu) - f_t(x_0,\cdot,\mu') )}{\| \mu - \mu' \|_s^2}\Bigr] \leq 0$,
%\item 
%$\displaystyle  
%\sup_{x_0\in {\mathbb R}^d} 
%\sup_{\mu,\mu' \in {\mathcal P}({\mathbb T}^d)}
%\Bigl[  \varepsilon_1 \frac{\vert g^0(x_0,\mu) - g^0(x_0,\mu') \vert^2}{\| \mu - \mu' \|_s^2}
%- \frac{(\mu - \mu', g(x_0,\cdot,\mu) - g(x_0,\cdot,\mu') )}{\| \mu - \mu' \|_s^2}\Bigr] \leq 0$,
\end{enumerate}
it holds, for any 
 $\cursr \in [1,\lfloor \curss \rfloor -(d/2+1)] \setminus {\mathbb N}$,  
\begin{equation} \label{eq:globalLip}
\begin{split}
| \delta Y_0^0 \vert^2 + \| \delta u_0\|_{\cursr}^2   
\leq C_{\varepsilon_1,\sigma_0,T} \Bigl(  \vert  x_0 - x_0' \vert^2 + \| \mu-\mu' \|_{-\cursr}^2 \Bigr).
\end{split} 
\end{equation} 
\end{proposition}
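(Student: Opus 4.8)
The plan is to integrate the differential inequality~\eqref{eq:commonhelp} over $[0,T]$, take expectation under $\overline{\mathbb P}^0$, and feed the outcome into the \textit{a priori} bounds of Lemmas~\ref{lem:forward:major:estimate}--\ref{lem:forward:estimate}. Throughout I argue conditionally on ${\mathcal F}_0^0$ (equivalently, I may take it trivial), so that $\delta X_0^0=x_0-x_0'$, $\delta\mu_0=\mu-\mu'$ and the time-$0$ values $\delta Y_0^0,\delta u_0$ are deterministic; I write $\Phi_t$ for the functional $\varepsilon_1\exp(\Theta_t)|\delta Y_t^0|^2-(\delta\mu_t,\delta u_t)+\varepsilon_2 e_t\bigl(\|\delta\mu_0\|_{-\cursr}^2+\int_0^t(|\nabla_x\delta u_r|^2,\mu_r)\ud r\bigr)$ appearing in~\eqref{eq:commonhelp}, and I fix $\cursr\in[1,\lfloor\curss\rfloor-(d/2+1)]\setminus{\mathbb N}$.

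\emph{Step 1.} I first check that the generic martingale term of~\eqref{eq:commonhelp} is a genuine (square-integrable) martingale on $[0,T]$: this uses the $L^\infty$ bounds on $\sup_t|\delta X_t^0|$, $\sup_t|\delta Y_t^0|$, $\sup_t\|\delta u_t\|_{3+\epsilon}$, $\sup_t\|\delta\mu_t\|_{-1}$ and the $L^2$ bounds on $\delta Z^0$, $\delta v^0$ from Definition~\ref{def:4.8:4.9}, the estimate $\overline{\mathbb E}^0[\int_0^T\|\bar v_r^0\|_{\lfloor\curss\rfloor-(d/2+1)}^2\ud r]\le C/\sigma_0^2$ obtained from~\eqref{eq:Novikov:2} by Jensen, and $c\le e_t\le1$ with $\ell\in{\mathscr H}^2$ (Lemma~\ref{lem:et}). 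Integrating~\eqref{eq:commonhelp} and taking $\overline{\mathbb E}^0$, I exploit condition (ii) to make the coefficient of $\|\bar v_t^0\|^2\,(\cdots)$ nonnegative, so the third bracket of~\eqref{eq:commonhelp} is $\ge-C\varepsilon_1\bigl(\|\delta\mu_0\|_{-\cursr}^2+\int_0^t e^{-\gamma(t-r)}(|\nabla_x\delta u_r|^2,\mu_r)\ud r\bigr)$; I use $\sup_t|\delta X_t^0|\le C_T|x_0-x_0'|$ (Lemma~\ref{lem:forward:major:estimate}) and $\overline{\mathbb E}^0[\int_0^T|Z_t^0-\nabla_{x_0}w^0(X_t^0)|^2\ud t]\le C$ (the BMO bound~\eqref{eq:BMO:major}) to dispose of the two bad terms. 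Choosing the constant $C$ in condition (i) large enough that $\lambda_H-C\varepsilon_1/\gamma\ge\lambda_H/2$, this yields
\[
\overline{\mathbb E}^0[\Phi_T]-\Phi_0\ \ge\ \tfrac{\varepsilon_1\sigma_0^2}{2}\,\overline{\mathbb E}^0\Big[\textstyle\int_0^T|\delta Z_t^0|^2\ud t\Big]+\tfrac{\lambda_H}{2}\,\overline{\mathbb E}^0\Big[\textstyle\int_0^T(|\nabla_x\delta u_t|^2,\mu_t)\ud t\Big]-C_{\varepsilon_1,\sigma_0,T}\big(|x_0-x_0'|^2+\|\delta\mu_0\|_{-\cursr}^2\big).
\]

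\emph{Step 2.} Next I bound $\overline{\mathbb E}^0[\Phi_T]$. From the terminal data in~\eqref{eq:major:diff:g:1}--\eqref{eq:minor:diff:g:2}, the regularity of $g^0,g$, and, crucially, the Lasry--Lions monotonicity of $g$ and $g^0$ (which makes $(\delta\mu_T,(\delta_\mu g(X_T^0,\cdot,\mu_T,\cdot),\delta\mu_T))$ and its $g^0$-analogue nonnegative), a Young inequality gives $\varepsilon_1\exp(\Theta_T)|\delta Y_T^0|^2-(\delta\mu_T,\delta u_T)\le\eta'\|\delta\mu_T\|_{-\cursr}^2+C_{\eta'}|\delta X_T^0|^2$ for any $\eta'>0$; then Lemma~\ref{lem:forward:estimate} bounds $\|\delta\mu_T\|_{-\cursr}^2$ by $C(\|\delta\mu_0\|_{-\cursr}^2+\int_0^T(|\nabla_x\delta u_r|^2,\mu_r)\ud r)$, while $\varepsilon_2 e_T\le\varepsilon_2$. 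Taking the constant $C$ in (i)--(ii) large (so that $\varepsilon_1+\varepsilon_2<\lambda_H/C$ is small) and $\eta'$ small, the net coefficient of $\overline{\mathbb E}^0[\int_0^T(|\nabla_x\delta u_t|^2,\mu_t)\ud t]$ inside $\overline{\mathbb E}^0[\Phi_T]$ is $\le\lambda_H/4$, so that, combining with Step~1 and Lemma~\ref{lem:forward:major:estimate},
\[
\Phi_0+\tfrac{\varepsilon_1\sigma_0^2}{2}\,\overline{\mathbb E}^0\Big[\textstyle\int_0^T|\delta Z_t^0|^2\ud t\Big]+\tfrac{\lambda_H}{4}\,\overline{\mathbb E}^0\Big[\textstyle\int_0^T(|\nabla_x\delta u_t|^2,\mu_t)\ud t\Big]\ \le\ C_{\varepsilon_1,\sigma_0,T}\big(|x_0-x_0'|^2+\|\delta\mu_0\|_{-\cursr}^2\big).
\]

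\emph{Step 3 and the main obstacle.} Since $\Phi_0=\varepsilon_1|\delta Y_0^0|^2-(\delta\mu_0,\delta u_0)+\varepsilon_2 e_0\|\delta\mu_0\|_{-\cursr}^2$ and $(\delta\mu_0,\delta u_0)\le\tfrac\eta2\|\delta u_0\|_{\cursr}^2+\tfrac1{2\eta}\|\delta\mu_0\|_{-\cursr}^2$, the displayed inequality of Step~2 controls $\varepsilon_1|\delta Y_0^0|^2$, $\overline{\mathbb E}^0[\int_0^T|\delta Z_t^0|^2\ud t]$ and $\overline{\mathbb E}^0[\int_0^T(|\nabla_x\delta u_t|^2,\mu_t)\ud t]$ by $C_{\varepsilon_1,\sigma_0,T}(|x_0-x_0'|^2+\|\delta\mu_0\|_{-\cursr}^2)+\tfrac\eta2\|\delta u_0\|_{\cursr}^2$. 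On the other hand, Lemma~\ref{lem:backward:estimate} at $t=0$, together with Lemma~\ref{lem:forward:estimate} (to dominate $\overline{\mathbb E}^0[\|\delta\mu_T\|_{-\cursr}^2]$ and $\overline{\mathbb E}^0[\int_0^T\|\delta\mu_r\|_{-\cursr}^2\ud r]$), Cauchy--Schwarz on $\int_0^T|\delta Z_r^0|\,\|\bar v_r^0\|_{\cursr}\ud r$ with $\|\bar v_r^0\|_{\cursr}\le\|\bar v_r^0\|_{\lfloor\curss\rfloor-(d/2+1)}$, and the $L^2$ bound~\eqref{eq:Novikov:2}, gives $\|\delta u_0\|_{\cursr}^2\le C_T(|x_0-x_0'|^2+\|\delta\mu_0\|_{-\cursr}^2)+C_T\,\overline{\mathbb E}^0[\int_0^T(|\nabla_x\delta u_t|^2,\mu_t)\ud t]+C\,\overline{\mathbb E}^0[\int_0^T|\delta Z_t^0|^2\ud t]$. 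Substituting the former into the latter and choosing $\eta$ small (allowed to depend on $\lambda_H,\varepsilon_1,\sigma_0,T$) so that the coefficient of $\|\delta u_0\|_{\cursr}^2$ on the right-hand side is $<1/2$, I absorb that term, which bounds $\|\delta u_0\|_{\cursr}^2$, and hence also $|\delta Y_0^0|^2$, by $C_{\varepsilon_1,\sigma_0,T}(|x_0-x_0'|^2+\|\delta\mu_0\|_{-\cursr}^2)$; recalling $\delta\mu_0=\mu-\mu'$ this is~\eqref{eq:globalLip}. The principal difficulty is the bookkeeping of constants — isolating those that may depend on $(\sigma_0,T)$ from those that may not (the latter being precisely the ones governing the ``good'' terms of~\eqref{eq:commonhelp} and the exponential moments~\eqref{eq:Novikov:2}), and then fixing the constant $C$ in (i)--(ii) large enough, together with $\eta'$ and $\eta$, so that every Young-type absorption is simultaneously licit; a secondary but necessary point is the verification that the accumulated martingale term of~\eqref{eq:commonhelp} is a true martingale.
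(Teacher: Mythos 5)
Your proposal is correct and follows essentially the same route as the paper's own proof: integrate \eqref{eq:commonhelp} over $[0,T]$ under $\overline{\mathbb P}^0$, dispose of the $\delta X^0$- and $Z^0$-terms via Lemma \ref{lem:forward:major:estimate} and the BMO/exponential bounds \eqref{eq:Novikov:2}, control the terminal bracket using the Lasry--Lions monotonicity of $g$ together with Lemma \ref{lem:forward:estimate} and the smallness conditions (i)--(ii), and finally combine Lemmas \ref{lem:backward:estimate} and \ref{lem:forward:estimate} with a Young absorption at time $0$ (this is exactly the chain \eqref{eq:commonhelp3}--\eqref{eq:commonhelp6}--\eqref{eq:commonhelp2} in the paper).

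Two small corrections to your Step 2, neither of which invalidates the argument. First, do not invoke Lasry--Lions monotonicity of $g^0$: it is not part of Assumption \hyp{A3}/\hyp{B} and is not needed, since $g^0$ enters the terminal term only through the square $\varepsilon_1\exp(\Theta_T)\vert\delta Y_T^0\vert^2$ with $\delta Y_T^0=\nabla_{x_0}g^0(X_T^0,\mu_T)\cdot\delta X_T^0+(\delta_\mu g^0(X_T^0,\mu_T),\delta\mu_T)$, which is handled by the mere boundedness of $\nabla_{x_0}g^0$ and $\delta_\mu g^0$. Second, your inequality $\varepsilon_1\exp(\Theta_T)\vert\delta Y_T^0\vert^2-(\delta\mu_T,\delta u_T)\le\eta'\|\delta\mu_T\|_{-\cursr}^2+C_{\eta'}\vert\delta X_T^0\vert^2$ cannot hold for an \emph{arbitrary} $\eta'>0$: the contribution of the square term to the coefficient of $\|\delta\mu_T\|_{-\cursr}^2$ is of fixed size $C\varepsilon_1\exp(\Theta_T)$ and is not tunable by Young (only the cross term $\delta X_T^0\cdot(\nabla_{x_0}g,\delta\mu_T)$ is). The correct statement has $\eta'\asymp\varepsilon_1$, and it is precisely condition (i) (with the constant $C$ taken large, as the paper allows since $C$ may depend on $\lambda_H$ and $\kappa_{H^0}$) that makes this coefficient absorbable, after Lemma \ref{lem:forward:estimate}, into the $\lambda_H$-term together with the $\varepsilon_2 e_T$-contribution; this is exactly how \eqref{eq:commonhelp6} is used in the paper. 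With these adjustments your bookkeeping of which constants may depend on $(\sigma_0,T)$, and the final absorption at time $0$, match the paper's proof.
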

It is worth pointing out that the constant $C$ in the statement may depend on
$\lambda_H$ and $\kappa_{H^0}$ themselves. This makes the two conditions 
 \textit{i} and \textit{ii}
 less explicit than might be assumed at first sight. However, we think this formulation is useful for the proof.

\begin{proof} 
Integrating \eqref{eq:commonhelp} from $0$ to $T$, taking expectation and using
condition \textit{ii} in the statement, we obtain
(for a constant $C$ only depending on the parameters in   \hyp{B} except $(\sigma_0,T)$)
\begin{equation}\label{eq:commonhelp3}
\begin{split}
&\overline{\mathbb E}^0\bigg[ \varepsilon_1 \exp( \Theta_T)  \vert \delta Y_T^0 \vert^2 - ( \delta \mu_T,\delta u_T) + 
\varepsilon_2 e_T \biggl( \| \delta\mu_0\|_{-\cursr}^2 + 
 \int_0^T \bigl(\bigl\vert \nabla_x \delta u_r \bigr\vert^2, \mu_r 
 \bigr) \ud r
 \biggr)\bigg]
\\
&\geq  \overline{\mathbb E}^0\bigg\{
 \varepsilon_1   \vert \delta Y_0^0 \vert^2 - ( \delta \mu_0,\delta u_0) + 
\varepsilon_2 e_0  \| \delta\mu_0\|_{-\cursr}^2 
\\
  &\hspace{15pt}
+ \int_0^T\Bigl[ \frac{ \varepsilon_1 \sigma_0^2 }{2}\vert \delta Z_t^0 \vert^2 + \lambda_H \bigl( \vert \nabla_x \delta u_t \vert^2, \mu_t \bigr)
\Bigr] \ud t 
-  C \varepsilon_1 \int_0^T \biggl[ \bigl( \vert \nabla_x \delta u_t \vert^2, \mu_t \bigr)
\int_t^T \exp(-\gamma(r-t)) \ud r 
\biggr] \ud t 
\\
  &\hspace{15pt} -
   C_{\varepsilon_1,\sigma_0,T} \int_0^T
  \Bigl[
 |\delta X_t^0|^2
+  
|Z_t^0-\nabla_{x_0}w^0(X_t^0)|^2 
 |\delta X_t^0|^2
  \Bigr]
  \ud t -C_{\varepsilon_1,\sigma_0,T}\|\delta \mu_0\|_{-\cursr}^2\biggr\}.
  %\text{\color{blue}Chenchen: Are we missing $-C_{\varepsilon_1,\sigma_0,T}\|\delta \mu_0\|_{-\cursr}^2$}
\end{split}
\end{equation}
Applying \eqref{eq:Novikov:2} and Lemma \ref{lem:forward:major:estimate}, we get
that the whole term on the last line is less $C_{\varepsilon_1,\sigma_0,T} \vert x_0-x_0' \vert^2$, for a constant 
$C_{\varepsilon_1,\sigma_0,T}$ depending only on the parameters in  \hyp{B} 
(including $\sigma_0$ and $T$) 
and on $\varepsilon_1$.
%, $\varrho$ and $T$. 
%\begin{equation}\label{eq:commonhelp4}
%\overline{\mathbb E}^0\bigg[\Big(\mathcal{O}(1)+\mathcal{O}\big(|Z_t^0-\nabla_{x_0}w^0(X_t^0)|^2\big)|\delta X_t|^2\bigg]\leq C_T|x_0-x_0'|^2.
%\end{equation}
%Using Lemma \ref{lem:forward:estimate}, we can derive
%\begin{equation} \label{eq:commonhelp5}
%\begin{split}
%&\overline{\mathbb E}^0\bigg[\int_0^T\big(\nabla_{x_0}f_r(X_r^0,\cdot,\mu_r)\delta X_r^0,\delta \mu_r\big)dr\bigg]\\
%&\geq -\big(\sup_{r\in [0,T]}|\delta X_r^0|\big)\overline{\mathbb E}^0\big[\sup_{r\in [0,T]}\|\delta \mu_r\|_{-s'}\big]\\
%&\geq -C_T|x_0-x_0'|\overline{\mathbb E}^0\biggl[ \|  \mu - \mu' \|_{-s'} + 
%\bigg( \int_0^T \Bigl(\Bigl\vert \nabla_x \delta u_r \Bigr\vert^2, \mu_r  
% \Bigr) \ud r\bigg)^{\frac{1}{2}}\bigg]
% \biggr)\\
% &\geq -C_T|x_0-x_0'|^2-\frac{c_0}{8}\biggl( \|  \mu - \mu' \|_{-s'}^2 + 
%\overline{\mathbb E}^0\bigg[ \int_0^T \Bigl(\Bigl\vert \nabla_x \delta u_r \Bigr\vert^2, \mu_r  
% \Bigr) \ud r\bigg]
% \biggr).
%\end{split}
%\end{equation} 
Moreover, 
by
the monotonicity condition \hyp{A3} and 
Lemma 
\ref{lem:forward:estimate}, 
we have
(for possibly new values of $C_{\varepsilon_1,\sigma_0,T}$ and $C$)
\begin{equation} \label{eq:commonhelp6}
\begin{split}
&\overline{\mathbb E}^0\Big[( \delta \mu_T,\delta u_T) 
-
\varepsilon_1 \exp(\Theta_T) \vert \delta Y_T^0 \vert^2
\Big]
\\
&=\overline{\mathbb E}^0\Big[\delta X_T^0\cdot (\nabla_{x_0}g(X_T^0,\cdot,\mu_T),\delta\mu_T)+\Big((\delta_\mu g(X_T^0,\cdot,\mu_T,\cdot),\delta \mu_T(\cdot)),\delta \mu_T(\cdot)\Big)
\\
&\hspace{15pt} -\varepsilon_1 \exp(\Theta_T)  \Big(\nabla_{x_0}g^0(X_T^0,\mu_T)\cdot \delta X_T^0+(\delta_\mu g^0(X_T^0,\mu_T),\delta\mu_T)\Big)^2\Big]
\\
%&=\overline{\mathbb E}^0\Big[\Big((\delta_\mu g(X_T^0,\cdot,\mu_T,\cdot),\delta \mu_T(\cdot)),\delta \mu_T(\cdot)\Big)-\varepsilon_1(\delta_\mu g^0(X_T^0,\mu_T),\delta\mu_T)^2\\
%&\hspace{15pt}+ \delta X_t^0\cdot (\nabla_{x_0}g(X_T^0,\cdot,\mu_T),\delta\mu_T)-\varepsilon_1\Big(\nabla_{x_0}g^0(X_T^0,\mu_T)\cdot \delta X_T^0\Big)^2\\
%&\hspace{15pt}-2\varepsilon_1 (\delta_\mu g^0(X_T^0,\mu_T),\delta\mu_T) \nabla_{x_0}g^0(X_T^0,\mu_T)\cdot \delta X_T^0  \Big]
%\\
&\geq -C_{\varepsilon_1,\sigma_0,T}
\overline{\mathbb E}^0
\bigl[ 
\vert 
\delta X_T^0
\vert^2
\bigr]
 -    C \varepsilon_1     \exp(\Theta_T)
\| \delta \mu_T \|^2_{-\cursr} 
 \\
 &\geq -C_{\varepsilon_1,\sigma_0,T} \vert x_0-x_0' \vert^2 -   C \varepsilon_1   \biggl( \|  \delta \mu_0 \|_{-\cursr}^2 + \overline{\mathbb E}^0\bigg[
 \int_0^T \bigl(\bigl\vert \nabla_x \delta u_r \bigr\vert^2, \mu_r  
 \bigr) \ud r\bigg]
 \biggr).
\end{split}
\end{equation} 

Therefore, by adding \eqref{eq:commonhelp3} and 
 \eqref{eq:commonhelp6}, 
 %\eqref{eq:commonhelp5} and 
and by dropping the positive term $\varepsilon_2e_0\|  \delta \mu_0\|_{-\cursr}^2$, we obtain
(for a new value of $C$) 
\begin{equation} \label{eq:commonhelp2}
\begin{split}
&\overline{\mathbb E}^0\bigg[
\bigl( \lambda_H
- C(\varepsilon_1+ \varepsilon_2)
\bigr) 
\int_0^T\bigl(\bigl|\nabla_x\delta u_r \bigr|^2,\mu_r\bigr)dr+\frac{\varepsilon_1\sigma_0^2}{2}\int_0^T|\delta Z_r^0|^2dr\bigg]\\
&\leq \biggl[ C_{\varepsilon_1,\sigma_0,T} (\vert x_0-x_0' \vert^2 +\|   \delta \mu_0\|_{-\cursr}^2)-\varepsilon_1|\delta Y_0^0|^2+(\delta u_0,   \delta \mu_0)\biggr].
\end{split} 
\end{equation} 
Next, we recall 
 Lemmas 
\ref{lem:backward:estimate} 
and
\ref{lem:forward:estimate}, 
and use \eqref{eq:Novikov:2} to derive
that 
\begin{equation*} 
\begin{split}
\| \delta u_0 \|_{\cursr} &\leq  
 C_{T}|x_0-x_0'|+C \overline{\mathbb E}^0 \biggl[ \| \delta \mu_T \|_{-\cursr} +
\int_0^T 
\Bigl( 
\|\delta \mu_r\|_{-\cursr}+|\delta Z_r^0|\|\bar v_r^0\|_{\lfloor \curss \rfloor -(d/2+1)} \Bigr)  \ud r \biggr]
\\
&\leq C_{T}\bigl(|x_0-x_0'|+\|   \delta \mu_0\|_{-\cursr}\bigr)+ C\overline{\mathbb E}^0\biggl[ \int_0^T\bigl(\bigl| \nabla_x\delta u_r \bigr|^2,\mu_r\bigr)\ud r\biggr]^{1/2} +C \overline{\mathbb E}^0\biggl[\int_0^T|\delta Z_r^0|^2 \ud r\biggr]^{1/2},
\end{split}
\end{equation*}
Squaring it, and applying assumption \textit{i} in the statement
together with \eqref{eq:commonhelp2}, 
we obtain 
\begin{equation*} \begin{split}
\| \delta u_0 \|_{\cursr}^2
&\leq C_{T} \big( \vert x_0 - x_0' \vert^2  
+ 
\|   \delta \mu_0 \|_{-\cursr}^2\big)+C\overline{\mathbb E}^0\bigg[\int_0^T
\bigl(\bigl|\nabla_x\delta u_r \bigr|^2,\mu_r\bigr)\ud r\bigg]+C\overline{\mathbb E}^0\biggl[\int_0^T|\delta Z_r^0|^2 \ud r \biggr]
\\
& \leq  
  C_{\varepsilon_1,\sigma_0,T} \Bigl( \vert x_0 - x_0' \vert^2  
+ 
\|    \delta \mu_0\|_{-\cursr}^2 +
\bigl\vert 
(\delta u_0,\delta\mu_0) \bigr\vert \Bigr) - \frac{1}{C_{\varepsilon_1,\sigma_0,T}} |\delta Y_0^0|^2,
\end{split}
\end{equation*} 
where we assumed without any loss of generality that 
$C_{\varepsilon_1,\sigma_0,T} \geq 1$. 
Rearranging it
(and handling the duality bracket with a Young's inequality), we complete the proof.
\end{proof} 

%\section{Bounds independent of the time horizon $T$}
%%%%%%%%%%%%%%%%%%%%%%%%%%%%%%%%%%%%%%%%%%%%%%%%%%%%%%%%%%%%%%%%%%%%%%%%%%%%%%%%%%%%%%%%%%%%%%%%%%%%%%%%%%%%%%%%%%%
%\bibliographystyle{imsart-number}

\subsection{Application to existence and uniqueness}
\label{subse:4.6}

{\bf In this subsection, we NO longer take for granted the assumptions of Propositions 
\ref{prop:strong:existence:uniqueness:passage}
and
\ref{prop:strong:existence:uniqueness:passage2}.}
\vskip 4pt

The main objective is to prove 
the following statement, which subsumes 
Theorem
\ref{thm:1}:

\begin{theorem}
\label{thm:4.10}
Under Assumption \hyp{B}, 
there exist a constant $C \geq 1$, only depending
on the parameters in   \hyp{B} except $(\sigma_0,T)$, 
and a constant $C_{\varepsilon_1,\sigma_0,T}$, only depending on 
$\varepsilon_1$ and 
the parameters in   \hyp{B} (including $(\sigma_0,T)$) 
and non-decreasing with $T$, 
such that, with the two notations
$\kappa_{H^0}:=\|\nabla^2_{pp}H^0\|_{\infty}/2$ and 
$\lambda_H := \sup \{ \theta : \nabla^2_{pp}H\geq \theta I_{d}\}$, for 
$A$  
and $c$ as in Lemma 
\ref{lem:et}, 
and
 under the following two conditions:
\begin{enumerate}[i.]
\item  $\displaystyle \varepsilon_1 + \varepsilon_2 <\frac{\lambda_{H}}{C}$, 
\item 
$\displaystyle  \frac{c \varepsilon_2\sigma_0^2}{2A} >  \frac{C\kappa_{H^0}}{\varepsilon_1 \sigma_0^2}$, 
\end{enumerate}
the following holds true: 
\begin{enumerate}[a.]
\item  for all $(t,x_0,\mu)\in [0,T]\times\mathbb R^d\times\mathcal{P}(\mathbb T^d)$, the system \eqref{eq:major:FB:1}--\eqref{eq:minor:FB:2} has a unique solution;
\item
there exist 
two continuous mappings 
${\mathcal U}^0 : [0,T] \times {\mathbb R}^d \times {\mathcal P}({\mathbb T}^d)
\rightarrow {\mathbb R}$ and 
${\mathcal U} : [0,T] \times {\mathbb R}^d \times {\mathbb T}^d 
\times {\mathcal P}({\mathbb T}^d) \rightarrow
{\mathbb R}$
such that, for any 
$\cursr \in [1,\lfloor \curss \rfloor - (d/2+1)] \setminus {\mathbb N}$, 
${\mathcal U}^0(t,\cdot,\cdot)$
and 
${\mathcal U}(t,\cdot,\cdot,\cdot)$
belong to ${\mathscr C}^0(C_{\varepsilon_1,\sigma_0,T},\cursr)$ and 
${\mathscr C}(C_{\varepsilon_1,\sigma_0,T},\cursr,\cursr)$ 
for all 
$t \in [0,T]$
and the representation formula 
\eqref{eq:representation:weak:solution} holds true;
\item for a certain $\alpha >0$, 
the continuity condition 
\eqref{eq:U:priori:contintime}
is satisfied and ${\mathcal U}^0(t,\cdot,\cdot)$
and 
${\mathcal U}(t,\cdot,\cdot,\cdot)$
belong to ${\mathscr D}^0(C_{\varepsilon_1,\sigma_0,T},1)$
and 
${\mathscr D}(C_{\varepsilon_1,\sigma_0,T},1,3+\alpha)$ respectively. 
\end{enumerate}
\end{theorem}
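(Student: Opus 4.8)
The plan is to run a continuation argument: the short-time solvability results of Section~\ref{se:5} will be glued over $[0,T]$, the gluing being made possible by the \emph{a priori} bound of Proposition~\ref{prop:4:7}, whose sole role is to furnish a bound on the regularity of the local master fields that does not deteriorate as the horizon grows. Throughout I would fix $\varepsilon_1=\varepsilon_2=\varepsilon$ small enough that condition \textit{i} of Proposition~\ref{prop:4:7} holds, and then $\sigma_0$ large enough that condition \textit{ii} holds; since $A$, $c$, $\lambda_H$, $\kappa_{H^0}$ and the constant $C$ of Proposition~\ref{prop:4:7} depend only on the parameters in~\hyp{B}, the resulting threshold on $\sigma_0$ will not involve $T$. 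It also suffices to treat the initial time $t=0$, since the game started at $t\in(0,T)$ is of the same type on $[t,T]$ with coefficients still satisfying~\hyp{B} with unchanged constants. First I would record what Section~\ref{se:5} provides: an exponent $\alpha>0$ and, for each $L\ge1$, a length $h(L)>0$ and a constant $\kappa_{\mathrm{loc}}(L)$, all depending only on the parameters in~\hyp{B}, such that, given terminal data at a time $t_1\in(0,T]$ in the $\mathscr D$-classes of parameter $L$ demanded by the standing assumptions of Subsections~\ref{subse:4:3}--\ref{subse:4:5}, the coupled system \eqref{eq:major:FB:1}--\eqref{eq:minor:FB:2} with that data is uniquely solvable on $[(t_1-h(L))\vee0,\,t_1]$, the solution is represented through continuous master fields $\mathcal U^0,\mathcal U$ lying, at every time of that subinterval, in the corresponding $\mathscr D$-classes of parameter $\kappa_{\mathrm{loc}}(L)$, satisfying the time-continuity property~\eqref{eq:U:priori:contintime}, and for which the linearized system \eqref{eq:major:diff:1}--\eqref{eq:minor:diff:2} is well posed with the representation~\eqref{eq:representation:weak:linear:solution}; in other words the full hypotheses of Propositions~\ref{prop:strong:existence:uniqueness:passage} and~\ref{prop:strong:existence:uniqueness:passage2} hold on that subinterval. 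Reconciling the various Sobolev indices at play here (the local indices $3+\alpha$ against $\lfloor\curss\rfloor-(d/2+1)$) is precisely what uses $\curss>d/2+5$.

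The crucial step is then to observe that, once the system is solved on a subinterval $[t_1,T]$ within this scale, Proposition~\ref{prop:4:7} applies there with the fixed $\varepsilon$ and large $\sigma_0$, so that~\eqref{eq:globalLip} holds uniformly: for every $s\in[t_1,T]$ and all $(x_0,\mu),(x_0',\mu')$,
\begin{equation*}
|\delta Y_0^0|^2+\|\delta u_0\|_{\cursr}^2\le C_{\varepsilon,\sigma_0,T-s}\bigl(|x_0-x_0'|^2+\mathbb W_1(\mu,\mu')^2\bigr)\le L^*:=C_{\varepsilon,\sigma_0,T}.
\end{equation*}
Specializing $\mu'=\mu$ and then $x_0'=x_0$ in the representation~\eqref{eq:representation:weak:solution} reads this off as the first-order (gradient and flat-derivative) parts of the statements $\mathcal U^0(s,\cdot,\cdot)\in{\mathscr C}^0(\sqrt{L^*},\cursr)$ and $\mathcal U(s,\cdot,\cdot,\cdot)\in{\mathscr C}(\sqrt{L^*},\cursr,\cursr)$; the remaining higher-order and H\"older parts are then controlled, with a constant of the same form, via the Schauder and heat-kernel estimates of Lemma~\ref{lem:minor:regularity} and Propositions~\ref{prop:minor:regularity}--\ref{prop:minor:higher} for the minor HJB equation and their BSDE analogue for the major equation, fed by the uniform gradient bound just obtained. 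Thus on any such $[t_1,T]$ the master field lies, at every time, in the \emph{fixed} class ${\mathscr C}^0(C_{\varepsilon,\sigma_0,T},\cursr)\times{\mathscr C}(C_{\varepsilon,\sigma_0,T},\cursr,\cursr)$; the Lipschitz continuity of its derivatives — the $\mathscr D$-part of item~\textit{c} — would be obtained by running the same duality argument on the once-more differentiated linearized system, at the cost of enlarging $C_{\varepsilon,\sigma_0,T}$ while keeping it non-decreasing in $T$.

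Finally I would iterate. Setting $L:=\max\{C_{\varepsilon,\sigma_0,T},L_g\}$ (with $L_g$ a $\mathscr D$-bound for $g^0,g$) and $h:=h(L)$, which for fixed $T$ is a fixed positive number, one short step from $g^0,g$ at time $T$ yields the unique solution on $[T-h,T]$, whose master field is bounded by $L^*$ by the previous step; using its value at $T-h$ (which is $L$-admissible) as new terminal data and iterating $\lceil T/h\rceil$ times builds the solution on all of $[0,T]$. It is unique (two global solutions agree on $[T-h,T]$ by short-time uniqueness, then on $[T-2h,T]$, and so on), the master fields glue continuously across junctions (the value at each junction time being common to the two adjacent pieces), they satisfy~\eqref{eq:representation:weak:solution}, they belong at every time to the classes of items~\textit{b}--\textit{c} with constant $C_{\varepsilon,\sigma_0,T}$ by the previous step, and they inherit~\eqref{eq:U:priori:contintime} piece by piece; this gives items~\textit{a}--\textit{c}. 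Since the threshold on $\sigma_0$ is $T$-independent, this also yields the $T$-uniform part of Theorem~\ref{thm:1}, while the $T$-dependent part follows by absorbing \hyp{B2}--\hyp{B3} into a $T$-dependent $\kappa$, as noted after Theorem~\ref{thm:1}. The main obstacle is the self-consistency of the bootstrap: one must work in a regularity scale simultaneously strong enough for Propositions~\ref{prop:strong:existence:uniqueness:passage}, \ref{prop:strong:existence:uniqueness:passage2} and~\ref{prop:4:7}, and stable under a single short step when the admissible input is the horizon-robust constant produced by Proposition~\ref{prop:4:7} — this is the delicate index bookkeeping behind the hypothesis $\curss>d/2+5$.
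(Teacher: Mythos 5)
Your overall architecture (short-time solvability from Section~\ref{se:5}, the $T$-robust bound of Proposition~\ref{prop:4:7} on intervals terminating at $T$, then an iteration whose step size depends only on the resulting constant) is the same as the paper's, but there is a genuine gap at the single step on which the whole bootstrap hinges: the passage from the duality-type estimate \eqref{eq:globalLip} to the class memberships ${\mathscr C}^0(C_{\varepsilon_1,\sigma_0,T},\cursr)$ and ${\mathscr C}(C_{\varepsilon_1,\sigma_0,T},\cursr,\cursr)$. Via \eqref{eq:representation:weak:linear:solution}, the bound \eqref{eq:globalLip} only controls the \emph{pairings} $\bigl(\delta_\mu{\mathcal U}(t_0,x_0,\cdot,\mu,\cdot),\mu'-\mu\bigr)$ and $\bigl(\delta_\mu{\mathcal U}^0(t_0,x_0,\mu,\cdot),\mu'-\mu\bigr)$ by $\|\mu'-\mu\|_{-\cursr}$ (this is \eqref{eq:14:12:5}); the ${\mathscr C}$-conditions, by contrast, require pointwise bounds on $\nabla_y^l\delta_\mu{\mathcal U}$ up to order $\lfloor\cursr\rfloor$ together with H\"older continuity in $y$, uniformly in $(x_0,\mu)$. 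You propose to obtain these ``higher-order and H\"older parts'' from the Schauder and heat-kernel estimates of Lemma~\ref{lem:minor:regularity} and Propositions~\ref{prop:minor:regularity}--\ref{prop:minor:higher}; but those estimates control regularity of $u_t$, i.e.\ of ${\mathcal U}(t,x_0,\cdot,\mu)$, in the state variable $x$ only — they say nothing about the regularity of the flat derivative $\delta_\mu{\mathcal U}$ in its $y$-argument (nor about its Lipschitz dependence on $\mu$), which is the quantity needed. The paper closes this gap with a dedicated duality/mollifier argument: starting from \eqref{eq:coroll:4:8:1}, it tests with perturbations $\mu'=\mu+\epsilon(-1)^{l}\partial^l\rho(y_0-\cdot)$ (first under a lower bound $\mu\geq\gamma\,{\rm Leb}_{{\mathbb T}^d}$ so that $\mu'$ is a probability measure, then removing it by approximation), obtains \eqref{eq:14:12:24:1}--\eqref{eq:14:12:24:2} for mollified kernels, and passes to the limit by compactness, using the local $\mathscr D(L,1,3+\alpha)$ regularity from Section~\ref{se:5} to identify the limit, yielding \eqref{eq:14:12:4}. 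Without this step (or an equivalent $T$-uniform well-posedness theory for the linearized system with distributional data $\nabla^l\delta_y$, which Theorem~\ref{thm:local:linearHFB} only provides with short-time constants), the terminal data fed into the next short step are only known to lie in the local class $\kappa_{\rm loc}(L)$, which grows at each iteration, so your step length $h(L)$ cannot be chosen uniformly and the continuation does not close.

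Two smaller points. First, your claim that the $\mathscr D$-part of item \emph{c} follows ``by running the same duality argument on the once-more differentiated linearized system'' is not what is needed nor what the paper does: the $\mathscr D(\cdot,1,3+\alpha)$ regularity comes from the short-time results (Proposition~\ref{prop:H:approx}, item \emph{i}), applied on each step of fixed length with boundary data in the fixed class produced by the mollifier argument above. Second, the gluing/uniqueness across junction times is more delicate than ``agree on $[T-h,T]$, then on $[T-2h,T]$'': at each junction the restarted problem has a \emph{random} initial condition, and the paper handles this via regular conditional probabilities together with the representation formula \eqref{eq:representation:weak:solution} at the junction time; this should be spelled out rather than absorbed into ``short-time uniqueness''.
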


\begin{proof}
The proof relies on a standard induction argument, used first in \cite{Delarue02}. 
\vskip 4pt

\begin{comment}
Assume that for all $(t,x_0,\mu)\in [0,T]\times\mathbb R^d\times\mathcal{P}(\mathbb T^d)$, the system \eqref{eq:major:FB:1}--\eqref{eq:minor:FB:2} has a unique solution, denoted $(X^{0,t,x_0,\mu},Y^{0,t,x_0,\mu}, Z^{0,t,x_0,\mu},\mu^{t,x_0,\mu},u^{t,x_0,\mu},v^{0,t,x_0,\mu})$, on $[t,T]$. 
Then, we call master fields the two functions
\begin{equation}\label{eq:U:rep}
{\mathcal U}(t,x_0,x,\mu):=u^{t,x_0,\mu}(t,x), \quad x \in {\mathbb T}^d, 
\end{equation}
and
\begin{equation}\label{eq:U0:rep}
{\mathcal U}^0(t,x_0,\mu):=Y^{0,t,x_0,\mu}_{t}.
\end{equation}
\end{comment}

\noindent  \textit{First Step.} 
We prove in Section \ref{se:5}, see Proposition 
\ref{prop:H:approx}
with $R_0 := 
2 \max(C_{\varepsilon_1,\sigma_0,T},\kappa)$ 
and ${\mathbb s} := \lfloor \curss \rfloor - (d/2+1) > 3$ (since 
$\curss > d/2+5$)
that, under 
Assumptions \hyp{A1}, \hyp{A2} and \hyp{A4} (but not 
\hyp{A3}),
%\footnote{\label{monotonicity:U} The fact we do not need monotonicity in small time is important for the induction that is used in the forthcoming Steps 2 and 3 of the proof. Indeed, in the induction, we want to apply \textit{a'}, \textit{b'} and 
%\textit{c'} but 
%with $g$ replaced by ${\mathcal U}(t,\cdot,\cdot,\cdot)$. 
%The point
%is that we 
%are not able to prove that 
%${\mathcal U}$ remains monotone in the measure argument as time goes backward, unless we 
%choose $\sigma_0$ depending on $T$: it suffices to come back 
%to \eqref{eq:commonhelp2} and to see the factor 
%$C_{\varepsilon_1,\sigma_0,T} \| \delta \mu_0 \|^2_{-\cursr}$ can be replaced 
%by $C_T \varepsilon_1 - c \varepsilon_2$.
%}) 
 there exists a constant $\delta >0$ (corresponding to 
${\mathfrak C}$ in 
Proposition 
 \ref{prop:H:approx}), 
depending on the parameters in Assumption 
 \hyp{A} but only depending on $g^0$ and $g$ through 
 any pair of (strictly) positive reals $(\tilde L,\tilde \alpha)$ such that 
 $g^0 \in {\mathscr C}^0(\tilde L,3+\tilde \alpha)$ and 
 $g \in  {\mathscr C}(\tilde L,3+\tilde \alpha,3+\tilde \alpha)$
 (the roles of $\tilde L$ and $\tilde \alpha$ in 
 the statement of Proposition 
 \ref{prop:H:approx} are respectively played by 
 ${\mathfrak L}$ and ${\mathbb s} - \lfloor {\mathbb s} \rfloor$)
  and satisfying 
 the following variant of \textit{a}, \textit{b} and \textit{c}:\emph{
 \begin{enumerate}[a'.]
\item \emph{[see part \textit{iii} in Proposition 
 \ref{prop:H:approx} together with the definition of 
 ${\mathbb r}$ in Theorem \ref{thm:local:FB}]}  for all $(t_0,x_0,\mu)\in [T-\delta,T]\times\mathbb R^d\times\mathcal{P}(\mathbb T^d)$, the system \eqref{eq:major:FB:1}--\eqref{eq:minor:FB:2} has a unique solution when: 1.  
 the BMO borm of
$(\int_0^{t} {Z}_s^0 \cdot \ud B_s^0)_{0 \le t \le T}$ is required to be bounded by $R_0$; 2.
${\boldsymbol u}$ is regarded as a continuous process with values in 
${\mathcal C}^{3+\beta}({\mathbb T}^d)$, for a certain $\beta \in (0,\tilde \alpha)$, 
and 
the gradient 
$(\nabla_x u_t)_{0 \le t \le T}$ is also required to be bounded by $R_0$; 3. the 
stochastic integral $(\int_{t_0}^t v_s^0(\cdot) \cdot \ud B_s^0)_{t_0 \le t \le T}$ is just 
regarded as a martingale $(m_t)_{t_0 \le t\le T}$ with values in 
${\mathcal C}^{1+\beta}({\mathbb T}^d)$, for the same value of $\beta$;
\item
\emph{[see part \textit{i} in Proposition  \ref{prop:H:approx}
together with the representation formulas 
in \eqref{eq:representation:short:time} and Proposition 
\ref{prop:representation:Z0}]}
there exist two positive reals $L$ and $\alpha$ 
(corresponding to $C$ and $({\mathbb s}+\lfloor {\mathbb s} \rfloor)/2$
in Proposition  \ref{prop:H:approx}) and 
two continuous mappings 
${\mathcal U}^0 : [T-\delta,T] \times {\mathbb R}^d \times {\mathcal P}({\mathbb T}^d)
\rightarrow {\mathbb R}$ and 
${\mathcal U} : [T-\delta,T] \times {\mathbb R}^d \times {\mathbb T}^d 
\times {\mathcal P}({\mathbb T}^d) \rightarrow
{\mathbb R}$
such that 
${\mathcal U}^0(t,\cdot,\cdot)$
and 
${\mathcal U}(t,\cdot,\cdot,\cdot)$
belong to ${\mathscr D}^0(L,1)$ and 
${\mathscr D}(L,1,3+\alpha)$ 
for all 
$t \in [T-\delta,T]$,
and the representation formula 
\eqref{eq:representation:weak:solution} (restricted to the interval $[T-\delta,T]$ and without the representation of $v^0$ which has not been defined yet) holds true;
\item \emph{[see part \textit{ii} in Proposition  \ref{prop:H:approx}]}
the continuity condition 
\eqref{eq:U:priori:contintime}
is satisfied. 
\end{enumerate}}

We now fix $(t_0,x_0,\mu) \in [T-\delta,T] \times {\mathbb R}^d \times 
{\mathcal P}({\mathbb T}^d)$. By Lemma 
\ref{lem:minor:regularity}
(which one can invoke with ${\boldsymbol \mu}$ given by 
item \textit{a}' above),
the pair $({\boldsymbol u},{\boldsymbol m})$ takes values in 
${\mathcal C}^{\curss}({\mathbb T}^d)
\times {\mathcal C}^{\curss-2}({\mathbb T}^d)$. 
We can also apply 
Lemma \ref{lem:reg:v0}: it
permits to represent the martingale as a stochastic integral (with 
${\boldsymbol v}^0$ as integrand). 
The combination of the latter two lemmas says that 
the solution given above fits
the requirements of Definition
\ref{def:forward-backward=MFG:solution}.
By Proposition \ref{prop:minor:higher} and Lemma 
\ref{lem:BMO:major}, we get bounds 
for $\vert Y_{t_0}^0 \vert$ and $\| u_{t_0}\|_{\curss}$ 
from which we deduce that, for a possibly new value of 
$C_{\varepsilon_1,\sigma_0,T}$, 
$\vert {\mathcal U}^0(t_0,x_0,\mu) \vert$  
and 
$\| {\mathcal U}(t_0,x_0,\cdot,\mu)\|_{\curss}$
are 
 bounded by 
$C_{\varepsilon_1,\sigma_0,T}$. By 
Lemma \ref{lema:representation:Z0:gen} (applied
to 
$({\boldsymbol U}^0,{\boldsymbol V}^0)
= (u_t(x),v_t(x))_{t_0  \leq t \le T}$
and 
${\mathcal V}^0= {\mathcal U}(\cdot,\cdot,x,\cdot)$ for 
any fixed $x \in {\mathbb T}^d$), 
we get the representation formula 
\eqref{eq:representation:weak:solution}
for 
${\boldsymbol v}^0_t(x)$ (observing that 
${\boldsymbol v}^0_t$ and $\nabla_{x_0} {\mathcal U}$ are continuous in 
$x$, we get the formula \eqref{eq:representation:weak:solution} 
for almost every $(t,\omega^0) \in [0,T] \times \Omega^0$, for all $x \in {\mathbb T}^d$).
By Lemma \ref{lem:BMO:major} again, we also deduce that, whatever the solution
$({\boldsymbol X}^{0,\prime},{\boldsymbol Y}^{0,\prime} ,{\boldsymbol Z}^{0,\prime},
{\boldsymbol \mu}',{\boldsymbol u}',{\boldsymbol m}')$
to 
\eqref{eq:major:FB:1}--\eqref{eq:minor:FB:2}, 
the BMO norm of $(\int_0^t Z_s^{0,\prime} \cdot \ud B_s^0)_{0 \le t \le T}$ is bounded by 
$R_0$ (again, this may require to modify $C_{\varepsilon_1,\sigma_0,T}$), which proves from \textit{a}' 
that $({\boldsymbol X}^{0},{\boldsymbol Y}^{0} ,{\boldsymbol Z}^{0},
{\boldsymbol \mu},{\boldsymbol u},{\boldsymbol m})$ is the unique solution to 
\eqref{eq:major:FB:1}--\eqref{eq:minor:FB:2}. 
Furthermore, the assumptions of Propositions 
\ref{prop:strong:existence:uniqueness:passage}
and
\ref{prop:strong:existence:uniqueness:passage2} 
are satisfied over the interval $[T-\delta,T]$. 
In turn, we can invoke 
Proposition 
\ref{prop:4:7}
to deduce that 
\eqref{eq:globalLip}
(at least, the analogue of it, but on the interval $[T-\delta,T]$) holds true. 
We explain below how to use this bound. 

We recall that the initial conditions to \eqref{eq:major:diff:1}--\eqref{eq:minor:diff:2} (which are now initialized from $t_0$) 
are $\delta X_0=x_0'-x_0$ and $\delta \mu_0 = \mu' - \mu$. And then, by the representation formula 
\eqref{eq:representation:weak:linear:solution}, we have, for 
$\cursr \in [3, \lfloor \curss \rfloor -(d/2+1)]\setminus {\mathbb N}$ (which is possible because $\curss > d/2+5$ and then $\lfloor \curss \rfloor > d/2+4 = (d/2+1)+3$), 
\begin{equation} 
\label{eq:14:12:5}
\begin{split}
&\bigl\vert \nabla_{x_0} {\mathcal U}^0(t_0,x_0,\mu)\cdot (x_0'-x_0) +
\left(\delta_{\mu}\mathcal{U}^0(t_0,x_0,\mu),\mu' - \mu\right)
\bigr\vert 
\leq C_{\varepsilon_1,\sigma_0,T} \Bigl(  \vert  x_0' - x_0 \vert + \| \mu'-\mu \|_{-\cursr} \Bigr),
\\
&\Bigl\| \nabla_{x_0}{\mathcal U}(t_0,x_0,\cdot,\mu)\cdot (x_0'-x_0) +
\left(\delta_{\mu}\mathcal{U}(t_0,x_0,\cdot,\mu),\mu' - \mu\right)
\Bigr\|_{\cursr} 
\leq C_{\varepsilon_1,\sigma_0,T} \Bigl(  \vert  x_0' - x_0 \vert + \| \mu'-\mu \|_{-\cursr} \Bigr).
\end{split} 
\end{equation} 
Next, 
we focus on $\delta_\mu {\mathcal U}$ (which is the most difficult term). 
Choosing $x_0'=x_0$, 
we rewrite the last inequality as
\begin{equation}
\label{eq:coroll:4:8:1}
\biggl\| \int_{{\mathbb T}^d} \delta_\mu {\mathcal U}(t_0,x_0,\mu,\cdot,y)
\ud \bigl( \mu'- \mu\bigr)(y) 
\biggr\|_{\cursr} 
\leq 
C_{\varepsilon_1,\sigma_0,T}  \| \mu'-\mu \|_{-\cursr}. 
\end{equation} 
Assume for a while that for some constant $\gamma>0$, $\mu$ is bounded from below by 
$\gamma \textrm{\rm Leb}_{{\mathbb T}^d}$ (which we denote 
$\mu \geq \gamma \textrm{\rm Leb}_{{\mathbb T}^d}$). 
For a given smooth even density $\rho$
and for $l \in \{1,\cdots,\lfloor \cursr \rfloor\}$, consider 
the $l$th partial derivative 
$\partial^l \rho$ of $\rho$ along $l$ (arbitrary) directions of ${\mathbb R}^d$ (with possible repetitions). Then,
for $\epsilon \in {\mathbb R}$ small enough and $y_0 \in {\mathbb T}^d$ fixed, 
$\mu + \epsilon \partial^l \rho(y_0-\cdot) \cdot \textrm{\rm Leb}_{{\mathbb T}^d}$ is a probability measure and  the above yields:  
\begin{equation}
\label{eq:14:12:24:1}
\biggl\|  \nabla^l_{y_0} \biggl( \int_{{\mathbb R}^d} \delta_\mu {\mathcal U}(t_0,x_0,\mu,\cdot,y)
\rho(y_0-y) \ud y \biggr)
\biggr\|_{\cursr} 
\leq 
C_{\varepsilon_1,\sigma_0,T}  \|  \nabla^l \rho \cdot \textrm{\rm Leb}_{{\mathbb T}^d}  \|_{-\cursr} \leq C_{\varepsilon_1,\sigma_0,T}. 
\end{equation} 
The case $l=0$ can be easily included in the left-hand side by returning back to 
\eqref{eq:coroll:4:8:1}, choosing therein $\mu' = \epsilon \delta_{y_0} + (1-\epsilon) \mu$ and then invoking 
\eqref{eq:convention:deltam}.

Denoting the integral in the right-hand side of 
\eqref{eq:14:12:24:1} by $[\delta_\mu {\mathcal U}(t_0,x_0,\mu,\cdot,\cdot) * \rho](y_0)$ (with the convolution acting implicitly on the last argument), 
we now address the H\"older regularity of $\nabla_{y}^{\lfloor \cursr \rfloor} [\delta_\mu {\mathcal U}(t_0,x_0,\mu,\cdot,\cdot) * \rho]$ in the variable 
$y$ by a similar argument. For a vector $z \in {\mathbb R}^d$, we apply 
\eqref{eq:coroll:4:8:1} 
to $\mu$ (still satisfying $\mu \geq \gamma \textrm{\rm Leb}_{{\mathbb T}^d}$) and  $\mu' = \mu + \epsilon [ \partial^l \rho ( \cdot - (y_0+ z))- \partial^l \rho(\cdot - y_0)]$
(for $\epsilon$ small enough and with $l=\lfloor \cursr \rfloor$). We obtain 
\begin{equation}
\label{eq:14:12:24:2} 
\begin{split}
&\biggl\|  \nabla^{\lfloor \cursr \rfloor}_{y_0} \biggl(  \int_{{\mathbb R}^d} \Bigl[  \delta_\mu {\mathcal U}(t_0,x_0,\mu,\cdot,y_0+z-y)
- 
 \delta_\mu {\mathcal U}(t_0,x_0,\mu,\cdot,y_0-y)
\Bigr] \rho (y) \ud y 
\biggr)
\biggr\|_{\cursr} 
\\
&\leq 
 C_{\varepsilon_1,\sigma_0,T}
\Bigl\|  \nabla^{\lfloor \cursr \rfloor} \bigl[  \rho(\cdot + z) - \rho \bigr] \cdot \textrm{\rm Leb}_{{\mathbb T}^d}  \Bigr\|_{-\cursr}
\leq  C_{\varepsilon_1,\sigma_0,T} \vert z \vert^{\cursr - \lfloor \cursr \rfloor}. 
\end{split}
\end{equation} 
The two inequalities \eqref{eq:14:12:24:1}
and 
\eqref{eq:14:12:24:2}
are true for any fixed (smooth) density $\rho$.
Considering a sequence of mollifiers $(\rho_n)_{n \geq 1}$, 
this 
 shows that, for any $l_x,l_y \in \{0,\cdots,\lfloor r \rfloor\}$, 
the functions 
$(\nabla^{l_x}_x \nabla^{l_y}_y [\delta_\mu {\mathcal U}(t_0,x_0,\mu,\cdot,\cdot) * \rho_n])_{n \geq 1}$
are relatively compact 
for the topology of uniform convergence on ${\mathbb T}^d \times {\mathbb T}^d$. 
Since ${\mathcal U}(t,\cdot,\cdot,\cdot)$ belongs to 
${\mathscr D}(L,1,3+\alpha)$ 
for all $t \in [T-\delta,T]$, we 
already know that 
$\delta_\mu {\mathcal U}(t_0,x_0,\mu,\cdot,\cdot) * \rho_n$
converges to 
$\delta_\mu {\mathcal U}(t_0,x_0,\mu,\cdot,\cdot)$
for the topology of uniform convergence on ${\mathbb T}^d \times {\mathbb T}^d$. 
We deduce that, 
for any $l_x,l_y \in \{0,\cdots,\lfloor r \rfloor\}$, 
the derivatives
$\nabla^{l_x}_x \nabla^{l_y}_y \delta_\mu {\mathcal U}(t_0,x_0,\mu,\cdot,\cdot)$ exist and 
satisfy 
\begin{equation}
\label{eq:14:12:4}
\begin{split}
&\sup_{l=0,\cdots,\lfloor \cursr \rfloor}
\Bigl\|  \nabla^l_{y_0}  \delta_\mu {\mathcal U}(t_0,x_0,\mu,\cdot,y)
\Bigr\|_{\cursr} 
 \leq C_{\varepsilon_1,\sigma_0,T}, 
\\
&\Bigl\|    \nabla^{\lfloor \cursr \rfloor}_{y_0}   \delta_\mu {\mathcal U}(t_0,x_0,\mu,\cdot,y_0+z)
- 
 \nabla^{\lfloor \cursr \rfloor}_{y_0} \delta_\mu {\mathcal U}(t_0,x_0,\mu,\cdot,y_0)
\Bigr)
\Bigr\|_{\cursr} 
 \leq 
C_{\varepsilon_1,\sigma_0,T} \vert z \vert^{\cursr - \lfloor \cursr \rfloor}. 
\end{split}
\end{equation} 
The above holds true under the condition 
$\mu \geq \gamma {\rm Leb}_{{\mathbb T}^d}$ for some $\gamma >0$.  
Also,
thanks to the continuity of 
$\delta_\mu {\mathcal U}$ in $\mu$ (given again by the fact that 
${\mathcal U}(t,\cdot,\cdot,\cdot)$ belongs to 
${\mathscr D}(L,1,3+\alpha)$ 
for all $t \in [T-\delta,T]$),
 we can prove that \eqref{eq:14:12:4} is true for any $\mu \in {\mathcal P}({\mathbb T}^d)$
by a new approximation argument. 
Indeed, for any $\mu \in {\mathcal P}({\mathbb T}^d)$ and $\epsilon \in (0,1)$, we can consider 
$(1-\epsilon) \mu + \mu {\rm Leb}_{{\mathbb T}^d}$ as new probability measure:
\eqref{eq:14:12:4} is true for all $\epsilon \in (0,1)$, and we can argue by a new compactness argument. Together with 
\eqref{eq:14:12:5}, this shows that, for any $t_0 \in [T-\delta,T]$, 
the function 
${\mathcal U}(t_0,\cdot,\cdot,\cdot)$ satisfies the condition 
${\mathscr C}(C_{\varepsilon_1,\sigma_0,T},\cursr,\cursr)$
By the same argument, ${\mathcal U}^0(t_0,\cdot,\cdot)$ satisfies the condition 
${\mathscr C}^0(C_{\varepsilon_1,\sigma_0,T},\cursr)$. 
%\textcolor{red}{At some point, should justify that $\int v^0(x) dB_r$ has the required regulariy in $x$}
\vskip 4pt

\noindent  {\it Second Step.} 
The next step is to iterate, considering now the problem 
\eqref{eq:major:FB:1}--\eqref{eq:minor:FB:2}
with ${\mathcal U}^0(T-\delta,\cdot,\cdot)$ and 
${\mathcal U}(T-\delta,\cdot,\cdot,\cdot)$ as new terminal conditions 
at time $T-\delta$ (in place of $g^0$ and $g$ at time $T$). 
By the first step (which does not require the terminal condition $g$ to satisfy \hyp{A3}) and thanks to the lower bound $\cursr > 3$ (without any loss of generality, we can assume 
$\cursr  > 3 + \tilde{\alpha}$ for the same $\tilde{\alpha}$ as the one used in the first step), 
we can find a new constant $\delta'>0$ only depending on the parameters in 
  \hyp{B} and on the constant $C_{\varepsilon_1,\sigma_0,T}$ 
(which also depends on the parameters in \hyp{B}) such that 
the items \textit{a}', \textit{b}' and \textit{c}' above are satisfied but on 
$[T-(\delta+\delta'),T-\delta]$ in place of $[T-\delta,T]$.
This makes it possible to extend the fields 
${\mathcal U}^0$ and ${\mathcal U}$ to the interval $[T-(\delta+\delta'),T]$. 
By combining the regularity properties given by item \textit{b}'
 on the intervals $[T-(\delta+\delta'),T-\delta]$ and $[T-\delta,T]$ respectively, we deduce that 
 \textit{b}' holds true on the entire $[T-(\delta+\delta'),T]$. Similarly, 
 we can easily check that \textit{c}' is satisfied on the entire 
$[T-(\delta+\delta'),T]$.

Now, we observe that any solution given by \textit{a}' on the interval $[T-(\delta+\delta'),T-\delta]$ can be extended 
to the interval $[T-\delta,T]$ by solving the problem addressed in the first step but restarting from 
the random initial condition $(X_{T-\delta}^0,\mu_{T-\delta})$. Solvability of the problem 
\eqref{eq:major:FB:1}--\eqref{eq:minor:FB:2} with a random initial condition is not an issue. The analysis of forward-backward systems with random initial conditions has been well documented in other examples: see for instance
\cite{Delarue02} for forward-backward stochastic differential equations in finite dimension and 
\cite[Subsection 5.1]{CardaliaguetDelarueLasryLions} for a similar analysis in the mean field case. Here, one can use the collection of regular conditional probabilities
$({\mathbb P}^0_{T-\delta}(\omega^0,\cdot))_{\omega^0 \in \Omega^0}$ of 
${\mathbb P}^0$ given the $\sigma$-field generated by $(B_s^0)_{T-(\delta+\delta') \leq s \leq T-\delta}$
 in order to reduce the system (initialized from a random initial 
condition) to a problem with a deterministic initial condition. Namely, 
for an initial condition $(X_{T-\delta}^0,\mu_{T-\delta})$ as above, one can solve the pair of two forward equations:
\begin{equation*} 
\begin{split}
&\ud X_t^0 = - \nabla_p H^0 \bigl( X_t^0, \nabla_{x_0} {\mathcal U}^0(t,X_t^0,\mu_t^0) \bigr) 
\ud t + \sigma_0 \ud B_t^0,
\\
&\partial_t \mu_t - \tfrac12 \Delta_x \mu_t - {\rm div}_x 
\Bigl( \nabla_p H \bigl( \cdot, \nabla_x {\mathcal U}(t,X_t^0,\cdot,\mu_t^0) \bigr) \mu_t
\Bigr) =0, \quad t \in [T-\delta,T].
\end{split}
\end{equation*} 
Since ${\mathcal U}^0 \in {\mathscr D}^0(L,3+\alpha)$ and 
${\mathcal U} \in {\mathscr D}(L,1,3+\alpha)$, existence and uniqueness are standard (it suffices to solve a conditional McKean-Vlasov equation, see for instance 
\cite[Chapter 2]{CarmonaDelarue_book_II}). 
Then, defining $(Y_t^0,Z_t^0)_{T-\delta \leq t \leq T}$ and 
$(u_t,v_t^0)_{T-\delta \le t \le T}$ 
as suggested by 
\eqref{eq:representation:weak:solution}, namely letting
\begin{equation*} 
\begin{split} 
&Y_t^0 := {\mathcal U}^0(t,X_t^0,\mu_t),
\
Z_t^0 := \nabla_{x_0} {\mathcal U}^0(t,X_t^0,\mu_t), \quad t \in [T-\delta,T], 
\\
&u_t(x) := {\mathcal U}(t,X_t^0,x,\mu_t),
\
v_t^0(x) := \nabla_{x_0} {\mathcal U}(t,X_t^0,x,\mu_t), 
 \quad (t,x) \in [T-\delta,T] \times 
{\mathbb T}^d,
\end{split} 
\end{equation*} 
we know from Proposition 
\ref{prop:strong:existence:uniqueness:passage} that, $\omega^0$-${\mathbb P}^0$-almost surely, under the 
disintegrated probability ${\mathbb P}^0_{T-\delta}(\omega^0,\cdot)$, 
the two triplets $(X^0_t,Y_t^0,Z^0_t)_{T-\delta \le t \le T}$ and 
$(\mu_t,u_t,v_t^0)_{T-\delta \le t \le T}$ that have been hence defined solve the 
two forward-backward equations in 
\eqref{eq:major:FB:1}--\eqref{eq:minor:FB:2} 
(with the realization 
$(X_{T-\delta}^0(\omega^0),\mu_{T-\delta}(\omega^0))$ as deterministic initial condition). 
And then, 
by construction of the regular conditional probability, 
they solve the systems 
\eqref{eq:major:FB:1}--\eqref{eq:minor:FB:2}
under the original probability
${\mathbb P}^0$, with 
$(X_{T-\delta}^0,\mu_{T-\delta})$
as (now random) initial condition. 

By combining the solution 
originally given by \textit{a}' on $[T-(\delta+\delta'),T-\delta]$ with 
its extension to $[T-\delta,T]$, we get a solution to 
\eqref{eq:major:FB:1}--\eqref{eq:minor:FB:2} in the sense of \textit{a}', starting from $(t_0,x_0,\mu)$ with $t_0 \in [T-(\delta+\delta'),T]$. 
This solution is necessarily unique. Indeed, if  
$(\widetilde X^0_t,\widetilde Y_t^0,\widetilde Z^0_t)_{T-(\delta+\delta') \le t \le T}$ and 
$(\widetilde \mu_t,\widetilde u_t,\widetilde v_t^0)_{T-(\delta+\delta') \le t \le T}$
form another solution with the same initial condition, then it satisfies the system 
\eqref{eq:major:FB:1}--\eqref{eq:minor:FB:2} on $[T-\delta,T]$ with 
$(\widetilde X^0_{T-\delta},\widetilde \mu_{T-\delta})$ as (random) initial condition. 
In particular, for ${\mathbb P}^0$-almost every $\omega^0$, it satisfies the system 
\eqref{eq:major:FB:1}--\eqref{eq:minor:FB:2} on $[T-\delta,T]$ under 
${\mathbb P}_{T-\delta}^0(\omega^0,\cdot)$,
with the realization 
$(\widetilde X^0_{T-\delta}(\omega^0),\widetilde \mu_{T-\delta}(\omega^0))$ as deterministic initial condition. 
And, then, by the uniqueness property on $[T-\delta,T]$, 
this new solution can be represented 
as in 
\eqref{eq:representation:weak:solution}
on the interval $[T-\delta,T]$. In particular, 
$\widetilde{Y}_{T-\delta}^0 = {\mathcal U}^0(T-\delta,\widetilde X_{T-\delta}^0,\widetilde \mu_{T-\delta})$
and
$\widetilde{u}_{T-\delta} = {\mathcal U}(T-\delta,\widetilde X_{T-\delta}^0,\cdot,\widetilde \mu_{T-\delta})$. 
This says that the triplets 
$(\widetilde X^0_t,\widetilde Y_t^0,\widetilde Z^0_t)_{T-(\delta+\delta') \le t \le T-\delta}$ and 
$(\widetilde \mu_t,\widetilde u_t,\widetilde v_t^0)_{T-(\delta+\delta') \le t \le T-\delta}$
satisfy the same equations as 
$(X^0_t,Y_t^0,Z^0_t)_{T-(\delta+\delta') \le t \le T-\delta}$ and 
$(\mu_t,u_t,v_t^0)_{T-(\delta+\delta') \le t \le T-\delta}$
on $[T-(\delta+\delta'),T-\delta]$. By (short time) uniqueness on $[T-(\delta+\delta'),T-\delta]$, the latter two processes coincide. 
Then, they restart from the same states at time $T-\delta$ and eventually coincide on the entire $[T-(\delta+\delta'),T]$. 

In the end, this proves that the three items \textit{a}, \textit{b} and \textit{c} listed in the statement now hold true but on the wider interval $[T-(\delta+\delta'),T]$. 
\vskip 4pt

\noindent  \textit{Third Step.} The proof now follows the scheme introduced in \cite{Delarue02}. We can restart from 
the first step, but replacing $[T-\delta,T]$ by $[T-(\delta+\delta'),T]$. We get that, for any 
$t_0 \in [T-(\delta+\delta'),T]$, 
${\mathcal U}^0(t_0,\cdot,\cdot)$
and
${\mathcal U}(t_0,\cdot,\cdot,\cdot)$ satisfy the conditions
 ${\mathscr C}^0(C_{\varepsilon_1,\sigma_0,T},\cursr)$ 
 and
${\mathscr C}(C_{\varepsilon_1,\sigma_0,T},\cursr,\cursr)$
respectively. And then, we can repeat the second step and extend the
maps ${\mathcal U}^0$ and ${\mathcal U}$ to 
$ [T-(\delta+2\delta'),T-(\delta+\delta')]$ for the same $\delta'$ as in the second step. 
By iterating the argument, we can cover the entire interval to $[0,T]$. Details are left to the reader. 
\end{proof} 

We are now in position to prove 
Theorem 
\ref{thm:2}:

\begin{theorem}
\label{thm:4.10:b}
In addition to Assumption  \hyp{B}, 
assume that 
\begin{enumerate}[i.]
\item  the coefficient $(t,x_0,\mu) \mapsto 
f_t^0(x_0,\mu)$ is H\"older continuous in time, uniformly in 
$(x_0,\mu)$; the coefficient $(t,x_0,x,\mu) \mapsto 
f_t(x_0,x,\mu)$ is H\"older continuous in time, 
uniformly in $(x_0,x,\mu)$;
\item $x_0 \mapsto g^0(x_0,\mu)$ has 
H\"older continuous second-order derivatives,
uniformly in $\mu$; $x_0 \mapsto g(x_0,x,\mu)$
has H\"older continuous second-order derivatives,  
uniformly in $(x,\mu)$. 
 \end{enumerate}
Then, 
under the same conditions (i) and (ii) as in the statement of 
Theorem 
\ref{thm:4.10}, 
the functions ${\mathcal U}^0$ and ${\mathcal U}$ are the unique solutions
to the
master equation
\eqref{eq:major:1}--\eqref{eq:minor:1}
in the class of pairs 
$(V^0,V)$ such that
\begin{enumerate}
\item $(t,x_0,\mu) \mapsto (\partial_t V^0(t,x_0,\mu),  \nabla_{x_0} V^0(t,x_0,\mu), \nabla^2_{x_0} V^0(t,x_0,\mu))$ is continuous on $[0,T] \times {\mathbb R}^d \times {\mathcal P}({\mathbb T}^d)$; 
$(t,x_0,\mu,y) \mapsto (\partial_\mu V^0(t,x_0,\mu,y), \nabla_y \partial_\mu V^0(t,x_0,\mu,y))$ is continuous on $[0,T] \times {\mathbb R}^d \times {\mathcal P}({\mathbb T}^d) \times {\mathbb T}^d$; 
\item $(t,x_0,x,\mu) \mapsto (\partial_t V(t,x_0,x,\mu),  \nabla_{x_0} V(t,x_0,x,\mu), \nabla^2_{x_0} V(t,x_0,x,\mu), 
 \nabla_{x} V(t,x_0,x,\mu), \nabla^2_{x} V(t,x_0,x,\mu))$ is continuous on $[0,T] \times {\mathbb R}^d \times {\mathbb T}^d \times {\mathcal P}({\mathbb T}^d)$; 
$(t,x_0,x,\mu,y) \mapsto (\partial_\mu V(t,x_0,x,\mu,y), \nabla_y \partial_\mu V(t,x_0,x,\mu,y))$ is continuous on $[0,T] \times {\mathbb R}^d \times {\mathbb T}^d \times {\mathcal P}({\mathbb T}^d) \times {\mathbb T}^d$, 
\item $(t,x_0,\mu) \mapsto \nabla_{x_0} V^0(t,x_0,\mu)$ and $(t,x_0,x,\mu) \mapsto(\nabla_{x_0} V(t,x_0,x,\mu), \nabla_x V(t,x_0,x,\mu))$ are Lipschitz 
continuous with respect to $(x_0,\mu)$ and $(x_0,x,\mu)$ respectively 
(using the distance ${\mathbb W}_1$ to handle the argument $\mu$), uniformly in $t$;
\item  
 $(t,x_0,\mu) \mapsto (V^0(t,x_0,\mu),\nabla_{x_0} V^0(t,x_0,\mu))$
 and
$(t,x_0,\mu) \mapsto( \| V(t,x_0,\cdot,\mu)\|_{\cursr}, \| \nabla_{x_0} V(t,x_0,\cdot,\mu)\|_{\cursr})$ are globally
bounded, for any $\cursr \in [1,\lfloor \curss \rfloor -(d/2+1)] \setminus {\mathbb N}$. \end{enumerate}
In particular,  
${\mathcal U}^0$ and ${\mathcal U}$ have 
first-order derivative in $t$ and 
second-order derivatives in $x_0$, which are jointly continuous with respect to all their arguments.\end{theorem}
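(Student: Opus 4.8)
The plan is to establish Theorem~\ref{thm:4.10:b} as a corollary of Theorem~\ref{thm:4.10}, which already furnishes the flow of solutions $({\mathcal U}^0,{\mathcal U})$ together with the representation formula \eqref{eq:representation:weak:solution}, the regularity ${\mathscr C}^0(C_{\varepsilon_1,\sigma_0,T},\cursr)$, ${\mathscr C}(C_{\varepsilon_1,\sigma_0,T},\cursr,\cursr)$, and the Lipschitz/time-continuity properties of the first-order derivatives. The bulk of the new work is to upgrade these to the \emph{classical} regularity required by items (1)--(2), in particular the existence and joint continuity of $\partial_t {\mathcal U}^0$, $\nabla^2_{x_0} {\mathcal U}^0$, $\partial_t {\mathcal U}$, $\nabla^2_{x_0}{\mathcal U}$, $\nabla^2_x {\mathcal U}$, and then to verify that $({\mathcal U}^0,{\mathcal U})$ actually solve the PDE system \eqref{eq:major:1}--\eqref{eq:minor:1}, and finally to prove uniqueness in the stated class.

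First I would address the additional space/time regularity. The point is that, with the new hypotheses (i) and (ii)---H\"older continuity of $f^0_t,f_t$ in time and H\"older continuity of $\nabla^2_{x_0} g^0$, $\nabla^2_{x_0} g$---one has full Schauder estimates available for the characteristic forward-backward system \eqref{eq:major:FBG:1}--\eqref{eq:minor:FBG:2}. Concretely, along the characteristics, $Y^0_t = {\mathcal U}^0(t,X^0_t,\mu_t)$ solves a quadratic BSDE with a source that is now H\"older in time, so parabolic regularity (applied to the linearized system of Subsection~\ref{subse:4:3}, together with the representation \eqref{eq:representation:weak:linear:solution}) yields that $\nabla_{x_0}{\mathcal U}^0$ and $\delta_\mu {\mathcal U}^0$, and likewise $\nabla_{x_0}{\mathcal U}$, $\nabla_x{\mathcal U}$, $\delta_\mu{\mathcal U}$, are H\"older continuous in \emph{all} arguments jointly, with a modulus locally uniform in $\sigma_0$ and $T$. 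Differentiating the characteristic system once more in the $x_0$-variable (which is licit because $H^0,H,f^0,f,g^0,g$ have enough derivatives under Assumption~\hyp{A}, specifically $\curss>d/2+5$) and invoking the short-time results of Section~\ref{se:5}---already used in the proof of Theorem~\ref{thm:4.10}---gives the existence and continuity of $\nabla^2_{x_0}{\mathcal U}^0$, $\nabla^2_{x_0}{\mathcal U}$; the continuity of $\nabla^2_x {\mathcal U}$ and $\nabla_y\partial_\mu{\mathcal U}$, $\nabla_y\partial_\mu{\mathcal U}^0$ follows from the ${\mathcal C}^\curss$-valued regularity of ${\boldsymbol u}$ in Lemma~\ref{lem:minor:regularity} combined with the ${\mathscr C}$-bounds in item (b) of Theorem~\ref{thm:4.10}. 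Once all these spatial/measure derivatives are known to be continuous, the PDEs \eqref{eq:major:1}--\eqref{eq:minor:1} \emph{define} $\partial_t{\mathcal U}^0$, $\partial_t{\mathcal U}$ as continuous functions of the other (continuous) quantities, which gives item (1)--(2) and the final sentence of the theorem.

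Next I would verify that $({\mathcal U}^0,{\mathcal U})$ genuinely solve \eqref{eq:major:1}--\eqref{eq:minor:1}. This is the It\^o-along-characteristics argument: taking the flow of solutions to \eqref{eq:major:FBG:1}--\eqref{eq:minor:FBG:2} (equivalently, after reverting the Girsanov change of measure, to \eqref{eq:major:FB:1}--\eqref{eq:minor:FB:2}), one expands $(Y^0_t)$ and $(u_t(x))$ by the chain rule on the space of measures---exactly Proposition~\ref{prop:ito:formula} in the Appendix, as invoked in the proof of Proposition~\ref{prop:verif}---and matches the resulting drift with the left-hand side of \eqref{eq:major:1} (respectively \eqref{eq:minor:1}) evaluated at $(t,X^0_t,\mu_t)$ (resp. $(t,X^0_t,x,\mu_t)$). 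Since the law $(X^0_t,\mu_t)$ can be made to charge a neighbourhood of any $(x_0,\mu)$ (start the characteristics from $(t,x_0,\mu)$ and let $t$ vary; use that the forward flow is non-degenerate in $x_0$ and that $\mu_t$ depends continuously on the initial data), the drift-matching identity must hold pointwise, which is precisely the master equation. Item (3)--(4) of the stated uniqueness class are already part of the conclusion of Theorem~\ref{thm:4.10} (parts b,c), so nothing new is needed there.

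Finally, uniqueness: given any pair $(V^0,V)$ in the stated class, the regularity in (1)--(4) is exactly what Proposition~\ref{prop:verif} requires, so the characteristics \eqref{eq:prop:verif:forward}--\eqref{eq:prop:verif:backward} built from $(V^0,V)$ solve \eqref{eq:major:FB:1}--\eqref{eq:minor:FB:2}; by the uniqueness part of Theorem~\ref{thm:4.10}(a) (and the Lipschitz bound \eqref{eq:prop:characterization:1}, which follows from (3)) these must coincide with the solutions produced by Theorem~\ref{thm:4.10}, whence $Y^0_t={\mathcal U}^0(t,X^0_t,\mu_t)=V^0(t,X^0_t,\mu_t)$ and $u_t(x)={\mathcal U}(t,X^0_t,x,\mu_t)=V(t,X^0_t,x,\mu_t)$ along characteristics; varying the initial data again forces ${\mathcal U}^0=V^0$ and ${\mathcal U}=V$ everywhere. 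The second assertion of the theorem (the $T$-independent threshold under Assumption~\hyp{B}) is inherited verbatim from Theorem~\ref{thm:4.10}, since that theorem is already stated under \hyp{B} with constants independent of $(\sigma_0,T)$ except through $C_{\varepsilon_1,\sigma_0,T}$. The main obstacle I anticipate is the first block: pushing the parabolic/Schauder regularity of the \emph{random} HJB-type equations up to second order in $x_0$ with moduli that are locally uniform in the noise intensity, and carefully justifying that differentiating the linearized system \eqref{eq:major:diff:g:1}--\eqref{eq:minor:diff:g:2} once more stays within the functional framework of Definition~\ref{def:4.8:4.9}; the condition $\curss > d/2+5$ is what makes this possible, and keeping track of it through the Sobolev embeddings used in Lemma~\ref{lem:reg:v0} and Proposition~\ref{prop:bmo:minor} is the delicate bookkeeping.
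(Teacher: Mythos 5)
Your overall skeleton (Theorem~\ref{thm:4.10} gives existence/uniqueness of the FBSDE flow, Section~\ref{se:5} upgrades regularity, Proposition~\ref{prop:verif} closes uniqueness) matches the paper's, and the uniqueness paragraph is essentially right. However, the existence/regularity paragraph has a genuine circularity that the paper's proof is explicitly engineered to avoid, and you omit the iteration mechanism that makes the argument work on $[0,T]$.

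The circularity: you propose to (a) obtain the spatial and measure derivatives of ${\mathcal U}^0,{\mathcal U}$, then (b) ``let the PDEs \eqref{eq:major:1}--\eqref{eq:minor:1} \emph{define} $\partial_t{\mathcal U}^0,\partial_t{\mathcal U}$ as continuous functions of the other quantities,'' and then (c) ``verify that $({\mathcal U}^0,{\mathcal U})$ genuinely solve \eqref{eq:major:1}--\eqref{eq:minor:1}'' via the chain rule of Proposition~\ref{prop:ito:formula}. Step (b) is only legitimate after (c), and (c) cannot be executed because Proposition~\ref{prop:ito:formula} already requires $\ell$ to be differentiable in $t$ with jointly continuous $\partial_t\ell$ --- precisely what you are trying to establish. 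The paper's Corollary~\ref{corollary:U0:PDE} avoids this by running a different mechanism: Propositions~\ref{prop:representation:U0:PDE}--\ref{prop:representation:U:PDE} express ${\mathcal U}^0(t_0,x_0,\mu)$ and ${\mathcal U}(t_0,x_0,x,\mu)$ through a Feynman--Kac formula along the SDE~\eqref{eq:tilde:X:0:U0} with \emph{no} time-derivative hypotheses on $({\mathcal U}^0,{\mathcal U})$, and these formulas are then re-read as identifying ${\mathcal U}^0(\cdot,\cdot,\mu)$ (for each frozen $\mu$) with the solution of a linear second-order parabolic PDE in $(t_0,x_0)$ whose coefficients and source are H\"older in $(t_0,x_0)$ --- this is exactly where the new hypotheses (i)--(ii) of the statement and the Lipschitz/H\"older estimates on first-order derivatives from Propositions~\ref{prop:deltaU:continuity} and~\ref{prop:reg:Holder:time:deribatives} are used. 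Schauder theory then delivers, \emph{simultaneously}, the existence and joint continuity of $\partial_t{\mathcal U}^0$, $\nabla^2_{x_0}{\mathcal U}^0$ and the fact that the PDE is satisfied. Your invocation of a parabolic bootstrap is the right instinct, but it has to be anchored in a time-derivative-free representation of this kind, not in an It\^o expansion.

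You also under-specify the global-time argument. Corollary~\ref{corollary:U0:PDE} and Proposition~\ref{prop:H:approx}(iv) are short-time statements: they only cover $[T-\delta,T]$ for some $\delta$ depending on the data. The paper's proof of Theorem~\ref{thm:4.10:b} explicitly iterates: item~(ii) of the hypothesis guarantees the terminal data have H\"older $\nabla^2_{x_0}$, the short-time result is applied on $[T-\delta,T]$ and delivers that ${\mathcal U}^0(T-\delta,\cdot,\mu)$, ${\mathcal U}(T-\delta,\cdot,x,\mu)$ again have H\"older $\nabla^2_{x_0}$ uniformly in $(\mu)$ resp.\ $(x,\mu)$, so the same short-time result can be re-applied on $[T-(\delta+\delta'),T-\delta]$, and so on. Without this propagation of the H\"older-$\nabla^2_{x_0}$ terminal regularity (which is the whole point of hypothesis (ii)), you cannot patch the intervals together. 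Finally, a minor omission in the uniqueness step: Proposition~\ref{prop:verif} does not by itself assert that the induced triples are solutions in the sense of Definition~\ref{def:forward-backward=MFG:solution}; the paper first upgrades the induced $({\boldsymbol u},{\boldsymbol v}^0)$ via Lemmas~\ref{lem:minor:regularity} and~\ref{lem:reg:v0} (using item~(4) of the class) so that the uniqueness of Theorem~\ref{thm:4.10}(a) actually applies.
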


\begin{proof}
The proof relies on 
the iteration principle introduced in the proof of Theorem 
\ref{thm:4.10}. We use in particular the same notation as therein. 

On the interval $[T-\delta,T]$, the result follows from 
Corollary
\ref{corollary:U0:PDE} and Proposition \ref{prop:H:approx} (item iv). 
It says in particular that, at time $T-\delta$, the function 
$x_0 \mapsto {\mathcal U}^0(T-\delta,x_0,\mu)$ has 
H\"older continuous second-order derivatives,
uniformly in $\mu$, and the function $x_0 \mapsto {\mathcal U}(T-\delta,x_0,x,\mu)$
has H\"older continuous second-order derivatives, 
uniformly in $(x,\mu)$. This makes it possible to reapply 
Corollary
\ref{corollary:U0:PDE}
and Proposition \ref{prop:H:approx} (item iv)
but on the interval
$[T-(\delta+\delta'),T-\delta]$. By continuity properties of 
${\mathcal U}^0$ and ${\mathcal U}$ (and of their derivatives), we get a solution 
on $[T-(\delta+\delta'),T]$. Again,
Corollary
\ref{corollary:U0:PDE} and Proposition \ref{prop:H:approx} (item iv) say that the functions 
$x_0 \mapsto {\mathcal U}^0(T-(\delta+\delta'),x_0,\mu)$ has 
H\"older continuous second-order derivatives,
uniformly in $\mu$, and the function $x_0 \mapsto {\mathcal U}(T-(\delta+\delta'),x_0,x,\mu)$
has H\"older continuous second-order derivatives, 
uniformly in $(x,\mu)$. By iterating, we get that ${\mathcal U}^0$ and 
${\mathcal U}$ solve the two master equations on 
the entire $[0,T]$. 

We now turn to 
uniqueness. 
By 
Proposition 
\ref{prop:verif}, we know that any solution 
$(V^0,V)$ to the master equation, as given in the statement, induces a solution to the 
forward-backward system 
\eqref{eq:major:FB:1}--\eqref{eq:minor:FB:2} (for a given initial condition $(x_0,\mu) \in {\mathbb R}^d \times {\mathcal P}({\mathbb T}^d)$). The  bounds on 
$(V^0,V)$ required in item (4) of the statement say that the resulting solution 
$({\boldsymbol u},{\boldsymbol v}^0)$ 
to 
\eqref{eq:minor:FB:2}
takes values in ${\mathcal C}^\mathfrc{r}({\mathbb T}^d) \times 
{\mathcal C}^{\cursr}({\mathbb T}^d)$, for any 
$\cursr \in [1,\lfloor \mathfrc{s} \rfloor -(d/2+1)] \setminus {\mathbb N}$.
Then, 
Lemmas
\ref{lem:minor:regularity}
and
\ref{lem:reg:v0} make it possible to gain extra regularity and  guarantee 
that (ii) and (iii) in item 2 of Definition 
\ref{def:forward-backward=MFG:solution}
are satisfied. 
This shows that, for any fixed
initial condition $(x_0,\mu) \in {\mathbb R}^d \times {\mathcal P}({\mathbb T}^d)$, 
 $(V^0,V)$ induces a solution to \eqref{eq:major:FB:1}--\eqref{eq:minor:FB:2} in 
the sense of Definition 
\ref{def:forward-backward=MFG:solution}. Uniqueness 
to \eqref{eq:major:FB:1}--\eqref{eq:minor:FB:2}, as given by 
Theorem \ref{thm:4.10} under the initial condition 
 $(x_0,\mu)$, together with the representation formula 
 \eqref{eq:representation:weak:solution}
 imply that 
 $(V^0,V)$ is necessarily equal to 
 $({\mathcal U}^0,{\mathcal U})$.
% iteratively, first on $[T-c,T]$, then on 
%$[T-(c+c'),T-c]$, and so forth... 
\end{proof}

\subsection{Proof of Proposition \ref{prop:strong:existence:uniqueness:passage2}}
\label{subse:4.7}

\begin{proof}
Recall that $(({\boldsymbol X}^0,{\boldsymbol Y}^0,{\boldsymbol Z}^0), ({\boldsymbol\mu},{\boldsymbol u},{\boldsymbol v^0}))$ is the unique solution of \eqref{eq:major:FBG:1}--\eqref{eq:minor:FBG:2} and has the representation \eqref{eq:representation:weak:solution}. For any $\varepsilon>0$, let $(({\boldsymbol X}^{0,\varepsilon},{\boldsymbol Y}^{0,\varepsilon},{\boldsymbol Z}^{0,\varepsilon}), ({\boldsymbol\mu}^{\varepsilon},{\boldsymbol u}^{\varepsilon},{\boldsymbol v^{0,\varepsilon}}))$ be the unique solution of \eqref{eq:major:FBG:1}--\eqref{eq:minor:FBG:2} corresponding to the initial data $(x_0+\varepsilon  (x_0'-x_0),\mu+\varepsilon (\mu'-\mu))$. By Proposition \ref{prop:strong:existence:uniqueness:passage}, we have, ${\mathbb P}^0$-almost surely, 
 \begin{equation} 
 \label{eq:proof:prop:4.2:representation:solutions}
\begin{split} 
&Y_t^{0,\varepsilon} = {\mathcal U}^0(t,X_t^{0,\varepsilon},\mu_t^{\varepsilon}),
\
Z_t^{0,\varepsilon} = \nabla_{x_0} {\mathcal U}^0(t,X_t^{0,\varepsilon},\mu_t^{\varepsilon}), \quad t \in [0,T], 
\\
&u_t^{\varepsilon}(x) = {\mathcal U}(t,X_t^{0,\varepsilon},x,\mu_t^{\varepsilon}),
\
v_t^{0,\varepsilon}(x) = \nabla_{x_0} {\mathcal U}(t,X_t^{0,\varepsilon},x,\mu_t^{\varepsilon}), 
 \quad (t,x) \in [0,T] \times 
{\mathbb T}^d. 
\end{split} 
\end{equation} 
We let
\begin{equation*}
\begin{split}
&\left((\triangle{\boldsymbol X}^{0,\varepsilon},\triangle{\boldsymbol Y}^{0,\varepsilon},\triangle{\boldsymbol Z}^{0,\varepsilon}), (\triangle{\boldsymbol\mu}^{\varepsilon},\triangle{\boldsymbol u}^{\varepsilon},\triangle{\boldsymbol v^{0,\varepsilon}})\right)
\\
&:=\left( \Bigl(\frac{{\boldsymbol X}^{0,\varepsilon}-{\boldsymbol X}^{0}}{\varepsilon},\frac{{\boldsymbol Y}^{0,\varepsilon}-{\boldsymbol Y}^{0}}{\varepsilon},\frac{{\boldsymbol Z}^{0,\varepsilon}-{\boldsymbol Z}^{0}}{\varepsilon} \Bigr), \Bigl(\frac{{\boldsymbol\mu}^{\varepsilon}-{\boldsymbol\mu}}{\varepsilon},\frac{{\boldsymbol u}^{\varepsilon}-{\boldsymbol u}}{\varepsilon},\frac{{\boldsymbol v^{0,\varepsilon}-\boldsymbol v^{0}}}{\varepsilon} \Bigr)\right).
\end{split}
\end{equation*}

\noindent {\it First Step.}  Let us first solve the following forward system
\begin{equation}
\label{eq:forward:forward:diff:1}
\begin{split}
&\ud \delta X_t^0 = - \delta \bigl[ \nabla_p H^0(X_t^0,\nabla_{x_0} w^0(t,X_t^0)) \bigr] \ud t,
\\
&\partial_t \delta \mu_t - \tfrac12 \Delta_x \delta \mu_t - {\rm div}_x \Bigl(  
\nabla_p H(\cdot ,\nabla_x u_t(\cdot))
 \delta \mu_t \Bigr) 
-
{\rm div}_x \Bigl(  
\delta \bigl[
\nabla_p H(\cdot ,\nabla_x \mathcal{U}(t,X_t^0,\cdot,\mu_t))
\bigr] \mu_t
\Bigr) 
=0,  
\end{split}
\end{equation}
for $t \in [0,T]$, 
with the two initial conditions 
$\delta X_0^0=x_0'-x_0$ and 
$\delta \mu_0= \mu'-\mu$. Above, the second equation is formally obtained from the forward equation in \eqref{eq:minor:diff:2} by replacing the term ${\rm div}_x(
\delta [
\nabla_p H(\cdot ,\nabla_x u_t(\cdot))
] \mu_t
)$ by ${\rm div}_x (
\delta [
\nabla_p H(\cdot ,\nabla_x \mathcal{U}(t, X_t^0, \cdot, \mu_t))
] \mu_t
)$.
Since $\nabla_{x_0} w^0$ is $x_0$-continuously differentiable with a bounded derivative 
(see \eqref{eq:auxiliary:HJB}), we easily deduce from the theory of ODEs that the first equation in \eqref{eq:forward:forward:diff:1} admits a unique solution ${\delta\boldsymbol X^0}$ in the sense of item 1 in Definition \ref{def:linear forward-backward=MFG:solution}. It satisfies $\sup_{t\in [0,T]}|\delta X_t^0|\leq C_T| x_0'-x_0|$. Moreover, 
${\mathbb P}^0$-almost surely, 
\begin{equation}\label{eq:stability:X0}
\sup_{t\in [0,T]}|\Delta X_t^{0,\varepsilon}|\leq C_T| x_0'-x_0|,\quad\lim_{\varepsilon\to 0}\sup_{t\in [0,T]}\left|X_t^{0,\varepsilon}-X_t^0\right|=0 \quad\text{and}\quad \lim_{\varepsilon\to 0}\sup_{t\in [0,T]}\left|\Delta X_t^{0,\varepsilon}-\delta X_t^0\right|=0.
\end{equation} 

Now let us turn to the second equation in \eqref{eq:forward:forward:diff:1}, which we prove to be solvable by linearizing the 
forward equation in \eqref{eq:minor:FBG:2}. To do so, we represent the solutions by means on an SDE, very much in the spirit of Lemma
\ref{lem:alpha:weak:solution}. 
Let $(\Omega,{\mathcal F},{\mathbb F},{{\mathbb P}})$
be a filtered probability space (different from $(\Omega^0,{\mathcal F}^0,{\mathbb F}^0,{{\mathbb P}}^0)$)
equipped with an ${\mathbb R}^d$-valued ${\mathbb F}$-Brownian $(B_t)_{0 \le t \le T}$ 
and two ${\mathbb R}^d$-valued random variables $\xi$ and $\xi^{\varepsilon}$
satisfying 
${\mathbb P} \circ \xi^{-1}=\mu_0=\mu$, 
 ${\mathbb P} \circ (\xi^{\varepsilon})^{-1}=\mu_0^\varepsilon=\mu+\varepsilon(\mu'-\mu)$ and 
 $\mathbb E[|\xi^\varepsilon-\xi|]=\mathbb{W}_1(\mu_0^\varepsilon,\mu_0)$. Let also $(X_t^\varepsilon)_{0 \le t \le T},(X_t)_{0 \le t \le T}$ be the solutions to
\begin{align} 
& X_t^{\varepsilon}=\xi^\varepsilon-\int_0^t\nabla_pH\left(X_s^{\varepsilon},\nabla_x \mathcal{U}(s,X_s^{0,\varepsilon},X_s^{\varepsilon},\mu_s^\varepsilon)\right)\ud s+B_t, \quad t \in [0,T], 
\label{eq:Xtvarepsilon:nablapH}
\\
& X_t=\xi-\int_0^t\nabla_pH\left(X_s,\nabla_x \mathcal{U}(s,X_s^{0},X_s,\mu_s)\right)\ud s+B_t, \quad t \in [0,T].
\label{eq:Xt0:nablapH}
\end{align} 
Then ${\mathbb P} \circ (X_t^{\varepsilon})^{-1} = \mu_t^{\varepsilon}$ and ${\mathbb P} \circ X_t^{-1} = \mu_t$ for any $t\in [0,T]$, ${\mathbb P}^0$-almost surely.
 We have
\begin{equation*}
\mathbb E\left[|X_t^{\varepsilon}-X_t|\right]\leq\mathbb E\left[|\xi^\varepsilon-\xi \vert \right]+\mathbb E\bigg[\int_0^t \Bigl|\nabla_pH\left(X_s^{\varepsilon},\nabla_x \mathcal{U}(s,X_s^{0,\varepsilon},X_s^{\varepsilon},\mu_s^\varepsilon)\right)
-\nabla_pH\left(X_s,\nabla_x \mathcal{U}(s,X_s^{0},X_s,\mu_s)\right)\Bigr|\ud s\bigg].
\end{equation*}
Since ${\mathcal U}(t,\cdot,\cdot,\cdot)$ belongs to ${\mathscr C}(\kappa,1,2)$ for every $t \in [0,T]$ and for some 
$\kappa >0$, the gradient $\nabla_x {\mathcal U}$ is Lipschitz in the variable $(x_0,x,\mu)$ (with 
${\mathcal P}({\mathbb T}^d)$ being equipped with 
${\mathbb W}_1$), uniformly in $t \in [0,T]$. 
By \eqref{eq:stability:X0} and Gronwall's lemma, we  deduce that, ${\mathbb P}^0$-almost surely,
\begin{equation}
\label{eq:stability:muuu}
\lim_{\varepsilon\to 0}\sup_{t\in [0,T]}\mathbb{W}_1(\mu_t^{\varepsilon},\mu_t)=0.
\end{equation} 
In fact, we claim that the result also holds in total variation. 
When $\mu_0^\varepsilon=\mu$ (i.e., $\mu'=\mu$), 
this follows from 
\cite[(3.1) and Proof of Theorem 2.4]{Lacker_ECP}
(which rely on Pinsker's inequality and Girsanov theorem) 
together with 
\eqref{eq:stability:X0}
and 
\eqref{eq:stability:muuu}. 
When $\mu'\not =\mu$, one must introduce 
the solution $\hat{\boldsymbol X}^\varepsilon$ 
to the SDE 
\eqref{eq:Xtvarepsilon:nablapH} when initialized from 
$\xi$ at time $0$. 
Then, 
\cite[(3.1) and Proof of Theorem 2.4]{Lacker_ECP}
say that, 
${\mathbb P}^0$-almost surely,
\begin{equation}
\label{eq:stability:muuu:TV}
\lim_{\varepsilon\to 0}\sup_{t\in [0,T]}d_{\rm TV}\Bigl( {\mathbb P} \circ (\hat{X}_t^{\varepsilon})^{-1} ,\mu_t\Bigr)=0.
\end{equation} 
Also, since $d_{\rm TV}(\mu_0^{\varepsilon},\mu_0) = 
d_{\rm TV}((1-\varepsilon) \mu + \varepsilon \mu',\mu) \leq 4 \varepsilon$, we can now choose 
$\xi^{\varepsilon}$ in 
\eqref{eq:Xtvarepsilon:nablapH} such that ${\mathbb P}(\{ \xi = \xi^\varepsilon\})\geq 1- 4 \varepsilon$. Then, it is easy to see that 
${\mathbb P}(\{ \hat{X}_t^{\varepsilon} = X_t^\varepsilon \})\geq 1- 4 \varepsilon$, for all 
$t \in [0,T]$. Since ${\mathbb P} \circ (\xi^{\varepsilon})^{-1}$ is still required to be equal to 
$\mu_0^{\varepsilon}$, this does not change the flow
$({\mathbb P} \circ (X_t^{\varepsilon})^{-1})_{0 \le t \le T}$, 
which is still equal to 
$(\mu_t^{\varepsilon})_{0 \le t \le T}$. We deduce that, ${\mathbb P}^0$-almost surely,
\begin{equation}
\label{eq:stability:muuu:TV}
\lim_{\varepsilon\to 0}\sup_{t\in [0,T]}d_{\rm TV}(\mu_t^{\varepsilon},\mu_t)=0.
\end{equation} 
We note that $\triangle{\boldsymbol\mu}^{\varepsilon}
= (\triangle \mu^{\varepsilon}_t)_{0 \le t \le T}$ satisfies
(in a weak sense similar to 
\eqref{eq:weak:sense:deltamu}) 
\begin{equation}
\label{eq:equation:triangle:mu_tvarepsilon}
\begin{split}
&\partial_t \left(\triangle{\mu}_t^{\varepsilon} \right)- \tfrac12 \Delta_x \left(\triangle{\mu}_t^{\varepsilon}\right) 
\\
&\hspace{15pt} - {\rm div}_x \bigl(  
\nabla_p H(\cdot ,\nabla_x u_t(\cdot))
\triangle{\mu}_t^{\varepsilon}\bigr) -
{\rm div}_x \Bigl(  \mu_t^{\varepsilon}\Bigl[\Gamma_t^{1,\varepsilon}(\cdot)\triangle X_t^{0,\varepsilon} +\Big(\Gamma_t^{2,\varepsilon}(\cdot,\cdot),\triangle{\mu}_t^{\varepsilon}\Big)\Bigr]\Bigr) 
=0,
\end{split}
\end{equation} 
where 
\begin{equation*}
\begin{split}
&\Gamma_t^{1,\varepsilon}(x):=\left(\int_0^1\nabla_{pp}^2 H\Bigl(x,
\theta\nabla_x 
\mathcal{U}(t,X_t^{0,\varepsilon},x,\mu_t^{\varepsilon})
+(1-\theta)\nabla_x  
\mathcal{U}(t,X_t^{0},x,\mu_t)
\Bigr)\ud\theta\right)\\
&\qquad\qquad\cdot\left(\int_{0}^1\nabla_{xx_0}^2 \mathcal{U}\Bigl(t,\theta X_t^{0,\varepsilon}+(1-\theta)X_t^0,x,\theta\mu_t^{\varepsilon}+(1-\theta)\mu_t\Bigr)\ud\theta\right),\\
&\Gamma_t^{2,\varepsilon}(x,y):=\left(\int_0^1\nabla_{pp}^2 H\Bigl(x,
\theta\nabla_x 
\mathcal{U}(t,X_t^{0,\varepsilon},x,\mu_t^{\varepsilon})
+(1-\theta)\nabla_x  
\mathcal{U}(t,X_t^{0},x,\mu_t) \Bigr)
d\theta\right)\\
&\qquad\qquad\quad\,\,\,\cdot\left( \int_{0}^1\nabla_{x}\delta_\mu\mathcal{U}\Bigl(t,\theta X_t^{0,\varepsilon}+(1-\theta)X_t^0,x,\theta\mu_t^{\varepsilon}+(1-\theta)\mu_t,y\Bigr)\ud\theta\right),
\end{split}
\end{equation*}
the dots right above being understood as matricial products and 
the duality bracket between 
$\triangle{\mu}_t^{\varepsilon}$ and 
$\Gamma_t^{2,\varepsilon}(\cdot,\cdot)$ in 
\eqref{eq:equation:triangle:mu_tvarepsilon} being acting on the variable $y$ of the latter. 
Following the proof of Lemma \ref{lem:forward:estimate} and using the fact that ${\mathcal U}(t,\cdot,\cdot,\cdot)\in {\mathscr C}(\kappa, 1 ,2)$, we get, ${\mathbb P}^0$-almost surely, for any $t \in [0,T]$, 
\begin{equation*} 
\begin{split}
\| \triangle \mu_t^{\varepsilon} \|_{-1}
&\leq 
 C \biggl[ \|  \mu'-\mu\|_{-1} + \int_0^t 
\Bigl[ \bigl\| \Gamma_s^{1,\varepsilon}(\cdot) \bigr\|_{L^\infty} 
\bigl\vert \triangle X_s^{0,\varepsilon} \bigr\vert +
\Bigl\| 
\Big(\Gamma_s^{2,\varepsilon}(\cdot,\cdot),\triangle{\mu}_s^{\varepsilon}\Big)
\Bigr\|_{L^\infty} 
\Bigr] \ud s \biggr]
\\
&\leq  
 C \biggl[ \|  \mu'-\mu\|_{-1} + \int_0^t 
 \Bigl( 
|\Delta X_s^{0,\varepsilon}|+\|\Delta \mu_s^{\varepsilon}\|_{-1} \Bigr)  \ud s \biggr].
\end{split}
\end{equation*}
By 
\eqref{eq:stability:X0}
and 
Gronwall's lemma, we obtain, with probability 1 under ${\mathbb P}^0$,
\begin{equation}
\label{eq:stability:muuu:2}
\sup_{t\in [0,T]}\| \triangle \mu_t^{\varepsilon} \|_{-1} \leq  
 C  \biggl[|x_0'-x_0|+ \|\mu'-\mu\|_{-1} \biggr].
\end{equation}
Now, for any sequence of reals $(\varepsilon_k)_{k\geq 1}$ converging to $0$, we let 
  \begin{equation*}
  \begin{split}
  &\triangle^2{\mu}_t^{k',k}:=\triangle{\mu}_t^{\varepsilon_{k'}}-\triangle{\mu}_t^{\varepsilon_k},\quad \triangle^2{X}_t^{0,k',k}:=\triangle{X}_t^{0,\varepsilon_{k'}}-\triangle X_t^{0,\varepsilon_k},\\
  &\triangle{\Gamma_t^{1,k',k}}:=\Gamma_t^{1,\varepsilon_{k'}}-\Gamma_t^{1,\varepsilon_k},\,\,\,\quad
  \triangle{\Gamma_t^{2,k',k}}:=\Gamma_t^{2,\varepsilon_{k'}}-\Gamma_t^{2,\varepsilon_{k}}, \quad t \in [0,T]. 
  \end{split}
  \end{equation*}
Then, we can verify that  $\triangle^2{\boldsymbol\mu}^{k',k}=
(\triangle^2{\mu}_t^{k',k})_{0 \le t \le T}$ satisfies
\begin{align*}
&\partial_t \left(\triangle^2{\mu}_t^{k',k} \right)- \tfrac12 \Delta_x \left(\triangle^2{\mu}_t^{k',k} \right) - {\rm div}_x \left(  
\nabla_p H(\cdot ,\nabla_x u_t(\cdot))
\triangle^2{\mu}_t^{k',k} \right) \\
&\hspace{15pt}  
-{\rm div}_x \Bigl(  \mu_t^{\varepsilon_k} \Bigl[\Gamma_t^{1,\varepsilon_k}(\cdot)\triangle^2 X_t^{0,k',k} +\Big(\Gamma_t^{2,\varepsilon_k}(\cdot,\cdot),\triangle^2{\mu}_t^{k',k}\Big)\Bigr]\Bigr)\\
&\hspace{15pt}  
 -{\rm div}_x \Bigl(  \mu_t^{\varepsilon_k}  \Bigl[\triangle\Gamma_t^{1,k',k}(\cdot)\triangle X_t^{0,\varepsilon_{k'}} +\Big(\triangle\Gamma_t^{2,k',k}(\cdot,\cdot),\triangle{\mu}_t^{\varepsilon_{k'}}\Big)\Bigr]\Bigr) 
 \\
&\hspace{15pt}  
 -{\rm div}_x \Bigl( \bigl(  \mu_t^{\varepsilon_{k'}} -  \mu_t^{\varepsilon_k} 
 \bigr) \Bigl[ \Gamma_t^{1,\varepsilon_{k'}}(\cdot)\triangle X_t^{0,\varepsilon_{k'}} +\Big( \Gamma_t^{2,\varepsilon_{k'}}(\cdot,\cdot),\triangle{\mu}_t^{\varepsilon_{k'}}\Big)\Bigr]\Bigr) 
=0, \quad t \in [0,T].
\nonumber
\end{align*} 
Again, following the proof of Lemma \ref{lem:forward:estimate}, we can show that, ${\mathbb P}^0$-almost surely, 
for all $t \in [0,T]$, 
\begin{equation*} 
\begin{split}
\| \triangle^2 \mu_t^{k',k} \|_{-1}
&\leq 
C \int_0^t 
\biggl[ \bigl\| \Gamma_s^{1,\varepsilon_k}(\cdot) \bigr\|_{L^\infty} 
\bigl\vert \triangle^2 X_t^{0,k',k} \bigr\vert +
\bigl\| \triangle \Gamma_s^{1,k',k}(\cdot) \bigr\|_{L^\infty} 
\bigl\vert \triangle X_t^{0,\varepsilon_{k'}} \bigr\vert 
\\
&\hspace{30pt} +
\Bigl\| 
\Big(\Gamma_s^{2,\varepsilon_k}(\cdot,\cdot),\triangle^2 {\mu}_s^{k',k}\Big)
\Bigr\|_{L^\infty} 
+
\Bigl\| 
\Big(\triangle \Gamma_s^{2,k',k}(\cdot,\cdot),\triangle{\mu}_s^{\varepsilon_{k'}}\Big)
\Bigr\|_{L^\infty} 
\\
&\hspace{30pt} + {d}_{\rm TV} \bigl(\mu_s^{\varepsilon_{k'}},\mu_s^{\varepsilon_k}\bigr) 
\Bigl( \bigl\| \Gamma_s^{1,\varepsilon_{k'}}(\cdot) \bigr\|_{L^\infty} 
\bigl\vert \triangle X_s^{0,\varepsilon_{k'}} \bigr\vert +
\Bigl\|
\Bigl(  \Gamma_s^{2,\varepsilon_{k'}}(\cdot,\cdot), 
  \triangle \mu_s^{\varepsilon_{k'}} 
\Bigr)
\Bigr\|_{L^\infty}
\biggr] \ud s.
\end{split}
\end{equation*}
Using the fact that 
${\mathcal U}(t,\cdot,\cdot)$ belongs to ${\mathscr D}(\kappa,1,2)$ for all 
$t \in [0,T]$, we deduce from 
\eqref{eq:stability:X0}
and
\eqref{eq:stability:muuu}
that, ${\mathbb P}^0$-almost surely, 
\begin{equation*} 
\lim_{n \rightarrow \infty} \sup_{k,k' \geq n} 
\sup_{s \in [0,T]} 
\Bigl( 
\bigl\| \triangle \Gamma_s^{1,k',k}(\cdot) \bigr\|_{L^\infty} 
+
\sup_{x \in {\mathbb T}^d} 
\bigl\|
\triangle \Gamma_s^{2,k',k}(x,\cdot)
\bigr\|_1 
\Bigr) 
= 0. 
\end{equation*} 
Moreover, by \eqref{eq:stability:X0} again, ${\mathbb P}^0$-almost surely, 
\begin{equation*}
\lim_{n \rightarrow \infty} \sup_{k,k' \geq n} 
\sup_{s \in [0,T]}  
\bigl\vert \triangle^2 X_s^{0,k',k} \bigr\vert
= 0. 
\end{equation*} 
By 
\eqref{eq:stability:muuu:TV}
and
\eqref{eq:stability:muuu:2},  we deduce that, ${\mathbb P}^0$-almost surely, there exists a positive sequence $(\delta_n)_{n \geq 1}$, 
converging to $0$, such that, for all $k,k' \geq n$
and for all $t \in [0,T]$, 
\begin{equation*} 
\begin{split}
\| \triangle^2 \mu_t^{k',k} \|_{-1}
&\leq \delta_n
+
C \int_0^t 
\| \triangle^2 \mu_s^{k',k} \|_{-1}
\ud s.
\end{split}
\end{equation*}
And then, by Gronwall's lemma, 
${\mathbb P}^0$-almost surely, 
$(\triangle {\boldsymbol \mu}^{\varepsilon_k})_{k \geq 1}$ is a Cauchy sequence in ${\mathcal C}([0,T],\mathcal{C}^{-1}(\mathbb T^d))$. Therefore, there exists a 
continuous 
 $\mathbb F^0$-progressively measurable process $\delta {\boldsymbol\mu}=(\delta \mu_t)_{0 \le t \le T}$ with values in $\mathcal{C}^{-1}(\mathbb T^d)$ such that, 
 ${\mathbb P}^0$-almost surely, 
\begin{equation*}
 \lim_{k \rightarrow \infty} 
 \sup_{t \in [0,T]} 
 \bigl\| 
 \triangle \mu_t^k - \delta \mu_t \bigr\|_{-1} = 0. 
\end{equation*}
Passing to the limit in 
\eqref{eq:equation:triangle:mu_tvarepsilon}, we deduce that 
$\delta {\boldsymbol \mu}$ satisfies 
(in the same sense as \eqref{eq:weak:sense:deltamu})
\begin{equation}
\label{eq:equation:triangle:mu_tvarepsilon:U}
\begin{split}
&\partial_t \left(\delta \mu_t \right)- \tfrac12 \Delta_x \left( \delta \mu_t\right) 
- {\rm div}_x \bigl(  
\nabla_p H(\cdot ,\nabla_x u_t(\cdot))
\delta \mu_t \bigr)
\\
&\hspace{15pt}  -
{\rm div}_x \Bigl(  \mu_t \Bigl[
\nabla_{pp}^2 H(\cdot ,\nabla_x u_t(\cdot))
\nabla^2_{xx_0} 
{\mathcal U}(t,X_t^0,x,\mu_t)
 \delta X_t^0 + 
 \Big(\nabla_{pp}^2 H(\cdot ,\nabla_x u_t(\cdot)) \nabla_x \delta_\mu {\mathcal U}(t,\cdot,\cdot),\delta {\mu}_t \Big)\Bigr]\Bigr) 
\\
&=0, \quad t \in [0,T]. 
\end{split}
\end{equation} 
Moreover, passing to the limit in 
\eqref{eq:stability:muuu:2}, we obtain 
 \begin{equation}\label{eq:stability:deltamu}
\sup_{t\in [0,T]}\| \delta \mu_t \|_{-1} \leq  
 C \biggl[|x_0'-x_0|+ \|\mu'-\mu\|_{-1} \biggr].
\end{equation}
This proves that $\delta {\boldsymbol\mu}$ solves the second equation in \eqref{eq:forward:forward:diff:1} in the sense of item 2 in Definition \ref{def:linear forward-backward=MFG:solution}. 
%and, with probability 1 under ${\mathbb P}^0$,
\vskip 4pt

\noindent  {\it Second Step.} We now address  the linearized backward equations
\begin{align}
&\ud \delta Y_t^0 = 
 - \delta \bigl[f_t^0(X_t^0,\mu_t)\bigr]  \ud t
 \nonumber
\\
&\hspace{15pt}
+
\Bigl[ -\bigl( Z_t^0 - \nabla_{x_0} w^0(t,X_t^0) \bigr) \cdot \delta \bigl[ \nabla_p H^0(X_t^0,\nabla_{x_0} w^0(t,X_t^0))\bigr]
-
\delta \bigl[L^0(X_t^0,-\nabla_p H^0(X_t^0,\nabla_{x_0}w^0(t,X_t^0))\bigr]
\Bigr] \ud t
\nonumber
\\
&\hspace{15pt} +  \bigl( \nabla_{x_0} H^0(X_t^0,Z_t^0) - \nabla_{x_0} H^0(X_t^0,\nabla_{x_0} w^0(t,X_t^0))\bigr) \cdot \delta X_t^0
  \ud t
  \nonumber
  \\
  &\hspace{15pt} + \bigl( \nabla_p H^0(X_t^0,Z_t^0) - \nabla_p H^0(X_t^0,\nabla_{x_0} w^0(t,X_t^0))
\bigr) \cdot \delta Z_t^0   \ud t
\label{eq:backward:backward:diff:2:Y0}
\\
&\hspace{15pt}+ 
\sigma_0 \delta Z_t^0 \cdot \ud   B_t^0, \quad t \in [0,T], 
\nonumber
\\
 &\delta Y_T^0 = 
 \delta \bigl[ g^0(X_T^0,\mu_T) \bigr],
 \nonumber 
\end{align}
and
\begin{equation}
\label{eq:backward:backward:diff:2:ut}
\begin{split}
&\ud_t \delta u_t(x) = \Bigl( -  \tfrac12 \Delta_x \delta u_t(x) +   \nabla_p H(x,\nabla_x u_t(x)) \cdot \nabla_x \delta u_t(x) 
- \delta \bigl[ f_t(X_t^0,x,\mu_t) \bigr]
\Bigr) \ud t,
\\
&
\hspace{15pt}   
+ \Bigl( \nabla_p H^0(X_t^0,Z_t^0) - \nabla_p H^0(X_t^0,\nabla_{x_0} w^0(t,X_t^0))  \Bigr)\cdot \delta v_t^0(x) 
\ud t 
\\
&
\hspace{15pt}   
+ \delta \Bigl[ \Bigl( \nabla_p H^0(X_t^0,Z_t^0) - \nabla_p H^0(X_t^0,\nabla_{x_0} w^0(t,X_t^0))  \Bigr)
\Bigr]
\cdot  v_t^0(x) 
\ud t 
\\
&\hspace{15pt} 
+ \sigma_0 \delta v_t^0(x) \cdot \ud   B_t^0, \quad (t,x) \in [0,T] \times {\mathbb T}^d, 
\\
&\delta u_T(x) = 
 \delta \bigl[ g(X_T^0,x,\mu_T) \bigr], \quad x \in {\mathbb T}^d. 
\end{split}
\end{equation}
We first address 
\eqref{eq:backward:backward:diff:2:Y0}. 
Recalling that $Z_t^0=\nabla_{x_0}\mathcal{U}^0(t,X_t^0,\mu_t)$ is bounded, it follows from the standard BSDE theory that \eqref{eq:backward:backward:diff:2:Y0} admits a solution $({\delta\boldsymbol Y}^0,{\delta\boldsymbol Z}^0)\in {\mathscr S}^2(\mathbb R)\times{\mathscr H}^2(\mathbb R^d)$. Moreover, 
following 
\cite{PardouxPeng92}, 
we have
\begin{equation}
\begin{split}
&\lim_{\varepsilon\to 0}\mathbb E^0\biggl[\sup_{t\in [0,T]}  |\triangle Y_t^{0,\varepsilon}-\delta Y_t^0|^{2}+
  \int_0^T|\triangle Z_t^{0,\varepsilon}-\delta Z_t^0|^2\ud t 
 \biggr] =0,
\end{split}
\end{equation}
which, together with the first step and the fact that 
${\mathcal U}^0(t,\cdot,\cdot) \in {\mathscr D}^0(\kappa,1)$ for all 
$t \in [0,T]$, implies
\begin{equation*}
\delta Y_t^0 =\nabla_{x_0} {\mathcal U}^0(t,X_t^0,\mu_t)\cdot\delta X_t^0+\left(\delta_{\mu}\mathcal{U}^0(t,X_t^0,\mu_t,\cdot),\delta\mu_t\right), \quad t \in [0,T].
\end{equation*}
Because ${\mathcal U}^0(t,\cdot,\cdot)\in {\mathscr C}^0(\kappa,1)$,
for all $t \in [0,T]$, and $|{\delta\boldsymbol X^0}|$ and $\| \delta \boldsymbol \mu \|_{-1}$ are bounded, we have
\begin{equation*}
\Bigl\|\sup_{0\leq t\leq T}|\delta Y_t^0|\Big\|_{L^\infty(\Omega^0,\mathbb P^0)}<\infty.
\end{equation*}
Then we can follow the proof of Lemma \ref{lem:BMO:major} to deduce that
\begin{equation*}
 \sup_{\tau}\Big \| {\mathbb E}^0\Big[ \int_{[\tau,T]} \vert \delta Z_t^0 \vert^2 \ud t \vert {\mathcal F}_\tau^0\Big] \Big\|_{L^\infty(\Omega^0,{\mathbb P}^0)} < \infty,
\end{equation*}
where the supremum is taken over all stopping times $\tau$. This justifies that $({\delta\boldsymbol Y}^0,{\delta\boldsymbol Z}^0)$ solves 
\eqref{eq:backward:backward:diff:2:Y0}
 in the sense of item 1 in Definition \ref{def:linear forward-backward=MFG:solution}.

The well-posedness of the  equation \eqref{eq:backward:backward:diff:2:ut} is obtained via \cite[Theorem 2.2]{Du:Meng}, with solutions 
in the set of 
progressively-measurable processes 
$(\delta {\boldsymbol u},\delta {\boldsymbol v}^0)$ with values in 
(the separable space) 
${\mathcal H}^1({\mathbb T}^d) \times [{\mathcal H}^1({\mathbb T}^d)]^d$,
such that $\delta {\boldsymbol u}$ has continuous trajectories and 
for which the following bound holds true:
\begin{equation*}
\mathbb E^0\left[\sup_{0\leq t\leq T}\|\delta u_t\|_{ 1, 2}^2+\int_0^T\|\delta v^0_t\|_{1, 2}^2\ud t\right]<\infty.
\end{equation*}
%Recall that  ${\mathcal H}^{\lfloor \textrm{\rm \mathfrc{s}} \rfloor}({\mathbb T}^d)$ can be embedded in 
%${\mathcal C}^{\textrm{\rm  \mathfrc{s}} - d/2-1}({\mathbb T}^d)$ and is separable 
%(equipped with $\| \cdot \|_{\textrm{\rm  \mathfrc{s}} - d/2-1}$). Then
%\begin{equation*}
%\mathbb E^0\left[\sup_{0\leq t\leq T}\|\delta u_t\|_{ \curss-d/2-1}^2+\int_0^T\|\delta v^0_t\|_{ \curss-d/2-1}^2\ud t\right]<\infty.
%\end{equation*}
Note that
\begin{equation*}
\begin{split} 
&\ud_t\left(\triangle u^\varepsilon_t\right) = \Bigl[ -  \tfrac12 \Delta_x\left(\triangle u^\varepsilon_t\right) +   \Upsilon_t^\varepsilon(x) \cdot \nabla_x \triangle u^\varepsilon_t
- \nabla_xf_t^\varepsilon(x)\cdot\triangle X_t^{0,\varepsilon}-\left(\delta_\mu f_t^\varepsilon(x,\cdot) ,\triangle \mu^{\varepsilon}_t\right)
\Bigr] \ud t
\\
&
\hspace{15pt}   
+ \Bigl( \nabla_p H^0(X_t^0,Z_t^0) - \nabla_p H^0(X_t^0,\nabla_{x_0} w^0(t,X_t^0))  \Bigr)\cdot \triangle v_t^{0,\varepsilon}\ud t 
\\
&
\hspace{15pt}   
+ \Bigl[(j_t^{1,\varepsilon}-j_t^{3,\varepsilon})\triangle X_t^{0,\varepsilon}+j_t^{2,\varepsilon}\triangle Z_t^{0,\varepsilon}\Bigr]
\cdot  v_t^{0,\varepsilon}(x) 
\ud t +\sigma_0\triangle v_t^{0,\varepsilon} \cdot \ud   B_t^0,
\quad (t,x) \in [0,T] \times {\mathbb T}^d, 
\\
&\triangle u_T^{\varepsilon}(x) = \nabla_x g^\varepsilon(x)\cdot\triangle X_T^{0,\varepsilon} + \left(\delta_\mu g^\varepsilon(x,\cdot) ,\triangle \mu^{\varepsilon}_T\right),
\end{split}
\end{equation*}
where 
\begin{equation*}
\begin{split}
&\Upsilon_t^\varepsilon(x):=\int_0^1\nabla_{p}H\bigl(x,\theta\nabla_x u_t^{\varepsilon}(x)+(1-\theta)\nabla_x u_t(x)\bigr)\ud\theta,
\\
&\nabla_xf_t^\varepsilon(x):=\int_0^1\nabla_{x}f_t\bigl(\theta X_t^{0,\varepsilon}+(1-\theta)X_t^0,x,\theta \mu_t^{\varepsilon}+(1-\theta) \mu_t\bigr)\ud\theta,
\quad \text{and similarly for } \nabla_x g^\varepsilon(x)
\\
&\delta_\mu f_t^\varepsilon(x,y):=\int_0^1\delta_{\mu}f_t\bigl(\theta X_t^{0,\varepsilon}+(1-\theta)X_t^0,x,\theta\mu_t^{\varepsilon}+(1-\theta)\mu_t,y\bigr)\ud\theta, 
\quad \text{and similarly for } \partial_\mu g^\varepsilon(x,y)
\\
&j_t^{1,\varepsilon}:=\int_0^1\nabla_{x_0p}^2H^0\bigl(\theta X_t^{0,\varepsilon}+(1-\theta)X_t^0,\theta Z_t^{0,\varepsilon}+(1-\theta)Z_t^0\bigr)\ud\theta,
\\
&j_t^{2,\varepsilon}:=\int_0^1\nabla_{pp}^2H^0\bigl(\theta X_t^{0,\varepsilon}+(1-\theta)X_t^0,\theta Z_t^{0,\varepsilon}+(1-\theta)Z_t^0\bigr)\ud\theta,
\\
&j_t^{3,\varepsilon}:=\int_0^1[\nabla_{x_0p}^2H^0+\nabla_{pp}^2H^0\nabla^2_{x_0x_0}w^0]\bigl(\theta X_t^{0,\varepsilon}+(1-\theta)X_t^0,\nabla_{x_0}w^0(t,\theta X_t^{0,\varepsilon}+(1-\theta)X_t^0)\bigr)\ud \theta.
\end{split}
\end{equation*}
Using the first step together with the representation formulas
\eqref{eq:proof:prop:4.2:representation:solutions}, there is no difficulty in proving 
that 
\begin{equation*} 
\begin{split}
&\lim_{\varepsilon \rightarrow 0} 
{\mathbb E}^0 \int_0^T \Bigl[ \bigl\| 
\bigl( \Upsilon_t^{\varepsilon} - \Upsilon_t^0 \bigr) 
\cdot 
\nabla_x \triangle u_t^{\varepsilon} \bigr\|_0^2 \Bigr] \ud t 
=0,
\\
&\lim_{\varepsilon \rightarrow 0} 
{\mathbb E}^0 \int_0^T \Bigl[ \bigl\| 
\nabla_x f_t^{\varepsilon} \cdot \triangle X_t^{0,\varepsilon} 
- 
\nabla_x f_t(X_t^0,\cdot,\mu_t) \cdot \delta X_t^{0} 
\bigr\|_0^2 \Bigr] \ud t 
=0,
\\
&\lim_{\varepsilon \rightarrow 0} 
{\mathbb E}^0 \int_0^T \Bigl[ \sup_{x \in {\mathbb T}^d} \bigl\vert  
\bigl( 
\delta_\mu f_t^{\varepsilon} , \triangle \mu_t^{\varepsilon} 
\bigr) 
- 
\bigl( \nabla_\mu f_t(X_t^0,x,\mu_t,\cdot) , \delta \mu_t 
\bigr) 
\bigr|^2 \Bigr] \ud t 
=0,
\\
&\lim_{\varepsilon \rightarrow 0} 
{\mathbb E}^0 \int_0^T 
\Bigl\| 
\Bigl[
\bigl( j_t^{1,\varepsilon} - j_t^{3,\varepsilon} \bigr) 
\triangle X_t^{0,\varepsilon} + 
j_t^{2,\varepsilon} \triangle Z_t^{0,\varepsilon}
\Bigr] \cdot v_t^{0,\varepsilon} 
- 
\Bigl[
\bigl( j_t^{1,0} - j_t^{3,0} \bigr) 
\delta X_t^{0} + 
j_t^{2,0} \delta Z_t^{0}
\Bigr] \cdot v_t^{0} 
\Bigr\|_0^2 \ud t = 0. 
\end{split} 
\end{equation*}
Proceeding in the same manner for the terminal condition, we can invoke \cite[Theorem 2.2]{Du:Meng} again to obtain
\begin{equation*}
\lim_{\varepsilon\to 0}\mathbb E^0\left[\sup_{0\leq t\leq T}\|\triangle u_t^{\varepsilon}-\delta u_t\|_{1,2}^2+\int_0^T\|\triangle v_t^{0,\varepsilon}-\delta v^0_t\|_{1,2}^2\ud t\right]=0.
\end{equation*}
Since ${\mathcal U}(t,\cdot,\cdot,\cdot)\in {\mathscr D}(\kappa,1,3+\alpha)$ for all 
$t \in [0,T]$, we can derive that 
$$\delta u_t(x)=\nabla_{x_0}\mathcal{U}(t,X_t^0,x,\mu_t)\cdot\delta X_t^0+\left(\delta_\mu\mathcal{U}(t,X_t^0,x,\mu_t,\cdot),\delta\mu_t\right),$$ for any $t \in [0,T]$ with probability 1 under ${\mathbb P}^0$,
which implies that $\sup_{0\leq t\leq T}\|\delta u_t\|_{3+\alpha}\in L^\infty(\Omega^0,{\mathbb P}^0)$.
Further applying \eqref{eq:U:priori:contintime}, we can verify that ${\delta\boldsymbol u}$ has continuous trajectories in $ {\mathcal C}^{3+\alpha}({\mathbb T}^d)$. 
Inserting the above identity in 
\eqref{eq:equation:triangle:mu_tvarepsilon:U}, we obtain
\eqref{eq:minor:diff:2}.
 %and, ${\mathbb P}^0$-almost surely, for any $t \in [0,T]$, 
%\begin{equation}
%\label{eq:relationship}
%\nabla_x\delta u_t(x)=\nabla_{xx_0}\mathcal{U}(t,X_t^0,x,\mu_t)\cdot \delta X_t^0+\left(\nabla_x\delta_\mu\mathcal{U}(t,X_t^0,x,\mu_t,\cdot),\delta\mu_t\right).
%\end{equation}
%Further using ${\mathcal U}(t,\cdot,\cdot,\cdot)\in {\mathscr C}(\kappa,\cursr,\cursr)$, we can follow the proof of Proposition \ref{prop:bmo:minor} to show
%\begin{equation*}
% \sup_{\tau}\Big \| {\mathbb E}^0\Big[ \int_{[\tau,T]} \| \delta v_t^0 \|_{\lfloor\cursr\rfloor-1,2}^2 \ud t \vert {\mathcal F}_\tau^0\Big] \Big\|_{L^\infty(\Omega^0,{\mathbb P}^0)} < \infty.
%\end{equation*}
%Further applying \eqref{eq:U:priori:contintime}, we can verify that ${\delta\boldsymbol u}$ has continuous trajectories in $ {\mathcal C}^{2}({\mathbb T}^d)$. 

Moreover, 
from the representation formula
\eqref{eq:proof:prop:4.2:representation:solutions}
together with 
\eqref{eq:stability:X0}
and
\eqref{eq:stability:muuu:2}, and the fact that ${\mathcal U}(t,\cdot,\cdot,\cdot)$ belongs to 
${\mathscr D}(\kappa,1,3+\alpha)$, we deduce that 
\begin{equation*}
\| \triangle v^{0,\varepsilon}_t \|_{3+\alpha} \leq 
C \biggl[|x_0'-x_0|+ \|\mu'-\mu\|_{-1} \biggr],
\end{equation*}
which proves by passing to the limit that 
$\delta v_t^0$ takes values in a compact subset of 
${\mathcal C}^{3+\epsilon}({\mathbb T}^d)$, for any $\epsilon \in (0,\alpha)$. 
Therefore, $\delta {\boldsymbol v}^0$, regarded as a process taking values in 
${\mathcal C}^{3+\epsilon}({\mathbb T}^d)$, is progressively-measurable (in the 
\textit{Bochner} sense). 

In the end, we obtain $({\delta\boldsymbol u},{\delta\boldsymbol v}^0)$ solves the second equation of the backward system  \eqref{eq:minor:diff:2} in the sense of item 2 in Definition \ref{def:linear forward-backward=MFG:solution}. 
%Combining \eqref{eq:forward:forward:diff:1} and \eqref{eq:backward:backward:diff:2}, and noticing the relationship \eqref{eq:relationship}, w
We finally get $(({\delta\boldsymbol X}^0,{\delta\boldsymbol Y}^0,{\delta\boldsymbol Z}^0),$ $({\delta\boldsymbol \mu},{\delta\boldsymbol u},{\delta\boldsymbol v}^0))$
is a solution to the system 
\eqref{eq:major:diff:1}--\eqref{eq:minor:diff:2} in the sense of
Definition 
\ref{def:linear forward-backward=MFG:solution}.
\end{proof}

\section{Short time analysis} 
\label{se:5}
In this section we study the local well-posedness of the
forward-backward system 
\eqref{eq:major:FB:1}--\eqref{eq:minor:FB:2}
(and then of the 
 master equations \eqref{eq:major:1}-\eqref{eq:minor:1}). 
One of the difficulties to do so is that the coefficients of 
\eqref{eq:major:FB:1}--\eqref{eq:minor:FB:2} are not Lipschitz continuous (due to the quadratic growth of the two Lagrangians $L^0$ and $L$ in the control variable $\alpha$). Conceptually, this does not cause any major trouble because
we know in the end (from the analysis carried out in the previous sections, see in particular  \eqref{eq:globalLip}) that 
$\vert Z_t^0 \vert$ and 
$\vert \nabla_x u_t \vert$ are bounded 
(by deterministic constants). 
\begin{comment}
there 
exist two constants $C_T^1>0$ (taking the one in \eqref{eq:globalLip}) and $C_T^2>0$ (the bound of $\textrm{\rm essup}_{\omega \in \Omega^0}\sup_{t\in [0,T]}\|u_t\|_s$ in Lemma \ref{lem:minor:regularity}) such that 
the following \textit{a priori} estimates hold true for any arbitrary solution: 
\begin{equation}
\label{eq:local:bdd}
\textrm{\rm essup}_{\omega \in \Omega^0}\sup_{t\in [0,T]}|Z_t^0|=|\nabla_{x_0}V^0(t,X_t^0,\mu_t)|\leq C_T^1\quad\text{and}\quad \textrm{\rm essup}_{\omega \in \Omega^0}\sup_{t\in [0,T]} \|u_t\|_{s}\leq C_T^2,
\end{equation}
\end{comment}
However, these bounds cannot be directly used in the small time analysis.

To circumvent this difficulty, we introduce two auxiliary functions $\hat{L}^0 : {\mathbb R}^d \times {\mathbb R}^d \rightarrow {\mathbb R}$ and 
$\hat{H} : {\mathbb T}^d \times {\mathbb R}^d \rightarrow {\mathbb R}$ that are globally Lipschitz continuous. 
The function $(x_0,p) \mapsto \hat{L}^0(x_0,p)$ is understood as an approximation of the function 
$(x_0,p) \mapsto  L^0(x_0,-\nabla_p H^0(x_0,p))$ and the function $(x,p) \in {\mathbb T}^d \times 
{\mathbb R}^d \mapsto \hat{H}(x,p)$ is understood as an approximation of $H$. 

In a first time, we thus focus on the systems of two forward-backward equations (set on 
an arbitrary 
filtered probability space 
$(\Omega^0,{\mathcal F}^0,{\mathbb F}^0,{\mathbb P}^0)$
satisfying the usual conditions and 
equipped with a 
Brownian motion $(B_t^0)_{0 \leq t \leq T}$ with values in ${\mathbb R}^d$):
\begin{equation} 
\label{eq:major:FB:1:small}
\begin{split} 
&\ud X_t^0 = - \nabla_p H^0 \bigl( X_t^0 , Z_t^0\bigr)  \ud t + \sigma_0 \ud B_t^0, \quad t\in [0,T],
\\
&\ud Y_t^0 =   - \Bigl( f_t^0(X_t^0,\mu_t) + \hat{L}^0\bigl(X_t^0,  Z_t^0\bigr) 
\Bigr) \ud t + 
\sigma_0 Z_t^0 \cdot \ud B_t^0, \quad t \in [0,T], 
\\
&X_0^0=x_0, \quad Y_T^0 = g^0(X_T^0,\mu_T),
\end{split}
\end{equation} 
and
\begin{equation}
\label{eq:minor:FB:2:small}
\begin{split} 
&\partial_t \mu_t - \tfrac12 \Delta_x \mu_t - {\rm div}_x \Bigl( \nabla_p \hat{H} \bigl(\cdot, \nabla_x u_t\bigr) \mu_t \Bigr) =0, 
\quad  \ (t,x)\in [0,T] \times {\mathbb T}^d, 
\\
&\ud_t u_t(x) =\Bigl(  -  \tfrac12 \Delta_x u_t(x) + \hat{H}\bigl(x,\nabla_x u_t(x) \bigr)  - f_t(X_t^0,x,\mu_t)  \Bigr) \ud t
+  \ud m_t(x), \quad (t,x) \in [0,T] \times {\mathbb T}^d, 
\\
&\mu_0=\mu,\quad u_T(x) = g(X_T^0,x,\mu_T), \quad x \in {\mathbb T}^d.  
\end{split}
\end{equation}
This is only in the end of the section that we return back to the original 
equations \eqref{eq:major:FB:1}
and
\eqref{eq:minor:FB:2}, see Subsection \ref{subse:original:coeff}. 
Notice in particular that, differently from 
\eqref{eq:minor:FB:2}, the martingale part in the backward equation of 
\eqref{eq:minor:FB:2:small}
is not represented as 
a stochastic integral (as there is no need at this stage). 
\vskip 4pt

We thus introduce the following variant of Assumption \hyp{A}: 
\vskip 4pt

 \noindent \textbf{Assumption \^A}. 
 There exist four reals ${\mathfrak L}>0$, 
$\kappa>0$, 
 $\hat{\kappa}>0$ and ${\mathbb s} > 3$ (${\mathbb s} \not \in {\mathbb N}$) such that 
\vskip 4pt

\noindent {\bf (\^A1)}
The function $(f^0_t)_{0 \leq t \leq T}$ 
satisfies
${\mathscr D}^0(\kappa, \mathbb s )$
and 
the function $(f_t)_{0 \le t \le T}$ 
satisfies
${\mathscr D}(\kappa,{\mathbb s},{\mathbb s})$. 
\vskip 4pt

\noindent {\bf (\^A2)} The function $g^0$ 
satisfies
${\mathscr C}^0({\mathfrak L}, \mathbb s )$
and 
${\mathscr D}^0(\hat{\kappa}, \mathbb s )$,
and 
the function $g$ 
satisfies
${\mathscr C}({\mathfrak L},{\mathbb s},{\mathbb s})$
and
${\mathscr D}(\hat{\kappa},{\mathbb s},{\mathbb s})$. 

\vskip 4pt
\noindent {\bf (\^A3)} 
The functions 
$H^0$ and
$\hat{L}^0$  have continuous joint derivatives up to the order 2 in $x_0$ and $\lfloor {\mathbb s} \rfloor +1$ in 
 $p$, and 
 the function 
 $\hat{H}$ has continuous joint derivatives up to the order $\lfloor {\mathbb s} \rfloor +1$ in 
 $(x,p)$.  
 All the existing derivatives (of order greater than 1) of $\hat{L}^0$ and 
 $\hat{H}$ are bounded by $\kappa$
 and the quantity $ \hat{L}^0(x_0,p)$ is bounded by 
 $\kappa( 1+ \vert p \vert)$. 
 The quantity $H^0(x_0,p)$ is bounded by 
 $\kappa(1+\vert p\vert^2)$, the gradient $\nabla_{p} H^0(x_0,p)$ 
  is bounded by $\kappa( 1+ \vert p \vert)$.   
 All the other existing derivatives of $H^0$ are bounded by $\kappa$.
 
\vskip 4pt

\noindent {\bf (\^A4)} 
The function  $\hat{H}$
satisfies the coercivity condition 
$\nabla^2_{pp} \hat{H}(x,p) >0$ (in the sense of positive definite matrices), for all $(x,p)
\in {\mathbb T}^d \times {\mathbb R}^d$. 
For any $r>0$, we let 
\begin{equation*}
\zeta(r) := \inf_{(x,p) \in {\mathbb T}^d \times {\mathbb R}^d : \vert p \vert \leq r}
\inf_{\rho \in {\mathbb R}^d : \vert \rho \vert =1}
\rho \cdot \bigl( \nabla^2_{pp} \hat{H}(x,p) \rho \bigr) >0.
\end{equation*}

\begin{comment}
Note that the statement of Lemma \ref{lem:minor:regularity} does not need $\nabla_{pp}H>0$. Hence, we can assume that the data $H, \hat L^0:\mathbb T^d\times\mathbb R\mapsto\mathbb R$ are globally Lipschitz continuous. (Otherwise, we just use $\hat L^0(x_0,p\vee(-C_T^1)\wedge C_T^1)$ and $H(x_0,x,p\vee(-C_T^2)\wedge C_T^2)$ to substitute $\hat L^0(x_0,p)$ and $H(x_0,x,p)$, respectively.) Throughout the section, we denote the common Lipschitz constant of $H,\nabla_p H, \hat L^0, \nabla_{x_0}\hat L. \nabla_p\hat L^0, \nabla_p H^0, f, f^0, g, g^0$ in $ x_0, p, \mu$ by $L$, where the Lipschitz continuity in $\mu$ is measured by the $\mathbb W_1$ metric. Throughout the section, the generic constant $C$ depends only on $d$ and $L$. 
\end{comment}

\subsection{Local well-posedness}

\begin{theorem}\label{thm:local:FB}
Under Assumption \hyp{\^A}, 
there exists a constant ${\mathfrak C}>0$, only depending on 
${\mathfrak L}$, $\kappa$, $\sigma_0$
and ${\mathbb s}$
such that for $T \leq {\mathfrak C}$, the forward-backward system \eqref{eq:major:FB:1:small}-\eqref{eq:minor:FB:2:small} admits a unique solution $({\boldsymbol X}^0, {\boldsymbol Y}^0, {\boldsymbol Z}^0,{\boldsymbol \mu}, {\boldsymbol u}, {\boldsymbol m})$   in the sense of Definition 
\ref{def:forward-backward=MFG:solution}, i.e., $({\boldsymbol X}^0, 
{\boldsymbol Y}^0, {\boldsymbol Z}^0)$ 
is required to be ${\mathbb F}^0$-progressively measurable, with the trajectories of 
${\boldsymbol X}^0$ and ${\boldsymbol Y}^0$ 
being continuous, such that
$\sup_{0 \le t \le T} \vert X_t^0 \vert \in L^2(\Omega^0,{\mathbb P}^0)$
and
 \begin{equation}\label{eq:local:BMO}
\Big\|\sup_{t\in [0,T]}|Y_t^0|\Big\|_{L^\infty(\Omega^0,\mathbb P^0)}+\sup_{\tau}\bigg\|\mathbb E^0\bigg[\int_\tau^T|Z_t^0|^2\ud t\,|\,\mathcal{F_\tau}^0\bigg]\bigg\|_{L^\infty(\Omega^0,\mathbb P^0)}<\infty,
\end{equation}
the second supremum being taken over ${\mathbb F}^0$-stopping times $\tau$; 
and $({\boldsymbol \mu},{\boldsymbol u},{\boldsymbol m})$ 
is required to 
be an ${\mathbb F}^0$-progressively measurable process 
with values in ${\mathcal P}({\mathbb T}^d) \times {\mathcal C}^\mathbb{s}({\mathbb T}^d) \times 
{\mathcal C}^{\mathbb{s}-2}({\mathbb T}^d)$, with continuous trajectories in 
${\mathcal P}({\mathbb T}^d) \times   {\mathcal C}^{{\mathbb r}}({\mathbb T}^d)
\times   {\mathcal C}^{{\mathbb r}-2}({\mathbb T}^d)$
for any $\mathbb{r} < \mathbb{s}$, 
such that 
\begin{equation}\label{eq:local:reg}
\Big\|\sup_{t\in [0,T]}\Bigl[\|u_t\|_{\mathbb s}\Bigr]\Big\|_{L^\infty(\Omega^0,\mathbb P^0)}
+
\Big\|\sup_{t\in [0,T]}\Bigl[\|m_t\|_{{\mathbb s}-2}\Bigr]\Big\|_{L^\infty(\Omega^0,\mathbb P^0)}
<\infty,
\end{equation}
and, for any $x \in {\mathbb T}^d$, $(m_t(x))_{0 \le t \le T}$ is an ${\mathbb F}^0$-martingale  satisfying 
$m_0(x) \equiv 0$. 

Moreover, for any pair of initial conditions $((x_0^i,\mu^i))_{i=1,2} \in [\mathbb R^d\times \mathcal{P}(\mathbb T^d)]^2$,  
the  corresponding pair of solutions 
 $(({\boldsymbol X}^{0,i}, {\boldsymbol Y}^{0,i}, {\boldsymbol Z}^{0,i},{\boldsymbol \mu}^i, {\boldsymbol u}^i, {\boldsymbol m}^{i}))_{i=1,2}$ to  \eqref{eq:major:FB:1:small}-\eqref{eq:minor:FB:2:small} satisfy, for any $p \in [1,8]$, 
 \begin{equation}\label{eq:local:diff-1}
\begin{split}
&\mathbb E^0\bigg[\sup_{t\in[0,T]}\Big[\|u^1_t-u_t^2\|_{\mathbb s}^{2p} +
 \|m^1_t-m_t^2\|_{{\mathbb s}-2}^{2p} + \mathbb W_1(\mu_t^1,\mu_t^2)^{2p}\bigg]
 \\
&+\mathbb E^0\bigg[\sup_{t\in [0,T]}\Bigl[ |X_t^{0,1}-X_t^{0,2}|^{2p}+
|Y_t^{0,1}-Y_t^{0,2}|^{2p} \Bigr] + \biggl( \int_0^T|Z^{0,1}_t-Z_t^{0,2}|^2\ud t \biggr)^{p} \bigg]\\
&\leq C\Big[|x_0^1-x_0^2|^{2p}+\mathbb W_1 (\mu^1,\mu^2)^{2p}\Big],
\end{split}
\end{equation}
for a constant $C$ only depending on 
$d$, ${\mathfrak L}$, $\kappa$, $\sigma_0$ and ${\mathbb s}$. For the same $C$, we can assume that the left-hand sides of \eqref{eq:local:BMO}
 and
\eqref{eq:local:reg} are bounded by $C$.
\end{theorem}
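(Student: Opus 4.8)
The plan is to construct the solution by a contraction argument on a sufficiently small time interval, exploiting crucially that under Assumption~\hyp{\^A} all the coefficients of \eqref{eq:major:FB:1:small}--\eqref{eq:minor:FB:2:small} are globally Lipschitz in their spatial and measure arguments (with $\mathcal P(\mathbb T^d)$ equipped with $\mathbb W_1$): $\nabla_p H^0$, $\nabla_p\hat H$ and $\hat L^0$ are Lipschitz by \^A3--\^A4, and $f^0,f,g^0,g$ are Lipschitz in $(x_0,\mu)$ with constants controlled by $\kappa$ and $\mathfrak L$ through the $\mathscr C^0/\mathscr C$ and $\mathscr D^0/\mathscr D$ parts of \^A1--\^A2. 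The coupling structure decouples nicely: once an $\mathbb F^0$-adapted continuous flow $\boldsymbol\mu$ with values in $\mathcal P(\mathbb T^d)$ is prescribed, \eqref{eq:major:FB:1:small} is a finite-dimensional Lipschitz forward-backward SDE; once $\boldsymbol X^0$ is prescribed, \eqref{eq:minor:FB:2:small} is a random mean field game system (Fokker--Planck coupled to a backward Hamilton--Jacobi--Bellman SPDE); and $(\boldsymbol Y^0,\boldsymbol m)$ are slaved to the rest. I would therefore define a map $\Gamma$ sending $\boldsymbol\mu$ to $\boldsymbol\mu'$ by: (a) solving \eqref{eq:major:FB:1:small} with this $\boldsymbol\mu$ to obtain $(\boldsymbol X^0,\boldsymbol Y^0,\boldsymbol Z^0)$; and (b) solving \eqref{eq:minor:FB:2:small} with this $\boldsymbol X^0$ frozen to obtain $(\boldsymbol\mu',\boldsymbol u,\boldsymbol m)$.

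For step (a), I would invoke the classical short-time theory for Lipschitz forward-backward SDEs in the spirit of \cite{Delarue02}: since $g^0$ is bounded by $\mathfrak L$, the component $Y^0$ is bounded, and applying It\^o's formula to $|Y^0_t|^2$ yields $\|\int_0^\cdot Z^0_t\cdot\ud B^0_t\|_{\mathrm{BMO}}<\infty$ with a BMO norm that is $O(\sqrt T)$; for $T$ below a threshold depending only on $(\mathfrak L,\kappa,\sigma_0)$ the system is uniquely solvable and Lipschitz-stable in $(x_0,\boldsymbol\mu)$ in $\mathscr S^{2p}\times\mathscr S^{2p}\times\mathscr H^{2p}$ for every finite $p$, the stability constant carrying a factor $\sqrt T$ once one combines the standard linearized-BSDE estimates with the reverse H\"older inequality for the (small-norm) BMO martingale $\int Z^0\cdot\ud B^0$, see \cite[Chapter~2]{Kazamaki}. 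For step (b), freezing $\boldsymbol X^0$, the random mean field game \eqref{eq:minor:FB:2:small} is solved for small $T$ by an inner contraction in $\boldsymbol\mu$: given a candidate flow $\boldsymbol\nu$, Lemma~\ref{lem:minor:regularity} solves the backward HJB SPDE for $\boldsymbol u$ (it applies since $\hat H$ is globally Lipschitz, which is even simpler than the quadratic Hamiltonian treated there), producing $\boldsymbol u$ with values in $\mathcal C^{\mathbb s}(\mathbb T^d)$ and $\boldsymbol m$ with values in $\mathcal C^{\mathbb s-2}(\mathbb T^d)$ — the pathwise $L^\infty(\Omega^0)$ bounds \eqref{eq:local:reg} being a consequence of the uniform bounds on $g$ and $f_t$ in $\mathcal C^{\mathbb s}$ through parabolic Schauder estimates — and the Fokker--Planck equation is then solved for $\tilde{\boldsymbol\mu}$ as the flow of marginal laws of the SDE with the bounded Lipschitz drift $\nabla_p\hat H(\cdot,\nabla_x u_t(\cdot))$, exactly as in the proof of Lemma~\ref{lem:alpha:weak:solution}, with $\mathbb W_1$-stability with respect to the drift. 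For $T$ small the HJB-then-Fokker--Planck round trip contracts, and the resulting solution is Lipschitz-stable in $(\mu,\boldsymbol X^0)$; all measurability statements are understood in the Bochner sense as in Remark~\ref{rem:def:2:9}.

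Composing (a) and (b), $\Gamma$ maps the complete metric space of $\mathbb F^0$-adapted continuous $\mathcal P(\mathbb T^d)$-valued processes with $\mu_0=\mu$, metrized by $\big(\mathbb E^0\sup_{t\le T}\mathbb W_1(\mu_t,\mu_t')^{16}\big)^{1/16}$, into itself, and by the two $O(\sqrt T)$ stability constants it is a strict contraction as soon as $T\le\mathfrak C$, where $\mathfrak C>0$ depends only on $(\mathfrak L,\kappa,\sigma_0,\mathbb s)$; its unique fixed point produces the solution $(\boldsymbol X^0,\boldsymbol Y^0,\boldsymbol Z^0,\boldsymbol\mu,\boldsymbol u,\boldsymbol m)$, and uniqueness in the stated class follows because any solution is a fixed point of $\Gamma$. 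The global bounds on the left-hand sides of \eqref{eq:local:BMO} and \eqref{eq:local:reg} are then read off from the a priori estimates of steps (a) and (b). For \eqref{eq:local:diff-1} I would run the stability estimates along the fixed point for a pair of initial conditions $(x_0^i,\mu^i)$: the contraction first gives $\mathbb E^0\sup_t\mathbb W_1(\mu^1_t,\mu^2_t)^{2p}\le C(|x_0^1-x_0^2|^{2p}+\mathbb W_1(\mu^1,\mu^2)^{2p})$ for $p\in[1,8]$; feeding this into the major-FBSDE stability yields the $X^0,Y^0,Z^0$ bounds (the $Z^0$ one, i.e. $L^{2p}$-control of $(\int_0^T|Z^{0,1}_t-Z^{0,2}_t|^2\ud t)^p$, coming from the linearized BSDE together with the reverse H\"older property); and feeding the $X^0$ and $\mu$ bounds into the minor-MFG stability yields the $u,m$ bounds in $\mathcal C^{\mathbb s}$ and $\mathcal C^{\mathbb s-2}$, using that $g$ and $f$ are Lipschitz in $(x_0,\mu)$ as $\mathcal C^{\mathbb s}$-valued maps and parabolic Schauder for the difference SPDE.

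The main obstacle I anticipate is not the contraction itself but the regularity bookkeeping for the backward HJB SPDE: one must propagate $\mathcal C^{\mathbb s}$-regularity with $\mathbb s>3$ non-integer — hence in non-separable H\"older spaces — pathwise in $\omega^0$ and with $L^\infty(\Omega^0)$ rather than merely $L^p(\Omega^0)$ control, which forces systematic use of the random Schauder estimates underlying Lemma~\ref{lem:minor:regularity} and Lemma~\ref{lem:reg:v0} together with the Bochner-measurability device of Remark~\ref{rem:def:2:9}; a secondary difficulty is to make the finite-dimensional FBSDE contraction constant and the infinite-dimensional MFG contraction constant simultaneously small, which requires carefully tracking the $\sqrt T$ factors and, for the $L^{2p}$ moments with $p$ up to $8$, the smallness of the BMO norm of $\int Z^0\cdot\ud B^0$ so that its reverse H\"older constant stays uniformly bounded. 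Everything else reduces to routine Gronwall, Burkholder--Davis--Gundy and duality arguments of the type already used in Sections~\ref{se:3} and~\ref{se:4}.
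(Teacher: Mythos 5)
Your construction is essentially the paper's own proof: Theorem \ref{thm:local:FB} is also obtained by a Banach fixed point that decouples the major FBSDE from the minor MFG block, solving the minor system with the major path frozen via the short-time, non-monotone adaptation of \cite[Theorem 4.3.1]{CardaliaguetDelarueLasryLions} and the major system via \cite{Delarue02}, and then deriving \eqref{eq:local:diff-1} by the same Doob/duality/Gronwall bookkeeping you sketch. The only structural difference is the outer iterate: the paper contracts on $\boldsymbol X^0$ in $\mathscr S^2(\mathbb R^d)$ (minor system first, then major FBSDE), whereas you contract on $\boldsymbol \mu$ in a Wasserstein-type norm (major FBSDE first, then minor system); this reordering is inessential.

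One claim should be corrected, though it does not break the argument. The BMO norm of $\int_0^\cdot Z^0_t\cdot\ud B^0_t$ is \emph{not} $O(\sqrt T)$: It\^o's formula applied to $|Y^0_t|^2$ gives $\sigma_0^2\,\mathbb E^0\bigl[\int_\tau^T|Z^0_t|^2\,\ud t\,\vert\,\mathcal F^0_\tau\bigr]\le \|g^0\|_\infty^2+C\,T$, so the terminal condition contributes a term of order $\mathfrak L^2$ uniformly in $T$, and consequently the stability of $(\boldsymbol X^0,\boldsymbol Y^0,\boldsymbol Z^0)$ with respect to the input $\boldsymbol\mu$ carries no small factor (nor is any BMO/reverse-H\"older machinery needed here, since $\hat L^0$ and $\nabla_pH^0$ make the system globally Lipschitz and plain short-time FBSDE estimates suffice). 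Your composed map still contracts because the one genuinely small factor --- and the one the paper actually uses --- comes from the minor block: the duality estimate for $u^1-u^2$ combined with Cauchy--Schwarz in the Fokker--Planck/$\mathbb W_1$ stability produces the factor $\sqrt T$ (cf.\ \eqref{eq:local:diff1}), and a single small factor in the round trip is enough. Finally, your fixed-point construction implicitly uses that $\mathbb F^0$ is generated by $\mathcal F^0_0$ and $\boldsymbol B^0$; as in the paper's last step, uniqueness on a general filtration should be recovered from the stability estimate \eqref{eq:local:diff-1} rather than directly from the fixed point.
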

\begin{proof}
The proof relies on a suitable application of the Banach fixed point theorem. 
Throughout, $T$ is less than 1. 
{ \ }
\vskip 4pt

\noindent  {\it First Step.} 
Assuming that ${\mathbb F}^0$ is generated by ${\mathcal F}_0^0$ and ${\boldsymbol B}^0$, 
we first construct the mapping to which we will eventually apply the fixed point theorem. 
Inputs ${\boldsymbol X}^0$ are taken in the set 
${\mathscr S}^2({\mathbb R}^d)$ of ${\mathbb F}^0$-progressively measurable 
continuous processes with values in ${\mathbb R}^d$ such that 
$\sup_{t \in [0,T]} \vert X_t^0 \vert \in L^2(\Omega^0,{\mathbb P}^0)$. 
For $X^{0}\in {\mathscr S}^2(\mathbb R^d)$, we can follow the proof of \cite[Theorem 4.3.1]{CardaliaguetDelarueLasryLions} to obtain, for a small time horizon, a unique solution to
\begin{equation}
\label{eq:local:FB:tilde1}
\begin{split} 
&\partial_t \tilde\mu_t - \tfrac12 \Delta_x \tilde\mu_t - {\rm div}_x \Bigl( \nabla_p \hat H (\cdot, \nabla_x \tilde u_t) \tilde\mu_t \Bigr) =0, 
\\
&\ud_t \tilde u_t(x) =\Bigl(  -  \tfrac12 \Delta_x \tilde u_t(x) + \hat{H}(x,\nabla_x \tilde u_t(x) ) - f_t(X_t^0,x,\tilde \mu_t)  \Bigr) \ud t
+ \ud m_t(x), \quad (t,x) \in [0,T] \times {\mathbb T}^d, 
\\
&\tilde \mu_0=\mu,\quad \tilde u_T(x) = g(X_T^0,x,\tilde \mu_T), \quad x \in {\mathbb T}^d, 
\end{split}
\end{equation}
where $(\tilde{\boldsymbol \mu},\tilde{\boldsymbol u},\tilde{\boldsymbol m})$ 
is an ${\mathbb F}^0$-progressively measurable processes 
with values in ${\mathcal P}({\mathbb T}^d) \times {\mathcal C}^\mathbb{s}({\mathbb T}^d) \times 
{\mathcal C}^{\mathbb{s}-2}({\mathbb T}^d)$, with continuous trajectories in 
${\mathcal P}({\mathbb T}^d) \times   {\mathcal C}^{{\mathbb r}}({\mathbb T}^d)
\times   {\mathcal C}^{{\mathbb r}-2}({\mathbb T}^d)$
for any $\mathbb{r} < \mathbb{s}$,
such that 
\begin{equation}
\label{eq:u:bdd}
\sup_{t\in [0,T]}\left\{\|\tilde u_t\|_{\mathbb s}+ \|m_t \|_{{\mathbb s}-2}\right\}\in L^\infty(\Omega^0,\mathcal{F}^0,\mathbb P^0),
\end{equation}
and $(m_t(x))_{0 \le t \le T}$ is an ${\mathbb F}^0$-martingale starting from $0$ for any $x \in {\mathbb T}^d$. 

Actually, the framework is not (exactly) the same as in 
\cite[Theorem 4.3.1]{CardaliaguetDelarueLasryLions}. First, ${\boldsymbol X}^0$ plays the role of the common noise, even though it is not acting 
additively (as ${\boldsymbol B}^0$ does in \cite{CardaliaguetDelarueLasryLions}). 
Second (and this is the main difference), 
\cite[Theorem 4.3.1]{CardaliaguetDelarueLasryLions}
is proven under the Lasry-Lions monotonicity condition, which we have not 
assumed 
here. 
However, the proof 
of \cite[Theorem 4.3.1]{CardaliaguetDelarueLasryLions} (which relies on 
the method of continuation) remains 
conceptually  the same in our context. Essentially, the parameter 
$\varpi$ in  
\cite[Subsection 4.3.5]{CardaliaguetDelarueLasryLions}
is here taken as $0$ and the role of $\epsilon$ 
(which is assumed to be small in
\cite{CardaliaguetDelarueLasryLions})
is now played by $T$.
Although the correspondence between 
$\epsilon$ and $T$ is not immediate, the similarity between the two parameters 
can be well noticed 
in the proof of 
\cite[Lemma 4.3.9]{CardaliaguetDelarueLasryLions}: the Cauchy-Schwarz inequality that permits to pass from 
\cite[(4.30)]{CardaliaguetDelarueLasryLions}
to 
\cite[(4.31)]{CardaliaguetDelarueLasryLions}
makes an additional $\sqrt{T}$ appear in front of 
the parameter 
$\vartheta$ in 
\cite[(4.30)]{CardaliaguetDelarueLasryLions}. 
This additional $\sqrt{T}$ then plays the role of 
$\epsilon$ 
in the second step of 
 \cite[Subsection 4.3.5]{CardaliaguetDelarueLasryLions}. 
 With this analogy in mind, one notices from 
 $(c)$ in the 
 second step of 
 \cite[Subsection 4.3.5]{CardaliaguetDelarueLasryLions}
 that the threshold ${\mathfrak C}>0$ below which 
 $T$ must be taken only depends on 
 $g$ through ${\mathfrak L}$ (and not $\hat{\kappa}$). 
 
 To be complete, one mentions
 another subtlety: in \cite[Theorem 4.3.1]{CardaliaguetDelarueLasryLions}, 
continuities of 
$\tilde{\boldsymbol u}$
and 
$\tilde{\boldsymbol m}$ are stated in 
${\mathcal C}^{\lfloor {\mathbb s} \rfloor}({\mathbb T}^d)$
and
${\mathcal C}^{\lfloor {\mathbb s} \rfloor-2}({\mathbb T}^d)$
 respectively, but there is no difficulty in replacing $\lfloor \mathbb{s} \rfloor
$
by any $\mathbb{r} \in (\lfloor {\mathbb s}\rfloor,\mathbb{s})$
(for instance by combining the above bound with an interpolation argument). 
In this regard, we emphasize that \textit{Bochner} measurability
(with values in ${\mathcal C}^{{\mathbb s} }({\mathbb T}^d)$
and
${\mathcal C}^{  {\mathbb s}  -2}({\mathbb T}^d)$) can be obtained as 
explained in 
Remark
\ref{rem:def:2:9}, by noticing that, for $t <T$, $u_t$ and $m_t$ takes 
values in 
${\mathcal C}^{  {\mathbb s}' }({\mathbb T}^d)$
and
${\mathcal C}^{{\mathbb s}'-2}({\mathbb T}^d)$
for some ${\mathbb s}'
> {\mathbb s}$. This follows from the smoothing properties of the heat kernel: 
we refer to   \cite[(4.16)]{CardaliaguetDelarueLasryLions}
for the way it can be used here. 
Following Proposition \ref{prop:minor:higher} (and \cite[Proposition 4.3.8]{CardaliaguetDelarueLasryLions}), we notice that 
$\sup_{t\in [0,T]} \{\|\tilde u_t\|_{\mathbb s}+ \|m_t \|_{{\mathbb s}-2}\}$
can be bounded by constants only depending on the parameters
${\mathfrak L}$, $\kappa$, $\sigma_0$, ${\mathbb s}$ and $T$ 
(but not 
$\hat{\kappa}$) in \hyp{\^A}.

Next we define $(\tilde {\boldsymbol X}^0,\tilde {\boldsymbol Y}^0,\tilde {\boldsymbol Z}^0)$ as the strong solution to the following FBSDE system
(see \cite{Delarue02}):
\begin{equation} 
\label{eq:major:FB:tilde2}
\begin{split} 
&\ud \tilde X_t^0 = - \nabla_p H^0 ( \tilde X_t^0 , \tilde Z_t^0)  \ud t + \sigma_0 \ud B_t^0, \quad t\in [0,T],
\\
&\ud \tilde Y_t^0 =  - \Bigl( f_t^0(\tilde X_t^0,\tilde \mu_t) +\hat L^0(\tilde X_t^0, \tilde Z_t^0) 
\Bigr) \ud t + 
\sigma_0 \tilde Z_t^0 \cdot \ud B_t^0, \quad t \in [0,T], 
\\
&\tilde X_0^0=x_0, \quad \tilde Y_T^0 = g^0(\tilde X_T^0,\tilde \mu_T).
\end{split}
\end{equation} 
Setting $\mathfrak{T}({\boldsymbol X}^0):=\tilde {\boldsymbol X}^0$, we define a map $\mathfrak{T}: {\mathscr S}^2(\mathbb R^d)\rightarrow {\mathscr S}^2(\mathbb R^d)$.
 In the rest of the proof, we show that $\mathfrak{T}$ is a contraction mapping from $ {\mathscr S}^2(\mathbb R^d)$ into itself (for $T$ small enough). 
\vskip 4pt

\noindent  {\it Second Step.} Denote the solutions to \eqref{eq:local:FB:tilde1} with two different inputs ${\boldsymbol X}^{0,1}$ and ${\boldsymbol X}^{0,2}$ with $X^{0,1}_0= X^{0,2}_0=x_0$ by $(\tilde {\boldsymbol \mu}^1,\tilde {\boldsymbol u}^1,\tilde {\boldsymbol m}^{1})$ and $( \tilde {\boldsymbol \mu}^2, \tilde {\boldsymbol u}^2,\tilde  {\boldsymbol m}^{2})$, and the corresponding solutions to \eqref{eq:major:FB:tilde2} by $(\tilde {\boldsymbol X}^{0,1}, \tilde {\boldsymbol Y}^{0,1}, \tilde {\boldsymbol Z}^{0,1})$ and $(\tilde {\boldsymbol X}^{0,2}, \tilde {\boldsymbol Y}^{0,2}, \tilde {\boldsymbol Z}^{0,2})$.
%We denote the differences of the solutions by 
%\begin{equation*} 
%\begin{split}
%&\left( \delta \mu ,\delta u, \delta v^0 \right) = 
%\left(\mu^1-\mu^2, u^1-u^2, v^{0,1}-v^{0,2} \right),\\
% &\left( \delta X^0,\delta Y^0,\delta Z^0 \right) = \left(
%X^{0,1}-X^{0,2}, Y^{0,1}-Y^{0,2}, Z^{0,1}-Z^{0,2}\right) .
%\end{split}
%\end{equation*}
By the standard duality method of mean field games (without using the monotonicity condition of $f_t$ and $g$, see for instance 
\cite[(4.29)]{CardaliaguetDelarueLasryLions}) and thanks 
to \eqref{eq:u:bdd} (which, together with \textbf{(\^{A}4)}, says that we are working on a subdomain of the space on which the Hessian matrix 
$\nabla^2_{pp} H$ is bounded from below by a strictly positive constant), we have
\begin{equation}\label{eq;local:diff0}
\mathbb E^0\biggl[\int_0^T\Bigl(|\nabla_x(\tilde u_t^1-\tilde u_t^2) |^2, \tilde \mu_t^1+\tilde \mu_t^2\Bigr)\ud t\biggr]\leq C\mathbb E^0\biggl[\sup_{t\in [0,T]}|X_t^{0,1}-X_t^{0,1}|^2+\sup_{t\in [0,T]}\mathbb W_1(\tilde\mu_t^1,\tilde\mu_t^2)^2\biggr].
\end{equation}
As the maps $x\mapsto \nabla_pH(x,\nabla_x \tilde u^1(t,x))$ and $p\mapsto \nabla_p H(x,p)$ are  Lipschitz continuous (with Lipschitz constants only depending on the parameters
${\mathfrak L}$, $\kappa$, $\sigma_0$, ${\mathbb s}$ and $T$, 
but not 
$\hat{\kappa}$), we use
\eqref{eq:u:bdd}
and
 \eqref{eq;local:diff0} to derive
 (the reader may compare with the derivation of \cite[(3.7)]{CardaliaguetDelarueLasryLions})
\begin{equation*}
\begin{split}
\mathbb E^0 \biggl[ \sup_{t\in [0,T]}\Bigl[\mathbb W_1(\tilde \mu_t^1,\tilde \mu_t^2)^2\Bigr]\biggr]&\leq C\mathbb E^0\bigg[\bigg(\int_0^T\Bigl(|\nabla_x(\tilde u_t^1-\tilde u_t^2) |^2, \tilde \mu_t^1+\tilde \mu_t^2\Bigr)^{\frac{1}{2}}\ud t\bigg)^2\bigg]\\
&\leq CT\mathbb E^0\biggl[\int_0^T\Bigl(|\nabla_x(\tilde u_t^1-\tilde u_t^2) |^2, \tilde \mu_t^1+\tilde \mu_t^2\Bigr)\ud t\biggr]\\
&\leq CT\mathbb E^0\biggl[\sup_{t\in [0,T]}|X_t^{0,1}-X_t^{0,1}|^2+\sup_{t\in [0,T]}\mathbb W_1(\tilde\mu_t^1,\tilde\mu_t^2)^2\biggr],
\end{split}
\end{equation*}
where $C$ 
only depends on
${\mathfrak L}$, $\kappa$, $\sigma_0$, ${\mathbb s}$. 
And then, for $CT \leq 1/2$, we obtain
\begin{equation}\label{eq:local:diff1}
\mathbb E^0 \biggl[ \sup_{t\in [0,T]}\Bigl[\mathbb W_1(\tilde \mu_t^1,\tilde \mu_t^2)^2\Bigr]\biggr]\leq 2CT\mathbb E^0\biggl[\sup_{t\in [0,T]}|X_t^{0,1}-X_t^{0,1}|^2\biggr].
\end{equation}
By standard short time FBSDE estimates (see \cite{Delarue02}), we can derive, still under the condition $CT \leq 1/2$
(but for a possibly new value of $C$ that only depends on
${\mathfrak L}$, $\kappa$, $\sigma_0$ and ${\mathbb s}$),
\begin{equation}\label{eq:local:diff2}
\mathbb E^0 \biggl[ \sup_{t\in [0,T]}\Bigl[|\tilde X_t^{0,1}-\tilde X_t^{0,2}|^2\biggr]\leq C\sup_{t\in [0,T]}\mathbb E^0\Bigl[\mathbb W_1(\tilde \mu_t^1,\tilde\mu_t^2)^2\Bigr]\leq CT\mathbb E^0\biggl[\sup_{t\in [0,T]}|X_t^{0,1}-X_t^{0,1}|^2\biggr].
\end{equation}
Since $C T \leq 1/2$, we get
\begin{equation*} 
\mathbb E^0\biggl[\sup_{t\in [0,T]}|\tilde X_t^{0,1}-\tilde X_t^{0,2}|^2\biggr]\leq \frac{1}{2} \mathbb E^0\biggl[\sup_{t\in [0,T]}|X_t^{0,1}-X_t^{0,2}|^2\biggr],
\end{equation*}
which implies $\mathfrak{T}$ is a contraction mapping from $ {\mathscr S}^2(\mathbb R^d)$ to $ {\mathscr S}^2(\mathbb R^d)$. Therefore, there exists a unique fixed point ${\boldsymbol X}^0\in  {\mathscr S}^2(\mathbb R^d)$ with $X_0^0=x_0$. For this choice of ${\boldsymbol X}^0$, the FBSPDE system 
\eqref{eq:local:FB:tilde1} admits a unique solution, which we denote $({\boldsymbol \mu},{\boldsymbol u},{\boldsymbol m})$.
%, where 
%\begin{equation}\label{eq:u:bdd}
%\sup_{t\in [0,T]}\left\{\| u_t\|_{\mathbb s}+\Big\|\int_0^tv^{0}_r\cdot \ud B^0_r\Big\|_{s-2}\right\}\in L^\infty(\Omega^0,\mathcal{F}^0,\mathbb P^0),
%\end{equation}
%and $\mu$ is $(\mathcal{F}_t^0)_{t\in [0,T]}$-adapted process with paths in the space $\mathcal C^0([0,T],(\mathcal{P}(\mathbb T^d),\mathbb W_1))$. 
Since ${\boldsymbol X}^0$ is the fixed point of $\mathfrak{T}$, the FBSDE system  \eqref{eq:major:FB:1:small} admits a unique strong solution $({\boldsymbol X}^0,{\boldsymbol Y}^0,{\boldsymbol Z}^0)$. In this way, we get existence and uniqueness of a solution to the pair 
\eqref{eq:major:FB:1:small}--\eqref{eq:minor:FB:2:small}.
The bound
\eqref{eq:local:reg}
follows from \eqref{eq:u:bdd} 
and 
\eqref{eq:local:BMO}
follows from the argument used in the proof 
of Lemma \ref{lem:BMO:major}.
\vskip 4pt

\noindent {\it Third Step.} We now prove the stability property, removing the assumption that 
${\mathbb F}^0$ is generated by 
${\mathcal F}_0^0$ and ${\boldsymbol B}^0$ 
(which will imply in particular uniqueness of the solution constructed above, but on the larger set-up 
$(\Omega^0,{\mathcal F}^0,{\mathbb F}^0,{\mathbb P}^0)$). 
For any initial conditions $(x_0^1,\mu^1),(x_0^2,\mu^2)\in \mathbb R^d\times \mathcal{P}(\mathbb T^d)$, let $({\boldsymbol X}^{0,1}, {\boldsymbol Y}^{0,1}, {\boldsymbol Z}^{0,1}, {\boldsymbol \mu}^1, {\boldsymbol u}^1, {\boldsymbol m}^{1})$ and $({\boldsymbol X}^{0,2}, {\boldsymbol Y}^{0,2}, {\boldsymbol Z}^{0,2}, {\boldsymbol \mu}^2, 
{\boldsymbol u}^2, {\boldsymbol m}^{2})$ be the solutions of the forward-backward system \eqref{eq:major:FB:1:small}-\eqref{eq:minor:FB:2:small}  with (respective) initial data $(x_0^1,\mu^1)$ and $(x_0^2,\mu^2)$. 
%Then, following the  argument used to derive \eqref{eq;local:diff0}, we obtain
%\begin{equation}\label{eq:local:diff3}
%\begin{split}
%&\mathbb E^0\biggl[\int_0^T \bigl(|\nabla_x( u_t^1- u_t^2) |^2,  \mu_t^1+ \mu_t^2 \bigr)\ud t\biggr]\\
%&\leq C\mathbb E^0\biggl[(u_0^1-u_0^2,\mu^1-\mu^2)+\sup_{t\in [0,T]}|X_t^{0,1}-X_t^{0,2}|^2+\sup_{t\in [0,T]}\mathbb W_1^2(\mu_t^1,\mu_t^2)\biggr],
%\end{split}
%\end{equation}
%where, here and below, 

Note that 
\begin{equation}\label{eq:local:newdiff3}
\begin{split}
&\ud_t\Big(u_t^1(x)-u_t^2(x)\Big)\\
&=\bigg[-\tfrac{1}{2}\Delta_x\bigl(u_t^1(x)-u_t^2(x)\bigr)-\bigl(f_t(X_t^{0,1},x,\mu_t^{1})-f_t(X_t^{0,2},x,\mu_t^{2}) \bigr)
\\
&\hspace{15pt} + \biggl\{ 
\int_{0}^1\nabla_p \hat{H}\bigl(x,r\nabla_x u^1_t(x)+(1-r)\nabla_xu^2(x)\bigr)\ud r \biggr\} \cdot \nabla_x \bigl( u^1_t(x)- u^2_t(x) \bigr)\bigg]\ud t
  + \ud \bigl(m_t^1(x) - m_t^2(x)\bigr). 
\end{split}
\end{equation}
and
\begin{equation*}
u_T^1(x)-u_T^2(x)=g(X_T^{0,1},x,\mu_T^1)-g(X_T^{0,2},x,\mu_T^2).
\end{equation*}
Following \cite[(4.14)--(4.16)]{CardaliaguetDelarueLasryLions} and recalling $T \leq 1$, we have, for 
all $t \in [0,T]$, 
\begin{align*}
&\|u_{t}^1-u_t^2\|_{\mathbb s}
\nonumber
\\
&\leq C {\mathbb E}^0 \Big[  \sup_{r\in [0,T]} \bigl\|f_r(X_r^{0,1},\cdot,\mu_r^1)-f_r(X_r^{0,2},\cdot,\mu_r^2)\bigr\|_{\mathbb s-1}+ \bigl\|g(X_T^{0,1},\cdot,\mu_T^1)-g(X_T^{0,2},\cdot,\mu_T^2)\bigr\|_{\mathbb s}
\, \vert \, {\mathcal F}_t
\Big]\nonumber
\\
&\leq C {\mathbb E}^0 \Big[ \sup_{r \in [0,T]} |X_r^{0,1}-X_r^{0,2}|+ \sup_{r\in [0,T]}\mathbb W_1(\mu_r^1,\mu_r^2) \, \vert \, {\mathcal F}_t \Big],
\end{align*}
for a constant $C$ that only depends on 
${\mathfrak L}$, $\kappa$, $\sigma_0$ and ${\mathbb s}$. 

Taking 
power $2p$, for $p \in [1,8]$, 
and then expectation (under ${\mathbb P}^0$) on both sides, we obtain,
by means of Doob's inequality,
\begin{equation}
{\mathbb E}^0 \Big[ \sup_{t \in [0,T]} 
\|u_{t}^1-u_t^2\|_{\mathbb s}^{2p}
\Bigr]
\leq C {\mathbb E} \Big[ \sup_{r \in [0,T]} |X_r^{0,1}-X_r^{0,2}|^{2p}+ \sup_{r\in [0,T]}\mathbb W_1(\mu_r^1,\mu_r^2)^{2p}  \Big],
\label{eq:local:diff4}
\end{equation}
where we 	allowed the constant $C$ to vary from line to line 
as long as it only depends on 
${\mathfrak L}$, $\kappa$, $\sigma_0$, ${\mathbb s}$. 

Moreover, 
similar to the derivation of \eqref{eq:local:diff1} (but with a power $2p$ in the computations, which does not change the argument), we can obtain 
(from \eqref{eq:local:diff4})
\begin{equation*}
\begin{split}
\mathbb E^0\Big[\sup_{t\in [0,T]}\mathbb W_1(\mu_t^1,\mu_t^2)^{2p}\Big]&\leq C\bigg\{ \mathbb W_1(\mu^1,\mu^2)^{2p}+  
\mathbb E^0\biggl[ \biggl( \int_0^T\bigl(|\nabla_x( u_t^1- u_t^2) |^2,  \mu_t^1+ \mu_t^2\bigr)\ud t \bigg)^{2p} \biggr]
\biggr\}
\\
&\leq 
C\bigg\{\mathbb W_1^2(\mu^1,\mu^2)+T^{2p}
 \mathbb E^0\Big[
\sup_{t \in [0,T]}
\bigl\|\nabla_x(u^1_t-u^2_t)\bigr\|_{L^\infty}^{2p} \Bigr]
\biggr\}
\\
&\leq 
C\bigg\{\mathbb W_1^2(\mu^1,\mu^2)+T^{2p}
\mathbb E^0\Big[\sup_{t\in [0,T]}|X_t^{0,1}-X_t^{0,2}|^{2p}+\sup_{t\in [0,T]}\mathbb W_1(\mu_t^1,\mu_t^2)^{2p}\Big] \bigg\}.
\end{split}
\end{equation*}
For $CT \leq 1/2$ (and since $T \leq 1$), we obtain
\begin{equation}\label{eq:local:diff5}
\begin{split}
&\mathbb E^0\Big[\sup_{t\in [0,T]}\mathbb W_1(\mu_t^1,\mu_t^2)^{2p}\Big]
\leq  2C \bigg\{\mathbb W_1(\mu^1,\mu^2)^{2p}+ T  \mathbb E^0\Big[\sup_{t\in [0,T]}|X_t^{0,1}-X_t^{0,2}|^{2p}\Big]  \bigg\}.
\end{split}
\end{equation}
By   standard short time FBSDE estimates
(see again \cite{Delarue02}),  we obtain (still under a condition of the form $C T \leq 1/2$)
\begin{equation*}
\begin{split} 
&\mathbb E^0 \biggl[ \sup_{t\in [0,T]}\big[| X_t^{0,1}- X_t^{0,2}|^{2p} +  | Y_t^{0,1}- Y_t^{0,2}|^{2p}\big]+
\biggl( \int_0^T| Z_t^{0,1}- Z_t^{0,2}|^2 \ud t \biggr)^p \biggr]
\\
&\leq C\Bigl( |x_0^1-x_0^2|^{2p}+\mathbb E^0\Bigl[ \sup_{t\in [0,T]} \mathbb W_1(\mu_t^1,\mu_t^2)^{2p}\Bigr]\Bigr)
\\
&\leq \max( C, 2 C^2)  \Bigl( 
|x_0^1-x_0^2|^{2p}+
\mathbb W_1(\mu^1,\mu^2)^{2p}+ T  \mathbb E^0\Big[\sup_{t\in [0,T]}|X_t^{0,1}-X_t^{0,2}|^{2p}\Big] 
\Bigr). 
%\\
%&\leq C\Bigl(|x_0^1-x_0^2|^2+\mathbb W_1^2(\mu^1,\mu^2)\Bigr)+CT \mathbb E^0\big[\sup_{t\in [0,T]}|X_t^{0,1}-X_t^{0,2}|^2\big]
%\\
%&\leq C\big(|x_0^1-x_0^2|^2+\mathbb W_1^2(\mu^1,\mu^2)\big)+CT\Big(1+\frac{1}{\varepsilon}\Big)\mathbb E^0\big[\sup_{t\in [0,T]}|X_t^{0,1}-X_t^{0,2}|^2\big]+C\varepsilon\mathbb E^0\bigg[\int_0^t |Z_r^{0,1}-Z_r^{0,2}|^2\ud r\bigg].
\end{split} 
\end{equation*}
If $\max( C, 2 C^2) T \leq 1/2$ (and $T \leq 1$), then
\begin{equation*} 
\begin{split}
&\mathbb E^0 \biggl[ \sup_{t\in [0,T]}\big[| X_t^{0,1}- X_t^{0,2}|^{2p} + | Y_t^{0,1}- Y_t^{0,2}|^{2p} \big]+
\biggl( \int_0^T| Z_t^{0,1}- Z_t^{0,2}|^2\ud t\biggr)^p \biggr]
\\
&\leq 2 \max( C, 2 C^2) \Big(|x_0^1-x_0^2 |^{2p}+ \mathbb W_1(\mu^1,\mu^2)^{2p}\Big).
\end{split}
\end{equation*}
Plugging the above inequality into \eqref{eq:local:diff4} and \eqref{eq:local:diff5}, we get (for a new value of $C$) 
\begin{equation*}
\mathbb E^0\Big[\sup_{t\in [0,T]}\|u_t^1-u_t^2\|_{\mathbb s}^{2p} + \sup_{t\in [0,T]}\mathbb W_1(\mu_t^1,\mu_t^2)^{2p}\Big]\leq C\Big[|x_0^1-x_0^2|^{2p} + \mathbb W_1(\mu^1,\mu^2)^{2p}\Big].
\end{equation*}
Recalling \eqref{eq:local:newdiff3} and inserting the above bound, we deduce that
\begin{equation*}
\mathbb E^0\Big[\sup_{t\in [0,T]}\|m_t^1-m_t^2\|_{{\mathbb s}-2}^{2p} \Bigr] 
\leq C\Big[|x_0^1-x_0^2|^{2p} + \mathbb W_1(\mu^1,\mu^2)^{2p}\Big].\\
\end{equation*}
In summary, we obtain \eqref{eq:local:diff-1}.
\end{proof}

\subsection{Linearized system} 

%\textcolor{red}{$\delta_\mu$ or $\delta_\mu$??}

We now address the following general linearized forward-backward system, set on $[0,T]$:
\begin{align} 
&\ud \delta X_t^0 =  \Big[-p_t \delta X_t^0-q_t\delta Z_t^0+a_t\Big] \ud t, 
\nonumber
\\
&\ud \delta Y_t^0 =\Big[ l_t\cdot \delta X_t^0+o_t\cdot \delta Z_t^0
 -  \nabla_{x_0} f_t^0(X_t^0,\mu_t)\cdot \delta X_t^0 - \Big(\delta_\mu f_t^0(X_t^0,\mu_t),\delta \mu_t\Big)  +b_t\Big]
 \ud t + 
\sigma_0 \delta Z_t^0 \cdot \ud B_t^0,
\nonumber
\\
& \delta X_0^0= \triangle x_0, \quad \delta Y_T^0 = \nabla_{x_0}g^0(X_T^0,\mu_T)\cdot \delta X_T^0+\Big(\delta_\mu g^0(X_T^0,\mu_T),\delta \mu_T\Big)+c_T;
\label{eq:linear:local}
\\
&  \partial_t \delta \mu_t -\tfrac{1}{2}\Delta_{x}\delta \mu_t - {\rm div}_x\big(\Upsilon_t(x)\delta \mu_t +\mu_t \Gamma_t(x)\nabla_x\delta u_t(x)\big)-{\rm div}_x\big(d_t \big)=0,\quad 
{\rm on} \ \mathbb T^d,
\nonumber
\\
& \ud_t\delta u_t(x)=\Big[-\tfrac{1}{2}\Delta_{x}\delta u_t(x)+\Upsilon_t(x)\cdot\nabla_x \delta u_t(x)-\nabla_{x_0}f_t(X_t^0,x,\mu_t)\cdot\delta X_t^0-\Big(\delta_\mu f_t(X_t^0,x,\mu_t),\delta \mu_t\Big)+j_t(x)\Big]\ud t
\nonumber
\\
& \hspace{15pt} + \ud_t \delta m_t(x),\quad x \in \mathbb T^d,
\nonumber
\\
&\delta \mu_0=\triangle \mu,\quad \delta u_T(x)=\nabla_{x_0}g(X_T^0,x,\mu_T)\cdot \delta X_T^0+\Big(\delta_\mu g(X_T^0,x,\mu_T),\delta \mu_T\Big)+k_T(x)\quad x \in  \mathbb T^d,
\nonumber
\end{align} 
which should be interpreted as a
generalized version of the system satisfied by the derivative of the flow 
induced by the solution to 
\eqref{eq:major:FB:1:small}-\eqref{eq:minor:FB:2:small}. 
On top of Assumption \hyp{\^A}, 
the  coefficients 
driving 
\eqref{eq:linear:local}
are required to satisfy the set of conditions below:
\vskip 5pt

 \noindent \textbf{Assumption C.} 
For a real ${\mathbb r} \in ( \lfloor {\mathbb s} \rfloor, {\mathbb s})$ and for another real $\mathfrak{K}>1$, 
\begin{enumerate}[(i)]
\item The initial conditions $\triangle \mu$ and $\triangle x_0$ are deterministic, 
taking values in ${\mathcal C}^{- \mathbb{r} + 1}$ and 
${\mathbb R}^d$ respectively; 

\item The random variable $k_T$ is $\mathcal{F}_T^0$-measurable with values in 
$\mathcal C^{ {\mathbb s}}(\mathbb T^d)$, 
and satisfies
$\essup \|k_T\|_{{\mathbb s}}<+\infty$; 

\item The process $(j_t)_{0 \le t \le T}$ is $\mathbb F^0$-adapted with continuous paths 
from $[0,T]$ to $\mathcal C^{{\mathbb s}'}(\mathbb T^d)$, for any ${\mathbb s}' \in (0,{\mathbb s}-1)$,  
and satisfies the bound $\essup \sup_{t \in [0,T]} 
\|j_t\|_{{\mathbb s}-1} < \infty$; the process $(d_t)_{0 \le t \le T}$ is $\mathbb F^0$-adapted with 
continuous paths from $[0,T]$ to $[{\mathcal C}^{-\mathbb{r}'+2}(\mathbb T^d)]^d$, for any 
$\mathbb{r}'>\mathbb{r}$
and satisfies
$\essup \sup_{t\in [0,T]} \|d_t\|_{-\mathbb{r} +2}<+\infty$;

\item The process $(\Upsilon_t)_{0 \le t \le T}$ is 
$\mathbb F^0$-adapted with
continuous 
paths from $[0,T]$ to $\mathcal C^{{\mathbb s}'}(\mathbb T^d\,,\,\mathbb R^d)
\simeq [\mathcal C^{{\mathbb s}'}(\mathbb T^d)]^d$, for any ${\mathbb s}' \in (0,{\mathbb s}-1)$, and satisfies
$\essup \sup_{t\in [0,T]}\|\Upsilon_t\|_{{\mathbb s}-1}\leq \mathfrak{K}$;

\item The process $(\Gamma_t)_{0 \le t \le T}$ is $\mathbb F^0$-adapted with
continuous paths from  $[0,T]$  to ${\mathcal C}^1({\mathbb T}^d,{\mathbb R}^{d \times d}) \simeq [{\mathcal C}^1(\mathbb T^d)]^{d\times d}$; it satisfies
$\essup \sup_{t\in [0,T]}\|\Gamma_t\|_{1}\leq \mathfrak{K}$
and, ${\mathbb P}^0$-almost surely, 
\begin{equation*}
\forall (t,x) \in [0,T] \times {\mathbb T}^d, 
\quad  \mathfrak{K}^{-1}I_d\leq \Gamma_t(x)\leq \mathfrak{K} I_d;
\end{equation*}

\item The processes $(a_t)_{0 \le t \le T}$
and  $(b_t)_{0 \le t \le T}$ belong to 
${\mathscr H}^2(\Omega^0,{\mathcal F}^0,{\mathbb F}^0,{\mathbb P}^0;\mathbb R^d)$
and
${\mathscr H}^2(\Omega^0,{\mathcal F}^0,{\mathbb F}^0,{\mathbb P}^0;\mathbb R)$ respectively; 
the random variable $c_T$ is 
in $L^2 (\Omega^0,{\mathcal F}^0_T,{\mathbb P}^0;{\mathbb R})$;
the processes $(p_t)_{0 \le t \le T}$, $(q_t)_{0 \le t \le T}$, $(l_t)_{0 \le t \le T}$
and $(o_t)_{0 \le t \le T}$ are $\mathbb F^0$-adapted, 
with respective values in ${\mathbb R}^{d}$, ${\mathbb R}^{d \times d}$, 
${\mathbb R}^d$ and ${\mathbb R}^d$, and satisfies
$\essup\sup_{t\in [0,T]}[|p_t|+|q_t|+|l_t|+|o_t|]\leq \mathfrak{K}$.
%\item {\color{blue}The functions $(f_t^0)_{0\leq t\leq T}$ and $g^0$ satisfy ${\mathscr C}^0(\tilde  L,\mathbb{r}-1)$; the functions $(f_t)_{0\leq t\leq T}$ and $g$ satisfy ${\mathscr C}(\tilde L,\mathbb{u}-1,\mathbb{r}-1)$ and ${\mathscr C}(L,{\mathbb s},\mathbb{r}-1)$.}
\end{enumerate}

\noindent Within the above framework
and under the additional condition  that 
 $(d_t)_{0 <t \leq T}$ takes values in ${\mathcal C}^{-\mathbb{r}'+2}({\mathbb T}^d)$ 
 for
some $\mathbb{r}'<\mathbb{r}$, 
 we let, for any $p \geq 1$, 
\begin{equation}\label{eq: M}
\begin{split}
M_p&:= \vert \triangle x_0 \vert^{2p} 
+ 
\|
\triangle \mu 
\|_{-\mathbb{r} +1}^{2p}
\\
&\hspace{15pt} +
{\mathbb E}^0 \biggl[ \sup_{t \in [0,T]} \Bigl(     \|d_t\|^{2p}_{-\mathbb{r}+2}
+
 \|j_t\|^{2p}_{{\mathbb s}-1}
 \Bigr) + 
\biggl( 
\int_0^T \bigl( \vert a_t \vert + \vert b_t \vert \bigr) \ud t  
\biggr)^{2p} 
+
\vert c_T \vert^{2p} 
+
\| k_T \|_{{\mathbb s}}^{2p}
\biggr].
\end{split}
\end{equation}

\begin{comment}
\textcolor{red}{Shouldn't we require that $\| \delta f /\delta m \|_{\mathbb s}$ is bounded by $\tilde L$ and similarly with $f$ replaced by $g$?}
\textcolor{red}{We cannot use the same value of ${\mathbb s}$ as in the previous section. We need to be able to solve the linearised system for 
a small ${\mathbb s}$. In fact, the proof uses the orange book, in which we require ${\mathbb s} > 3$.} {\color{blue}Chenchen: To me, the $c$ in Theorem 5.2 depends only on $d, \tilde L$ and $(\mathbb{r},{\mathbb s},\mathbb{u})$, but not on $L$, while $C_p$ depends only on $c$, $d$, $\tilde L$ $L$ and $p$, where $\tilde L$ and $L$ are given in Assumption \hyp{C}-(vii).}
\end{comment}

\begin{theorem}\label{thm:local:linearFB}
Under Assumptions \hyp{\^A} and \hyp{C}, 
there exists a real ${\mathfrak C}>0$, only depending on 
$d$, $\kappa$, ${\mathfrak K}$, ${\mathfrak L}$, $\sigma_0$ and $(\mathbb{r},{\mathbb s})$, 
such that for $T \leq {\mathfrak C}$, 
%for any (say deterministic) initial condition $(\delta X^0_0,\delta \mu_0) \in
%{\mathbb R}^d \times {\mathcal C}^{-\mathbb{r} +1}({\mathbb T}^d)$, 
the forward-backward system \eqref{eq:linear:local} admits a unique solution $(\delta {\boldsymbol \mu}, \delta {\boldsymbol u}, \delta {\boldsymbol m}, \delta {\boldsymbol X}^0, \delta {\boldsymbol Y}^0, \delta {\boldsymbol Z}^0)$, adapted with respect to the filtration $\mathbb F^0$ and with values in 
${\mathcal C}^{-\mathbb{r}+1}({\mathbb T}^d) \times {\mathcal C}^{{\mathbb s}}({\mathbb T}^d) 
\times {\mathcal C}^{{\mathbb s} -2}({\mathbb T}^d)\times\mathbb R^d\times \mathbb R\times\mathbb R^d$, 
 satisfying
\begin{enumerate}[(a)]
\item 
 $(\delta {\boldsymbol \mu},\delta {\boldsymbol u},\delta {\boldsymbol m})$
 has continuous trajectories in $\mathcal{C}^{-\mathbb{u}+1}(\mathbb T^d)\times \mathcal{C}^{\mathbb{u}}(\mathbb T^d)\times \mathcal{C}^{\mathbb{u}-2}(\mathbb T^d)$ for any $\mathbb{u} \in (\mathbb{r}, {\mathbb s})$, 
 and
\begin{equation}\label{eq:local:linear:diff-1}
\textrm{\rm essup}_{\omega \in \Omega^0}\sup_{t\in [0,T]}\Big(\|\delta u_t\|_{\mathbb s}+
\|\delta m_t \|_{{\mathbb s}-2}+\|\delta \mu_t\|_{-\mathbb{r}+1}\Big)<+\infty;
\end{equation}
\item for any $x \in {\mathbb T}^d$, $(\delta m_t(x))_{0 \le t \le T}$ is an ${\mathbb F}^0$-martingale; 
\item  $(\delta {\boldsymbol X^0},\delta{\boldsymbol Y^0},\delta {\boldsymbol Z^0})$ belongs to $ {\mathscr S}^2(\mathbb R^d)\times {\mathscr S}^2(\mathbb R)\times  {\mathscr H}^2(\mathbb R^d)$.
\end{enumerate}
Moreover, 
if
$(d_t)_{t\in (0,T]}$ takes values in $[{\mathcal C}^{-\mathbb{r}'+2}({\mathbb T}^d)]^d$ for
some $\mathbb{r}'<\mathbb{r}$, 
there exists a constant $C$, only depending on  $d$, $\kappa$, ${\mathfrak K}$, ${\mathfrak L}$, $\sigma_0$ and $(\mathbb{r},{\mathbb s})$, such that, for any 
$p \in [1,8]$,  
\begin{equation}\label{eq:local:linear:diff0}
\begin{split}
&\mathbb E^0\biggl[\sup_{t\in [0,T]} \Bigl( 
\|\delta \mu_t\|_{-\mathbb{r}+1}^{2p}
+
\|\delta u_t\|_{\mathbb s}^{2p}
+
\|\delta m_t\|_{{\mathbb s}-2}^{2p}
 \Bigr)  + \sup_{t\in [0,T]} \Bigl( |\delta X_t^0|^{2p}+ |\delta Y_t^0|^{2p} \Bigr) +
 \biggl( \int_0^T|\delta Z_t^0|^2\ud t
 \biggr)^p 
 \biggr]
 \\
 & \leq C_p M_p.
\end{split}
\end{equation}
\end{theorem}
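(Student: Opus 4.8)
The plan is to solve the linearized system \eqref{eq:linear:local} by the same Banach fixed-point scheme used for Theorem \ref{thm:local:FB}, exploiting the linear structure, and to extract the \textit{a priori} bound \eqref{eq:local:linear:diff0} from the contraction estimates themselves. The key point is that \eqref{eq:linear:local} is a \emph{linear} FBSPDE-FBSDE coupled system, so the Lipschitz constants of its coefficients are already controlled by the structural constants $\kappa$, $\mathfrak K$, $\mathfrak L$, $\sigma_0$ and the indices $(\mathbb r,\mathbb s)$; the only genuinely $T$-dependent quantity is the time horizon, which is exactly what makes a short-time contraction work. Concretely, I would set up the map $\mathfrak T:\mathscr S^2(\mathbb R^d)\to\mathscr S^2(\mathbb R^d)$ that sends an input $\delta{\boldsymbol X}^0$ to the first component of the solution of the decoupled subsystems: (1) given $\delta{\boldsymbol X}^0$, solve the forward Fokker-Planck-type equation for $\delta{\boldsymbol\mu}$ together with the backward stochastic transport-diffusion equation for $(\delta{\boldsymbol u},\delta{\boldsymbol m})$ — this is a linearized minor-player MFG system of the type treated in \cite[Chapter 4]{CardaliaguetDelarueLasryLions}, solvable on short time by the method of continuation with $\varpi=0$ and with $\Gamma_t$ playing the role of $\nabla^2_{pp}H$ (uniformly elliptic by (v)); (2) with $(\delta{\boldsymbol\mu},\delta{\boldsymbol u})$ in hand, solve the linear BSDE for $(\delta{\boldsymbol Y}^0,\delta{\boldsymbol Z}^0)$ and the linear ODE (in $\omega^0$) for $\tilde{\delta{\boldsymbol X}}^0$, using the bounds on $p_t,q_t,l_t,o_t,a_t,b_t$ from (vii). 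The output regularity in (a)--(c) follows from the Schauder/heat-kernel estimates as in \cite[(4.14)--(4.16)]{CardaliaguetDelarueLasryLions}, together with the \textit{Bochner}-measurability discussion of Remark \ref{rem:def:2:9}; in particular $\delta u_t$ gains regularity for $t<T$, giving continuity in $\mathcal C^{\mathbb u}$ for any $\mathbb u<\mathbb s$, and $\delta m_t(x)$ is a martingale for each $x$ by construction.

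First I would carry out the fixed-point argument: mimic the Second Step of the proof of Theorem \ref{thm:local:FB}. The duality argument of \cite[(4.29)]{CardaliaguetDelarueLasryLions} (which does \emph{not} use Lasry-Lions monotonicity) applied to the difference of two solutions of the $(\delta\mu,\delta u)$-subsystem with inputs $\delta{\boldsymbol X}^{0,1}, \delta{\boldsymbol X}^{0,2}$ gives $\mathbb E^0\!\int_0^T(|\nabla_x(\delta u^1_t-\delta u^2_t)|^2,\mu^1_t+\mu^2_t)\,\ud t\le C\,\mathbb E^0\sup_t|\delta X_t^{0,1}-\delta X_t^{0,2}|^2$, where the uniform lower bound on $\Gamma_t$ replaces $\mathbf{(\hat A4)}$. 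A Cauchy-Schwarz in time then produces a factor $T$, yielding $\mathbb E^0\sup_t\|\delta\mu^1_t-\delta\mu^2_t\|_{-\mathbb r+1}^2+\mathbb E^0\sup_t\|\delta u^1_t-\delta u^2_t\|_{\mathbb s}^2\le C T\,\mathbb E^0\sup_t|\delta X^{0,1}_t-\delta X^{0,2}_t|^2$ for $CT\le 1/2$. Feeding this into the standard short-time FBSDE and ODE stability estimates (as in \cite{Delarue02}) for the $(\delta Y^0,\delta Z^0,\delta X^0)$-part gives $\mathbb E^0\sup_t|\tilde{\delta X}^{0,1}_t-\tilde{\delta X}^{0,2}_t|^2\le CT\,\mathbb E^0\sup_t|\delta X^{0,1}_t-\delta X^{0,2}_t|^2$, so $\mathfrak T$ is a strict contraction once $CT\le 1/2$; the unique fixed point is the unique solution of \eqref{eq:linear:local}. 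Note the threshold $\mathfrak C$ depends only on $d,\kappa,\mathfrak K,\mathfrak L,\sigma_0,(\mathbb r,\mathbb s)$ — crucially not on $\hat\kappa$ — exactly as in \cite[Subsection 4.3.5(c)]{CardaliaguetDelarueLasryLions}.

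Next, to get \eqref{eq:local:linear:diff0}, I would rerun the same chain of estimates but now tracking the source terms $a_t,b_t,c_T,d_t,j_t,k_T$ and the initial data $\triangle x_0,\triangle\mu$, and raising everything to the power $2p$ for $p\in[1,8]$ (Doob's inequality handles the martingale terms, as in the Third Step of the proof of Theorem \ref{thm:local:FB}). The duality estimate for $\delta u$ now reads $\|\delta u_t\|_{\mathbb s}\le C\,\mathbb E^0[\,\sup_r(|\delta X^0_r|+\|\delta\mu_r\|_{-\mathbb r+1}+\|j_r\|_{\mathbb s-1})+\|k_T\|_{\mathbb s}\,|\,\mathcal F_t\,]$, the forward estimate for $\delta\mu$ absorbs $\|d_t\|_{-\mathbb r+2}$ and a factor $T^{2p}$ against $\|\nabla_x\delta u\|_\infty^{2p}$, and the BSDE/ODE parts contribute $|\triangle x_0|^{2p}+|c_T|^{2p}+(\int_0^T(|a_t|+|b_t|)\,\ud t)^{2p}$. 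Closing the loop with $CT\le 1/2$ at each stage yields $\mathbb E^0[\cdots]\le C_p M_p$ with $M_p$ as in \eqref{eq: M}; the extra hypothesis that $d_t$ takes values in $\mathcal C^{-\mathbb r'+2}$ with $\mathbb r'<\mathbb r$ is used only to ensure the forward equation for $\delta\mu$ has the claimed $\mathcal C^{-\mathbb r+1}$-regularity in positive time via the smoothing of the heat semigroup. The main obstacle I anticipate is not any single estimate but the bookkeeping of function-space indices: one must check that all the duality pairings $(\delta\mu_t,\varphi_t)$ and $(\delta u_t, q_t)$ are legitimately defined, i.e. that the test-function equations (the adjoint transport-diffusion equations driven by $\Upsilon_t$) produce solutions regular enough to pair against distributions of order $\mathbb r$, uniformly in $\omega^0$, and that the \textit{Bochner}-measurability of the non-separable-valued processes $\delta{\boldsymbol u},\delta{\boldsymbol m},\delta{\boldsymbol v}^0$ is preserved through the fixed-point iteration — both handled by the Schauder-gain-of-regularity trick and the approximation argument of Remark \ref{rem:def:2:9}, but requiring care because $\mathbb s\notin\mathbb N$.
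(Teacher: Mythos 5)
Your overall architecture coincides with the paper's: a fixed point on the input $\delta{\boldsymbol X}^0\in\mathscr S^2(\mathbb R^d)$, the linearized FBSPDE for $(\delta{\boldsymbol\mu},\delta{\boldsymbol u},\delta{\boldsymbol m})$ handled with the linearized-system machinery of \cite[Chapter 4]{CardaliaguetDelarueLasryLions} (with smallness of $T$ substituting for monotonicity, exactly as you say), the finite-dimensional linear FBSDE handled via \cite{Delarue02}, a contraction for $T$ small, and \eqref{eq:local:linear:diff0} obtained by rerunning the estimates with the source terms and Doob's inequality. There is, however, one concrete gap in your first stage. You propose to solve the $(\delta\mu,\delta u,\delta m)$-subsystem by directly invoking the results of \cite[Chapter 4]{CardaliaguetDelarueLasryLions}, but those statements (Theorem 4.4.2 and Proposition 4.4.5 there) are formulated for \emph{bounded} perturbations of the driver and of the terminal condition, whereas here the input $\delta X^0$ enters the backward equation through $-\nabla_{x_0}f_t(X_t^0,x,\mu_t)\cdot\delta X_t^0$ and the terminal condition through $\nabla_{x_0}g(X_T^0,x,\mu_T)\cdot\delta X_T^0$, and it is only in $\mathscr S^2(\mathbb R^d)$, not in $L^\infty$. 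The paper must work for this: existence is obtained by truncating the input to $\varphi_R(\delta X^0_t)$, applying the cited results for each $R$, and showing (via Proposition 4.4.5) that the truncated solutions form a Cauchy family in the $L^2$ sense as $R\to\infty$; uniqueness within the $L^2$-class then requires a stopping-time localization $\tau_R:=\inf\{t: \vert\delta X^0_t\vert+\|\delta\tilde u_t\|_{\mathbb s}\ge R\}$, with a careful split of the terminal condition on $\{\tau_R=T\}$ and $\{\tau_R<T\}$ and a uniform-integrability passage to the limit. Without some such device, the step ``solvable on short time by the method of continuation'' does not follow from the references you cite, and the contraction estimate (which must hold for arbitrary $\mathscr S^2$ inputs) is not yet justified.

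A secondary point: the extra hypothesis that $(d_t)_{t\in(0,T]}$ takes values in $[{\mathcal C}^{-\mathbb{r}'+2}({\mathbb T}^d)]^d$ for some $\mathbb{r}'<\mathbb{r}$ is not there for heat-semigroup smoothing, as you suggest, but for measurability. Since ${\mathcal C}^{-\mathbb{r}+2}({\mathbb T}^d)$ is the dual of a non-separable space, $\sup_{t}\|d_t\|_{-\mathbb{r}+2}$ need not be a random variable in general; the hypothesis, combined with the continuity of the paths in ${\mathcal C}^{-\mathbb{r}''+2}({\mathbb T}^d)$ for $\mathbb{r}''>\mathbb{r}$, is exactly what makes the supremum appearing in $M_p$ measurable (see Remark \ref{rem:thm:local:linearFB}). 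This does not affect your estimates, but it does change where, and why, the hypothesis is invoked.
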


\begin{remark}
\label{rem:thm:local:linearFB}
\begin{enumerate}
\item 
In the statement of 
Theorem 
\ref{thm:local:linearFB}, measurability and progressive-measurability are understood in the same \textit{Bochner} sense as 
in the third item of Remark \ref{rem:def:2:9}. In comparison, the 
novelty here is that 
the forward component of the FBSPDE is also impacted by issues of measurability. 
The difficulty is the same as in Definition \ref{def:forward-backward=MFG:solution}:
the dual space ${\mathcal C}^{-\mathbb{r}+1}({\mathbb T}^d)$ 
of 
${\mathcal C}^{\mathbb{r}-1}({\mathbb T}^d)$ 
is not separable. The remedy is the same: 
we can easily prove
(see for instance \cite[Lemma 3.3.1]{CardaliaguetDelarueLasryLions}) that, for $t \in (0,T]$, 
$\delta \mu_t$ is in fact in the subspace 
 ${\mathcal C}^{-\mathbb{r}'+1}({\mathbb T}^d)$, 
 for some $\mathbb{r}' < \mathbb{r}$, which is separable when 
 equipped  
 with $\| \cdot \|_{-\mathbb{r}+1}$. 
 
 Actually, some care is also needed in the formulation of 
 Assumption \hyp{C}. In item (ii) therein, measurability is also understood in \textit{Bochner} sense. 
 In item (iii), each $j_t$ is regarded as a random variable with values in ${\mathcal C}^{{\mathbb s}'}({\mathbb T}^d)$, 
 for any ${\mathbb s}' \in (0,{\mathbb s}-1)$. Since $j_t$ takes almost surely values in 
 ${\mathcal C}^{{\mathbb s}-1}({\mathbb T}^d)$, this says that 
 $j_t$ is in fact \textit{Bochner} measurable with values in ${\mathcal C}^{{\mathbb s}'}({\mathbb T}^d)$, for any 
 ${\mathbb s}' < {\mathbb s}-1$. The same argument applies to $(d_t)_{0 \le t \le T}$ and $(\Psi_t)_{0 \le t \le T}$ in items (iii) and (iv) in Assumption \hyp{C}.  
 \item In 
\eqref{eq:local:linear:diff-1}, $\sup_{t \in [0,T]} \| \delta \mu_t \|_{-\mathbb{r}+1}$
can be shown to be 
measurable by the same argument as above. 
For any $\varepsilon>0$, we have in fact a (deterministic) bound for $\sup_{\varepsilon \leq t \leq T} 
\| \delta \mu_t \|_{-\mathbb{r}'+1}$ for some $\mathbb{r}' <\mathbb{r}$, from which 
we deduce that $\delta {\boldsymbol \mu}$ has  continuous trajectories from $[\varepsilon,T]$ to ${\mathcal C}^{-\mathbb{r}+1}({\mathbb T}^d)$. 
This proves that 
$\sup_{t \in [\varepsilon,T]} \| \delta \mu_t \|_{-\mathbb{r}+1}$
is measurable. Letting $\varepsilon$ tend to $0$, we deduce that 
$\sup_{t \in (0,T]} \| \delta \mu_t \|_{-\mathbb{r}+1}$
and
then
$\sup_{t \in [0,T]} \| \delta \mu_t \|_{-\mathbb{r}+1}$
are measurable. 
That said, it must be clear that there is actually no need of measurability of the supremum to 
write down the result. Instead, we can just say  that there exists $C >0$ such that ${\mathbb P}^0(\{ \forall t \in [0,T], \ \| \delta \mu_t \|_{-\mathbb{r}+1} \leq C\})=1$, which is licit since ${\mathbb P}^0$ is complete. 
In particular, this is exactly how
the condition
$\essup \sup_{t\in [0,T]} \|d_t\|_{-\mathbb{r} +2}<+\infty$
in item (iii) of \hyp{C} 
should be understood: 
formally, we cannot prove that 
$\sup_{t\in [0,T]} \|d_t\|_{-\mathbb{r} +2}$
is measurable, but the condition still makes sense. (Notice that, in comparison, 
$\sup_{t\in [0,T]} \|j_t\|_{{\mathbb s}-1}$
-- also in item (iii) of Assumption \hyp{C}---
is a random variable, see Remark 
\ref{rem:def:2:9}.
 Here, the same argument does not hold 
for processes taking values in 
the dual space 
${\mathcal C}^{-\mathbb{r}+2}({\mathbb T}^d)$ 
because the norm 
$\| \cdot \|_{-\mathbb{r}+2}$ is 
not lower-semicontinuous 
on ${\mathcal C}^{-\mathbb{r}'+2}({\mathbb T}^d)$
for $\mathbb{r}' >\mathbb{r}$. %Lower semi-continuity just holds on the dual of the closure 
%of smooth functions with respect to 
%$\| \cdot \|_{\mathbb{r}-2}$.
)
\item The fact that 
$\sup_{t\in [0,T]} \|d_t\|_{-\mathbb{r} +2}$
may not be a random variable explains why we need another condition
to define $M$ in 
\eqref{eq: M} (in order to guarantee that the \textit{sup} is in fact measurable). Indeed, if $d_t$ takes values in $[{\mathcal C}^{-\mathbb{r}'+2}({\mathbb T}^d)]^d$
for some $\mathbb{r}'<\mathbb{r}$, then (for any $\omega^0 \in \Omega^0)$
\begin{equation*} 
\| d_t \|_{-\mathbb{r}+2} 
= \sup_{ g \in {\mathcal C}^{\infty}({\mathbb T}^d) : \| g \|_{\mathbb{r}-2} \leq 1} 
(d_t,g).
\end{equation*} 
And then, using the fact that the process ${\boldsymbol d}$ has continuous trajectories with values in 
$[{\mathcal C}^{-\mathbb{r}''+2}({\mathbb T}^d)]^d$ for any $\mathbb{r}''>\mathbb{r}$ (see item (ii) in 
Assumption \hyp{C}), we deduce have that, for any 
$g \in {\mathcal C}^{\infty}({\mathbb T}^d)$, 
\begin{equation*} 
(d_t,g) = \lim_{s \rightarrow t, s \in {\mathbb Q}} (d_s,g) \leq \liminf_{s \rightarrow t, s \in {\mathbb Q}} 
\| d_s \|_{-\mathbb{r}+2}, 
\end{equation*} 
and then 
\begin{equation*} 
\sup_{t \in [0,T]} \| d_t \|_{-\mathbb{r} +2} 
= 
\sup_{t \in [0,T] \cap {\mathbb Q}} \| d_t \|_{-\mathbb{r} +2}. 
\end{equation*}
It then remains to prove that 
each $d_t$ is a \textit{Bochner} random variable with values in 
$[{\mathcal C}^{-\mathbb{r}+2}({\mathbb T}^d)]^d$, but this follows from the fact 
that it is a
random variable with values in 
 $[{\mathcal C}^{-\mathbb{r}''+2}({\mathbb T}^d)]^d$
 for any $\mathbb{r}''>\mathbb{r}$ and that it takes values in 
 ${\mathcal C}^{-\mathbb{r}'+2}({\mathbb T}^d)$
 for some 
 $\mathbb{r}' < \mathbb{r}$. 
\end{enumerate}
\end{remark}

\begin{proof}
Small time unique solvability 
of  
\eqref{eq:linear:local}
is proven by means of a suitable contraction argument, 
which is explained in the first two steps below. 
In the third step, we establish 
\eqref{eq:local:linear:diff0}. Proceeding as in the proof of Theorem
\ref{thm:local:FB}, one can assume that 
${\mathbb F}^0$ is generated by 
${\mathcal F}_0^0$ and ${\boldsymbol B}^0$. 
\vskip 4pt

\noindent   {\it First Step.} 
The first of the two steps underpinning the contraction 
argument relies on the following idea. 
For a given input $\delta {\boldsymbol X}^{0}:=
(\delta X^0_t)_{0 \le t \le T} \in  {\mathscr S}^2(\mathbb R^d)$ satisfying $\delta X_0^0=\triangle x_0$, 
we want to solve
the FBSPDE, on $[t,T]$, 
\begin{align}
&\partial_t \delta\tilde\mu_t - \tfrac12 \Delta_x\delta  \tilde\mu_t - {\rm div}_x \Bigl( \Upsilon_t(x)\delta \tilde\mu_t  +\mu_t \Gamma_t(x)\nabla_x\delta \tilde u_t(x) \Bigr)-{\rm div}_x\big(d_t \big) =0, \quad {\rm on} \ {\mathbb T}^d, 
\nonumber
\\
&\ud_t \delta \tilde u_t(x) =\Bigl[  -  \tfrac12 \Delta_x \delta \tilde u_t(x) +\Upsilon_t(x)\cdot \nabla_x\delta \tilde u_t(x)-\nabla_{x_0} f_t(X_t^0,x,\tilde \mu_t)\cdot \delta X_t^0-\Big(\delta_\mu f_t(X_t^0,x,\mu_t),\delta \tilde \mu_t\Big)+j_t(x)  \Bigr] \ud t
\nonumber
\\
&+ \ud_t \delta \tilde m_t(x), \quad x \in  {\mathbb T}^d, 
\label{eq:local:linear:diff1}
\\
&\delta\tilde \mu_0=\triangle\mu,\quad \delta \tilde u_T(x)=\nabla_{x_0}g(X_T^0,x,\mu_T)\cdot\delta X_T^0+\Big(\delta_\mu g(X_T^0,x,\mu_T),\delta \tilde \mu_T\Big)+k_T(x), \quad  x \in \mathbb T^d, \nonumber
\end{align}
with paths in the space $\mathcal C^0([0,T]\,,\,\mathcal{C}^{-\mathbb{u}+1}(\mathbb T^d)\times\mathcal{C}^{\mathbb{u}}(\mathbb T^d)\times\mathcal{C}^{\mathbb{u}-2}(\mathbb T^d))$ and with
\begin{equation}
\label{eq:local:linear:diff1:l2}
{\mathbb E}^0 
\biggl[
\sup_{t\in [0,T]}\left\{\|\delta \tilde\mu_t\|_{-\mathbb{u}+1}^2 +\|\delta\tilde  u_t\|_\mathbb{u}^2 +\|\delta \tilde m_t \|^2_{\mathbb{u}-2}\right\}
\biggr] <+\infty.
\end{equation}
Basically, 
we would like to 
apply \cite[Theorem 4.4.2]{CardaliaguetDelarueLasryLions} in order to guarantee that there is a unique solution 
$(\delta\tilde {\boldsymbol \mu},\delta{\tilde {\boldsymbol u}},\delta{\tilde {\boldsymbol m}})$, adapted with respect to the filtration $\mathbb F^0$, to 
\eqref{eq:local:linear:diff1}--\eqref{eq:local:linear:diff1:l2}. 
Notice that if we can indeed apply  \cite[Theorem 4.4.2]{CardaliaguetDelarueLasryLions}, 
then 
\textit{Bochner} measurability 
with values in any $\mathcal{C}^{-\mathbb{u}+1}(\mathbb T^d)\times\mathcal{C}^{\mathbb{u}}(\mathbb T^d)\times\mathcal{C}^{\mathbb{u}-2}(\mathbb T^d)$ is obvious because 
\cite[Theorem 4.4.2]{CardaliaguetDelarueLasryLions} then implies that the solution is also 
in $\mathcal{C}^{-\mathbb{u}''+1}(\mathbb T^d)\times\mathcal{C}^{\mathbb{u}'}(\mathbb T^d)\times\mathcal{C}^{\mathbb{u}'-2}(\mathbb T^d)$ for $\mathbb{r} < \mathbb{u}'' < \mathbb{u} < \mathbb{u}' < \mathbb{s}$. 

Actually, the difficulty is that
the current setting 
does not exactly fit the assumption 
of 
\cite[Theorem 4.4.2]{CardaliaguetDelarueLasryLions}. 
One first difficulty is that 
\cite[Theorem 4.4.2]{CardaliaguetDelarueLasryLions}
is stated in the monotone framework. 
This is the same issue as the one mentioned in the proof of 
Theorem 
\ref{thm:local:FB}
and the remedy is very similar. Here is an overview of it. Because of the lack of monotonicity, 
there is an additional term in 
\cite[(4.44)]{CardaliaguetDelarueLasryLions}
that writes
(with the notations
from \cite{CardaliaguetDelarueLasryLions})
${\mathbb E}[ \sup_{t \in [0,T]} 
\| \tilde \rho_t - \tilde \rho_t' \|_{-(n+\alpha')}^2]$
(in our system of notations, $\tilde \rho_t$ 
corresponds to 
$\delta \tilde \mu_t$, $\tilde \rho_t'$
to another (forward) solution 
$\delta \tilde \mu_t'$ and 
$n+\alpha'$ to ${\mathbb u}-1$, for
${\mathbb u} \in (\mathbb{r},\mathbb{s})$). 
If $T$ is small enough, then 
Cauchy-Schwarz inequality yields an additional factor 
$\sqrt{T}$ in 
\cite[(4.47)]{CardaliaguetDelarueLasryLions}
so that
\cite[(4.49)]{CardaliaguetDelarueLasryLions}
remains true, with an additional factor 
$T$ in front of 
$\sup_{t \in [0,T]} \| \tilde z_t - \tilde z_t' \|_{n+1+\alpha}^2$
(again with the notations 
from 
\cite{CardaliaguetDelarueLasryLions}, 
$\tilde z_t$ being here understood as 
$\delta \tilde u_t$). This makes it possible to apply the same contraction 
argument as in the proof of 
\cite[Proposition 4.4.7]{CardaliaguetDelarueLasryLions}, the role 
of $\varepsilon$ (which is assumed to be small therein) being now played by $\sqrt{T}$.

Another difference is that the
process $\delta {\boldsymbol X}^0$ (which appears here both in the driver and in the boundary condition of the backward equation) is not bounded (in $\omega^0$). 
This creates another difficulty since the perturbations 
$\tilde f_t^0$ and $\tilde g_T^0$ in 
\cite[(4.37)]{CardaliaguetDelarueLasryLions} are assumed to be bounded. 
In order to construct a solution, we thus apply 
\cite[Theorem 4.4.2]{CardaliaguetDelarueLasryLions}
to an
approximation of 
\eqref{eq:local:linear:diff1}, in which 
$(\delta X_t^0)_{0 \le t \le T}$ is replaced by 
$(\varphi_R(\delta X^0_t))_{0 \leq t \leq T}$ for a
real $R>0$ and a function 
$\varphi_R : {\mathbb R}^d \rightarrow {\mathbb R}^d$ that is bounded by $2R$ and equal to the identity mapping (from 
${\mathbb R}^d$ into itself) on the $d$-dimensional ball of center $0$ and radius $R$. 
This makes it possible to combine 
\cite[Theorem 4.4.2]{CardaliaguetDelarueLasryLions}
and 
\cite[Proposition 4.4.5]{CardaliaguetDelarueLasryLions}: 
The first statement shows that there is a solution for each $R$ (and for a fixed initial condition), 
denoted 
$(\delta \tilde {\boldsymbol \mu}^R,\delta \tilde {\boldsymbol u}^R, \delta \tilde {\boldsymbol m}^{R})$, 
and the second one shows (by comparing the solutions for two different values of $R$) 
that the family of (hence constructed) solutions  
is Cauchy as $R$ tends to $\infty$. Here, the Cauchy property is understood in the same 
$L^2$ sense 
as in 
\eqref{eq:local:linear:diff1:l2}. The limit of the Cauchy sequence is a solution 
of 
\eqref{eq:local:linear:diff1}. Uniqueness is slightly more challenging. We take one solution 
to 
\eqref{eq:local:linear:diff1}
satisfying 
\eqref{eq:local:linear:diff1:l2}. We then introduce the stopping 
time 
$\tau_R :=
\inf \{ t \in  [0,T] : \vert \delta X_t^0 \vert + \| \delta \tilde u_t \|_{{\mathbb s}} \geq R\}$.  
Writing the system 
solved by 
$(\delta \tilde {\boldsymbol \mu}_{t \wedge \tau_R} ,\delta \tilde {\boldsymbol u}_{t \wedge \tau_R}, \delta \tilde {\boldsymbol m}_{t \wedge \tau_R})_{0 \le t \le T}$
as a forward-backward system on $[0,T]$, 
with 
\begin{equation*} 
\delta \tilde u_{\tau_R}(x) = 
\Bigl[ \nabla_{x_0}g(X_T^0,x,\mu_T)\cdot\delta X_T^0+\Big(\delta_\mu g(X_T^0,x,\mu_T),\delta \tilde \mu_T\Big)+k_T(x)
\Bigr] {\mathbf 1}_{\{ \tau_R = T\}} + 
\delta \tilde u_{\tau_R}(x)
{\mathbf 1}_{\{ \tau_R < T\} }, \quad x \in {\mathbb T}^d, 
\end{equation*} 
as terminal condition, 
we can compare 
$(\delta \tilde {\boldsymbol \mu}_{t \wedge \tau_R} ,\delta \tilde {\boldsymbol u}_{t \wedge \tau_R}, \delta \tilde {\boldsymbol v}^{0}_{t \wedge \tau_R})_{0 \le t \le T}$
 with 
$(\delta \tilde {\boldsymbol \mu}^R,\delta \tilde {\boldsymbol u}^R, \delta \tilde {\boldsymbol v}^{0,R})$ by means of
\cite[Proposition 4.4.5]{CardaliaguetDelarueLasryLions}. Thanks to the above decomposition, we observe that, on the event 
$\{ \tau_R=T\}$, 
 the terminal condition of the forward-backward system obtained by making the difference of the two solutions 
just writes  
$(\delta_\mu g(X_T^0,x,\mu_T),\delta \tilde \mu_T- \delta \tilde \mu_T^R)$
and
 is thus purely linear 
 (equivalently, the intercept is zero). 
 On the event $\{ \tau_R<T\}$, 
 the terminal condition writes $[\delta \tilde u_{\tau_R}(x)
 - \delta \tilde u_{T}^R(x)]
{\mathbf 1}_{\{ \tau_R < T\}}$, which tends, by 
a standard uniform integrability argument, to $0$ in $L^2$ as $R$ tends to $0$. 
By
\cite[Proposition 4.4.5]{CardaliaguetDelarueLasryLions}, 
this proves uniqueness. Also, this makes it possible to compare any two solutions satisfying 
  \eqref{eq:local:linear:diff1}
and \eqref{eq:local:linear:diff1:l2}
by means of
\cite[Proposition 4.4.5]{CardaliaguetDelarueLasryLions} again. 
All these claims hold true for $T \leq {\mathfrak C}$, 
where 
${\mathfrak C}>0$ only depends 
on
$d$, $\kappa$, ${\mathfrak K}$, ${\mathfrak L}$, $\sigma_0$ and $(\mathbb{r},{\mathbb s})$. In particular, 
${\mathfrak C}$ is independent of the initial condition. 
%\textcolor{red}{Importantly, thanks to the monotonicity property of 
%$f$ and $g$ (in the variable $(x,\mu)$), there is no condition on the length $T$ of the interval 
%$[0,T]$ in the analysis of \eqref{eq:local:linear:diff1}. }

Next we define $(\delta\tilde {\boldsymbol X}^0,\delta\tilde {\boldsymbol Y}^0,\delta\tilde 
{\boldsymbol Z}^0)$ as the strong solution to the following FBSDE system (on $[0,T]$)
\begin{equation} 
\label{eq:local:linear:diff2}
\begin{split} 
&\ud \delta \tilde X_t^0 =  \Big[-p_t \delta \tilde X_t^0-q_t\delta \tilde Z_t^0+a_t\Big] \ud t,% \quad \text{on } [0,T],
\\
&\ud \delta \tilde Y_t^0 =\Big[l_t\cdot \delta \tilde X_t^0+o_t\delta \tilde Z_t^0
\\
& \hspace{45pt}  -  \nabla_{x_0} f_t^0(X_t^0,\mu_t) \cdot \delta \tilde X_t^0 - \Big(\delta_\mu f_t^0(X_t^0,\mu_t),\delta \tilde \mu_t\Big)  +b_t\Big]
 \ud t + 
\sigma_0 \delta \tilde Z_t^0 \cdot \ud B_t^0, \quad t \in [0,T], 
\\
& \delta \tilde X_0^0= \triangle x_0, \quad \delta \tilde Y_T^0 = \nabla_{x_0}g^0(X_T^0,\mu_T)\cdot \delta \tilde X_T^0+\Big(\delta_\mu g^0(X_T^0,\mu_T),\delta \tilde \mu_T\Big)+c_T.
\end{split}
\end{equation} 
Here, solutions are required to satisfy
\begin{equation}
\label{eq:local:linear:diff2:l2}  
{\mathbb E}^0 \biggl[ \sup_{0 \le t \le T} \bigl( \vert \delta \tilde X_t^0 \vert^2 + 
\vert \delta \tilde Y_t^0 \vert^2 \bigr) + \int_0^T \vert \delta \tilde Z_t^0 \vert^2 \ud t\biggr] < \infty.  
\end{equation} 
By \cite{Delarue02}, existence and uniqueness of a solution hold true for $T \leq {\mathfrak C}$, for
a possibly new value of the constant ${\mathfrak C}$. 
Next, we implicitly require $T$ to be less than ${\mathfrak C}$. 

Setting $\delta {\mathfrak T}(\delta X^0):=\delta \tilde X^0$, this defines a map $\delta {\mathfrak  T}: {\mathscr S}^2(\mathbb R^d)\rightarrow  {\mathscr S}^2(\mathbb R^d)$. 
\vskip 4pt

\noindent {\it Second Step.}
In this step, we show that, for $T \leq {\mathfrak C}$ (again for a possibly new value of 
${\mathfrak C}$, depending on $d$, $\kappa$, ${\mathfrak K}$, 
${\mathfrak L}$ and $({\mathbb r},{\mathbb s})$), $\delta {\mathfrak T}$ is a contraction mapping from $ {\mathscr S}^2(\mathbb R^d)$ to $ {\mathscr S}^2(\mathbb R^d)$.

 For
 two  inputs $\delta {\boldsymbol X}^{0,1}$ and $\delta {\boldsymbol X}^{0,2}$ with $\delta X^{0,1}_0= \delta X^{0,2}_0=\triangle x_0$
 as initial conditions, we denote 
  by $(\delta\tilde {\boldsymbol \mu}^1,\delta\tilde {\boldsymbol u}^1,\delta\tilde {\boldsymbol m}^{1})$ and $( \delta \tilde {\boldsymbol \mu}^2,\delta \tilde {\boldsymbol u}^2,\delta\tilde  {\boldsymbol m}^{2})$
  the corresponding solutions to 
\eqref{eq:local:linear:diff1}, and next  by $(\delta \tilde {\boldsymbol X}^{0,1},\delta \tilde {\boldsymbol Y}^{0,1},\delta \tilde {\boldsymbol Z}^{0,1})$ and $(\delta \tilde {\boldsymbol X}^{0,2},\delta \tilde {\boldsymbol Y}^{0,2},\delta \tilde {\boldsymbol Z}^{0,2})$ the corresponding solutions to \eqref{eq:local:linear:diff2}.
%We denote the differences of the solutions by 
%\begin{equation*} 
%\begin{split}
%&\left( \delta \mu ,\delta u, \delta v^0 \right) = 
%\left(\mu^1-\mu^2, u^1-u^2, v^{0,1}-v^{0,2} \right),\\
% &\left( \delta X^0,\delta Y^0,\delta Z^0 \right) = \left(
%X^{0,1}-X^{0,2}, Y^{0,1}-Y^{0,2}, Z^{0,1}-Z^{0,2}\right) .
%\end{split}
%\end{equation*}
Using 
\cite[Proposition 4.4.5]{CardaliaguetDelarueLasryLions}
(which is licit thanks to the analysis performed in 
the first step, 
and 
with the following notations therein:
the primed solution 
is null,  
$\vartheta = 1$, 
$\tilde m_t = \mu_t$, 
$\tilde z_t (x) = \delta \tilde u_t^1(x)$, 
$\tilde z_t' (x) = \delta \tilde u_t^2(x)$,
$\tilde f_t^0(x)=  j_t(x) - \nabla_{x_0} f_t(X_t^{0},x,\tilde \mu_t) \cdot \delta X_t^{0,1}$,
$\tilde f_t^{0,\prime}(x) = j_t(x) - \nabla_{x_0} f_t(X_t^{0},x,\tilde \mu_t) \cdot \delta X_t^{0,2}$,
$\tilde V_t(x)=\tilde V_t'(x) = \Upsilon_t(x)$, 
$\tilde b_t^0(x)=\tilde b_t^{0,\prime}(x) = d_t(x)$), we get 
(thanks to 
Remark \ref{rem:thm:local:linearFB}, 
the term inside the expectation symbol in the left-hand side below is 
a random variable):
\begin{equation*}
\begin{split}
\mathbb E^0\bigg[\sup_{t\in [0,T]}\|\delta \tilde \mu^1_t-\delta \tilde \mu^2_t\|_{-\mathbb{r}+1}^2 \bigg]\leq C\mathbb E^0\bigg[\sup_{t\in [0,T]}|\delta X_t^{0,1}-\delta X_t^{0,2}|^2\bigg],
\end{split}
\end{equation*} 
where $C$ only depends  on 
$d$, $\kappa$, ${\mathfrak K}$, ${\mathfrak L}$,  and $(\mathbb{r},{\mathbb s})$.
By  standard short time stability estimates for FBSDEs, see 
\cite[Theorem 1.3]{Delarue02}, 
 we can derive (for $T \leq {\mathfrak C}$, allowing the value of 
 ${\mathfrak C}$ to vary from line to line as long as it just depends on the same parameters as before)
\begin{equation}\label{eq:local:linear:diff6}
\begin{split}
&\mathbb E^0 \biggl[ \int_0^T |\delta\tilde Z_t^{0,1}-\delta\tilde Z_t^{0,2}|^2
\ud t
\biggr]\\
&\leq C\mathbb E^0\biggl[\bigg(\int_0^T\Big(\delta_\mu f^0_t(X_t^0,\mu_t),\delta \tilde \mu_t^1-\delta \tilde \mu_t^2\Big)\ud t\bigg)^2+\Big(\delta_\mu g^0(X_T^0,\mu_T),\delta \tilde \mu_T^1-\delta \tilde \mu_T^2\Big)^2\biggr]\\
&\leq C\mathbb E^0\bigg[\sup_{t\in [0,T]}\|\delta \tilde \mu^1_t-\delta \tilde \mu^2_t\|_{-\mathbb{r}+1}^2\bigg]\leq C \mathbb E^0\bigg[\sup_{t\in [0,T]}|X_t^{0,1}-X_t^{0,2}|^2\bigg].
\end{split}
\end{equation}
%By It\^o's formula, we have, for any $\varepsilon>0$,
%\begin{equation*} 
%\begin{split} 
%\big(\delta\tilde X_t^{0,1}-\delta\tilde X_t^{0,2}\big)^2&=-\int_0^t2p_r\big(\delta\tilde X_r^{0,1}-\delta \tilde X_r^{0,2}\big)^2+2q_r\big(\delta\tilde Z_r^{0,1}-\delta \tilde Z_r^{0,2}\big)dr\\
%&\leq 2\int_0^t\bigg(|p_r|+\frac{|q_r|}{\varepsilon}\bigg)\big(\delta\tilde X_r^{0,1}-\delta \tilde X_r^{0,2}\big)^2dr+\varepsilon\int_0^t\big(\delta\tilde Z_r^{0,1}-\delta \tilde Z_r^{0,2}\big)^2dr.
%\end{split} 
%\end{equation*}
%Taking expectation on both sides, we obtain
%\begin{equation}\label{eq:local:linear:diff7}
%\mathbb E^{0}\bigg[\sup_{t\in [0,T]}\big(\delta\tilde X_t^{0,1}-\delta\tilde X_t^{0,2}\big)^2\bigg]\leq C\Big[T\Big(1+\frac{1}{\varepsilon}\Big)\Big]\mathbb E\Big[\sup_{t\in [0,T]}\big(\delta\tilde X_t^{0,1}-\delta\tilde X_t^{0,2}\big)^2\Big]+\varepsilon\int_0^t\big(\delta\tilde Z_r^{0,1}-\delta \tilde Z_r^{0,2}\big)^2dr.
%\end{equation}
Inserting the above estimate in the forward equation of 
\eqref{eq:local:linear:diff2}, we deduce that, for $T \leq {\mathfrak C}$,
%, with $c'$ depending only on {\color{blue} $d, \tilde L$ and $(\mathbb{r},{\mathbb s},\mathbb{u})$}, it holds
\begin{equation*} 
\mathbb E^0\biggl[\sup_{t\in [0,T]}|\delta\tilde X_t^{0,1}-\delta\tilde X_t^{0,2}|^2\biggr]\leq \frac{1}{2} \mathbb E^0\biggl[\sup_{t\in [0,T]}|\delta X_t^{0,1}-\delta X_t^{0,2}|^2\biggr],
\end{equation*}
which implies $\delta {\mathfrak T}$ is a contraction mapping from $ {\mathscr S}^2(\mathbb R^d)$ to $ {\mathscr S}^2(\mathbb R^d)$. Therefore, there exists a unique fixed point 
$\delta {\boldsymbol X}^0\in  {\mathscr S}^2(\mathbb R^d)$ with $\delta X_0^0=\triangle x_0$.

%Using \cite[Theorem 4.4.2]{CardaliaguetDelarueLasryLions} again, 
Then, the coupled system comprising 
both
the (finite-dimensional) FBSDE and
the
 FBSPDE   in \eqref{eq:linear:local} admits a unique solution, which is adapted with respect to the filtration $\mathbb F^0$. The solution to the FBSPDE system is denoted 
$(\delta {\boldsymbol \mu},\delta {\boldsymbol u},\delta {\boldsymbol v}^0)$. It satisfies 
$(a)$ and $(b)$ in the statement. The 
solution to the finite-dimensional FBSDE system in  \eqref{eq:linear:local} admits a unique strong solution $(\delta {\boldsymbol X}^0, \delta {\boldsymbol Y}^0, \delta {\boldsymbol Z}^0)$. 
It satisfies $(c)$ in the statement.
\vskip 4pt

\noindent {\it Third Step.} We now estimate the 6-tuple $(\delta {\boldsymbol \mu},\delta 
{\boldsymbol u},\delta {\boldsymbol m},\delta {\boldsymbol X}^0,\delta {\boldsymbol Y}^0,\delta {\boldsymbol Z}^0)$, solution to the forward-backward system \eqref{eq:linear:local}.
The proof is similar to that one of 
\cite[Corollary 4.4.6]{CardaliaguetDelarueLasryLions}. 
The point is to compare the solution 
$(\delta {\boldsymbol \mu},\delta 
{\boldsymbol u},\delta {\boldsymbol m})$ of the FBSPDE system in 
\eqref{eq:linear:local}
with the trivial system driven by null coefficients and thus having 
a null solution. Comparison is obtained by means of  
\cite[Proposition 4.4.5]{CardaliaguetDelarueLasryLions}.
One of the key point in the proof is 
to fix one instant $t \in [0,T]$ 
and then
to apply 
\cite[Proposition 4.4.5]{CardaliaguetDelarueLasryLions}
to the restriction of the 
triple
$(\delta {\boldsymbol \mu},\delta 
{\boldsymbol u},\delta {\boldsymbol m})$
to the interval $[t,T]$, seen 
as a solution 
of the FBSPDE system in 
\eqref{eq:linear:local}
on $[t,T]$, with $\delta \mu_t$ as initial solution 
and
 under the   conditional 
probability distribution of 
${\mathbb P}^0$ given the $\sigma$-field 
${\mathcal F}_t^0$. 

We get, with probability 1 under ${\mathbb P}^0$,  
\begin{equation*} 
\begin{split} 
 \|\delta  u_t\|_{{\mathbb s}}^2
\leq 
C \|\delta  \mu_t\|_{-\mathbb{r}+1}^2
+ C {\mathbb E}^0 \Bigl[ 
\sup_{r\in [t,T]}\|d_r\|^2 _{-\mathbb{r}+2}
+
\sup_{r\in [t,T]}\|j_r\|^2 _{{\mathbb s}-1}
+
\sup_{r\in [t,T]} |\delta X_r^0|^2
+
\| k_T \|_{{\mathbb s}}^2
\, \vert \, {\mathcal F}_t^0
 \Bigr],
 \end{split} 
\end{equation*}
where $C$ 
only depends on
$d$, $\kappa$, ${\mathfrak K}$, ${\mathfrak L}$ and $(\mathbb{r},{\mathbb s})$ (recalling that $T \leq {\mathfrak C}$) and is allowed to vary from line to line. 
Measurability of the arguments in the above right-hand side 
follows from 
the third item in Remark 
\ref{rem:thm:local:linearFB}. 

Taking 
power $p \in (1,8]$ 
and then expectation (under ${\mathbb P}^0$) on both sides, we obtain
by means of Doob's inequality, 
\begin{equation*}
\begin{split}
&{\mathbb E}^0\Big[\sup_{t\in [0,T]}\|\delta u_t\|_{{\mathbb s}}^{2p}\Big] 
\\
&\leq C
{\mathbb E}^0 \Bigl[ 
\sup_{t\in [0,T]}
 \|\delta  \mu_t\|_{-\mathbb{r}+1}^{2p}
 +
 \sup_{t\in [0,T]}\|d_t\|^{2p}_{-\mathbb{r}+2}
+
\sup_{t\in [0,T]}\|j_t\|^{2p}_{{\mathbb s}-1}
+
\sup_{t\in [0,T]} |\delta X_t^0|^{2p}
+
\| k_T \|_{{\mathbb s}}^{2p}
 \Bigr].
 \end{split}
\end{equation*}
%where $C$ 
%is allowed to vary from line to line as long as it only depends on
%$d$, $\kappa$, ${\mathcal L}$, ${\mathfrak K}$ and $(\mathbb{r},{\mathbb s})$. 
Now, by an obvious adaptation of Lemma 
\ref{le:appendix:2} (see for instance 
\cite[Lemma 3.3.1]{CardaliaguetDelarueLasryLions}),
we have, ${\mathbb P}^0$-almost surely,   
\begin{equation*} 
\sup_{t \in [0,T]}
 \|\delta  \mu_t\|_{-\mathbb{r}+1}
 \leq C \| \triangle \mu \|_{-\mathbb{r}+1} 
 +
 C T \Bigl[ \sup_{t \in [0,T]} \| \delta u_t \|_{{\mathbb s} -1} +
 \sup_{t \in [0,T]} \| d_t \|_{-\mathbb{r} +2} \Bigr]. 
 \end{equation*} 
 And, then,
 %allowing the value of $C_p$ to vary from line to line as long as it just depends on
 %$d$, $p$, $\kappa$, ${\mathcal L}$, ${\mathfrak K}$ and $(\mathbb{r},{\mathbb s})$
 %and 
 assuming $C T \leq 1/2$, 
 \begin{equation*}
\begin{split}
&{\mathbb E}^0\Big[\sup_{t\in [0,T]}\|\delta \mu_t\|_{-\mathbb{r}+1}^{2p}\Big] 
 \leq C \Bigl( 
 \| \triangle \mu \|_{-\mathbb{r}+1}^{2p}
 +
{\mathbb E}^0 \Bigl[ 
 \sup_{t\in [0,T]}\|d_t\|^{2p}_{-\mathbb{r}+2}
+
\sup_{t\in [0,T]}\|j_t\|^{2p}_{{\mathbb s}-1}
+
\sup_{t\in [0,T]} |\delta X_t^0|^{2p}
+
\| k_T \|_{{\mathbb s} }^{2p}
 \Bigr] \Bigr) .
\end{split}
\end{equation*}
And then, by \cite[Theorem A.5]{Delarue02},
\begin{equation*} 
\begin{split} 
&{\mathbb E}^0\bigg[ \biggl( \int_0^T \vert \delta Z_t^0 \vert^2 \ud t \biggr)^p \biggr]
\\
&\leq  
C
\biggl( \vert \triangle x_0 \vert^{2p}
+
{\mathbb E}^0 \biggl[ \sup_{t \in [0,T]}   \| \delta \mu_t \|^{2p}_{-\mathbb{r} +1}  +
\biggl( 
\int_0^T \bigl( \vert a_t \vert + \vert b_t \vert \bigr) \ud t  
\biggr)^{2p} 
+
\vert c_T \vert^{2p} 
\biggr]
\biggr) 
\\
&\leq 
C 
\biggl( \vert \triangle x_0 \vert^{2p} 
+ 
\|
\triangle \mu 
\|_{-\mathbb{r} +1}^{2p}
\\
&\hspace{15pt} +
{\mathbb E}^0 \biggl[ \sup_{t \in [0,T]}  \Bigl(   \|d_t\|^{2p}_{-\mathbb{r}+2}
+
 \|j_t\|^{2p}_{{\mathbb s}-1}
+
 |\delta X_t^0|^{2p}
 \Bigr) 
 +
\biggl( 
\int_0^T \bigl( \vert a_t \vert + \vert b_t \vert \bigr) \ud t  
\biggr)^{2p} 
 + 
\vert c_T \vert^{2p} 
+
\| k_T \|_{{\mathbb s}}^{2p}
\biggr]
\biggr). 
\end{split} 
\end{equation*} 
 Inserting the above bound into the equation for 
 $\delta {\boldsymbol X}^0$, we deduce that, for $C T \leq 1/2$ (and $T \leq 1$), 
 \begin{equation*} 
 \begin{split} 
&{\mathbb E}^0\Bigl[
 \sup_{0 \le t \le T}   |\delta X_t^0|^{2p}
\Bigr]
\\
&\leq  
C 
\biggl( \vert \triangle x_0 \vert^{2p} 
+ 
\|
\triangle \mu 
\|_{-\mathbb{r} +1}^{2p}
\\
&\hspace{15pt} +
{\mathbb E}^0 \biggl[ \sup_{0 \le t \le T} \Bigl(   \|d_t\|^{2p}_{-\mathbb{r}+2}
+
 \|j_t\|^{2p}_{{\mathbb s}-1}
 \Bigr) 
 +
\biggl( 
\int_0^T \bigl( \vert a_t \vert + \vert b_t \vert \bigr) \ud t  
\biggr)^{2p} 
 + 
\vert c_T \vert^{2p} 
+
\| k_T \|_{{\mathbb s}}^{2p}
\biggr]
\biggr)
\\
&=C M_p,
\end{split} 
 \end{equation*} 
and the proof is easily completed by recombining all the 
intermediary steps. 
\end{proof}

\color{black}

\subsection{First order derivatives of the master fields}

For simplicity, the results in the last two subsections are stated with $t=0$ as initial time. However, they remain true for any initial time $t\in [0,T]$ (provided 
$T$ is small enough). In particular, for any initial condition $(t,x_0,\mu)\in [0,T]\times\mathbb R^d\times\mathcal{P}(\mathbb T^d)$, the system \eqref{eq:major:FB:1:small}--\eqref{eq:minor:FB:2:small} has a unique solution, denoted $(X^{0,t,x_0,\mu},Y^{0,t,x_0,\mu}, Z^{0,t,x_0,\mu},\mu^{t,x_0,\mu},u^{t,x_0,\mu},m^{t,x_0,\mu})$, on $[t,T]$ with $T$ given in Theorem \ref{thm:local:FB}, which makes it possible to let
\begin{equation}\label{eq:U:rep}
{\mathcal U}(t,x_0,x,\mu):=u^{t,x_0,\mu}(t,x), \quad x \in {\mathbb T}^d, 
\end{equation}
and
\begin{equation}\label{eq:U0:rep}
{\mathcal U}^0(t,x_0,\mu):=Y^{0,t,x_0,\mu}_{t}.
\end{equation}
Elaborating on 
the proof of \cite[Lemma 5.1.1]{CardaliaguetDelarueLasryLions}
and using 
the uniqueness result of \eqref{eq:major:FB:1:small}--\eqref{eq:minor:FB:2:small}
together with the stability property \eqref{eq:local:diff-1}, we have, with probability 1 under 
${\mathbb P}^0$, 
\begin{equation}
\label{eq:representation:short:time}
u^{t,x_0,\mu}(r,x)={\mathcal U}(r,X_r^{0,t,x_0,\mu},x,\mu_r^{t,x_0,\mu})\quad\text{and}\quad Y_r^{0,t,x_0,\mu}={\mathcal U}^0(r,X_r^{0,t,x_0,\mu},\mu_r^{t,x_0,\mu}),\quad \text{for any }r\in [t,T].
\end{equation}
Thanks to 
\eqref{eq:local:BMO}
and
\eqref{eq:local:reg}, ${\mathcal U}^0$ and ${\mathcal U}$ are bounded. 
In fact, by \eqref{eq:local:diff-1} again, we also have
\begin{equation}
\label{eq:local:diff-1:again}
\begin{split}
&{\mathcal U}: [0,T]\times\mathbb R^d \times\mathcal{P}(\mathbb T^d)
\ni (t,x_0,\mu) 
\mapsto {\mathcal U}(t,x_0,\cdot,\mu) \in {\mathcal C}^{{\mathbb s}}({\mathbb T}^d) \ \text{ is uniformly Lipschitz continuous in $x_0,\mu$},
\\
&{\mathcal U}^0: [0,T]\times\mathbb R^d\times\mathcal{P}(\mathbb T^d)
\ni (t,x_0,\mu) 
\mapsto {\mathcal U}^0(t,x_0,\mu) \in {\mathbb R} 
\
\text{ is uniformly Lipschitz continuous in $x_0,\mu$},\\
\end{split}
\end{equation}
where the Lipschitz continuity in $\mu$ is measured by the $\mathbb W_1$ metric.
Continuity in time is ensured by the following lemma:

\begin{lemma}
\label{lem:U0:U:timereg}
Under Assumption \hyp{\^A}, there exists a constant 
${\mathfrak C}$ only depending on 
$d$, ${\mathfrak L}$, $\kappa$, $\sigma_0$ and ${\mathbb s}$,
such that the following holds true for $T
\leq {\mathfrak C}$. 
For any ${\mathbb r} \in (\lfloor {\mathbb s} \rfloor,{\mathbb s})$, we can find a constant $C$,  only depending on 
$d$, ${\mathfrak L}$, $\kappa$, $\sigma_0$, ${\mathbb r}$
and 
${\mathbb s}$, such that, for any $(t,x_0,\mu)\in [0,T]\times\mathbb R^d\times\mathcal{P}(\mathbb T^d)$ and $r\in[t,T]$,
\begin{equation}\label{eq:U:timereg}
\|{\mathcal U}(r,x_0,\cdot,\mu)-{\mathcal U}(t,x_0,\cdot,\mu)\|_{{\mathbb r}}\leq C(r-t)^{({\mathbb s}- {\mathbb r})/2},
\end{equation}
and
\begin{equation}\label{eq:U0:timereg}
|{\mathcal U}^0(r,x_0,\mu)-{\mathcal U}^0(t,x_0,\mu)|\leq C(r-t)^{1/2}.
\end{equation}
\end{lemma}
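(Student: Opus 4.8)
The plan is to exploit the flow (Markov) structure of the small-time system. Fix $(t,x_0,\mu)$, solve \eqref{eq:major:FB:1:small}--\eqref{eq:minor:FB:2:small} on $[t,T]$, and write $(X^0_s,Y^0_s,Z^0_s,\mu_s,u_s,m_s)$ for the solution, recalling from \eqref{eq:representation:short:time} that $u_r=\mathcal U(r,X^0_r,\cdot,\mu_r)$ and $Y^0_r=\mathcal U^0(r,X^0_r,\mu_r)$ almost surely for $r\in[t,T]$, while $\mathcal U(t,x_0,\cdot,\mu)=u_t$ and $\mathcal U^0(t,x_0,\mu)=Y^0_t$ are deterministic. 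I would then decompose
\[
\mathcal U(r,x_0,\cdot,\mu)-\mathcal U(t,x_0,\cdot,\mu)=\bigl[\mathcal U(r,x_0,\cdot,\mu)-\mathbb E^0 u_r\bigr]+\bigl[\mathbb E^0 u_r-u_t\bigr],
\]
and likewise for $\mathcal U^0$. For the second bracket one uses the backward equation: from \eqref{eq:minor:FB:2:small} (resp. \eqref{eq:major:FB:1:small}), the increment $u_r-u_t$ (resp. $Y^0_r-Y^0_t$) is an $\int_t^r(\cdots)\,\ud s$ drift term plus a martingale increment which, since the martingale starts from $0$ at the initial time $t$ and $Z^0\in\mathscr H^2$ (resp. $m$ has the integrability of \eqref{eq:local:reg}), has vanishing $\mathbb E^0$-mean. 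The uniform bounds \eqref{eq:local:reg} show that $-\tfrac12\Delta u_s+\hat H(\cdot,\nabla u_s)-f_s(X^0_s,\cdot,\mu_s)$ is bounded in $\mathcal C^{\mathbb s-2}(\mathbb T^d)$ by a constant (the Laplacian costs two derivatives, the other terms fewer), so $\|\mathbb E^0 u_r-u_t\|_{\mathbb s-2}\le C(r-t)$; and, using $|\hat L^0(X^0_s,Z^0_s)|\le\kappa(1+|Z^0_s|)$ together with Cauchy--Schwarz and the BMO bound \eqref{eq:local:BMO}, $\mathbb E^0\int_t^r|\hat L^0(X^0_s,Z^0_s)|\,\ud s\le C\sqrt{r-t}$, whence $|\mathbb E^0Y^0_r-Y^0_t|\le C\sqrt{r-t}$.

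For the first bracket I would invoke the local Lipschitz continuity of the master fields in $(x_0,\mu)$ recorded in \eqref{eq:local:diff-1:again}: since $\mathbb E^0 u_r-\mathcal U(r,x_0,\cdot,\mu)=\mathbb E^0[\mathcal U(r,X^0_r,\cdot,\mu_r)-\mathcal U(r,x_0,\cdot,\mu)]$, this bracket is bounded in $\mathcal C^{\mathbb s}(\mathbb T^d)$ by $C\,\mathbb E^0[|X^0_r-x_0|+\mathbb W_1(\mu_r,\mu)]$, and analogously in $\mathbb R$ for $\mathcal U^0$. The forward SDE for $X^0$ (drift $\nabla_pH^0$ with $\mathbb E^0\int_t^r|\nabla_pH^0(X^0_s,Z^0_s)|\,\ud s\le C\sqrt{r-t}$, diffusion $\sigma_0$) and the forward Fokker--Planck equation for $\mu$ (bounded drift $\nabla_p\hat H$, unit diffusion) give $\mathbb E^0|X^0_r-x_0|\le C\sqrt{r-t}$ and $\mathbb W_1(\mu_r,\mu)\le C\sqrt{r-t}$. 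Combining the two brackets yields \eqref{eq:U0:timereg} immediately with exponent $1/2$, and also $\|\mathcal U(r,x_0,\cdot,\mu)-\mathcal U(t,x_0,\cdot,\mu)\|_{\mathbb s-2}\le C\sqrt{r-t}$, together with the uniform bound $\|\mathcal U(r,x_0,\cdot,\mu)-\mathcal U(t,x_0,\cdot,\mu)\|_{\mathbb s}\le C$ coming from \eqref{eq:local:reg} and \eqref{eq:local:diff-1:again}.

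The main obstacle is to reach the sharp exponent $(\mathbb s-\mathbb r)/2$ in \eqref{eq:U:timereg}: with an interpolation inequality in Hölder spaces along $\mathbb r=\theta(\mathbb s-2)+(1-\theta)\mathbb s$, i.e. $\theta=(\mathbb s-\mathbb r)/2$, one needs the weak-norm estimate on the first bracket to be of order $(r-t)$ rather than $(r-t)^{1/2}$ in $\mathcal C^{\mathbb s-2}$. I would obtain this by handling the $\mu$-increment in a negative Hölder norm, where $\mu_r-\mu=\int_t^r\partial_s\mu_s\,\ud s$ is genuinely $O(r-t)$, and by Taylor-expanding $\mathcal U(r,X^0_r,\cdot,\mu_r)$ around $(x_0,\mu)$ using the Lipschitz continuity of $\nabla_{x_0}\mathcal U$ and of $\delta_\mu\mathcal U$ (the local analogues of the $\mathscr D$-conditions, available at this stage of the section): the first-order term in $X^0_r-x_0$ has zero $\mathbb E^0$-mean up to the $O(r-t)$ drift contribution, and the remainder is controlled by $\mathbb E^0|X^0_r-x_0|^2\le C(r-t)$. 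With $\|\mathcal U(r,x_0,\cdot,\mu)-\mathcal U(t,x_0,\cdot,\mu)\|_{\mathbb s-2}\le C(r-t)$ thus established, the interpolation gives exactly $(r-t)^{(\mathbb s-\mathbb r)/2}$. Throughout, all constants are kept to depend only on $d,\mathfrak L,\kappa,\sigma_0,\mathbb r,\mathbb s$ (and the threshold $\mathfrak C$ on $T$ only on $d,\mathfrak L,\kappa,\sigma_0,\mathbb s$), since $\mathfrak C$ and the constants in the underlying Theorems \ref{thm:local:FB} and \ref{thm:local:linearFB} have this dependence.
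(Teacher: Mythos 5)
Your decomposition, your treatment of \eqref{eq:U0:timereg}, and your bound $\|\mathbb E^0 u_r-u_t\|_{{\mathbb s}-2}\le C(r-t)$ (drift of the backward SPDE bounded in $\mathcal C^{{\mathbb s}-2}$ by \eqref{eq:local:reg}, martingale killed by $\mathbb E^0$) are all sound, and combining that weak-norm bound with the uniform $\mathcal C^{{\mathbb s}}$-bound by interpolation is a perfectly legitimate substitute for the paper's route, which instead writes $u_t-\mathbb E^0[u_r]$ in mild form and uses $\|(P_{r-t}-\mathrm{id})u_r\|_{{\mathbb r}}\le C(r-t)^{({\mathbb s}-{\mathbb r})/2}\|u_r\|_{{\mathbb s}}$ together with the $(s-t)^{-1/2}$ smoothing of the heat kernel on the source term — essentially the same analytic content. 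However, your final paragraph misplaces the difficulty and, in trying to fix a non-problem, makes a claim that does not hold. The bracket $\mathcal U(r,x_0,\cdot,\mu)-\mathbb E^0 u_r$ needs no improvement at all: since ${\mathbb r}\in(\lfloor{\mathbb s}\rfloor,{\mathbb s})$ forces ${\mathbb s}-{\mathbb r}<1$, and $r-t\le T\le 1$, one has $(r-t)^{1/2}\le (r-t)^{({\mathbb s}-{\mathbb r})/2}$, so the Lipschitz bound \eqref{eq:local:diff-1:again} together with $\mathbb E^0[|X^0_r-x_0|+\mathbb W_1(\mu_r,\mu)]\le C(r-t)^{1/2}$ already yields the target rate for this bracket, directly in $\mathcal C^{{\mathbb s}}$. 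The interpolation should be applied only to $\mathbb E^0 u_r-u_t$, where you already have both ingredients; this is exactly how the paper closes the estimate (it bounds the flow bracket by $\sqrt{r-t}$ and absorbs it into $(r-t)^{({\mathbb s}-{\mathbb r})/2}$).

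The repair you propose instead — Taylor-expanding $\mathcal U(r,X^0_r,\cdot,\mu_r)$ and asserting that the first-order term in $X^0_r-x_0$ has zero mean "up to the $O(r-t)$ drift contribution" — is incorrect at this stage. The drift of $X^0$ is $-\nabla_pH^0(X^0_s,Z^0_s)$, and the only control on $Z^0$ available here is the BMO bound \eqref{eq:local:BMO}; by Cauchy--Schwarz this gives $|\mathbb E^0[X^0_r-x_0]|\le C\mathbb E^0\int_t^r(1+|Z^0_s|)\,\ud s\le C(r-t)^{1/2}$, not $O(r-t)$ (the pointwise bound $Z^0_t=\nabla_{x_0}\mathcal U^0(t,X^0_t,\mu_t)$ that would upgrade this is only established later, in Proposition \ref{prop:representation:Z0}). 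Moreover, the Lipschitz continuity of $\nabla_{x_0}\mathcal U$ and $\delta_\mu\mathcal U$ that this expansion invokes is itself only proved after this lemma (Propositions \ref{prop:local:1stdiff} and \ref{prop:deltaU:continuity}), so as written the step is both unjustified in its key estimate and out of order, even though — as explained above — it is simply not needed.
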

\begin{proof}
\textit{First Step.} We first show \eqref{eq:U:timereg}.
Following \cite[(4.12)]{CardaliaguetDelarueLasryLions}, note that, with probability 1 under ${\mathbb P}^0$, 
\begin{equation*}
u_t(\cdot)= \mathbb E^0\bigg[ P_{r-t}u_r(\cdot)-\int_{t}^{r}P_{s-t} \Bigl( \hat{H}(\cdot,\nabla_x u_s(\cdot)) -f_s(X_s^0,\cdot,\mu_s)\Bigr)   \ud s\bigg],\quad\text{for any }r\in [t,T],
\end{equation*}
where $(P_t)_{t\in [0,T]}$ stands here for the standard heat kernel on the torus $\mathbb T^d$. 
The  expectation in the right-hand side should be understood as follows: 
here and below, we write ${\mathbb E}^0[h(\cdot)]$ for the mapping 
$x \in {\mathbb T}^d \mapsto {\mathbb E}^0[h(x)]$ when 
$h : \Omega^0 \times {\mathbb T}^d \rightarrow {\mathbb R}$
is a (measurable) random field such that ${\mathbb E}^0[\vert h(x) 
\vert]$ is finite for any $x \in {\mathbb T}^d$. 
%In particular, 
%${\mathbb E}^0[g(\cdot)]$ is not defined as an 
%expectation in some functional 
%space. 
Of course, Lebesgue's theorem makes it possible to address the regularity of 
${\mathbb E}^0[h(\cdot)]$
when $h$ itself is regular in ${\mathbb T}^d$.  
Also, we recall 
from 
\eqref{eq:local:reg}
that there exists a constant $C$, only depending 
on the parameters in \hyp{\^A}, such that 
${\mathbb P}^0(\{ \sup_{t \leq s \le T} \| \nabla_x u_s \|_{{\mathbb s} -1} \leq C\})=1$. 
And then (with ${\rm id}$ denoting the identity mapping),
\begin{equation*}
u_t(\cdot)-\mathbb E^0[u_r(\cdot)]=\mathbb E^0\bigg[\Big(P_{r-t}-{\rm id}\Big)u_r(\cdot)-\int_{t}^{r}P_{s-t} \Bigl( \hat{H}(\cdot,\nabla_x u_s(\cdot)) -f_s(X_s^0,\cdot,\mu_s)\Bigr)   \ud s\bigg],
\end{equation*}
 which further implies
 \begin{equation*}
 \begin{split}
 \big\|u_t-\mathbb E^0[u_r]\big\|_{{\mathbb r}}&\leq\mathbb E^0\bigg[\big\|(P_{r-t}-{\rm id})u_r\big\|_{{\mathbb r}}+C\int_{t}^r(s-t)^{-1/2}\big\| \hat{H}(\cdot,\nabla_x u_s(\cdot))-f_s(X_s^0,\cdot,\mu_s)\big\|_{{\mathbb r}-1}\ud s\bigg]\\
 &\leq\mathbb E^0\bigg[\big\|(P_{r-t}-{\rm id})u_r\big\|_{{\mathbb r}}\bigg]+C(r-t)^{1/2},
 \end{split}
 \end{equation*}
 with the constant $C$ only depends on the parameters in Assumption \hyp{\^A}. 
 It is standard to have
 \begin{equation*}
 \mathbb E^0\Big[\big\|(P_{r-t}-{\rm id})u_r\big\|_{{\mathbb r}}\Big]\leq C(r-t)^{({\mathbb s}-{\mathbb r})/{2}}\mathbb E^0\big[\|u_r\|_{{\mathbb s}}\big]\leq C(r-t)^{({\mathbb s}-{\mathbb r})/{2}}.
 \end{equation*}
 Therefore,
 allowing the constant $C$ to vary from line to line, 
 \begin{equation*}
  \big\|u_t-\mathbb E^0[u_r]\big\|_{{\mathbb r}}\leq C(r-t)^{({\mathbb s}-{\mathbb r})/{2}}.
 \end{equation*}
 By \eqref{eq:U:rep}, we can derive
 \begin{equation*}
 \begin{split}
 \big\|u_t-{\mathcal U}(r,x_0,\cdot,\mu)\big\|_{{\mathbb r}}&\leq \big\|u_t-\mathbb E^0[u_r]\big\|_{r}+\Big\|\mathbb E^0[u_r]-{\mathcal U}(r,x_0,\cdot,\mu) \Big\|_{{\mathbb r}}\\
 &\leq C(r-t)^{({\mathbb s}-{\mathbb r})/{2}}+\Big\|\mathbb E^0\Big[{\mathcal U}(r,X_r^{0,t,x_0,\mu},\cdot,\mu_r^{t,x_0,\mu})-{\mathcal U}(r,x_0,\cdot,\mu)\Big]\Big\|_{{\mathbb r}}
 \\
 &\leq C\Big\{(r-t)^{({\mathbb s}-{\mathbb r})/{2}}+\mathbb E^0\big[\mathbb W_1(\mu_r^{t,x_0,\mu},\mu)+|X_r^{0,t,x_0,\mu}-x_0|\big]\Big\}
 \\
 &\leq C(r-t)^{({\mathbb s}-{\mathbb r})/{2}}.
 \end{split}
 \end{equation*}
 \vskip 2pt
 
\noindent  \textit{Second Step.} We now show \eqref{eq:U0:timereg}. We first have
\begin{equation}\label{eq:U0:diff}
\begin{split}
&\big|{\mathcal U}^0(r,x_0,\mu)-{\mathcal U}^0(t,x_0,\mu)\big|\\
&\leq\bigl\vert 
{\mathcal U}^0(r,x_0,\mu)-\mathbb E^0\bigl[{\mathcal U}^0(r,X_r^{0,t,x_0,\mu},\mu_r^{t,x_0,\mu})\bigr]
\bigr\vert
+ 
\bigl\vert \mathbb E^0\bigl[{\mathcal U}^0(r,X_r^{0,t,x_0,\mu},\mu_r^{t,x_0,\mu})-{\mathcal U}^0(t,x_0,\mu)\bigr]\bigr\vert
\\
&\leq C \Bigl( \mathbb E^0\big[\mathbb W_1(\mu_r^{t,x_0,\mu},\mu)+|X_r^{0,t,x_0,\mu}-x_0|\big]+\big|\mathbb E^0\big[Y_r^{0,t,x_0,\mu}-Y_t^{0,t,x_0,\mu}\big]\big|
\Bigr)
\\
&\leq C \Bigl( |r-t|^{1/2}+\big|\mathbb E^0\big[Y_r^{0,t,x_0,\mu}-Y_t^{0,t,x_0,\mu}\big]\big| \Bigr).
\end{split}
\end{equation}
By means of 
\eqref{eq:local:BMO}, we
have a bound for ${\mathbb E}^0 \int_t^r \vert Z_s^{0,t,x_0,\mu} \vert^2 
\ud s$, from which we obtain
\begin{equation}\label{eq:Y0:diff}
\mathbb E^0\big[\big|Y_r^{0,t,x_0,\mu}-Y_t^{0,t,x_0,\mu}\big|\big]\leq C|r-t|^{1/2}.
\end{equation}
Combining \eqref{eq:U0:diff} and \eqref{eq:Y0:diff}, we can derive
\begin{equation*}
|{\mathcal U}^0(r,x_0,\mu)-{\mathcal U}^0(t,x_0,\mu)|\leq C(r-t)^{1/2},
\end{equation*}
which completes the proof. 
\end{proof}

We now would like to establish the continuous differentiability of ${\mathcal U}$ and ${\mathcal U}_0$ in $x_0$ and $\mu$. For any initial condition $(t,x_0,\mu)\in [0,T]\times\mathbb R^d\times\mathcal{P}(\mathbb T^d)$ and $(\triangle x_0,\triangle \mu)\in \mathbb R^d\times\mathcal{C}^{-\mathbb{r}+1}(\mathbb T^d)$,
for some $\mathbb{r} \in (\lfloor {\mathbb s} \rfloor, {\mathbb s})$, 
 we consider the solution $(\delta {\boldsymbol X}^0,\delta {\boldsymbol Y}^0,\delta {\boldsymbol Z}^0,\delta {\boldsymbol \mu}, \delta {\boldsymbol u},\delta {\boldsymbol m})$ (its dependence on the initial condition is omitted for simplicity) to the linearized forward-backward 
system, set on $[t,T]$, 
\begin{equation} 
\label{eq:linear H:local}
\begin{split} 
&\ud \delta X_r^0 =  \Big[-\nabla^2_{px_0 }H^0(X_r^0,Z_r^0)\delta X_r^0-\nabla^2_{pp}H^0(X_r^0,Z_r^0)\delta Z_r^0\Big] \ud r , 
\\
&\ud \delta Y_r^0 =\Big[ \nabla_{x_0}\hat L^0(X_r^0,Z_r^0)\cdot\delta X_r^0+\nabla_{p}\hat L^0(X_r^0,Z_r^0)\cdot\delta Z_r^0
\\
& \hspace{30pt}  - \nabla_{x_0} f_r^0(X_r^0,\mu_r)\cdot \delta X_r^0 - \Big(\delta_\mu f_r^0(X_r^0,\mu_r),\delta \mu_r\Big) \Big]
 \ud r + 
\sigma_0 \delta Z_r^0 \cdot \ud B_r^0, %\quad \text{on } [t,T],
\\
& \delta X_t^0= \triangle x_0, \quad \delta Y_T^0 = \nabla_{x_0}g^0(X_T^0,\mu_T)\cdot \delta X_T^0+\Big(\delta_\mu g^0(X_T^0,\mu_T),\delta \mu_T\Big);\\
&  \partial_r \delta \mu_r
-\tfrac{1}{2}\Delta_{x}\delta \mu_r
-{\rm div}_x\big(\nabla_{p} \hat{H}(x,\nabla_xu_r(x))\delta \mu_r
+\mu_r\nabla_{pp}^2 \hat{H}(x,\nabla_xu_r(x))\nabla_x\delta u_r(x)\big)=0,
\\
& \ud_r\delta u_r(x)=\Big[-\tfrac{1}{2}\Delta_{x}\delta u_r(x)+\nabla_p \hat{H}(x,\nabla_x u_r(x))\cdot\nabla_x \delta u_r(x)-\nabla_{x_0}f_r(X_r^0,x,\mu_r)\cdot\delta X_r^0\\
& \hspace{45pt}-\Big(\delta_\mu f_r(X_r^0,x,\mu_r),\delta \mu_r\Big)\Big]\ud r +\ud_r \delta m_r(x),\quad x \in\mathbb T^d,\\
&\delta \mu_t=\triangle \mu,\quad \delta u_T(x)=\nabla_{x_0}g(X_T^0,x,\mu_T)\cdot\delta X_T^0+\Big(\delta_\mu g(X_T^0,x,\mu_T),\delta \mu_T\Big)\quad \text{on } \mathbb T^d,
\end{split}
\end{equation} 
where $({\boldsymbol X}^0,{\boldsymbol Y}^0,{\boldsymbol Z}^0,{\boldsymbol \mu},{\boldsymbol u},{\boldsymbol m})$ is the unique solution to the system \eqref{eq:major:FB:1:small}--\eqref{eq:minor:FB:2:small} corresponding to the initial condition $(t,x_0,\mu)$. Applying Theorem \ref{thm:local:linearFB} to \eqref{eq:linear H:local}, we have the following theorem.
\begin{theorem}\label{thm:local:linearHFB}
Let ${\mathbb r} \in (\lfloor \mathbb{s} \rfloor,\mathbb{s})$.
Under Assumption \hyp{\^A}, 
there exist a constant 
$c$ only depending on $d$,  $\kappa$, ${\mathfrak L}$ and 
$\mathbb{s}$
and a constant ${\mathfrak C}$ only depending on $d$,  $\kappa$, 
${\mathfrak L}$, 
$\zeta(c)$, $\sigma_0$ and 
$({\mathbb r},\mathbb{s})$
such that, 
for $T \leq {\mathfrak C}$, 
the forward-backward system \eqref{eq:linear:local} admits a unique solution $(\delta {\boldsymbol \mu}, \delta {\boldsymbol u}, \delta {\boldsymbol m}, \delta {\boldsymbol X}^0, \delta {\boldsymbol Y}^0, \delta {\boldsymbol Z}^0)$, adapted with respect to the filtration $\mathbb F^0$ and with values in 
${\mathcal C}^{-\mathbb{r}+1}({\mathbb T}^d) \times {\mathcal C}^{{\mathbb s}}({\mathbb T}^d) 
\times {\mathcal C}^{{\mathbb s} -2}({\mathbb T}^d) \times {\mathbb R}^d \times {\mathbb R} \times {\mathbb R}^d$, 
and satisfying
  items (a), (b) and (c) in the statement of Theorem 
 \ref{thm:local:linearFB}. 
%
% the system \eqref{eq:linear H:local} admits a unique solution $(\delta \mu, \delta u, \delta m, \delta X^0, \delta Y^0, \delta Z^0)$ for some sufficiently small $T(\leq 1)$ depending only on $d$ and $L$, adapted with respect to the filtration $\mathbb F^0$, with $(\delta \mu,\delta u,\delta v^0)$'s path in the space $\mathcal{C}^0 \big([0,T]\,,\,\mathcal{C}^{-\bar s+1}(\mathbb T^d)\times \mathcal{C}^{\bar s}(\mathbb T^d)\times \mathcal{C}^{\bar s-2}(\mathbb T^d)\big)$ satisfying \eqref{eq:local:linear:diff-1}, with $(\delta X^0,\delta Y^0,\delta Z^0)$ in $ {\mathscr S}^2(\mathbb R^d)\times {\mathscr S}^2(\mathbb R^d)\times  {\mathscr H}^2(\mathbb R^d)$ and with

Moreover, 
there exists a constant $C$ only depending on the parameters in
Assumption \hyp{\^A} and on ${\mathbb r}$ such that, 
for any $p \in [1,8]$, 
\begin{equation}\label{eq:local:linear H:diff0}
\begin{split}
&\mathbb E^0\biggl[\sup_{t\in [0,T]} \Bigl( 
\|\delta \mu_t\|_{-\mathbb{r}+1}^{2p}
+
\|\delta u_t\|_{\mathbb s}^{2p}
+
\|\delta m_t\|_{{\mathbb s}-2}^{2p}
 \Bigr)  + \sup_{t\in [0,T]} \Bigl( |\delta X_t^0|^{2p}+ |\delta Y_t^0|^{2p} \Bigr) +
 \biggl( \int_0^T|\delta Z_t^0|^2\ud t \biggr)^p \biggr]
 \\
&\leq C\big(\|\delta \mu\|_{-\mathbb{r}+1}^{2p} +|\triangle x_0|^{2p} \big).
\end{split}
\end{equation}
\end{theorem}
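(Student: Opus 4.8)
The plan is to recognize the linearized system \eqref{eq:linear H:local} as a particular instance of the abstract linearized system \eqref{eq:linear:local} and then to invoke Theorem \ref{thm:local:linearFB}. Let $({\boldsymbol X}^0,{\boldsymbol Y}^0,{\boldsymbol Z}^0,{\boldsymbol\mu},{\boldsymbol u},{\boldsymbol m})$ be the (unique, by Theorem \ref{thm:local:FB}) solution of \eqref{eq:major:FB:1:small}--\eqref{eq:minor:FB:2:small} started from $(t,x_0,\mu)$. I would make the identification
\[
p_r := \nabla^2_{px_0}H^0(X_r^0,Z_r^0),\quad q_r := \nabla^2_{pp}H^0(X_r^0,Z_r^0),\quad l_r := \nabla_{x_0}\hat L^0(X_r^0,Z_r^0),\quad o_r := \nabla_p\hat L^0(X_r^0,Z_r^0),
\]
\[
\Upsilon_r(x) := \nabla_p\hat H(x,\nabla_x u_r(x)),\qquad \Gamma_r(x) := \nabla^2_{pp}\hat H(x,\nabla_x u_r(x)),
\]
and take all of $a,b,c_T,d,j,k_T$ to be $0$; with these choices \eqref{eq:linear H:local} is literally \eqref{eq:linear:local} (the $\delta X^0$, $\delta Y^0$, $\delta\mu$, $\delta u$ equations and the two terminal conditions all match, term by term).

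Next I would verify that Assumption \hyp{C} holds for these coefficients, with some ${\mathbb r}\in(\lfloor{\mathbb s}\rfloor,{\mathbb s})$. Items (i), (ii), (iii), (vi) are immediate: the source terms $a,b,c_T,d,j,k_T$ vanish, while $\triangle\mu\in\mathcal C^{-{\mathbb r}+1}$ and $\triangle x_0\in\mathbb R^d$ by hypothesis, and $p,q,l,o$ are bounded since $\nabla^2_{px_0}H^0,\nabla^2_{pp}H^0$ are bounded by $\kappa$ (Assumption \hyp{\^A}, item \textbf{(\^A3)}) and $\nabla_{x_0}\hat L^0,\nabla_p\hat L^0$ are bounded by global Lipschitz continuity of $\hat L^0$. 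The content lies in items (iv) and (v). Since $\hat H$ has derivatives up to order $\lfloor{\mathbb s}\rfloor+1$ in $(x,p)$, all of order $\ge 2$ bounded by $\kappa$, composition with $\nabla_x u_r$ together with the a priori bound $\essup\sup_r\|u_r\|_{\mathbb s}\le C$ from \eqref{eq:local:reg} gives $\essup\sup_r\|\Upsilon_r\|_{{\mathbb s}-1}\le\mathfrak K$ and $\essup\sup_r\|\Gamma_r\|_1\le\mathfrak K$ (hence $\Gamma_r(x)\le\mathfrak K I_d$), for a $\mathfrak K$ depending only on $d,\kappa,\mathfrak L,{\mathbb s}$; continuity of the paths $r\mapsto\Upsilon_r,\Gamma_r$ in the lower-order Hölder norms follows from continuity of $r\mapsto u_r$ in $\mathcal C^{\mathbb r}$, and \emph{Bochner} measurability is handled exactly as in the third item of Remark \ref{rem:def:2:9}. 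The one genuinely nontrivial point is the lower ellipticity $\Gamma_r(x)\ge\mathfrak K^{-1}I_d$ required by \hyp{C}(v): here I would combine item \textbf{(\^A4)} (which provides, for each radius, a strictly positive modulus $\zeta(\cdot)$ of strong convexity of $\hat H$ in $p$) with the a priori $L^\infty$-gradient bound $\sup_r\|\nabla_x u_r\|_{L^\infty}\le c$ valid for a constant $c$ depending only on $d,\kappa,\mathfrak L,{\mathbb s}$ — this bound is $\sigma_0$-independent, obtained by the Bernstein/maximum-principle argument behind Proposition \ref{prop:minor:regularity} applied to $\hat H$ (the martingale term drops out of the maximum principle). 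Then $\Gamma_r(x)=\nabla^2_{pp}\hat H(x,\nabla_x u_r(x))\ge\zeta(c)I_d$ pointwise and almost surely, so \hyp{C}(v) holds with $\mathfrak K:=\max(\,\cdot\,,\zeta(c)^{-1})$, which is precisely why the threshold $\mathfrak C$ in the statement is allowed to depend on $\zeta(c)$.

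Once \hyp{C} is checked, Theorem \ref{thm:local:linearFB} applies and, for $T\le\mathfrak C$, produces the unique $\mathbb F^0$-adapted solution $(\delta{\boldsymbol\mu},\delta{\boldsymbol u},\delta{\boldsymbol m},\delta{\boldsymbol X}^0,\delta{\boldsymbol Y}^0,\delta{\boldsymbol Z}^0)$ in the stated spaces, satisfying items (a), (b), (c). Moreover $d\equiv 0$ trivially takes values in $[\mathcal C^{-{\mathbb r}'+2}(\mathbb T^d)]^d$ for any ${\mathbb r}'<{\mathbb r}$, so the quantitative estimate \eqref{eq:local:linear:diff0} is available; and since $a\equiv b\equiv c_T\equiv d\equiv j\equiv k_T\equiv 0$, the quantity $M_p$ of \eqref{eq: M} collapses to $M_p=|\triangle x_0|^{2p}+\|\triangle\mu\|_{-{\mathbb r}+1}^{2p}$, which yields exactly \eqref{eq:local:linear H:diff0}. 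Finally, although Theorem \ref{thm:local:FB} and the auxiliary bounds are phrased at $t=0$, they hold (for $T$ small enough, uniformly in the initial time) from any $t\in[0,T]$, so the whole argument applies verbatim to \eqref{eq:linear H:local} posed on $[t,T]$. I expect the main obstacle to be the uniform lower bound on $\nabla^2_{pp}\hat H$ along the solution (forcing the use of the a priori gradient bound and explaining the $\zeta(c)$-dependence of $\mathfrak C$); a secondary, purely bookkeeping, difficulty is to keep all suprema of dual-space-valued norms in the \emph{Bochner} sense demanded by \hyp{C} and Remark \ref{rem:thm:local:linearFB}.
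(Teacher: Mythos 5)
Your proposal is correct and takes essentially the same route as the paper: cast \eqref{eq:linear H:local} as an instance of the abstract linear system \eqref{eq:linear:local} with vanishing source terms, verify Assumption \hyp{C}, and observe that the only genuine point is the uniform lower bound on $\Gamma_t(x)=\nabla^2_{pp}\hat H(x,\nabla_x u_t(x))$, which is obtained by plugging a uniform gradient bound on $u$ into the modulus $\zeta$ from \textbf{(\^A4)}. The paper's own proof is a one-liner that simply cites the $\|u_t\|_{\mathbb s}$-bound of Theorem~\ref{thm:local:FB}; your write-up fills in the remaining items of \hyp{C} and explains more carefully that the radius $c$ fed into $\zeta$ can be taken $\sigma_0$-independent via the first-order maximum-principle bound from the proof of Lemma~\ref{lem:minor:regularity} (the part that does \emph{not} use strict convexity — not the full Bernstein second-derivative argument of Proposition~\ref{prop:minor:regularity}, which would require a uniform lower ellipticity of $\nabla^2_{pp}\hat H$ that \textbf{(\^A4)} does not provide). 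One small bookkeeping slip: the constant coming from \eqref{eq:local:reg} that you use to bound $\|\Upsilon_t\|_{{\mathbb s}-1}$ and $\|\Gamma_t\|_1$ does depend on $\sigma_0$ (Theorem~\ref{thm:local:FB} lists $\sigma_0$ among its parameters), so the $\mathfrak K$ for items (iv) and the upper half of (v) is $\sigma_0$-dependent; this is harmless because the statement of Theorem~\ref{thm:local:linearHFB} allows $\mathfrak C$ to depend on $\sigma_0$, but the claim that $\mathfrak K$ depends only on $d,\kappa,\mathfrak L,{\mathbb s}$ should be dropped. Only the radius $c$ used inside $\zeta(\cdot)$ needs to be $\sigma_0$-free, and that is exactly what the maximum-principle gradient bound gives.
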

\begin{proof}
The only subtlety in the application of 
Theorem \ref{thm:local:linearFB} 
lies in the verification of (v) in 
Assumption \hyp{C}. 
In fact, by Theorem 
\ref{thm:local:FB}, 
we have (for $T \leq {\mathfrak C}$) a bound for 
$\| u_t \|_{\mathbb s}$. And then, we can insert this bound in the 
function $\zeta$ in Assumption \hyp{\^A4}. 
The rest of the proof does not raise any difficulty. 
\end{proof}

Given $(t,x_0,y,\mu)\in [0,T]\times\mathbb R^d\times\mathbb T^d \times\mathcal{P}(\mathbb T^d)$, $i\in \{1,\cdots, d\}$ and a $d$-tuple $l\in \{0,\cdots, \lfloor {\mathbb s} \rfloor-1\}^d$ with $|l|:=\sum_{j=1}^dl_j\leq \lfloor {\mathbb s} \rfloor-1$, we first denote by $(\delta {\boldsymbol \mu}^i,\delta {\boldsymbol u}^i,\delta {\boldsymbol m}^{i},\delta {\boldsymbol X}^{0,i},\delta {\boldsymbol Y}^{0,i},\delta {\boldsymbol Z}^{0,i})$ the solution to the system \eqref{eq:linear H:local} on $[t,T]$ with $\triangle \mu=0$ and $\triangle x^0=e_i$, and then denote by 
$(\delta {\boldsymbol \mu}^{l,y}, \delta {\boldsymbol u}^{l,y}, \delta {\boldsymbol m}^{l,y}, \delta {\boldsymbol X}^{0,l,y}, \delta {\boldsymbol Y}^{0,l,y}, \delta {\boldsymbol Z}^{0,l,y})$ the solution to the system \eqref{eq:linear H:local} on $[t,T]$ with $\triangle \mu=(-1)^{|l|} \nabla^l \delta_y\in \mathcal{C}^{-(\mathbb{r}-1)}({\mathbb T}^d)$ ($\delta_y$ denoting the Dirac mass at point $y$) for any 
$\mathbb{r} \in (\lfloor {\mathbb s} \rfloor,{\mathbb s})$, and $\triangle x^0=0$. 
%Here, 
%one implicitly considers
%the system \eqref{eq:linear H:local}
%for each of the $d^l$ coordinates of $\Delta \mu$
%(with each coordinate being in 
%$\mathcal{C}^{1-\mathbb{r}}({\mathbb T}^d)$). 
Define
\begin{equation}
\label{eq:all:the:Ks}
\begin{split}
&K_i^{x_0}(t,x_0,x,\mu) :=\delta u_t^{i}(x),\,\, K_i^{0,x_0}(t,x_0,\mu):=
\delta Y_t^{0,i},
\\
&K_l^\mu(t,x_0,x,\mu,y):=\delta u_t^{l,y}(x),\,\,  K_l^{0,\mu}(t,x_0,\mu,y):=\delta Y_t^{0,l,y}.
\end{split} 
\end{equation}
Then, choosing for instance 
${\mathbb r}=(\lfloor {\mathbb s} \rfloor + {\mathbb s})/2$, 
we can apply Theorem \ref{thm:local:linearHFB} to obtain
(for $T \leq {\mathfrak C}$, for some ${\mathfrak C}>0$ only depending 
on the parameters
$d$, $\kappa$,  ${\mathfrak L}$,  
$\zeta$ and 
$\mathbb{s}$
 in Assumption \hyp{\^A})
\begin{equation}\label{eq:Kbdd}
\|K_i^{x_0}(t,x_0,\cdot,\mu)\|_{\mathbb s}+\|K_{l}^{\mu}(t,x_0,\cdot,\mu,y)\|_{\mathbb s}+|K_i^{0,x_0}(t,x_0,\mu)|+|K_l^{0,\mu}(t,x_0,\mu,y)|\leq C,
\end{equation}
for a constant $C$ only depending on the parameters in 
Assumption \hyp{\^A}. 
In what follows, we just write $K^{x_0}(t,x_0,\cdot,\mu)$ and 
$K^{0,x_0}(t,x_0,\mu)$
for the $d$-tuples
$(K_i^{x_0}(t,x_0,\cdot,\mu))_{i=1,\cdots,d}$ and 
$(K_i^{0,x_0}(t,x_0,\mu))_{i=1,\cdots,d}$. 

The following statement is the analogue of \cite[Lemma  5.2.2]{CardaliaguetDelarueLasryLions}:

\begin{lemma}\label{lem:deltau:reginy}
Given $(t,x_0,\mu)\in [0,T]\times\mathbb R^d\times\mathcal{P}(\mathbb T^d)$ and $p \in [ 1,8]$, we have under the same generic condition $T \leq {\mathfrak C}$ as before, for any $d$-tuple $l\in \{0,\cdots,\lfloor {\mathbb s} \rfloor-1\}^d$ with $|l|:=\sum_{i=1}^dl_i\leq \lfloor {\mathbb s} \rfloor-1$ and any $y \in {\mathbb T}^d$,
\begin{equation}
 \label{eq:deltau:cony}
 \begin{split}
\lim_{h\in\mathbb R^d,h\to 0} &\mathbb E^0\Big[\sup_{r\in [t,T]}\big\|\delta u_r^{l,y+h}-\delta u_r^{l,y}\big\|_{\mathbb s}^{2p}
+\sup_{r\in [t,T]}\big\|\delta \mu_r^{l,y+h}-\delta \mu_{r}^{l,y}\big\|_{-{\mathbb s}+1}^{2p} 
\\
&\hspace{15pt} +\sup_{r\in [t,T]}\big|\delta Y_r^{0,l,y+h}-\delta Y_r^{0,l,y}\big|^{2p}\Big]=0.
\end{split} 
\end{equation}
Moreover, for any $l\in \{0,\cdots, \lfloor {\mathbb s} \rfloor-2\}^d$ with $|l|\leq  \lfloor {\mathbb s} \rfloor-2$, and any $i\in \{1,\cdots, d\}$, 
\begin{equation*}
\begin{split}
&\lim_{h\in\mathbb R,h\to 0}\mathbb E^0\Big[\sup_{r\in [t,T]}\Bigl\| \frac{1}{h}(\delta u_r^{l,y+
he_i}-\delta u_r^{l,y})-\delta u_r^{l+e_i,y}\Big\|_{\mathbb s}^{2p} +\sup_{r\in [t,T]}\Big\|\frac{1}{h}(\delta \mu_r^{l,y+
he_i}-\delta \mu_r^{l,y})-\delta \mu_r^{l+e_i,y}\Big\|_{-{\mathbb s}+1}^{2p} 
\\
& \hspace{15pt} +\sup_{r\in [t,T]}  \Big|\frac{1}{h}(\delta Y_r^{0,l,y+
he_i}-\delta Y_r^{0,l,y})-\delta Y_r^{0,l+e_i,y}\Big|^{2p} \Big]=0.
\end{split}
\end{equation*}
In particular, the function $(x,y)\in\mathbb T^d\times\mathbb T^d\mapsto K_0^{\mu}(t,x_0,x,\mu,y)$ and $y\in\mathbb T^d\mapsto K_0^{0,\mu}(t,x_0,\mu,y)$ are $(\lfloor {\mathbb s} \rfloor-1)$-times continuously differentiable with respect to $y$, and for any $l\in \{0,\cdots,\lfloor {\mathbb s} \rfloor-1\}$ with $|l|\leq \lfloor {\mathbb s} \rfloor-1$, the derivative $y \in {\mathbb T}^d \mapsto \nabla_y^l K_0^{\mu}(t,x_0,\cdot,\mu,y)\in\mathcal{C}^{\mathbb s}(\mathbb T^d)$ is continuous. It holds, for any 
$(x,y)\in\mathbb T^d\times\mathbb T^d$, 
\begin{equation*}
\nabla_y^l K^{\mu}_0(t,x_0,x,\mu,y)=K^{\mu}_l(t,x_0,x,\mu,y)\quad\text{and}\quad \nabla_y^lK_0^{0,\mu}(t,x_0,\mu,y)=K_l^{0,\mu}(t,x_0,\mu,y).
\end{equation*}
Moreover, there exists a constant $C$, only depending on the parameters in Assumption \hyp{\^A}, such that 
\begin{equation*}
\sup_{(t,x_0,\mu,y)\in [0,T]\times\mathbb R^d\times\mathcal{P}(\mathbb T^d)\times\mathbb T^d}\big\|K^{\mu}_l(t,x_0,\cdot,\mu,y)\big\|_{\mathbb s}
\leq C, 
\end{equation*}
and
\begin{equation*}
 \sup_{(t,x_0,\mu,y)\in [0,T]\times\mathbb R^d\times\mathcal{P}(\mathbb T^d)\times\mathbb T^d}\big|K_l^{0,\mu}(t,x_0,\mu,y)\big| \leq C. 
\end{equation*}
\end{lemma}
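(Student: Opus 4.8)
The plan is to follow closely the proof of \cite[Lemma 5.2.2]{CardaliaguetDelarueLasryLions}, exploiting that the linearized system \eqref{eq:linear H:local} is genuinely linear in its initial datum and source terms and that Theorem \ref{thm:local:linearHFB} (a specialization of Theorem \ref{thm:local:linearFB}) provides the quantitative, base-point independent stability estimate \eqref{eq:local:linear H:diff0}. Throughout I would fix ${\mathbb r}=(\lfloor {\mathbb s}\rfloor+{\mathbb s})/2$, so that $\lfloor {\mathbb s}\rfloor<{\mathbb r}<{\mathbb s}$; in particular every multi-index $l$ in the statement satisfies $|l|\le \lfloor {\mathbb s}\rfloor-1<{\mathbb r}-1$, so $\nabla^l\delta_y$ and $\nabla^l\delta_{y+h}$ all belong to $\mathcal{C}^{-{\mathbb r}+1}(\mathbb{T}^d)$ with $\|\nabla^l\delta_y\|_{-{\mathbb r}+1}\le 1$ uniformly in $y$, which is exactly what is needed to apply Theorem \ref{thm:local:linearHFB} to each of the systems defining $\delta u^{l,y}$, $\delta\mu^{l,y}$, $\delta Y^{0,l,y}$. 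I also recall that $-{\mathbb s}+1\le -{\mathbb r}+1$, so a control in $\|\cdot\|_{-{\mathbb r}+1}$ yields automatically the weaker control in $\|\cdot\|_{-{\mathbb s}+1}$ appearing in the statement, and that the quantities at the starting time $r=t$ are deterministic by construction of the master fields and their derivatives (cf. the representation formula \eqref{eq:representation:short:time} and the definitions \eqref{eq:Kbdd}).

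For the first claim \eqref{eq:deltau:cony}, I would observe that, by linearity of \eqref{eq:linear H:local}, the difference $(\delta u^{l,y+h}-\delta u^{l,y},\delta\mu^{l,y+h}-\delta\mu^{l,y},\delta Y^{0,l,y+h}-\delta Y^{0,l,y},\dots)$ solves the same linearized system on $[t,T]$ with $\triangle x_0=0$, all inhomogeneous source terms equal to zero, and initial datum $\triangle\mu=(-1)^{|l|}(\nabla^l\delta_{y+h}-\nabla^l\delta_y)$. The stability estimate \eqref{eq:local:linear:diff0} (with $M_p$ as in \eqref{eq: M}, which here reduces to $\|\triangle\mu\|_{-{\mathbb r}+1}^{2p}$) then gives, for $p\in[1,8]$,
\[
\mathbb{E}^0\Big[\sup_{r\in[t,T]}\big(\|\delta u_r^{l,y+h}-\delta u_r^{l,y}\|_{{\mathbb s}}^{2p}+\|\delta\mu_r^{l,y+h}-\delta\mu_r^{l,y}\|_{-{\mathbb s}+1}^{2p}+|\delta Y_r^{0,l,y+h}-\delta Y_r^{0,l,y}|^{2p}\big)\Big]\le C\,\|\nabla^l\delta_{y+h}-\nabla^l\delta_y\|_{-{\mathbb r}+1}^{2p}.
\]
It remains to note that for any $\phi\in\mathcal{C}^{{\mathbb r}-1}(\mathbb{T}^d)$ with $\|\phi\|_{{\mathbb r}-1}\le 1$ one has $|(\phi,\nabla^l\delta_{y+h}-\nabla^l\delta_y)|=|\nabla^l\phi(y+h)-\nabla^l\phi(y)|\le |h|^{\min({\mathbb r}-1-|l|,1)}$, since ${\mathbb r}-1-|l|\ge {\mathbb r}-\lfloor {\mathbb s}\rfloor>0$; hence $\|\nabla^l\delta_{y+h}-\nabla^l\delta_y\|_{-{\mathbb r}+1}\to 0$ as $h\to 0$, which establishes \eqref{eq:deltau:cony} (in fact with an explicit Hölder rate).

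For the differentiability statement I would use the additional margin ${\mathbb r}>|l|+2$, valid here because $|l|\le \lfloor {\mathbb s}\rfloor-2$ and ${\mathbb r}>\lfloor {\mathbb s}\rfloor$. By linearity again, $\tfrac1h(\delta u^{l,y+he_i}-\delta u^{l,y})-\delta u^{l+e_i,y}$, together with the corresponding combinations of $\delta\mu$ and $\delta Y^{0}$, solves \eqref{eq:linear H:local} on $[t,T]$ with $\triangle x_0=0$, zero source terms, and initial datum $\triangle\mu=(-1)^{|l|}\big[\tfrac1h(\nabla^l\delta_{y+he_i}-\nabla^l\delta_y)+\nabla^{l+e_i}\delta_y\big]$, the sign being dictated by $\tfrac1h(\delta_{y+he_i}-\delta_y)\to-\partial_i\delta_y$ in the distributional sense. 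Pairing this datum with $\phi\in\mathcal{C}^{{\mathbb r}-1}(\mathbb{T}^d)$, $\|\phi\|_{{\mathbb r}-1}\le 1$, and performing a first-order Taylor expansion of $\nabla^l\phi$ (licit since ${\mathbb r}-1-|l|>1$) gives $\|\triangle\mu\|_{-{\mathbb r}+1}\le |h|^{\min({\mathbb r}-2-|l|,1)}\to 0$; inserting this into \eqref{eq:local:linear:diff0} yields the asserted $L^{2p}$ convergence. Evaluating at $r=t$ both this convergence and \eqref{eq:deltau:cony}, and using that the involved quantities at $r=t$ are deterministic, an induction on $|l|$ starting from $l=0$ shows that $y\mapsto K_0^{\mu}(t,x_0,\cdot,\mu,y)\in\mathcal{C}^{{\mathbb s}}(\mathbb{T}^d)$ and $y\mapsto K_0^{0,\mu}(t,x_0,\mu,y)$ are $(\lfloor {\mathbb s}\rfloor-1)$-times continuously differentiable, with $\nabla_y^l K_0^{\mu}=K_l^{\mu}$ and $\nabla_y^l K_0^{0,\mu}=K_l^{0,\mu}$; continuity of each $\nabla_y^l$ is exactly \eqref{eq:deltau:cony} applied with that $l$. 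Finally, the uniform bounds on $K_l^{\mu}$ and $K_l^{0,\mu}$ are immediate from \eqref{eq:Kbdd} (equivalently \eqref{eq:local:linear H:diff0} with $p=1$), because the constant there depends only on the parameters of Assumption \hyp{\^A} (and on ${\mathbb r}$, itself a function of ${\mathbb s}$) and $\|(-1)^{|l|}\nabla^l\delta_y\|_{-{\mathbb r}+1}\le 1$ uniformly in $(t,x_0,\mu,y)$.

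The main obstacle — though conceptually mild — is the bookkeeping in passing from the probabilistic $L^{2p}$-estimates for the linearized FBSPDE to genuine classical regularity in $y$ of the deterministic master-field derivatives: one must carefully verify that the distributional source data $\nabla^l\delta_{y+h}-\nabla^l\delta_y$ and $\tfrac1h(\nabla^l\delta_{y+he_i}-\nabla^l\delta_y)+\nabla^{l+e_i}\delta_y$ tend to $0$ in the negative Hölder norm $\|\cdot\|_{-{\mathbb r}+1}$ — which is precisely where the gap ${\mathbb r}>\lfloor {\mathbb s}\rfloor$ (hence ${\mathbb r}-1>|l|$, resp.\ ${\mathbb r}-2>|l|$) is consumed — and that the objects evaluated at the starting time are deterministic. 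Everything else is a routine application of the small-time well-posedness and stability theory of Theorems \ref{thm:local:linearFB} and \ref{thm:local:linearHFB}.
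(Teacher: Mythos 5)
Your proof is correct and follows essentially the same route as the paper: use the linearity of \eqref{eq:linear H:local}, reduce the differences (and difference quotients) to solutions driven by initial data $\nabla^l\delta_{y+h}-\nabla^l\delta_y$ and $\tfrac1h(\nabla^l\delta_{y+he_i}-\nabla^l\delta_y)+\nabla^{l+e_i}\delta_y$, show these data tend to $0$ in $\|\cdot\|_{-{\mathbb r}+1}$, and invoke the stability estimate of Theorem \ref{thm:local:linearHFB}. The only cosmetic difference is that you make the convergence rate explicit ($|h|^{\min({\mathbb r}-1-|l|,1)}$, resp. $|h|^{\min({\mathbb r}-2-|l|,1)}$), whereas the paper simply records that the negative Hölder norms tend to zero.
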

\begin{proof}
By Theorem \ref{thm:local:linearHFB}, 
for 
any fixed $\mathbb{r} \in (\lfloor {\mathbb s} \rfloor,{\mathbb s})$ 
and
any $p \in [1,8]$, 
we can find a constant $C_p$ such that, for all $(t,x,\mu,y)\in [0,T]\times\mathbb R^d\times\mathcal{P}(\mathbb T^d)\times\mathbb T^d$ and $l\in \{1,\cdots,\lfloor {\mathbb s} \rfloor-1\}$ with $|l|\leq \lfloor {\mathbb s} \rfloor-1$,
\begin{equation*}
\mathbb E^0\Big[\sup_{r\in [t,T]}\big\|\delta u_r^{l,y}\big\|_{\mathbb s}^{2p} +\sup_{r\in [t,T]}\big\|\delta \mu_r^{l,y}\big\|_{-\mathbb{r} +1}^{2p}
+\sup_{r\in [t,T]}\big|\delta Y_r^{0,l,y}\big|^{2p} \Big]\leq C_p.
\end{equation*}
In particular,
\begin{equation*}
\|K^\mu_l(t,x_0,\cdot,\mu,y)\|_{\mathbb s} +|K^{0,\mu}_l(t,x_0,\mu,y)|\leq C.
\end{equation*}
Notice that
\begin{equation*}
\lim_{h\in\mathbb R^d, h\to 0}\big\|\nabla^l_y \delta_{y+h}-\nabla_y^l\delta_y\big\|_{-\mathbb{r}+1}=0.
\end{equation*}
Therefore, Theorem \ref{thm:local:linearHFB} gives \eqref{eq:deltau:cony}. This yields
\begin{equation*}
\lim_{h\in\mathbb R^d,h\to 0}\Big(\big\|K^\mu_{l}(t,x_0,\cdot,\mu,y+h)-K^\mu_{l}(t,x_0,\cdot,\mu,y)\big\|_{\mathbb s}+\big|K_l^{0,\mu}(t,x_0,\mu,y+h)-K_l^{0,\mu}(t,x_0,\mu,y)\big|\Big)=0,
\end{equation*}
which implies
that the mappings $y\in\mathbb T^d\mapsto K^\mu_l(t,x_0,\cdot,\mu,y)\in\mathcal{C}^{\mathbb s}(\mathbb T^d)$  
and 
 $y\in\mathbb T^d\mapsto K^{0,\mu}_{l}(t,x_0,\mu,y)\in\mathbb R$ are continuous. 
Similarly, for any $l\in \{0,\cdots, \lfloor {\mathbb s} \rfloor-2\}^d$ with $|l|\leq  \lfloor {\mathbb s} \rfloor-2$, and any $i\in \{1,\cdots, d\}$, 
\begin{equation*}
\lim_{h\in\mathbb R,h\to 0}\Big\|\frac{1}{h}\big(\nabla^l\delta_{y+he_i}-\nabla^l\delta_y\big)+\nabla^{l+e_i}\delta_y\Big\|_{-\mathbb{r}+1}=0.
\end{equation*}
Therefore, again by Theorem \ref{thm:local:linearHFB}, we get
\begin{equation*}
\begin{split}
&\lim_{h\in\mathbb R,h\to 0}\bigg(\Big\|\frac{1}{h}\big(K_l^\mu(t,x_0,\cdot,\mu,y+he_i)-K_l^\mu(t,x_0,\cdot,\mu,y)\big)-K^\mu_{l+e_i}(t,x_0,\cdot,\mu,y)\Big\|_{\mathbb s}
\\
& \hspace{15pt} +\Big|\frac{1}{h}\big(K^{0,\mu}_l(t,x_0,\mu,y+he_i)-K^{0,\mu}_l(t,x_0,\mu,y)\big)-K^{0,\mu}_{l+e_i}(t,x_0,\mu,y)\Big|\bigg)=0,
\end{split}
\end{equation*}
which implies, by induction, that 
\begin{equation*}
\nabla_y^lK^\mu_0(t,x_0,x,\mu,y)=K_l^\mu(t,x_0,x,\mu,y)\quad\text{and}\quad \nabla_y^lK^{0,\mu}_0(t,x_0,\mu,y)=K_l^{0,\mu}(t,x_0,\mu,y).
\end{equation*}
The proof is completed. 
\end{proof}

The following statement is the analogue of 
\cite[Lemma  5.2.3]{CardaliaguetDelarueLasryLions}:

\begin{lemma}
Given $\triangle x_0\in\mathbb R^d$ and a finite signed measure $\triangle \mu$ on $\mathbb T^d$, the solution  $(\delta {\boldsymbol \mu}, \delta {\boldsymbol u}, \delta
{\boldsymbol m}, \delta {\boldsymbol X}^0, \delta {\boldsymbol Y}^0, \delta {\boldsymbol Z}^0)$ to the system \eqref{eq:linear H:local} has the following representation formulas
\begin{equation}\label{eq:deltau:rep}
\delta u_t(x)=K^{x_0}(t,x_0,x,\mu)\cdot\triangle x_0+\int_{\mathbb T^d}K_0^\mu(t,x_0,x,\mu,y)\triangle \mu(\ud y),
\end{equation}
and
\begin{equation}\label{eq:deltaY:rep}
\delta Y_t^0=K^{0,x_0}(t,x_0,\mu)\cdot \triangle x_0+\int_{\mathbb T^d}K_0^{0,\mu}(t,x_0,\mu,y)\triangle \mu(\ud y).
\end{equation}
\end{lemma}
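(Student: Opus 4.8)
The plan is to exploit the fact that the linearized system \eqref{eq:linear H:local} is linear in the pair of initial data $(\triangle x_0,\triangle\mu)$. Indeed, all its coefficients are frozen along the (fixed) solution $({\boldsymbol X}^0,{\boldsymbol Y}^0,{\boldsymbol Z}^0,{\boldsymbol\mu},{\boldsymbol u},{\boldsymbol m})$ of \eqref{eq:major:FB:1:small}--\eqref{eq:minor:FB:2:small} attached to $(t,x_0,\mu)$ and are therefore independent of $(\triangle x_0,\triangle\mu)$, while the remaining inputs (the sources, the terminal conditions) depend linearly on $(\delta X^0,\delta\mu)$. Since \eqref{eq:linear H:local} is uniquely solvable by Theorem \ref{thm:local:linearHFB} whenever $T\le{\mathfrak C}$, the solution map $(\triangle x_0,\triangle\mu)\mapsto(\delta{\boldsymbol\mu},\delta{\boldsymbol u},\delta{\boldsymbol m},\delta{\boldsymbol X}^0,\delta{\boldsymbol Y}^0,\delta{\boldsymbol Z}^0)$ is well defined and linear. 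First I would establish \eqref{eq:deltau:rep}--\eqref{eq:deltaY:rep} when $\triangle x_0$ is arbitrary and $\triangle\mu=\sum_jc_j\delta_{y_j}$ is a finite combination of Dirac masses, and then pass to a general finite signed measure by approximation.

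\textbf{Dirac data.} By construction (see \eqref{eq:all:the:Ks}), $(\delta{\boldsymbol\mu}^i,\delta{\boldsymbol u}^i,\delta{\boldsymbol m}^i,\delta{\boldsymbol X}^{0,i},\delta{\boldsymbol Y}^{0,i},\delta{\boldsymbol Z}^{0,i})$ solves \eqref{eq:linear H:local} with data $(e_i,0)$ and $(\delta{\boldsymbol\mu}^{0,y},\delta{\boldsymbol u}^{0,y},\delta{\boldsymbol m}^{0,y},\delta{\boldsymbol X}^{0,0,y},\delta{\boldsymbol Y}^{0,0,y},\delta{\boldsymbol Z}^{0,0,y})$ solves it with data $(0,\delta_y)$, with $K_i^{x_0}(t,x_0,\cdot,\mu)=\delta u_t^i$, $K_0^\mu(t,x_0,\cdot,\mu,y)=\delta u_t^{0,y}$, $K_i^{0,x_0}(t,x_0,\mu)=\delta Y_t^{0,i}$ and $K_0^{0,\mu}(t,x_0,\mu,y)=\delta Y_t^{0,0,y}$. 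Hence, for $\triangle x_0=\sum_i(\triangle x_0)_ie_i$ and $\triangle\mu=\sum_jc_j\delta_{y_j}$, linearity and uniqueness of the solution force it to equal $\sum_i(\triangle x_0)_i(\delta{\boldsymbol\mu}^i,\ldots)+\sum_jc_j(\delta{\boldsymbol\mu}^{0,y_j},\ldots)$, and reading off the $\delta u$ and $\delta Y^0$ components at the initial time $t$ gives \eqref{eq:deltau:rep}--\eqref{eq:deltaY:rep} with $\int K_0^\mu(t,x_0,x,\mu,y)\,\triangle\mu(\ud y)$ replaced by the finite sum $\sum_jc_jK_0^\mu(t,x_0,x,\mu,y_j)$, and similarly for $K_0^{0,\mu}$.

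\textbf{General $\triangle\mu$ and the main obstacle.} For an arbitrary finite signed measure $\triangle\mu$, I would, using $\mathbb{r}-1\ge1$, partition $\mathbb T^d$ into cells of mesh $\varepsilon_n\downarrow0$ and replace $\triangle\mu$ (through its Jordan decomposition) by atomic measures $\triangle\mu_n=\sum_jc_j^n\delta_{y_j^n}$ concentrated at one point of each cell; the Lipschitz bound $\|\phi\|_{1}\le\|\phi\|_{\mathbb{r}-1}$ then gives $\|\triangle\mu_n-\triangle\mu\|_{-\mathbb{r}+1}\le\varepsilon_n\,\|\triangle\mu\|_{\mathrm{TV}}\to0$. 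Applying the quantitative stability \eqref{eq:local:linear H:diff0} to the linear problem with data $(0,\triangle\mu-\triangle\mu_n)$ shows that the solutions converge (in $\mathbb E^0$, uniformly in $r\in[t,T]$, in the norms of \eqref{eq:local:linear H:diff0}), hence in particular $\delta u_t$ and $\delta Y_t^0$ (which are deterministic at the initial time $t$) are the limits of their $\triangle\mu_n$-counterparts. On the other side, by Lemma \ref{lem:deltau:reginy} the maps $y\mapsto K_0^\mu(t,x_0,\cdot,\mu,y)\in\mathcal C^{\mathbb s}(\mathbb T^d)$ and $y\mapsto K_0^{0,\mu}(t,x_0,\mu,y)$ are $(\lfloor{\mathbb s}\rfloor-1)$-times continuously differentiable in $y$ with all derivatives bounded by \eqref{eq:Kbdd}; combining this $y$-regularity with \eqref{eq:local:linear H:diff0} applied to data $(0,\nabla^l\delta_{y+h}-\nabla^l\delta_y)$ and the elementary estimate $\|\nabla^{l}\delta_{y+h}-\nabla^{l}\delta_y\|_{-\mathbb{r}+1}\le C|h|^{\min(1,\,\mathbb{r}-1-|l|)}$ for $|l|\le\lfloor\mathbb{r}-1\rfloor$, I would upgrade this to membership of $K_0^\mu(t,x_0,\cdot,\mu,\cdot)$ and $K_0^{0,\mu}(t,x_0,\mu,\cdot)$ in $\mathcal C^{\mathbb{r}-1}$ in the $y$-variable. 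This is the delicate point: it is exactly what makes $\int K_0^\mu(t,x_0,\cdot,\mu,y)\,\triangle\mu(\ud y)$ well defined (as a $\mathcal C^{\mathbb s}$-valued \textit{Bochner} integral, using compactness of $\mathbb T^d$ together with \eqref{eq:Kbdd}) and continuous in $\triangle\mu\in\mathcal C^{-\mathbb{r}+1}$, so that $\sum_jc_j^nK_0^\mu(t,x_0,\cdot,\mu,y_j^n)\to\int K_0^\mu(t,x_0,\cdot,\mu,y)\,\triangle\mu(\ud y)$ in $\mathcal C^{\mathbb s}$ and likewise for $K_0^{0,\mu}$. Passing to the limit in the identities of the previous step then yields \eqref{eq:deltau:rep}--\eqref{eq:deltaY:rep}. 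As an alternative to the approximation argument, one can instead check directly that the superposition $\sum_i(\triangle x_0)_i(\delta{\boldsymbol\mu}^i,\ldots)+\int(\delta{\boldsymbol\mu}^{0,y},\ldots)\,\triangle\mu(\ud y)$ solves \eqref{eq:linear H:local} — interchanging the $y$-integral with the spatial operators and with the stochastic integral is routine once the uniform bounds above are in hand — and then invoke uniqueness from Theorem \ref{thm:local:linearHFB}.
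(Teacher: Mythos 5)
Your proposal is correct and follows essentially the same route as the paper: approximate the signed measure by an atomic measure supported on a fine partition (with a total-variation/mesh bound on the negative-Hölder norm of the difference), use linearity and uniqueness of \eqref{eq:linear H:local} to get the representation for atomic data, invoke the stability estimate of Theorem \ref{thm:local:linearHFB} to control the difference of solutions, and use the $y$-regularity of $K_0^\mu$, $K_0^{0,\mu}$ from Lemma \ref{lem:deltau:reginy} to pass the finite sums to the integrals in the limit.
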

\begin{proof}
By compactness of the torus, we can find, for a given $\varepsilon>0$, a covering $\{U_i\}_{1\leq i\leq N}$ of $\mathbb T^d$, made of disjoint Borel subsets with diameter less than $\varepsilon$. Choosing, for each $i\in\{1,\cdots,N\}$, $y_i\in U_i$, we then let
\begin{equation*}
\triangle \mu^\varepsilon:=\sum_{i=1}^N \triangle \mu(U_i)\delta_{y_i}.
\end{equation*}
Then,
\begin{equation*}
\begin{split} 
\|\triangle \mu-\triangle \mu^\varepsilon\|_{-1}
&=
\sup_{\|\phi\|_1\leq 1} 
\bigg|\int_{\mathbb T^d}\phi(y)\big(\triangle \mu(\ud y)-\triangle \mu^{\varepsilon}(\ud y)\big)\bigg|
\\
&=\sup_{\|\phi\|_1\leq 1}  \bigg|\sum_{i=1}^N\int_{U_i}\big(\phi(y)-\phi(y_i)\big)\triangle \mu(\ud y)\bigg|\leq C
|\triangle \mu |_{\rm TV} 
\varepsilon,
\end{split} 
\end{equation*}
where $|\triangle \mu|_{\rm TV}$ is the total mass of $\triangle \mu$. 
Consider now 
\eqref{eq:linear:local}
with
$(\triangle x_0,\triangle \mu^\varepsilon)$ as initial condition at time 
$t$
and denote the corresponding solution by 
$(\delta {\boldsymbol \mu}^{\varepsilon}, \delta {\boldsymbol u}^{\varepsilon}, \delta
{\boldsymbol m}^{\varepsilon}, \delta {\boldsymbol X}^{0,\varepsilon}, \delta {\boldsymbol Y}^{0,\varepsilon}, \delta {\boldsymbol Z}^{0,\varepsilon})$.
By linearity of the system 
\eqref{eq:linear H:local}, we observe that 
\begin{equation*} 
\begin{split}
\delta Y_t^{0,\varepsilon} &= 
K^{0,x_0}(t,x_0,\mu)\cdot \triangle x_0+\sum_{i=1}^N
K^{0,\mu}_0(t,x_0,\mu,y_i)
\triangle \mu(U_i)
\\
&=
K^{0,x_0}(t,x_0,\mu)\cdot \triangle x_0+\sum_{i=1}^N\int_{U_i}K^{0,\mu}_0(t,x_0,\mu,y_i)\triangle \mu(\ud y),
\\
\delta u_t^{\varepsilon}(x) &= 
K^{x_0}(t,x_0,x,\mu)\cdot \triangle x_0+\sum_{i=1}^N
K^\mu_0(t,x_0,x,\mu,y_i)\triangle \mu(U_i)
\\
&=
K^{x_0}(t,x_0,x,\mu)\cdot \triangle x_0+\sum_{i=1}^N\int_{U_i}K^\mu_0(t,x_0,x,\mu,y_i)\triangle \mu(\ud y), \quad x \in {\mathbb T}^d. 
\end{split} 
\end{equation*} 
And 
then, by invoking Theorem \ref{thm:local:linearHFB} in order to compare 
$(\delta {\boldsymbol u},\delta {\boldsymbol Y}^0)$
and 
$(\delta {\boldsymbol u}^{\varepsilon},\delta {\boldsymbol Y}^{0,\varepsilon})$, we obtain 
\begin{equation*}
\begin{split}
&\bigg\|\delta u_t(\cdot)-K^{x_0}(t,x_0,\cdot,\mu)\cdot \triangle x_0-\sum_{i=1}^N\int_{U_i}K^\mu_0(t,x_0,\cdot,\mu,y_i)\triangle \mu(\ud y)\bigg\|_{{\mathbb s}}\\
&+\bigg|\delta Y_t^0-K^{0,x_0}(t,x_0,\mu)\cdot\triangle x_0-\sum_{i=1}^N\int_{U_i}K_0^{0,\mu}(t,x_0,\mu,y_i)\triangle \mu(\ud y)\bigg|\\
&\leq C\|\triangle \mu- \triangle \mu^\varepsilon\|_{-{\mathbb s}+1}\leq C\| \triangle \mu- \triangle \mu^\varepsilon\|_{-1}\leq C |\triangle \mu|_{\rm TV} \varepsilon.
\end{split}
\end{equation*}
By the smoothness of $K^\mu_0$ and $K_0^{0,\mu}$ derived in Lemma \ref{lem:deltau:reginy}, we can easily derive that
\begin{equation}\label{eq:K:regy}
\begin{split}
&\bigg\|\delta u_t(\cdot)-K^{x_0}(t,x_0,\cdot,\mu)\cdot \triangle x_0-\sum_{i=1}^N\int_{U_i}K^\mu_0(t,x_0,\cdot,\mu,y)
\triangle \mu(\ud y)\bigg\|_{{\mathbb s}}
\\
&+\bigg|\delta Y_t^0-K^{0,x_0}(t,x_0,\mu)\cdot\triangle x_0-\sum_{i=1}^N\int_{U_i}K_0^{0,\mu}(t,x_0,\mu,y)\triangle \mu(\ud y)\bigg|
\\
&\leq 
%C\|\triangle \mu- \triangle \mu^\varepsilon\|_{-{\mathbb s}+1}\leq C\|\triangle \mu- \triangle \mu^\varepsilon\|_{-1}\leq 
C|\triangle \mu|_{\rm TV} \varepsilon.
\end{split}
\end{equation}
Letting $\varepsilon\to 0$ in \eqref{eq:K:regy}, we obtain \eqref{eq:deltau:rep} and \eqref{eq:deltaY:rep}.
\end{proof}

\subsection{Differentiability of the fields ${\mathcal U}^0$ and ${\mathcal U}$ in the variables $x_0$ and 
$\mu$}

The following statement provides a second-order expansion of the 
fields ${\mathcal U}^0$ and 
${\mathcal U}$: 

\begin{proposition}\label{prop:local:1stdiff}
Under Assumption \hyp{\^A}, consider two initial conditions $(t,x_0,\mu), (t,\hat x_0,\hat\mu)\in [0,T]\times\mathbb R^d\times\mathcal{P}(\mathbb T^d)$ and 
call 
$({\boldsymbol \mu},{\boldsymbol u},{\boldsymbol m},{\boldsymbol X}^0,{\boldsymbol Y}^0,{\boldsymbol Z}^0)$ 
and $(\hat {\boldsymbol \mu},\hat {\boldsymbol u},\hat {\boldsymbol m},\hat {\boldsymbol X}^0,\hat {\boldsymbol Y}^0,\hat 
{\boldsymbol Z}^0)$ the solutions of the system \eqref{eq:major:FB:1:small}--\eqref{eq:minor:FB:2:small} with $(t,x_0,\mu)$ and $(t,\hat x_0,\hat \mu)$ as 
respective initial conditions and $(\delta {\boldsymbol \mu},\delta 
{\boldsymbol u},\delta {\boldsymbol m},\delta {\boldsymbol X}^0,\delta 
{\boldsymbol Y}^0,\delta {\boldsymbol Z}^0)$ the solution of the system \eqref{eq:linear H:local} with $(t,\hat x_0-x_0,\hat \mu-\mu)$ as
 initial condition, so that we can let
\begin{equation*}
\begin{split}
&\triangle^{\hspace{-1pt} 2} {\boldsymbol X}^0:=\hat {\boldsymbol X}^0-{\boldsymbol X}^0-\delta {\boldsymbol X}^0,\quad
\triangle^{\hspace{-1pt} 2} {\boldsymbol Y}^0:=\hat {\boldsymbol Y}^0-{\boldsymbol Y}^0-\delta {\boldsymbol Y}^0,\quad
\triangle^{\hspace{-1pt} 2} {\boldsymbol Z}^0:=\hat {\boldsymbol Z}^0-{\boldsymbol Z}^0-\delta {\boldsymbol Z}^0,
\\
&\triangle^{\hspace{-1pt} 2} {\boldsymbol \mu}:=\hat
{\boldsymbol \mu}-{\boldsymbol \mu}-\delta{\boldsymbol \mu},\quad\triangle^{\hspace{-1pt} 2} 
{\boldsymbol u}:=\hat {\boldsymbol u}-{\boldsymbol u}-\delta {\boldsymbol u},
\quad\triangle^{\hspace{-1pt} 2} {\boldsymbol m}:=
\hat{\boldsymbol m}
- {\boldsymbol m}
-\delta {\boldsymbol m}.
\end{split}
\end{equation*}
Let $\mathbb{r} \in (\lfloor {\mathbb s} \rfloor,{\mathbb s})$.
Then,
for $T \leq {\mathfrak C}$,
for some ${\mathfrak C}>0$ only depending on the parameters 
$d$,  $\kappa$, ${\mathfrak L}$, 
$\zeta$, $\sigma_0$ and 
${\mathbb s}$
in Assumption \hyp{\^A} and on ${\mathbb r}$,  
 we can find a constant $C$, only depending
 on the parameters in Assumption \hyp{\^A} and on ${\mathbb r}$, 
   such that, for any 
   $p \in [1,8]$, 
\begin{equation}\label{eq:local:delta2}
\begin{split}
&\mathbb E^0\bigg[
\sup_{r\in [t,T]}
\Bigl( 
\|\triangle^{\hspace{-1pt} 2} \mu_r\|_{-\mathbb{r}+1}^p
+
 \|\triangle^{\hspace{-1pt} 2} u_r\|_{\mathbb s}^{p}
+
 \|\triangle^{\hspace{-1pt} 2} m_r\|_{{\mathbb s}-2}^{p} \Bigr) 
 \bigg]
 \\
&+\mathbb E^0\bigg[\sup_{r\in [t,T]}|\triangle^{\hspace{-1pt} 2} X_r^0|^p+\sup_{r\in [t,T]}|\triangle^{\hspace{-1pt} 2} Y_t^0|^p+\int_t^T|\triangle^{\hspace{-1pt} 2} Z_r^0|^p\ud r\bigg]
\leq C_p \Big(\mathbb W_1(\hat \mu,\mu)^{2p}+|\hat x_0-x_0|^{2p} \Big).
\end{split}
\end{equation}
In particular,
by choosing $p=1$, 
we get (with $C:=C_1$) 
\begin{equation*}
\begin{split}
&\bigg\|{\mathcal U} (t,\hat x_0,\cdot,\hat \mu)-{\mathcal U} (t,x_0,\cdot, \mu)-K^{x_0}(t,x_0,\cdot,\mu)\cdot (\hat x_0-x_0)-\int_{\mathbb T^d}K_0^\mu(t,x_0,\cdot,\mu,y)(\hat \mu(\ud y)-\mu(\ud y))\bigg\|_{{\mathbb s}} \\
&\leq C\Big(\mathbb W_1(\hat\mu,\mu)^2+|\hat x_0-x_0|^2\Big),
\end{split}
\end{equation*}
\begin{equation*}
\begin{split}
&\bigg| {\mathcal U}^0(t,\hat x_0,\hat \mu)- {\mathcal U}^0(t, x_0,\mu)-K^{0,x_0}(t,x_0,\mu)\cdot (\hat x_0-x_0)-\int_{\mathbb T^d}K_0^{0,\mu}(t,x_0,\mu,y)(\hat \mu-\mu)(\ud y)\bigg|\\
&\leq C\Big(\mathbb W_1(\hat\mu,\mu)^2+|\hat x_0-x_0|^2\Big),
\end{split}
\end{equation*}
and, thus, for any $(t,x)\in [0,T] \times\mathbb T^d$, the mappings
\begin{equation*}
(x_0,\mu) \in {\mathbb R}^d \times \mathcal{P}(\mathbb T^d)\mapsto {\mathcal U}(t,x_0,x,\mu)\quad 
{\rm and} \quad 
(x_0,\mu) \in {\mathbb R}^d \times \mathcal{P}(\mathbb T^d)\mapsto {\mathcal U}^0(t,x_0,\mu)
\end{equation*}
are differentiable with respect to $x_0$ and $\mu$ and the derivatives read, for any $(x_0,\mu) \in {\mathbb R}^d \times \mathcal{P}(\mathbb T^d)$,
\begin{equation*}
\nabla_{x_0} {\mathcal U}(t,x_0,x,\mu)=K^{x_0}(t,x_0,x,\mu),\quad\nabla_{x_0}
{\mathcal U}^0(t,x_0,\mu)=K^{0,x_0}(t,x_0,\mu),
\end{equation*}
and
\begin{equation*}
\delta_{\mu} {\mathcal U}(t,x_0,x,\mu,y)=K_0^\mu (t,x_0,x,\mu,y),\quad 
\delta_{\mu} {\mathcal U}^0(t,x_0,\mu,y)=K_0^{0,\mu}(t,x_0,\mu,y),\quad \text{for any }y\in\mathbb T^d.
\end{equation*}
\end{proposition}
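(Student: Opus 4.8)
The strategy is the one used for the analogous second-order expansions in \cite{CardaliaguetDelarueLasryLions} (see e.g. Lemma 5.2.4 therein), adapted to the coupled FBSDE/FBSPDE structure. First I would write down the system satisfied by the second-order remainders $(\triangle^{2}{\boldsymbol\mu},\triangle^{2}{\boldsymbol u},\triangle^{2}{\boldsymbol m},\triangle^{2}{\boldsymbol X}^0,\triangle^{2}{\boldsymbol Y}^0,\triangle^{2}{\boldsymbol Z}^0)$. Subtracting the linearized system \eqref{eq:linear H:local} (driven by the initial perturbation $(\hat x_0-x_0,\hat\mu-\mu)$) from the difference of the two nonlinear systems \eqref{eq:major:FB:1:small}--\eqref{eq:minor:FB:2:small} started at $(t,\hat x_0,\hat\mu)$ and $(t,x_0,\mu)$, one checks that the remainder solves a linear forward-backward system of the form \eqref{eq:linear:local} in which the structural coefficients $p,q,l,o,\Upsilon,\Gamma$ are \emph{identical} to those appearing in \eqref{eq:linear H:local} (namely the first- and second-order derivatives of $H^0$, $\hat H$, $\hat L^0$ evaluated at $({\boldsymbol X}^0,{\boldsymbol Z}^0)$ and at $\nabla_x u$), while the source terms $a,b,c_T,d,j,k_T$ collect the genuine second-order Taylor remainders. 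Schematically each of these sources has the form ``(second derivative of a coefficient along a Feynman segment) $\times$ (two first-order increments)''; for instance the contributions from $H^0$ and $\hat L^0$ involve terms of the type $\nabla^2 H^0(\cdots)\,(\hat X^0-X^0,\hat Z^0-Z^0)^{\otimes 2}$ plus cross terms with $\delta X^0$, $\delta\mu$, and similarly the ones from $f^0,f,g^0,g$ involve $\nabla^2_{x_0x_0}f$, $\nabla_{x_0}\delta_\mu f$, $\delta^2_{\mu\mu}f$ paired against the increments $\hat\mu-\mu$, $\delta\mu$ and $\triangle^{2}\mu$.

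Next I would verify Assumption \hyp{C} for this remainder system. The conditions (iv)--(vii) hold because the structural coefficients are those of \eqref{eq:linear H:local}: in particular $\mathfrak K$ can be taken depending only on $\kappa$ and on the (deterministic) bound for $\sup_{t}\|u_t\|_{\mathbb s}$ furnished by Theorem \ref{thm:local:FB}, which through \hyp{\^A4} yields the two-sided control $\mathfrak K^{-1}I_d\leq\Gamma_t\leq\mathfrak K I_d$ on the relevant subdomain. The real work is to place the source terms in the correct spaces (in particular to check $d_t$ is valued in ${\mathcal C}^{-\mathbb r'+2}$ for some $\mathbb r'<\mathbb r$, as required by Remark \ref{rem:thm:local:linearFB} for $M_p$ to be well-defined) and to bound the quantity $M_p$ of \eqref{eq: M} by $C\big(\mathbb W_1(\hat\mu,\mu)^{2p}+|\hat x_0-x_0|^{2p}\big)$. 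This is where the first-order theory enters: the stability estimate \eqref{eq:local:diff-1} of Theorem \ref{thm:local:FB} controls $\sup_t\big(|\hat X^0_t-X^0_t|+\|\hat u_t-u_t\|_{\mathbb s}+\mathbb W_1(\hat\mu_t,\mu_t)\big)$ and $\big(\int_0^T|\hat Z^0_t-Z^0_t|^2\ud t\big)^{1/2}$ by the initial increments, while the estimate \eqref{eq:local:linear H:diff0} of Theorem \ref{thm:local:linearHFB} controls $\sup_t\big(|\delta X^0_t|+\|\delta u_t\|_{\mathbb s}+\|\delta\mu_t\|_{-\mathbb r+1}\big)+\big(\int_0^T|\delta Z^0_t|^2\ud t\big)^{1/2}$ likewise; combined with the $C^2$-regularity of all coefficients in \hyp{\^A}/\hyp{A} and a few applications of Cauchy--Schwarz (to turn a product of two first-order quantities into a sum of squares), each Taylor remainder is quadratic in these controlled quantities, and the higher integrability (powers up to $8$ available in \eqref{eq:local:diff-1} and \eqref{eq:local:linear H:diff0}) gives exactly $M_p\leq C\big(\mathbb W_1(\hat\mu,\mu)^{2p}+|\hat x_0-x_0|^{2p}\big)$ for $p\in[1,8]$.

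With $M_p$ so bounded, I would apply the a priori estimate of the linear theory --- Theorem \ref{thm:local:linearFB}, inequality \eqref{eq:local:linear:diff0} (or equivalently Theorem \ref{thm:local:linearHFB}) --- to the remainder system, obtaining \eqref{eq:local:delta2}. Finally, specializing to $p=1$ and inserting the representation formulas \eqref{eq:deltau:rep}--\eqref{eq:deltaY:rep} for the linearized solution evaluated at time $t$ (with $K^{x_0},K^\mu_0,K^{0,x_0},K^{0,\mu}_0$ as in \eqref{eq:all:the:Ks}), one reads off the second-order expansions of $\mathcal U$ and $\mathcal U^0$ around $(t,x_0,\mu)$ with remainder $O\big(\mathbb W_1(\hat\mu,\mu)^2+|\hat x_0-x_0|^2\big)$. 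Since the first-order term is linear in $(\hat x_0-x_0,\hat\mu-\mu)$ and continuous (continuity of $(x_0,x,\mu,y)\mapsto K^\mu_0$ and $y\mapsto K^{0,\mu}_0$ being provided by Lemma \ref{lem:deltau:reginy}), this is precisely the statement that $\mathcal U(t,\cdot,x,\cdot)$ and $\mathcal U^0(t,\cdot,\cdot)$ are differentiable in $x_0$ and $\mu$ with $\nabla_{x_0}\mathcal U=K^{x_0}$, $\delta_\mu\mathcal U=K^\mu_0$, $\nabla_{x_0}\mathcal U^0=K^{0,x_0}$, $\delta_\mu\mathcal U^0=K^{0,\mu}_0$.

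\textbf{Main obstacle.} The delicate step is the bookkeeping in the second paragraph: writing each source $a,b,c_T,d,j,k_T$ explicitly as a bona fide element of the functional space demanded by \hyp{C} and controlling its norm by a product of two first-order increments in the \emph{right} norms. The measure-variable remainders are the worst offenders, since expanding $\delta_\mu f$ and $\delta_\mu g$ along the paths $(\hat\mu_t)$ versus $(\mu_t)$ produces terms that must be paired simultaneously against $\triangle^{2}\mu_t$ (in $\|\cdot\|_{-\mathbb r+1}$) and against the tensor $(\delta\mu_t)\otimes(\hat\mu-\mu)$, and this forces one to use the joint $x_0$/$x$/$\mu$/$y$ regularity of $\delta_\mu f$ and $\delta_\mu g$ (from \hyp{A}/\hyp{\^A}) together with both stability estimates at once. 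Once these estimates are assembled, the conclusion is a direct invocation of the already-established linear well-posedness and a priori bounds.
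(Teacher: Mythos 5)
Your overall strategy coincides with the paper's: write the second-order remainder as the solution to a linear forward-backward system of the form \eqref{eq:linear:local} whose structural coefficients are inherited from \eqref{eq:linear H:local}, bound the source terms quadratically in the initial increments, and then invoke Theorem \ref{thm:local:linearFB} via \eqref{eq:local:linear:diff0}. However, two points in your description are inaccurate and would make the bookkeeping noticeably heavier than necessary.

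First, the source terms $a_r,b_r,c_T,d_r,j_r,k_T$ that the paper writes out are \emph{pure} Taylor remainders of the nonlinear coefficients around the un-hatted solution: they are functions of the nonlinear increments $\hat X^0-X^0$, $\hat Z^0-Z^0$, $\nabla_x\hat u-\nabla_x u$, $\hat\mu-\mu$ only. They do \emph{not} contain cross terms involving $\delta X^0$, $\delta\mu$, or $\triangle^2\mu$. This is a direct consequence of the structure: when you subtract the linearized equation (evaluated along $({\boldsymbol X}^0,{\boldsymbol Z}^0,{\boldsymbol\mu},{\boldsymbol u})$) from the difference of the two nonlinear equations, the linear part in $(\hat{\cdot}-\cdot)$ reorganizes exactly into a linear part in $\triangle^2$ plus a remainder depending only on $(\hat{\cdot}-\cdot)$. (The forward Kolmogorov equation does produce a ``cross term'' $\nabla^2_{pp}\hat H\,\nabla_x(\hat u-u)\,(\hat\mu-\mu)$, but this is a product of two \emph{nonlinear} increments, not a term paired with $\delta\mu$.) As a consequence, the estimate of $M_p$ uses \emph{only} the stability estimate \eqref{eq:local:diff-1} for the nonlinear system; the estimate \eqref{eq:local:linear H:diff0} for the linearized system is not needed at this stage.

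Second, your ``main obstacle'' paragraph anticipates needing $\delta^2_{\mu\mu}f$ and pairing against a tensor $(\delta\mu_t)\otimes(\hat\mu-\mu)$. Neither is required. Each measure-variable source is of the form
\[
\Bigl(\delta_\mu h(X^0,\cdot,\mu,\cdot)-\int_0^1\delta_\mu h\bigl(\theta\hat X^0+(1-\theta)X^0,\cdot,\theta\hat\mu+(1-\theta)\mu,\cdot\bigr)\ud\theta,\ \hat\mu-\mu\Bigr),
\]
for $h\in\{f,g,f^0,g^0\}$, and this is controlled by the \emph{Lipschitz} continuity of $\delta_\mu h$ in $(x_0,\mu)$ (the condition ${\mathscr D}$/${\mathscr D}^0$ in \hyp{\^A}), giving a bound of the form $C(|\hat X^0-X^0|+\mathbb W_1(\hat\mu,\mu))\,\mathbb W_1(\hat\mu,\mu)$. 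No second flat derivative in $\mu$ is used or assumed.

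With these corrections, the rest of your argument — bounding each source term by a product of two nonlinear increments via Young's inequality, raising to the $p$-th power (noting that \eqref{eq:local:diff-1} delivers moments up to exponent $16$, which covers the needed $2p$ for $p\le 8$), and then applying the a priori estimate \eqref{eq:local:linear:diff0} — is exactly what the paper does, and the deduction of the differentiability of $\mathcal U$, $\mathcal U^0$ from the $p=1$ case together with the representation formulas \eqref{eq:deltau:rep}–\eqref{eq:deltaY:rep} and the continuity statements of Lemma \ref{lem:deltau:reginy} is correct.
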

\begin{proof}
We first note that $(\triangle^{\hspace{-1pt} 2} {\boldsymbol X}^0,\triangle^{\hspace{-1pt} 2} {\boldsymbol Y}^0,\triangle^{\hspace{-1pt} 2} {\boldsymbol Z}^0)$ solves, on 
$[t,T]$, 
\begin{equation*}
\begin{split}
&\ud_r \triangle^{\hspace{-1pt} 2} X_r^0=\big(-\nabla_{px_0}H^0(X_r^0,Z_r^0)\triangle^{\hspace{-1pt} 2} X_r^0-\nabla_{pp}H^0(X_r^0,Z_r^0)\triangle^{\hspace{-1pt} 2} Z_r^0+a_r\big) \ud r, 
\\
&\ud_r\triangle^{\hspace{-1pt} 2} Y_r^0=\bigg[-\nabla_{x_0}f^0_r(X_r^0,\mu_r)\cdot \triangle^{\hspace{-1pt} 2} X_r^0-\big(\delta_\mu f_r^0(X_r^0,\mu_r,\cdot),\triangle^{\hspace{-1pt} 2} \mu_r\big)
\\
& \hspace{15pt}+\nabla_{x_0}\hat L^0(X_r^0,Z_r^0)\triangle^{\hspace{-1pt} 2} X_r^0+\nabla_p\hat L^0(X_r^0,Z_r^0)\triangle^{\hspace{-1pt} 2} Z_r^0+b_r\bigg]\ud r+\sigma_0 \triangle^{\hspace{-1pt} 2} Z_r^0\cdot \ud B_r^0,\quad\text{on $[t,T]$,}\\
&\triangle^{\hspace{-1pt} 2}X_t^0=0,\quad \triangle^{\hspace{-1pt} 2}Y_T^0=\nabla_{x_0}g^0(X_T^0,\mu_T)\cdot\triangle^{\hspace{-1pt} 2}X_T^0+\big(\delta_\mu g^0(X_T^0,\mu_T,\cdot),\triangle^{\hspace{-1pt} 2} \mu_T\big)+c_T,
\end{split}
\end{equation*}
where
\begin{equation*}
\begin{split}
a_r=&\bigg(\nabla_{px_0}^2H^0(X_r^0,Z_r^0)-\int_0^1\nabla_{px_0}^2H^0(\theta \hat X_r^0+(1-\theta)X_r^0,\theta \hat Z_r^0+(1-\theta)Z_r^0)d\theta\bigg) (\hat X_r^0-X_r^0)\\
&+\bigg(\nabla_{pp}^2H^0(X_r^0,Z_r^0)-\int_0^1\nabla_{pp}^2H^0(\theta \hat X_r^0+(1-\theta)X_r^0,\theta \hat Z_r^0+(1-\theta)Z_r^0)d\theta\bigg)(\hat Z_r^0-Z_r^0),\\
b_r=&\bigg(\nabla_{x_0}f^0(X_r^0,\mu_r)-\int_0^1\nabla_{x_0}f^0(\theta \hat X_r^0+(1-\theta)X_r^0,\theta \hat \mu_r+(1-\theta)\mu_r)\ud \theta\bigg)\cdot ( \hat X_r^0-X_r^0)
\\
&+\bigg(\delta_\mu f^0(X_r^0,\mu_r,\cdot)-\int_0^1\delta_\mu f^0(\theta \hat X_r^0+(1-\theta)X_r^0,\theta\hat \mu_r+(1-\theta)\mu_r,\cdot)\ud \theta,\hat \mu_r-\mu_r\bigg)
\\
&-\bigg(\nabla_{x_0}\hat L^0(X_r^0,Z_r^0)-\int_0^1\nabla_{x_0}\hat L^0(\theta \hat X_r^0+(1-\theta)X_r^0,\theta \hat Z_r^0+(1-\theta)Z_r^0)d\theta\bigg) 
\cdot (\hat X_r^0-X_r^0)
\\
&-\bigg(\nabla_{p}\hat L^0(X_r^0,Z_r^0)-\int_0^1\nabla_{p}\hat L^0(\theta \hat X_r^0+(1-\theta)X_r^0,\theta \hat Z_r^0+(1-\theta)Z_r^0)d\theta\bigg)
\cdot
(\hat Z_r^0-Z_r^0)
%+\Big(\nabla_{x_0}H^0(X_r^0,Z_r^0)-\int_0^1\nabla_{x_0}H^0(\theta \hat X_r^0+(1-\theta) X_r^0,\theta \hat Z_r^0+(1-\theta)Z_r^0)\ud \theta\Big)\cdot (\hat X_r^0-X_r^0)\\
%&+\Big(\nabla_{p}H^0(X_r^0,Z_r^0)-\int_0^1\nabla_{p}H^0(\theta \hat X_r^0+(1-\theta) X_r^0,\theta \hat Z_r^0+(1-\theta)Z_r^0)\ud \theta\Big)\cdot (\hat Z_r^0-Z_r^0)\\
%&+(\hat Z_r^0-Z_r^0)\cdot\big(\nabla_{px_0}H^0(X_r^0,Z_r^0)(\hat X_r^0-X_r^0)+\nabla_{pp}H^0(X_r^0,Z_r^0)(\hat Z_r^0-Z_r^0)\big)\\
%&-\hat Z_r^0\bigg[\Big(\nabla_{px_0}H^0(X_r^0,Z_r^0)-\int_0^1\nabla_{px_0}H^0(\theta \hat X_r^0+(1-\theta)X_r^0,\theta \hat Z_r^0+(1-\theta)Z_r^0)\ud \theta\Big)\cdot (\hat X_r^0-X_r^0)\\
%&-\Big(\nabla_{pp} H^0(X_r^0,Z_r^0)-\int_0^1\nabla_{pp}H^0(\theta \hat X_r^0+(1-\theta)X_r^0,\theta\hat Z_r^0+(1-\theta)Z_r^0)\ud \theta\Big)\cdot (\hat Z_r^0-Z_r^0)\bigg]
\end{split}
\end{equation*}
and
\begin{equation*}
\begin{split}
c_T=&-\bigg(\nabla_{x_0}g^0(X_T^0,\mu_T)-\int_0^1\nabla_{x_0}g^0(\theta \hat X_T^0+(1-\theta)X_T^0,\theta\hat\mu_T+(1-\theta)\mu_T)\ud\theta\bigg)\cdot ( \hat X_T^0-X_T^0)\\
&-\bigg(\delta_\mu g^0(X_T^0,\mu_T,\cdot)-\int_0^1\delta_\mu g^0(\theta X_T^0+(1-\theta)X_T^0,\theta\hat\mu_T+(1-\theta)\mu_T,\cdot),\hat \mu_T-\mu_T\bigg).
\end{split}
\end{equation*}
We then notice that $(\triangle^{\hspace{-1pt} 2} {\boldsymbol \mu},\triangle^{\hspace{-1pt} 2} {\boldsymbol u},\triangle^{\hspace{-1pt} 2} {\boldsymbol m})$ satisfies, on $[t,T]$, 
\begin{equation*}
\begin{split}
&  \partial_r \triangle^{\hspace{-1pt} 2} \mu_r -\tfrac{1}{2}\Delta_{x}\triangle^{\hspace{-1pt} 2} \mu_r -{\rm div}_x\big(\nabla_{p}\hat H(x,\nabla_xu_r(x))\triangle^{\hspace{-1pt} 2} \mu_r +\mu_r \nabla_{pp}^2 \hat H(x,\nabla_xu_r(x))\nabla_x\delta^2 u_r(x)\big)
\\
&\hspace{15pt}-{\rm div}_x\big(d_r(x)\big)=0,\quad\text{on } \mathbb T^d,
\\
& \ud_r\delta^2 u_r(x)=\Big[-\tfrac{1}{2}\Delta_{x}\delta^2 u_r(x)+\nabla_p \hat H(x,\nabla_x u_r(x))\cdot\nabla_x \delta^2 u_r(x)-\nabla_{x_0}f_r(X_r^0,x,\mu_r)\cdot\delta^2 X_r^0
\\
& \hspace{15pt}-\Big(\delta_\mu f_r(X_r^0,x,\mu_r,\cdot),\delta^2 \mu_r\Big)+j_r(x)\Big]\ud r + \ud_r \delta^2 m_r(x),
\quad x \in \mathbb T^d,
\\
&\delta \mu_t=0,\quad \delta u_T(x)=\nabla_{x_0}g(X_T^0,x,\mu_T)\cdot\delta^2 X_T^0+\Big(\delta_\mu g(X_T^0,x,\mu_T,\cdot),\delta^2 \mu_T\Big)+k_T, \quad x \in 
 \mathbb T^d,
\end{split}
\end{equation*}
where
\begin{equation*}\begin{split}
d_r(x)=&\big(\hat \mu_r -\mu_r \big)\bigg(\int_0^1\nabla_{pp}^2 \hat H(x,\theta \nabla_x \hat u_r(x)+(1-\theta)\nabla_x u_r(x))\ud \theta\bigg) \big(\nabla_x \hat u_r(x)-\nabla_x u_r(x)\big)\\
&+\mu_r \int_0^1\int_0^1\theta\nabla_{ppp}^3\hat H\big(x,\nabla_x u_r(x)+\tilde\theta\theta(\nabla_x\hat u_r(x)-\nabla_xu_r(x))\big)\big(\nabla_x\hat u_r(x)-\nabla_x u_r(x)\big)^{\otimes 2}\ud \tilde\theta \ud \theta,
\\
j_r(x)=&-\bigg(\nabla_p \hat H(x,\nabla_xu_r(x))-\int_0^1\nabla_p \hat H\big(x,\theta\nabla_x\hat u_r(x)+(1-\theta)\nabla_xu_r(x)\big)\ud\theta\bigg)\cdot\big(\nabla_x \hat u_r(x)-\nabla_x u_r(x)\big)\\
&+\bigg(\nabla_{x_0}f_r(X_r^0,x,\mu_r)-\int_{0}^1\nabla_{x_0}f_r\big(\theta \hat X_r^0+(1-\theta)X_r^0,x,\theta \hat\mu_r+(1-\theta)\mu_r\big)\ud\theta\bigg)\cdot (\hat X_r^0-X_r^0)\\
&+\bigg(\delta_\mu f_r(X_r^0,x,\mu_r,\cdot)-\int_0^1\delta_\mu f_r\big(\theta \hat X_r^0+(1-\theta)X_r^0,x,\theta\hat \mu_r+(1-\theta)\mu_r,\cdot \big)
\ud \theta,\hat \mu_r-\mu_r\bigg),
\end{split}
\end{equation*}
and
\begin{equation*}
\begin{split}
k_T(x)=&-\bigg(\nabla_{x_0}g(X_T^0,x,\mu_T)-\int_0^1\nabla_{x_0}g\big(\theta \hat X_T^0+(1-\theta)X_T^0,x,\theta \hat \mu_T+(1-\theta)\mu_T\big)\ud \theta\bigg)\cdot (\hat X_T^0-X_T^0)\\
&-\bigg(\delta_\mu g(X_T^0,x,\mu_T)-\int_0^1\delta_\mu g(\theta X_T^0+(1-\theta)X_T^0,x,\theta\hat\mu_T+(1-\theta)\mu_T,\cdot),\hat \mu_T-\mu_T\bigg).
\end{split}
\end{equation*}
Next, we recall 
%once again from \eqref{eq:local:reg} that there exists a constant $C$, only depending 
%on the parameters
%$d$, ${\mathfrak L}$, $\kappa$, $\sigma_0$ and ${\mathbb s}$
% in \hyp{A} such that 
%${\mathbb P}^0(\{ \sup_{t \leq s \le T} \| \nabla_x u_s \|_{{\mathbb s} -1}, 
%\sup_{t \leq s \le T} \| \nabla_x \hat{u}_s \|_{{\mathbb s} -1}  \leq C\})=1$. 
%Therefore, by Theorem \ref{thm:local:FB}, we obtain
%\begin{equation*}
%\begin{split}
%&\bigg\|\sup_{r\in [t,T]}\Bigl[\|u_r\|_s\Bigr]\bigg\|_{L^\infty(\Omega^0,\mathbb P^0)}+\bigg\|\sup_{r\in [t,T]}\Bigl[\|\hat u_r\|_s\Bigr]\bigg\|_{L^\infty(\Omega^0,\mathbb P^0)}<\infty,\\
%Moreover, 
${\boldsymbol \mu}$
and 
$\hat{\boldsymbol \mu}$
take values in the space $\mathcal{P}(\mathbb T^d)$. 
%\end{split}
%\end{equation*}
%and
%\begin{equation*}
%(X^0, Y^0, Z^0),(\hat X^0,\hat Y^0, \hat Z^0)\in{\mathscr S}^2(\mathbb R^d)\times {\mathscr S}^2(\mathbb R^d)\times {\mathscr H}^2(\mathbb R^d).
%\end{equation*}
We 
deduce that 
%(say, without any loss of generality, that $T \leq 1$) 
\begin{equation*}
\begin{split}
&\|d_r\|_{-1}\leq C\Bigl(\|\hat u_r-u_r \|_2\mathbb W_1(\hat \mu_t,\mu_t)+\|\hat u_r-u_r \|_1^2\Bigr),
\quad r \in [t,T]
\\
&\|k_T\|_{{\mathbb s}}\leq C\Bigl( |\hat X_T^0-X_T^0|^2+
\mathbb W_1(\hat \mu_T,\mu_T)^2\Bigr),
\\
&\|j_r\|_{{\mathbb s}-1}\leq C\Bigl(\|\hat u_r-u_r\|_{\mathbb s}^2+|\hat X_r^0-X_r^0|^2+\mathbb W_1(\hat \mu_r,\mu_r)^2\Bigr),
\\
&\int_t^T|a_r|dr\leq C\bigg[\sup_{r\in [t,T]}|\hat X_r^0-X_r^0|^2
%+\big(\sup_{r\in [t,T]}|\hat X_r^0-X_r^0|\big)\int_t^T|\hat Z_r^0-Z_r^0|\ud r
+\int_t^T|\hat Z_r^0-Z_r^0|^2 \ud r\bigg]
\\
&\int_t^T|b_r|dr\leq C\bigg[\sup_{r\in [t,T]}|\hat X_r^0-X_r^0|^2
%+\big(\sup_{r\in [t,T]}|\hat X_r^0-X_r^0|\big)\int_t^T\mathbb W_1(\hat \mu_r,\mu_r)\ud r
+\int_t^T \mathbb W_1(\hat \mu_r,\mu_r)^2 \ud r
%\\
%& \hspace{60pt}+\big(\sup_{r\in [t,T]}|\hat X_r^0-X_r^0|\big)\int_t^T|\hat Z_r^0-Z_r^0| \ud r
+\int_t^T|\hat Z_r^0-Z_r^0|^2 \ud r\bigg],
\\
&|c_T|\leq C\Big(|\hat X_T^0-X_T^0|^2
%+\mathbb{W}_1(\hat \mu_T,\mu_T)|\hat X_T^0-X_T^0|
+\mathbb W_1^2(\hat \mu_T,\mu_T)\Big).
\end{split}
\end{equation*}
We further apply \eqref{eq:local:diff-1} 
to obtain,
%(notice
%from an obvious continuity argument that the suprema below are random variables), 
for any 
$p \geq 1$, 
\begin{equation*}
\begin{split}
&\mathbb E\Big[\sup_{r\in [t,T]}\|d_r\|_{-{\mathbb r}+2}^{p} 
\Big]\leq C_p\big(\mathbb W_1^{2p}(\hat \mu,\mu)+|\hat x_0-x_0|^{2p}\big),
\\
&\mathbb E\Big[\|k_T\|_{\mathbb s}^p \Big]
\leq 
C\big(\mathbb W_1^{2p}(\hat \mu,\mu)+|\hat x_0-x_0|^{2p}\big),
\\
&\mathbb E\Big[\sup_{r\in [t,T]}\|j_r\|_{{\mathbb s}-1}^p\Big]\leq C\big(\mathbb W_1^{2p}(\hat \mu,\mu)+|\hat x_0-x_0|^{2p}\big),
\\
&\mathbb E\bigg[\bigg(\int_t^T|a_r|dr\bigg)^p+\bigg(\int_t^T|b_r|dr\bigg)^p+c_T^2\bigg]\leq C\big(\mathbb W_1^{2p}(\hat \mu,\mu)+|\hat x_0-x_0|^{2p}\big).
\end{split}
\end{equation*}
Therefore, by Theorem \ref{thm:local:linearFB}, we have \eqref{eq:local:delta2}.
\end{proof}

\begin{corollary}
\label{corol:U:U0:C:C0}
There exist a constant ${\mathfrak C}>0$, only depending on the parameters 
$d$, $\kappa$,  ${\mathfrak L}$,  
$\zeta$, $\sigma_0$ and 
$\mathbb{s}$
in Assumption \hyp{\^A},  
and a constant $C$, only depending on the parameters 
in Assumption \hyp{\^A},  
such that, for $T \leq {\mathfrak C}$, 
for all $t \in [0,T]$, 
${\mathcal U}^0(t,\cdot,\cdot)$ satisfies \emph{(ii)} and \emph{(iii)} in the definition of 
${\mathscr C}^0(C,\lfloor \mathbb{s} \rfloor -1)$
and ${\mathcal U}(t,\cdot,\cdot,\cdot)$ satisfies \emph{(ii)}, \emph{(iii)} and 
\emph{(iv)} in the definition of 
${\mathscr C}(C,\lfloor \mathbb{s} \rfloor -1,\mathbb{s})$. 
\end{corollary}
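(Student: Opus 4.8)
The plan is to read the statement off from the results already established for the master fields, so the proof is essentially an assembly argument. First I would set ${\mathfrak C}$ to be the smallest of the time thresholds appearing in Theorem \ref{thm:local:FB}, Theorem \ref{thm:local:linearHFB}, Lemma \ref{lem:deltau:reginy} and Proposition \ref{prop:local:1stdiff}, so that all of these hold simultaneously for $T \le {\mathfrak C}$, and likewise take $C$ to be the largest of the constants they produce (all of which depend only on $d$, $\kappa$, ${\mathfrak L}$, $\zeta$, $\sigma_0$ and ${\mathbb s}$). By Proposition \ref{prop:local:1stdiff}, for each $t\in[0,T]$ the fields ${\mathcal U}^0(t,\cdot,\cdot)$ and ${\mathcal U}(t,\cdot,\cdot,\cdot)$ are differentiable in $x_0$ and in $\mu$, with the derivatives identified, in the notation of \eqref{eq:all:the:Ks}, as $\nabla_{x_0}{\mathcal U}^0=K^{0,x_0}$, $\delta_\mu{\mathcal U}^0=K_0^{0,\mu}$, $\nabla_{x_0}{\mathcal U}=K^{x_0}$ and $\delta_\mu{\mathcal U}=K_0^{\mu}$. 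Everything then follows from the properties of these $K$-fields.

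For the bounds in item (iii) of ${\mathscr C}^0$ and item (iv) of ${\mathscr C}$: boundedness of ${\mathcal U}^0$ comes from \eqref{eq:local:BMO} via \eqref{eq:U0:rep}, and $\|{\mathcal U}(t,x_0,\cdot,\mu)\|_{{\mathbb s}}\le C$ comes from \eqref{eq:local:reg} via the representation \eqref{eq:representation:short:time}; the bounds $|K^{0,x_0}|\le C$ and $\|K^{x_0}(t,x_0,\cdot,\mu)\|_{{\mathbb s}}\le C$ are exactly \eqref{eq:Kbdd}; and the bounds on $\|\delta_\mu{\mathcal U}^0(t,x_0,\mu,\cdot)\|_{\lfloor{\mathbb s}\rfloor-1}=\sup_{l}|K_l^{0,\mu}(t,x_0,\mu,\cdot)|$ and on $\sup_{l}\sup_{y}\|\nabla_y^l\delta_\mu{\mathcal U}(t,x_0,\cdot,\mu,y)\|_{{\mathbb s}}=\sup_{l}\sup_{y}\|K_l^{\mu}(t,x_0,\cdot,\mu,y)\|_{{\mathbb s}}$ follow from the identifications $\nabla_y^lK_0^{0,\mu}=K_l^{0,\mu}$, $\nabla_y^lK_0^{\mu}=K_l^{\mu}$ of Lemma \ref{lem:deltau:reginy} together with the final uniform bounds stated there (and \eqref{eq:Kbdd}). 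Since ${\mathbb s}>3$ is not an integer, the index playing the role of $\cursr$ in ${\mathscr C}(C,\lfloor{\mathbb s}\rfloor-1,{\mathbb s})$, namely $\lfloor{\mathbb s}\rfloor-1\ge 2$, is an integer, so the H\"older-in-$y$ line of condition (iv) is vacuous and nothing further is required there.

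For the regularity in items (ii)–(iii): the function $\delta_\mu{\mathcal U}^0(t,x_0,\mu,\cdot)=K_0^{0,\mu}(t,x_0,\mu,\cdot)$ is $(\lfloor{\mathbb s}\rfloor-1)$-times continuously differentiable in $y$ by Lemma \ref{lem:deltau:reginy}, hence lies in ${\mathcal C}^{\lfloor{\mathbb s}\rfloor-1}({\mathbb T}^d)$, which is item (ii) of ${\mathscr C}^0$. For ${\mathscr C}$, item (ii) is $\nabla_{x_0}{\mathcal U}(t,x_0,\cdot,\mu)=K^{x_0}(t,x_0,\cdot,\mu)\in{\mathcal C}^{{\mathbb s}}({\mathbb T}^d)$, which is part of \eqref{eq:Kbdd} (the components $K_i^{x_0}$ are the $\delta u_t^i$ of the solution to \eqref{eq:linear H:local}, hence ${\mathcal C}^{{\mathbb s}}$-valued by Theorem \ref{thm:local:linearHFB}); item (iii) asks for joint continuity of $\nabla_x^k\nabla_y^l\delta_\mu{\mathcal U}(t,x_0,\cdot,\mu,\cdot)$ for $k\le\lfloor{\mathbb s}\rfloor$, $l\le\lfloor{\mathbb s}\rfloor-1$, which I would deduce from the statement in Lemma \ref{lem:deltau:reginy} that $y\mapsto \nabla_y^lK_0^{\mu}(t,x_0,\cdot,\mu,y)=K_l^{\mu}(t,x_0,\cdot,\mu,y)\in{\mathcal C}^{{\mathbb s}}({\mathbb T}^d)$ is continuous: a continuous map into ${\mathcal C}^{{\mathbb s}}({\mathbb T}^d)$ has jointly continuous spatial derivatives up to order $\lfloor{\mathbb s}\rfloor$, and combining the $l$ levels gives joint continuity of all the mixed partials.

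The argument has no genuine difficulty — the content is entirely contained in Proposition \ref{prop:local:1stdiff}, Lemma \ref{lem:deltau:reginy}, and the estimates \eqref{eq:local:BMO}, \eqref{eq:local:reg}, \eqref{eq:Kbdd} — so the only point that will need a line of care is the passage from ``$y\mapsto K_l^{\mu}(t,x_0,\cdot,\mu,y)$ is a continuous ${\mathcal C}^{{\mathbb s}}$-valued map'' to genuine joint continuity of the mixed partials $(x,y)\mapsto\nabla_x^k\nabla_y^l\delta_\mu{\mathcal U}$, together with the parallel observation that, because $\lfloor{\mathbb s}\rfloor-1$ is an integer, one needs only continuity — not H\"older control — of the top-order $y$-derivative, which is exactly what Lemma \ref{lem:deltau:reginy} supplies. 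Finally I would keep track of which threshold and which constant each cited result contributes and check that all of them depend only on the parameters listed in Assumption \hyp{\^A} and on ${\mathbb s}$, completing the proof.
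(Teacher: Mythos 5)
Your proof is correct and takes essentially the same approach as the paper: identify the derivatives of ${\mathcal U}^0$ and ${\mathcal U}$ with the $K$-fields via Proposition~\ref{prop:local:1stdiff}, then read the required bounds and continuity off \eqref{eq:local:BMO}, \eqref{eq:local:reg}, \eqref{eq:Kbdd} and Lemma~\ref{lem:deltau:reginy}, noting correctly that the H\"older-in-$y$ clause of item (iv) is vacuous because $\lfloor {\mathbb s} \rfloor - 1$ is an integer. One small inaccuracy to flag: your parenthetical claim that the constant $C$ depends only on $d,\kappa,{\mathfrak L},\zeta,\sigma_0,{\mathbb s}$ is too strong, since the bounds in \eqref{eq:Kbdd} and Lemma~\ref{lem:deltau:reginy} are stated to depend on all of Assumption \hyp{\^A}'s parameters and can in particular involve $\hat{\kappa}$ (through the ${\mathscr D}$-regularity of $g^0$, $g$); this is harmless here because the corollary only asks that $C$ depend on the parameters of Assumption \hyp{\^A}, but the distinction between what ${\mathfrak C}$ and what $C$ may depend on is made deliberately in the statement and should be preserved.
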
 

\begin{proof}
The bounds for $\vert {\mathcal U}^0(t,x_0,\mu)\vert$
and
$\|{\mathcal U}(t,x_0,\cdot,\mu) \|_{\mathbb s}$ follow from 
 \eqref{eq:local:BMO}
 and 
 \eqref{eq:local:reg}. 
 The bounds for $\vert \nabla_{x_0} {\mathcal U}^0(t,x_0,\mu)\vert$
and
$\|\nabla_{x_0} {\mathcal U}(t,x_0,\cdot,\mu) \|_{\mathbb s}$
follow from the inequality
\eqref{eq:Kbdd}. 
Moreover, 
the bounds for 
$\| \delta_\mu {\mathcal U}^0(t,x_0,\mu,\cdot) \|_{\lfloor {\mathbb s} \rfloor - 1}$
and
$\max_{l=0,\cdots,\lfloor {\mathbb s} \rfloor -1} 
\| \nabla_y^l \delta_\mu {\mathcal U}^0(t,x_0,\cdot,\mu,\cdot) \|_{{\mathbb s}}$
follow from 
Lemma \ref{lem:deltau:reginy}. Continuity of the derivatives as stated in 
the definition of ${\mathscr C}^0$ and ${\mathscr C}$ also follows from 
Lemma \ref{lem:deltau:reginy}. 
%\textcolor{red}{should we exchange the indices in the norms to be consistant with the order in
%${\mathscr C}(..,r,s)$...}
\end{proof} 

In the following statement, we address the spatial regularity of the 
derivatives of ${\mathcal U}$ and 
${\mathcal U}^0$: 

\begin{proposition}\label{prop:deltaU:continuity}
Given two 4-tuples $(t,x_0,\mu,y), (t,\hat x_0,\hat\mu,\hat y)\in [0,T]\times\mathbb R^d\times\mathcal{P}(\mathbb T^d) \times {\mathbb T}^d$, a 
unitary vector $e \in {\mathbb R}^d$, two scalars $\vartheta_0,\vartheta \in [0,1]$, a 
fixed $\mathbb{r} \in (\lfloor {\mathbb s} \rfloor,{\mathbb s})$
and 
a $d$-tuple $l\in \{0,\cdots,\lfloor {\mathbb s} \rfloor-1\}^d$ with $|l|:=\sum_{i=1}^dl_i\leq \lfloor {\mathbb s} \rfloor-1$, 
call 
$({\boldsymbol \mu},{\boldsymbol u},{\boldsymbol m},{\boldsymbol X}^0,{\boldsymbol Y}^0,{\boldsymbol Z}^0)$ 
and $(\hat {\boldsymbol \mu},\hat {\boldsymbol u},\hat {\boldsymbol m},\hat {\boldsymbol X}^0,\hat {\boldsymbol Y}^0,\hat 
{\boldsymbol Z}^0)$ the solutions of the system \eqref{eq:major:FB:1:small}--\eqref{eq:minor:FB:2:small} with $(t,x_0,\mu)$ and $(t,\hat x_0,\hat \mu)$ as 
respective initial conditions and then $(\delta {\boldsymbol \mu},\delta 
{\boldsymbol u},\delta {\boldsymbol m},\delta {\boldsymbol X}^0,\delta 
{\boldsymbol Y}^0,\delta {\boldsymbol Z}^0)$ and $(\delta \hat {\boldsymbol \mu},\delta\hat {\boldsymbol u},\delta\hat {\boldsymbol m},\delta \hat
{\boldsymbol X}^0,\delta \hat {\boldsymbol Y}^0,\delta \hat {\boldsymbol Z}^0)$ the solutions of the 
linearized system \eqref{eq:linear H:local} with $(t, \vartheta_0 e, \vartheta (-1)^{|l|}\nabla^l\delta_y)$ and $(t,\vartheta_0 e, \vartheta (-1)^{|l|}\nabla^l\delta_{\hat y})$ as respective initial conditions. 

Then,
for $T \leq {\mathfrak C}$,
for some ${\mathfrak C}>0$ only depending on the parameters 
$d$, $\kappa$,  ${\mathfrak L}$,  
$\zeta$, $\sigma_0$ and 
$\mathbb{s}$ in Assumption \hyp{\^A} and on 
${\mathbb r}$,  we can find a constant $C$, only depending
 on the parameters in Assumption \hyp{\^A} and on ${\mathbb r}$, 
   such that, for any 
   $p \in [1,4]$, 
%for $T \leq {\mathfrak C}$,
%for some $c>0$ only depending on the parameters in Assumption \hyp{A},  
%and for $p \geq 1$, 
% we can find a constant $C_p$, only depending
% on $p$, $\mathbb{r}$ and the parameters in Assumption \hyp{A}, 
%   such that
\begin{equation}\label{eq:local:diff-3}
\begin{split}
&\mathbb E^0\biggl[\sup_{r\in [t,T]}
\Bigl( 
\|\delta \hat \mu_r-\delta \mu_r\|_{-\mathbb{r}+1}^{2p}
+
\|\delta \hat u_r-\delta u_r\|_{{\mathbb s}}^{2p}
+
\|\delta \hat m_r-\delta m_r\|_{{\mathbb s}-2}^{2p}
\Bigr)
\biggr]
\\
&+\mathbb E^0\bigg[\sup_{r\in [t,T]}
\Bigl( |\delta \hat X_r^0-\delta X_r^0|^{2p}+ |\delta \hat Y_t^0-\delta Y_t^0|^{2p} \Bigr) +
\biggl( \int_t^T|\delta \hat Z_r^0-\delta Z_r^0|^2\ud r\biggr)^p \bigg]\\
&\leq C \Big(|\hat x_0-x_0|^{2p}+\mathbb W_1^{2p}(\hat \mu,\mu)+|\hat y-y|^{2p(\mathbb{r} -\lfloor {\mathbb s} \rfloor)}\Big).
\end{split}
\end{equation}
In particular, 
%choosing $p=1$, $(\vartheta_0,\vartheta)=(0,1)$ or $(1,0)$
%there exists a constant $C$, only depending on the parameters in \hyp{A} such that 
 for any $y,\hat y\in\mathbb T^d$,
 for any $k \in \{0,\cdots,\lfloor \mathbb{s} \rfloor-1\}$, 
\begin{align}
&\big\|\nabla_{x_0} {\mathcal U}(t,x_0,\cdot,\mu)- \nabla_{x_0} {\mathcal U}(t,\hat x_0,\cdot,\hat \mu)\big\|_{{\mathbb s}}+
\big|\nabla_{x_0} {\mathcal U}^0(t,x_0,\mu)-\nabla_{x_0}  {\mathcal U}^0(t,\hat x_0,\hat \mu)\big| \leq C \Big(|\hat x_0-x_0|+\mathbb W_1(\hat \mu,\mu) \Big),
\nonumber
\\
&\big\|\nabla_y^k \delta_\mu {\mathcal U}(t,x_0,\cdot,\mu,y)-\nabla_y^k \delta_\mu {\mathcal U}(t,\hat x_0,\cdot,\hat \mu,\hat y)\big\|_{{\mathbb s}}+\big|\nabla_y^k \delta_\mu {\mathcal U}^0(t,x_0,\mu,y)-\nabla_y^k \delta_\mu {\mathcal U}^0(t,\hat x_0,\hat \mu,\hat y)\big| \label{eq:local:diff-3bb}
\\
&\leq C \Big(|\hat x_0-x_0|+\mathbb W_1(\hat \mu,\mu)+|\hat y-y|^{\mathbb{r}-\lfloor {\mathbb s} \rfloor}\Big),
\nonumber
\end{align}
and, for any $t \in [0,T]$, ${\mathcal U}^0(t,\cdot,\cdot) \in {\mathscr D}^0(C, {\mathbb r}  - 1)$
and ${\mathcal U}(t,\cdot,\cdot,\cdot) \in {\mathscr D}(C,  {\mathbb r}   - 1,{\mathbb s})$. 
\end{proposition}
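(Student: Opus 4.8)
The plan is to first establish the stability bound \eqref{eq:local:diff-3} for the difference of the two linearized solutions, and then to read off \eqref{eq:local:diff-3bb} and the membership statements from it, using the representation formulas of Proposition~\ref{prop:local:1stdiff} and Lemma~\ref{lem:deltau:reginy}. So let $(\delta{\boldsymbol\mu},\delta{\boldsymbol u},\delta{\boldsymbol m},\delta{\boldsymbol X}^0,\delta{\boldsymbol Y}^0,\delta{\boldsymbol Z}^0)$ and $(\delta\hat{\boldsymbol\mu},\delta\hat{\boldsymbol u},\delta\hat{\boldsymbol m},\delta\hat{\boldsymbol X}^0,\delta\hat{\boldsymbol Y}^0,\delta\hat{\boldsymbol Z}^0)$ be the two solutions of \eqref{eq:linear H:local} described in the statement, driven respectively by the solution of \eqref{eq:major:FB:1:small}--\eqref{eq:minor:FB:2:small} started from $(t,x_0,\mu)$ and from $(t,\hat x_0,\hat\mu)$, and set $\square\boldsymbol\mu:=\delta\hat{\boldsymbol\mu}-\delta{\boldsymbol\mu}$, and similarly $\square\boldsymbol u,\square\boldsymbol m,\square\boldsymbol X^0,\square\boldsymbol Y^0,\square\boldsymbol Z^0$. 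Subtracting the two copies of \eqref{eq:linear H:local} and performing first-order Taylor expansions of $\nabla_p\hat H$, $\nabla^2_{pp}\hat H$, $\nabla_pH^0$, $\nabla^2_{pp}H^0$, $\nabla^2_{px_0}H^0$, $\hat L^0$, $f$, $f^0$, $g$, $g^0$ around the base points $(X^0_r,Z^0_r,\mu_r)$ of the first nonlinear solution, one isolates a part that is linear in $\square$ — with coefficients $\Upsilon_r$, $\Gamma_r$, $p_r$, $q_r$, $l_r$, $o_r$ read off the first linearized system — and a source part consisting of sums of products of a ``difference of a smooth coefficient evaluated at $(\hat X^0_r,\hat Z^0_r,\hat\mu_r)$ versus $(X^0_r,Z^0_r,\mu_r)$'' with one of the terms $\delta u_r$, $\nabla_x\delta u_r$, $\delta X^0_r$, $\delta Z^0_r$, $\delta\mu_r$. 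This exhibits $(\square\boldsymbol\mu,\square\boldsymbol u,\square\boldsymbol m,\square\boldsymbol X^0,\square\boldsymbol Y^0,\square\boldsymbol Z^0)$ as a solution, on $[t,T]$, of a system of the general form \eqref{eq:linear:local}, with initial data $\square x_0=\vartheta_0e-\vartheta_0e=0$ and $\square\mu|_{r=t}=\vartheta(-1)^{|l|}(\nabla^l\delta_{\hat y}-\nabla^l\delta_y)$, and with explicit source terms $a_r,b_r,c_T,j_r,d_r,k_T$ of the above product type. The coefficients $\Upsilon_r,\Gamma_r$ satisfy Assumption~\hyp{C} with a constant ${\mathfrak K}$ depending only on the parameters of \hyp{\^A}, thanks to the bound on $\|u_r\|_{\mathbb s}$ from Theorem~\ref{thm:local:FB} and to \hyp{\^A4} (the coercivity holds on the relevant ball of radius controlled by that bound).

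\textbf{Bounding the data and applying Theorem~\ref{thm:local:linearFB}.} For the initial condition one has $\|\nabla^l\delta_{\hat y}-\nabla^l\delta_y\|_{-{\mathbb r}+1}\le C|\hat y-y|^{{\mathbb r}-\lfloor{\mathbb s}\rfloor}$, by testing against $\phi\in\mathcal C^{{\mathbb r}-1}$ and using that $\nabla^l\phi$ is $({\mathbb r}-1-|l|)$-Hölder with ${\mathbb r}-1-|l|\ge{\mathbb r}-\lfloor{\mathbb s}\rfloor\in(0,1)$. For the source terms, the Lipschitz regularity of all the coefficients in \hyp{\^A} bounds each summand pointwise by $\bigl(|\hat X^0_r-X^0_r|+|\hat Z^0_r-Z^0_r|+\mathbb W_1(\hat\mu_r,\mu_r)+\|\hat u_r-u_r\|_{\mathbb s}\bigr)$ times the corresponding norm of a component of the first linearized solution; moreover $(d_r)_{r\in(t,T]}$ then takes values in $\mathcal C^{-{\mathbb r}'+2}$ for some ${\mathbb r}'<{\mathbb r}$ (indeed it is a genuine function, $\delta{\boldsymbol u}$ and $\hat u-u$ being $\mathcal C^{\mathbb s}$-valued), so that $M_p$ in \eqref{eq: M} is well defined. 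Applying Cauchy--Schwarz together with the stability estimate \eqref{eq:local:diff-1} for \eqref{eq:major:FB:1:small}--\eqref{eq:minor:FB:2:small} (valid up to moments of order $16$) and with the a priori bound \eqref{eq:local:linear H:diff0} for $\delta{\boldsymbol u},\delta{\boldsymbol X}^0,\delta{\boldsymbol Z}^0,\delta{\boldsymbol\mu}$ (whose right-hand side $\|\vartheta(-1)^{|l|}\nabla^l\delta_y\|_{-{\mathbb r}+1}^{2p}+|\vartheta_0e|^{2p}$ is bounded by a constant, also up to order $16$), one obtains
\begin{equation*}
\mathbb E^0[M_p]\le C\bigl(|\hat x_0-x_0|^{2p}+\mathbb W_1(\hat\mu,\mu)^{2p}+|\hat y-y|^{2p({\mathbb r}-\lfloor{\mathbb s}\rfloor)}\bigr),\qquad p\in[1,4],
\end{equation*}
the restriction $p\le4$ arising because each product of two factors must be split into factors of order $4p\le16$. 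Then, for $T\le{\mathfrak C}$, Theorem~\ref{thm:local:linearFB} (more precisely \eqref{eq:local:linear:diff0}) applies to the difference system and yields $\mathbb E^0[\sup_{r\in[t,T]}(\cdots)+\cdots]\le C_pM_p$, which combined with the previous display is exactly \eqref{eq:local:diff-3}.

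\textbf{From \eqref{eq:local:diff-3} to \eqref{eq:local:diff-3bb}.} By Proposition~\ref{prop:local:1stdiff} one has $\nabla_{x_0}\mathcal U=K^{x_0}$, $\nabla_{x_0}\mathcal U^0=K^{0,x_0}$, $\delta_\mu\mathcal U=K^\mu_0$, $\delta_\mu\mathcal U^0=K^{0,\mu}_0$, and by Lemma~\ref{lem:deltau:reginy}, $\nabla_y^lK^\mu_0=K^\mu_l$ and $\nabla_y^lK^{0,\mu}_0=K^{0,\mu}_l$. Recalling the definitions \eqref{eq:all:the:Ks}, the choice $(\vartheta_0,\vartheta)=(1,0)$, $e=e_i$ in the two linearized systems gives $\delta u_t=K^{x_0}(t,x_0,\cdot,\mu)$, $\delta Y^0_t=K^{0,x_0}(t,x_0,\mu)$ (and the hatted ones with $(\hat x_0,\hat\mu)$), while $(\vartheta_0,\vartheta)=(0,1)$ gives $\delta u_t^{l,y}=K^\mu_l(t,x_0,\cdot,\mu,y)$, $\delta Y^{0,l,y}_t=K^{0,\mu}_l(t,x_0,\mu,y)$. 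Evaluating \eqref{eq:local:diff-3} at $r=t$ with $p=1$, and noting that these quantities at the initial time are deterministic (they coincide with the deterministic fields $K^{x_0},K^{0,x_0},K^\mu_l,K^{0,\mu}_l$), the expectation on the left is superfluous; taking square roots gives precisely the two inequalities in \eqref{eq:local:diff-3bb} — with the $|\hat y-y|$ term absent in the $(1,0)$ case and present with exponent ${\mathbb r}-\lfloor{\mathbb s}\rfloor$ in the $(0,1)$ case.

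\textbf{Membership in $\mathscr D^0$ and $\mathscr D$, and the main obstacle.} Corollary~\ref{corol:U:U0:C:C0} already provides conditions (ii)--(iii) of $\mathscr C^0(C,\lfloor{\mathbb s}\rfloor-1)$ for $\mathcal U^0(t,\cdot,\cdot)$ and (ii)--(iv) of $\mathscr C(C,\lfloor{\mathbb s}\rfloor-1,{\mathbb s})$ for $\mathcal U(t,\cdot,\cdot,\cdot)$; since $\lfloor{\mathbb r}-1\rfloor=\lfloor{\mathbb s}\rfloor-1$ and the required top-order $y$-Hölder exponent ${\mathbb r}-\lfloor{\mathbb s}\rfloor$ is supplied by \eqref{eq:local:diff-3bb} (take $k=\lfloor{\mathbb s}\rfloor-1$, $\hat x_0=x_0$, $\hat\mu=\mu$), the corresponding conditions of $\mathscr C^0(C,{\mathbb r}-1)$ and $\mathscr C(C,{\mathbb r}-1,{\mathbb s})$ hold; the remaining Lipschitz and Lipschitz--Hölder bounds that define $\mathscr D^0(C,{\mathbb r}-1)$ and $\mathscr D(C,{\mathbb r}-1,{\mathbb s})$ are exactly the two displays of \eqref{eq:local:diff-3bb}, with a constant $C$ depending only on the parameters of \hyp{\^A} and on ${\mathbb r}$ by the uniformity of all the constants invoked. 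The technically heaviest step — and the main obstacle — is the bookkeeping in the first two paragraphs: writing the difference of the two linearized systems cleanly as an instance of \eqref{eq:linear:local} satisfying Assumption~\hyp{C}, and verifying that every source term genuinely carries the order $|\hat x_0-x_0|+\mathbb W_1(\hat\mu,\mu)+|\hat y-y|^{{\mathbb r}-\lfloor{\mathbb s}\rfloor}$ with moments up to order $16$; this is conceptually parallel to the expansion carried out in the proof of Proposition~\ref{prop:local:1stdiff}.
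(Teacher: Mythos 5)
Your argument is correct and mirrors the paper's own proof: writing the difference of the two linearized systems as an instance of \eqref{eq:linear:local} with source terms that are products of a factor controlled by the stability estimate \eqref{eq:local:diff-1} (used at order $4p$) and a factor controlled by \eqref{eq:local:linear H:diff0} (also at order $4p$), handling these products by Cauchy--Schwarz, bounding the initial condition via $\|\nabla^l\delta_{\hat y}-\nabla^l\delta_y\|_{-\mathbb{r}+1}\le|\hat y-y|^{\mathbb{r}-\lfloor\mathbb{s}\rfloor}$, and invoking Theorem~\ref{thm:local:linearFB}. The only cosmetic difference is that you spell out the deduction of the $\mathscr D^0$/$\mathscr D$ memberships in more detail than the paper, which leaves it implicit after reading off \eqref{eq:local:diff-3bb} at $r=t$ with $p=1$.
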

\begin{proof}
Applying Theorem \ref{thm:local:FB}, we obtain
(for $T \leq {\mathfrak C}$)
\begin{equation}\label{eq:local:diff-2}
\begin{split}
&\mathbb E^0\bigg[\sup_{r\in[t,T]}\Big( \|\hat u_r-u_r\|_s^{4p} + \mathbb W_1^{4p}(\hat\mu_r,\mu_r) + |\hat X_r^{0}-X_r^{0}|^{4p}+|\hat Y_r^{0}-Y_r^{0}|^{4p}
\Bigr) + 
\biggl( 
\int_t^T|\hat Z^{0}_r-Z_r^{0}|^2\ud r
\biggr)^{2p} \bigg]\\
&\leq C \Big[|\hat x_0-x_0|^{4p} +\mathbb W_1^{4p} (\hat \mu,\mu)\Big].
\end{split}
\end{equation}
By Theorem \ref{thm:local:linearHFB}, 
\begin{equation}
\label{eq:local:diff-2bb} 
\begin{split}
&\mathbb E^0\biggl[\sup_{r\in [t,T]} \Bigl( 
\|\delta \mu_r\|_{-\mathbb{r}+1}^{4p}
+\|\delta \hat{\mu}_r\|_{-\mathbb{r}+1}^{4p}
+
\|\delta u_r\|_{\mathbb s}^{4p}
+
\|\delta \hat{u}_r\|_{\mathbb s}^{4p}
+  |\delta X_r^0|^{4p}+ |\delta \hat{X}_r^0|^{4p} 
+
 |\delta Y_r^0|^{4p}+ |\delta \hat{Y}_r^0|^{4p} 
\Bigr) 
\\
&\hspace{15pt} +
 \biggl( \int_0^T \bigl( |\delta Z_t^0|^2
 +
 |\delta \hat{Z}_t^0|^2
 \bigr)
 \ud t \biggr)^{2p} \biggr]
 \\
&\leq C. 
\end{split}
\end{equation}
It remains to see that, since $|l|\leq\lfloor {\mathbb s} \rfloor-1$, we have
\begin{equation*}
\|\nabla^l\delta_{\hat y}-\nabla^l\delta_y\|_{-\mathbb{r}+1}\leq |\hat y-y|^{\mathbb{r}-\lfloor {\mathbb s} \rfloor}.
\end{equation*}
In order to conclude, we follow the proof of Proposition \ref{prop:local:1stdiff}. 
The idea is to write 
 $(\delta \hat {\boldsymbol \mu}- 
 \delta {\boldsymbol \mu},
 \delta\hat {\boldsymbol u}
 -\delta {\boldsymbol u},
 \delta\hat {\boldsymbol m}
 -
 \delta {\boldsymbol m}, 
 \delta \hat
{\boldsymbol X}^0
-
 \delta {\boldsymbol X}^0,
 \delta \hat {\boldsymbol Y}^0-\delta 
{\boldsymbol Y}^0,\delta \hat {\boldsymbol Z}^0-\delta {\boldsymbol Z}^0)$ 
as a solution of a linear system of the form 
\eqref{eq:linear:local}, with suitable coefficients
$(a_t)_{0 \le t \le T}$, 
$(b_t)_{0 \le t \le T}$,
$c_T$, $(d_t)_{0 \le t \le T}$, 
$(j_t)_{0 \le t \le T}$ 
and $k_T$. 
Briefly, all these terms can be bounded from above by 
products of two terms: one of the norms appearing in 
\eqref{eq:local:diff-2}
and one of the norms 
of 
\eqref{eq:local:diff-2bb}. Cauchy-Schwarz inequality permits to handle 
the $2p$ moment of any of these products. By \eqref{eq:local:diff-2} and 
\eqref{eq:local:diff-2bb}, we obtain \eqref{eq:local:diff-3}.

Then, we get the first line in 
\eqref{eq:local:diff-3bb}
by choosing $p=1$, $\vartheta_0=1$, $\vartheta=0$ and $y=\hat{y}$. The second inequality is
obtained by choosing 
$p=1$, $\vartheta_0=0$ and $\vartheta=1$. 
\end{proof}

\subsection{Regularity in time of the derivatives}

We now address the time continuity of the derivatives. We start with the following statement: 

\begin{proposition}
\label{prop:se:5:continuity:time}
Let ${\mathbb r} \in (\lfloor {\mathbb s} \rfloor,{\mathbb s})$
and let Assumption \hyp{\^A} be in force. 
Then, there exists a constant ${\mathfrak C}>0$, only depending on $d$,   $\kappa$, ${\mathfrak L}$,
$\zeta$, $\sigma_0$ and 
$\mathbb{s}$
in Assumption \hyp{\^A} and on ${\mathbb r}$, such that, for 
$T \leq {\mathfrak C}$, for any $(t,x_0,\mu)\in [0,T]\times\mathbb R^d\times\mathcal{P}(\mathbb T^d)$, it holds
\begin{equation}\label{eq:U:contintime}
\begin{split}
&\lim_{h\to 0}\Big(\big\|\nabla_{x_0} {\mathcal U}(t+h,x_0,\cdot,\mu)-\nabla_{x_0}{\mathcal U}(t,x_0,\cdot,\mu)\big\|_{\mathbb{r}}
 + 
\big\| \delta_\mu {\mathcal U}(t+h,x_0,\cdot,\mu,\cdot)- \delta_\mu {\mathcal U}(t,x_0,\cdot,\mu,\cdot)\big\|_{\mathbb{r},\mathbb{r}-1}
\Bigr) =0,
\end{split}
\end{equation}
where 
$\| \cdot \|_{{\mathbb r},{\mathbb r}-1 }$ denotes the standard H\"older norm\footnote{\label{foo:2:26}
Let $r,s>0$ with $r,s\not\in\mathbb N$. If a function $h : (x,y) \in {\mathbb T}^d \times {\mathbb T}^d \mapsto h(x,y)$ has 
mixed derivatives up to the order $\lfloor r \rfloor$ in $x$ and $\lfloor s \rfloor$ in $y$, we set
\begin{equation*}
\|g\|_{\lfloor r \rfloor,\lfloor s \rfloor}:=\sup_{k=0,\cdots, \lfloor r \rfloor}\sup_{l=0,\cdots \lfloor s \rfloor}\sup_{x,y\in\mathbb T^d}|\nabla_{x}^k\nabla_{y}^lg(x,y)|,
\end{equation*}
and, if  $\nabla_{x}^{\lfloor r \rfloor}\nabla_y^{\lfloor s \rfloor} h$ is $(r-\lfloor r \rfloor,s-\lfloor s \rfloor)$-H\"older continuous, we also set
\begin{equation*}
\|g\|_{(r,s)}:=\|g\|_{\lfloor r \rfloor,\lfloor s \rfloor}+\sup_{(x,y),(x',y') \in {\mathbb T}^d\times {\mathbb T}^d : (x,y) \not = (x',y')}\frac{|\nabla_x^{\lfloor r \rfloor}\nabla_y^{\lfloor s \rfloor}g(x,y)-\nabla_x^{\lfloor r \rfloor}\nabla_y^{\lfloor s \rfloor}g(x',y')|}{|x-x'|^{r-\lfloor r \rfloor}+|y-y'|^{s-\lfloor s \rfloor}}.
\end{equation*}} 
on 
the space ${\mathcal C}^{{\mathbb r},{\mathbb r}-1}({\mathbb T}^d \times {\mathbb T}^d)$ of functions $h : (x,y) \in {\mathbb T}^d \times {\mathbb T}^d
\mapsto h(x,y)$ that have crossed derivatives in $(x,y)$ up the order $\lfloor {\mathbb r} \rfloor $ in $x$ and $\lfloor {\mathbb r} \rfloor -1$ in $y$, 
all the derivatives being 
${\mathbb r}-\lfloor {\mathbb s}\rfloor$
H\"older continuous in $(x,y)$. Similarly, 
\begin{equation}\label{eq:U0:contintime}
\begin{split}
&\lim_{h\to 0}\Big(\big|\nabla_{x_0} {\mathcal U}^0(t+h,x_0,\mu)-\nabla_{x_0}
{\mathcal U}^0(t,x_0,\mu) \bigr\vert +
\big\| \delta_\mu {\mathcal U}^0(t+h,x_0,\mu,\cdot)- \delta_\mu {\mathcal U}^0(t,x_0,\mu,\cdot)\big\|_{\mathbb{r}-1  }\Big)=0.
\end{split}
\end{equation}
\end{proposition}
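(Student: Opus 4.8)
\textbf{Proof strategy for Proposition \ref{prop:se:5:continuity:time}.}
The plan is to exploit the two representation formulas established above: for $r \ge t$, with probability $1$ under ${\mathbb P}^0$, one has $u^{t,x_0,\mu}(r,x) = {\mathcal U}(r,X_r^{0,t,x_0,\mu},x,\mu_r^{t,x_0,\mu})$ and $Y_r^{0,t,x_0,\mu} = {\mathcal U}^0(r,X_r^{0,t,x_0,\mu},\mu_r^{t,x_0,\mu})$, together with the analogous formulas for the derivatives, namely $\nabla_{x_0}{\mathcal U}(t,x_0,x,\mu) = K^{x_0}(t,x_0,x,\mu) = \delta u_t^i(x)$, $\delta_\mu {\mathcal U}(t,x_0,x,\mu,y) = K_0^\mu(t,x_0,x,\mu,y) = \delta u_t^{0,y}(x)$, and likewise for ${\mathcal U}^0$. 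The idea is that continuity in time of these master fields at $(t,x_0,\mu)$ is equivalent, via these formulas, to continuity at $r=t$ of the corresponding solutions to \eqref{eq:major:FB:1:small}--\eqref{eq:minor:FB:2:small} and \eqref{eq:linear H:local}, which in turn follows from the moment bounds \eqref{eq:local:reg}, \eqref{eq:local:BMO}, \eqref{eq:local:diff-1}, \eqref{eq:local:linear:diff0} and the semigroup estimates already used in Lemma \ref{lem:U0:U:timereg}.

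First I would handle the first-order $x_0$-derivative of ${\mathcal U}$. Fix $h>0$ small; I want to compare $\nabla_{x_0}{\mathcal U}(t+h,x_0,\cdot,\mu)$ with $\nabla_{x_0}{\mathcal U}(t,x_0,\cdot,\mu)$. Write $\nabla_{x_0}{\mathcal U}(t,x_0,x,\mu) = \delta u_t(x)$ where $(\delta{\boldsymbol\mu},\delta{\boldsymbol u},\delta{\boldsymbol m},\ldots)$ solves \eqref{eq:linear H:local} on $[t,T]$ with initial data $(\triangle x_0,\triangle\mu) = (e_i,0)$ relative to the solution $({\boldsymbol X}^0,\ldots)$ started at $(t,x_0,\mu)$. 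On the one hand, by \eqref{eq:local:reg} applied to the linearized system (i.e. \eqref{eq:local:linear H:diff0} with $p=1$) and the parabolic smoothing of the backward SPDE, $\delta{\boldsymbol u}$ has continuous trajectories with values in ${\mathcal C}^{{\mathbb r}'}(\mathbb T^d)$ for ${\mathbb r}' \in ({\mathbb r},{\mathbb s})$ after any positive time, so $\|\delta u_{t}-\delta u_{t+h}\|_{\mathbb{r}} \to 0$ with a quantitative rate $O(h^{({\mathbb s}-{\mathbb r})/2})$ exactly as in the proof of \eqref{eq:U:timereg}. On the other hand, via the representation formula for $\nabla_{x_0}{\mathcal U}$ along the flow, $\nabla_{x_0}{\mathcal U}(t+h,x_0,\cdot,\mu)$ equals, up to an error controlled by ${\mathbb E}^0[\,|X^{0,t,x_0,\mu}_{t+h}-x_0| + {\mathbb W}_1(\mu^{t,x_0,\mu}_{t+h},\mu)\,]$, the value ${\mathbb E}^0[\nabla_{x_0}{\mathcal U}(t+h, X^{0,t,x_0,\mu}_{t+h},\cdot,\mu^{t,x_0,\mu}_{t+h})] = {\mathbb E}^0[\delta u^{t,x_0,\mu}_{t+h}]$, using the Lipschitz bound $\nabla_{x_0}{\mathcal U}(t+h,\cdot,\cdot,\cdot)\in {\mathscr C}(C,\lfloor{\mathbb s}\rfloor-1,{\mathbb s})$ from Corollary \ref{corol:U:U0:C:C0} and the moment estimate ${\mathbb E}^0[|X^{0}_{t+h}-x_0| + {\mathbb W}_1(\mu_{t+h},\mu)] \le C\sqrt h$ (from \eqref{eq:local:diff-1} and \eqref{eq:local:BMO}). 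Combining the two displays yields \eqref{eq:U:contintime} for the $\nabla_{x_0}$-part, with a Hölder-in-$h$ rate; the same argument, run with \eqref{eq:local:BMO} controlling ${\mathbb E}^0\int_t^{t+h}|Z^0_s|^2\ud s$ and the boundedness of $\nabla_{x_0}{\mathcal U}^0$ from \eqref{eq:Kbdd}, gives the $\nabla_{x_0}{\mathcal U}^0$-part of \eqref{eq:U0:contintime}.

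Next I would treat the measure derivatives $\delta_\mu{\mathcal U}$ and $\delta_\mu{\mathcal U}^0$, which is the genuinely delicate point because we need continuity of the full $\|\cdot\|_{{\mathbb r},{\mathbb r}-1}$-norm of $(x,y)\mapsto\delta_\mu{\mathcal U}(t,x_0,x,\mu,y)$, i.e. joint Hölder control in both the ${\mathbb T}^d$-argument and the measure-argument $y$. Here I would use the representation $\delta_\mu{\mathcal U}(t,x_0,x,\mu,y) = K_0^\mu(t,x_0,x,\mu,y) = \delta u_t^{0,y}(x)$ together with $\nabla_y^l K_0^\mu(t,x_0,x,\mu,y) = K_l^\mu(t,x_0,x,\mu,y) = \delta u_t^{l,y}(x)$ from Lemma \ref{lem:deltau:reginy}, where now $(\delta{\boldsymbol\mu}^{l,y},\delta{\boldsymbol u}^{l,y},\ldots)$ solves \eqref{eq:linear H:local} with $\triangle\mu = (-1)^{|l|}\nabla^l\delta_y \in {\mathcal C}^{-{\mathbb r}+1}(\mathbb T^d)$. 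As before, along the flow started at $(t,x_0,\mu)$ one has $\delta u^{l,y}_{t+h} = {\mathbb E}^0[K_l^\mu(t+h,X^0_{t+h},\cdot,\mu_{t+h},y)] + (\text{error})$, and, by the joint Hölder regularity \eqref{eq:local:diff-3bb} and the moment bounds, the error is $o(1)$ as $h\to 0$, uniformly in $y$ and uniformly over $l$ with $|l|\le\lfloor{\mathbb s}\rfloor-1$. The $y$-Hölder modulus of $\nabla_y^{\lfloor{\mathbb r}\rfloor-1}\delta_\mu{\mathcal U}$ is controlled through the $|\hat y - y|^{{\mathbb r}-\lfloor{\mathbb s}\rfloor}$ term in \eqref{eq:local:diff-3bb} applied with $l$ of order $\lfloor{\mathbb r}\rfloor-1 = \lfloor{\mathbb s}\rfloor-1$; the $x$-Hölder modulus comes from the ${\mathcal C}^{\mathbb s}(\mathbb T^d)$-bound in \eqref{eq:Kbdd} plus the ${\mathcal C}^{{\mathbb r}'}$-smoothing of $\delta{\boldsymbol u}^{l,y}$ after positive time; and continuity in $h$ of all these moduli is obtained by the same interpolation-plus-heat-kernel argument as in Lemma \ref{lem:U0:U:timereg}, applied to the linearized backward SPDE in \eqref{eq:linear H:local} (whose source $-\nabla_{x_0}f_r\cdot\delta X_r^0 - (\delta_\mu f_r,\delta\mu_r) + j_r$ is, by \eqref{eq:local:linear H:diff0}, bounded in ${\mathcal C}^{{\mathbb s}-1}(\mathbb T^d)$ uniformly, so the heat-semigroup gives an $O(h^{({\mathbb s}-{\mathbb r})/2})$ modulus for $\|\delta u^{l,y}_{t} - \mathbb{E}^0[\delta u^{l,y}_{t+h}]\|_{\mathbb r}$). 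Assembling the $x$-part, the $y$-part and their joint Hölder modulus gives $\|\delta_\mu{\mathcal U}(t+h,x_0,\cdot,\mu,\cdot) - \delta_\mu{\mathcal U}(t,x_0,\cdot,\mu,\cdot)\|_{{\mathbb r},{\mathbb r}-1}\to 0$, and the same reasoning with ${\mathcal U}$ replaced by ${\mathcal U}^0$ and $\|\cdot\|_{\mathbb s}$ by absolute value yields the $\delta_\mu{\mathcal U}^0$-part of \eqref{eq:U0:contintime}. The main obstacle, as indicated, is bookkeeping the joint Hölder norm: one must verify that the estimates \eqref{eq:local:diff-3bb} are uniform over the multi-index $l$ and that interpolation between the (uniform, deterministic) ${\mathbb r}'$-bound after positive time and the ${\mathbb r}$-bound at the endpoint produces a genuine modulus of continuity in $h$ — this requires invoking the standard interpolation inequality in Hölder spaces together with continuity of trajectories in the appropriate Bochner sense, exactly in the spirit of the third item of Remark \ref{rem:def:2:9}.
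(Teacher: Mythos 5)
Your proposal is correct, but it takes a genuinely different route from the paper's proof of this particular proposition. The paper's argument is a soft compactness-plus-uniqueness argument: fix $(x_0,\mu)$, set $\mathbb{u}:=(\mathbb{r}+\mathbb{s})/2$, and note that by Proposition~\ref{prop:deltaU:continuity} applied with the intermediate index $\mathbb{u}$ in place of $\mathbb{r}$, the family $\{(x,y)\mapsto\delta_\mu{\mathcal U}(t,x_0,x,\mu,y)\}_{t\in[0,T]}$ is uniformly bounded in ${\mathcal C}^{\mathbb{u},\mathbb{u}-1}({\mathbb T}^d\times{\mathbb T}^d)$ and hence relatively compact in ${\mathcal C}^{\mathbb{r},\mathbb{r}-1}({\mathbb T}^d\times{\mathbb T}^d)$, and similarly for the other three derivatives. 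Any subsequential limit $\Phi^{x_0},\Phi^\mu,\Phi^{0,x_0},\Phi^{0,\mu}$ obtained by sending $t\to t_0$ must, on passing to the limit in the Taylor expansions \eqref{eq:U:taylor1st}--\eqref{eq:U0:taylor1st}, satisfy the same first-order expansion at $t_0$; uniqueness of the Fr\'echet derivative then forces the limit to equal the corresponding derivative at $t_0$, and continuity follows. No quantitative rate is produced, and none of the semigroup or flow machinery you invoke is needed. Your approach, which represents the derivatives as linearized solutions, expands them along the flow via the backward equation in \eqref{eq:linear H:local}, and controls all the terms with \eqref{eq:local:diff-3bb}, the moment bounds and the heat-kernel argument of Lemma~\ref{lem:U0:U:timereg}, is essentially what the paper does for the subsequent Proposition~\ref{prop:reg:Holder:time:deribatives}, which establishes the stronger H\"older-in-time rate $O(h^{1/2})$; the paper even remarks after that statement that one could have proved the quantitative version directly and bypassed the present proposition. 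So your route buys a rate, at the cost of more technical bookkeeping.

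One step your sketch leaves implicit, which the paper also leaves implicit in the proof of Proposition~\ref{prop:reg:Holder:time:deribatives}, is the passage from ${\mathbb E}^0[\delta u_{t+h}^{l,y}(x)]$ to ${\mathbb E}^0[\nabla_y^l\delta_\mu{\mathcal U}(t+h,X^0_{t+h},x,\mu_{t+h},y)]+O(h)$. This requires the representation formula \eqref{eq:deltau:rep} to propagate along the flow: at time $t+h$ the linearized solution $\delta u^{l,y}_{t+h}$ is given by the master-field derivatives evaluated at $(X^0_{t+h},\mu_{t+h})$ and paired against $\delta X^{0,l,y}_{t+h}$ and $\delta\mu^{l,y}_{t+h}$ rather than against the original initial data; one then needs $\delta X^{0,l,y}_{t+h}=O(h)$ (Gronwall from a zero initial condition) and that $\delta\mu^{l,y}_{t+h}$ is close to $(-1)^{|l|}\nabla^l\delta_y$ with an $O(h)$ error in the weak topology that pairs against $K^\mu_0(t+h,\cdot)$. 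This is plausible and consistent with the estimates at hand, but it is the one nontrivial step your plan glosses over. The rest --- uniformity in $y$ and over $l$ of the H\"older moduli coming from \eqref{eq:local:diff-3bb}, interpolation between the uniform $\mathbb{s}$-bound and pointwise convergence to control the full $\|\cdot\|_{\mathbb{r},\mathbb{r}-1}$-norm --- is sound.
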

\begin{proof}
Given $(x_0,\mu),(\hat x_0,\hat \mu)\in\mathbb R^d\times\mathcal{P}(\mathbb T^d)$, we can derive from Proposition \ref{prop:local:1stdiff} that, for
$T \leq {\mathfrak C}$ and for any $t\in [0,T]$,
\begin{equation}\label{eq:U:taylor1st}
\begin{split}
&{\mathcal U}(t,\hat x_0,\cdot,\hat \mu)-{\mathcal U}(t,x_0,\cdot, \mu)
\\
&=\nabla_{x_0} {\mathcal U}(t,x_0,\cdot,\mu)\cdot (\hat x_0-x_0)+\int_{\mathbb T^d}\delta_\mu {\mathcal U}(t,x_0,\cdot,\mu,y)\big(\hat \mu(\ud y)-\mu(\ud y)\big)+O\big(|\hat x_0-x_0|^2+\mathbb W_1^2(\hat \mu,\mu)\big),
\end{split}
\end{equation}
where the equality holds true in $\mathcal{C}^{\mathbb s}(\mathbb T^d)$ and the Landau notation $O(\cdot)$ is uniform in $(t,x_0,\mu)$, and
\begin{equation}\label{eq:U0:taylor1st}
\begin{split}
&{\mathcal U}^0(t,\hat x_0,\hat \mu)-{\mathcal U}^0(t,x_0,\mu)
\\
&=\nabla_{x_0} {\mathcal U}^0(t,x_0,\mu)\cdot (\hat x_0-x_0)+\int_{\mathbb T^d}\delta_\mu {\mathcal U}^0(t,x_0,\mu,y)\big(\hat \mu(\ud y)-\mu(\ud y)\big)+O\big(|\hat x_0-x_0|^2+\mathbb W_1^2(\hat \mu,\mu)\big).
\end{split}
\end{equation}

We now fix $(x_0,\mu) \in {\mathbb R}^d \times {\mathcal P}({\mathbb T}^d)$
and let $\mathbb{u} := ({\mathbb r} + {\mathbb s})/2$. 
Recall \eqref{eq:Kbdd} and the fact that $\nabla_{x_0}
{\mathcal U}^0(t,x_0,\mu)=K^{0,x_0}(t,x_0,\mu)$. Therefore,   the 
collection  $\{ \nabla_{x_0} {\mathcal U}^0(t,x_0,\mu), \ t\in [0,T]\}$ 
is bounded and thus relatively compact in $\mathbb R^d$. By Proposition \ref{prop:deltaU:continuity}
(applied with the indices $({\mathbb u},{\mathbb s})$ in lieu of $({\mathbb r},{\mathbb s})$)
and under the same generic condition $T \leq {\mathfrak C}$ as therein,  
the collection of functions
$\{ x\in\mathbb T^d\mapsto \nabla_{x_0} {\mathcal U}(t,x_0,x,\mu),  t\in [0,T]\}$
is relatively compact in $\mathcal{C}^{\mathbb{r}}(\mathbb T^d)$,
the collection of functions $\{ 
y\in\mathbb T^d\mapsto \delta_\mu {\mathcal U}^0(t,x_0,\mu,y), t\in [0,T]\}$
is relatively compact in $\mathcal{C}^{\mathbb{r}-1}(\mathbb T^d)$, and
 the collection of functions
$\{(x,y)\in \mathbb T^d \times \mathbb T^d \mapsto  \delta_{\mu}{\mathcal U}(t,x_0,x,\mu,y), t\in [0,T]\}$ 
is relatively compact in $\mathcal{C}^{\mathbb{r},{\mathbb r}-1}(\mathbb T^d \times \mathbb T^d)$. Any limits $\Phi^{0,x_0}\in\mathbb R^d$, $\Phi^{x_0},\Phi^{0,\mu}:\mathbb T^{d}\mapsto \mathbb R$ and $\Phi^{\mu}:\mathbb \mathbb T^d \times \mathbb T^d \mapsto \mathbb R$ obtained by letting $t$ tend to some $t_0\in [0,T]$ in \eqref{eq:U:taylor1st} and \eqref{eq:U0:taylor1st} must satisfy
\begin{equation*}
\begin{split}
&{\mathcal U}(t_0,\hat x_0,\cdot,\hat \mu)-{\mathcal U}(t_0,x_0,\cdot, \mu)
\\
&=\Phi^{x_0}(\cdot)\cdot (\hat x_0-x_0)+\int_{\mathbb T^d}\Phi^{\mu}(\cdot,y)\big(\hat \mu(\ud y)-\mu(\ud y)\big)+O\big(|\hat x_0-x_0|^2+\mathbb W_1^2(\hat \mu,\mu)\big),
\end{split}
\end{equation*}
where the equality holds true in $\mathcal{C}^{\mathbb{r}}(\mathbb T^d)$, and
\begin{equation*} 
\begin{split}
&{\mathcal U}^0(t_0,\hat x_0,\hat \mu)-{\mathcal U}^0(t_0,x_0, \mu)
\\
&=\Phi^{0,x_0}\cdot (\hat x_0-x_0)+\int_{\mathbb T^d}\Phi^{0,\mu}(y)\big(\hat \mu(\ud y)-\mu(\ud y)\big)+O\big(|\hat x_0-x_0|^2+\mathbb W_1^2(\hat \mu,\mu)\big).
\end{split}
\end{equation*}
The last two displays show that
\begin{equation*}
\begin{split}
&\nabla_{x_0}{\mathcal U}(t_0,x_0,x,\mu)\cdot (\hat x_0-x_0)+\int_{\mathbb T^d}\delta_\mu {\mathcal U}(t_0,x_0,x,\mu,y)\big(\hat \mu(\ud y)-\mu(\ud y)\big)
\\
&=\Phi^{x_0}(x)\cdot (\hat x_0-x_0)+\int_{\mathbb T^d}\Phi^{\mu}(x,y)\big(\hat \mu(\ud y)-\mu(\ud y)\big),
\quad x \in {\mathbb T}^d, 
\end{split}
\end{equation*}
and
\begin{equation*}
\begin{split}
&\nabla_{x_0} {\mathcal U}^0(t,x_0,\mu)\cdot (\hat x_0-x_0)+\int_{\mathbb T^d}\delta_\mu {\mathcal U}^0(t,x_0,\mu,y)\big(\hat \mu(\ud y)-\mu(\ud y)\big)\\
=&\Phi^{0,x_0}\cdot (\hat x_0-x_0)+\int_{\mathbb T^d}\Phi^{0,\mu}(y)\big(\hat \mu(\ud y)-\mu(\ud y)\big),
\end{split}
\end{equation*}
which implies that
\begin{equation*}
\nabla_{x_0} {\mathcal U}(t_0,x_0,x,\mu)=\Phi^{x_0}(x), \quad\delta_\mu {\mathcal U}(t_0,x_0,x,\mu,y)=\Phi^{\mu}(x,y),
\end{equation*}
and
\begin{equation*}
\nabla_{x_0} {\mathcal U}^0(t_0,x_0,\mu)=\Phi^{0,x_0},\quad \delta_\mu {\mathcal U}^0(t_0,x_0,\mu,y)=\Phi^{0,\mu}(y).
\end{equation*}
%Therefore, we have just shown 
%%\textcolor{red}{should define the norm in the second limit}{\color{blue}Chenchen: It has been defined on Page 2.}
%\begin{equation*}
%\lim_{t\to t_0}\big\|\nabla_{x_0} {\mathcal U}(t,x_0,\cdot,\mu)-\nabla_{x_0}
%{\mathcal U}(t_0,x_0,\cdot,\mu)\big\|_{\mathbb{r}} =0,\quad \lim_{t\to t_0}\big\|\delta_{\mu}{\mathcal U}(t,x_0,\cdot,\mu,\cdot)-\delta_{\mu} {\mathcal U}(t_0,x_0,\cdot,\mu,\cdot)\big\|_{\mathbb{r},\mathbb{r}-\lfloor {\mathbb s} \rfloor} =0,
%\end{equation*}
%and
%\begin{equation*}
%\lim_{t\to t_0}\big|\nabla_{x_0} {\mathcal U}^0(t,x_0,\mu)-\nabla_{x_0} {\mathcal U}^0(t_0,x_0,\mu)\big|=0,\quad \lim_{t\to t_0}\big\|\delta_{\mu}{\mathcal U}^0(t,x_0,\mu,\cdot)-\delta_{\mu} {\mathcal U}^0(t_0,x_0,\mu,\cdot)\big\|_{\mathbb{u}-1} =0.
%\end{equation*}
This proves \eqref{eq:U:contintime} and \eqref{eq:U0:contintime}. 
%
%Since the set of functions
%\begin{equation*}
%(x,y)\in\mathbb T^{2d}\mapsto \Big(\nabla_y^l\delta_{\mu}{\mathcal U}(t,x_0,x,\mu,y)\Big)_{t\in [0,T]}
%\end{equation*}
%and
%\begin{equation*}
%y\in\mathbb T^{d}\mapsto \Big(\nabla_y^l\delta_{\mu}{\mathcal U}^0(t,x_0,\mu,y)\Big)_{t\in [0,T]}
%\end{equation*}
%are relatively compact in $\mathcal{C}^{\mathbb{u}}(\mathbb T^d)\times\mathcal{C}^{\mathbb{u}-\lfloor {\mathbb s} \rfloor}(\mathbb T^d)$ and $\mathcal{C}^{\mathbb{u}-\lfloor {\mathbb s} \rfloor}(\mathbb T^d)$ respectively, any limits as $t$ tends to $t_0$ must be coincide with $\nabla_y^l\delta_{\mu}{\mathcal U}(t_0,x_0,x,\mu,y)$ and $\nabla_y^l\delta_{\mu} {\mathcal U}^0(t_0,x_0,\mu,y)$. We completed the proofs of \eqref{eq:U:contintime} and \eqref{eq:U0:contintime} for any $l$ with $|l|\leq \lfloor {\mathbb s} \rfloor-1$.
\end{proof}

We now refine Proposition 
\ref{prop:se:5:continuity:time}. (using the same notation as therein for the norm 
$\| \cdot \|_{{\mathbb r},{\mathbb r}-1}$, 
see footnote
\ref{foo:2:26}):
\begin{proposition}
\label{prop:reg:Holder:time:deribatives}
Let $\mathbb{r} \in (\lfloor {\mathbb s} \rfloor,{\mathbb s})$.
Under Assumption \hyp{\^A}, there 
exist a constant ${\mathfrak C}>0$, only depending on the parameters 
$d$, ${\mathfrak L}$,  $\kappa$, 
$\zeta$, $\sigma_0$ and 
$\mathbb{s}$
in Assumption \hyp{\^A} and on ${\mathbb r}$, and a constant 
$C>0$, only depending on 
the parameters in 
Assumption \hyp{\^A}
and on ${\mathbb r}$,
such that, for $T \leq {\mathfrak C}$, 
for all $(t,x_0,\mu)\in [0,T]\times\mathbb R^d\times\mathcal{P}(\mathbb T^d)$ 
and $h>0$ such that $t+h \leq T$, 
\begin{equation}
\label{eq:U:holder:contintime}
\begin{split}
&\big\|\nabla_{x_0} {\mathcal U}(t+h,x_0,\cdot,\mu)-\nabla_{x_0}{\mathcal U}(t,x_0,\cdot,\mu)\big\|_{\mathbb{r}}
+ \big\| \delta_\mu {\mathcal U}(t+h,x_0,\cdot,\mu,\cdot)- \delta_\mu {\mathcal U}(t,x_0,\cdot,\mu,\cdot)\big\|_{\mathbb{r},\mathbb{r}-1 }
\\
&\quad 
\leq C h^{1/2},
\end{split}
\end{equation}
and
\begin{equation}\label{eq:U0:holder:contintime}
\begin{split}
&\big|\nabla_{x_0} {\mathcal U}^0(t+h,x_0,\mu)-\nabla_{x_0}
{\mathcal U}^0(t,x_0,\mu)\big|
+ \big\| \delta_\mu {\mathcal U}^0(t+h,x_0,\mu,\cdot)- \delta_\mu {\mathcal U}^0(t,x_0,\mu,\cdot)\big\|_{\mathbb{r}-1}
  \leq C h^{1/2}.
\end{split}
\end{equation}
\end{proposition}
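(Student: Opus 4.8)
The plan is to upgrade the qualitative time--continuity of Proposition~\ref{prop:se:5:continuity:time} to a $1/2$--H\"older modulus by revisiting its proof and that of Lemma~\ref{lem:U0:U:timereg} and quantifying every estimate. The engine is the consistency (flow) property of the short--time system \eqref{eq:major:FB:1:small}--\eqref{eq:minor:FB:2:small}, the representation formulas \eqref{eq:deltau:rep}--\eqref{eq:deltaY:rep} for the linearized system \eqref{eq:linear H:local}, and the uniform estimates of Theorems~\ref{thm:local:FB} and \ref{thm:local:linearHFB}. First I would fix $(x_0,\mu)$, times $t<t+h\le T$, and a direction: for the $\nabla_{x_0}$--statements, the linearized data $(\triangle x_0,\triangle\mu)=(e_i,0)$ in \eqref{eq:linear H:local}; for the $\delta_\mu$--statements, the data $(0,(-1)^{|l|}\nabla^l\delta_y)$ with $|l|\le\lfloor\mathbb{s}\rfloor-1$ (i.e.\ the processes defining the kernels $K^{\mu}_l,K^{0,\mu}_l$ of \eqref{eq:all:the:Ks}). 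Writing $(X^0,\mu,u,\dots)$ for the nonlinear solution from $(t,x_0,\mu)$ and $(\delta X^0,\delta\mu,\delta u,\dots)$ for the corresponding linearized solution started at $t$, the backward SPDE in \eqref{eq:linear H:local}, integrated from $t$ to $t+h$ in mild form exactly as in the proof of Lemma~\ref{lem:U0:U:timereg}, gives
\[
\nabla_{x_0}\mathcal{U}(t,x_0,\cdot,\mu)\cdot e_i=\delta u_t=\mathbb{E}^0\!\Big[P_h\,\delta u_{t+h}-\int_t^{t+h}P_{s-t}\,\mathcal{G}_s\,\ud s\Big],
\]
where $(P_\tau)$ is the heat semigroup on $\mathbb{T}^d$ and $\mathcal{G}_s$ is the non--Laplacian part of the linearized backward driver; by the Markov property of the nonlinear flow and \eqref{eq:deltau:rep} applied at time $t+h$ from the random state $(X^0_{t+h},\mu_{t+h})$,
\[
\delta u_{t+h}=\nabla_{x_0}\mathcal{U}(t+h,X^0_{t+h},\cdot,\mu_{t+h})\cdot\delta X^0_{t+h}+\big(\delta_\mu\mathcal{U}(t+h,X^0_{t+h},\cdot,\mu_{t+h},\cdot),\,\delta\mu_{t+h}\big).
\]

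Next I would bound the three error sources. (i) \emph{Flow displacement.} From the short--time estimates of Theorem~\ref{thm:local:FB} together with the BMO bound \eqref{eq:local:BMO} for $Z^0$ and the boundedness of $\nabla_p\hat H$ in the modified system, $\mathbb{E}^0[|X^0_{t+h}-x_0|+\mathbb{W}_1(\mu_{t+h},\mu)]\le C h^{1/2}$; from Theorem~\ref{thm:local:linearHFB}, using $\delta X^0_t=e_i$, $\delta\mu_t=0$ and $\int_t^{t+h}|\delta Z^0_s|\,\ud s\le\sqrt h\,(\int_t^{t+h}|\delta Z^0_s|^2\,\ud s)^{1/2}$, one gets $\mathbb{E}^0[|\delta X^0_{t+h}-e_i|]\le C h^{1/2}$ and (by the duality argument behind Lemma~\ref{lem:forward:estimate}) $\|\delta\mu_{t+h}\|_{-\mathbb{r}+1}\le C h$. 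Combining these with the uniform bounds \eqref{eq:Kbdd} and the Lipschitz estimates of Proposition~\ref{prop:deltaU:continuity} shows that $\mathbb{E}^0[\delta u_{t+h}]-\nabla_{x_0}\mathcal{U}(t+h,x_0,\cdot,\mu)\cdot e_i=O(h^{1/2})$ in $\mathcal{C}^{\mathbb{r}}$. (ii) \emph{Duhamel integral.} By the uniform linearized estimates of Theorem~\ref{thm:local:linearHFB}, $\|\mathcal{G}_s\|_{\mathbb{s}-1}$ is bounded in $L^2(\Omega^0)$; with the smoothing bound $\|P_\tau g\|_{\mathbb{r}}\le C\tau^{-(\mathbb{r}-\mathbb{s}+1)/2}\|g\|_{\mathbb{s}-1}$ ($\mathbb{r}-\mathbb{s}+1\in(0,1)$) one obtains $\big\|\int_t^{t+h}P_{s-t}\mathcal{G}_s\,\ud s\big\|_{\mathbb{r}}\le C h^{(1+\mathbb{s}-\mathbb{r})/2}\le C h^{1/2}$ in $L^2(\Omega^0)$. (iii) \emph{Heat--smoothing defect.} Writing $\delta u_{t+h}=\nabla_{x_0}\mathcal{U}(t+h,x_0,\cdot,\mu)\cdot e_i+R$ with $\mathbb{E}^0[\|R\|_{\mathbb{r}}]\le C h^{1/2}$ by (i), the contribution of $R$ to $\mathbb{E}^0[(P_h-\mathrm{id})\delta u_{t+h}]$ is $O(h^{1/2})$ since $P_h$ is bounded on $\mathcal{C}^{\mathbb{r}}$ uniformly in $h$, and the contribution $(P_h-\mathrm{id})\big(\nabla_{x_0}\mathcal{U}(t+h,x_0,\cdot,\mu)\cdot e_i\big)$ is controlled, as in Lemma~\ref{lem:U0:U:timereg}, by the heat--kernel smoothing estimate applied to the $\mathcal{C}^{\mathbb{s}}$--bounded field $\nabla_{x_0}\mathcal{U}(t+h,x_0,\cdot,\mu)$ of \eqref{eq:Kbdd}. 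Assembling (i)--(iii) yields $\|\nabla_{x_0}\mathcal{U}(t+h,x_0,\cdot,\mu)-\nabla_{x_0}\mathcal{U}(t,x_0,\cdot,\mu)\|_{\mathbb{r}}\le C h^{1/2}$.

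The remaining cases go through the same way. For $\mathcal{U}^0$, I would run the analogous It\^o expansion of $Y^0$ (as in the second step of the proof of Lemma~\ref{lem:U0:U:timereg}), using \eqref{eq:local:BMO} to get $\mathbb{E}^0[|Y^0_{t+h}-Y^0_t|]\le C h^{1/2}$, together with \eqref{eq:deltaY:rep} for the linearized representation. For $\delta_\mu\mathcal{U}$ and $\delta_\mu\mathcal{U}^0$ I would take the linearized data $(0,(-1)^{|l|}\nabla^l\delta_y)$, $|l|\le\lfloor\mathbb{s}\rfloor-1$, noting $\|\nabla^l\delta_y\|_{-\mathbb{r}+1}\le C$, and repeat (i)--(iii) for the kernels $K^{\mu}_l,K^{0,\mu}_l$; the crossed norms $\|\cdot\|_{\mathbb{r},\mathbb{r}-1}$ and $\|\cdot\|_{\mathbb{r}-1}$ are handled by invoking, in addition, the $y$--regularity and its continuity in $y$ supplied by Lemma~\ref{lem:deltau:reginy} and Proposition~\ref{prop:deltaU:continuity} (so that the $y$--differentiated linearized data again has uniformly bounded negative--H\"older norm).

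The hard part will be step (iii): extracting a genuine $h^{1/2}$ rate from the heat--semigroup smoothing defect in the \emph{top} H\"older indices $\mathbb{r}$ (and $\mathbb{r},\mathbb{r}-1$), rather than the weaker rate that the plain heat--kernel bound applied to $\mathcal{C}^{\mathbb{s}}$--data gives. This is precisely the point where one must use the full strength of the uniform higher--order bounds \eqref{eq:Kbdd}, Proposition~\ref{prop:deltaU:continuity}, and Lemma~\ref{lem:deltau:reginy}, and isolate which parts of $\delta u_{t+h}$ (namely the ``interior'' parts, differing from the frozen field $\nabla_{x_0}\mathcal{U}(t+h,x_0,\cdot,\mu)$) carry a true $\sqrt h$ and which must be absorbed against the frozen field; the bookkeeping of these contributions is the delicate point, everything else being a routine quantification of the estimates already established in this section.
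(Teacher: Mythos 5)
Your overall architecture is the right one and matches the paper's: represent $\nabla_{x_0}\mathcal{U}$, $\delta_\mu\mathcal{U}$ (and the $\mathcal{U}^0$--analogues) through the linearized kernels $K^{x_0},K^\mu_l,K^{0,x_0},K^{0,\mu}_l$ of \eqref{eq:all:the:Ks}, integrate the backward part of \eqref{eq:linear H:local} from $t$ to $t+h$, and pay an $h^{1/2}$ from the displacement $(X^0_{t+h},\mu_{t+h},\delta X^0_{t+h},\delta\mu_{t+h})$ against $(x_0,\mu,e_i,0)$ (or $(x_0,\mu,0,(-1)^{|l|}\nabla^l\delta_y)$) via the Lipschitz bounds of Proposition~\ref{prop:deltaU:continuity}, using \eqref{eq:local:BMO}, \eqref{eq:local:reg} and \eqref{eq:local:linear H:diff0}. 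Your steps (i)--(ii) and the treatment of the Duhamel integral are fine.

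The genuine gap is your step (iii), and you have correctly diagnosed it without resolving it. Your detour through the mild formulation $\delta u_t=\mathbb{E}^0[P_h\delta u_{t+h}-\int_t^{t+h}P_{s-t}\mathcal{G}_s\,\ud s]$ is what creates the obstruction. The frozen field $\nabla_{x_0}\mathcal{U}(t+h,x_0,\cdot,\mu)\cdot e_i$ is only uniformly $\mathcal{C}^{\mathbb{s}}$--bounded by \eqref{eq:Kbdd}, and one needs it in $\mathcal{C}^{\mathbb{r}}$ with $\mathbb{r}\in(\lfloor\mathbb{s}\rfloor,\mathbb{s})$; the heat--smoothing defect then gives
\[
\big\|(P_h-\mathrm{id})\big(\nabla_{x_0}\mathcal{U}(t+h,x_0,\cdot,\mu)\cdot e_i\big)\big\|_{\mathbb{r}}\lesssim h^{(\mathbb{s}-\mathbb{r})/2},
\]
and $(\mathbb{s}-\mathbb{r})/2<1/2$ because $\mathbb{s}-\mathbb{r}<1$, so this term is \emph{strictly worse} than $h^{1/2}$ and the estimate does not close. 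No amount of ``bookkeeping'' of the interior parts will repair this, because the defect comes entirely from the frozen field, which carries no further structure beyond its $\mathcal{C}^{\mathbb{s}}$ bound.

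The paper sidesteps this entirely by \emph{not} passing to the Duhamel form. It simply takes $\mathbb{E}^0$ of the backward equation in \eqref{eq:linear H:local} written in integrated (not mild) form, keeping the term $-\tfrac12\Delta_x\delta u_r$ inside the Lebesgue integral. Since $\mathbb{s}>3$, the uniform bound $\mathbb{E}^0[\sup_r\|\delta u_r\|_{\mathbb{s}}^{2}]\le C$ from Theorem~\ref{thm:local:linearHFB} / \eqref{eq:local:linear H:diff0} makes $\Delta_x\delta u_r$ uniformly bounded, and all the other driver terms are likewise bounded; hence the whole Lebesgue integral contributes $O(h)$ pointwise, and there is no $(P_h-\mathrm{id})$ term at all. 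The $h^{1/2}$ in the final estimate then comes solely from the flow displacements, which you already handle correctly in step (i). So the fix is not a finer bookkeeping inside step (iii), but to drop the mild representation altogether.
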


In fact, one could have directly established Proposition 
\ref{prop:reg:Holder:time:deribatives}, but 
we felt better to start with Proposition 
\ref{prop:se:5:continuity:time}, because we use repeatedly the two 
properties \eqref{eq:U:contintime} and \eqref{eq:U0:contintime}
in the rest of the analysis.

\begin{proof}
We start with the proof of the second inequality in \eqref{eq:U:holder:contintime}. To do so, we recall 
from Lemma 
\ref{lem:deltau:reginy} and
Proposition \ref{prop:local:1stdiff} that 
$\nabla_y^{l}\delta_\mu {\mathcal U}(t,x_0,x,\mu,y)
= K_l^{\mu}(t,x_0,x,\mu,y)$,
for $y \in {\mathbb T}^d$ and for 
$l \in \{0,\cdots,\lfloor {\mathbb s} \rfloor -1 \}^d$ with 
$\vert l \vert \leq \lfloor {\mathbb s} \rfloor -1$, 
with 
$K^{x_0}$
and 
$K^{\mu}_l$ being defined in 
\eqref{eq:all:the:Ks}. By the latter, we have
(with the same notation as therein and under the generic notation $T \leq {\mathfrak C}$)
\begin{equation*} 
K_l^{\mu}(t,x_0,x,\mu,y) - 
K_l^{\mu}(t+h,x_0,x,\mu,y)
= \delta u_t^{l,y}(x) - \nabla_y^l \delta_\mu {\mathcal U}(t+h,x_0,x,\mu,y),
\end{equation*}
and then, by recalling 
\eqref{eq:linear H:local}, 
\begin{equation}
\label{eq:U:holder:contintime:step:1}
\begin{split} 
&K_l^{\mu}(t,x_0,x,\mu,y) - 
K_l^{\mu}(t+h,x_0,x,\mu,y)
\\
&= {\mathbb E}^0 
\bigl[ \delta u_{t+h}^{l,y}(x) \bigr]
- \nabla_y^l \delta_\mu {\mathcal U}(t+h,x_0,x,\mu,y)
\\
&\hspace{15pt}+ 
{\mathbb E}^0 \int_t^{t+h} 
\Big[-\tfrac{1}{2}\Delta_{x}\delta u_r^{l,y}(x)+\nabla_p \hat H(x,\nabla_x u_r (x))\cdot\nabla_x \delta u_r^{l,y} (x)-\nabla_{x_0}f_r(X_r^0,x,\mu_r  )\cdot\delta X_r^{0,l,y}
\\
& \hspace{45pt}-\Big(\delta_\mu f_r(X_r^0,x,\mu_r),\delta \mu_r^{l,y} \Big)\Big]\ud r. 
\end{split}
\end{equation} 
Again, it is worth stressing that the pair of triples 
$( \delta {\boldsymbol X}^{0,l,y}, \delta {\boldsymbol Y}^{0,l,y},
\delta {\boldsymbol Z}^{0,l,y})$ and 
$(\delta {\boldsymbol \mu}^{l,y},\delta {\boldsymbol u}^{l,y}, 
\delta {\boldsymbol m}^{l,y})$ solve
the linearized system 
\eqref{eq:linear H:local}
with $\triangle \mu=(-1)^{|l|} \nabla^l \delta_y\in \mathcal{C}^{1-\mathbb{r}}({\mathbb T}^d)$
(for the same  
$\mathbb{r}$ as in the statement) and $\triangle x^0=0$ as initial conditions at
time $t$, when ${\boldsymbol X}^0$ 
starts from $x_0$ at time $t$ and  
${\boldsymbol \mu}$ from $\mu$. 
%We also recall that 
%$\delta u_r^{l,y} = 
%\nabla_y^l \delta_\mu {\mathcal U}(r,X_r^0,x,\mu,y)$. In particular, it is bounded. 
Moreover, 
using \eqref{eq:local:linear H:diff0}, 
we can bound the 
$L^2(\Omega^0,{\mathcal F}^0,{\mathbb P}^0)$-norms
of all the 
terms entering the integrand 
of 
\eqref{eq:U:holder:contintime:step:1}. 
As a result, 
\begin{equation}
\label{eq:U:holder:contintime:step:1:b}
\begin{split} 
&\Bigl\vert 
\nabla_y^{l}\delta_\mu {\mathcal U}(t,x_0,x,\mu,y)
-
\nabla_y^{l}\delta_\mu {\mathcal U}(t+h,x_0,x,\mu,y)
\Bigr\vert 
\\
&\leq 
{\mathbb E}^0 \Bigl[ \Bigl\vert 
\nabla_y^{l}\delta_\mu {\mathcal U}(t+h,X_{t+h}^0,x,\mu_{t+h},y)
-
\nabla_y^{l}\delta_\mu {\mathcal U}(t+h,x_0,x,\mu,y)
\Bigr\vert 
\Bigr] + Ch. 
\end{split}
\end{equation} 
Using
the forward equation in 
\eqref{eq:major:FB:1:small}, we have
\begin{equation*} 
{\mathbb E}^0 \bigl[ \vert X_{t+h}^0 -x_0 \vert^2 \bigr] 
=
{\mathbb E}^0 \bigl[ \vert X_{t+h}^0 - X_t^0 \vert^2 \bigr] 
\leq C h {\mathbb E}^0 \int_t^{t+h} \bigl( 1 + 
\vert Z_s^0 \vert^2 
\bigr) \ud s. 
\end{equation*} 
By \eqref{eq:local:BMO}, the right-hand
side above is bounded by $Ch$ for a possibly new value of $C$. 

Also, using 
the forward equation in 
\eqref{eq:minor:FB:2:small}
together with the bound \eqref{eq:local:reg}, we have 
${\mathbb W}_1(\mu_{t+h},\mu_t) \leq C h$. And then, using the Lipschitz regularity of 
$\nabla_y^{l}\delta_\mu {\mathcal U}$ in the variables $x_0$ and $\mu$
(see 
\eqref{eq:local:diff-3bb}), 
we obtain 
\begin{equation}
\label{eq:U:holder:contintime:step:1:c}
\begin{split} 
&\Bigl\vert 
\nabla_y^{l}\delta_\mu {\mathcal U}(t,x_0,x,\mu,y)
-
\nabla_y^{l}\delta_\mu {\mathcal U}(t+h,x_0,x,\mu,y)
\Bigr\vert 
\leq Ch^{1/2},
\end{split}
\end{equation} 
which proves the second inequality in 
\eqref{eq:U:holder:contintime}. 

We prove 
the second inequality in 
a similar manner, using the representation 
$\nabla_y^l 
\delta_{\mu} {\mathcal U}^0(t,x_0,\mu,y)=K_l^{0,\mu}(t,x_0,\mu,y)=\delta Y_t^{0,l,y}$
(with the same definition as above for 
$\delta {\boldsymbol Y}^{0,l,y}$). 
We have
\begin{equation*} 
\begin{split} 
&\nabla_y^l 
\delta_{\mu} {\mathcal U}^0(t,x_0,\mu,y)
- 
\nabla_y^l 
\delta_{\mu} {\mathcal U}^0(t+h,x_0,\mu,y)
\\
&= K_l^{0,\mu}(t,x_0,\mu,y) - K_l^{0,\mu}(t+h,x_0,\mu,y)
\\
&= {\mathbb E}^0 \bigl[ \delta Y_{t+h}^{0,l,y} \bigr] 
- K_l^{0,\mu}(t+h,x_0,\mu,y)
\\
&\hspace{15pt} - 
{\mathbb E}^0 \int_t^{t+h}
\Big[ \nabla_{x_0}\hat L^0(X_r^0,Z_r^0)\cdot\delta X_r^0+\nabla_{p}\hat L^0(X_r^0,Z_r^0)\cdot\delta Z_r^0
 \\
 &\hspace{30pt} - \nabla_{x_0} f_r^0(X_r^0,\mu_r)\cdot \delta X_r^0 - \Big(\delta_\mu f_r^0(X_r^0,\mu_r),\delta \mu_r\Big) \Big]
 \ud r.
\end{split}
\end{equation*}
This is the same as before except for 
the fact that the bound for 
%two things: 
%(i) the first two coefficients in the integrand may grow (at most linearly) with 
%respect to
%$Z_r^0$; (ii) the bound satisfied by 
%
the term $\delta Z_r^0$ in 
\eqref{eq:local:linear H:diff0} is weaker than the bound satisfied by the other terms therein. 
Here is how we remedy this difficulty: it suffices to say (thanks 
to Cauchy-Schwarz inequality) that 
${\mathbb E} \int_t^{t+h} \vert \delta Z_r^0 \vert \ud r 
\leq C h^{1/2}$. Proceeding as in the derivation of 
\eqref{eq:U:holder:contintime:step:1:b}, we deduce 
\begin{equation}
\label{eq:U:holder:contintime:step:2:b}
\begin{split} 
&\Bigl\vert 
\nabla_y^{l}\delta_\mu {\mathcal U}^0(t,x_0,\mu,y)
-
\nabla_y^{l}\delta_\mu {\mathcal U}^0(t+h,x_0,\mu,y)
\Bigr\vert 
\\
&\leq 
{\mathbb E}^0 \Bigl[ \Bigl\vert 
\nabla_y^{l}\delta_\mu {\mathcal U}^0(t+h,X_{t+h}^0,\mu_{t+h},y)
-
\nabla_y^{l}\delta_\mu {\mathcal U}^0(t+h,x_0,\mu,y)
\Bigr\vert 
\Bigr] + Ch^{1/2}. 
\end{split}
\end{equation} 
And then, following the proof of 
\eqref{eq:U:holder:contintime:step:1:c}, we get 
\begin{equation}
\label{eq:U:holder:contintime:step:2:b}
\begin{split} 
&\Bigl\vert 
\nabla_y^{l}\delta_\mu {\mathcal U}^0(t,x_0,\mu,y)
-
\nabla_y^{l}\delta_\mu {\mathcal U}^0(t+h,x_0,\mu,y)
\Bigr\vert 
 \leq   Ch^{1/2}. 
\end{split}
\end{equation}
This completes the proof. 
%
%that the two 
%linear coefficients in the first equation of 
%\eqref{eq:linear H:local}
%are bounded, we easily have 
%\begin{equation*} 
%\sup_{t \le r \le t+h} 
%{\mathbb E}^0 \bigl[
%\vert 
%\delta X_r^{0,l,y}
%- 
%\delta X_t^{0,l,y}
%\vert^2
%\bigr] 
%\leq C h {\mathbb E}^0 \int_{t}^{t+h} \bigl( \vert \delta 
%X_r^{0,l,y} \vert^2 
%+ 
%\vert \delta Z_r^{0,l,y} 
%\vert^2 
%\bigr) \ud r, 
%\end{equation*} 
%and the latter term is bounded by 
%$Ch$ (see \eqref{eq:local:linear H:diff0}). And since $\delta X_t^{0,l,y}=0$, this yields 
%$ \sup_{t \le r \le t+h}
%{\mathbb E}^0  [
%\vert 
%\delta X_r^{0,l,y}
%\vert^2] 
%\leq C h$. 
%
%Similarly, 
%using the (forward) equation for 
%$\delta {\boldsymbol \mu}$ in 
%\eqref{eq:linear H:local}, we have, for any deterministic function 
%$\varphi : {\mathbb T}^d \rightarrow {\mathbb R}$ in 
%${\mathcal C}^{\mathbb{r}' - 1}({\mathbb T}^d)$ with 
%$\| \varphi \|_{\mathbb{r} - 1} \leq 1$
%and for $\mathbb{r} < {\mathbb s} -1$
%\begin{equation*} 
%\Bigl\vert 
%\bigl(\varphi,\delta \mu_{t+h} - \delta \mu_t
%\bigr) 
%\Bigr\vert 
%\leq C \int_t^{t+h} \| \delta 
%\end{equation*} 
\end{proof}

\subsection{Identification of the martingale representation term}

%proceed further to address the existence and the regularity of the second order derivatives $\partial^2_{x_0 x_0} U_0$. 
%The strategy relies on the same stability results for 
%the linearized system 
%\eqref{eq:linear:local}
%as the ones used for the analysis of the first-order derivatives, see in particular 
%Theorem 
%\ref{thm:local:linearFB}. 
%In general, there is however 
%an additional subtlety 
%when addressing the second-order derivatives, because it requires the stability estimates 
%in 
%Theorem 
%\ref{thm:local:linearFB} to be stated in $L^p$, for $p$ typically equal to 4. 
%Whilst we could address such an extension of 
%Theorem 
%\ref{thm:local:linearFB} here, we use a different approach that consists in 
%providing first $L^\infty$ estimates for $\delta {\boldsymbol X}^0$ and $\delta {\boldsymbol \mu}$ in 
%\eqref{eq:linear:local}. 
The next step is to identify the process ${\boldsymbol Z}^0$ in 
\eqref{eq:major:FB:1}:

\begin{proposition}
\label{prop:representation:Z0}
Under Assumption \hyp{\^A}, 
there exists a constant ${\mathfrak C}>0$, only depending on the parameters 
$d$, ${\mathfrak L}$,  $\kappa$, 
$\zeta$, $\sigma_0$ and 
$\mathbb{s}$
in Assumption \hyp{\^A},  
such that, for $T \leq {\mathfrak C}$,
the process ${\boldsymbol Z}^0$ in 
 \eqref{eq:major:FB:1:small} admits the following representation:
 \begin{equation*} 
 \forall t \in [0,T], \quad {\mathbb P} \bigl( \{ Z_t^0 = \nabla_{x_0} {\mathcal U}^0(t,X_t^0,\mu_t) \} \bigr) =1,
 \end{equation*} 
 and this, for any choice of initial condition $(x_0,\mu)$ in 
  \eqref{eq:major:FB:1:small}.
\end{proposition}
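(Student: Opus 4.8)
$\textbf{Proof proposal.}$ The plan is to identify $Z_t^0$ by means of the forward-backward representation already established in the short-time analysis. Recall that $({\boldsymbol X}^0,{\boldsymbol Y}^0,{\boldsymbol Z}^0)$, together with $({\boldsymbol \mu},{\boldsymbol u},{\boldsymbol m})$, solves the system \eqref{eq:major:FB:1:small}--\eqref{eq:minor:FB:2:small} on $[0,T]$ with $T\leq {\mathfrak C}$. By \eqref{eq:representation:short:time} (which follows from the uniqueness result of Theorem \ref{thm:local:FB} together with the stability estimate \eqref{eq:local:diff-1}), we have, with probability $1$ under ${\mathbb P}^0$,
\begin{equation*}
Y_r^0 = {\mathcal U}^0(r,X_r^0,\mu_r), \quad u_r(x) = {\mathcal U}(r,X_r^0,x,\mu_r), \quad r \in [0,T], \ x \in {\mathbb T}^d.
\end{equation*}
The idea is then to expand $(Y_r^0)_{0 \le r \le T}$ by applying the chain rule (the It\^o-type formula proven in the Appendix, see Proposition \ref{prop:ito:formula}) to the right-hand side ${\mathcal U}^0(r,X_r^0,\mu_r)$, using the smoothness of ${\mathcal U}^0$ established in Proposition \ref{prop:local:1stdiff} (differentiability in $x_0$ and $\mu$ with derivatives $\nabla_{x_0}{\mathcal U}^0 = K^{0,x_0}$ and $\delta_\mu {\mathcal U}^0 = K_0^{0,\mu}$), Corollary \ref{corol:U:U0:C:C0} (which gives the required spatial regularity, namely ${\mathcal U}^0(t,\cdot,\cdot) \in {\mathscr C}^0(C,\lfloor {\mathbb s}\rfloor-1)$), Proposition \ref{prop:deltaU:continuity} (Lipschitz continuity of the derivatives in $(x_0,\mu)$), and Proposition \ref{prop:se:5:continuity:time} / Proposition \ref{prop:reg:Holder:time:deribatives} (time continuity, indeed H\"older continuity, of $\nabla_{x_0}{\mathcal U}^0$ and $\delta_\mu {\mathcal U}^0$). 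This chain rule applies because the forward dynamics of $({\boldsymbol X}^0,{\boldsymbol \mu})$ are precisely those in \eqref{eq:major:FB:1:small}--\eqref{eq:minor:FB:2:small}: $X^0$ is an It\^o process driven by $\sigma_0 {\boldsymbol B}^0$, and ${\boldsymbol \mu}$ evolves according to a (random) Fokker-Planck equation with bounded, smooth coefficients.

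The key step is to match the martingale parts. On the one hand, the backward equation in \eqref{eq:major:FB:1:small} asserts that the martingale part of $(Y_r^0)_{0 \le r \le T}$ is $\sigma_0 \int_0^\cdot Z_r^0 \cdot \ud B_r^0$. On the other hand, the chain-rule expansion of ${\mathcal U}^0(r,X_r^0,\mu_r)$ produces, for its martingale component, exactly $\sigma_0 \int_0^\cdot \nabla_{x_0}{\mathcal U}^0(r,X_r^0,\mu_r) \cdot \ud B_r^0$ — the only source of randomness in $({\boldsymbol X}^0,{\boldsymbol \mu})$ is ${\boldsymbol B}^0$, entering additively through $X^0$ (the Fokker-Planck flow ${\boldsymbol \mu}$ is adapted but has no independent martingale part, being driven by $X^0$ only through the coefficient $f_r(X_r^0,\cdot,\mu_r)$ in the HJB equation of the minor player, and the $\delta_\mu$-term in the expansion of ${\mathcal U}^0$ therefore contributes only a bounded-variation drift). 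By uniqueness of the semimartingale decomposition (on the usual augmented filtration, which we may assume is generated by ${\mathcal F}_0^0$ and ${\boldsymbol B}^0$, as in the proof of Theorem \ref{thm:local:FB}), the two martingale parts agree, so $\sigma_0 Z_r^0 = \sigma_0 \nabla_{x_0}{\mathcal U}^0(r,X_r^0,\mu_r)$ for $\mathrm{Leb}\times{\mathbb P}^0$-almost every $(r,\omega^0)$. Since $\sigma_0 > 0$, this gives $Z_r^0 = \nabla_{x_0}{\mathcal U}^0(r,X_r^0,\mu_r)$ almost everywhere, and then for every fixed $t \in [0,T]$ with probability $1$ by the continuity in time of both sides (continuity of $\nabla_{x_0}{\mathcal U}^0$ from Proposition \ref{prop:se:5:continuity:time}, of ${\boldsymbol X}^0$ by construction, and of $(\mu_r)_{0 \le r \le T}$; continuity of $(Z_r^0)_{0 \le r \le T}$ is not available, but the identity holding Leb-a.e.\ together with the right-continuity one can build into a suitable version, or simply a density argument along rationals combined with the continuity of the right-hand side, suffices to upgrade to "for all $t$").

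The main obstacle I expect is the rigorous justification of the chain rule for $r \mapsto {\mathcal U}^0(r,X_r^0,\mu_r)$ along the Fokker-Planck flow ${\boldsymbol \mu}$: one needs ${\mathcal U}^0$ to be ${\mathcal C}^1$ in $\mu$ with $\delta_\mu {\mathcal U}^0$ and $\nabla_y \delta_\mu {\mathcal U}^0$ jointly continuous and with the required Lipschitz/H\"older moduli, so that the generator term $\int_{{\mathbb T}^d}[\tfrac12 \mathrm{div}_y(\partial_\mu {\mathcal U}^0(r,X_r^0,\mu_r,y)) - \partial_\mu {\mathcal U}^0(r,X_r^0,\mu_r,y)\cdot \nabla_p \hat{H}(y,\nabla_x u_r(y))]\ud\mu_r(y)$ is well defined and the It\^o expansion closes. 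All of this regularity has been assembled in the preceding subsections (Propositions \ref{prop:local:1stdiff}, \ref{prop:deltaU:continuity}, \ref{prop:se:5:continuity:time}, \ref{prop:reg:Holder:time:deribatives} and Corollary \ref{corol:U:U0:C:C0}), so the proof is really a matter of invoking Proposition \ref{prop:ito:formula} with the right ingredients and reading off the martingale term; the one point demanding a little care is confirming that ${\boldsymbol \mu}$ contributes no martingale part, which is immediate since $({\boldsymbol \mu}_r)$ is ${\mathbb F}^0$-adapted and solves a Fokker-Planck (not stochastic Fokker-Planck) equation with $\sigma_0{\boldsymbol B}^0$ entering the system only through the additive noise on $X^0$ — as already emphasized in the third item of Remark \ref{rem:def:2:9}.
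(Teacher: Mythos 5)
Your strategy of matching martingale parts of the semimartingale $r \mapsto {\mathcal U}^0(r,X_r^0,\mu_r)$ is morally the right conclusion to aim for, but the way you propose to reach it—invoking the chain rule from Proposition~\ref{prop:ito:formula}—runs into a circularity that the paper goes out of its way to avoid. The chain rule requires second-order differentiability of ${\mathcal U}^0$ in $x_0$ (the formula contains the term $\tfrac12\mathrm{Tr}(\varsigma_t^0(\varsigma_t^0)^\dagger\nabla_{x_0}^2\ell)$) and first-order differentiability in $t$. Neither of these is available at this point in Section~5: what has been established so far (Propositions~\ref{prop:local:1stdiff}, \ref{prop:deltaU:continuity}, \ref{prop:se:5:continuity:time}, \ref{prop:reg:Holder:time:deribatives} and Corollary~\ref{corol:U:U0:C:C0}) is only first-order differentiability in $x_0$ and $\mu$, Lipschitz continuity of those first derivatives in $(x_0,\mu)$, and H\"older continuity of them in time. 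The existence and continuity of $\partial_t{\mathcal U}^0$ and $\nabla^2_{x_0}{\mathcal U}^0$ is precisely the content of Corollary~\ref{corollary:U0:PDE}, which in turn rests on Propositions~\ref{prop:representation:U0:PDE} and \ref{prop:representation:U:PDE}, whose proofs invoke Proposition~\ref{prop:representation:Z0} in their very first step (to rewrite the forward SDE for $\boldsymbol{X}^0$ in terms of $\nabla_{x_0}{\mathcal U}^0$). So your argument would need as hypothesis the regularity that can only be derived after the proposition is proved.

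The paper's actual proof (Lemma~\ref{lema:representation:Z0:gen}, adapted from \cite[Lemma~4.11]{CarmonaDelarue_book_I}) is deliberately ``low-regularity'': it never applies It\^o's formula to ${\mathcal U}^0(r,X_r^0,\mu_r)$. Instead it tests the martingale increment $\int_{t_i}^{t_{i+1}}\pi_t\cdot dB_t^0$ against $U^0_{t_{i+1}}-U^0_{t_i}$ on a partition, uses only a first-order Taylor expansion of ${\mathcal U}^0$ in $(x_0,\mu)$ (with the Lipschitz modulus to control the remainder), shows the $\mu$-contribution is of lower order because the Fokker-Planck evolution gives $\mathbb{W}_1(\mu_{t_{i+1}},\mu_{t_i})\lesssim h^{1/2}$ and $\delta_\mu{\mathcal V}^0$ has bounded $\nabla_y$ and $\Delta_y$, and then passes to the limit in the partition using only the (already-proved) time continuity of $\nabla_{x_0}{\mathcal U}^0$. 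No second-order spatial regularity and no time derivative are needed. If you want to salvage your approach, you would have to prove Corollary~\ref{corollary:U0:PDE} (or at least the $C^{1,2}$ regularity of ${\mathcal U}^0$) by some independent route first; as written, the proof proposal does not close.
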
 

The result is in fact a consequence of the more general 
representation property (the reader can skip the demonstration on first reading): 
\begin{lemma}
\label{lema:representation:Z0:gen}
Under Assumption \hyp{\^A}, 
consider ${\mathfrak C}>0$ as in Theorem 
\ref{thm:local:FB}. Then, for $T \leq {\mathfrak C}$ and
\begin{enumerate}
\item for 
any initial condition 
$(x_0,\mu)$ in 
  \eqref{eq:major:FB:1:small}, 
 \item for any continuous 
function ${\mathcal V}^0 : [0,T] \times {\mathbb R}^d \times 
{\mathcal P}({\mathbb T}^d) \rightarrow {\mathbb R}$ 
with the property that, for some $C_0>0$ and some ${\mathbb r}
\in (\lfloor {\mathbb s} \rfloor,{\mathbb s})$,  
${\mathcal V}^0(t,\cdot,\cdot) \in {\mathscr D}^0(C_0, {\mathbb r}  - 1)$
for all $t \in [0,T]$, and 
\begin{equation*}
\begin{split}
&\forall (t,x_0,\mu) 
\in [0,T) \times {\mathbb R}^d \times {\mathcal P}({\mathbb T}^d), 
\\
&\quad \lim_{h\to 0}\Big(\big|\nabla_{x_0} {\mathcal V}^0(t+h,x_0,\mu)-\nabla_{x_0}
{\mathcal V}^0(t,x_0,\mu) \bigr\vert +
\big\| \delta_\mu {\mathcal U}^0(t+h,x_0,\mu,\cdot)- \delta_\mu {\mathcal U}^0(t,x_0,\mu,\cdot)\big\|_{\mathbb{r}-1  }\Big)=0,
\end{split}
\end{equation*}
\item for any 
triplet $({\boldsymbol U}^0,{\boldsymbol V}^0,{\boldsymbol \ell}^0)$
in ${\mathscr S}^2(\Omega^0,{\mathcal F}^0,{\mathbb F}^0,{\mathbb P}^0;{\mathbb R})
\times 
{\mathscr H}^2(\Omega^0,{\mathcal F}^0,{\mathbb F}^0,{\mathbb P}^0;{\mathbb R})
\times
{\mathscr H}^2(\Omega^0,{\mathcal F}^0,{\mathbb F}^0,{\mathbb P}^0;{\mathbb R})$
 satisfying 
\begin{equation} 
\label{eq:lem:representation:bsde:U0:V0}
\ud U^0_t = \ell^0_t \ud t + \sigma_0 V^0_t \cdot \ud B_t^0, \quad U_T^0 = {\mathcal V}^0(T,X_T^0,\mu_T), 
\end{equation} 
and, ${\mathbb P}^0$-almost surely, $U_t^0 = {\mathcal V}^0(t,X_t^0,\mu_t)$, 
\end{enumerate}
the process ${\boldsymbol V}^0$ admits the following representation:
 \begin{equation*} 
 \forall t \in [0,T], \quad {\mathbb P} \bigl( \{ V_t^0 = \nabla_{x_0} {\mathcal V}^0(t,X_t^0,\mu_t) \} \bigr) =1.
 \end{equation*} 
\end{lemma}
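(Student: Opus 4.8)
\textbf{Proof plan for Lemma \ref{lema:representation:Z0:gen}.}

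The plan is to compare the martingale part of $({\boldsymbol U}^0,{\boldsymbol V}^0,{\boldsymbol\ell}^0)$ with that of a carefully constructed auxiliary backward process whose driver is \emph{explicitly} the one dictated by the forward-backward system $\eqref{eq:major:FB:1:small}$, and then to exploit the chain rule (It\^o's formula along the flow, in the spirit of Proposition \ref{prop:ito:formula}) applied to ${\mathcal V}^0(t,X_t^0,\mu_t)$. The starting point is that, under the standing smoothness of ${\mathcal V}^0$ (namely ${\mathcal V}^0(t,\cdot,\cdot)\in{\mathscr D}^0(C_0,{\mathbb r}-1)$ uniformly in $t$, together with the time-continuity of $\nabla_{x_0}{\mathcal V}^0$ and $\delta_\mu{\mathcal V}^0$ assumed in item 2), one can differentiate the flow $(X_t^0,\mu_t)_{0\le t\le T}$ in its initial condition and obtain, by Proposition \ref{prop:local:1stdiff} and the representation formulas \eqref{eq:deltau:rep}--\eqref{eq:deltaY:rep}, that ${\mathcal V}^0$ is differentiable in $(x_0,\mu)$ with derivatives given by the linearized system $\eqref{eq:linear H:local}$. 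The key algebraic fact is then: since ${\boldsymbol\mu}$ is \emph{not} forced by ${\boldsymbol B}^0$ (it solves a Fokker-Planck equation with random coefficients depending on ${\boldsymbol X}^0$, which in turn is adapted to ${\boldsymbol B}^0$), the only source of $\ud B_t^0$-fluctuation in ${\mathcal V}^0(t,X_t^0,\mu_t)$ comes from the $\nabla_{x_0}{\mathcal V}^0(t,X_t^0,\mu_t)\cdot\sigma_0\,\ud B_t^0$ term produced by the forward dynamics $\ud X_t^0=-\nabla_pH^0(X_t^0,Z_t^0)\,\ud t+\sigma_0\,\ud B_t^0$.

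More precisely, I would proceed as follows. First, fix the initial condition $(x_0,\mu)$ and expand $({\mathcal V}^0(t,X_t^0,\mu_t))_{0\le t\le T}$ by the chain rule. The $\mu$-part of the expansion contributes only a $\ud t$-term: indeed, writing $\mu_t$ as the marginal flow of the conditional McKean-Vlasov system and using the weak formulation \eqref{eq:FP:weak:form:W12d+1} of the Fokker-Planck equation together with Remark \ref{rem:!:Fokker-Planck}, the bracket $(\delta_\mu{\mathcal V}^0(t,X_t^0,\mu_t,\cdot),\partial_t\mu_t)$ is absolutely continuous in $t$, and the cross term between $\nabla_{x_0}{\mathcal V}^0$ and the $\mu$-increment vanishes because $X_t^0$ and $\mu_t$ share the \emph{same} Brownian increment $\ud B_t^0$ only through the $X^0$-drift, not through any diffusive coupling of $\mu$. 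Hence the chain rule yields
\begin{equation*}
\ud\bigl[{\mathcal V}^0(t,X_t^0,\mu_t)\bigr]=\bigl(\text{absolutely continuous in }t\bigr)\,\ud t+\sigma_0\,\nabla_{x_0}{\mathcal V}^0(t,X_t^0,\mu_t)\cdot\ud B_t^0.
\end{equation*}
Second, compare this expansion with \eqref{eq:lem:representation:bsde:U0:V0}: since $U_t^0={\mathcal V}^0(t,X_t^0,\mu_t)$ ${\mathbb P}^0$-almost surely and both $(U_t^0)_{0\le t\le T}$ and $({\mathcal V}^0(t,X_t^0,\mu_t))_{0\le t\le T}$ are continuous semimartingales, uniqueness of the semimartingale (Doob–Meyer / Kunita–Watanabe) decomposition forces their martingale parts to coincide. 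Matching the $\ud B_t^0$-coefficients gives $\sigma_0 V_t^0=\sigma_0\nabla_{x_0}{\mathcal V}^0(t,X_t^0,\mu_t)$ for $\mathrm{Leb}\times{\mathbb P}^0$-almost every $(t,\omega^0)$, and then, using the continuity in $t$ of $\nabla_{x_0}{\mathcal V}^0$ together with the continuity of $(X_t^0,\mu_t)$, the identity holds for \emph{every} fixed $t\in[0,T]$, ${\mathbb P}^0$-almost surely. Proposition \ref{prop:representation:Z0} is then the special case ${\mathcal V}^0={\mathcal U}^0$, $U_t^0=Y_t^0$, $V_t^0=Z_t^0$, $\ell_t^0=-(f_t^0(X_t^0,\mu_t)+\hat L^0(X_t^0,Z_t^0))$, whose integrability hypotheses are met thanks to \eqref{eq:local:BMO} and the representation \eqref{eq:representation:short:time}, and whose time-continuity hypothesis on $\nabla_{x_0}{\mathcal U}^0$, $\delta_\mu{\mathcal U}^0$ is exactly \eqref{eq:U0:contintime} from Proposition \ref{prop:se:5:continuity:time}.

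The main obstacle I anticipate is making the chain-rule expansion of $({\mathcal V}^0(t,X_t^0,\mu_t))$ rigorous, i.e.\ justifying that one may differentiate through the measure argument $\mu_t$ even though ${\boldsymbol\mu}$ lives in the non-separable-flavoured setting of Section \ref{se:2} and ${\mathcal V}^0$ is only once-differentiable in $(x_0,\mu)$ with H\"older (not Lipschitz) regularity of $\nabla_y\delta_\mu{\mathcal V}^0$. The clean way around this is \emph{not} to use an abstract It\^o–Wentzell formula on ${\mathcal P}({\mathbb T}^d)$, but rather to import the flow differentiability already established in Proposition \ref{prop:local:1stdiff}: one approximates $(x_0,\mu)$-increments along the flow, writes the first-order Taylor expansion of ${\mathcal V}^0$ with the explicit $O(|\cdot|^2+{\mathbb W}_1^2(\cdot,\cdot))$ remainder from that proposition, plugs in the dynamics of $(\delta X_t^0,\delta\mu_t)$ from \eqref{eq:linear H:local}, and passes to the limit using the uniform $L^p$-estimates \eqref{eq:local:linear H:diff0} and the local stability bounds \eqref{eq:local:diff-1}. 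This reduces the desired expansion to a statement about the \emph{linearized} forward-backward system, which is finite-dimensional in its $\delta X^0$-component and for which the standard It\^o calculus applies directly. The $\delta_\mu$-bracket term then emerges as the $\ud t$-contribution $(\delta_\mu{\mathcal V}^0(t,X_t^0,\mu_t,\cdot),\,\tfrac12\Delta_x\mu_t-\mathrm{div}_x(\nabla_p\hat H(\cdot,\nabla_x u_t)\mu_t))$, carrying no martingale part, precisely because the forward Fokker-Planck equation in \eqref{eq:minor:FB:2:small} has no It\^o term — this is the structural simplification over \cite{CardaliaguetDelarueLasryLions} noted in Remark \ref{rem:def:2:9}(2), and it is what ultimately makes the martingale representation term reduce to the single $\nabla_{x_0}{\mathcal V}^0$ contribution.
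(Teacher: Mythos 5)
Your high-level intuition is right: the only source of $\ud B^0$-fluctuation in ${\mathcal V}^0(t,X_t^0,\mu_t)$ is the $\nabla_{x_0}{\mathcal V}^0$-term from the forward $X^0$-dynamics, because the Fokker--Planck equation for ${\boldsymbol\mu}$ has no martingale part. However, the route you sketch does not go through as stated, for two reasons.

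First, the references to Proposition \ref{prop:local:1stdiff} and the linearized system \eqref{eq:linear H:local} are a category error. The lemma is formulated for a \emph{generic} function ${\mathcal V}^0$ satisfying only the ${\mathscr D}^0(C_0,\mathbb{r}-1)$ regularity and the time-continuity of item~2; it is applied in the proof of Theorem \ref{thm:4.10} not just to ${\mathcal V}^0={\mathcal U}^0$ but also to ${\mathcal V}^0={\mathcal U}(\cdot,\cdot,x,\cdot)$ for each fixed $x\in{\mathbb T}^d$, with $({\boldsymbol U}^0,{\boldsymbol V}^0)=(u_\cdot(x),v^0_\cdot(x))$. Proposition \ref{prop:local:1stdiff} is a statement about ${\mathcal U}^0,{\mathcal U}$ specifically, and the processes $(\delta X_t^0,\delta\mu_t)$ in \eqref{eq:linear H:local} are derivatives with respect to the \emph{initial condition}, not the in-time increments $(X^0_{t_{i+1}}-X^0_{t_i},\,\mu_{t_{i+1}}-\mu_{t_i})$ you need to Taylor-expand in. Those Taylor remainders must instead be controlled directly from the assumed ${\mathscr D}^0$ regularity of ${\mathcal V}^0$ and from the forward equations \eqref{eq:major:FB:1:small}--\eqref{eq:minor:FB:2:small}.

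Second, and more fundamentally, the argument ``uniqueness of the semimartingale decomposition forces the martingale parts to coincide'' presupposes that you have \emph{independently} established a semimartingale decomposition of $({\mathcal V}^0(t,X_t^0,\mu_t))_t$ with volatility $\sigma_0\nabla_{x_0}{\mathcal V}^0(\cdot)$. That is precisely the hard part, and under the lemma's weak hypotheses it is not available: ${\mathcal V}^0$ is not assumed differentiable in $t$ nor twice differentiable in $x_0$, so Proposition \ref{prop:ito:formula} does not apply. The paper's proof is designed exactly to avoid this step: rather than establishing the decomposition, it tests against stochastic integrals $\int_0^T\pi_t\cdot\ud B_t^0$ of bounded simple processes, uses the \emph{given} decomposition \eqref{eq:lem:representation:bsde:U0:V0} of $U^0$ to compute ${\mathbb E}^0[(\int\pi\,\ud B^0)\,U^0_{t_{i+1}}]\approx\sigma_0{\mathbb E}^0[\int\pi\cdot V^0\,\ud t]$, expands ${\mathcal V}^0(t_{i+1},X^0_{t_{i+1}},\mu_{t_{i+1}})-{\mathcal V}^0(t_{i+1},X^0_{t_i},\mu_{t_i})$ to first order on each short interval, and shows the $\mu$-increment and the Taylor remainders correlate with $\int\pi\,\ud B^0$ only at order $o(h)$. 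A density argument over simple integrands then identifies $V^0$ with $\nabla_{x_0}{\mathcal V}^0(\cdot)$ for $\mathrm{Leb}\times{\mathbb P}^0$-a.e.\ $(t,\omega^0)$, after which time-continuity upgrades this to the pointwise-in-$t$ statement. If you replace your chain-rule/semimartingale-uniqueness framing by this testing argument and estimate the increments directly from ${\mathscr D}^0$-regularity and the forward dynamics, you essentially recover the paper's proof.
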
 

\begin{proof} 
The proof is standard. Here, we take it from 
\cite[Lemma 4.11]{CarmonaDelarue_book_I}. We consider a uniform subdivision 
${\boldsymbol \tau}:= (t_i)_{i=0,\cdots,n}$ with $0=:t_0<t_1< \cdots < t_n:=T$ of stepsize $h$ together with a simple 
${\mathbb R}^d$-valued 
adapted 
process 
${\boldsymbol \pi}=(\pi_t)_{0 \le t \le T}$ of the form 
\begin{equation}
 \label{eq:prop:5:11:representation:Z0:0}
\pi_t = \sum_{i=0}^{n-1} \pi^{(i)} {\mathbf 1}_{(t_i,t_{i+1}]}(t), \quad t \in [0,T],
\end{equation} 
where, for some 
$K \geq 0$ and for each 
$i=0,\cdots,n-1$, $\pi^{(i)}$ belongs to $L^\infty(\Omega,{\mathcal F}_{t_i}^0,{\mathbb P}^0;{\mathbb R}^d)$
with 
${\mathbb P}^0(\{ \vert \pi^{(i)} \vert \leq K\})=1$.
For $i=0,\cdots,n-1$, we then have
\begin{equation*} 
\begin{split}
{\mathbb E}^0 \biggl[ \biggl( \int_{t_i}^{t_{i+1}} \pi_t \cdot 
\ud B_t^0 
\biggr) \times U_{t_{i+1}}^0
\biggr] 
= {\mathbb E}^0 \Bigl[ \bigl( \pi^{(i)} \cdot ( B^0_{t_{i+1}} - B^0_{t_i}) \bigr) 
\times U_{t_{i+1}}^0
\Bigr].
\end{split} 
\end{equation*}
\vskip 4pt

\noindent  \textit{First Step.} Using the backward equation   
\eqref{eq:lem:representation:bsde:U0:V0},
%together with the bound 
%\eqref{eq:local:BMO}, 
it is standard to prove that 
there exists a constant $C$
(only depending on ${\boldsymbol \pi}$ through $K$)
such that, for all $i=0,\cdots,n-1$, 
\begin{equation} 
\label{eq:prop:5:11:representation:Z0:1}
\begin{split}
&\biggl\vert 
{\mathbb E}^0 \biggl[ \biggl( \int_{t_i}^{t_{i+1}} \pi_t \cdot \ud B_t^0 
\biggr) \times U_{t_{i+1}}^0 \biggr] - 
\sigma_0
 {\mathbb E}^0 \biggl[ \int_{t_i}^{t_{i+1}} \pi_t \cdot V_t^0 \ud t
\biggr]
\biggr\vert 
\\
&= \biggl\vert 
{\mathbb E}^0 \biggl[ \biggl( \int_{t_i}^{t_{i+1}} \pi_t \cdot \ud B_t^0 
\biggr) \times 
\int_{t_i}^{t_{i+1}}
\ell_t^0 \ud t
 \biggr] 
\biggr\vert 
\\
&\leq 
(t_{i+1}-t_i)^{1/2}
{\mathbb E}^0 \biggl[   \int_{t_i}^{t_{i+1}} \vert \pi_t \vert^2
\ud t
\biggr]^{1/2} 
{\mathbb E}^0 \biggl[ \int_{t_i}^{t_{i+1}}
 \vert \ell_t^0
 \vert^2 \ud t
 \biggr]^{1/2} 
\\
&\leq C (t_{i+1} - t_i) 
\biggl\{  \int_{t_i}^{t_{i+1}} 
{\mathbb E}^0 
\bigl[ \vert \ell_t^0 \vert^2
\bigr] 
\ud t 
\biggr\}^{1/2}
\leq  C 
h
\epsilon (h), 
\end{split}
\end{equation} 
where we have let
\begin{equation} 
\label{eq:def:espilon:representation:mart}
\epsilon(h) := 
\biggl\{  
h
+
\sup_{0 \leq s < t < T : \vert t-s \vert \leq h} 
\int_s^t {\mathbb E}^0 \bigl[ 
\vert Z_r^0 \vert^2 
+
\vert \ell_r^0 \vert^2 
\bigr] \ud r \biggr\}^{1/2}, \quad h >0. 
\end{equation}
Because $\int_0^T {\mathbb E}^0[ \vert Z_r^0 \vert^2 + \vert \ell_r^0 \vert^2] 
\ud r < \infty$, we have 
$\lim_{h \rightarrow 0} \epsilon(h)=0$. 

Now, we observe that the left-hand side 
in the top line of 
\eqref{eq:prop:5:11:representation:Z0:1}
 is equal to 
\begin{align} 
&{\mathbb E}^0 \biggl[ \biggl( \int_{t_i}^{t_{i+1}} \pi_t\cdot \ud B_t^0 \biggr) \times U_{t_{i+1}}^0 \biggr]
\nonumber
\\
&= 
{\mathbb E}^0 \biggl[ \biggl( \int_{t_i}^{t_{i+1}} \pi_t\cdot \ud B_t^0 \biggr) 
\times 
\Bigl( {\mathcal V}^0(t_{i+1},X_{t_{i+1}}^0,\mu_{t_{i+1}} ) - 
{\mathcal V}^0(t_{i+1},X_{t_{i}}^0,\mu_{t_{i}} )
 \Bigr) \biggr]
\label{eq:prop:5:11:representation:Z0:2}
 \\
 &=
 {\mathbb E}^0 \biggl[ \biggl( \int_{t_i}^{t_{i+1}} \pi_t\cdot \ud B_t^0 \biggr) 
 \times
\biggl( 
\int_0^1 
\biggl[
\nabla_{x_0} {\mathcal V}^0 \Bigl(t_{i+1}, r X_{t_{i+1}}^0 + (1-r) 
X_{t_i}^0, r \mu_{t_{i+1}} + (1-r) \mu_{t_i} \Bigr) 
\cdot 
\bigl( X_{t_{i+1}}^0 - X_{t_i}^0 \bigr)
\nonumber 
\\
&\hspace{15pt} + 
\int_{{\mathbb T}^d} \delta_{\mu} {\mathcal V}^0 
 \Bigl(t_{i+1}, r X_{t_{i+1}}^0 + (1-r) 
X_{t_i}^0, r \mu_{t_{i+1}} + (1-r) \mu_{t_i}, y \Bigr) 
\ud \bigl( \mu_{t_{i+1}} - \mu_{t_{i}} \bigr)
(y)
 \biggr] \ud r \biggr)\biggr].
\nonumber
\end{align} 
\vskip 4pt

\noindent  \textit{Second Step.} We first handle the term on the last line of 
\eqref{eq:prop:5:11:representation:Z0:2}. 
Using the forward equation in 
\eqref{eq:minor:FB:2:small}, we have, for any 
$r \in [0,1]$, 
\begin{equation*}
\begin{split}
&\int_{{\mathbb T}^d} \delta_{\mu} {\mathcal V}^0 
 \Bigl(t_{i+1}, r X_{t_{i+1}}^0 + (1-r) 
X_{t_i}^0, r \mu_{t_{i+1}} + (1-r) \mu_{t_i}, y \Bigr) 
\ud \bigl( \mu_{t_{i+1}} - \mu_{t_{i}} \bigr)
(y)
\\
&=
\int_{t_i}^{t_{i+1}}
\int_{{\mathbb T}^d} 
\tfrac12 \Delta_{y} \delta_{\mu} {\mathcal V}^0 
 \Bigl(t_{i+1}, r X_{t_{i+1}}^0 + (1-r) 
X_{t_i}^0, r \mu_{t_{i+1}} + (1-r) \mu_{t_i}, y \Bigr) 
\ud   \mu_{s} 
(y) 
ds
\\
&\hspace{15pt} -
\int_{t_i}^{t_{i+1}}
\int_{{\mathbb T}^d} 
\nabla_y \delta_{\mu} {\mathcal V}^0 
 \Bigl(t_{i+1}, r X_{t_{i+1}}^0 + (1-r) 
X_{t_i}^0, r \mu_{t_{i+1}} + (1-r) \mu_{t_i}, y \Bigr) 
\cdot 
\nabla_p \hat H \bigl(y, \nabla_x u_s(y) \Bigr) 
\ud   \mu_{s} 
(y) 
ds. 
\end{split}
\end{equation*} 
By item 2 in the statement, 
the two functions
$\Delta_{y} \delta_{\mu}
{\mathcal V}^0$
and
$\nabla_y  \delta_\mu {\mathcal V}^0$ 
are bounded. Therefore, 
we can modify the value of  
$C$ such that, for all 
$i = 0,\cdots,n-1$, with probability 1 under 
${\mathbb P}^0$, 
\begin{equation}
\label{eq:prop:5:11:representation:Z0:3}
\begin{split}
\biggl\vert \int_{{\mathbb T}^d} \delta_{\mu} {\mathcal V}^0 
 \Bigl(t_{i+1}, r X_{t_{i+1}}^0 + (1-r) 
X_{t_i}^0, r \mu_{t_{i+1}} + (1-r) \mu_{t_i}, y \Bigr) 
\ud \bigl( \mu_{t_{i+1}} - \mu_{t_{i}} \bigr)
(y) \biggr\vert \leq C (t_{i+1} - t_i). 
\end{split}
\end{equation} 
And then, by Cauchy-Schwarz' inequality (and recalling 
\eqref{eq:def:espilon:representation:mart}), 
%for a possibly new value of $C$, 
\begin{equation}
\label{eq:prop:5:11:representation:Z0:5}
\begin{split} 
&\biggl\vert {\mathbb E}^0 \biggl[ \biggl( \int_{t_i}^{t_{i+1}} \pi_t\cdot \ud B_t^0 \biggr) 
\\
&\hspace{30pt} \times
\biggl( 
\int_0^1 
\biggl[ 
\int_{{\mathbb T}^d} \delta_{\mu} {\mathcal V}^0 
 \Bigl(t_{i+1}, r X_{t_{i+1}}^0 + (1-r) 
X_{t_i}^0, r \mu_{t_{i+1}} + (1-r) \mu_{t_i}, y \Bigr) 
\ud \bigl( \mu_{t_{i+1}} - \mu_{t_{i}} \bigr)
(y)
 \biggr] \ud r
\biggr)
\biggr]
\biggr\vert 
\\
&\leq C  (t_{i+1} - t_i)^{3/2}
\leq C h \epsilon(h).  
\end{split}
\end{equation}
\vskip 4pt
 
\noindent  \textit{Third Step.} We now address 
the term on the third line of \eqref{eq:prop:5:11:representation:Z0:2}.
Proceeding as 
in the derivation of 
\eqref{eq:prop:5:11:representation:Z0:3}, 
we have, for all $i=0,\cdots,n-1$, 
with probability 1 under 
${\mathbb P}^0$, 
\begin{equation*} 
{\mathbb W}_1\bigl( \mu_{t_{i}}, \mu_{t_{i+1}}\bigr) 
\leq C 
(t_{i+1} - t_i)^{1/2}.
\end{equation*}
Therefore, thanks to the boundedness and Lipschitz regularity of 
$\nabla_{x_0} {\mathcal V}^0 $ (see 
item 2 in the statement), we get 
(recalling again 
\eqref{eq:def:espilon:representation:mart})
\begin{equation}
\label{eq:prop:5:11:representation:Z0:6}
\begin{split} 
& \biggl\vert  {\mathbb E}^0 \biggl[
\int_0^1 
\biggl[
\nabla_{x_0} {\mathcal V}^0  \Bigl(t_{i+1}, r X_{t_{i+1}}^0 + (1-r) 
X_{t_i}^0, r \mu_{t_{i+1}} + (1-r) \mu_{t_i} \Bigr) 
\cdot 
\bigl( X_{t_{i+1}}^0 - X_{t_i}^0 \bigr)
 \biggr] dr 
 \\
 &\hspace{30pt} 
 - 
 \sigma_0
\nabla_{x_0} {\mathcal V}^0  \Bigl(t_{i+1}, X_{t_{i}}^0,  \mu_{t_i} \Bigr) 
\cdot 
\bigl( B_{t_{i+1}}^0 - B_{t_i}^0 \bigr)
 \biggr\vert^2\biggr]^{1/2}
 \\
 &\leq C
 (t_{i+1} - t_i)^{1/2} 
 \epsilon(t_{i+1} - t_i) 
 \\
&\hspace{15pt} +  \sigma_0
 {\mathbb E}^0 \biggl[ \biggl\vert 
\int_0^1 
\biggl[
\nabla_{x_0} {\mathcal V}^0  \Bigl(t_{i+1}, r X_{t_{i+1}}^0 + (1-r) 
X_{t_i}^0, r \mu_{t_{i+1}} + (1-r) \mu_{t_i} \Bigr) 
\cdot 
\bigl( B_{t_{i+1}}^0 - B_{t_i}^0 \bigr)
 \biggr] \ud r 
 \\
 &\hspace{30pt} 
 - 
\nabla_{x_0} {\mathcal V}^0  \Bigl(t_{i+1}, X_{t_{i}}^0,  \mu_{t_i} \Bigr) 
\cdot 
\bigl( B_{t_{i+1}}^0 - B_{t_i}^0 \bigr)
 \biggr\vert^2\biggr]^{1/2}
\\
&\leq  
C (t_{i+1} - t_i)^{1/2} 
 \epsilon(t_{i+1} - t_i) 
 \\
  &\hspace{15pt}
+ 
C {\mathbb E}^0 \Bigl[ 
\Bigl\{ 1 
\wedge 
\Bigl(   
\bigl\vert X_{t_{i+1}}^0 - X_{t_i}^0 \bigr\vert^2
+
{\mathbb W}_1\bigl( \mu_{t_{i}}, \mu_{t_{i+1}}\bigr)^2 
\Bigr) 
\Bigr\}
\bigl\vert B_{t_{i+1}}^0 - B_{t_i}^0 \bigr\vert^2
\Bigr]^{1/2}
\\
&\leq C (t_{i+1} - t_i)^{1/2} 
 \epsilon(t_{i+1} - t_i) 
+ 
C (t_{i+1} - t_i)^{1/2}  {\mathbb E}^0 \biggl[ 
  1 
\wedge 
\biggl(   
(t_{i+1} - t_i)
\int_{t_i}^{t_{i+1}}
  \vert Z_r^0 \vert^2 \ud r 
\biggr)^2
\biggr]^{1/4}
\\
&\leq C (t_{i+1} - t_i )^{1/2} 
\epsilon(t_{i+1} - t_i) 
+ C (t_{i+1} - t_i)^{1/2}  {\mathbb E}^0 \biggl[ 
  1 
\wedge 
\biggl(   
(t_{i+1} - t_i)
\int_{t_i}^{t_{i+1}}
  \vert Z_r^0 \vert^2 \ud r 
\biggr)
\biggr]^{1/4}
\\
&\leq  C (t_{i+1} - t_i )^{1/2} 
\epsilon(t_{i+1} - t_i)
\leq C h^{1/2} \epsilon(h), 
 \end{split}
 \end{equation} 
 with the last line following from Young's inequality. 
\vskip 4pt 

\noindent  \textit{Fourth Step.} 
Finally, by combining
\eqref{eq:prop:5:11:representation:Z0:1},\eqref{eq:prop:5:11:representation:Z0:2},
\eqref{eq:prop:5:11:representation:Z0:5}
and
\eqref{eq:prop:5:11:representation:Z0:6}.
\begin{equation*}
\begin{split} 
&\biggl\vert  {\mathbb E}^0 \biggl[   
\int_{t_i}^{t_{i+1}} 
\pi_s  \cdot \Bigl( \sigma_0 V_s^0
\Bigr) 
\ud s 
-
\int_{t_i}^{t_{i+1}}  \pi_s
\cdot \Bigl( 
\sigma_0
\nabla_{x_0} {\mathcal V}^0 \bigl(t_{i+1}, X_{t_{i}}^0,  \mu_{t_i} \bigr) 
\Bigr) 
\ud s
 \biggr]
  \biggr\vert
  \leq C  h\epsilon(h),
 \end{split}
 \end{equation*} 
which yields 
\begin{equation}
\begin{split} 
&\lim_{h \rightarrow 0} 
\biggl\vert  {\mathbb E}^0 \biggl[   
\int_{0}^{T} 
\pi_s \cdot  
  \Bigl( 
  \sigma_0  V_s^0 
\Bigr) 
 \ud s
- \int_{0}^{T}
\pi_s \cdot 
\Bigl( \sigma_0
\nabla_{x_0} {\mathcal V}^0 \bigl(t^+({\boldsymbol \tau},s), X_{t^-({\boldsymbol \tau},s)}^0,  \mu_{t^-({\boldsymbol \tau},s)} \bigr) 
\Bigr) 
\ud s
 \biggr]
  \biggr\vert
 \leq C  \epsilon(h),
 \end{split}
\label{eq:prop:5:11:representation:Z0:7}
 \end{equation} 
 with 
 \begin{equation*}
 t^+({\boldsymbol \tau},s)
: = \sum_{i=0}^{n-1} t_{i+1} {\mathbf 1}_{(t_i,t_{i+1}]}(s), 
\quad
t^-({\boldsymbol \tau},s)
:= \sum_{i=0}^{n-1} t_{i} {\mathbf 1}_{[t_i,t_{i+1})}(s),    
 \quad s \in [0,T]. 
 \end{equation*}
We observe that, as the stepsize $h$ of ${\boldsymbol \tau}$ tends to $0$, 
$ t^+({\boldsymbol \tau},s)$ and 
$ t^-({\boldsymbol \tau},s)$ tend to $s$. 
 
 By boundedness and continuity of $\nabla_{x_0} {\mathcal V}^0$
 (see item 2) and by Lebesgue's dominated convergence theorem,
 it is straightforward to prove that 
\begin{equation} 
\label{eq:prop:5:11:representation:Z0:8}
\lim_{h \rightarrow 0} 
{\mathbb E}^0 \biggl[
\int_{0}^{T} 
\Bigl\vert 
\nabla_{x_0} {\mathcal V}^0 \bigl(s, X_{s}^0,  \mu_{s} \bigr) 
-
\nabla_{x_0} {\mathcal V}^0 \bigl( t^+({\boldsymbol \tau},s), X_{ t^-({\boldsymbol \tau},s)}^0,  \mu_{ t^-({\boldsymbol \tau},s)} \bigr) 
\Bigr\vert 
\ud s
 \biggr]
= 0. 
\end{equation}   
 
It remains to choose ${\boldsymbol \pi}$ as a bounded simple process 
(as in 
\eqref{eq:prop:5:11:representation:Z0:0})
but with respect to a fixed partition
${\boldsymbol \tau}^0:=(t_i^0)_{i=0,\cdots,n_0}$, 
with $0=:t_0^0 < t_1^0 < \cdots < t_{n_0}:=T$.
The stepsize of ${\boldsymbol \tau}^0$ is denoted $h_0$ and is kept fixed. 
We then refine the partition ${\boldsymbol \tau}^0$ considering 
another partition ${\boldsymbol \tau} \supset {\boldsymbol \tau}^0$.
The stepsize of ${\boldsymbol \tau}$ is denoted $h$ and is sent to zero 
in the next lines. 
The main observation is that  ${\boldsymbol \pi}$ 
is also a bounded simple process with respect to the partition 
${\boldsymbol \tau}$. This makes it possible to apply \eqref{eq:prop:5:11:representation:Z0:7}
and 
\eqref{eq:prop:5:11:representation:Z0:8}, from which we deduce
(sending $h$ to $0$) that 
\begin{equation*} 
{\mathbb E}^0 \biggl[   
\int_{0}^{T} 
\pi_t \cdot  
  \Bigl( 
  \sigma_0 \Bigl[ 
  V_t^0 
-
\nabla_{x_0} {\mathcal V}^0 \bigl(t, X_t,  \mu_{t} \bigr)
\Bigr] 
\Bigr) 
 \ud t 
 \biggr]
 =0. 
 \end{equation*}
 The above is true for any bounded simple process ${\boldsymbol \pi}$. By a density argument, 
it is true for any bounded progressively measurable process, from which we deduce that, 
 for ${\rm Leb} \times {\mathbb P}^0$-almost every $(t,\omega^0)$, 
$$ \sigma_0 \Bigl[ 
  V_t^0 
-
\nabla_{x_0} {\mathcal V}^0 \bigl(t, X_t,  \mu_{t} \bigr)
\Bigr]=0.$$ 
This completes the proof. 
 \end{proof}

\subsection{Derivation of the master equations}
We now address the two PDEs (called master equations) 
that are satisfied by ${\mathcal U}^0$ and ${\mathcal U}$. 
The derivation of the master equation for ${\mathcal U}^0$ relies on the following formula (which is very much in the spirit of a Kolmogorov formula): 

\begin{proposition}
\label{prop:representation:U0:PDE}
Under Assumption \hyp{\^A}, there 
exists a constant ${\mathfrak C}>0$, only depending on the parameters 
$d$,  $\kappa$,  ${\mathfrak L}$,
$\zeta$, $\sigma_0$ and 
$\mathbb{s}$
in Assumption \hyp{\^A} such that, for $T \leq {\mathfrak C}$, 
we have the following representation of 
${\mathcal U}^0(t_0,x_0,\mu)$, for any $(t_0,x_0,\mu) \in [0,T] 
\times {\mathbb R}^d \times {\mathcal P}({\mathbb T}^d)$:
\begin{equation} 
\label{prop:representation:U0:PDE:00}
\begin{split}
{\mathcal U}^0(t_0,x_0,\mu) 
&= {\mathbb E}^0 \bigl[ g^0(\tilde X_T^0,\mu) \bigr]
 - 
 {\mathbb E}^0 \biggl[ \tfrac12 \int_{t_0}^{T} \int_{{\mathbb T}^d} 
\Delta_y \delta_\mu {\mathcal U}^0\bigl(s,\tilde X_{s}^{0},\mu,y\bigr) \ud
\mu(y)  \ud s  
\biggr] 
\\
&\hspace{15pt}  +  {\mathbb E}^0 \biggl[ \int_{t_0}^{T} \int_{{\mathbb T}^d} 
\nabla_y \delta_\mu {\mathcal U}^0\bigl(s,\tilde X_{s}^0,\mu,y\bigr) 
\cdot 
\nabla_p \hat H\bigl(y,\nabla_x {\mathcal U}(s,\tilde X_s^0,y,\mu) \bigr) 
 \ud
\mu(y)  \ud s  
\biggr] 
\\
&\hspace{15pt} 
 -
 {\mathbb E}^0 \biggl[ \int_{t_0}^{T}  
  \Bigl( f_s^0(\tilde X_s^0,\mu) + \hat L^0\bigl(\tilde X_s^0, 
  \nabla_{x_0} {\mathcal U}^0(s,\tilde X_s^0,\mu)
  \bigr)
\Bigr) \ud s   \biggr], 
\end{split} 
\end{equation} 
where $\tilde{\boldsymbol X}^0=(\tilde X^0_t)_{t_0 \le t \leq T}$ solves the 
SDE
\begin{equation}
\label{eq:tilde:X:0:U0} 
\ud \tilde X_t^0 = - \nabla_p H^0 \bigl( \tilde X_t^0, \nabla_{x_0} {\mathcal U}^0(t,\tilde X_t^0,\mu) \bigr) \ud t 
+ \ud B_t^0, \quad t \in [t_0,T]; \quad \tilde X_{t_0}^0=x_0.    
\end{equation} 
\end{proposition}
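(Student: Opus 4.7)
The plan is to obtain \eqref{prop:representation:U0:PDE:00} by discretizing $[t_0,T]$ and telescoping against the dynamic programming relation implicit in \eqref{eq:representation:short:time}. Fix a partition $t_0=s_0<s_1<\cdots<s_N=T$ of mesh $h$; using $\mathcal{U}^0(T,\cdot,\mu)=g^0(\cdot,\mu)$,
\begin{equation*}
\mathbb{E}^0\bigl[g^0(\tilde X^0_T,\mu)\bigr]-\mathcal{U}^0(t_0,x_0,\mu)=\sum_{i=0}^{N-1}\mathbb{E}^0\bigl[\mathcal{U}^0(s_{i+1},\tilde X^0_{s_{i+1}},\mu)-\mathcal{U}^0(s_i,\tilde X^0_{s_i},\mu)\bigr].
\end{equation*}
For each $i$ I would apply the local DPP: denote by $(\hat X^{0,i},\hat Y^{0,i},\hat Z^{0,i},\hat\mu^i,\hat u^i,\hat m^i)$ the solution of \eqref{eq:major:FB:1:small}--\eqref{eq:minor:FB:2:small} on $[s_i,s_{i+1}]$ restarted from the $\mathcal{F}^0_{s_i}$-measurable datum $(\tilde X^0_{s_i},\mu)$; the identifications $\hat Y^{0,i}_{s_i}=\mathcal{U}^0(s_i,\tilde X^0_{s_i},\mu)$ and $\hat Z^{0,i}_r=\nabla_{x_0}\mathcal{U}^0(r,\hat X^{0,i}_r,\hat\mu^i_r)$ (from \eqref{eq:representation:short:time} and Proposition \ref{prop:representation:Z0}) together with the backward equation yield
\begin{equation*}
\mathcal{U}^0(s_i,\tilde X^0_{s_i},\mu)=\mathbb{E}^0\Bigl[\mathcal{U}^0(s_{i+1},\hat X^{0,i}_{s_{i+1}},\hat\mu^i_{s_{i+1}})+\int_{s_i}^{s_{i+1}}\!\bigl(f^0_r(\hat X^{0,i}_r,\hat\mu^i_r)+\hat L^0(\hat X^{0,i}_r,\hat Z^{0,i}_r)\bigr)\,\ud r\,\Big|\,\mathcal{F}^0_{s_i}\Bigr].
\end{equation*}
Substituted into the telescoping sum, each increment splits as $\mathbb{E}^0[A_i+B_i]+\mathbb{E}^0\int_{s_i}^{s_{i+1}}(f^0_r+\hat L^0)\,\ud r$, with $A_i=\mathcal{U}^0(s_{i+1},\tilde X^0_{s_{i+1}},\mu)-\mathcal{U}^0(s_{i+1},\hat X^{0,i}_{s_{i+1}},\mu)$ the change in the $x_0$-slot and $B_i=\mathcal{U}^0(s_{i+1},\hat X^{0,i}_{s_{i+1}},\mu)-\mathcal{U}^0(s_{i+1},\hat X^{0,i}_{s_{i+1}},\hat\mu^i_{s_{i+1}})$ the change in the measure slot.

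The $A_i$ contribution is negligible: $\tilde X^0$ and $\hat X^{0,i}$ share the same Brownian increments and agree at $s_i$, and their drifts differ only through the measure slot of $\nabla_{x_0}\mathcal{U}^0$, which is Lipschitz in $\mathbb{W}_1$ by Proposition \ref{prop:deltaU:continuity}; since $\mathbb{W}_1(\hat\mu^i_r,\mu)=O(\sqrt h)$ on $[s_i,s_{i+1}]$, Gronwall yields $|\tilde X^0_r-\hat X^{0,i}_r|=O(h^{3/2})$ and then $\sum_i\mathbb{E}^0|A_i|=O(\sqrt h)$ by the Lipschitz bound on $\mathcal{U}^0$ in $x_0$. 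For $B_i$, I would write via the flat derivative
\begin{equation*}
B_i=-\int_0^1\!\int_{\mathbb T^d}\delta_\mu\mathcal{U}^0\bigl(s_{i+1},\hat X^{0,i}_{s_{i+1}},r\hat\mu^i_{s_{i+1}}+(1-r)\mu,y\bigr)\,\ud(\hat\mu^i_{s_{i+1}}-\mu)(y)\,\ud r,
\end{equation*}
freeze the interpolated measure argument at $\mu$ (the replacement error per step is $O(h^{3/2})$ by pairing $\|\delta_\mu\mathcal{U}^0(\cdot,\cdot,r\hat\mu^i_{s_{i+1}}+(1-r)\mu,\cdot)-\delta_\mu\mathcal{U}^0(\cdot,\cdot,\mu,\cdot)\|_{\mathbb{r}-1}=O(\sqrt h)$ from condition $\mathscr{D}^0$ against $\|\hat\mu^i_{s_{i+1}}-\mu\|_{-(\mathbb{r}-1)}=O(h)$ from Fokker--Planck, for any $\mathbb{r}\in(\lfloor\mathbb{s}\rfloor,\mathbb{s})$), and then integrate by parts using $\partial_r\hat\mu^i_r=\tfrac12\Delta_y\hat\mu^i_r-\mathrm{div}_y(\nabla_p\hat H(\cdot,\nabla_x\hat u^i_r)\hat\mu^i_r)$ to rewrite the main term as a Riemann sum over $[s_i,s_{i+1}]$ of exactly the Laplacian and drift-gradient integrals appearing on the right-hand side of \eqref{prop:representation:U0:PDE:00}.

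Letting $h\to 0$, on each $[s_i,s_{i+1}]$ the continuity of the flow (Theorem \ref{thm:local:FB}) combined with the representation $\hat u^i_r(\cdot)=\mathcal{U}(r,\hat X^{0,i}_r,\cdot,\hat\mu^i_r)$ forces $\hat X^{0,i}_r\to\tilde X^0_r$, $\hat\mu^i_r\to\mu$ and $\nabla_x\hat u^i_r(y)\to\nabla_x\mathcal{U}(r,\tilde X^0_r,y,\mu)$, with bounds uniform in $(i,h)$ thanks to Corollary \ref{corol:U:U0:C:C0} and Proposition \ref{prop:deltaU:continuity}; all the Riemann sums then converge by dominated convergence to the claimed time-integrals, and \eqref{prop:representation:U0:PDE:00} follows. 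The main obstacle is the $\mu$-expansion control for $B_i$: because $\mathcal{U}^0$ is only $C^1$ in $x_0$ (no classical second-order spatial estimate), the formula cannot be derived from an Itô expansion in the $x_0$-variable, so the ``missing'' second-order spatial structure has to be absorbed into the law of $\tilde X^0$ via the Feynman--Kac flavour of the argument; the delicate point is therefore to measure the $\mu$-expansion remainder in the right functional norms, exploiting the smoothing effect of the Fokker--Planck operator (which delivers $O(h)$ in the dual $\|\cdot\|_{-(\mathbb{r}-1)}$ norm, rather than the worse $O(\sqrt h)$ coming from $\mathbb{W}_1$) so that the Lipschitz constant $O(\sqrt h)$ of $\delta_\mu\mathcal{U}^0$ is multiplied by the better $O(h)$ factor and produces a summable per-step error.
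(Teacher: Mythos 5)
Your proposal is correct and is essentially the paper's own argument: both rest on comparing, over a small step, the frozen-measure diffusion $\tilde{\boldsymbol X}^0$ with the coupled system restarted from $(\tilde X^0_{s_i},\mu)$, using the identifications $Y^0_r={\mathcal U}^0(r,X^0_r,\mu_r)$, $Z^0_r=\nabla_{x_0}{\mathcal U}^0(r,X^0_r,\mu_r)$ together with the backward SDE, expanding the measure slot via the flat derivative along the Fokker--Planck flow, controlling the per-step freezing errors (of order $h^{3/2}$, via the Lipschitz/H\"older bounds on $\delta_\mu {\mathcal U}^0$, $\nabla_y\delta_\mu{\mathcal U}^0$ and $\nabla_x{\mathcal U}$ and the closeness $|\tilde X^0_r-\hat X^{0,i}_r|=O(h^{3/2})$), and summing. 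The only differences are cosmetic — you telescope over the whole horizon before doing the per-step analysis, and you bound the measure-slot remainder by a duality pairing in a negative H\"older norm rather than the paper's pointwise freezing in \eqref{eq:proof:edp:X0:5} — and in the final passage to the limit you should also invoke the time-continuity of the derivatives of the master fields (Propositions \ref{prop:se:5:continuity:time} and \ref{prop:reg:Holder:time:deribatives}), which is what justifies replacing the endpoint evaluation $s_{i+1}$ by the running time $s$ in the Riemann sums.
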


Pay attention that $\mu$ is fixed in 
\eqref{eq:tilde:X:0:U0} 
(equivalently, 
\eqref{eq:tilde:X:0:U0} is not a McKean-Vlasov equation). 

\begin{proof}
Throughout the proof, $T$ is implicitly taken less than ${\mathfrak C}$ so that all the results already proved in this section can be freely applied.

For simplicity, we establish the representation at time $t_0=0$. 
Thanks to the regularity of $\nabla_{x_0} {\mathcal U}^0$ (see 
Proposition 
\ref{prop:deltaU:continuity}) 
the equation 
\eqref{eq:tilde:X:0:U0}
has a unique solution. 
\vskip 4pt

\noindent  \textit{First Step.} 
We fix $t \in [0,T]$. With $\tilde{\boldsymbol X}^0$ as in the statement, we consider 
the solution 
$({\boldsymbol X}^{0},{\boldsymbol \mu})=(X_s^0,\mu_s)_{t \le s \le T}$
to the two forward equations in 
\eqref{eq:major:FB:1:small}
and
\eqref{eq:minor:FB:2:small}
with $X^0_t=\tilde X^0_t$ and $\mu_t=\mu$ as initial condition 
at time $t$. 
By 
Proposition 
\ref{prop:representation:Z0}, the equation for 
${\boldsymbol X}^0$ can be rewritten as
\begin{equation}
\label{eq:proof:edp:X0:0}  
\ud X_s^0 = - \nabla_p H^0 \bigl( X_s^0, \nabla_{x_0} {\mathcal U}^0(s,X_s^0,\mu_s) \bigr) \ud s
+ \ud B_s^0, \quad s \in [t,T]; \quad X_t^0 = \tilde X_t^0. 
\end{equation} 
In particular, by Lipschitz regularity of $\nabla_{x_0}  {\mathcal U}^0$
and from the identity $\mu_t=\mu$, 
we obtain, for a fixed (small) $h>0$, 
\begin{equation}
\label{eq:proof:edp:X0:1} 
\begin{split}
\sup_{t \leq s \leq t+h} \vert \tilde X_s^0  - X_s^{0} \vert
&\leq C \int_t^{t+h} {\mathbb W}_1(\mu_s,\mu_t) \ud s
 \leq C h^{3/2}. 
\end{split} 
\end{equation} 
Now, 
by 
\eqref{eq:proof:edp:X0:1}, 
\begin{equation} 
\label{eq:proof:edp:X0:2}
\begin{split}
{\mathbb E}^0 \bigl[ {\mathcal U}^0(t+h,\tilde X_{t+h}^{0},\mu) 
\, \vert \, {\mathcal F}_t^0 \bigr] 
&=
{\mathbb E}^0 \bigl[ {\mathcal U}^0(t+h,\tilde X_{t+h}^{0},\mu_t) 
\, \vert \, {\mathcal F}_t^0 \bigr] 
\\
&= {\mathbb E}^0 \bigl[ {\mathcal U}^0(t+h, X_{t+h}^{0},\mu_{t+h}) 
\, \vert \, {\mathcal F}_t^0 \bigr] 
\\
&\hspace{15pt} +
{\mathbb E}^0 \bigl[ {\mathcal U}^0(t+h,\tilde X_{t+h}^{0},\mu_t) 
 - {\mathcal U}^0(t+h, X_{t+h}^0,\mu_{t+h}) 
\, \vert \, {\mathcal F}_t^0 \bigr] 
\\
&= {\mathbb E}^0 \bigl[{\mathcal U}^0(t+h, X_{t+h}^{0},\mu_{t+h}) 
\, \vert \, {\mathcal F}_t^0 \bigr] 
\\
&\hspace{15pt} +
{\mathbb E}^0 \bigl[ {\mathcal U}^0(t+h, X_{t+h}^{0},\mu_t) 
- {\mathcal U}^0(t+h, X_{t+h}^0,\mu_{t+h}) 
\, \vert \, {\mathcal F}_t^0 \bigr] 
+ O(h^{3/2}), 
\end{split}
\end{equation} 
where 
$\vert O(h^{3/2}) \vert \leq C h^{3/2}$. 
\vskip 4pt

\noindent \textit{Second Step.} 
We handle the term on the last line of 
\eqref{eq:proof:edp:X0:2}. Using the regularity of 
${\mathcal U}^0$ in the third argument 
together with 
the equation for ${\boldsymbol \mu}$ 
in 
\eqref{eq:minor:FB:2:small}, we have (the reader may compare the expansion below with \cite[Theorem 5.99]{CarmonaDelarue_book_I}):
\begin{equation*} 
\begin{split} 
&{\mathcal U}^0(t+h, X_{t+h}^0,\mu_{t+h}) 
- 
{\mathcal U}^0(t+h, X_{t+h}^{0},\mu_t) 
\\
&= \tfrac12 \int_{t}^{t+h} \int_{{\mathbb T}^d} 
\Delta_y \delta_\mu {\mathcal U}^0(t+h,X_{t+h}^0,\mu_s,y) \ud
\mu_s(y)  \ud s 
\\
&\hspace{15pt} - \int_t^{t+h} \int_{{\mathbb T}^d} 
\nabla_y \delta_\mu {\mathcal U}^0(t+h,X_{t+h}^0,\mu_s,y) 
\cdot 
\nabla_p \hat H\bigl(y,\nabla_x {\mathcal U} (s,X_s^0,y,\mu_s) \bigr) 
 \ud
\mu_s(y)  \ud s.
\end{split}
\end{equation*}  
And then, using 
\eqref{eq:proof:edp:X0:1} 
together with 
the Lipschitz regularity 
of $\Delta_y \delta_\mu {\mathcal U}^0$, $\nabla_y \delta_\mu {\mathcal U}^0$ and $\nabla_x {\mathcal U}$
in the variables $x_0$ and $\mu$ and the H\"older regularity of the same functions in $y$ (see \eqref{eq:local:diff-3bb}), we deduce that 
\begin{align} 
&{\mathbb E}^0 \Bigl[ {\mathcal U}^0(t+h, X_{t+h}^0,\mu_{t+h}) 
- 
{\mathcal U}^0(t+h, X_{t+h}^{0},\mu_t) \, \vert \,  {\mathcal F}_t^0 \Bigr]
\nonumber
\\
&= {\mathbb E}^0 \biggl[ \tfrac12 \int_{t}^{t+h} \int_{{\mathbb T}^d} 
\Delta_y \delta_\mu {\mathcal U}^0(t+h,\tilde X_{s}^{0},\mu_t,y) \ud
\mu_t(y)  \ud s 
\label{eq:proof:edp:X0:4} 
\\
&\hspace{15pt} - \int_t^{t+h} \int_{{\mathbb T}^d} 
\nabla_y \delta_\mu {\mathcal U}^0(t+h,\tilde X_{s}^0,\mu_t,y) 
\cdot 
\nabla_p \hat H\bigl(y,\nabla_x {\mathcal U} (s,\tilde X_s^0,y,\mu_t) \bigr) 
 \ud
\mu_t(y)  \ud s \, \vert \, {\mathcal F}_t^0 
\biggr] + O\bigl(h^{1+({\mathbb r}-\lfloor {\mathbb s}\rfloor)/2}\bigr). \nonumber
\end{align} 
We then handle the term on the penultimate line of 
\eqref{eq:proof:edp:X0:2}. 
Using the 
backward equation in \eqref{eq:major:FB:1:small}, 
we have
\begin{equation}
\label{eq:proof:edp:X0:3}
\begin{split}
&{\mathbb E}^0 \bigl[ {\mathcal U}^0(t+h, X_{t+h}^{0},\mu_{t+h}) 
\, \vert \, {\mathcal F}_t^0 \bigr]
 =  {\mathbb E}^0 \bigl[ Y_{t+h}^0 
\, \vert \, {\mathcal F}_t^0 \bigr]
\\
&= Y_t^0
  -
 {\mathbb E}^0 \biggl[ \int_t^{t+h}  
  \Bigl( f_s^0(X_s^0,\mu_s) + \hat L^0\bigl(X_s^0, 
  \nabla_{x_0} {\mathcal U}^0(s,X_s^0,\mu_s)
  \bigr)
\Bigr) \ud s \, \vert \, {\mathcal F}_t^0 \biggr]
\\
&= {\mathcal U}^0(t,X_t^0,\mu_t)  
  -
 {\mathbb E}^0 \biggl[ \int_t^{t+h}  
  \Bigl( f_s^0(\tilde X_s^0,\mu_t) + \hat L^0\bigl(\tilde X_s^0,  \nabla_{x_0} {\mathcal U}^0(s,\tilde X_s^0,\mu_t)
  \bigr)  
\Bigr) \ud s \, \vert \, {\mathcal F}_t^0 \biggr] 
  + O(h^{3/2}). 
\end{split}
\end{equation}
\vskip 4pt

\noindent \textit{Third Step.} 
We now insert
\eqref{eq:proof:edp:X0:4}
and
\eqref{eq:proof:edp:X0:3}
into 
\eqref{eq:proof:edp:X0:2}. 
Recalling 
$X_t^0 = \tilde X_t^0$ and 
$\mu_t=\mu$, 
we obtain 
\begin{equation} 
\label{eq:proof:edp:X0:5}
\begin{split} 
&{\mathbb E}^0 \bigl[ {\mathcal U}^0(t+h,\tilde X_{t+h}^{0},\mu) 
\, \vert \, {\mathcal F}_t^0 \bigr] 
= {\mathcal U}^0(t,\tilde X_t^0,\mu)  
\\
&\hspace{15pt} - 
 {\mathbb E}^0 \biggl[ \tfrac12 \int_{t}^{t+h} \int_{{\mathbb T}^d} 
\Delta_y \delta_\mu {\mathcal U}^0(t+h,\tilde X_{s}^{0},\mu,y) \ud
\mu(y)  \ud s \, \vert \, {\mathcal F}_t^0 
\biggr] 
\\
&\hspace{15pt} +  {\mathbb E}^0 \biggl[ \int_t^{t+h} \int_{{\mathbb T}^d} 
\nabla_y \delta_\mu {\mathcal U}^0(t+h,\tilde X_{s}^0,\mu,y) 
\cdot 
\nabla_p \hat H\bigl(y,\nabla_x {\mathcal U}(s,\tilde X_s^0,y,\mu) \bigr) 
 \ud
\mu(y)  \ud s \, \vert \, {\mathcal F}_t^0 
\biggr] 
\\
&\hspace{15pt} 
 -
 {\mathbb E}^0 \biggl[ \int_t^{t+h}  
  \Bigl( f_s^0(\tilde X_s^0,\mu) + \hat L^0\bigl(\tilde X_s^0,   \nabla_{x_0} {\mathcal U}^0(s,\tilde X_s^0,\mu)
  \bigr)  
\Bigr) \ud s \, \vert \, {\mathcal F}_t^0 \biggr] + O\bigl(h^{1+({\mathbb r}-\lfloor {\mathbb s}\rfloor)/2}\bigr).
\end{split}
\end{equation}  

We are now given  a subdivision
${\boldsymbol \tau} = (t_i)_{i=0,\cdots,n}$
of $[0,T]$, with 
 $0=t_0<t_1<\cdots<t_n=T$
 and 
with stepsize $h$. Then, \eqref{eq:proof:edp:X0:5} says 
\begin{equation*} 
\begin{split} 
&{\mathbb E}^0 \bigl[ {\mathcal U}^0(t_{i+1},\tilde X_{t_{i+1}}^{0},\mu)  \bigr] 
= {\mathbb E}^0 \bigl[ {\mathcal U}^0(t_i,\tilde X_{t_i}^0,\mu) \bigr] 
\\
&\hspace{15pt} - 
 {\mathbb E}^0 \biggl[ \tfrac12 \int_{t_i}^{t_{i+1}} \int_{{\mathbb T}^d} 
\Delta_y \delta_\mu {\mathcal U}^0(t_{i+1},\tilde X_{s}^{0},\mu,y) \ud
\mu(y)  \ud s  
\biggr] 
\\
&\hspace{15pt} +  {\mathbb E}^0 \biggl[ \int_{t_i}^{t_{i+1}} \int_{{\mathbb T}^d} 
\nabla_y \delta_\mu {\mathcal U}^0(t_{i+1},\tilde X_{s}^0,\mu,y) 
\cdot 
\nabla_p \hat H\bigl(y,\nabla_x {\mathcal U}(s,\tilde X_s^0,y,\mu) \bigr) 
 \ud
\mu(y)  \ud s  
\biggr] 
\\
&\hspace{15pt} 
 -
 {\mathbb E}^0 \biggl[ \int_{t_i}^{t_{i+1}}  
  \Bigl( f_s^0(\tilde X_s^0,\mu) + \hat L^0\bigl(\tilde X_s^0,  
  \nabla_{x_0} {\mathcal U}^0(s,\tilde X_s^0,\mu)
 \bigr)
\Bigr) \ud s   \biggr] + O\bigl(h^{1+({\mathbb r}-\lfloor {\mathbb s}\rfloor)/2}\bigr).
\end{split}
\end{equation*}  
Letting $t({\boldsymbol \tau},s) := t_{i+1}$ if 
$s \in (t_i,t_{i+1}]$, for $i=0,\dots,n-1$, 
and summing over $i$ 
the above expansion, 
we get 
%This leads to 
\begin{equation*} 
\begin{split}
&{\mathcal U}^0(0,x_0,\mu) 
= {\mathbb E}^0 \bigl[ g^0(\tilde X_T^0,\mu) \bigr]
\\
&\hspace{15pt}  - 
 {\mathbb E}^0 \biggl[ \tfrac12 \int_{0}^{T} \int_{{\mathbb T}^d} 
\Delta_y \delta_\mu {\mathcal U}^0\bigl(t({\boldsymbol \tau},s),\tilde X_{s}^{0},\mu,y\bigr) \ud
\mu(y)  \ud s  
\biggr] 
\\
&\hspace{15pt}  +  {\mathbb E}^0 \biggl[ \int_{0}^{T} \int_{{\mathbb T}^d} 
\nabla_y \delta_\mu {\mathcal U}^0\bigl(t({\boldsymbol \tau},s),\tilde X_{s}^0,\mu,y\bigr) 
\cdot 
\nabla_p \hat H\bigl(y,\nabla_x {\mathcal U}(s,\tilde X_s^0,y,\mu) \bigr) 
 \ud
\mu(y)  \ud s  
\biggr] 
\\
&\hspace{15pt} 
 -
 {\mathbb E}^0 \biggl[ \int_{0}^{T}  
  \Bigl( f_s^0(\tilde X_s^0,\mu) + \hat L^0\bigl(\tilde X_s^0,   \nabla_{x_0} {\mathcal U}^0(s,\tilde X_s^0,\mu)
  \bigr)  
\Bigr) \ud s   \biggr] + O\bigl(h^{({\mathbb r}-\lfloor {\mathbb s}\rfloor)/2}\bigr). 
\end{split} 
\end{equation*} 
Using the
boundedness and 
time continuity of 
$\Delta_y \delta_\mu {\mathcal U}^0$
and 
$\nabla_y \delta_\mu {\mathcal U}^0$ (recall in particular 
Proposition 
\ref{prop:reg:Holder:time:deribatives}) and observing 
that $\lim_{h \rightarrow 0} t({\boldsymbol \tau},s) = s$, we can easily let $h$ tend to $0$ in the formula.
By Lebesgue's dominated convergence theorem,  we get the expected formula. 
\end{proof} 

We have a similar result for ${\mathcal U}$: 

\begin{proposition}
\label{prop:representation:U:PDE}
Under Assumption \hyp{\^A}, there 
exists a constant ${\mathfrak C}>0$, only depending on the parameters 
$d$, $\kappa$, ${\mathfrak L}$,  
$\zeta$, $\sigma_0$ and 
$\mathbb{s}$
in Assumption \hyp{\^A} such that, for $T \leq {\mathfrak C}$, 
the following representation of 
${\mathcal U}(t_0,x_0,\mu,x)$ holds true, for any $(t_0,x_0,x,\mu) \in [0,T] 
\times {\mathbb R}^d \times {\mathbb T}^d \times {\mathcal P}({\mathbb T}^d)$:
\begin{equation}
\label{prop:representation:U0:PDE:11} 
\begin{split}
{\mathcal U}(t_0,x_0,x,\mu) 
&= {\mathbb E}^0 \bigl[ g(\tilde X_T^0,x,\mu) \bigr]
 - 
 {\mathbb E}^0 \biggl[ \tfrac12 \int_{t_0}^{T} \int_{{\mathbb T}^d} 
\Delta_y \delta_\mu {\mathcal U}\bigl(s,\tilde X_{s}^{0},x,\mu,y\bigr) \ud
\mu(y)  \ud s  
\biggr] 
\\
&\hspace{15pt}  +  {\mathbb E}^0 \biggl[ \int_{t_0}^{T} \int_{{\mathbb T}^d} 
\nabla_y \delta_\mu {\mathcal U}\bigl(s,\tilde X_{s}^0,x,\mu,y\bigr) 
\cdot 
\nabla_p \hat H\bigl(y,\nabla_x {\mathcal U}(s,\tilde X_s^0,y,\mu) \bigr) 
 \ud
\mu(y)  \ud s  
\biggr] 
\\
&\hspace{15pt} 
 + {\mathbb E}^0 \biggl[ \int_{0}^{T}  
  \Bigl( 
      -  \tfrac12 \Delta_x {\mathcal U}(s,\tilde X_s^0,x,\mu) + \hat H\bigl(x,\nabla_x 
  {\mathcal U}(s,\tilde X_s^0,x,\mu)
   \bigr)  - f_s(\tilde X_s^0,x,\mu)  
\Bigr) \ud s   \biggr],
\end{split} 
\end{equation} 
where $\tilde{\boldsymbol X}^0=(\tilde X^0_t)_{t_0 \le t \leq T}$ solves the 
SDE
\eqref{eq:tilde:X:0:U0}.
\end{proposition}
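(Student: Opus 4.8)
The proof runs parallel to that of Proposition \ref{prop:representation:U0:PDE}, the only structural change being that the finite-dimensional backward equation of \eqref{eq:major:FB:1:small} is replaced by the backward stochastic HJB equation of \eqref{eq:minor:FB:2:small}, and the field ${\mathcal U}^0$ by ${\mathcal U}$; the extra frozen argument $x \in {\mathbb T}^d$ plays no active role. Throughout, $T$ is taken less than the ${\mathfrak C}$ of Theorem \ref{thm:local:FB}, so that all the regularity and stability results of Section \ref{se:5} apply, and we set $t_0 = 0$. The process $\tilde{\boldsymbol X}^0$ of \eqref{eq:tilde:X:0:U0} is well defined thanks to the Lipschitz regularity of $\nabla_{x_0} {\mathcal U}^0$ (Proposition \ref{prop:deltaU:continuity}).

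First I would fix $t \in [0,T]$ and $x \in {\mathbb T}^d$, and consider the solution $({\boldsymbol X}^0,{\boldsymbol \mu},{\boldsymbol u},{\boldsymbol m})$ to \eqref{eq:major:FB:1:small}--\eqref{eq:minor:FB:2:small} started from $(X_t^0,\mu_t) = (\tilde X_t^0, \mu)$ at time $t$. By Proposition \ref{prop:representation:Z0}, ${\boldsymbol X}^0$ satisfies the closed equation \eqref{eq:proof:edp:X0:0}, and from the Lipschitz regularity of $\nabla_{x_0} {\mathcal U}^0$ together with $\mu_t = \mu$ one gets, exactly as in \eqref{eq:proof:edp:X0:1}, the bound $\sup_{t \le s \le t+h} |\tilde X_s^0 - X_s^0| \le C h^{3/2}$ for small $h>0$. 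Running the decomposition \eqref{eq:proof:edp:X0:2} with ${\mathcal U}$ in place of ${\mathcal U}^0$, one finds
\[
{\mathbb E}^0 \bigl[ {\mathcal U}(t+h,\tilde X_{t+h}^0,x,\mu) \,\vert\, {\mathcal F}_t^0 \bigr] = {\mathbb E}^0 \bigl[ {\mathcal U}(t+h, X_{t+h}^0, x, \mu_{t+h}) \,\vert\, {\mathcal F}_t^0 \bigr] + R_h + O(h^{3/2}),
\]
where $R_h := {\mathbb E}^0[{\mathcal U}(t+h, X_{t+h}^0, x, \mu_t) - {\mathcal U}(t+h, X_{t+h}^0, x, \mu_{t+h}) \,\vert\, {\mathcal F}_t^0]$ and the $O(h^{3/2})$ term comes from swapping $\tilde X^0$ for $X^0$, using boundedness and Lipschitz continuity of $\nabla_{x_0} {\mathcal U}$ (Theorem \ref{thm:local:FB} and Proposition \ref{prop:deltaU:continuity}).

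The term $R_h$ is treated exactly as in the second step of the proof of Proposition \ref{prop:representation:U0:PDE}: expanding along the Fokker-Planck equation in \eqref{eq:minor:FB:2:small} and using the Lipschitz/H\"older regularity \eqref{eq:local:diff-3bb} of $\Delta_y \delta_\mu {\mathcal U}$, $\nabla_y \delta_\mu {\mathcal U}$ and $\nabla_x {\mathcal U}$ in $(x_0,\mu)$ and in $y$, one obtains, up to an error $O(h^{1+({\mathbb r}-\lfloor {\mathbb s} \rfloor)/2})$, the two integral terms involving $\delta_\mu {\mathcal U}$ in \eqref{prop:representation:U0:PDE:11} restricted to $[t,t+h]$. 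For the remaining conditional expectation I would use $\mathcal{U}(t+h, X_{t+h}^0, x, \mu_{t+h}) = u_{t+h}(x)$ by \eqref{eq:representation:short:time} and then the backward SPDE in \eqref{eq:minor:FB:2:small}, namely $u_{t+h}(x) = u_t(x) + \int_t^{t+h}\bigl(-\tfrac12 \Delta_x u_s(x) + \hat H(x,\nabla_x u_s(x)) - f_s(X_s^0,x,\mu_s)\bigr)\,\ud s + \bigl(m_{t+h}(x) - m_t(x)\bigr)$. Taking $\mathcal{F}_t^0$-conditional expectation kills the martingale term, $u_t(x) = {\mathcal U}(t,\tilde X_t^0,x,\mu)$, and using $u_s(x) = {\mathcal U}(s,X_s^0,x,\mu_s)$ together with $|\tilde X_s^0 - X_s^0| \le C h^{3/2}$, $\mathbb{W}_1(\mu_s,\mu) \le C h$ (from \eqref{eq:local:reg}), the $\mathscr{D}$-bounds on $f$ and the stability estimate \eqref{eq:local:diff-1} for ${\mathcal U}$ (note $\mathbb{s}-2 > 1$, so $\Delta_x {\mathcal U}$ and $\nabla_x {\mathcal U}$ are bounded and $(x_0,\mu)$-Lipschitz in ${\mathcal C}^{\mathbb{s}-2}$), one may replace $X_s^0$ by $\tilde X_s^0$ and $\mu_s$ by $\mu$ inside the driver at the price of another $O(h^{3/2})$. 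This yields the one-step identity with bracketed terms exactly those of \eqref{prop:representation:U0:PDE:11} on $[t,t+h]$.

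Finally I would sum this identity over a uniform partition $0 = t_0 < \cdots < t_n = T$ of stepsize $h$, take expectations, and let $h \to 0$: as in the third step of the proof of Proposition \ref{prop:representation:U0:PDE}, the accumulated error is $O(h^{({\mathbb r}-\lfloor {\mathbb s} \rfloor)/2}) \to 0$, while the Riemann sums converge by dominated convergence, using the boundedness and time-continuity of $\Delta_y \delta_\mu {\mathcal U}$, $\nabla_y \delta_\mu {\mathcal U}$, $\Delta_x {\mathcal U}$, $\nabla_x {\mathcal U}$ and $f$ (Proposition \ref{prop:reg:Holder:time:deribatives} and the $\mathscr{D}$-conditions) together with $t({\boldsymbol \tau},s) \to s$. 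The main obstacle is essentially bookkeeping: ensuring the substitution of $\tilde X^0_s$ for $X^0_s$ inside the second-order term $\Delta_x u_s(x)$ is legitimate, which is precisely where the assumption ${\mathbb s} > 3$ is used, so that ${\mathcal U}(s,\cdot,\cdot,\cdot)$ carries two bounded, $(x_0,\mu)$-Lipschitz spatial derivatives.
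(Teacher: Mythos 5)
Your argument follows the paper's own proof essentially verbatim: the same freeze-and-compare decomposition built on Proposition \ref{prop:representation:Z0} and the bound $\sup_{t\le s\le t+h}|\tilde X^0_s-X^0_s|\le Ch^{3/2}$, the same expansion of the measure increment along the Fokker--Planck equation of \eqref{eq:minor:FB:2:small}, the same use of the backward SPDE (with the martingale killed by conditioning) for the driver term, and the same mesh summation and passage to the limit via the time-regularity results. The only (harmless) imprecision is the claim ${\mathbb W}_1(\mu_s,\mu)\le Ch$, which should read $Ch^{1/2}$; this still produces the required $O(h^{3/2})$ error once integrated over $[t,t+h]$, so the conclusion is unaffected.
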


\begin{proof}
The proof is similar to the proof of 
Proposition 
\ref{prop:representation:U0:PDE}. 
In particular, we just establish the representation at time $t_0=0$. 
\vskip 4pt

\noindent \textit{First Step.} 
For a fixed $t \in [0,T]$, we reuse the notation
\eqref{eq:proof:edp:X0:0}
and the bound
\eqref{eq:proof:edp:X0:1}. 
Following 
\eqref{eq:proof:edp:X0:2},
we deduce from the Lipschitz regularity of 
${\mathcal U}$
(see 
\eqref{eq:local:diff-1:again})
 that  
\begin{align} 
{\mathbb E}^0 \bigl[ {\mathcal U}(t+h,\tilde X_{t+h}^{0},x,\mu) 
\, \vert \, {\mathcal F}_t^0 \bigr] 
&=
{\mathbb E}^0 \bigl[ {\mathcal U}(t+h,\tilde X_{t+h}^{0},x,\mu_t) 
\, \vert \, {\mathcal F}_t^0 \bigr] \nonumber
\\
&= {\mathbb E}^0 \bigl[ {\mathcal U}(t+h, X_{t+h}^{0},x,\mu_{t+h}) 
\, \vert \, {\mathcal F}_t^0 \bigr] \nonumber
\\
&\hspace{15pt} +
{\mathbb E}^0 \bigl[ {\mathcal U}(t+h,\tilde X_{t+h}^{0},x,\mu_t) 
 - {\mathcal U}(t+h, X_{t+h}^0,x,\mu_{t+h}) 
\, \vert \, {\mathcal F}_t^0 \bigr] 
\label{eq:proof:edp:X:2}
\\
&= {\mathbb E}^0 \bigl[{\mathcal U}(t+h, X_{t+h}^{0},x,\mu_{t+h}) 
\, \vert \, {\mathcal F}_t^0 \bigr] 
\nonumber
\\
&\hspace{15pt} +
{\mathbb E}^0 \bigl[ {\mathcal U}(t+h, X_{t+h}^{0},x,\mu_t) 
- {\mathcal U}(t+h, X_{t+h}^0,x,\mu_{t+h}) 
\, \vert \, {\mathcal F}_t^0 \bigr] 
+ O(h^{3/2}), 
\nonumber
\end{align} 
where 
$\vert O(h^{3/2}) \vert \leq C h^{3/2}$. 
\vskip 4pt

\noindent \textit{Second Step.} 
We follow the second step in the proof of Proposition 
\ref{prop:representation:U0:PDE} and 
address the term on the last line of 
\eqref{eq:proof:edp:X:2}. Using the regularity of 
${\mathcal U}$ in the third argument 
(see Proposition 
\ref{prop:deltaU:continuity})
together with 
the equation for ${\boldsymbol \mu}$ 
in 
\eqref{eq:minor:FB:2:small}, we have 
\begin{equation*} 
\begin{split} 
&{\mathcal U}(t+h, X_{t+h}^0,x,\mu_{t+h}) 
- 
{\mathcal U}(t+h, X_{t+h}^{0},x,\mu_t) 
\\
&= \tfrac12 \int_{t}^{t+h} \int_{{\mathbb T}^d} 
\Delta_y \delta_\mu {\mathcal U}^0(t+h,X_{t+h}^0,x,\mu_s,y) \ud
\mu_s(y)  \ud s 
\\
&\hspace{15pt} - \int_t^{t+h} \int_{{\mathbb T}^d} 
\nabla_y \delta_\mu {\mathcal U}^0(t+h,X_{t+h}^0,x,\mu_s,y) 
\cdot 
\nabla_p \hat H\bigl(y,\nabla_x {\mathcal U} (s,X_s^0,y,\mu_s) \bigr) 
 \ud
\mu_s(y)  \ud s.
\end{split}
\end{equation*}  
And then, using 
\eqref{eq:proof:edp:X0:1} 
together with 
the Lipschitz regularity 
of $\Delta_y \delta_\mu {\mathcal U}$, $\nabla_y \delta_\mu {\mathcal U}$ and $\nabla_x {\mathcal U}$
in the variables $x_0$ and $\mu$ and their H\"older regularity in $y$, we deduce that 
\begin{align}
&{\mathbb E}^0 \Bigl[ {\mathcal U}(t+h, X_{t+h}^0,x,\mu_{t+h}) 
- 
{\mathcal U}(t+h, X_{t+h}^{0},x,\mu_t) \, \vert \,  {\mathcal F}_t^0 \Bigr]
\nonumber
\\
&= {\mathbb E}^0 \biggl[ \tfrac12 \int_{t}^{t+h} \int_{{\mathbb T}^d} 
\Delta_y \delta_\mu {\mathcal U}(t+h,\tilde X_{s}^{0},x,\mu_t,y) \ud
\mu_t(y)  \ud s \label{eq:proof:edp:X:4} 
\\
&\hspace{15pt} - \int_t^{t+h} \int_{{\mathbb T}^d} 
\nabla_y \delta_\mu {\mathcal U}(t+h,\tilde X_{s}^0,x,\mu_t,y) 
\cdot 
\nabla_p \hat H\bigl(y,\nabla_x {\mathcal U} (s,\tilde X_s^0,y,\mu_t) \bigr) 
 \ud
\mu_t(y)  \ud s \, \vert \, {\mathcal F}_t^0 
\biggr] + O\bigl(h^{1+({\mathbb r}-\lfloor {\mathbb s}\rfloor)/2}\bigr). \nonumber
\end{align} 
We then handle the term on the penultimate line of 
\eqref{eq:proof:edp:X:2}. 
Using the 
backward equation in \eqref{eq:minor:FB:2:small} (with the prescribed initial conditions for the two forward equations in \eqref{eq:major:FB:1:small} and \eqref{eq:minor:FB:2:small}) and 
\eqref{eq:local:diff-1:again}
%Proposition 
%\ref{prop:deltaU:continuity}
again, 
we have
\begin{equation}
\label{eq:proof:edp:X:3}
\begin{split}
&{\mathbb E}^0 \bigl[ {\mathcal U}(t+h, X_{t+h}^{0},x,\mu_{t+h}) 
\, \vert \, {\mathcal F}_t^0 \bigr]
 =  {\mathbb E}^0 \bigl[ u_{t+h}(x)
\, \vert \, {\mathcal F}_t^0 \bigr]
\\
&= u_t(x)
  +
 {\mathbb E}^0 \biggl[ \int_t^{t+h}  
  \Bigl( 
  -  \tfrac12 \Delta_x u_s(x) + \hat  H\bigl(x,\nabla_x u_s(x) \bigr)  - f_s(X_s^0,x,\mu_s)  \Bigr) \ud s  
 \, \vert \, {\mathcal F}_t^0 \biggr]
\\
&= {\mathcal U}(t,X_t^0,x,\mu_t)
\\
&\hspace{10pt} 
  +
 {\mathbb E}^0 \biggl[ \int_t^{t+h}  
  \Bigl( 
  -  \tfrac12 \Delta_x {\mathcal U}(s,X_s^0,x,\mu_s) + \hat H\bigl(x,\nabla_x 
  {\mathcal U}(s,X_s^0,x,\mu_s)
   \bigr)  - f_s(X_s^0,x,\mu_s)  \Bigr) \ud s  
 \, \vert \, {\mathcal F}_t^0 \biggr]
 \\
&= {\mathcal U}(t,X_t^0,x,\mu_t)  
\\
&\hspace{10pt} 
  +
 {\mathbb E}^0 \biggl[ \int_t^{t+h}  
  \Bigl( 
  -  \tfrac12 \Delta_x {\mathcal U}(s,\tilde X_s^0,x,\mu_t) + \hat H\bigl(x,\nabla_x 
  {\mathcal U}(s,\tilde X_s^0,x,\mu_t)
   \bigr)  - f_s(\tilde X_s^0,x,\mu_t)  
\Bigr) \ud s \, \vert \, {\mathcal F}_t^0 \biggr] 
 + O(h^{3/2}). 
\end{split}
\end{equation}
\vskip 4pt

\noindent \textit{Third Step.} 
We now insert
\eqref{eq:proof:edp:X:4}
and
\eqref{eq:proof:edp:X:3}
into 
\eqref{eq:proof:edp:X:2}. 
We obtain 
\begin{equation} 
\label{eq:proof:edp:X:5}
\begin{split} 
&{\mathbb E}^0 \bigl[ {\mathcal U}(t+h,\tilde X_{t+h}^{0},x,\mu_t) 
\, \vert \, {\mathcal F}_t^0 \bigr] 
= {\mathcal U}^0(t,\tilde X_t^0,x,\mu)  
\\
&\hspace{15pt} - 
 {\mathbb E}^0 \biggl[ \tfrac12 \int_{t}^{t+h} \int_{{\mathbb T}^d} 
\Delta_y \delta_\mu {\mathcal U}(t+h,\tilde X_{s}^{0},x,\mu,y) \ud
\mu(y)  \ud s \, \vert \, {\mathcal F}_t^0 
\biggr] 
\\
&\hspace{15pt} +  {\mathbb E}^0 \biggl[ \int_t^{t+h} \int_{{\mathbb T}^d} 
\nabla_y \delta_\mu {\mathcal U}(t+h,\tilde X_{s}^0,x,\mu,y) 
\cdot 
\nabla_p \hat H\bigl(y,\nabla_x {\mathcal U}(s,\tilde X_s^0,y,\mu) \bigr) 
 \ud
\mu(y)  \ud s \, \vert \, {\mathcal F}_t^0 
\biggr] 
\\
&\hspace{15pt} 
 +
 {\mathbb E}^0 \biggl[ \int_t^{t+h}  
  \Bigl(
    -  \tfrac12 \Delta_x {\mathcal U}(s,\tilde X_s^0,x,\mu) + \hat H\bigl(x,\nabla_x 
  {\mathcal U}(s,\tilde X_s^0,x,\mu)
   \bigr)  - f_s(\tilde X_s^0,x,\mu)  
\Bigr) \ud s \, \vert \, {\mathcal F}_t^0 \biggr] 
\\
&\hspace{15pt}  + O\bigl(h^{1+({\mathbb r}-\lfloor {\mathbb s}\rfloor)/2}\bigr).
\end{split}
\end{equation}  

We are now given  a subdivision
${\boldsymbol \tau} = (t_i)_{i=0,\cdots,n}$
of $[0,T]$, with 
 $0=t_0<t_1<\cdots<t_n=T$
 and 
with stepsize $h$. Then, \eqref{eq:proof:edp:X:5} says 
\begin{equation*} 
\begin{split} 
&{\mathbb E}^0 \bigl[ {\mathcal U}(t_{i+1},\tilde X_{t_{i+1}}^{0},x,\mu)  \bigr] 
= {\mathbb E}^0 \bigl[ {\mathcal U}(t_i,\tilde X_{t_i}^0,x,\mu) \bigr] 
\\
&\hspace{15pt} - 
 {\mathbb E}^0 \biggl[ \tfrac12 \int_{t_i}^{t_{i+1}} \int_{{\mathbb T}^d} 
\Delta_y \delta_\mu {\mathcal U}(t_{i+1},\tilde X_{s}^{0},x,\mu,y) \ud
\mu(y)  \ud s  
\biggr] 
\\
&\hspace{15pt} +  {\mathbb E}^0 \biggl[ \int_{t_i}^{t_{i+1}} \int_{{\mathbb T}^d} 
\nabla_y \delta_\mu {\mathcal U}(t_{i+1},\tilde X_{s}^0,x,\mu,y) 
\cdot 
\nabla_p \hat H\bigl(y,\nabla_y {\mathcal U}(s,\tilde X_s^0,y,\mu) \bigr) 
 \ud
\mu(y)  \ud s  
\biggr] 
\\
&\hspace{15pt} 
 +
 {\mathbb E}^0 \biggl[ \int_{t_i}^{t_{i+1}}  
  \Bigl( 
      -  \tfrac12 \Delta_x {\mathcal U}(s,\tilde X_s^0,x,\mu) + \hat H\bigl(x,\nabla_x 
  {\mathcal U}(s,\tilde X_s^0,x,\mu)
   \bigr)  - f_s(\tilde X_s^0,x,\mu)  
\Bigr) \ud s   \biggr] +  O\bigl(h^{1+({\mathbb r}-\lfloor {\mathbb s}\rfloor)/2}\bigr).
\end{split}
\end{equation*}  
Letting $t({\boldsymbol \tau},s) := t_{i+1}$ if 
$s \in (t_i,t_{i+1}]$, for $i=0,\dots,n-1$, 
and summing over $i$ 
the above expansion, we get 
%This leads to 
\begin{equation*} 
\begin{split}
&{\mathcal U}(0,x_0,x,\mu) 
= {\mathbb E}^0 \bigl[ g(\tilde X_T^0,x,\mu) \bigr]
\\
&\hspace{15pt} - 
 {\mathbb E}^0 \biggl[ \tfrac12 \int_{0}^{T} \int_{{\mathbb T}^d} 
\Delta_y \delta_\mu {\mathcal U}(t({\boldsymbol \tau},s),\tilde X_{s}^{0},x,\mu,y) \ud
\mu(y)  \ud s  
\biggr] 
\\
&\hspace{15pt} +  {\mathbb E}^0 \biggl[ \int_{0}^{T} \int_{{\mathbb T}^d} 
\nabla_y \delta_\mu {\mathcal U}(t({\boldsymbol \tau},s),\tilde X_{s}^0,x,\mu,y) 
\cdot 
\nabla_p \hat H\bigl(y,\nabla_y {\mathcal U}(s,\tilde X_s^0,y,\mu) \bigr) 
 \ud
\mu(y)  \ud s  
\biggr] 
\\
&\hspace{15pt} 
 +
 {\mathbb E}^0 \biggl[ \int_{0}^{T}  
  \Bigl( 
      -  \tfrac12 \Delta_x {\mathcal U}(s,\tilde X_s^0,x,\mu) +\hat H\bigl(x,\nabla_x 
  {\mathcal U}(s,\tilde X_s^0,x,\mu)
   \bigr)  - f_s(\tilde X_s^0,x,\mu)  
\Bigr) \ud s   \biggr]  + O\bigl(h^{({\mathbb r}-\lfloor {\mathbb s}\rfloor)/2}\bigr).
\end{split} 
\end{equation*} 
Using the
boundedness and 
time continuity 
of all the derivatives 
appearing above (see 
Lemma
\ref{lem:U0:U:timereg}
and
Proposition 
\ref{prop:reg:Holder:time:deribatives}) 
and 
 observing 
that $\lim_{h \rightarrow 0} t({\boldsymbol \tau},s) = s$, we can easily let $h$ tend to $0$ in the formula.
By Lebesgue's dominated convergence theorem,  we get the expected formula. 
\end{proof}

The following result makes the connection between Propositions \ref{prop:representation:U0:PDE}
and 
\ref{prop:representation:U:PDE}
and the master equation.
\begin{corollary}
\label{corollary:U0:PDE}
Let  Assumption \hyp{\^A}
be in force. Assume also that:
\begin{enumerate}[i.]
\item  the coefficient $(t,x_0,\mu) \mapsto 
f_t^0(x_0,\mu)$ is H\"older continuous in time, uniformly in 
$(x_0,\mu)$; the coefficient $(t,x_0,x,\mu) \mapsto 
f_t(x_0,x,\mu)$ is H\"older continuous in time, 
uniformly in $(x_0,x,\mu)$;
\item $x_0 \mapsto g^0(x_0,\mu)$ has 
H\"older continuous second-order derivatives,
uniformly in $\mu$; $x_0 \mapsto g(x_0,x,\mu)$
has H\"older continuous second-order derivatives,  
uniformly in $(x,\mu)$. 
 \end{enumerate}
 Then, 
there 
exists a constant ${\mathfrak C}>0$, only depending on the parameters 
$d$, ${\mathfrak L}$,  $\kappa$, 
$\zeta$, $\sigma_0$ and 
$\mathbb{s}$
in Assumption \hyp{\^A} such that, for $T \leq {\mathfrak C}$, 
the system of two master equations
\begin{equation} 
\label{eq:major:1:small}
\begin{split} 
&\partial_t V^0(t,x_0,\mu) + 
\tfrac{1}2 \sigma_0^2 \Delta_{x_0} 
V^0(t,x_0,\mu) - \nabla_p H^0 \bigl( x_0, 
  \nabla_{x_0} V^0(t,x_0,\mu) \bigr)  \cdot  \nabla_{x_0} V^0(t,x_0,\mu) 
  \\
&\hspace{15pt}  +
  \hat{L}^0 \bigl( x_0, 
  \nabla_{x_0} V^0(t,x_0,\mu) \bigr)  
  + f_t^0(x_0,\mu)
\\
&\hspace{15pt}+ 
\int_{{\mathbb T}^d}
\bigl\{ \tfrac{1}2
{\rm div}_y(\partial_{\mu}V^0(t,x_0,\mu,y)) -\partial_\mu V^0(t,x_0,\mu,y) \cdot \nabla_p \hat H \bigl(y, \nabla_x V(t,x_0,y,\mu) \bigr) \bigr\} \ud \mu(y) =0, 
\\
&V^0(T,x_0,\mu) = g^0(x_0,\mu),
\end{split}
\end{equation}
for $(t,x_0,\mu) \in [0,T] \times {\mathbb R}^d \times {\mathcal P}({\mathbb T}^d)$, and 
\begin{equation} 
\label{eq:minor:1:small}
\begin{split} 
&\partial_t V(t,x_0,x,\mu) + 
\tfrac{1}2 
\Delta_x 
V(t,x_0,x,\mu) +\tfrac{1}{2} \sigma_0^2 \Delta_{x_0}V(t,x_0,x,\mu)- \hat H \bigl( x_0,x, \nabla_x V(t,x_0,x,\mu) \bigr)+  f_t(x_0,x,\mu)
\\
&\hspace{15pt} - \nabla_p H^0\big( x_0,  \nabla_{x_0} V^0(t,x_0,\mu)\big) \cdot \nabla_{x_0} V(t,x_0,x,\mu) 
\\
&\hspace{15pt} + \int_{{\mathbb T}^d} \bigl\{ \tfrac12 {\rm div}_y(\partial_\mu V(t,x_0,x,\mu,y)) -\partial_\mu V(t,x_0,x,\mu,y) \cdot 
\nabla_p \hat H \bigl( y, \nabla_x V(t,x_0,y,\mu)\bigr) \bigr\} \ud \mu(y) =0, 
\\
&V(T,x_0,x,\mu) = g(x_0,x,\mu),
\end{split}
\end{equation} 
for $(t,x_0,x,\mu) \in [0,T] \times {\mathbb R}^d \times {\mathbb T}^d \times {\mathcal P}({\mathbb T}^d)$,
has a classical solution $(V^0,V)$ in the sense that 
\begin{enumerate}
\item $(t,x_0,\mu) \mapsto (\partial_t V^0(t,x_0,\mu),  \nabla_{x_0} V^0(t,x_0,\mu), \nabla^2_{x_0} V^0(t,x_0,\mu))$ is continuous on $[0,T] \times {\mathbb R}^d \times {\mathcal P}({\mathbb T}^d)$; 
$(t,x_0,\mu,y) \mapsto (\partial_\mu V^0(t,x_0,\mu,y), \nabla_y \partial_\mu V^0(t,x_0,\mu,y))$ is continuous on $[0,T] \times {\mathbb R}^d \times {\mathcal P}({\mathbb T}^d) \times {\mathbb T}^d$; 
\item $(t,x_0,x,\mu) \mapsto (\partial_t V(t,x_0,x,\mu),  \nabla_{x_0} V(t,x_0,x,\mu), \nabla^2_{x_0} V(t,x_0,x,\mu), 
 \nabla_{x} V(t,x_0,x,\mu), \nabla^2_{x} V(t,x_0,x,\mu))$ is continuous on $[0,T] \times {\mathbb R}^d \times {\mathbb T}^d \times {\mathcal P}({\mathbb T}^d)$; 
$(t,x_0,x,\mu,y) \mapsto (\partial_\mu V(t,x_0,x,\mu,y), \nabla_y \partial_\mu V(t,x_0,x,\mu,y))$ is continuous on $[0,T] \times {\mathbb R}^d \times {\mathbb T}^d \times {\mathcal P}({\mathbb T}^d) \times {\mathbb T}^d$, 
\item $(t,x_0,\mu) \mapsto \nabla_{x_0} V^0(t,x_0,\mu)$ and $(t,x_0,x,\mu) \mapsto ( \nabla_{x_0} V(t,x_0,x,\mu),
 \nabla_x V(t,x_0,x,\mu))$ are Lipschitz 
continuous with respect to $(x_0,\mu)$ and $(x_0,x,\mu)$ respectively 
(using the distance ${\mathbb W}_1$ to handle the argument $\mu$), uniformly in $t$.
\end{enumerate}
One solution is given by the master fields 
${\mathcal U}^0$ and 
${\mathcal U}$ 
introduced in 
\eqref{eq:U0:rep}
and
\eqref{eq:U:rep}. 
In particular, 
${\mathcal U}^0$ and ${\mathcal U}$ have 
first-order derivative in $t$ and 
second-order derivatives in $x_0$ and these derivatives are jointly continuous (with respect to all their arguments). 
Moreover, for any $t \in [0,T]$, 
the mapping 
$x_0 \mapsto {\mathcal U}^0(t,x_0,\mu)$ has 
H\"older continuous second-order derivatives,
uniformly in $\mu$, and the mapping $x_0 \mapsto {\mathcal U}^0(t,x_0,x,\mu)$
has H\"older continuous second-order derivatives, 
uniformly in $(x,\mu)$. 
\end{corollary}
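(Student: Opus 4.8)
The plan is to read off the two master equations directly from the probabilistic representation formulas in Propositions \ref{prop:representation:U0:PDE} and \ref{prop:representation:U:PDE}, much as one derives a PDE from a Feynman--Kac formula. First I would observe that the bounds and regularity already established in the previous subsections—namely the spatial $\mathscr C$- and $\mathscr D$-type regularity of ${\mathcal U}^0$ and ${\mathcal U}$ from Corollary \ref{corol:U:U0:C:C0} and Proposition \ref{prop:deltaU:continuity}, together with the time-H\"older regularity of the first-order derivatives from Proposition \ref{prop:reg:Holder:time:deribatives}—mean that all the integrands appearing on the right-hand sides of \eqref{prop:representation:U0:PDE:00} and \eqref{prop:representation:U0:PDE:11} are continuous and bounded. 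The strategy is then to fix $(t_0,x_0,\mu)$, write the representation on the interval $[t_0,t_0+h]$ (restarting the diffusion $\tilde{\boldsymbol X}^0$ from $x_0$ at $t_0$), subtract $\mathcal U^0(t_0,x_0,\mu)$ (resp. $\mathcal U(t_0,x_0,x,\mu)$) from both sides, divide by $h$, and let $h\to 0$: the left-hand side produces $\partial_t$ applied to the field along the characteristic plus the It\^o/Kolmogorov second-order term $\tfrac12\sigma_0^2\Delta_{x_0}$ coming from the Brownian part of $\tilde{\boldsymbol X}^0$ in \eqref{eq:tilde:X:0:U0}, while the right-hand integrands, being continuous in time, contribute their value at $t_0$.

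More concretely, for ${\mathcal U}^0$ I would argue as follows. Proposition \ref{prop:representation:U0:PDE} gives $\mathcal U^0(t_0,x_0,\mu) = \mathbb E^0[\mathcal U^0(t_0+h,\tilde X^0_{t_0+h},\mu)] + $ (integral terms over $[t_0,t_0+h]$). To extract the PDE, I need $\mathcal U^0$ to be $C^{1,2}$ in $(t,x_0)$; this is \emph{not} yet known, so the order of operations matters. I would first use the representation together with interior Schauder estimates for the linear PDE $\partial_t V^0 + \tfrac12\sigma_0^2\Delta_{x_0}V^0 = \Phi$, where $\Phi$ collects the (now known to be continuous, and H\"older in time by Proposition \ref{prop:reg:Holder:time:deribatives} and hypothesis (i)) right-hand side of \eqref{eq:major:1:small}: applying Proposition \ref{prop:representation:U0:PDE} shows that $\mathcal U^0$ is the unique bounded mild/viscosity solution of this linear parabolic equation with terminal datum $g^0$, and since $g^0$ has H\"older second-order $x_0$-derivatives by hypothesis (ii) and $\Phi$ is H\"older in time uniformly in space, parabolic Schauder theory upgrades $\mathcal U^0$ to a classical $C^{1,2}$ (in $(t,x_0)$) solution with the stated joint continuity, and in fact with H\"older second-order $x_0$-derivatives uniformly in $\mu$. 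The same scheme, using Proposition \ref{prop:representation:U:PDE} and treating $x$ and $x_0$ symmetrically (the extra $\tfrac12\Delta_x$ term appears because $u_t$ itself solves a parabolic HJB equation in $x$), gives the classical solvability of \eqref{eq:minor:1:small}.

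The algebraic check that the classical solution thus obtained actually satisfies the master equations in the precise form \eqref{eq:major:1:small}--\eqref{eq:minor:1:small}—rather than merely the linear surrogate—comes from matching the $\mathrm{d}s$-integrands in the representation formulas term by term: the $\hat L^0$ term combined with $-\nabla_p H^0\cdot\nabla_{x_0}\mathcal U^0$ reproduces exactly the first-order Hamiltonian part of \eqref{eq:major:1:small} (using that $Z^0_s=\nabla_{x_0}\mathcal U^0$ along characteristics, by Proposition \ref{prop:representation:Z0}), while the $\mu$-integrals with $\tfrac12\operatorname{div}_y\partial_\mu$ and $-\partial_\mu\cdot\nabla_p\hat H$ reproduce the measure-derivative transport term; the identity $\Delta_y\delta_\mu\mathcal U^0 = \operatorname{div}_y\partial_\mu\mathcal U^0$ handles the notational discrepancy. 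I expect the main obstacle to be the justification of the $C^{1,2}$-regularity upgrade: one must verify carefully that the right-hand side $\Phi$ of the linear equation is H\"older continuous \emph{jointly} in $(t,x_0)$ and not merely continuous—this requires combining the H\"older-in-time control of $\nabla_y\delta_\mu\mathcal U^0$, $\Delta_y\delta_\mu\mathcal U^0$ and $\nabla_x\mathcal U$ from Proposition \ref{prop:reg:Holder:time:deribatives} with their Lipschitz-in-$x_0$ control from Proposition \ref{prop:deltaU:continuity}, paying attention to the compact-set argument on $\mathcal P(\mathbb T^d)$ needed to keep all constants uniform in $\mu$—and then invoking the standard parabolic Schauder estimate on $\mathbb R^d$ (or, for the minor equation, jointly on $\mathbb R^d\times\mathbb T^d$) with care that the coefficient $\sigma_0^2$ and the domain are fixed. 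Everything else—Lebesgue's theorem to pass $h\to 0$, the summation-over-a-mesh device already used inside the proofs of Propositions \ref{prop:representation:U0:PDE}--\ref{prop:representation:U:PDE}, and the final term-by-term identification—is routine.
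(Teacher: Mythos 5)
Your proof is correct and follows essentially the same route as the paper's: both read off the master equations from the Feynman--Kac representations in Propositions \ref{prop:representation:U0:PDE} and \ref{prop:representation:U:PDE}, view $\mathcal U^0$ and $\mathcal U$ as solutions of $\mu$-parametrized linear parabolic equations with H\"older data (the key technical point being the joint $(t,x_0)$-H\"older continuity of the integrands, obtained by combining Proposition \ref{prop:deltaU:continuity} with Proposition \ref{prop:reg:Holder:time:deribatives}), and then upgrade to $C^{1,2}$ via parabolic Schauder estimates, with $\mu$-uniform bounds plus compactness giving joint continuity in $(t,x_0,\mu)$. The only cosmetic difference is that you absorb the drift $-\nabla_p H^0\bigl(x_0,\nabla_{x_0}\mathcal U^0\bigr)\cdot\nabla_{x_0}$ into the source $\Phi$, whereas the paper keeps it in the operator of the auxiliary linear PDE $(E^\mu)$; this is harmless because $\nabla_{x_0}\mathcal U^0$ is already known to be H\"older in $(t,x_0)$, so the two mild formulations coincide, but you should note that the Feynman--Kac formula is written along the drifted diffusion $\tilde{\boldsymbol X}^0$ of \eqref{eq:tilde:X:0:U0}, so it directly furnishes the mild solution of the transport--diffusion equation rather than of the pure heat equation you quote.
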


\begin{proof}
We use Propositions \ref{prop:representation:U0:PDE}
and 
\ref{prop:representation:U:PDE}
to prove that ${\mathcal U}^0$ and 
${\mathcal U}$ solve equations \eqref{eq:major:1:small} and \eqref{eq:minor:1:small}
respectively. 

We start with \eqref{eq:major:1:small}. 
We claim that all the terms entering the time integrals in the right-hand side of 
\eqref{prop:representation:U0:PDE:00}
are H\"older continuous in the pair $(s,\tilde X_s^0)$, uniformly in the other parameters. 
For the term on the first line of 
\eqref{prop:representation:U0:PDE:00}, this follows from the fact that, for all 
$t \in [0,T]$, 
${\mathcal U}^0(t,\cdot,\cdot)$ belongs to ${\mathscr D}^0(C,2)$ for some $C \geq 0$
(see Proposition \ref{prop:deltaU:continuity})
and that its derivatives are H\"older continuous in time (see 
Proposition 
\ref{prop:reg:Holder:time:deribatives}).
For the second line of
\eqref{prop:representation:U0:PDE:00}, 
the regularity of the term 
$\nabla_y \delta_\mu {\mathcal U}^0(s,\tilde X_s^0,\mu,y)$ follows from the same argument. 
The regularity 
of the term 
$\nabla_x  {\mathcal U}(s,\tilde X_s^0,y,\mu)$ also follows from the fact that ${\mathcal U}(t,\cdot,\cdot,\cdot)$ belongs to ${\mathscr D}(C,1,2)$
for all $t \in [0,T]$,
and that its derivatives in $x$ are H\"older continuous in time (see Lemma \ref{lem:U0:U:timereg}).
The argument is the same for the 
terms on 
the third line of 
\eqref{prop:representation:U0:PDE:00}. 

Moreover, we observe that the drift of the SDE
\eqref{eq:tilde:X:0:U0} is also H\"older continuous in the arguments 
$(t,\tilde X_t^0)$. 

And then we can interpret  
\eqref{prop:representation:U0:PDE:00}
as the Kolmogorov formula for 
representing ${\mathcal U}^0(t_0,x_0,\mu)$
as the solution to a second-order PDE, denoted $(E^{\mu})$, in the variables $(t_0,x_0) \in [0,T] \times {\mathbb R}^d$. 
This second-order PDE $(E^{\mu})$ is driven by the generator of 
\eqref{eq:tilde:X:0:U0}. It has $g^0(\cdot,\mu)$ as terminal condition and the remaining three terms 
in the right-hand side of \eqref{prop:representation:U0:PDE:00}
as source terms. By standard Schauder's theory for uniformly parabolic PDEs with H\"older continuous coefficients, the PDE $(E^{\mu})$ has a classical solution with second H\"older derivatives that are H\"older continuous 
in time and space. 
This says two things. First, for a fixed $\mu$, $(t_0,x_0) \mapsto {\mathcal U}^0(t_0,x_0,\mu)$ solves the 
PDE $(E^{\mu})$. 
By expanding all the terms, one easily recovers the master equation 
 \eqref{eq:major:1:small}. 
 In particular, ${\mathcal U}^0$ has a first-order derivative in time and
 second-order derivatives in $x_0$. 
 Second, those derivatives are jointly continuous in $(t,x_0,\mu)$ (the proof is given a few lines below). In particular, 
 ${\mathcal U}^0$ satisfies items 1 and 3 in the statement 
 (properties of the first order derivatives of ${\mathcal U}^0$ in $x_0$ and $\mu$ 
 follow directly from Propositions 
\ref{prop:deltaU:continuity}
and
\ref{prop:reg:Holder:time:deribatives}). 
Joint regularity of $\partial_t {\mathcal U}^0$ and $\nabla^2_{x_0} {\mathcal U}^0$ 
in $(t,x_0,\mu)$ follows
from Schauder's theory again: the $(t,x_0)$-H\"older norms 
of $\partial_t {\mathcal U}^0$ and $\nabla^2_{x_0} {\mathcal U}^0$
can be bounded independently of $\mu$; therefore, the latter two derivatives are locally compact 
(in the space of H\"older functions) when $\mu$ varies; using continuity of the coefficients 
of 
$(E^{\mu})$ with respect to the argument $\mu$, one easily deduces that 
$\partial_t {\mathcal U}^0$ and $\nabla^2_{x_0} {\mathcal U}^0$ 
are indeed jointly continuous (using in particular the fact that 
${\mathcal U}^0(t,\cdot,\cdot) \in {\mathscr D}^0(C,2+({\mathbb s}-\lfloor {\mathbb s} \rfloor)/2)$ for some $C \geq 0$). 

Thanks to Proposition \ref{prop:representation:U:PDE}, one can proceed in a similar way to prove that 
${\mathcal U}$ solves the master equation 
\eqref{eq:minor:1:small} and satisfies the properties 
claimed in the statement.

\end{proof}

\subsection{Back to the original coefficients}
\label{subse:original:coeff}

We now want to come back to the original system 
\eqref{eq:major:FB:1}--\eqref{eq:minor:FB:2}. 
To do this, we use the following lemma: 
\begin{lemma}
\label{lem:H:approx}
Assume that 
$L^0$ and 
$L$ 
are as in 
Assumption \hyp{A}
and define the corresponding Hamiltonians 
$H^0$ and $H$. 
Then, for any given $R >0$, there exist two 
coefficients $\hat{L}^0$ and $\hat{H}$ satisfying Assumption \hyp{\^A} with respect to 
a new constant $\kappa$
and a function $\zeta$ only depending on the parameters 
$\kappa$ and $\lambda$ 
in \hyp{A},  such that, 
for any $(x,x_0,p) \in {\mathbb T}^d \times {\mathbb R}^d \times {\mathbb R}^d$, 
with $\vert p \vert \leq R$, 
\begin{equation}
\label{eq:lem:H:approx}
\hat{H}(x,p) = H(x,p), \quad \hat{L}^0(x_0,p) = L^0(x_0,-\nabla_p H^0(x_0,p)).
\end{equation}
\end{lemma}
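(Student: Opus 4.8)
The plan is a construction by truncation of the gradient variable, together with a verification that the resulting functions satisfy Assumption \hyp{\^A}. Fix $R>0$ and recall from \hyp{A5} that, under Assumption \hyp{A}, the Hamiltonian $H$ is $\lambda'$-strictly convex in $p$, has continuous derivatives up to order $\lfloor \curss \rfloor+1$ in $(x,p)$, all of them bounded by $\kappa'$ except $\nabla_p H$ which has linear growth. The issue is purely the quadratic growth of $H$ (and of the composition $L^0(x_0,-\nabla_pH^0(x_0,p))$) in $p$, which prevents the global Lipschitz bounds needed in Section~\ref{se:5}. First I would introduce a smooth cut-off: pick a function $\chi\in C^\infty({\mathbb R}^d;{\mathbb R}^d)$ with $\chi(p)=p$ for $|p|\le R$, $|\chi(p)|\le 2R$ for all $p$, $|\nabla\chi|\le C$, and with all higher derivatives of $\chi$ bounded; such a $\chi$ is standard to build (e.g. $\chi(p)=\varphi(|p|^2)p$ for a suitable cut-off $\varphi$, or a mollified radial truncation). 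Then set $\hat H(x,p):=H(x,\chi(p))$ when $|p|$ is large, interpolating near $\{|p|=R\}$ so that coercivity is retained; more precisely one takes $\hat H(x,p):=H(x,p)$ for $|p|\le R$ and, for $|p|>R$, one continues $H$ by a function that agrees with $H$ to second order on $\{|p|=R\}$ and becomes, say, quadratic with a fixed positive-definite Hessian for $|p|\ge 2R$. Similarly set $\hat L^0(x_0,p):=L^0(x_0,-\nabla_pH^0(x_0,p))$ for $|p|\le R$, and extend it for $|p|>R$ to a globally Lipschitz function of $(x_0,p)$ with all derivatives of order $\ge 2$ bounded; since $p\mapsto L^0(x_0,-\nabla_pH^0(x_0,p))$ has the linear-growth gradient $-Z_t^0\mapsto L^0$'s derivative, already quantified in \eqref{eq:Hamiltonians:Fenchel} and \hyp{A2}, the extension is again a routine mollified-truncation argument.

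The key steps, in order, are: (1) construct $\hat H$ on $\{|p|\le R\}$ equal to $H$, and extend smoothly so that $\nabla^2_{pp}\hat H(x,p)>0$ everywhere — this uses that $\nabla^2_{pp}H\ge \lambda' I_d$ on the (compact in $p$) region $\{|p|\le 2R\}$, so one can glue to a fixed quadratic tail and the resulting Hessian stays bounded below by a constant $\zeta(r)$ depending only on $r$, $\lambda'$, $\kappa'$; (2) check \hyp{\^A3}: by construction all derivatives of $\hat H$ of order $\ge 2$ in $(x,p)$ are bounded (on $\{|p|\le 2R\}$ they equal those of $H$, bounded by $\kappa'$; on $\{|p|\ge 2R\}$ they come from the quadratic tail; in between they are controlled by $\chi$ and its derivatives), and $\nabla_p\hat H$ is globally bounded since $\hat H$ is now globally Lipschitz in $p$; the part of \hyp{\^A3} concerning $H^0$ (not $\hat H$) is unchanged and follows directly from \hyp{A5}; (3) do the analogous verification for $\hat L^0$: by \eqref{eq:Hamiltonians:Fenchel}, on $\{|p|\le R\}$ we have $\hat L^0(x_0,p)=p\cdot\nabla_pH^0(x_0,p)-H^0(x_0,p)$, whose derivatives up to order $\lfloor\curss\rfloor+1$ in $(x_0,p)$ are expressible via those of $H^0$ and are bounded by a constant depending on $\kappa'$; the global extension then makes $\hat L^0$ Lipschitz with the linear growth bound $|\hat L^0(x_0,p)|\le \kappa(1+|p|)$; (4) record that \eqref{eq:lem:H:approx} holds by construction on $\{|p|\le R\}$, and that the new constant $\kappa$ and the modulus $\zeta$ depend only on $d$, $R$, and the data $\kappa,\lambda$ (hence $\kappa',\lambda'$) of \hyp{A}.

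The main obstacle, though not a deep one, is the simultaneous requirement that the extension of $H$ be (a) smooth enough, i.e. $C^{\lfloor\curss\rfloor+1}$ in $(x,p)$, (b) globally Lipschitz in $p$, and (c) uniformly convex in $p$ with a Hessian lower bound that can be fed into the function $\zeta$ of \hyp{\^A4}; one cannot simply compose with a Lipschitz truncation $\chi$ because that destroys convexity, so the gluing near $\{|p|=R\}$ must be done with care (for instance by writing $\hat H(x,p)=H(x,p)$ plus a correction supported in $\{|p|\ge R\}$ that is itself convex and smooth, chosen so that the sum transitions to a quadratic tail; convexity of the sum follows from convexity of each summand on the overlap region). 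Once this gluing is fixed, all the claimed bounds are immediate from the chain rule and the compactness of $\{|p|\le 2R\}$, and the dependence of the constants on the parameters of \hyp{A} is transparent. I would also note that the index $\mathbb{s}>3$ required in \hyp{\^A} is guaranteed because $\curss>d/2+5\ge 5$ in \hyp{A}, so taking $\mathbb{s}$ equal to (or slightly below) $\curss$ suffices, and the regularity of $f^0,f,g^0,g$ demanded by \hyp{\^A1}--\hyp{\^A2} is a direct consequence of \hyp{A1} (with $\mathfrak{L}$ and $\hat\kappa$ extracted from the ${\mathscr D}^0$- and ${\mathscr D}$-norms of $g^0$ and $g$).
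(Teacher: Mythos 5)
Your proposal correctly identifies the main obstacle — post-composing $H$ with a Lipschitz truncation $\chi$ destroys convexity — but the fix you offer does not close the gap. You propose writing $\hat{H}=H+K$ for a correction $K$ that is ``itself convex and smooth, chosen so that the sum transitions to a quadratic tail,'' and you invoke convexity of each summand on the overlap. This decomposition is internally inconsistent. On $\{|p|\geq 2R\}$ you want $\hat{H}$ to equal a fixed quadratic $Q$, so there $K=Q-H$; under \hyp{A5} the map $p\mapsto H(x,p)$ is $\lambda'$-strictly convex and has its own quadratic growth with $\nabla^2_{pp}H$ possibly exceeding the Hessian of $Q$, in which case $Q-H$ is \emph{concave}. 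Truncation must lower the effective growth of $H$, and you cannot achieve that by adding a convex function; the ``convexity of each summand'' claim therefore begs exactly the question it was meant to answer. You acknowledge that the gluing ``must be done with care,'' but the sketch gives no mechanism by which the transition region $\{R<|p|<2R\}$ actually retains a positive Hessian.

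The paper's construction resolves this by a genuinely different balancing. It sets
\begin{equation*}
\hat{H}(x,p) := \Bigl( H(x,p) - \tfrac{\lambda'}{4}|p|^2 \Bigr)\,\zeta(p) + \tfrac{\lambda'}{2}\Phi(p),
\end{equation*}
where $\zeta$ is a radial cut-off equal to $1$ on $\{|p|\leq R\}$, vanishing for $|p|\geq R^2$, and — this is the crucial quantitative point — with small derivatives $|\nabla\zeta|\leq c/R^2$, $|\nabla^2\zeta|\leq c/R^4$; and $\Phi(p)=\varphi(|p|)$ is a convex radial function equal to $\tfrac12|p|^2$ near the origin with bounded gradient at infinity. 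Differentiating twice, the dangerous cross-terms from $\nabla\zeta$, $\nabla^2\zeta$ acting on the (still convex, since $H-\tfrac{\lambda'}{4}|p|^2$ is $\tfrac{\lambda'}{2}$-convex) localized piece are of order $|p|/R^2+|p|^2/R^4$ on the annulus, while the explicit term $\tfrac{\lambda'}{2}\nabla^2_{pp}\Phi$ supplies a lower bound of order $\lambda'|p|/R$ there; for $R$ large enough the latter dominates and $\nabla^2_{pp}\hat{H}>0$ everywhere. So the key idea is to make the cut-off slow (derivatives $O(1/R^2)$, $O(1/R^4)$) and pay for the lost convexity with an explicit radial convex term, using $R$ as the small parameter — a degree of freedom you do not exploit (and no loss arises from taking $R$ large, since agreement on $\{|p|\leq R'\}$ for $R'\geq R$ is a fortiori agreement on $\{|p|\leq R\}$). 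Your treatment of $\hat{L}^0$ is, by contrast, essentially right and close to the paper's, which simply sets $\hat{L}^0(x_0,p):=L^0\bigl(x_0,-\zeta(p)\nabla_pH^0(x_0,p)\bigr)$; no convexity in $p$ is demanded there, so a routine truncation does suffice.
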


\begin{proof}
Without any loss of generality, we can assume $R>2$. We then introduce two auxiliary functions. We call $\zeta$ a smooth cut-off function 
from ${\mathbb R}^d$ to $[0,1]$ such that $\zeta(p)=1$ for $\vert p\vert \leq R$, 
$\zeta(p)=0$ for $\vert p\vert \geq R^2$, 
and
$\vert \nabla_{p} \zeta(p) \vert \leq c/R^2$ 
 and 
$\vert \nabla_{pp}^2 \zeta(p) \vert \leq c/R^4$ for any $p$, where $c>0$ is a universal constant. 
We call $\varphi$ an even smooth function from ${\mathbb R}$ to ${\mathbb R}$ such that 
$\varphi''$ is non-decreasing on $[0,R^2]$ and non-increasing
on $[R^2,+\infty)$ with 
$\varphi''(r)=1$ for 
$r \in [0,R]$, 
$\varphi''(r)=r/R$
for $r \in [2R,R^2]$,
and 
 $\varphi''(r) = 2R^5/r^2$ for $r \geq 2R^2$. Assuming that $\varphi(0)=0$ and $\varphi'(0)=0$, we have 
$\varphi(r)=r^2/2$ for $r \in [-R,R]$, 
$\varphi'(r) \geq r^2/(2R)$ for $r \in [2R,R^2]$ and 
$ \varphi'(r) \in (0, 4R^3]$ for all $r \in {\mathbb R} \setminus \{0\}$. We then write $\Phi(p)$ for $\varphi(\vert p\vert)$. 
The function $\Phi$ is smooth on ${\mathbb R}^d$ and 
\begin{equation} 
\label{eq:approx:new:construction:Phi}
\nabla^2_{pp} \Phi(p) =  
\frac{ \varphi'(\vert p \vert)}{\vert p \vert}  
\Bigl[ 1 - 
\frac{p}{\vert p \vert} \otimes  \frac{p}{\vert p \vert} \Bigr]
+
\varphi''(\vert p \vert)  \frac{p}{\vert p \vert} \otimes  \frac{p}{\vert p \vert}, \quad p \in {\mathbb R}^d \setminus \{0\}. 
\end{equation} 
In particular, $\nabla^2_{pp} \Phi(p)=I_d$ for $\vert p \vert \leq R$.
For $\vert p \vert \in [R,R^2]$, 
$\nabla^2_{pp} \Phi(p)\geq [\vert p \vert/(2R)]I_d$
(in the sense of comparison of symmetric matrices) and, 
for $\vert p \vert > R^2$, $\nabla^2_{pp} \Phi(p)$ is positive definite. 

With $\lambda'$ as in \hyp{A5}, we consider 
\begin{equation*} 
\hat{H}(x,p) := \Bigl( H(x,p) - \tfrac{\lambda'}4 \vert p \vert^2 \Bigr) \zeta(p) + 
\tfrac{\lambda'}{2}
\Phi(p). 
\end{equation*} 
For $\vert p \vert \leq R$, 
\begin{equation*} 
\hat{H}(x,p) = 
H(x,p) - \tfrac{\lambda'}4 \vert p \vert^2 + 
\tfrac{\lambda'}4 \vert p \vert^2 
= H(x,p). 
\end{equation*} 
Moreover, 
\begin{equation*} 
\begin{split}
\nabla^2_{pp} 
\hat{H}(x,p) 
&= \Bigl( \nabla^2_{pp} H(x,p) - \tfrac{\lambda'}2 I_d \Bigr) \zeta(p) + 
\Bigl( \nabla_{p} H(x,p) - \tfrac{\lambda'}2 p \Bigr) 
\otimes \nabla \zeta(p) 
+
\nabla \zeta(p) 
\otimes 
\Bigl( \nabla_{p} H(x,p) - \tfrac{\lambda'}2 p \Bigr) 
\\
&\hspace{15pt}
+ \Bigl( H(x,p) - \tfrac{\lambda'}4 \vert p \vert^2 \Bigr) \nabla^2_{pp} \zeta(p)
+ \tfrac{\lambda'}{2}
\nabla^2_{pp} \Phi(p).
\end{split}
\end{equation*}
And then, using \hyp{A5}, we know that 
(in the sense of comparison of symmetric matrices) 
\begin{equation*} 
\begin{split}
&\nabla^2_{pp} 
\hat{H}(x,p) \geq - 
C \Bigl(  \vert p \vert \times \frac{1}{R^2}
+
  \vert p \vert^2 \times \frac{1}{R^4}
  \Bigr) 
{\mathbf 1}_{\{\vert p \vert \in [R,R^2]\}}
I_d
+  \tfrac{\lambda'}{2} \nabla^2_{pp} \Phi(p),
\end{split}
\end{equation*}
for a constant $C$ only depending on the properties of $H$. 
And then, by 
\eqref{eq:approx:new:construction:Phi}, we get, 
for $\vert p \vert \in [R,R^2]$, 
$\nabla^2_{pp} 
\hat{H}(x,p) \geq 
[\lambda' \vert p \vert/(4R)
- C (  \vert p \vert /R^2
+
  \vert p \vert^2 /R^4 
)]  I_d = 
(\vert p \vert/R) [\lambda'/4
- C /R - 
 C \vert p \vert /R^3 
] I_d.$ 
%Using the fact that 
%$\varphi$ is non-decreasing and coincides with the identity on $[-2R^2,2R^2]$, we easily deduce 
%\begin{equation*} 
%\begin{split}
%&\nabla^2_{pp} 
%\hat{H}(x,p) \geq - 
%\frac{C}{R}
%{\mathbf 1}_{\{\vert p \vert \in [R,2R^2]\}}
%+  \tfrac{\lambda}{2} {\mathbf 1}_{\{\vert p \vert \leq 2R^2\}}
%+
%\tfrac{\lambda'}{2} \nabla^2_{pp} \Phi(p) {\mathbf 1}_{\{\vert p \vert > 2R^2\}}.
%\end{split}
%\end{equation*}
For $R$ large enough, the right-hand side is positive. And, then, it is straightforward to deduce that 
$\hat{H}$ satisfies \hyp{\^A4}. Property 
\hyp{\^A3} follows from the fact that $\varphi$ has bounded derivatives (of order greater than 1) and thus $\Phi$ also has bounded derivatives (of order greater than 1). 

As for the construction of $\hat{L}^0(x_0,p)$, it suffices to consider
$L^0(x_0,-\zeta(p) \nabla_p H^0(x_0,p) )$. 
\end{proof}

\begin{proposition}
\label{prop:H:approx}
Assume that 
$(f_t^0)_{0 \le t \le T}$, 
$(f_t)_{0 \le t \le T}$, 
$L^0$ and 
$L$ 
are as in 
Assumptions \hyp{A1}, \hyp{A2} and \hyp{A4} 
and define the corresponding Hamiltonians 
$H^0$ and $H$. 
Consider $g^0$ and $g$ as in 
\hyp{\^A2}, for a certain ${\mathbb s} \in (3,\lfloor \curss \rfloor- (d/2+1)]$. 
Given $R_0\geq 2 {\mathfrak L}$ (with ${\mathfrak L}$ as in 
\hyp{\^A2}), there exists a constant
${\mathfrak C} >0$, only depending on 
$d$, $\kappa$, ${\mathfrak L}$, $\lambda$, $R_0$, $\sigma_0$
and $\mathbb{s}$, such that, for $T \leq {\mathfrak C}$, 
the system 
\eqref{eq:major:FB:1:small}--\eqref{eq:minor:FB:2:small}
with $\hat{H}$ and $\hat{L}^0$ given by Lemma 
\ref{lem:H:approx}
  is uniquely solvable (in the sense of 
Theorem 
\ref{thm:local:FB}) and satisfies the 
following conclusions for ${\mathbb r} :=( \lfloor {\mathbb s} \rfloor + {\mathbb s})/2$
and for another constant $C$, only depending
on the parameters in \hyp{A}:
\begin{enumerate}[i.]
\item For all $t \in [0,T]$, ${\mathcal U}^0(t,\cdot,\cdot) \in {\mathscr D}^0(C, {\mathbb r}  - 1)$
and 
${\mathcal U}(t,\cdot,\cdot,\cdot) \in {\mathscr D}(C,   {\mathbb r}  - 1,{\mathbb s})$;
\item With the same notation for the norm 
$\| \cdot \|_{{\mathbb r},{\mathbb r}-1}$ as in footnote
\ref{foo:2:26}, 
\begin{equation*}
\begin{split}
&\lim_{h\to 0}\Big(\big\|\nabla_{x_0} {\mathcal U}(t+h,x_0,\cdot,\mu)-\nabla_{x_0}{\mathcal U}(t,x_0,\cdot,\mu)\big\|_{\mathbb{r}}
 +%\sup_{l\in\{0,1,\cdots,\lfloor {\mathbb s} \rfloor-1\}^d,|l|\leq \lfloor  {\mathbb s} \rfloor-1}
\big\| \delta_\mu {\mathcal U}(t+h,x_0,\cdot,\mu,\cdot)- \delta_\mu {\mathcal U}(t,x_0,\cdot,\mu,\cdot)\big\|_{\mathbb{r},\mathbb{r}-1}
\Bigr) =0,
\end{split}
\end{equation*}
and
\begin{equation*}
\begin{split}
&\lim_{h\to 0}\Big(\big|\nabla_{x_0} {\mathcal U}^0(t+h,x_0,\mu)-\nabla_{x_0}
{\mathcal U}^0(t,x_0,\mu)\big| +%\sup_{l\in\{0,1,\cdots,\lfloor {\mathbb s} \rfloor-1\}^d,|l|\leq \lfloor  {\mathbb s} \rfloor-1}
\big\| \delta_\mu {\mathcal U}^0(t+h,x_0,\mu,\cdot)- \delta_\mu {\mathcal U}^0(t,x_0,\mu,\cdot)\big\|_{\mathbb{r}-1}\Big)=0;
\end{split}
\end{equation*}
\item The gradients
 $(t,x_0,\mu) \mapsto \nabla_{x_0} {\mathcal U}^0(t,x_0,\mu)$ 
 and
 $(t,x_0,x,\mu) \mapsto \nabla_{x} {\mathcal U}(t,x_0,x,\mu)$ 
 are bounded by 
$R_0$. In particular, the solution to \eqref{eq:major:FB:1:small}--\eqref{eq:minor:FB:2:small}
is also a solution to 
\eqref{eq:major:FB:1}--\eqref{eq:minor:FB:2} and any other solution to 
\eqref{eq:major:FB:1}--\eqref{eq:minor:FB:2} satisfying 
$\| \int_0^{\cdot} Z_s^0 \cdot \ud B_s^0 \|_{\rm BMO} \leq 
R_0$
and 
$\sup_{t \in [0,T]} \vert \nabla_x u_t\vert 
\leq R_0$
 coincides with the unique 
solution of 
\eqref{eq:major:FB:1:small}--\eqref{eq:minor:FB:2:small};
\item If the coefficients $f^0$, $f$, $g^0$ and $g$ satisfy the additional conditions of 
Corollary 
\ref{corollary:U0:PDE}, then 
$({\mathcal U}^0,{\mathcal U})$ solves the 
master equations 
\eqref{eq:major:1}--\eqref{eq:minor:1}. 
\end{enumerate}
\end{proposition}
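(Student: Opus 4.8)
\textbf{Step 1: reduction to Assumption \hyp{\^A}.} First I would invoke Lemma \ref{lem:H:approx} with $R:=R_0$ to produce globally Lipschitz coefficients $\hat H$ and $\hat L^0$ satisfying \eqref{eq:lem:H:approx}, and check that the sextuple $(f^0,f,g^0,g,\hat L^0,\hat H)$ satisfies Assumption \hyp{\^A} for the given index $\mathbb s$: \hyp{\^A1} is \hyp{A1}, \hyp{\^A2} is the hypothesis made on $g^0,g$, and \hyp{\^A3}--\hyp{\^A4} are precisely the output of Lemma \ref{lem:H:approx}. The only bookkeeping point worth stressing is that the constant $\kappa$ and the coercivity function $\zeta$ furnished by Lemma \ref{lem:H:approx} depend only on $d$, $\kappa$, $\lambda$ and $R_0$, so that all thresholds $\mathfrak C$ and constants $C$ produced below, once traced through Section \ref{se:5}, depend only on $d$, $\kappa$, $\mathfrak L$, $\lambda$, $R_0$, $\sigma_0$, $\mathbb s$, as required.

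\textbf{Step 2: items (i) and (ii).} With \hyp{\^A} in force, Theorem \ref{thm:local:FB} gives, for $T$ below a first threshold, unique solvability of \eqref{eq:major:FB:1:small}--\eqref{eq:minor:FB:2:small}, whence the master fields ${\mathcal U}^0,{\mathcal U}$ are well defined by \eqref{eq:U0:rep}--\eqref{eq:U:rep}. Choosing $\mathbb r:=(\lfloor\mathbb s\rfloor+\mathbb s)/2$, item (i) is exactly the last assertion of Proposition \ref{prop:deltaU:continuity} (whose proof already incorporates Proposition \ref{prop:local:1stdiff} and Lemma \ref{lem:deltau:reginy}), and item (ii) is Proposition \ref{prop:se:5:continuity:time} (quantitatively, Proposition \ref{prop:reg:Holder:time:deribatives}). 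Both require $T$ below the thresholds of those statements, so I would simply take $\mathfrak C$ to be the minimum of all thresholds encountered.

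\textbf{Step 3: item (iii).} The uniform gradient bound is the genuinely new ingredient. At $t=T$ one has $\nabla_{x_0}{\mathcal U}^0(T,\cdot,\cdot)=\nabla_{x_0}g^0$ and $\nabla_x{\mathcal U}(T,x_0,\cdot,\mu)=\nabla_x g(x_0,\cdot,\mu)$, both bounded by $\mathfrak L\le R_0/2$ thanks to \hyp{\^A2} and $\mathbb s>3$; the time-Hölder bounds \eqref{eq:U0:holder:contintime} for $\nabla_{x_0}{\mathcal U}^0$ and \eqref{eq:U:timereg}/\eqref{eq:U:holder:contintime} for $\nabla_x{\mathcal U}$ control the deviation from the terminal value by $CT^{\gamma}$ with $\gamma>0$, so shrinking $\mathfrak C$ yields the bound $R_0$ on all of $[0,T]$. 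Given this, Proposition \ref{prop:representation:Z0} gives $Z^0_t=\nabla_{x_0}{\mathcal U}^0(t,X^0_t,\mu_t)$ and \eqref{eq:representation:short:time} gives $\nabla_x u_t=\nabla_x{\mathcal U}(t,X^0_t,\cdot,\mu_t)$, hence $|Z^0_t|\le R_0$ and $\|\nabla_x u_t\|_{L^\infty}\le R_0$; on $\{|p|\le R_0\}$ one has $\hat H=H$, $\nabla_p\hat H=\nabla_p H$ and $\hat L^0(x_0,p)=L^0(x_0,-\nabla_pH^0(x_0,p))$, so — after representing the martingale $m$ as a stochastic integral through Lemma \ref{lem:reg:v0} — the truncated-system solution also solves \eqref{eq:major:FB:1}--\eqref{eq:minor:FB:2}. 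The converse, i.e. that any solution of \eqref{eq:major:FB:1}--\eqref{eq:minor:FB:2} with $\|\int_0^\cdot Z^0_s\cdot\ud B^0_s\|_{\rm BMO}\le R_0$ and $\sup_t\|\nabla_x u_t\|_{L^\infty}\le R_0$ coincides with the truncated-system solution, is where I expect the main difficulty to lie: the bound on $\nabla_x u$ makes the minor player's FBSPDE insensitive to the truncation, so by the FBSPDE uniqueness built into Theorem \ref{thm:local:FB} its $(\mu,u,v^0)$-part is the truncated one for the same $X^0$; for the major player's quadratic FBSDE, however, one must combine uniqueness of quadratic BSDEs in the bounded-$Y$/BMO-$Z$ class (as in \cite{Kobylanski,Tevzadze}) with the fact — shown exactly as the $R_0$-bound above — that the truncated major FBSDE driven by the given flow of measures keeps its $Z$-component below $R_0$ and hence itself solves the original (untruncated) major FBSDE, so that the two solutions solve the same equation and coincide. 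The delicate point is thus ruling out that an untruncated solution ever leaves the region $\{|p|\le R_0\}$ on which the two systems agree.

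\textbf{Step 4: item (iv).} Under the extra time/space Hölder hypotheses on $f^0,f,g^0,g$, Corollary \ref{corollary:U0:PDE} shows that $({\mathcal U}^0,{\mathcal U})$ is a classical solution of the truncated master system \eqref{eq:major:1:small}--\eqref{eq:minor:1:small}, with the claimed joint continuity of $\partial_t$ and $\nabla^2_{x_0}$. It then remains to identify \eqref{eq:major:1:small}--\eqref{eq:minor:1:small} with \eqref{eq:major:1}--\eqref{eq:minor:1} along $({\mathcal U}^0,{\mathcal U})$: by item (iii), every occurrence of $\hat H$ (resp. $\nabla_p\hat H$) evaluated at $\nabla_x{\mathcal U}$ is an occurrence of $H$ (resp. $\nabla_p H$), while the pair of terms $-\nabla_pH^0(x_0,\nabla_{x_0}{\mathcal U}^0)\cdot\nabla_{x_0}{\mathcal U}^0+\hat L^0(x_0,\nabla_{x_0}{\mathcal U}^0)$ in \eqref{eq:major:1:small} collapses, by the Fenchel identity \eqref{eq:Hamiltonians:Fenchel} in the form $-\nabla_pH^0(x_0,p)\cdot p+L^0\bigl(x_0,-\nabla_pH^0(x_0,p)\bigr)=-H^0(x_0,p)$ together with $\hat L^0(x_0,p)=L^0(x_0,-\nabla_pH^0(x_0,p))$ on $\{|p|\le R_0\}$, to $-H^0(x_0,\nabla_{x_0}{\mathcal U}^0)$, which is exactly the corresponding term of \eqref{eq:major:1}. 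This turns \eqref{eq:major:1:small}--\eqref{eq:minor:1:small} into \eqref{eq:major:1}--\eqref{eq:minor:1}, completing the proof.
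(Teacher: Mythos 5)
Steps 1, 2 and 4 are correct and essentially follow the paper's route (for the gradient bound at the start of Step 3 you use the time-Hölder estimates \eqref{eq:U0:holder:contintime} and \eqref{eq:U:timereg}, whereas the paper revisits the linear backward SPDE/FBSDE estimates and bounds the gradient by $\mathfrak L + C_R(T-t)$; both are legitimate and give the same conclusion). The problem is the uniqueness part of item (iii), where your argument has a genuine gap and, in fact, misidentifies the difficulty.

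You frame the issue as "ruling out that an untruncated solution ever leaves the region $\{|p|\le R_0\}$", and propose to combine quadratic-BSDE uniqueness in the bounded-$Y$/BMO-$Z$ class with the observation that the truncated solution also solves the original FBSDE. But the hypothesis on a competitor solution is only that $\|\int_0^\cdot Z_s^0\cdot \ud B_s^0\|_{\rm BMO}\le R_0$, which gives no pointwise control on $Z^0$; hence you cannot replace $L^0(X_t^0,-\nabla_p H^0(X_t^0,Z_t^0))$ by $\hat L^0(X_t^0,Z_t^0)$ along that solution, and the two FBSDEs do not "agree" on the competitor's paths. Observing that the truncated solution also solves the original equation merely reduces uniqueness-in-the-BMO-class for the original system to itself: you still need a uniqueness argument for the quadratic system in that class, and Kobylanski/Tevzadze do not apply here, since they address BSDEs with a fixed forward driver, whereas \eqref{eq:major:FB:1} is a fully coupled FBSDE (the forward diffusion $X^0$ and the measure flow $\mu$ feed back through $Z^0$). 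The paper explicitly flags this: the BMO bound is "too weak" to replace $L^0$ by $\hat L^0$, and instead it redoes the fixed-point argument of Theorem \ref{thm:local:FB} directly for the quadratic driver $L^0(x_0,-\nabla_p H^0(x_0,p))$, using Gronwall for the forward equation, a Girsanov change of measure together with \cite[Theorem 3.1]{Kazamaki} to linearize the driver while paying only a fixed $L^q$-power of the Doléans exponential, then a Hölder/Doob argument to close an $L^{2p}$ contraction for the coupled $(X^0,Y^0,Z^0,\mu)$-stability estimate, which is finally inserted back into \eqref{eq:local:diff5}. This BMO/Girsanov contraction is the substantive content of item (iii) and is not obtainable from your proposed reduction.
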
 

\begin{proof}
We let 
${\mathbb s}:=( \lfloor \curss \rfloor -(d/2+1)+3)/2$. For $R \geq R_0$ and $T \leq {\mathfrak C}$, 
with ${\mathfrak C}$ depending on $d$, ${\mathfrak L}$, $\lambda$, $\kappa$, $R$, 
$\sigma_0$ and 
${\mathbb s}$, the system 
\eqref{eq:major:FB:1:small}--\eqref{eq:minor:FB:2:small}
with $\hat{H}$ and $\hat{L}^0$ given by Lemma 
\ref{lem:H:approx} has a unique solution.  
Item (i) follows from 
Proposition 
\ref{prop:deltaU:continuity}. 
Item (ii) follows from 
\eqref{eq:U:contintime}
and
\eqref{eq:U0:contintime}.

The main challenge is to prove item (iii). 
We 
first prove the bound for $(t,x_0,x,\mu) \mapsto \nabla_{x_0} {\mathcal U}(t,x_0,x,\mu)$. To do so, we come back to the proof of the bound 
\eqref{eq:local:reg}. In fact, now that ${\boldsymbol u}$ has been found, 
we can regard the term $\hat H(x,\nabla_x u_t(x))$ 
as part of the source term in 
\eqref{eq:minor:FB:2:small}, with an explicit bound (depending on $R$). 
The idea is that the influence of the source term is very small as $t$ gets close to $T$. 
In particular, we can use any standard estimates for linear backward SPDEs, see for instance 
\cite[Lemma 4.3.7]{CardaliaguetDelarueLasryLions}, to  
a get a bound for $\vert \nabla_x u_t \vert$ of the form ${\mathfrak L} + C_R(T-t)$, where 
$C_R$ depends on $R$. Briefly, ${\mathfrak L}$ comes from the boundary condition 
$g$ and $C_R$ comes from the source term. For $T-t$ small enough, 
$C_R(T-t)$ can be bounded by ${\mathfrak L}$. Equivalently, we can decrease the value of 
${\mathfrak C}$ so that $(t,x_0,x,\mu) \mapsto \nabla_{x} {\mathcal U}(t,x_0,x,\mu)$ is bounded by $R_0$. 
The proof of the bound for 
$(t,x_0,\mu) \mapsto \nabla_{x_0} {\mathcal U}^0(t,x_0,\mu)$
is very similar, even though slightly more difficult. 
The first step is to use the representation of 
$\nabla_{x_0} {\mathcal U}^0$
in terms of $K^{0,x_0}$, see
Proposition \ref{prop:local:1stdiff},
and then to return to the interpretation of the latter 
in terms of $\delta {\boldsymbol Y}^0$, 
see
\eqref{eq:all:the:Ks}. 
The point is thus to come back to the equation 
\eqref{eq:linear H:local}
for $\delta {\boldsymbol Y}^0$. 
At this stage, we already have a bound for 
$\delta {\boldsymbol X}^0$ and 
$\delta {\boldsymbol Z}^0$. Therefore, we can also obtain a bound 
for $\vert \delta {Y}^0_t \vert$ of the form 
${\mathfrak L} + C_R(T-t)$
and then 
complete the proof as done for 
$\vert \nabla_x {\boldsymbol u} \vert$.

We now turn to the rest of the claim (iii). 
The fact that the solution to 
\eqref{eq:major:FB:1:small}--\eqref{eq:minor:FB:2:small}
is also a solution to 
\eqref{eq:major:FB:1}--\eqref{eq:minor:FB:2} 
 is easily checked and follows 
from 
\eqref{eq:lem:H:approx} (by the way,
the argument is similar for item (iv)). 
It thus remains 
to prove that uniqueness to 
\eqref{eq:major:FB:1}--\eqref{eq:minor:FB:2} 
holds true within the class of 
solutions satisfying 
$\| \int_0^{\cdot} Z_s^0 \cdot \ud B_s^0 \|_{\rm BMO} \leq 
R_0$
(which condition is obviously satisfied by the solution 
to 
\eqref{eq:major:FB:1:small}--\eqref{eq:minor:FB:2:small}
since $T \leq 1$)
and 
$\sup_{t \in [0,T]} \vert \nabla_x u_t\vert 
\leq R_0$.
Under the latter bound, we can replace $H(x,\nabla_x u_t)$
by $\hat{H}(x,\nabla_x u_t)$ in 
the system \eqref{eq:minor:FB:2}.
This makes it possible to identify 
the two systems \eqref{eq:minor:FB:2}
and 
\eqref{eq:minor:FB:2:small} (i.e., the systems for the minor player). The difficulty 
comes from the fact that 
we cannot do the same for the major player and identify the systems 
\eqref{eq:major:FB:1}
and 
\eqref{eq:major:FB:1:small}: indeed, we cannot replace $L^0(X_t^0,-\nabla_p H^0(X_t^0,Z_t^0))$
by 
$\hat{L}^0(X_t^0,Z_t^0)$ because the 
BMO bound on ${\boldsymbol Z}^0$ 
is too weak to do so. To overcome this issue, 
we must revisit the proof of 
Theorem 
\ref{thm:local:FB}
with 
$\hat{L}^0(x_0,p)$ being replaced by 
$L^0(x_0,-\nabla_p H^0(x_0,p))$ (but with the same $\hat{H}$ as therein):
since the difficulty only comes from the new `major' system
\eqref{eq:major:FB:1:small}, 
 the only point is to prove that
\eqref{eq:local:diff2}
remains true
in this new setting with the additional
assumption that $\tilde {\boldsymbol X}^{0,i} = {\boldsymbol X}^{0,i}$ for 
$i=1,2$ (since we are dealing with uniqueness,
we can restrict the analysis to fixed points
of the mapping ${\mathfrak T}$, 
with the latter being
defined right below 
\eqref{eq:major:FB:tilde2}).
Here is the way we get the analogue of 
 \eqref{eq:local:diff2}. Removing the `tilde' in
the second step of 
the proof of Theorem 
\ref{thm:local:FB}, we first observe 
from the Lipschitz property of 
$\nabla_p H^0$ and Gronwall's lemma that
\begin{equation}
\label{eq:new:proof:uniqueness??}
\sup_{t \in [0,T]} 
\vert X_t^{0,1} - X_t^{0,2} \vert 
\leq C \int_0^T \vert Z_t^{0,1} - Z_t^{0,2} 
\vert \ud t,
\end{equation} 
for a constant $C$ 
only 
depending 
on $d$, $\kappa$,
${\mathfrak L}$,  $\lambda$, $\sigma_0$ and ${\mathbb s}$, where here and below we assume without any loss of generality that $T \leq 1$ (equivalently ${\mathfrak C} \leq 1$). 
As for the 
backward equation, 
we
notice that there exist two progressively measurable processes $(\theta_t)_{0 \le t \le T}$ and 
$(\vartheta_t)_{0 \le t \le T}$, with values in 
${\mathbb R}^d$, such that $\vert \theta_t \vert \leq C (1+ \vert Z_t^{0,1} 
\vert + \vert Z_t^{0,2} \vert)$ and $\vert \vartheta_t \vert \leq C$ for all $t \in [0,T]$ and a 
possibly new value of $C$, such that 
\begin{equation*} 
\begin{split}
\ud \bigl( Y_t^{0,1} - Y_t^{0,2} 
\bigr) &= - \Bigl[ f_t(X_t^{0,1},\mu_t^1) - f_t(X_t^{0,2},\mu_t^2) + \vartheta_t \cdot \bigl( X_t^{0,1} - X_t^{0,2} \bigr) \Bigr]
\ud t 
\\
&\hspace{15pt} - \theta_t \cdot \bigl( Z_t^{0,1} - Z_t^{0,2} \bigr) \ud t + 
\sigma_0 \bigl( Z_t^{0,1} - Z_t^{0,2} \bigr) \cdot \ud B_t^0, \quad t \in [0,T]. 
\end{split} 
\end{equation*} 
We then let
\begin{equation*} 
{\mathcal E}_t :=
{\mathcal E}_t \biggl(  \sigma_0^{-1} \int_0^{\cdot} \theta_s \cdot \ud B_s^0
\biggr), \quad t \in [0,T]. 
\end{equation*} 
And then, by the BMO properties of ${\boldsymbol Z}^{0,1}$ and ${\boldsymbol Z}^{0,2}$, 
we can apply Girsanov theorem and then obtain 
\begin{equation*} 
\begin{split}
 Y_t^{0,1} - Y_t^{0,2} 
&= 
{\mathcal E}_t^{-1} 
{\mathbb E}^0 
\biggl\{ {\mathcal E}_T  \biggl[ g^0(X_T^{0,1},\mu_T^1) - 
g^0(X_T^{0,2},\mu_T^2) 
+ 
\int_t^T \bigl[ f^0_s(X_s^{0,1},\mu_s^1)
-
f^0_s(X_s^{0,2},\mu_s^2)
\bigr] \ud s
\\
&\hspace{15pt} + \int_t^T \vartheta_s \cdot \bigl( X_s^{0,1} - X_s^{0,2} 
\bigr) \ud s \biggr] \, \vert \, {\mathcal F}_t^0 \biggr\}, \quad t \in [0,T]. 
\end{split}
\end{equation*} 
In fact, the BMO norm of ${\boldsymbol \theta}$ is bounded by a known constant. We deduce from 
\cite[Theorem 3.1]{Kazamaki} 
that there exist two conjugate exponents $p,q>1$, only depending on 
$d$, $\kappa$,
${\mathfrak L}$,  $\lambda$, $\sigma_0$ and ${\mathbb s}$, such that
\begin{equation*} 
{\mathbb E}^0 \Bigl[ \Bigl( {\mathcal E}_t^{-1} {\mathcal E}_T\Bigr)^q \vert \, {\mathcal F}_t^0 \Bigr] \leq C,
\end{equation*} 
the value of $C$ being allowed to vary from line to line. 
Then, by H\"older inequality,
% (allowing the constant $C$ to vary from line to line), 
\begin{equation} 
\label{eq:new:estimate:HHH}
\begin{split}
\vert Y_t^{0,1} - Y_t^{0,2} \vert
&\leq C 
{\mathbb E}^0 
\biggl[ \sup_{t \leq s \leq T} \vert X_s^{0,1} - X_s^{0,2} \vert^p 
+ 
\sup_{t \leq s \leq T}   {\mathbb W}_1(\mu_s^1,\mu_s^2)^p
 \, \vert \, {\mathcal F}_t^0 \biggr]^{1/p}
 \\
&\leq C 
{\mathbb E}^0 
\biggl[ \sup_{t \leq s \leq T} \vert X_s^{0,1} - X_s^{0,2} \vert^{2p} 
+ 
\sup_{t \leq s \leq T}   {\mathbb W}_1(\mu_s^1,\mu_s^2)^{2p}
 \, \vert \, {\mathcal F}_t^0 \biggr]^{1/(2p)}. 
\end{split}
\end{equation} 
Also, by squaring the difference 
${\boldsymbol Y}^{0,1} - {\boldsymbol Y}^{0,2}$ (in \eqref{eq:major:FB:1}),
we obtain the following variant of \cite[Proposition 2.2]{PARDOUX199055}: 
${\mathbb P}^0$-almost surely, 
for all $t \in [0,T]$, 
\begin{equation*}
\begin{split}
&\vert Y_t^{0,1} - Y_t^{0,2} \vert^2 
+ \tfrac12 \sigma_0^2 {\mathbb E}^0 
\biggl[ \int_t^T \vert Z_s^{0,1} - Z_s^{0,2} 
\vert^2 \ud s \, \vert \, {\mathcal F}_t^0
\biggr]
\\
&\leq C {\mathbb E} \biggl[ 
\sup_{s \in [0,T]} \vert X_s^{0,1} -X_s^{0,2} 
\vert^2 + 
\sup_{s \in [0,T]} 
{\mathbb W}_1(\mu_s^1,\mu_s^2)^2
+ 
  \sup_{s \in [t,T]} \vert Y_s^{0,1} -Y_s^{0,2} 
\vert^2 \int_t^T \bigl( 1+  \vert Z_s^{0,1} \vert 
+
\vert Z_s^{0,2} \vert 
\bigr)^2 \ud s 
\, \vert \, {\mathcal F}_t^0
\biggr].
\end{split}
\end{equation*} 
%And then, by the BMO condition 
%$\| \int_0^{\cdot} Z_s^{0,i} \cdot \ud B_s^0 \|_{\rm BMO} \leq 
%R_0$ (for $i=1,2$)
%and for a new value of the constant $C$, we obtain 
%\begin{equation*}
%\begin{split}
%&\vert Y_t^{0,1} - Y_t^{0,2} \vert^2 
%+ \tfrac12 \sigma_0^2 {\mathbb E}^0 
%\biggl[ \int_t^T \vert Z_s^{0,1} - Z_s^{0,2} 
%\vert^2 \ud s \, \vert \, {\mathcal F}_t^0 
%\biggr]
%\\
%&\leq  C {\mathbb E}^0 \Bigl[ 
%\sup_{s \in [0,T]} \vert X_s^{0,1} -X_s^{0,2} 
%\vert^2 
%+ 
%\sup_{s \in [0,T]} 
%{\mathbb W}_1(\mu_s^1,\mu_s^2)^2
%\, \vert \, {\mathcal F}_t^0
%\Bigr]
% + 
%C T  {\mathbb E}^0 \Bigl[ \sup_{s \in [t,T]} \vert Y_s^{0,1} -Y_s^{0,2} 
%\vert^4
%\, \vert \, {\mathcal F}_t^0 
% \Bigr]^{1/2}.
%\end{split}
%\end{equation*} 
Here, using again the BMO property together with \cite[Theorem 2.2]{Kazamaki}, we can assume without any loss of generality that 
\begin{equation*} 
{\mathbb E} \biggl[ 
\biggl( 
 \int_t^T \bigl( 1+  \vert Z_s^{0,1} \vert 
+
\vert Z_s^{0,2} \vert 
\bigr)^2 \ud s \biggr)^{q}  
\, \vert \, {\mathcal F}_t^0
\biggr]
\leq C. 
\end{equation*}
And then, by a new application of H\"older inequality (assuming without any loss of generality that $p \in (2,+\infty)$ and $q \in (1,2)$), 
\begin{equation*}
\begin{split}
&\vert Y_t^{0,1} - Y_t^{0,2} \vert^2 
+ \tfrac12 \sigma_0^2 {\mathbb E}^0 
\biggl[ \int_t^T \vert Z_s^{0,1} - Z_s^{0,2} 
\vert^2 \ud s \, \vert \, {\mathcal F}_t^0
\biggr]
\\
&\leq C {\mathbb E} \biggl[ 
\sup_{s \in [0,T]} \vert X_s^{0,1} -X_s^{0,2} 
\vert^2 + 
\sup_{s \in [0,T]} 
{\mathbb W}_1(\mu_s^1,\mu_s^2)^2
\, \vert \, {\mathcal F}_t^0
\biggr]
+ C
{\mathbb E} \biggl[ 
  \sup_{s \in [t,T]} \vert Y_s^{0,1} -Y_s^{0,2} 
\vert^{2p}
\, \vert \, {\mathcal F}_t^0
\biggr]^{1/p}.
\end{split}
\end{equation*} 
Inserting \eqref{eq:new:estimate:HHH} and using Doob's inequality, we get 
\begin{equation*}
\begin{split}
&\vert Y_t^{0,1} - Y_t^{0,2} \vert^2 
+ \tfrac12 \sigma_0^2 {\mathbb E}^0 
\biggl[ \int_t^T \vert Z_s^{0,1} - Z_s^{0,2} 
\vert^2 \ud s \, \vert \, {\mathcal F}_t^0
\biggr]
\\
&\leq C {\mathbb E} \biggl[ 
\sup_{s \in [0,T]} \vert X_s^{0,1} -X_s^{0,2} 
\vert^{2p} + 
\sup_{s \in [0,T]} 
{\mathbb W}_1(\mu_s^1,\mu_s^2)^{2p}
\, \vert \, {\mathcal F}_t^0
\biggr]^{1/p}.
\end{split}
\end{equation*} 
Taking the power $p$ and then the supremum over $t \in [0,T]$, we get
\begin{equation*} 
\begin{split}
&{\mathbb E}^0 \biggl[ \sup_{t \in [0,T]} 
 {\mathbb E}^0 
\biggl[ \int_t^T \vert Z_s^{0,1} - Z_s^{0,2} 
\vert^2 \ud s \, \vert \, {\mathcal F}_t^0 
\biggr]^{p}
\, \vert \, {\mathcal F}_0^0 
\biggr]
\\
&\leq C T^{p} {\mathbb E}^0 \biggl[
\biggl( 
\int_0^T \vert Z_s^{0,1} -Z_s^{0,2} 
\vert^2
\ud s
\biggr)^{p}
\, \vert \, {\mathcal F}_0^0 
\biggr] +
C {\mathbb E}^0 \Bigl[ 
\sup_{s \in [0,T]} 
{\mathbb W}_1(\mu_s^1,\mu_s^2)^{2p}
\, \vert \, {\mathcal F}_0^0 
\Bigr],
\end{split}
\end{equation*} 
with the second line following from 
\eqref{eq:new:proof:uniqueness??}. 
Bounding from below the left-hand side by 
the value at $t=0$, 
we get 
(for a possibly new value of ${\mathfrak C} \leq 1$):
\begin{equation*} 
\begin{split}
&{\mathbb E}^0 \Bigl[ 
\sup_{s \in [0,T]} \vert X_s^{0,1} -X_s^{0,2} 
\vert^{2p} 
\Bigr]
\leq C T
{\mathbb E}^0 \Bigl[
\sup_{s \in [0,T]} 
{\mathbb W}_1(\mu_s^1,\mu_s^2)^{2p}
\Bigr].
\end{split}
\end{equation*} 
The fact that we use here 
$2p$ as exponent (and not $2$) makes a difference with 
\eqref{eq:local:diff2}. However, we can easily complete the proof of uniqueness 
by inserting the above bound in 
\eqref{eq:local:diff5} (with a possibly new value of $p$). 
\end{proof} 

\section{Appendix}

\subsection{Convenient form of the chain rule over 
${\mathbb R}^d \times {\mathcal P}({\mathbb T}^d)$}

In this subsection, we consider
 two filtered probability spaces 
$(\Omega^0,{\mathcal F}^0,{\mathbb F}^0,{\mathbb P}^0)$
and
$(\Omega,{\mathcal F},{\mathbb F},{{\mathbb P}})$ equipped with two 
Brownian motions $(B_t^0)_{0 \leq t \leq T}$ 
and $(B_t)_{0 \leq t \leq T}$ with values in ${\mathbb R}^d$.
We are also given two ${\mathbb R}^d$-valued 
It\^o processes
\begin{equation*} 
\begin{split} 
&\ud X_t^0 = b_t^0 \ud t + \varsigma_t^0 \ud B_t^0,
\\
&\ud X_t = b_t \ud t + \varsigma_t \ud B_t, \quad t \geq 0, 
\end{split}
\end{equation*}
with square-integrable conditions $X_0^0$ and $X_0$, respectively 
${\mathcal F}_0^0$ and ${\mathcal F}_0$-measurable.  
The processes $(b_t^0)_{t \geq 0}$ and $(\varsigma_t^0)_{t \geq 0}$ 
are constructed on $\Omega^0$ and are ${\mathbb F}^0$-progressively measurable
(with values in ${\mathbb R}^d$ and ${\mathbb R}^d \otimes {\mathbb R}^d$ respectively), and the processes 
$(b_t)_{t \geq 0}$ and $(\varsigma_t)_{t \geq 0}$ 
are constructed on 
$\Omega^0 \times \Omega$ 
and are 
${\mathbb F}^0 \otimes {\mathbb F}$-progressively measurable
(also with values in ${\mathbb R}^d$ and ${\mathbb R}^d \otimes {\mathbb R}^d$ respectively). 

We also assume that, for any $T>0$,  the processes 
$(\varsigma_t^0)_{0 \le t \le T}$ and 
$(\varsigma_t)_{0 \le t \le T}$
are bounded by a deterministic constant. And, we assume that 
\begin{equation*} 
{\mathbb E}^0 \Bigl[ \sup_{0 \le t \le T}  
\vert b_t^0 \vert^2 
\Bigr]
+
{\mathbb E}^0{\mathbb E}\Bigl[ \sup_{0 \le t \le T} 
\vert b_t \vert^2 
\Bigr]
< \infty. 
\end{equation*} 

Lastly, 
for any $t \geq 0$, 
we let 
\begin{equation*} 
\mu_t(\omega^0) := 
{\mathcal L}^0(X_t)(\omega^0), \quad \omega^0 \in \Omega^0, 
\end{equation*}
when $(X_t)_{t \geq 0}$ is regarded as a random variable with values in 
${\mathbb T}^d$. Equivalently, 
$\mu_t(\omega^0)$ is seen as an element of 
${\mathcal P}({\mathbb T}^d)$. 

\begin{proposition}
\label{prop:ito:formula}
Let $\ell : [0,+\infty) \times {\mathbb R}^d \times {\mathcal P}({\mathbb T}^d) \ni (t,x_0,\mu) \mapsto \ell(t,x_0,\mu) \in {\mathbb R}$ be differentiable with respect to $t$, 
$x_0$ and $\mu$ such that 
\begin{enumerate}
\item The functions $(t,x_0,\mu) \mapsto \partial_t \ell(t,x_0,\mu)$ and 
$(t,x_0,\mu) \mapsto \nabla_{x_0} \ell(t,x_0,\mu)$
are jointly continuous (with respect to ${\mathbb W}_1$ in the argument $\mu$); 
\item The function $[0,+\infty) \times {\mathbb R}^d \times {\mathcal P}({\mathbb T}^d) \times {\mathbb T}^d \ni (t,x_0,\mu,y) \mapsto \partial_\mu \ell(t,x_0,\mu,y)$
is jointly continuous (with respect to ${\mathbb W}_1$ in the argument  $\mu$)
and is differentiable with respect to $y$, 
the derivative 
$[0,+\infty) \times {\mathbb R}^d \times {\mathcal P}({\mathbb T}^d) \times {\mathbb T}^d \ni (t,x_0,\mu,y) \mapsto \nabla_y \partial_\mu \ell(t,x_0,\mu,y)$
being also jointly continuous. 
\end{enumerate}
Then, ${\mathbb P}^0$-almost surely, for any $t \geq 0$, 
\begin{equation*} 
\begin{split}
\ud_t \bigl[ \ell \bigl(t, X_t^0 , \mu_t \bigr) \bigr]
&=
\biggl[ 
\partial_t \ell \bigl(t, X_t^0 , \mu_{t} \bigr)
+
{\mathbb E} \bigl[ b_t \cdot \partial_\mu \ell (t,X_t^0,\mu_t,X_t)
\bigr] 
+
\tfrac12
{\mathbb E} \bigl[ {\rm Tr} \bigl( \varsigma_t \varsigma_t^\dagger  \nabla_y \partial_\mu \ell (t,X_t^0,\mu_t,X_t)
\bigr) 
\bigr] 
\\
&\hspace{5pt} + 
b_t^0 \cdot \nabla_{x_0} \ell(t,X_t^0,\mu_t) 
+
\tfrac12 
{\rm Tr} \Bigl( 
\varsigma_t^0 \bigl( \varsigma_t^0 \bigr)^\dagger \nabla_{x_0}^2 \ell(t,X_t^0,\mu_t)  \Bigr)
\biggr] \ud t + 
\bigl( \varsigma_t^0 \bigr)^{\dagger} \nabla_{x_0} \ell (t,X_t^0,\mu_t) \cdot \ud B_t^0.
\end{split}
\end{equation*}

\end{proposition}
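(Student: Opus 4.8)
The plan is to prove Proposition \ref{prop:ito:formula} by combining the finite-dimensional It\^o formula (applied on $\Omega^0$ to the diffusion $\boldsymbol X^0$) with the chain rule along the flow of conditional measures $\boldsymbol \mu$ (applied on the product space $\Omega^0 \times \Omega$), and then gluing the two. The key structural observation is that $\boldsymbol X^0$ lives on $\Omega^0$ while $\boldsymbol X$ lives on $\Omega^0 \times \Omega$, but the map $\omega^0 \mapsto \mu_t(\omega^0) = \mathcal L^0(X_t)(\omega^0)$ is $\mathbb F^0$-adapted, so the composite $t \mapsto \ell(t, X_t^0, \mu_t)$ is an $\mathbb F^0$-semimartingale. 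I would therefore split $\ell(t, X_t^0, \mu_t) - \ell(0, X_0^0, \mu_0)$, over a fine partition $0 = t_0 < \cdots < t_n = t$, into three telescoping pieces: the time increment $\ell(t_{i+1}, X_{t_i}^0, \mu_{t_i}) - \ell(t_i, X_{t_i}^0, \mu_{t_i})$, the spatial increment $\ell(t_{i+1}, X_{t_{i+1}}^0, \mu_{t_i}) - \ell(t_{i+1}, X_{t_i}^0, \mu_{t_i})$, and the measure increment $\ell(t_{i+1}, X_{t_{i+1}}^0, \mu_{t_{i+1}}) - \ell(t_{i+1}, X_{t_{i+1}}^0, \mu_{t_i})$.

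For the time increment, joint continuity of $\partial_t \ell$ (hypothesis (1)) gives $\int_{t_i}^{t_{i+1}} \partial_t \ell(s, X_{t_i}^0, \mu_{t_i})\, \ud s$ up to a negligible error as the mesh tends to $0$. For the spatial increment, I would apply the classical It\^o formula on $(\Omega^0, \mathcal F^0, \mathbb F^0, \mathbb P^0)$ to $x_0 \mapsto \ell(t_{i+1}, x_0, \mu_{t_i})$ along $\boldsymbol X^0$ between $t_i$ and $t_{i+1}$; this produces the drift $b_t^0 \cdot \nabla_{x_0}\ell + \tfrac12 \mathrm{Tr}(\varsigma_t^0 (\varsigma_t^0)^\dagger \nabla_{x_0}^2 \ell)$ and the martingale term $(\varsigma_t^0)^\dagger \nabla_{x_0}\ell \cdot \ud B_t^0$, again modulo mesh errors controlled by joint continuity of $\nabla_{x_0}\ell$ and $\nabla_{x_0}^2\ell$ together with the integrability of $b^0$ and boundedness of $\varsigma^0$. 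For the measure increment, I would invoke the standard chain rule for functions of a flow of marginal laws of an It\^o process on $\Omega^0 \times \Omega$ (this is exactly the structure of \cite[Chapter 5]{CarmonaDelarue_book_I}, Theorem 5.99 type result, and is consistent with the expansions used around \eqref{eq:proof:edp:X0:4} in the short-time section): conditionally on $\mathcal F^0$, the flow $(\mu_s)_{t_i \le s \le t_{i+1}}$ is the flow of marginals of $(X_s)$ under $\mathbb P$, and the chain rule gives $\ell(t_{i+1}, x_0, \mu_{t_{i+1}}) - \ell(t_{i+1}, x_0, \mu_{t_i}) = \int_{t_i}^{t_{i+1}} \mathbb E[\, b_s \cdot \partial_\mu \ell(t_{i+1}, x_0, \mu_s, X_s) + \tfrac12 \mathrm{Tr}(\varsigma_s \varsigma_s^\dagger \nabla_y \partial_\mu \ell(t_{i+1}, x_0, \mu_s, X_s))\,]\, \ud s$, evaluated at $x_0 = X_{t_{i+1}}^0$. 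The martingale part $\mathbb E[\varsigma_s^\dagger \partial_\mu \ell \cdot \ud B_s]$ vanishes because the expectation is over $\Omega$ only (here one uses that $\partial_\mu\ell$ and $\nabla_y\partial_\mu\ell$ are bounded on compacts — $\mu_s$ ranges in the compact $\mathcal P(\mathbb T^d)$ — and that $\varsigma$ is bounded, so the stochastic integral in $\omega$ is a true martingale and $\mathbb E$ annihilates it).

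The main obstacle, as in comparable computations, is the \emph{cross term}: when I replace $x_0$ by $X_{t_{i+1}}^0$ inside the measure-increment formula I must argue that switching to $X_{t_i}^0$ (which is what makes the sums telescope cleanly against the other pieces) costs only $o(\text{mesh})$ in aggregate. This requires a joint modulus-of-continuity estimate on $(t, x_0, \mu, y) \mapsto \partial_\mu\ell$ and $\nabla_y\partial_\mu\ell$ together with the $L^2$ bound $\mathbb E^0[\sup_{|s-r|\le h}|X_s^0 - X_r^0|^2] \le Ch$ coming from the integrability of $b^0$ and boundedness of $\varsigma^0$ and a Doob/Burkholder argument; this is precisely the kind of cross-term control carried out in \cite[Chapter 5]{CarmonaDelarue_book_I} and in the proof of Proposition \ref{prop:representation:Z0} above, so I would reproduce that scheme. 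One must also be slightly careful that all the ``$\mathbb P^0$-almost surely, for all $t$'' quantifiers can be exchanged: this follows by first establishing the identity for fixed $t$ (the telescoping argument), then using continuity in $t$ of both sides (the right-hand side is a sum of an absolutely continuous drift and a continuous martingale, by the integrability assumptions) to upgrade to a statement valid simultaneously for all $t$ on a single full-measure event. Once the three pieces are assembled and the mesh sent to $0$ via dominated convergence — legitimate because all the integrands are jointly continuous and dominated by integrable processes — the stated formula follows.
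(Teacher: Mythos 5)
Your proposal is correct and follows essentially the same route as the paper: telescope over a fine partition, treat the $x_0$-increment with the classical It\^o formula on $\Omega^0$ and the measure increment with the chain rule for flows of marginal laws from \cite[Theorem 5.99]{CarmonaDelarue_book_I} after freezing $\omega^0$, then absorb the frozen-argument (cross-term) errors using path continuity of ${\boldsymbol X}^0$, ${\boldsymbol \mu}$ and continuity/boundedness on compacts of the derivatives, before passing to the mesh limit. The only cosmetic difference is that the paper uses a two-piece split (time and measure handled jointly by the time-dependent chain rule) rather than your three-piece split, and it controls the error terms by an almost-sure continuity/compactness argument rather than an $L^2$ modulus, both of which are immaterial.
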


\begin{proof}
Fix $T>0$. It suffices to prove the formula for $t \in [0,T]$. 
For a mesh $0=t_0<t_1<\cdots<t_n=t$ of the interval $[0,t]$, we 
have
\begin{equation*}
\begin{split}
&\ell \bigl(t, X_t^0 , \mu_t \bigr)  - 
\ell \bigl(0, X_0^0 , \mu_0 \bigr)
\\
&= \sum_{i=1}^n 
\Bigl[ 
\bigl\{ 
\ell \bigl(t_i, X_{t_i}^0 , \mu_{t_i} \bigr) 
-
\ell \bigl(t_{i-1}, X_{t_i}^0 , \mu_{t_{i-1}} \bigr) 
\bigr\}
+
\bigl\{ 
\ell \bigl(t_{i-1}, X_{t_i}^0 , \mu_{t_{i-1}} \bigr) 
-
\ell \bigl(t_{i-1}, X_{t_{i-1}}^0 , \mu_{t_{i-1}} \bigr) 
\bigr\}
\Bigr].
\end{split}
\end{equation*}
By freezing $\omega^0$, we can expand 
$\ell (t_i, X_{t_i}^0 , \mu_{t_i}  ) 
-
\ell (t_{i-1}, X_{t_i}^0 , \mu_{t_{i-1}} )$ by means of the standard chain rule 
on $[0,+\infty) \times {\mathcal P}({\mathbb T}^d)$, see \cite[Theorem 5.99]{CarmonaDelarue_book_I}. 
For $i \in \{1,\cdots,n\}$,
we obtain 
\begin{equation*}
\begin{split} 
&\ell \bigl(t_i, X_{t_i}^0 , \mu_{t_i} \bigr) 
-
\ell \bigl(t_{i-1}, X_{t_i}^0 , \mu_{t_{i-1}} \bigr) 
\\
&= 
\int_{t_{i-1}}^{t_i} 
\partial_t \ell \bigl(s, X_{t_i}^0 , \mu_{s} \bigr)
\ud s  
+
\int_{t_{i-1}}^{t_i} 
{\mathbb E} \bigl[ b_s \cdot \partial_\mu \ell (s,X_{t_i}^0,\mu_s,X_s) 
\bigr] \ud s
+
\frac12
\int_{t_{i-1}}^{t_i} 
{\mathbb E} \bigl[ {\rm Tr} \bigl(  \varsigma_s \varsigma_s^\dagger  \nabla_y \partial_\mu \ell (s,X_{t_i}^0,\mu_s,X_s)
\bigr) 
\bigr] \ud s
\\
&= \int_{t_{i-1}}^{t_i} 
\partial_t \ell \bigl(s, X_s^0 , \mu_{s} \bigr)
\ud s  
+
\int_{t_{i-1}}^{t_i} 
{\mathbb E} \bigl[ b_s \cdot \partial_\mu \ell (s,X_s^0,\mu_s,X_s) 
\bigr] \ud s
+
\frac12
\int_{t_{i-1}}^{t_i} 
{\mathbb E} \bigl[ {\rm Tr} \bigl(  \varsigma_s \varsigma_s^\dagger  \nabla_y \partial_\mu \ell (s,X_s^0,\mu_s,X_s)
\bigr) 
\bigr] \ud s
\\
&\hspace{15pt} + \varpi^0_{t_{i-1},t_{i}},
\end{split}
\end{equation*}  
where $(\varpi^0_{r,s})_{0 \leq r \leq s \leq T}$ is a collection of ${\mathcal F}^0_T$-measurable random variables satisfying 
$\sum_{i=1}^n \vert 
\varpi^0_{t_{i-1},t_{i}} \vert \rightarrow 0$ in ${\mathbb P}^0$-probability as 
$n$ tends to $\infty$ (and the step size of the mesh tends to $0$). 
The derivation of the above identity relies on the fact that the 
 path $(X^0_s)_{0 \leq s \leq T}$ 
 is continuous and the derivatives
 $(s,\xi,\mu,y) \mapsto  \partial_\mu \ell(s,\xi,\mu,y)$ 
 and
 $(s,\xi,\mu,y) \mapsto \nabla_y \partial_\mu \ell(s,\xi,\mu,y)$ 
 are continuous (and thus) bounded on (compact) sets of the form 
$[0,T] \times \{ \xi \in {\mathbb R}^d : \vert \xi \vert \leq a \}
\times {\mathcal P}({\mathbb T}^d) \times {\mathbb T}^d$, for any $a>0$
(and similarly for the derivative 
$\partial_t \ell$).
Moreover, by standard It\^o formula, we have in a similar manner:
\begin{equation*}
\begin{split}
&\ell \bigl(t_{i-1}, X_{t_i}^0 , \mu_{t_{i-1}} \bigr) 
-
\ell \bigl(t_{i-1}, X_{t_{i-1}}^0 , \mu_{t_{i-1}} \bigr) 
\\
&= \int_{t_{i-1}}^{t_i}
b_s^0 \cdot \nabla_{x_0} \ell(t_{i-1},X_s^0,\mu_{t_{i-1}}) \ud s 
+
\frac12 
\int_{t_{i-1}}^{t_i}
{\rm Tr} \Bigl( 
\varsigma_s^0 \bigl( \varsigma_s^0 \bigr)^\dagger \nabla_{x_0}^2 \ell(t_{i-1},X_s^0,\mu_{t_{i-1}})  \Bigr) \ud s
\\
&\hspace{15pt} +
\int_{t_{i-1}}^{t_i}
\bigl( \varsigma_s^0 \bigr)^{\dagger} \nabla_{x_0} \ell (t_{i-1},X_s^0,\mu_{t_{i-1}}) \cdot \ud B_s^0
\\
&= \int_{t_{i-1}}^{t_i}
b_s^0 \cdot \nabla_{x_0} \ell(s,X_s^0,\mu_s) \ud s 
+
\frac12 
\int_{t_{i-1}}^{t_i}
{\rm Tr} \Bigl( 
\varsigma_s^0 \bigl( \varsigma_s^0 \bigr)^\dagger \nabla_{x_0}^2 \ell(s,X_s^0,\mu_s)  \Bigr) \ud s
\\
&\hspace{15pt} + \int_{t_{i-1}}^{t_i}
\bigl( \varsigma_s^0 \bigr)^{\dagger} \nabla_{x_0} \ell (s,X_s^0,\mu_{s}) \cdot \ud B_s^0+   \varpi^0_{t_{i-1},t_i},
\end{split}
\end{equation*}
for a possible new choice of $(\varpi^0_{r,s})_{0 \le r < s \leq T}$. 
The above identity follows from the continuity of the path 
$(\mu_s)_{0 \le s \le T}$ (with respect to ${\mathbb W}_1$). 

Combining the last two displays, summing over $i$ and letting 
the step size of the mesh tend to $0$, we complete the proof. 
\end{proof} 

\subsection{Estimates for transport-diffusion equations}
\color{black} 

We here collect several results regarding the long-time behaviour of transport-diffusion equation on the torus. 
We start with the following first lemma:

\begin{lemma}
\label{le:appendix:1}
Let $T >0$, $\mathbb{r}\geq 1$ and $b : [0,\infty) \times {\mathbb T}^d \rightarrow {\mathbb R}^d$ be a measurable function such that 
$\sup_{0 \leq t \leq T} \| b(t,\cdot)\|_{\mathbb{r}-1} < \infty$.
Then, 
there exist two constants $C$ and $\gamma >0$, depending on the quantity 
$\sup_{0 \leq t \leq T} \| b(t,\cdot)\|_{\mathbb{r}-1}$
 but not on $T$, such that 
the solution to the transport-diffusion equation 
(set 
on $[0,T] \times {\mathbb T}^d$)
\begin{equation*} 
\begin{split}
&\partial_t \varphi_t + \tfrac12 \Delta \varphi_t + b(t,\cdot) \cdot \nabla \varphi_t = 0,
\quad t \in [0,T]; \quad \varphi_T=\phi,
\end{split}
\end{equation*}
for $\phi \in {\mathcal C}^{\mathbb{r}}({\mathbb T}^d)$, 
satisfies 
\begin{equation}
\label{eq:exp:decay:varphi:lem:6.2} 
\| \varphi_t - \bar \varphi_t \|_{\mathbb{r}} \leq C \exp \bigl( - \gamma (T-t) \bigr) \| \phi \|_{\mathbb{r}}, 
\end{equation}
where 
$\bar \varphi_t = \int_{{\mathbb T}^d} \varphi_t(x) \ud x$.
\end{lemma}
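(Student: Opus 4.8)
The plan is to reduce the statement to a standard spectral/ergodic estimate for the heat semigroup perturbed by a bounded drift, exploiting the conservative structure of the dual equation. First I would observe that the transport-diffusion equation is linear and that $t \mapsto \varphi_t$ is simply the (backward) evolution of $\phi$ under the generator $\tfrac12 \Delta + b(t,\cdot) \cdot \nabla$. Writing $\psi_s := \varphi_{T-s}$, this becomes a forward parabolic problem on $[0,T]$ with initial datum $\phi$, whose solution can be represented probabilistically as $\psi_s(x) = \mathbb{E}[\phi(X_s^{x})]$, where $(X_s^x)_{0 \le s \le T}$ solves the SDE $\ud X_s = b(T-s,X_s)\,\ud s + \ud W_s$ started from $x$, with $W$ a $d$-dimensional Brownian motion. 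Since $b$ is bounded (with a bound controlled by $\sup_t \|b(t,\cdot)\|_{\mathbb{r}-1}$), standard coupling or Doeblin-type arguments on the torus give a spectral gap: there exist $C_0 \ge 1$ and $\gamma > 0$, depending only on $d$ and on $\sup_t \|b(t,\cdot)\|_{\mathbb{r}-1}$ (not on $T$), such that the total-variation distance between the laws of $X_s^x$ and $X_s^{x'}$ decays like $C_0 e^{-\gamma s}$; equivalently, the $L^\infty$ oscillation of $\psi_s$ satisfies $\|\psi_s - \bar\psi_s\|_{L^\infty} \le C_0 e^{-\gamma s}\|\phi\|_{L^\infty}$, where $\bar\psi_s := \int_{\mathbb{T}^d}\psi_s$. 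Note that, because the drift is divergence-acting on the adjoint side, the constant function is preserved in the appropriate sense and $\bar\varphi_t$ is exactly the quantity to subtract.

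Next I would upgrade the $L^\infty$ decay of the oscillation to decay in the $\mathcal{C}^{\mathbb{r}}$-norm. The idea is a two-step argument combining the $L^\infty$ bound just obtained with interior parabolic (Schauder) regularity. Fix $t \le T-1$ (the regime $T-t \le 1$ being trivial since then $\exp(-\gamma(T-t))$ is bounded below and one only needs the maximum principle together with standard a priori Schauder bounds $\|\varphi_t\|_{\mathbb{r}} \le C\|\phi\|_{\mathbb{r}}$, which hold because $b \in \mathcal{C}^{\mathbb{r}-1}$). For $t \le T-1$, apply the equation on the time window $[t, t+1]$: the function $\tilde\varphi := \varphi - \bar\varphi$ (note $\bar\varphi_t$ is a function of $t$ alone, and since $b$ is divergence-free-tested, $t \mapsto \bar\varphi_t$ is actually independent of... more precisely $\bar\varphi$ solves an ODE, which one checks directly by integrating the equation over $\mathbb{T}^d$) solves the same transport-diffusion equation with terminal datum at $t+1$ of $\mathcal{C}^{\mathbb{r}}$-norm controlled, via the $L^\infty$ estimate and standard Schauder bounds applied on $[t+1, T]$, by $C e^{-\gamma(T-t-1)}\|\phi\|_{\mathbb{r}}$. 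Then interior Schauder estimates on $[t,t+1] \times \mathbb{T}^d$ bootstrap: $\|\tilde\varphi_t\|_{\mathbb{r}} \le C \sup_{s \in [t,t+1]} \|\tilde\varphi_s\|_{L^\infty} + C \|\tilde\varphi_{t+1}\|_{\mathbb{r}}$, and both terms on the right are bounded by $C e^{-\gamma(T-t)}\|\phi\|_{\mathbb{r}}$ after adjusting $\gamma$ slightly. Iterating this over unit time windows, or simply combining it with the decay of the $L^\infty$ oscillation over $[t, T]$, yields \eqref{eq:exp:decay:varphi:lem:6.2}.

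The main obstacle I anticipate is making the Schauder bootstrap uniform in $T$ and, more subtly, ensuring the constants $C$ and $\gamma$ genuinely depend only on $\sup_{0 \le t \le T}\|b(t,\cdot)\|_{\mathbb{r}-1}$ and not on $T$ itself. The uniformity of $\gamma$ comes from the coupling/Doeblin estimate, which is a one-step (say, time horizon $1$) contraction in total variation that is iterated, so the rate depends only on the drift bound; this is the crucial point, and it is exactly the kind of statement that appears in the long-time analysis of MFG systems (e.g.\ \cite{CLLP,cardaliaguet:porretta:longtime:master}). The uniformity of $C$ in the $\mathcal{C}^{\mathbb{r}}$ estimate follows because the interior Schauder constant on a unit-length time window is a fixed number once $\|b\|_{\mathbb{r}-1}$ is fixed, and the bootstrap only ever references two consecutive unit windows. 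I would also need to handle the endpoint $\mathbb{r} \notin \mathbb{N}$ versus $\mathbb{r} \in \mathbb{N}$ uniformly, which is routine once one works with H\"older norms throughout. Everything else — the probabilistic representation, the elementary ODE for $\bar\varphi_t$, the maximum principle near $t = T$ — is standard and can be cited or dispatched quickly.
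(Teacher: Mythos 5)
Your two-stage plan --- first exponential $L^\infty$ decay of the oscillation (which you establish by coupling/Doeblin, and which is precisely the content of \cite[Lemma 7.4]{CLLP} that the paper cites), then an upgrade to $\mathcal{C}^{\mathbb{r}}$ via local parabolic smoothing --- is the same architecture the paper uses. The gap is in the execution of the upgrade step.

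The bound you write, $\|\tilde\varphi_t\|_{\mathbb{r}} \le C\sup_{s\in[t,t+1]}\|\tilde\varphi_s\|_{L^\infty} + C\|\tilde\varphi_{t+1}\|_{\mathbb{r}}$, does not close: iterating it over unit windows produces a factor $C^n$ with $n \approx T-t$, so the final constant is $T$-dependent, which is exactly what the lemma must avoid. What is actually needed is the cleaner interior estimate $\|\tilde\varphi_t\|_{\mathbb{r}} \le C\sup_{s\in[t,t+1]}\|\tilde\varphi_s\|_{L^\infty}$ with no $\mathcal{C}^{\mathbb{r}}$ term on the right --- but that bound is itself the nontrivial bootstrap to be established, and your appeal to ``interior Schauder'' papers over two difficulties. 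First, $b$ is only assumed \emph{measurable} in time, so the standard parabolic Schauder estimates in H\"older-in-time classes do not apply off the shelf; only the spatial regularity of $b$ may be used. Second, the incremental gain of derivatives requires iterating, and each iteration shrinks the interior window, so one must be careful that the constants do not compound. The paper resolves both points by writing the Duhamel formula on a \emph{short} window $[t,t+\delta]$,
\[
\varphi_t - \bar\varphi_{t+\delta} = P_\delta\bigl(\varphi_{t+\delta}-\bar\varphi_{t+\delta}\bigr) + \int_t^{t+\delta} P_{r-t}\bigl[b(r,\cdot)\cdot\nabla\varphi_r\bigr]\,\ud r,
\]
and using the heat-kernel smoothing bound $\|P_{r-t}[\cdot]\|_{k+1+\eta}\lesssim (r-t)^{-(1+\eta)/2}\|\cdot\|_k$; the small prefactor $\delta^{(1-\eta)/2}$ lets the inductive term be absorbed into the left-hand side, with $\delta$ depending only on $\|b\|_{\mathbb{r}-1}$ and $\mathbb{r}$, never on $T$. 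This is then iterated on the derivative order $k\in\{0,\dots,\lfloor\mathbb{r}\rfloor-1\}$, with the $L^\infty$ estimate as the base case. Finally, note that your $\tilde\varphi_t := \varphi_t - \bar\varphi_t$ does \emph{not} solve the same equation as $\varphi$ (integrating the PDE over $\mathbb{T}^d$ gives $\partial_t\bar\varphi_t = -\int_{\mathbb{T}^d} b\cdot\nabla\varphi_t \ne 0$ in general, as you partly noticed); the paper sidesteps this by subtracting the \emph{fixed} constant $\bar\varphi_{t+\delta}$ throughout the window, which is harmless since constants vanish under $\nabla^{k+1}$. In short: the probabilistic first stage is fine and equivalent to what the paper cites, but the second stage needs the short-window Duhamel absorption device rather than a naive unit-window iteration.
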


\begin{proof}
Notice that the result is standard for $t$ close to $T$ (say $T-t \leq 1$). 
It just provides a control of the ${\mathcal C}^{\mathbb r}$-norm of the solution 
in term of the ${\mathcal C}^{\mathbb r}$-norm of the terminal condition 
and the ${\mathcal C}^{{\mathbb r}-1}$-norm of the velocity field. 

In order to get the exponential decay when $t$ gets away from $T$, we recall from \cite[Lemma 7.4]{CLLP} that 
\begin{equation}
\label{eq:exp:decay:varphi:lem:6.2} 
\| \varphi_t - \bar \varphi_t \|_{L^\infty} \leq C \exp \bigl( - \gamma (T-t) \bigr) \| \phi \|_{L^\infty}, 
\quad t \in [0,T],
\end{equation}
with $C$ and $\gamma$ as in the statement. 
%As far as higher order derivatives are concerned, the only difficulty is to get estimates for $t$ away from $T$. 
Take now $\delta \in (0,1)$. For $t \in [0,T-\delta]$, we write 
\begin{equation*}
 \varphi_t= P_{\delta} \varphi_{t+\delta}  + \int_t^{t+\delta} P_{r-t} \bigl[ b(r,\cdot) \cdot  \nabla\varphi_r \bigr] \ud r,
\end{equation*} 
where $(P_s)_{s \geq 0}$ stands here for the standard heat kernel on the torus 
${\mathbb T}^d$, namely $(P_s)_{s \geq 0}$  is the semi-group generated by the 
operator
$\tfrac12 \Delta$. 
And then, 
\begin{equation*} 
 \varphi_t - \bar{\varphi}_{t+\delta} = P_{\delta} \bigl( \varphi_{t+\delta} - \bar \varphi_{t+\delta} \bigr)  + \int_t^{t+\delta} P_{r-t} \bigl[ b(r,\cdot) \cdot  \nabla\varphi_r \bigr] \ud r.
\end{equation*}
In particular, for any 
integer $k \in \{0,\cdots,\lfloor \mathbb{r} \rfloor-1\}$ and any real $\eta \in (0,1)$,% and any $\delta \in (0,1)$, 
\begin{equation*} 
\| \nabla^{k+1} \varphi_t   \|_{\eta}
=
\bigl\| \nabla^{k+1} \bigl( \varphi_t  - \bar{\varphi}_{t+\delta} \bigr)  \bigr\|_{\eta}
 \leq C_\delta \| \varphi_{t+\delta}- \bar{\varphi}_{t+\delta}  \|_k 
+ C \int_t^{t+\delta} (r-t)^{-(1+\eta)/2 } \| b(r,\cdot) \cdot  \nabla\varphi_r \|_k \ud r, 
\end{equation*}
where $C_\delta$ depends on $\delta$. 

Assuming that 
\eqref{eq:exp:decay:varphi:lem:6.2} holds true with respect to $\| \cdot \|_k$ instead of 
$\| \cdot \|_\mathbb{r}$, we can bound 
$\| b(r,\cdot) \cdot  \nabla\varphi_r
 \|_{k} = \| b(r,\cdot) \cdot  \nabla( \varphi_r
 -
\bar \varphi_r)
 \|_{k} $ by $C_k(\exp(-\gamma (T-r)) \| \phi \|_{\mathbb r} +\| \nabla^{k+1}  \varphi_r\|_{\eta})$. We obtain
 \begin{equation*}
\begin{split}
\| \nabla^{k+1} \varphi_t   \|_{\eta} &\leq C_{k,\delta}
\exp\bigl(-\gamma (T-t)\bigr) \| \phi \|_{\mathbb r} 
+
C_k
\int_t^{t+\delta}  
 (r-t)^{-(1+\eta)/2 }
\| \nabla^{k+1}  \varphi_r\|_{\eta}
\ud r,
\end{split} 
\end{equation*} 
and then, for $t+\delta \leq T$, 
\begin{equation*} 
 \exp \bigl(  \gamma(T-t) \bigr)  
\,  \| \nabla^{k+1} \varphi_t   \|_{\eta}
 \leq C_{k,\delta} \| \phi \|_{\mathbb r} + C_k \delta^{(1-\eta)/{2}}  
  \exp \bigl(  \gamma \delta  \bigr)  
\sup_{t \leq r \leq t + \delta} 
\Bigl[ \exp \bigl(  \gamma(T-r) \bigr)  
 \| \nabla^{k+1} \varphi_r \|_\eta \Bigr]. 
 \end{equation*} 
Choosing $\delta$ small enough and then taking the supremum over $t \leq T-\delta$, we get a bound for the left-hand side. 
We then get 
the result by iterating on the value of $k$. 
\end{proof}

\begin{lemma}
\label{le:appendix:2}
Let $T >0$, $\mathbb{r} \geq 1$ and $b : [0,\infty) \times {\mathbb T}^d \rightarrow {\mathbb R}^d$ be a measurable function such that 
$\sup_{0 \leq t \leq T} \| b(t,\cdot)\|_\mathbb{r} < \infty$. 
Then, 
there exist two constants $C$ and $\gamma >0$, depending on 
$\sup_{0 \leq t \leq T} \| b(t,\cdot)\|_\mathbb{r} < \infty$
 but not on $T$, such that 
the solution to the conservation equation 
(set on $[0,T] \times {\mathbb T}^d$) 
\begin{equation} 
\label{le:eq:6:2}
\begin{split}
&\partial_t q_t - \tfrac12 \Delta q_t + \textrm{\rm div}_x \bigl( b(t,\cdot) q_t \bigr) = 0,
\quad t \in [0,T]; \quad q_0=q,
\end{split}
\end{equation}
for a smooth initial condition $q : {\mathbb T}^d \rightarrow {\mathbb R}$ 
with $\int_{{\mathbb T}^d} q(x) \ud x =0$,  
satisfies 
\begin{equation*} 
\| q_t \|_{-\mathbb{r}} \leq C \exp \bigl( - \gamma t \bigr) \| q \|_{-\mathbb{r}}, \quad t \in [0,T]. 
\end{equation*}
When 
$\bar q:= \int_{{\mathbb T}^d} q(x) \ud x \not =0$, 
we deduce from the conservative structure that 
\begin{equation*} 
\| q_t - \bar q \|_{-\mathbb{r}} \leq C   \| q - \bar q\|_{-\mathbb{r}},
\quad 
\int_{{\mathbb T}^d} q_t(x) \ud x = \bar q, \quad t \in [0,T].  
\end{equation*}
\end{lemma}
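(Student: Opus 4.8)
The plan is to prove the estimate by duality, viewing the conservation equation \eqref{le:eq:6:2} as the adjoint of the backward transport–diffusion equation handled in Lemma~\ref{le:appendix:1}: the zero-mean condition $\int_{\mathbb T^d} q\,\ud x = 0$ is exactly what pairs with the centering $\varphi_t-\bar{\varphi}_t$ appearing in the conclusion of that lemma. As preliminaries, note that since $\mathbb r\ge 1$ the field $b$ is, uniformly in time, Lipschitz in space, so that for the smooth datum $q$ equation \eqref{le:eq:6:2} has a unique solution $(q_t)_{0\le t\le T}$ which is smooth in $x$ for every $t$ (parabolic smoothing); moreover $\sup_{0\le s\le t}\|b(s,\cdot)\|_{\mathbb r-1}\le \sup_{0\le s\le T}\|b(s,\cdot)\|_{\mathbb r}$, so Lemma~\ref{le:appendix:1} can be applied on every subinterval $[0,t]\subset[0,T]$ with one and the same pair of constants $(C,\gamma)$, independent of $t$ and of $T$.

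First I would fix $t\in[0,T]$ and a smooth $\phi$ with $\|\phi\|_{\mathbb r}\le 1$, and let $(\varphi_s)_{0\le s\le t}$ be the solution of $\partial_s\varphi_s+\tfrac12\Delta\varphi_s+b(s,\cdot)\cdot\nabla\varphi_s=0$ on $[0,t]$ with $\varphi_t=\phi$, as provided by Lemma~\ref{le:appendix:1}. The central step is the duality identity: differentiating $s\mapsto(q_s,\varphi_s)=\int_{\mathbb T^d}q_s\varphi_s\,\ud x$ and integrating by parts on the torus (there are no boundary terms), the Laplacian contributions and the transport contributions cancel pairwise, so $s\mapsto(q_s,\varphi_s)$ is constant on $[0,t]$ and hence $(q_t,\phi)=(q,\varphi_0)$. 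Since $\int_{\mathbb T^d}q\,\ud x=0$ I may subtract the mean and write $(q,\varphi_0)=(q,\varphi_0-\bar{\varphi}_0)$, whence
\[
|(q_t,\phi)|=|(q,\varphi_0-\bar{\varphi}_0)|\le \|q\|_{-\mathbb r}\,\|\varphi_0-\bar{\varphi}_0\|_{\mathbb r}\le C\,e^{-\gamma t}\,\|q\|_{-\mathbb r}\,\|\phi\|_{\mathbb r},
\]
the last inequality being the exponential-decay estimate of Lemma~\ref{le:appendix:1} on $[0,t]$. Taking the supremum over smooth $\phi$ with $\|\phi\|_{\mathbb r}\le 1$ (and using that the dual norm of the smooth function $q_t$ is attained along smooth test functions, or a routine density argument) gives $\|q_t\|_{-\mathbb r}\le C\,e^{-\gamma t}\,\|q\|_{-\mathbb r}$.

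For the case $\bar{q}\neq 0$, integrating \eqref{le:eq:6:2} over $\mathbb T^d$ and using $\int_{\mathbb T^d}\Delta q_t\,\ud x=\int_{\mathbb T^d}{\rm div}(b q_t)\,\ud x=0$ yields the conservation identity $\int_{\mathbb T^d}q_t\,\ud x=\bar{q}$ for all $t\in[0,T]$. By linearity I would then split $q_t=q_t^{(0)}+q_t^{(c)}$, where $q^{(0)}$ is the solution started from the zero-mean datum $q-\bar{q}$ (controlled by the previous step) and $q^{(c)}$ the solution started from the constant $\bar{q}$; the remainder $q_t^{(c)}-\bar{q}$ solves \eqref{le:eq:6:2} with the additional source $-\bar{q}\,{\rm div}\,b$ and zero initial datum, so a Duhamel representation together with the exponential decay of the dual flow keeps $\sup_{0\le t\le T}\|q_t^{(c)}-\bar{q}\|_{-\mathbb r}$ bounded, and combining the two contributions gives the stated estimate together with the conservation identity.

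I do not expect a serious obstacle: once Lemma~\ref{le:appendix:1} is granted, the whole argument is the duality reformulation above. The only points needing a little care are the justification of the differentiation of $s\mapsto(q_s,\varphi_s)$ and of the integration by parts — which rest on the smoothness of $q_s$ for $s>0$ and the regularity of $\varphi_s$ furnished by Lemma~\ref{le:appendix:1}, together with continuity up to $s=0$ — and the observation, already noted, that the constants of Lemma~\ref{le:appendix:1} are monotone in the length of the time interval and may therefore be fixed once and for all, which is precisely what makes $C$ and $\gamma$ independent of $T$.
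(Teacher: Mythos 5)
Your proof of the main (zero-mean) inequality is exactly the paper's: pick $\phi$ with $\|\phi\|_{\mathbb r}\leq 1$, solve the backward transport-diffusion equation of Lemma \ref{le:appendix:1} on $[0,t]$ with terminal data $\phi$, observe that $s\mapsto(q_s,\varphi_s)$ is constant, use $\int_{\mathbb T^d}q\,\ud x=0$ to replace $\varphi_0$ by $\varphi_0-\bar\varphi_0$, invoke the exponential decay of Lemma \ref{le:appendix:1}, and maximize over $\phi$. The observation that the constants of Lemma \ref{le:appendix:1} depend only on $\sup_{0\leq s\leq T}\|b(s,\cdot)\|_{\mathbb r -1}\leq\sup_{0\leq s\leq T}\|b(s,\cdot)\|_{\mathbb r}$ and not on the interval length, so that one pair $(C,\gamma)$ serves on every $[0,t]$, is the right justification for the $T$-independence.

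Your treatment of the case $\bar q\neq 0$ goes beyond the paper, which only asserts it "from the conservative structure" without proof -- but as written it does not close. Splitting $q_t=q_t^{(0)}+q_t^{(c)}$ and applying Duhamel to $p_t:=q_t^{(c)}-\bar q$ (zero initial datum, source $-\bar q\,\mathrm{div}\,b$), the zero-mean estimate together with $\|\mathrm{div}\,b(s,\cdot)\|_{-\mathbb r}\leq C$ gives $\|p_t\|_{-\mathbb r}\leq C|\bar q|\int_0^t e^{-\gamma(t-s)}\,\ud s\leq C|\bar q|$, hence $\|q_t-\bar q\|_{-\mathbb r}\leq Ce^{-\gamma t}\|q-\bar q\|_{-\mathbb r}+C|\bar q|$. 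This is not the stated $C\|q-\bar q\|_{-\mathbb r}$: the $|\bar q|$ term is not dominated by $\|q-\bar q\|_{-\mathbb r}$. Indeed, for $q\equiv\bar q$ constant the lemma's right-hand side vanishes, yet $\partial_t q_t|_{t=0}=-\bar q\,\mathrm{div}\,b$, so $q_t$ is not constant and the left-hand side is strictly positive whenever $\mathrm{div}\,b\not\equiv 0$; the display cannot hold as stated. What your decomposition actually proves is $\|q_t-\bar q\|_{-\mathbb r}\leq C(\|q-\bar q\|_{-\mathbb r}+|\bar q|)$; you should record that conclusion and flag the gap explicitly rather than asserting that the lemma's display follows.
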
 

\begin{proof}
The proof is done by duality. For $\phi$ and $(\varphi_t)_{0 \le t \le T}$ as in the statement of Lemma 
 \ref{le:appendix:1}, we compute 
 (by decomposing $q$ in positive and negative parts, it suffices to derive the identity below 
 when $q$ is a probability measure, in which case the result follows on It\^o-Krylov formula
 for It\^o processes with a non-degenerate diffusion coefficient and  abounded drift)
 \begin{equation*} 
 \ud_t \bigl( \varphi_t, q_t \bigr) = 0, 
 \quad t \in [0,T], 
 \end{equation*} 
where $(\cdot, \cdot)$ is here understood as the duality bracket between ${\mathcal C}^\mathbb{r}({\mathbb T}^d)$ and 
${\mathcal C}^{-\mathbb{r}}({\mathbb T}^d)$,
so that 
\begin{equation*} 
( \phi,q_T) = (\varphi_0,q) \leq  C \exp \bigl( - \gamma T \bigr) \| q \|_{-\mathbb{r}} \| \phi \|_\mathbb{r}.
\end{equation*}
The result follows by maximizing over $\phi \in {\mathcal C}^\mathbb{r}({\mathbb T}^d)$ satisfying 
$\| \phi \|_\mathbb{r} \leq 1$. 
\end{proof}

 \section*{Acknowledgment}

François Delarue acknowledges the financial support of the European Research Council (ERC) under the European Union’s Horizon Europe research and innovation programme (AdG ELISA project, Grant Agreement No. 101054746). Views and opinions expressed are however those of the authors only and do not necessarily reflect those of the European Union or the European Research Council Executive Agency. Neither the European Union nor the granting authority can be held responsible for them.

Chenchen Mou acknowledges the financial support of the Hong Kong Research Grants Council (RGC) under Grants No. GRF 11311422 and GRF 11303223.

\bibliographystyle{myamsplain} 

%%  \bibliography{<your bibdatabase>}%wmaainf}%alpha}%plain}
\providecommand{\bysame}{\leavevmode\hbox to3em{\hrulefill}\thinspace}
\providecommand{\MR}{\relax\ifhmode\unskip\space\fi MR }
% \MRhref is called by the amsart/book/proc definition of \MR.
\providecommand{\MRhref}[2]{%
  \href{http://www.ams.org/mathscinet-getitem?mr=#1}{#2}
}
\providecommand{\href}[2]{#2}

%%%%%%%%%%%%%%%%%%%%%%%%%%%%%%%%%%%%%%%%%%%%%%%%%%%%%%%%%%%%%%%%%%%%%%%%%%
%%%% \END BIBLIOGRAPHY
%%%%%%%%%%%%%%%%%%%%%%%%%%%%%%%%%%%%%%%%%%%%%%%%%%%%%%%%%%%%%%%%%%%%%%%%%%
\end{document}